\pgfplotsset{compat=1.15}
\newtheorem{thm}{Theorem}[section]
\newtheorem{lem}[thm]{Lemma}
\newtheorem{ques}[thm]{Question}
\newtheorem{cor}[thm]{Corollary}
\newtheorem{claim}[thm]{Claim}
\newtheorem{conj}[thm]{Conjecture}
\theoremstyle{definition}
\declaretheoremstyle[
  headfont=\normalfont\bfseries,%\itshape,
  numbered=unless unique,
  bodyfont=\normalfont,
  spaceabove=1em plus 0.75em minus 0.25em,
  spacebelow=1em plus 0.75em minus 0.25em,
  qed={$\heartsuit$},
]{hartending}
\declaretheorem[
  style=hartending,
  title=Definition,
  sibling=thm,
  % refname={definition,definitions},
  % Refname={Definition,Definitions}
]{de}
\declaretheoremstyle[
  headfont=\normalfont\bfseries,%\itshape,
  numbered=unless unique,
  bodyfont=\normalfont,
  spaceabove=1em plus 0.75em minus 0.25em,
  spacebelow=1em plus 0.75em minus 0.25em,
  qed={$\clubsuit$},
]{clubsending}
\declaretheorem[
  style=clubsending,
  title=Observation,
  sibling=thm,
  % refname={definition,definitions},
  % Refname={Definition,Definitions}
]{obs}
\declaretheoremstyle[
  headfont=\normalfont\bfseries,%\itshape,
  numbered=unless unique,
  bodyfont=\normalfont,
  spaceabove=1em plus 0.75em minus 0.25em,
  spacebelow=1em plus 0.75em minus 0.25em,
  qed={$\diamondsuit$},
]{diamending}
\declaretheorem[
  style=diamending,
  title=Example,
  sibling=thm,
  % refname={definition,definitions},
  % Refname={Definition,Definitions}
]{ex}
\declaretheorem[
  style=diamending,
  title=Counterexample,
  sibling=thm,
  % refname={definition,definitions},
  % Refname={Definition,Definitions}
]{cex}
\theoremstyle{remark}
\newtheorem{remark}[thm]{Remark}
\numberwithin{equation}{section}
\newcommand{\Rmnum}[1]{\expandafter\@slowromancap\romannumeral #1@}
\newcommand{\brho}{{\boldsymbol{\rho}}}
\newcommand{\bpi}{{\boldsymbol{\pi}}}
\newcommand{\RR}{\mathbb{R}}
\newcommand{\NN}{\mathbb{N}}
\newcommand{\SL}{\operatorname{SL}}
\newcommand{\GL}{\operatorname{GL}}
\newcommand{\ZZ}{\mathbb{Z}}
\newcommand{\CC}{\mathbb{C}}
\DeclareMathOperator{\cov}{{\mathrm {covol}}}
\DeclareMathOperator{\vol}{{\mathrm {vol}}}
\DeclareMathOperator{\diag}{\mathrm{diag}}
\DeclareMathOperator{\spa}{\mathrm{span}}
\DeclareMathOperator{\supp}{\mathrm{supp\,}}
\DeclareMathOperator{\rk}{\mathrm{rank}}
\DeclareMathOperator{\tr}{tr}
\DeclareMathOperator{\bl}{Bl}
\newcommand{\HN}{F_{\mathrm{HN}}}
\newcommand{\bra}{\left\langle}
\newcommand{\ket}{\right\rangle}
\newcommand{\normi}{\|}
\newcommand{\conv}{{\rm conv}}
\newcommand{\fU}{\mathfrak{U}}
\newcommand{\fV}{\mathfrak{V}}
\newcommand{\cI}{\mathcal{I}}
\newcommand{\cG}{\mathcal{G}}
\newcommand{\cF}{\mathcal{F}}
\newcommand{\nin}{\notin}
\newcommand{\gr}{{{\rm Gr}(l, n)}}
\newcommand{\grl}[1]{{{\rm Gr}(#1, n)}}
\newcommand{\grln}[2]{{{\rm Gr}(#1, #2)}}
\newcommand{\Eall}{E_{\rm all}}
\newcommand{\Id}{{\rm Id}}
\DeclareMathOperator{\mult}{mult}
\newcommand{\BL}{{\mathscr B}_l^n}
\def\acts{\curvearrowright}
\newcommand{\funh}{{\text{\it\em\fontfamily{qcr}\selectfont h}}}
\newcommand{\funH}{{\text{\it\em\fontfamily{qcr}\selectfont\bf H}}}
\newcommand{\funq}{{\text{\it\em\fontfamily{qcr}\selectfont q}}}
\newcommand{\mLU}{m_{H^{-0}H}}
\newcommand{\et}{\triangleleft}
\newcommand{\eteq}{\trianglelefteq}
\newcommand{\nula}{\xrightarrow{*}}
\definecolor{bluecolor}{rgb}{0.30196078431372547,0.30196078431372547,1.}
\definecolor{lightbluecolor}{rgb}{0.6509803921568628,0.6509803921568628, 1.}
\definecolor{redcolor}{rgb}{1.,0.,0.}
\definecolor{lightredcolor}{rgb}{1.,.5,.5}
\definecolor{lightblackcolor}{rgb}{.5,.5,.5}
\definecolor{greencolor}{rgb}{0.,1.,0.}
\definecolor{lightgreencolor}{rgb}{0.5,1.,0.5}
\definecolor{pinkcolor}{rgb}{1.,0.6,0.8}
\newlist{additionalconstrians}{enumerate}{1}%         if you want to create a new list from scratch
\setlist[additionalconstrians,1]{label=(Add \roman*)}%     in that case, at least label must be specified using \setlist
\begin{document}

\title{Parametric Geometry of Numbers for a General Flow}
% Information for first author
\author{Omri Nisan Solan}
\date{\today}
\begin{abstract}
We extend results on parametric geometry of numbers to a general diagonal flow on the space of lattices. Moreover, we compute the Hausdorff dimension of the set of trajectories with every given behavior, with respect to a nonstandard metric and  obtain bounds on the standard ones.
\end{abstract}
\thanks{I thank my advisor Barak Weiss and Eilon Solan for their comments, and acknowledge the support of ISF 2919 BSF 2016256. I wish to thank Andrei Iacob who read the paper and improved the presentation.}
\maketitle

\markright{}
\section{Introduction} % (fold)
\label{sec:introduction}
Let 
$X_n$ be the space of rank-$n$ lattices in $\RR^n$ with covolume $1$, and let
$g_t\in \SL_n(\RR)$ be a $1$-parameter diagonal flow.
There is a long history of research on the behavior of $g_t$-orbits in $X_n$, see \cite{D}, \cite{K2}, \cite{KM}. 

In 2011, Cheung \cite{C2} computed the Hausdorff dimension of the set of divergent $g_t=\exp(t\diag(-2, 1, 1))$-trajectories in $X_3$, and proved that the Hausdorff codimension is $2/3$. 
Later on, many extensions of his result were proved, e.g.,
\begin{center}
 \begin{tabular}{||c c c c c ||} 
 \hline
 Paper & space & flow eigenvalues & codimension & remarks\\ [0.5ex] 
 \hline\hline
 \cite{C2} & $X_3$ & $-2, 1,1$ & $\frac{2}{3}$ &\\ 
 \hline
 \cite{EK} & $X_3$ & $-2, 1,1$ & $\ge\frac{2}{3}$ & using entropy method\\ 
 \hline
 \cite{CC} & $X_{n+1}$ & $-n, 1,\dots,1$ & $\frac{n}{n+1}$ & \\ 
 \hline
 \cite{KKLM} & $X_{n+m}$ & $-m,\dots,-m, n,\dots,n$ & $\ge\frac{nm}{n+m}$ & \\ 
 \hline
 \cite{LSST} & $X_{3}$ & $-1, w_1, w_2;~\begin{matrix*}[l]
   w_1\le w_2\\ w_1+w_2=1
 \end{matrix*}$ & $\frac{1}{1+w_1}$ & computed on \\
 &&&&$\begin{pmatrix}
1& 0& 0\\
*& 1& 0\\
*& 0& 1\\
\end{pmatrix}$-orbits\\ 
 \hline
 \cite{GS} & general $G/\Gamma$ & General flow & $>0$ &\\
 \hline
 \cite{AGMS} & Product space & $\begin{matrix*}[l]
   \text{flow weighted} \\\text{with respect} \\\text{to product} 
 \end{matrix*}$ & exact result &\\
 \hline
\end{tabular}
\end{center}

% Kadyrov Einsiedler \cite{EK} gained the same result using entropy method.

% Cheung and Chevallier \cite{CC} showed that the Hausdorff codimension of divergent trajectories in $X_{n+1}$ of $g_t=\exp(t\diag(1,\dots,1,-n))$ is $\frac{n}{n+1}$.

% Kadyrov, Kleinbock, Lindenstrauss and Margulis computed an upper bound to the Hausdorff dimension of divergent trajectories in $X_{n+m}$ of $g_t=\exp(t\diag(n,\dots,n,-m,\dots,-m))$ and showed that the Hausdorff codimension is at least $\frac{mn}{m+n}$.
% \hskip[1]
\bigskip
In the first four results, 
% The common ground of these results is that 
all eigenvalues of the flow that are greater than $0$ are equal, and so are the eigenvalues smaller than $0$.

% Liao Shi Tamam and the author computed the Hausdorff dimension of divergent trajectories of $g_t=\exp(t\diag(\eta,1,-1-\eta))$ where $\eta>1$ in $\begin{pmatrix}
% 1& 0& *\\
% 0& 1& *\\
% 0& 0& 1\\
% \end{pmatrix}$ orbits in $X_{3}$ and showed that the Hausdorff codimension is $\frac{1}{1+\eta}$.

% Guan and Shi \cite{GS} proved a general result on any general lie group $G$ a lattice $\Gamma\subseteq G$ and a $1$-parameter subgroup $\{g_t\}\subseteq G$ the set of divergent trajectories has positive codimension. 

An independent research direction initiated by Schmidt and Summerer \cite{SS} and continued by Roy \cite{R}, though solving a seemingly different problem, classified the possible trajectories $g_t \Lambda$ for $\Lambda\in X_n$ up to equivalence. They use the action of
\begin{align}\label{eq:monoexpanding eigenvalue}
g_t:=\exp(t\diag(n,\dots,n,-m,\dots,-m))
\end{align}
on $X_{n+m}$.
We say two trajectories $g_t \Lambda_1, g_t \Lambda_2$ are equivalent if the distance between them is bounded for all $t>0$. 

Das, Fishman, Simmons, and Urbański \cite{DFSU} merged these two research directions and proved a general result computing the Hausdorff dimensions of sets of trajectories closed under equivalence in the setting of the $g_t=\exp(t\diag(n,\dots,n,-m,\dots,-m))$-action on $X_{n+m}$.

In this paper we extend the result by Das, Fishman, Simmons, and Urbański \cite{DFSU} to a set of trajectories with respect to a general one-dimensional diagonal action $g_t\acts X_n$. We compute the dimension of the set of trajectories with any specific behavior, provide exact dimension formulas with respect to a weighted metric, which depends on $g_t$, and derive lower and upper bounds on the standard Hausdorff dimension.
\subsection{Structural Exposition} % (fold)
\label{sub:structureal_exposition}
Section \ref{sec:preliminaries} is devoted to the preliminary material that will be needed to formally state the main results of this paper. Sections \ref{sec:hausdorff_dimensions}-\ref{sec:computation_of_the_dimension} are devoted to Theorem \ref{thm: general scewed dimention formula}, the main result of this paper, which determines the Hausdorff dimension of certain families of trajectories with respect to a certain (non-standard) metric. Section \ref{sec:combinatorial_computations} proves Corollary \ref{cor: general scewed dimention of divergent}, which applies Theorem \ref{thm: general scewed dimention formula} to the family of divergent trajectories. 

Theorem \ref{thm: general scewed dimention formula} transforms a dynamical question on the space of lattices, into a combinatorial question on combinatorial objects, namely $g$-templates.
In Section \ref{sec:hausdorff_dimensions} we discuss the Hausdorff dimension of sets, and present our computational tool which reduces the computation of the Hausdorff dimension to the dynamical question of computing the value of the Dimension Game introduced in \cite{DFSU}. 
In Section \ref{sec:approximation_of_templates} we discuss properties of $g$-templates. In Sections \ref{sec:geometric_theorems} and \ref{sec:Flag_perturbation_theorem} we prove geometric theorems that we will apply in the following three sections.
Finally, in Sections \ref{sec:alice_s_strategy} and \ref{sec:Bob_s_strategy} we describe strategies in the dimension game of the two players, and in Section \ref{sec:computation_of_the_dimension} we compute the game's value. 

\subsubsection{Mathematical topics} % (fold)
\label{ssub:mathematical_topics}
We use ideas from three different areas: Geometry, Dynamics, and Combinatorics. 
\begin{enumerate}[label=(\alph*)]
	\item {\bf Geometry} is used to discuss properties of the action of $\SL_n(\RR)$ and its subgroups $g_t$ and $H$ on the Grassmannian. 
	\item We discuss {\bf dynamical} properties of orbits. 
	\item We construct {\bf combinatorial} objects and then encounter combinatorial problems. 
\end{enumerate}
These ideas are not independent: the combinatorial objects are designed with a view to the geometric theorems, we iteratively apply the geometric theorems to obtain orbits with dynamical properties and the construction of each orbit follows a combinatorial object. 
% subsubsection mathematical_topics (end)
\subsubsection{Dependencies} % (fold)
\label{ssub:dependencies}
Altough the proofs of the main results depend on intuition acquired in most of the chapters preceding them, in a first reading one can focus only on Sections \ref{sec:preliminaries}, \ref{sub:hausdorff_games}, and \ref{sec:approximation_of_templates} up to Subsection \ref{sub:shifting_templates}, \ref{sec:Flag_perturbation_theorem}, the introduction of Section \ref{sec:alice_s_strategy}, and Sections \ref{sec:Bob_s_strategy} -- \ref{sec:combinatorial_computations}.

% subsubsection dependencies (end)
% subsection structureal_exposition (end) 

\section{Preliminaries} % (fold)
\label{sec:preliminaries}
\subsection{Geometry of Trajectories} % (fold)
\label{sub:geometry_of_trajectories}
\index{Xn@$X_n$}\hypertarget{a}{} 
\index{SLnR@$\SL_n(\RR)$}\hypertarget{b}{} 
\index{SLnZ@$\SL_n(\ZZ)$}\hypertarget{c}{} 
\index{n@$n$}\hypertarget{d}{} 
Let $X_n = \SL_n(\RR) / \SL_n(\ZZ)$
denote the topological space of rank-$n$ lattices in $\RR^n$ with determinant $1$. 
Denote the standard right-invariant Riemannian metric on $\SL_n(\RR)$ by $d_{\SL_n(\RR)}$. 
\index{dXn@$d_{X_n}$}\hypertarget{e}{} 
\index{dSLn@$d_{\SL_n}$}\hypertarget{f}{} 
It induces a metric $d_{X_n}$ on $X_n$. 

By right-invariance,

\begin{align}\label{eq: close action}
\forall g\in \SL_n(\RR), \quad\Lambda\in X_n. ~~d_{X_n}(g\Lambda, \Lambda)<d_{\SL_n(\RR)}(g,\Id).
\end{align}

\index{etai@$\eta_i$}\hypertarget{g}{} 
Fix a sequence of real weights \[\eta_1\le \eta_2\le\dots\le\eta_n\text{ with }\eta_1+\eta_2+\dots+\eta_n = 0,\]
for the rest of the paper. 
\index{gt@$g_t$}\hypertarget{h}{} 
Denote $g_t := \exp(t\diag(\eta_1,\dots,\eta_n))$ be a $1$-parameter group $\{g_t:t\in \RR\}\subseteq \SL_n(\RR)$.
\index{Eall@$\Eall$}\hypertarget{i}{} 
Denote by $\Eall:=\{\eta_1, \dots, \eta_n\}$ the multiset of all weights. The multiplicity of $\eta_i$ is the number of indices $j$ such that $\eta_j = \eta_i$.

\index{Equivalent trajectories}\hypertarget{j}{} 
We say that two trajectories $\{g_t\Lambda: t\ge 0\}$ and $\{g_t\Lambda': t\ge 0\}$ are equivalent if the distance $d_{X_n}(g_t\Lambda, g_t\Lambda')$ is uniformly bounded for $t>0$.

% subsection geometry_of_trajectories (end)
\subsection{Auxiliary Group Actions} % (fold)
\label{sub:auxiliary_group_actions}

Set
\index{H@$H$}\hypertarget{ba}{} 
\index{H@$H^-$}\hypertarget{bb}{} 
\index{H@$H^0$}\hypertarget{bc}{} 
\index{H@$H^{-0}$}\hypertarget{bd}{} 
\begin{align*}
\mathrlap H\hphantom{H^{-0}} =& \left\{h\in\SL_n(\RR):g_{-t}hg_t\xrightarrow{t\to \infty}\Id\right\}&&\text{-- The expanding subgroup.}\\
\mathrlap {H^-}\hphantom{H^{-0}} =& \left\{h\in\SL_n(\RR):g_{t}hg_{-t}\xrightarrow{t\to \infty}\Id\right\}&&\text{-- The contracting subgroup.}\\
\mathrlap {H^0}\hphantom{H^{-0}} =& \left\{h\in\SL_n(\RR):g_{t}hg_{-t}=h\right\}&&\text{-- The stable subgroup.}\\
H^{-0} =& H^-H^0&&\text{-- The nonexpanding subgroup.}
\end{align*}
\begin{obs}\label{obs: H coordinate}
Expressing the definitions in coordinates we observe that
\begin{align*}
\mathrlap H\hphantom{H^{-0}}=& \left\{h\in\SL_n(\RR):h_{ij} = \delta_{ij}\text{ if }\eta_i \le \eta_j\right\},\\
\mathrlap{H^-}\hphantom{H^{-0}} =& \left\{h\in\SL_n(\RR):h_{ij} = \delta_{ij}\text{ if }\eta_i \ge \eta_j\right\},\\
\mathrlap{H^0}\hphantom{H^{-0}} =& \left\{h\in\SL_n(\RR):h_{ij} = 0\text{ if }\eta_i \neq \eta_j\right\},\\
H^{-0} =& \left\{h\in\SL_n(\RR):h_{ij} = 0\text{ if }\eta_i > \eta_j\right\}.
\end{align*}
If $\eta_1<\dots<\eta_n$, the group $H$ (resp. $H^-$) is the group of lower (resp. upper) triangular matrices with $1$ on the diagonal, and $H^0$ is the group of diagonal matrices.
\end{obs}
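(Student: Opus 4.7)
The strategy is a direct coordinate computation. Since $g_t = \diag(e^{t\eta_1},\dots,e^{t\eta_n})$, conjugation scales each entry: for any $h \in \SL_n(\RR)$,
\[
(g_{-t} h g_t)_{ij} = e^{t(\eta_j-\eta_i)} h_{ij}, \qquad (g_{t} h g_{-t})_{ij} = e^{t(\eta_i-\eta_j)} h_{ij}.
\]
All four claims will follow by analyzing the limit or fixed-point conditions entry by entry.

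For $H$, the requirement $g_{-t} h g_t \to \Id$ reads, on the $(i,j)$ entry, $e^{t(\eta_j-\eta_i)} h_{ij} \to \delta_{ij}$. When $i=j$ this forces $h_{ii}=1$; when $i\neq j$ it forces $h_{ij}=0$ unless $\eta_j < \eta_i$. Together, these are precisely the condition $h_{ij}=\delta_{ij}$ whenever $\eta_i\le \eta_j$. The formula for $H^-$ is obtained by swapping $t\leftrightarrow -t$. For $H^0$, the equality $g_t h g_{-t}=h$ for all $t$ is equivalent to $e^{t(\eta_i-\eta_j)}h_{ij}=h_{ij}$ for all $t\in\RR$, which forces $h_{ij}=0$ as soon as $\eta_i\neq\eta_j$.

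The only step with content is $H^{-0}=H^-H^0$. Re-index so that the indices are grouped into blocks $I_1,\dots,I_r$ of equal $\eta$-value with increasing weights across blocks (the given ordering $\eta_1\le\dots\le\eta_n$ already does this). Then the coordinate descriptions just proved say that $H^0$ is the block-diagonal subgroup with respect to $(I_k)$, and $H^-$ is the block-upper-unipotent subgroup (identity diagonal blocks, arbitrary blocks strictly above). For the inclusion $H^-H^0\subseteq\{h_{ij}=0\text{ when }\eta_i>\eta_j\}$, expand $(h^- h^0)_{ij}=\sum_{k:\eta_k=\eta_j}h^-_{ik}h^0_{kj}$: if $\eta_i>\eta_j=\eta_k$ then $h^-_{ik}=0$, so the sum vanishes. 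For the reverse inclusion, any $h\in \SL_n(\RR)$ satisfying $h_{ij}=0$ when $\eta_i>\eta_j$ is block-upper-triangular with respect to $(I_k)$; letting $h^0$ be the block-diagonal matrix of its diagonal blocks (invertible since $h$ is) and $h^-=h(h^0)^{-1}$, one checks that $h^-$ has identity diagonal blocks and is block-upper-triangular, hence lies in $H^-$, while $h^0\in H^0$.

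No real obstacle is expected; the whole observation is a bookkeeping exercise. The only subtlety worth flagging is handling repeated weights correctly, both in the limit conditions (the case $\eta_i=\eta_j$ with $i\neq j$ forces $h_{ij}=0$ in $H$ and $H^-$, since $e^{0}=1\not\to 0$) and in the block decomposition used for $H^{-0}$.
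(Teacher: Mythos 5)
Your proposal is correct. The paper states this as an Observation without proof, and your direct coordinate computation (conjugation scales the $(i,j)$-entry by $e^{t(\eta_j-\eta_i)}$ or its reciprocal, then read off what must vanish or be fixed; block decomposition for $H^{-0}=H^-H^0$ using the blocks of equal weight) is exactly the natural verification, including the correct handling of the $i\neq j$, $\eta_i=\eta_j$ case.
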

\begin{remark}\label{rem:notations}
Here we see the use of $\clubsuit$ to mark the end of an observation. Likewise, $\heartsuit$ will mark the end of a definition, $\diamondsuit$ the end of an example, and $\square$ the end of a proof.
\end{remark}
\begin{lem}\label{lem: mult map}
The image of the multiplication map
\index{mul@$\mLU$}\hypertarget{be}{} 
\begin{align*}
\mLU: H\times H^{-0}\to \SL_n(\RR),\quad(h,  q^{-0})\mapsto q^{-0}h
\end{align*}
is a dense Zariski open set that contains $\Id$. 
\end{lem}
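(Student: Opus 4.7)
The plan is to identify the image of $\mLU$ with an orbit of an algebraic group action on $\SL_n(\RR)$, and then deduce Zariski openness via a dimension count. Consider the algebraic action of $H\times H^{-0}$ on $\SL_n(\RR)$ given by $(h,q^{-0})\cdot g = q^{-0} g h$; the image of $\mLU$ is exactly the orbit of $\Id$ under this action, and in particular $\Id=\mLU(\Id,\Id)$ lies in the image.

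Next I would compute the stabilizer of $\Id$ and the dimension of the orbit. The equation $q^{-0}h=\Id$ forces $q^{-0}=h^{-1}$, so $h\in H\cap H^{-0}$; by Observation \ref{obs: H coordinate}, $H$ requires $h_{ij}=\delta_{ij}$ whenever $\eta_i\le\eta_j$ while $H^{-0}$ requires $h_{ij}=0$ whenever $\eta_i>\eta_j$, and together these force $h=\Id$, so the stabilizer is trivial and the orbit has dimension $\dim H+\dim H^{-0}$. Reading off the coordinate descriptions, $\dim H=\#\{(i,j):\eta_i>\eta_j\}$, $\dim H^-=\#\{(i,j):\eta_i<\eta_j\}$, and $\dim H^0=\sum_k m_k^2-1$, where $m_k$ are the multiplicities of the distinct weights; using $\sum_k m_k^2=\#\{(i,j):\eta_i=\eta_j\}$ and symmetry, summation yields $\dim H+\dim H^{-0}=n^2-1=\dim\SL_n(\RR)$.

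To conclude, I would recognize $H^{-0}$ as a parabolic subgroup of $\SL_n$ with Levi $H^0$ and unipotent radical $H^-$, and $H$ as the unipotent radical of the opposite parabolic, so that $H^{-0}\cdot H$ is the classical big Bruhat cell, well known to be Zariski open and dense in $\SL_n$. The main obstacle is justifying the passage from $\CC$ to $\RR$: the openness is immediate over $\CC$ from the dimension count, but over $\RR$ one must verify that every real matrix in the open cell admits a real decomposition. This follows from the uniqueness of the decomposition (forced by $H\cap H^{-0}=\{\Id\}$): if $M$ is real and $M=q^{-0}h$ over $\CC$, then $M=\overline{q^{-0}}\,\overline{h}$ as well, so uniqueness yields $q^{-0}$ and $h$ real.
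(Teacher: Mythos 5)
Your proof is correct and, in broad outline, tracks the paper's: both rest on the facts that $H\cap H^{-0}=\{\Id\}$ and $\dim H+\dim H^{-0}=\dim\SL_n(\RR)$. Where you differ is in how you close the argument. The paper treats $\mLU$ as a map whose derivative is an isomorphism (by the dimension count), hence an open map, and then invokes irreducibility of $\SL_n$ to pass from a nonempty open set to a dense one; it does not spell out the passage from Euclidean openness to Zariski openness, nor the descent from complex to real points. You instead place the statement in the Bruhat framework: $H^{-0}$ is the standard parabolic $P$ attached to the weight blocks, $H$ is the unipotent radical of the opposite parabolic, so the image is the big cell $P\cdot N^{-}$, which is Zariski open and dense by classical structure theory. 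That buys you Zariski openness without any extra argument, and your explicit treatment of the real-vs-complex point (using uniqueness of the decomposition to show a real matrix in the complex big cell has real factors) fills a gap that the paper's proof implicitly glosses over. One small streamlining you could make: the stabilizer computation already gives $H\cap H^{-0}=\{\Id\}$, so the orbit-dimension argument and the injectivity/uniqueness argument are really the same observation used twice; you could fold them together.
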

\begin{proof}
Since $H$ and $H^{-0}$ are algebraic groups and their intersection of the complexifications $H_\CC\cap H^{-0}_\CC = \{\Id\}$, it follows that $\mLU$ is one-to-one. 
Since $\dim H + \dim H^{-0} = \dim\SL_n(\RR)$, $\mLU$ is an open map, and hence its image is open. 
Since $\SL_n(\RR)$ is irreducible as an algebraic variety, the image is dense.
\end{proof}
\begin{obs}
By Eq.~\eqref{eq: close action}, for every $q\in H^{-0}, \Lambda\in X_n$, the trajectories $g_t\Lambda$ and $g_tq \Lambda$ are equivalent.
Hence, studying the behavior of trajectories of lattices lattices in $ H^{-0}H\Lambda$ is equivalent to studying the behavior of trajectories of lattices in $H\Lambda$.  
\end{obs}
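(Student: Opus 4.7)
The plan is to verify the observation by showing $d_{X_n}(g_t\Lambda, g_t q \Lambda)$ stays uniformly bounded for $t > 0$, and then deduce the consequence about $H^{-0}H\Lambda$-orbits. The decisive algebraic manipulation is to rewrite
\begin{equation*}
g_t q \Lambda = (g_t q g_{-t}) \cdot (g_t \Lambda),
\end{equation*}
so that Eq.~\eqref{eq: close action}, applied with $g := g_t q g_{-t}$ and the lattice $g_t \Lambda$, reduces the problem to bounding $d_{\SL_n(\RR)}(g_t q g_{-t}, \Id)$ uniformly in $t > 0$.

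To carry out that bound, I will use the factorization $H^{-0} = H^- H^0$ built into the definition and write $q = q^- q^0$ with $q^- \in H^-$ and $q^0 \in H^0$. Since $H^0$ commutes with $g_t$ by definition, inserting $g_{-t}g_t = \Id$ between $q^-$ and $q^0$ simplifies the conjugate to
\begin{equation*}
g_t q g_{-t} = (g_t q^- g_{-t}) \cdot (g_t q^0 g_{-t}) = (g_t q^- g_{-t}) \cdot q^0.
\end{equation*}
By the very definition of $H^-$, the first factor tends to $\Id$ as $t\to\infty$, so it is relatively compact on $[T,\infty)$ for any $T$; on $[0,T]$ it is bounded by continuity of the conjugation map. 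Multiplication by the fixed element $q^0$ preserves boundedness, giving the uniform bound of $g_t q g_{-t}$ that feeds back into Eq.~\eqref{eq: close action}.

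For the final sentence of the observation, an arbitrary point of $H^{-0}H\Lambda$ has the shape $q^{-0} h \Lambda$ with $q^{-0}\in H^{-0}$ and $h\in H$; applying the first part with $\Lambda$ replaced by $h\Lambda$ shows that its trajectory is equivalent to that of $h\Lambda\in H\Lambda$, so trajectories in $H^{-0}H\Lambda$ and in $H\Lambda$ match up to equivalence. I do not foresee a serious obstacle: the argument is essentially an unwinding of the definitions of $H^-$ and $H^0$ through the closeness inequality Eq.~\eqref{eq: close action}. The only point requiring care is the order of the factorization $q=q^- q^0$, chosen so that the $H^0$-component sits on the right and can be pulled past $g_t$ without residue, which is immediate from $g_t$-invariance of $H^0$.
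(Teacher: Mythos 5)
Your proof is correct and fills in the details the paper leaves implicit: the observation cites Eq.~\eqref{eq: close action} with no further argument, and the key step — writing $g_t q \Lambda = (g_t q g_{-t})(g_t\Lambda)$ and bounding $d_{\SL_n(\RR)}(g_t q g_{-t},\Id)$ uniformly via the factorization $q=q^-q^0$ with $q^-\in H^-$ contracting and $q^0\in H^0$ fixed — is exactly the intended reading. The final reduction of $H^{-0}H\Lambda$-orbits to $H\Lambda$-orbits by replacing $\Lambda$ with $h\Lambda$ is also correct.
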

\index{dH@$d_{H}$}\hypertarget{bf}{} 
The restriction $d_{H}$ of the metric $d_{\SL_n(\RR)}$ to $H$ does not suit us, as it has bad behavior with respect to the $g_t$-action. 
We use instead the semi-metric $d_\varphi$ on $H$ defined by by 
\[d_\varphi(h_1, h_2): = \exp\left(-\min\{t\in \RR:d_H (g_th_1g_{-t}, g_th_2g_{-t})\ge 1\}\right).\]
The semi-metric $d_\varphi$ satisfies the equality 
\[d_\varphi(g_th_1g_{-t}, g_th_2g_{-t}) = \exp(t)d_\varphi(h_1,h_2),\]
while $d_H$ has no such property even approximately.
We will see that $d_\varphi$ is not a necessarily a metric, but there exists a constant $\alpha_\varphi<1$ such that $d_\varphi^{\alpha_\varphi}$ is a metric, see Subsections \ref{sub:the_expansion_metric} and \ref{sub:Case of interest}.
% subsection auxiliary_group_actions (end)
\index{Harder-Narasimhan filtration}\hypertarget{bg}{}
\subsection{The Harder-Narasimhan filtration} % (fold)
\label{sub:the_harder-narasimhan_filtration}
The Harder-Narasimhan filtration of a lattice was defined in \cite{HN} in the context of algebraic geometry, and adapted to lattices by Grayson \cite{G}. Its construction for standard lattices in $\RR^n$ goes as follows.

\index{cov@$\cov \Gamma$}\hypertarget{bh}{}
For every lattice $\Gamma\subset\RR^n$, denote by $\cov \Gamma$ the Euclidean volume of the torus $\spa\Gamma/\Gamma$. 
By convention, $\cov\{0\}:=1$. 

Informally, the Harder-Narasimhan filtration of a lattice $\Lambda$ is a lattice filtration $\{0\} = \Gamma_{l_0}\subset\Gamma_{l_1}\subset\dots\subset\Gamma_{l_k}=\Lambda$ of ``minimal covolume'', where $\rk \Gamma_{l_i} = l_i$. It is the set of all sublattices $\Gamma\subseteq \Lambda$ with exceptionally small covolume; if $\rk\Gamma = l$, even if we know the minimal covolume of all lattices for every possible rank $l'\neq l$, the covolume of $\Gamma$ is still particularly small.

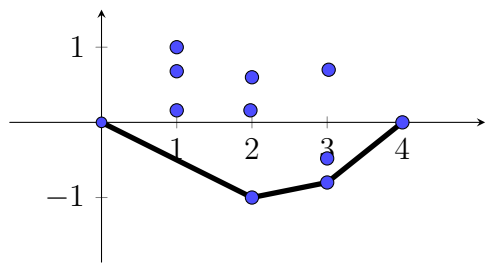
\begin{wrapfigure}{r}{0.5\textwidth}
  \begin{center}
    \begin{tikzpicture}[line cap=round,line join=round,>=triangle 45,x=1.0cm,y=1.0cm]
\begin{axis}[
x=1.0cm,y=1.0cm,
axis lines=middle,
xmin=-1.22,
xmax=5.1000000000000005,
ymin=-1.859999999999999,
ymax=1.5,
xtick={0.0,1, ...,4.0},
ytick={-1.0,0.0,1.0},]
\clip(-1.22,-1.86) rectangle (5.1,2);
\draw [line width=2.pt] (0.,0.)-- (2.,-1.);
\draw [line width=2.pt] (2.,-1.)-- (3.,-0.8);
\draw [line width=2.pt] (3.,-0.8)-- (4.,0.);
\begin{scriptsize}
\draw [fill=bluecolor] (0.,0.) circle (2.0pt);
\draw [fill=bluecolor] (4.,0.) circle (2.5pt);
\draw [fill=bluecolor] (3.,-0.8) circle (2.5pt);
\draw [fill=bluecolor] (3.,-0.48) circle (2.5pt);
\draw [fill=bluecolor] (3.02,0.7) circle (2.5pt);
\draw [fill=bluecolor] (2.,-1.) circle (2.5pt);
\draw [fill=bluecolor] (1.98,0.16) circle (2.5pt);
\draw [fill=bluecolor] (2.,0.6) circle (2.5pt);
\draw [fill=bluecolor] (1.,1.) circle (2.5pt);
\draw [fill=bluecolor] (1.,0.68) circle (2.5pt);
\draw [fill=bluecolor] (1.,0.16) circle (2.5pt);
\end{scriptsize}
\end{axis}
\end{tikzpicture}

  \end{center}
  \caption{An illustration of $S_\Lambda$. (Not based on a real lattice.)}
\end{wrapfigure}
\index{p@$p_\Gamma$}\hypertarget{bi}{}
Formally, to every lattice $\Gamma\subseteq \Lambda$ we associate the point $p_\Gamma := (\rk \Gamma, \log \cov \Gamma)\in \RR^2$.
For every lattice $\Lambda\in X_n$ define $S_\Lambda := \{p_\Gamma:\Gamma\subseteq \Lambda\}$. 
Denote the extreme points of the convex hull $\conv(S_\Lambda)$ by $\{p_l:l\in L\}$, where the index $l$ is the $x$-coordinate of $p_l$. For each $l\in L$, let $\Gamma_l \subseteq \Lambda$ satisfy $p_{\Gamma_l}=p_l$.

By \cite{G}, the lattices $\{\Gamma_l:l\in L\}$ form a filtration of $\Lambda$. Furthermore, $\Gamma_l$ is the unique subgroup that is associated to the point $p_l$ for $l\in L$. 

\index{FHN@$\HN$}\hypertarget{bj}{} 
By definition, for every $l\in L$ one has $\cov\Gamma_l < 1$. The filtration $\HN(\Lambda):=\{\Gamma_l:l\in L\}$ is called the \textit{Harder-Narasimhan filtration}.
Note that we index the elements in the filtration by their rank, and the set of ranks is $L = L(\Lambda)$.
% subsection the_harder-narasimhan_filtration (end)
\subsection{Various Filtration Types} % (fold)
\label{sub:various_filtration_types}
We will often use linear subspaces $V\subseteq \RR^n$. 
\index{Gr@$\gr$}\hypertarget{ca}{}
The Grassmannian $\gr$ is the space of all $l$-dimensional subspaces $V\subseteq \RR^n$. 
\index{dGr@$d_{\gr}$}\hypertarget{cb}{}
Denote by $d_{\gr}$ the path metric on $\gr$ derived from the Riemannian metric, which is invariant under multiplication by orthogonal matrices. 

The group $\{g_t\}$ acts on $\gr$. The set of $g_t$-invariant spaces is denoted $\gr^g$. 
For any $V\in \gr$ the eigenvalues of the action of $g_t$ on $V$ are $\exp (\eta t)$ for a multiset of $\eta\in \Eall$. 
\index{Il@$\cI_l$}\hypertarget{cc}{}
We let $\cI_l$ denote the set of multisets of $\Eall$ of size $l$. 

\index{Gr@$\gr$!$\gr_E^g$}\hypertarget{cd}{}
For every $E\in \cI_l$ denote by $\gr_E^g$ the space of $V\in \gr^g$ such that the eigenvalues of $g|_{V}$ are $\{\exp(t\eta):\eta\in E\}$. 
\begin{ex}\label{ex: gr inv}
Let $n=4, \Eall = \{-1, 0, 0, 1\}$, and $l=2$. Then $\cI_2 = \left\{\{-1, 0\}, \{-1, 1\},\{0, 1\}, \{0,0\}\right\}$, and
\begin{align*}
\grln{2}{4}^g_{\{-1, 0\}} &= \{\spa(e_1, v):0\neq v\in \spa(e_2, e_3)\},\\
\grln{2}{4}^g_{\{-1, 1\}} &= \{\spa(e_1, e_4)\},\\
\grln{2}{4}^g_{\{0,1\}} &= \{\spa(e_4, v):0\neq v\in \spa(e_2, e_3)\},\\
\grln{2}{4}^g_{\{0, 0\}} &= \{\spa(e_2, e_3)\}.
\end{align*}
\end{ex}
\index{Flag}\hypertarget{ce}{}
A \emph{flag} is an increasing sequence of linear spaces,
\index{Vbul@$V_\bullet$}\hypertarget{cf}{}
\[V_\bullet:~0=V_0\subset V_{l_1}\subset V_{l_2}\subset \dots\subset V_{l_k} = \RR^n; \quad\dim V_{l_i} = l_i.\]
\index{Direction filtration}\hypertarget{cg}{}
A \emph{direction filtration} is a sequence 
\index{Ebul@$E_\bullet$}\hypertarget{ch}{}
\begin{align}\label{eq: directed filtration}
E_\bullet:~\emptyset = E_0\subset E_{l_1}\subset E_{l_2}\subset \dots\subset E_{l_k} = \Eall; E_{l_i}\in \cI_{l_i}.
\end{align}
\index{L@$L(-)$}\hypertarget{ci}{}
Denote by $L(V_\bullet) = L(E_\bullet) := \{l_0,\dots,l_k\}$.

A \emph{height sequence} 
\index{abul@$a_\bullet$}\hypertarget{cj}{}
\index{Height sequence}\hypertarget{da}{}
$a_\bullet:~0=a_0, a_{1},\dots, a_n=0,$
is a convex sequence of numbers, that is, for every $l$ we have 
\[\partial^2 a_l:= a_{l-1} -2a_l + a_{l+1} \ge 0.\]

\index{Nontrivial place}\hypertarget{db}{}
\index{partial2@$\partial^2a_\bullet$}\hypertarget{dc}{}
The \emph{nontrivial places} $1\le l\le n-1$ of $a_\bullet$ are those for which $\partial^2a_l > 0$.
By convention, $l=0$ and $l = n$ are also nontrivial place. Denote the set of nontrivial places by $L(a_\bullet)$. 
Using convexity we can reconstruct $a_\bullet$ from the values $(a_l)_{l\in L(a_\bullet)}$.

A direction filtration with heights is a tuple $F = (F_{E,\bullet}, F_{H,\bullet})$ of a direction filtration and a height sequence where $L(F_{E,\bullet}) = L(F_{H,\bullet})$.
The Harder-Narasimhan filtration $\HN(\Lambda): \{\Gamma_l:l\in L\}$ of a lattice $\Lambda$ yields a flag  $\HN(\Lambda)_{V,\bullet} = \spa \Gamma_\bullet$, a height sequence defined by $\HN(\Lambda)_{H,l} = \log \cov \Gamma_l$ for every $l\in L$ and extended piecewise linearly for other $l$.
For consistency, we will write $\HN(\Lambda)_{\Gamma,l} = \Gamma_l$.
% subsection various_filtration_types (end)
% section preliminaries (end)

\subsection{Dynamics of Linear Spaces} % (fold)
\label{sub:dynamics_of_linear_spaces}
\index{#@$\normi\cdot\normi$}\hypertarget{dd}{}
For every $0\le l \le n$ consider the standard $L^2$ norm $\|\cdot \|$ on $\bigwedge^l \RR^n$. 
\index{Blades}\hypertarget{de}{}
\index{blades@$\BL$}\hypertarget{df}{}
\index{wedgelR@$\bigwedge^l \RR^n$}\hypertarget{dg}{}
Denote by $\BL$ the set of $l$-blades in $\bigwedge^l \RR^n$, that is, 
the set of vectors $0\neq v\in \bigwedge^l \RR^n$ that can be written as 
$v=v_1\wedge v_2\wedge\cdots\wedge v_l$ with $v_1,v_2,\dots,v_l\in \RR^n$.
Blades are useful for the following definition:
\begin{de}\label{de: blade}
Let $\Gamma\subset \RR^n$ be a lattice of rank $l$. Let $v_1,v_2,\dots,v_l$ be a basis of $\Gamma$. Define the \emph{blade of} $\Gamma$ by 
\index{BlGamma@$\bl\Gamma$}\hypertarget{dh}{}
\[\bl\,\Gamma:= v_1\wedge v_2\wedge\cdots\wedge v_l.\]
It is well-defined up to sign.
\end{de} 
We identify the real Grassmanian $\gr$, the space of $l$-dimensional subspaces of $\RR^n$, with $\BL/\RR^\times$, via \[v_1\wedge\cdots\wedge v_l\mapsto\spa(v_1,\dots,v_l).\]
\index{Rmul@$\RR^\times$}\hypertarget{di}{}
\noindent Here $\RR^\times$ denotes the multiplicative group of $\RR$.
The group $\GL_n(\RR)$ acts naturally on the spaces $\bigwedge^l \RR^n$ and $\gr$. 

Recall that the connected components of the $g_t$-invariant subspace $\gr^g$ are $\gr^g_E$ for every $E\in \cI_l$.
\index{#@$\#$}\hypertarget{dj}{}
\index{#@$\eteq$}\hypertarget{ea}{}
Define a partial order $\eteq$ on $\cI_l$ by $E_1 \eteq E_2$ if $\#\{x\in E_1:x>r\} \le \#\{x\in E_2:x>r\}$ for every $r>0$. In other words, we can obtain $E_2$ from $E_1$ by increasing elements. Here $\#$ denotes the cardinality of a set. 
We write $E_1\et E_2$ if $E_1\eteq E_2$ and $E_1\neq E_2$.
Thus we have (and will use) two partial orders on multisets: multiset inclusion $\subseteq$ and the new partial order $\eteq$.

% The map $V\mapsto E_V$ is locally constant, and hence $\gr^g$ is decomposed to compact parts \[\gr^g = \bigsqcup_{E'\in \cI_l}\gr^g_{E'}, ~~~ \gr^g_{E'} = \{V\in \gr^g:E_V = E'\}\]
The following result describes the dynamics of the $g_t$-action on $\gr$. Its proof is given in Subsection \ref{sub:grasmanian_theorems}.
\begin{thm}\label{thm:lin path}
% There exists $\varepsilon_0>0$ such that the following holds.
For every $\varepsilon>0$, and $ 1\le l\le n-1$ there is $C>0$ such that for every $V\in \gr$ there exists a sequence $E_1\et\cdots\et E_r\in \cI_l$ and a sequence of intervals $a_1<b_1<a_2<b_2<\dots<a_r<b_r$ such that 
\index{Uepsilon@$U_\varepsilon$}\hypertarget{eb}{}
\[a_1 = -\infty,\quad b_r = \infty, \quad\sum_{k=1}^{r-1}(a_{k+1}-b_k)<C\] and for every $t\in (a_k, b_k)$, the space $g_tV$ lies in an $\varepsilon$-neighborhood $U_\varepsilon(\gr_{E_k}^g)$. 
\end{thm}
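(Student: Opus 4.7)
The plan is to reduce to a linear analysis via the Plücker embedding $\gr \hookrightarrow \mathbb{P}(\bigwedge^l \RR^n)$. The $g_t$-action on $\bigwedge^l \RR^n$ diagonalises with weight spaces $W_E := \spa\{e_{i_1}\wedge\cdots\wedge e_{i_l} : \{\eta_{i_1},\ldots,\eta_{i_l}\}=E\}$ for $E \in \cI_l$, on which $g_t$ acts by $e^{t\sigma(E)}$, where $\sigma(E) := \sum_{\eta\in E}\eta$. I would represent $V$ by a blade $v \in \BL$, decompose $v = \sum_E v_E$ with $v_E \in W_E$, and set $\phi_E(t) := t\sigma(E) + \log\|v_E\|$ for $E \in S(v) := \{E : v_E \neq 0\}$. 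Since $\gr^g_E = \mathbb{P}(W_E)\cap\gr$ under Plücker, a local computation on a tubular neighbourhood of this intersection shows that $g_tV \in U_\varepsilon(\gr^g_E)$ if and only if $\phi_E(t) \ge \phi_{E'}(t) + C_\varepsilon$ for every $E' \in S(v)\setminus\{E\}$, for a constant $C_\varepsilon$ depending only on $\varepsilon$, $n$ and $l$.

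The envelope $\phi(t) := \max_{E\in S(v)}\phi_E(t)$ is piecewise-linear convex, partitioning $\RR$ into finitely many maximal intervals, each realising $\phi$ through a single $\phi_{E_k^*}$. Trimming each transition time $t_k^*$ by a neighbourhood of width at most $2C_\varepsilon/\Delta_k$, where $\Delta_k := \sigma(E_{k+1}^*) - \sigma(E_k^*) > 0$, yields the intervals $(a_k, b_k)$ on which the strong-dominance inequality above holds, hence $g_tV \in U_\varepsilon(\gr^g_{E_k^*})$. The set of slope differences $\{\sigma(E)-\sigma(E') : E,E'\in\cI_l\}\setminus\{0\}$ is a finite positive set depending only on $\Eall$ and $l$, and there are at most $|\cI_l|$ transitions, so the total trim $\sum_k(a_{k+1}-b_k)<C$ is uniformly bounded in $V$.

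The chain property $E_1\et\cdots\et E_r$ is where the blade condition enters essentially, via the quadratic Plücker relations. Suppose two consecutive dominants $E, E' \in S(v)$ were $\eteq$-incomparable. Writing $E = E(I)$, $E' = E(J)$ and partitioning $I\cup J$ into its $l$ smallest and $l$ largest indices gives a pair $(E^-, E^+)$ with $E^- \et E^+$, $E^- \eteq E,\, E' \eteq E^+$, and $\sigma(E^-)+\sigma(E^+) = \sigma(E)+\sigma(E')$. The Plücker relations, applied iteratively when $|I\triangle J|>2$, express $p_Ip_J$ as a signed sum of $O_{n,l}(1)$ exchange products including $p_{I^-}p_{I^+}$; by the pigeonhole principle some exchange pair $(\tilde E, \tilde E')$ is $\eteq$-ordered and satisfies $\|v_{\tilde E}\|\|v_{\tilde E'}\| \ge c\|v_E\|\|v_{E'}\|$ with $c>0$ depending only on $n,l$. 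The strict inequality $\sigma(\tilde E')>\sigma(E')$ then forces itself: otherwise $E'\et\tilde E'$ combined with equality of $\sigma$ would termwise collapse the sorted sequences, giving $E'=\tilde E'$ and hence $E\eteq E'$, contradicting incomparability. Consequently $\phi_{\tilde E'}$ overtakes $\phi_{E'}$ within time $O(|\log c|/(\sigma(\tilde E')-\sigma(E')))$ after the $E\to E'$ crossing, so the ``$E'$-phase'' has bounded width and may be absorbed into the trimmed transition; in the chain we replace $E'$ by $\tilde E'$. Iterating this procedure resolves all incomparabilities.

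The main obstacle is precisely this last step: controlling the Plücker exchange constants $c$ through iterated exchanges when $|I\triangle J|>2$, and ensuring that the finitely many absorbed ``incomparable phases'' across all iterations collectively remain within a uniform bound on the total trim, so that the constant $C$ in the statement can indeed be chosen independently of $V$.
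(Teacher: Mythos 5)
Your proposal takes a genuinely different route from the paper. The paper's proof introduces the quadratic forms $Q_s$ of Subsection~\ref{sub:grasmanian_theorems} and tracks the signature of $Q_s|_{g_tV}$: Lemma~\ref{lem:monotonicity} gives that $\sigma^+_s(g_tV)$ is nondecreasing and $\sigma^-_s(g_tV)$ nonincreasing, so the sets $U_E$ are intervals and the chain condition $E_1\et\cdots\et E_r$ comes \emph{free}, since $E\eteq E'$ is by definition a comparison of the counts $\#\{\eta\in E:\eta>r\}$, which is precisely what the signatures record. Your Pl\"ucker-coordinate route correctly establishes the phase decomposition and the uniform trim bound (the first two paragraphs are sound, with the ``local computation'' replaceable by compactness of $\gr$: a point of $\gr$ within $\delta$ of $\mathbb{P}(W_E)$ must be within $\varepsilon(\delta)$ of $\gr\cap\mathbb{P}(W_E)=\gr^g_E$). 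But abandoning the quadratic-form bookkeeping costs you dearly at the $\eteq$-chain property, which is where the real content of the theorem lies, and this is where the gap is.

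Concretely, your third paragraph has three problems beyond the constant-tracking you flag. (i) Writing $E=E(I)$, $E'=E(J)$ presumes the weights $\eta_i$ are pairwise distinct; when $\Eall$ has multiplicities, $v_E$ is a sum of Pl\"ucker coordinates over many index sets, and the quadratic Pl\"ucker relations constrain individual $p_I$'s rather than $\|v_E\|$, so even a single exchange step needs rethinking. (ii) The pigeonhole extracts a large exchange term $(\tilde E,\tilde E')$, but nothing in a Pl\"ucker relation guarantees that this term is $\eteq$-ordered --- exchange pairs with $(\tilde E,\tilde E')$ themselves $\eteq$-incomparable do occur --- nor that $\sigma(\tilde E')>\sigma(E')$; roughly half of the one-step swaps push $\sigma$ in the unhelpful direction, and the ``termwise collapse'' remark does not rule out the largest term being one of those. (iii) Replacing $E'$ by $\tilde E'$ in the chain can introduce new incomparabilities with the next dominant, so termination with a uniform total-width bound requires its own induction, not just that each single absorbed phase is short. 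Nor is there a shortcut: the support $S(v)$ of a blade is genuinely not a chain under $\eteq$ (take $v=e_1\wedge(e_2+e_3)\wedge(e_4+e_5)$ with suitable weights), so any argument really must engage with the upper envelope. The paper's signature argument dissolves all of this simultaneously, and is the route I would recommend over pushing through the Pl\"ucker exchange arithmetic.
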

\begin{remark}\label{rem: the good ep}
Note that in Theorem \ref{thm:lin path}, if $\varepsilon$ is large enough the theorem is vacuous. 
On the other hand, if 
\begin{align*}
\varepsilon <\varepsilon_0 := \min_{1\le l\le n-1}\min_{E_1\neq E_2\in \cI_l}d_{\gr}(\gr^g_{E_1},\gr^g_{E_2})/2,
\end{align*}
then we have a uniqueness property for the output. 
That is, if $E_1\et\cdots\et E_r\in \cI_l$, $a_1<b_1<a_2<b_2<\dots<a_r<b_r$, 
and $E_1'\et\cdots\et E_{r'}'\in \cI_l$, $a_1'<b_1'<a_2'<b_2'<\dots<a_r'<b_r'$
are two valid outputs of Theorem \ref{thm:lin path}, 
and if $(a_i,b_i)\cap (a'_j,b'_j)\neq \emptyset$ for some $1\le i\le r, 1\le j\le r'$, then $E_i=E_j'$. In particular, if $b_i-a_i>C$ then $E_i$ must be one of the $E_{j}'$. 
\end{remark}
\begin{de}[Exterior eigenvalues]
\index{etaE@$\eta_E$}\hypertarget{ec}{}
For every $E\in \cI_l$ denote $\eta_{E}:=\sum_{\eta\in E}\eta$. This sum, as all operations on multisets, counts the elements $\eta\in E$ with multiplicities. The eigenvalue of $g_t$ at an $l$-blade $w\in \BL$, which corresponds to $V\in \gr^g_{E}$, is $\exp(t\eta_{E})$. 
\end{de}
Theorem \ref{thm:lin path} admits the following,
\begin{cor}\label{cor:blade behavior}
For every $w\in \BL$ there are $-\infty=t_0<\dots<t_r=\infty$, and $E_1\et\dots\et E_r\in \cI_l$ such that 
$\log \|g_tw\| = \exp(f(t)) + O(1)$, where $f(t)$ is a piecewise linear function with $f'(t) = \eta_{E_k}$ for every $t_{k-1}<t<t_{k}$. 
\end{cor}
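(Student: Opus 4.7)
The plan is to apply Theorem~\ref{thm:lin path} to $V := \spa(w) \in \gr$, obtaining the sequence $E_1 \et \cdots \et E_r$ and intervals $(a_k, b_k)$, and then to use the $g_t$-eigenspace decomposition of $\bigwedge^l \RR^n$ to read off the norm growth of $g_t w$ on each piece. First, decompose $w = \sum_\lambda w_\lambda$ where $w_\lambda$ lies in the $\exp(t\lambda)$-eigenspace $W_\lambda \subset \bigwedge^l \RR^n$ (so $\lambda$ ranges over the distinct values $\eta_E$, $E \in \cI_l$); the orthogonal projections $P_\lambda$ commute with $g_t$, and $g_t w_\lambda = \exp(t\lambda) w_\lambda$.

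The core estimate is
\[
\bigl|\log \|g_t w\| - t\,\eta_{E_k} - \log \|w_{\eta_{E_k}}\|\bigr| = O(1), \qquad t \in (a_k, b_k).
\]
To prove it, observe that for any $V_0 \in \gr_{E_k}^g$ the blade $\bl V_0$ is a $g_t$-eigenvector with eigenvalue $\exp(t\eta_{E_k})$, so the Plücker image of $\gr_{E_k}^g$ is contained in the projectivization $\mathbb{P}(W_{\eta_{E_k}})$. Since the Plücker embedding is Lipschitz on the compact Grassmannian, the hypothesis $g_t V \in U_\varepsilon(\gr_{E_k}^g)$ forces $g_t w / \|g_t w\|$ within some $\delta(\varepsilon)$ of $\mathbb{P}(W_{\eta_{E_k}})$, whence $\|P_{\eta_{E_k}} g_t w\| \ge (1 - \delta(\varepsilon)) \|g_t w\|$. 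Using $P_{\eta_{E_k}} g_t w = \exp(t\eta_{E_k}) w_{\eta_{E_k}}$ and the trivial inequality $\|g_t w\| \ge \|P_{\eta_{E_k}} g_t w\|$ yields the two-sided bound.

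Finally, set $f(t) := \max_{1 \le k \le r}\bigl(t\,\eta_{E_k} + \log \|w_{\eta_{E_k}}\|\bigr)$, a convex piecewise linear function with increasing slopes $\eta_{E_1} < \cdots < \eta_{E_r}$ and breakpoints $t_1 < \cdots < t_{r-1}$ at the line intersections. The core estimate gives $|\log \|g_t w\| - f(t)| = O(1)$ on each $(a_k, b_k)$, and the dominance it provides forces each $t_k$ to lie in the transition interval $(b_k, a_{k+1})$ (up to $O(1)$). Since the total length of the transition intervals is bounded by $C$, both $\log \|g_t w\|$ and $f$ change by at most $O(1)$ on them, and the bound extends to all of $\RR$.

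The main obstacle is the quantitative passage from Grassmannian closeness to eigenspace dominance in $\bigwedge^l \RR^n$---the one place where the Plücker embedding interacts nontrivially with the decomposition $\bigwedge^l \RR^n = \bigoplus_\lambda W_\lambda$. Everything else reduces to elementary piecewise-linear bookkeeping.
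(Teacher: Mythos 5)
Your argument is correct, and it supplies the natural proof of a statement the paper presents as an immediate consequence of Theorem~\ref{thm:lin path} with no proof of its own. The key steps — decomposing $w$ into eigenvectors $w_\lambda$ of $g_t$ on $\bigwedge^l\RR^n$, noting that the Pl\"ucker image of $\gr^g_{E}$ lands in $\mathbb{P}(W_{\eta_E})$, and using the $\varepsilon$-closeness from Theorem~\ref{thm:lin path} (together with bi-Lipschitzness of the Pl\"ucker embedding on a compact Grassmannian) to get $\|P_{\eta_{E_k}}g_tw\| = \Theta(\|g_tw\|)$ on each $(a_k,b_k)$ — are all sound, as is the final extension across the transition intervals using the Lipschitz bound on $\log\|g_tw\|$ and the total gap bound $C$. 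This dovetails with the paper's own (unproved) Example immediately following the corollary, which constructs a vector realizing a prescribed convex piecewise-linear $f$ by inverting exactly this decomposition. One small remark: as written the corollary says $\log\|g_tw\| = \exp(f(t)) + O(1)$, which is a typo for $\log\|g_tw\| = f(t) + O(1)$ (cf.\ the Example right after it and Eq.~\eqref{eq:tilde f small close to hn} in the proof of Theorem~\ref{thm:Lattice approx}); your proof, correctly, establishes the latter. Also, if a line $t\eta_{E_j}+\log\|w_{\eta_{E_j}}\|$ never attains the max you should drop $E_j$ from the chain before declaring the final sequence $E_1\et\cdots\et E_r$, but since $\et$ is transitive this is cosmetic.
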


% subsection dynamics_of_linear_spaces (end)

\begin{ex}[Existence of a blade given a function]
Denote by $k$ the number of different values of $\eta$ in $\Eall$, and denote them by $\bar\eta_1<\bar\eta_2<\dots<\bar\eta_{k}$. 
Let $-\infty=t_0\le t_1\le \dots\le t_k=\infty$ and let $f:\RR \to \RR$ be a convex piecewise-linear function, such that $df/dt (t) = \eta_r$ for $t_{r-1}<t<t_{r}$. 
We will construct a vector $v\in \RR^n$ such that $\log \|g_t v\| = f(t)+O(1)$ for every $t$. 

Denote by $V_\eta$ the $\exp(\eta t)$-eigenspace of $g_t$. 
\index{V:$V_\eta$}\hypertarget{ed}{}
Fix $v_\eta\in V_{\eta}$ with $\|v_\eta\| = 1$.
Note that for some constants $(a_\eta)_{\eta \in \Eall} \subset \{-\infty\}\cup \RR$ we have $f(t) = \max_{\eta\in \Eall} (t\eta+a_\eta)$. 
Therefore, the vector $\sum_{\eta\in \Eall}\exp (a_\eta) v_\eta$ enjoys the desired property. 

If $C>0$ is large and $t_{r-1}+C\le t\le t_r-C$ then the only significant summand in 
$g_tv$ is $a_{\eta_r} g_t v_{\eta_r}$, and hence 
$g_tv\cdot \RR\in U_{\varepsilon}(\grl1^g_{\{\eta_r\}}$.

By wedging such vectors we can obtain similar results for spaces and blades with $l>1$.
\end{ex}

\subsection{A \texorpdfstring{$g$}{g}-Template} % (fold)
\label{sub:a_g_template}
In this section we describe information that is given in a trajectory $g_t \Lambda$ up to equivalence.
Though for most choices of $\Eall$ the information on the trajectory that we will provide will be equivalent to the equivalence relation on trajectories, sometimes this information will be stronger than the equivalence relation.
We will keep track of some of the information provided by the Harder-Narasimhan filtration $\HN(g_t \Lambda)$, and the direction of its linear spaces.

We wish to encode the ``behavior of $\HN(g_t \Lambda)$'' for  $t\ge 0$. 
For every $\Gamma\subseteq \Lambda$ of rank $l$ there is a (possibly empty) set $U_\Gamma\subseteq \RR$ such that $g_t\Gamma = \HN(g_t \Lambda)_{\Gamma, l}$ for all $t\in U_\Gamma$. It will follow that $U_\Gamma$ is approximately an interval. 

Inspired by Theorem \ref{thm:lin path}, we will encode the behavior of $g_t\Gamma$ for $t\in U_\Gamma$ with multisets $E\in \cI_l$. The partially ordered structure on $\cI_l$ will not suffice, as $\HN(g_t \Lambda)$ might not contain a rank-$l$ element, and $\HN(g_t \Lambda)_{V,l}$ can return to the same $U_\varepsilon(\gr_E^g)$ many times.

% there is not always an $l$ dimensional space in $\HN(g_t \Lambda)$. 
It will be convenient to extend $\cI_l$ to a category, though we will use only the combinatorial structure of a category and not the whole power of the theory. See Example \ref{ex:category flow tachles} for an equivalent formulation of the concepts without resorting to categories.
% Fellow non-algebraists - don't be frightened, this is just the combinatorial structure of a category that we seek, nothing stronger. 

\begin{de}
Let $1\le l\le n-1$.
\index{#@$*$}\hypertarget{ee}{}
\index{I_star@$\cI_l^*$}\hypertarget{ef}{}
Define a category $\cI_l^*$ whose objects are $\cI_l\cup \{*\}$, where $*$ is the null object, and which has two types of arrows. 
\begin{itemize}
  \item
\index{Transition arrow}\hypertarget{eg}{}
\emph{Transition type arrows:} for $a, b\in \cI_l$ we have a transition arrow $a\xrightarrow{\eteq} b$ if $a\eteq b$. The composition of transition tow arrows $(b\xrightarrow\eteq c)\circ (a\xrightarrow\eteq b)$ is the transition arrow $a\xrightarrow\eteq c$. 
For every $a\in \cI_l$ the arrow $a\xrightarrow{\eteq} a$ is the identity arrow.
\index{Null arrow}\hypertarget{eh}{}
\item \emph{Null type arrows:} for every $a,b\in \cI_l^*$ we have a null arrow $a\nula b$. The composition of a null arrow with any arrow is a null arrow. 
\end{itemize}
By convention, for $l\in \{0,n\}$ we denote $\cI_l^* := \cI_l$, the unique arrow is the identity, the single element in $\cI_l^*$ is the null element.
\end{de}
The objects of the category $\cI_l^*$ will be used to classify the different phases of the $l$-dimensional spaces in $\HN(g_t\Lambda)$ for various $t$. 
The object $*$ indicates there is no $l$-dimensional space in the flag, and any multiset $E$ indicates that $\HN(g_t\Lambda)_{V,l}$ exists and is near $\gr^g_E$. 
Morphisms in the category $\cI_l^*$ classify which phase of the $l$-dimensional spaces in $\HN(g_t\Lambda)$ can transform to which one as $t$ increases.
% Transition arrows correspond to a linear space changing the $g_t$-invariant linear space it is close to, and the null arrows correspond to replacing a linear space in the Harder-Narasimhan filtration. 
\begin{de}
Extend each direction filtration $E_\bullet$ by setting $E_l=*$ whenever $E_l$ was not defined previously.

Define the category $\cI_\bullet^*$ whose objects are all the direction filtrations $E_\bullet$, and each arrow between $E_\bullet$ and $E_\bullet'$ is a collection of $n+1$ arrows $(E_l \to E_l')_{l=0,\dots,n}$.
\index{Istarbul@$\cI_\bullet^*$}\hypertarget{ei}{}
We say that the empty filtration $\emptyset\subseteq \Eall$ is the null element of $\cI_\bullet^*$, and an arrow $E_\bullet\to E_\bullet'$ in $\cI_\bullet^*$ is null if the induced arrow $E_l\to E_l'$ is null for every $l=1,...,n-1$.
\end{de}
\begin{de}
Let $C$ be a category with a null element.
\index{Category flow}\hypertarget{ej}{}
For every interval $I\subseteq \RR$, a \emph{category flow} $f: I\to C$ is a locally constant map $f:I\setminus T\to C$ for a locally finite set $T\subseteq I$ together with the choice of a non-identity arrow $f_-(t)\to f_+(t)$ for every point $t\in T$, where $f_\pm(t) = \lim_{\varepsilon\searrow 0} f(t\pm \varepsilon)$. 
\index{Null point}\hypertarget{fa}{}
A point $t\in I$ is called a \emph{null point} if 
\begin{itemize}
\item $t\in T$ and $f_-(t)\to f_+(t)$ is the null arrow, or
\item $f(t)$ is the null object.
\end{itemize}
\index{Nontrivial point}\hypertarget{fb}{}
All the other points are called \emph{nontrivial points}.

\index{irregular point}\hypertarget{fc}{}
Points in $T$ are called \emph{irregular point}, and the other are called \emph{regular points}.
\end{de}
\begin{obs}
Let $I\subset \RR$ be an interval and let $0\le l \le n$. We can restrict every category flow $f_{E,\bullet}:I_\bullet^*$ to its $l$-th entry and obtain a category flow $f_{E,l}: I\to \cI_l^*$. 
\end{obs}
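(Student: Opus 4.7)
The plan is straightforward bookkeeping: the projection onto the $l$-th coordinate is compatible with the category structure, and we only need to throw away those irregular points at which the $l$-th component of the arrow happens to be the identity.

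More precisely, suppose $f_{E,\bullet}: I\to \cI_\bullet^*$ is a category flow with irregular set $T\subseteq I$ (locally finite) and, for each $t\in T$, a chosen non-identity arrow $f_{E,\bullet,-}(t)\to f_{E,\bullet,+}(t)$ with $l$-th component $\alpha_l(t): f_{E,\bullet,-}(t)_l\to f_{E,\bullet,+}(t)_l$ in $\cI_l^*$. Define
\[
T_l := \{t\in T : \alpha_l(t)\text{ is not the identity arrow}\} \subseteq T,
\]
which is again locally finite. On $I\setminus T_l$ set $f_{E,l}(t) := f_{E,\bullet}(t)_l$; this is well-defined because at a point $t\in T\setminus T_l$ the $l$-th component does not jump, so the two one-sided limits of $f_{E,\bullet}(\cdot)_l$ at $t$ coincide. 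At each $t\in T_l$ assign the non-identity arrow $\alpha_l(t)$.

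Next I would verify the two conditions of a category flow. Local constancy on $I\setminus T_l$ follows because $f_{E,\bullet}$ is locally constant on $I\setminus T$, and across points of $T\setminus T_l$ the $l$-th entry is unchanged, so $f_{E,l}$ extends continuously across those deleted points and remains locally constant on the larger set $I\setminus T_l$. The choice of a non-identity arrow at each $t\in T_l$ is exactly $\alpha_l(t)$, which is non-identity by the very definition of $T_l$. Finally the one-sided limits $f_{E,l,\pm}(t)$ agree with $f_{E,\bullet,\pm}(t)_l$, matching the source and target of $\alpha_l(t)$.

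The only point requiring a small argument is the compatibility remark just made: a non-identity arrow in $\cI_\bullet^*$ need not be non-identity in every coordinate, so one really does have to discard points of $T\setminus T_l$; but once one does, nothing further is at issue. Hence $f_{E,l}: I\to \cI_l^*$ is a category flow.
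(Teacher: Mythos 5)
The paper states this as an Observation and supplies no explicit proof, treating the verification as routine. Your proposal is exactly the routine verification one would write out: project to the $l$-th entry, delete from the irregular set those points where the $l$-th component of the chosen arrow is the identity, note that the resulting $T_l\subseteq T$ is still locally finite, and check that the projected map extends to a locally constant map on $I\setminus T_l$ with non-identity arrows at $T_l$. Your proof is correct, and you rightly flag the one non-trivial point — that an arrow in $\cI_\bullet^*$ can be non-identity overall while being the identity in the $l$-th coordinate, which forces you to shrink $T$ to $T_l$ rather than keep it as is. This matches the intended content of the observation.
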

\begin{ex}\label{ex:category flow tachles}
Let $I=(A,B)$ be an open interval. $A,B$ may be $\pm\infty$. A category flow $f:I\to \cI_l^*$ contains the following information:
\begin{itemize}
\item 
A collection of disjoint intervals $((a_i,b_i))_{i\in Z}$ in $I$, where $Z\subseteq \ZZ$ is a possibly infinite collection of successive integers, with $b_i\le a_{i+1}$ for every tuple of consecutive integers $i,i+1\in Z$. 
In addition, the set $\{a_i, b_i:i\in Z\}$ is required to be locally finite.
\item
A finite set $T_i = \{a_i = c_{i,0}<c_{i,1}<c_{i,2}<\cdots < c_{i,r_i}=b_i\}$ for every $i\in Z$
\item 
A sequence of multisets $E_{i,1}\et E_{i,2} \et \cdots \et E_{i,r_i}$ for every $i\in Z$.
\end{itemize}
Denote \[T := \bigcup_{i\in Z}T_i.\]
Define a category flow by 
\[f(t) = \begin{cases}
E_{i,j},&\text{if }c_{i,j-1}<t<c_{i,j},\\
*,&\text{if }t\in I\setminus \bigcup_{i\in Z}[a_i,b_i],
\end{cases}\]
and the connecting morphisms are $E_{i,j-1}\xrightarrow{\eteq} E_{i,j}$ at $t=c_{i,j}$ for every $i\in Z, 1\le j<r_i$, and the null morphism for $t=a_i,b_i$ for $i\in Z$, provided that $t\in I$ (as opposed to $t$ being an endpoint of $I$).
This is a complete description of a category flow $f:I\to \cI_l^*$.
\end{ex}
We now define our encoding for $g_t$-trajectories, which are termed $g$-templates, and play a crucial role in this paper. 
\begin{de}[$g$-template]\label{de: template}
\index{gtemplate@$g$-template}\hypertarget{fd}{}
\index{Template}\hypertarget{fe}{}
\index{f@$f$}\hypertarget{ff}{}
\index{f@$f$!$f_{E,\bullet}$}\hypertarget{fg}{}
\index{f@$f$!$f_{H,\bullet}$}\hypertarget{fh}{}
A \emph{$g$-template} $f$ over an interval $I\subseteq \RR$ is a pair of a category flow 
$f_{E,\bullet}:I\to \cI_\bullet^*$ and a continuous height sequence-valued map 
$t\mapsto f_{H,\bullet}(t)$
 such that 
\begin{enumerate}
  % \item \label{prop:Everywhere flag} For every $t\in I$ the sequence $f_H(t): 0,f_{H,1}(t),\dots,f_{H, n-1}, 0$ form a flag-weights. 
  \item \label{prop:compatibility} 
  For every  $t\in I$ that is regular for $f_{E, \bullet}(t)$, the tuple $(f_{E, \bullet}(t), f_{H, \bullet}(t))$ forms a direction filtration with heights.
  \item \label{prop:Derivative}
  For every $1\le l < n$ and every $t\in I$ that is regular and not a null point for $f_{E, l}$,
\begin{align}\label{eq:diff eq}
  \eta_{f_{E,l}(t)} = \frac{d}{dt}f_{H, l}(t).
\end{align}
  
  % \item \label{prop:trivial when null}
  % For every $1\le l < n$ and $t\in I$ which is a null point for $f_{E, l}$ we get that $l$ is a trivial point at the flag-weights $f_H(t)$.
\end{enumerate}
Note that each $f_{E,l}:I\to \cI_l$ is a category flow.
\end{de}
\begin{de}[Nontriviality intervals]\label{de: U_l}
Let $f$ be a $g$-template on an interval $I$.
\index{Ulf@$U_l(f)$}\hypertarget{fi}{}
\index{Nontriviality interval}\hypertarget{fj}{}
Let $U_l(f)$ denote the set of nontrivial points of $f_{E, l}$, and set $U_0(f) = U_n(f) := I$. The set $U_l(f)$ is a disjoint union of open intervals (as every open subset of $\RR$), which will be referred to as the \emph{nontriviality intervals of $f$ at $l$}. 

Define the set of all nontriviality intervals by 
\index{Gf@$\cG_f$}\hypertarget{ga}{}
\[\cG_f:=\bigsqcup_{l=0}^{n} \pi_0(U_l(f)),\]
where $\pi_0(U_l(f))$ is the set of nontriviality intervals of $f$ at $l$.
\end{de}
\begin{remark}
We have already defined the concepts of nontrivial places of a height sequence, and nontrivial points of a $g$-template. They match: $l$ is a nontrivial place of $f_{H,\bullet}(t)$ if and only if $t$ is a nontrivial point of $f_{E,l}$. 
\end{remark}
\begin{ex}\label{ex: a g template}
In Figure \ref{fig: a g template} we present a $g$-template with $\Eall = \{-1.2, 0.5, 0.7\}$. 
The nontriviality intervals are depicted as solid lines and the parts outside them as dashed lines.

Figure \ref{fig: cross-sections} shows two cross-sections of $f$ at the three points $t_0, t_1$, and  $t_2$. 
Note that the value of $f_{H,2}(t_0)$ is defined by $f_{H,1}(t_0)$ and convexity. Similarly, $f_{H,1}(t_0)$ is defined by $f_{H,1}(t_0)$ and convexity.
\begin{figure}[ht]
\caption{A $g$-template.}
\label{fig: a g template}
\begin{tikzpicture}[line cap=round,line join=round,>=triangle 45,x=1.0cm,y=1.0cm]

\draw [color=black] (6.45, -1.4) node {$f_{H, 1}$};
\draw [color=redcolor] (6.45, -2.7) node {$f_{H, 2}$};
\clip(-6,-7) rectangle (6,.8);

\draw [color=black] (-5.5, -5.2) node {$f_{E, 1}:$};
\draw [color=redcolor] (-5.5, -6.2) node {$f_{E, 2}:$};

\draw [line width=1.pt, dotted] (-3.5, -.8) -- (-3.5,.2) node[above] {$t_0$};
\draw [line width=1.pt, dotted] (-1, -2.2) -- (-1,.2) node[above] {$t_1$};
\draw [line width=1.pt, dotted] (.1, -2.97) -- (.1,.2) node[above] {$t_2$};

\draw [line width=2.pt, dash pattern=on 5pt off 5pt,] (-7.0,0.03) -- (-4,0.03);
\draw [line width=2.pt, dash pattern=on 5pt off 5pt,color=redcolor] (-7.2,-.03) -- (-4,-.03);
\draw [line width=1.pt, -to] (-7,0) -- (5.5,0) node[above] {$t$};

\draw [line width=2.pt, ] (-4,0.0) -- (-2,-2.4);
\draw [line width=2.pt, ] (-2,-2.4) -- (0.05882352941176476,-1.3705882352941174);
\draw [line width=2.pt, ] (1,-1.6999999999999997) -- (2,-2.8999999999999995);
\draw [line width=2.pt, ] (2,-2.899999999999999) -- (2.8947368421052624,-2.2736842105263153);
\draw [line width=2.pt, color=redcolor,] (-3,-0.6000000000000001) -- (2,-4.1);
\draw [line width=2.pt, color=redcolor,] (2,-4.1) -- (4,-5.1);
\draw [line width=2.pt, color=redcolor,] (4,-5.099999999999999) -- (7,-1.4999999999999982);

\draw [line width=2.pt, dash pattern=on 5pt off 5pt,] (0.05882352941176476,-1.3705882352941174) -- (1,-1.6999999999999997);
\draw [line width=2.pt, dash pattern=on 5pt off 5pt,] (2.8947368421052624,-2.2736842105263153) -- (4.0,-2.5499999999999994);
\draw [line width=2.pt, dash pattern=on 5pt off 5pt,] (4.0,-2.549999999999999) -- (7.0,-0.7499999999999982);
\draw [line width=2.pt, dash pattern=on 5pt off 5pt,color=redcolor,] (-4.0,0.0) -- (-3,-0.6000000000000001);
\begin{scriptsize}

\end{scriptsize}

\draw [line width=2.pt, dash pattern=on 5pt off 5pt,] (-7,-5.5) -- (-4,-5.5);
\draw[color=black] (-4.5, -5.2) node {$*$};
\draw [line width=2.pt, ] (-4,-5.5) -- (-2,-5.5);
\draw[color=black] (-3.0, -5.2) node {$\{-1.2\}$};
\draw [line width=2.pt, ] (-2,-5.5) -- (0.05882352941176476,-5.5);
\draw[color=black] (-0.9705882352941176, -5.2) node {$\{0.5\}$};
\draw [line width=2.pt, dash pattern=on 5pt off 5pt,] (0.05882352941176476,-5.5) -- (1,-5.5);
\draw[color=black] (0.5294117647058824, -5.2) node {$*$};
\draw [line width=2.pt, ] (1,-5.5) -- (2,-5.5);
\draw[color=black] (1.4, -5.2) node {$\{-1.2\}$};
\draw [line width=2.pt, ] (2,-5.5) -- (2.8947368421052624,-5.5);
\draw[color=black] (2.5, -5.2) node {$\{0.7\}$};
\draw [line width=2.pt, dash pattern=on 5pt off 5pt,] (2.8947368421052624,-5.5) -- (7,-5.5);
\draw[color=black] (4.947368421052631, -5.2) node {$*$};
\draw [line width=2.pt, color=black] (-7,-5.6) -- (-7,-5.4); %POINT2
\draw [line width=2.pt, color=black] (-4,-5.6) -- (-4,-5.4); %POINT2
\draw [line width=2.pt, color=black] (-2,-5.6) -- (-2,-5.4); %POINT2
\draw [line width=2.pt, color=black] (0.05882352941176476,-5.6) -- (0.05882352941176476,-5.4); %POINT2
\draw [line width=2.pt, color=black] (1,-5.6) -- (1,-5.4); %POINT2
\draw [line width=2.pt, color=black] (2,-5.6) -- (2,-5.4); %POINT2
\draw [line width=2.pt, color=black] (2.8947368421052624,-5.6) -- (2.8947368421052624,-5.4); %POINT2
\draw [line width=2.pt, color=black] (7,-5.6) -- (7,-5.4); %POINT2
\draw [line width=2.pt, dash pattern=on 5pt off 5pt,color=redcolor,] (-7,-6.5) -- (-3,-6.5);
\draw[color=redcolor] (-4.5, -6.2) node {$*$};
\draw [line width=2.pt, color=redcolor,] (-3,-6.5) -- (2,-6.5);
\draw[color=redcolor] (-0.5, -6.2) node {$\{-1.2, 0.5\}$};
\draw [line width=2.pt, color=redcolor,] (2,-6.5) -- (4,-6.5);
\draw[color=redcolor] (3.0, -6.2) node {$\{-1.2, 0.7\}$};
\draw [line width=2.pt, color=redcolor,] (4,-6.5) -- (7,-6.5);
\draw[color=redcolor] (5, -6.2) node {$\{0.5, 0.7\}$};
\draw [line width=2.pt, color=redcolor] (-7,-6.6) -- (-7,-6.4); %POINT2
\draw [line width=2.pt, color=redcolor] (-3,-6.6) -- (-3,-6.4); %POINT2
\draw [line width=2.pt, color=redcolor] (2,-6.6) -- (2,-6.4); %POINT2
\draw [line width=2.pt, color=redcolor] (4,-6.6) -- (4,-6.4); %POINT2
\draw [line width=2.pt, color=redcolor] (7,-6.6) -- (7,-6.4); %POINT2

%[Finished in 0.1s]

% \draw [color=black] (-4.8,-2.11) node {$f$};
\end{tikzpicture}
\end{figure}

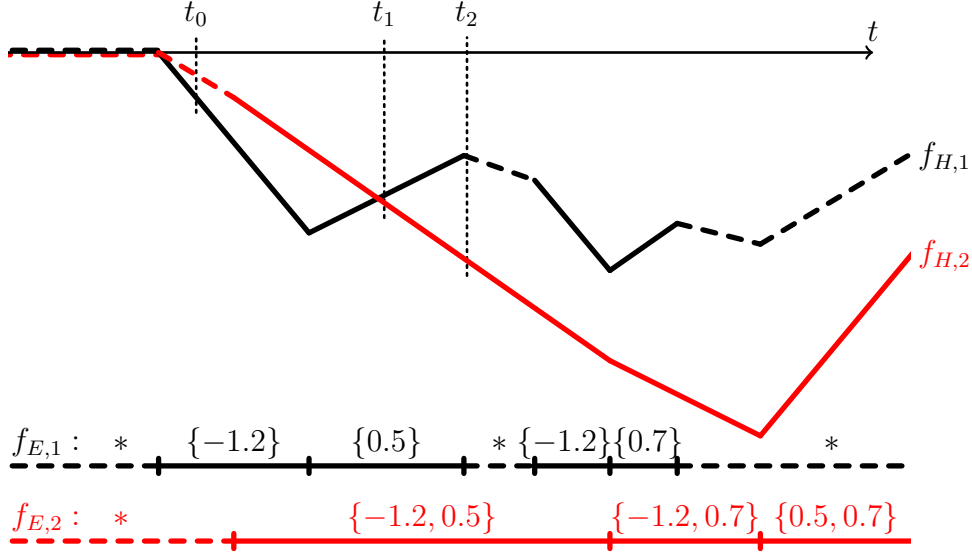
\begin{figure}[ht]
\caption{The cross-sections.}
\label{fig: cross-sections}
\begin{tikzpicture}[line cap=round,line join=round,>=triangle 45,x=1.0cm,y=1.0cm]

\clip(-6.2,-3.3) rectangle (6.2,0.6);

\draw [line width=1pt, dotted] (-6.0,0.0) -- (-5.0,-0.5999999999999996);
\draw [line width=1pt, dotted] (-5.0,-0.5999999999999996) -- (-4.0,-0.2999999999999998);
\draw [line width=1pt, dotted] (-4.0,-0.2999999999999998) -- (-3.0,0.0);

    \draw[->] (-6.0, 0.2) -- (-6.0,-0.8999999999999997);
    \draw (-5.6, -.9) node[below] {$f_{H, l}(t_0)$};
    \draw[->] (-6.2,0) -- (-2.7, 0)node[below] {$l$};\draw[shift={(-5.0, 0)}] (0pt,2pt) -- (0pt,-2pt)node[above] {$1$};
\draw[shift={(-4.0, 0)}] (0pt,2pt) -- (0pt,-2pt)node[above] {$2$};
\draw[shift={(-3.0, 0)}] (0pt,2pt) -- (0pt,-2pt)node[above] {$3$};
% \clip(-6.3,-1.5999999999999996) rectangle (-2.6,0.3);
\draw [fill=black, color=black] (-6.0,0.0) circle (1.7pt); %POINT1
\draw [fill=black, color=black] (-5.0,-0.5999999999999996) circle (1.7pt); %POINT1
\draw [color=redcolor] (-4.0,-0.2999999999999998) circle (1.7pt); %POINT1
\draw [fill=black, color=black] (-3.0,0.0) circle (1.7pt); %POINT1
\draw [line width=1pt, dotted] (-1.5,0.0) -- (-0.5,-1.8999999999999997);
\draw [line width=1pt, dotted] (-0.5,-1.9) -- (0.5,-1.9999999999999996);
\draw [line width=1pt, dotted] (0.5,-1.9999999999999996) -- (1.5,0.0);

    \draw[->] (-1.5, 0.2) -- (-1.5,-2.3) node[right] {$f_{H, l}(t_1)$};
    \draw[->] (-1.7,0) -- (1.8, 0)node[below] {$l$};\draw[shift={(-0.5, 0)}] (0pt,2pt) -- (0pt,-2pt)node[above] {$1$};
\draw[shift={(0.5, 0)}] (0pt,2pt) -- (0pt,-2pt)node[above] {$2$};
\draw[shift={(1.5, 0)}] (0pt,2pt) -- (0pt,-2pt)node[above] {$3$};
% \clip(-1.8,-3.0) rectangle (1.9,0.3);
\draw [fill=black, color=black] (-1.5,0) circle (1.7pt); %POINT1
\draw [fill=black, color=black] (-0.5,-1.9) circle (1.7pt); %POINT1
\draw [fill=redcolor, color=redcolor] (0.5,-1.9999999999999998) circle (1.7pt); %POINT1
\draw [fill=black, color=black] (1.5,0) circle (1.7pt); %POINT1
\draw [line width=1pt, dotted] (3.0,0.0) -- (4.0,-1.3849999999999998);
\draw [line width=1pt, dotted] (4.0,-1.3849999999999998) -- (5.0,-2.7699999999999996);
\draw [line width=1pt, dotted] (5.0,-2.7699999999999996) -- (6.0,0.0);

    \draw[->] (3.0, 0.2) -- (3.0,-3.0699999999999994) node[right] {$f_{H, l}(t_2)$};
    \draw[->] (2.8,0) -- (6.3, 0)node[below] {$l$};\draw[shift={(4.0, 0)}] (0pt,2pt) -- (0pt,-2pt)node[above] {$1$};
\draw[shift={(5.0, 0)}] (0pt,2pt) -- (0pt,-2pt)node[above] {$2$};
\draw[shift={(6.0, 0)}] (0pt,2pt) -- (0pt,-2pt)node[above] {$3$};
% \clip(2.7,-3.7699999999999996) rectangle (6.4,0.3);
\draw [fill=black, color=black] (3.0,0.0) circle (1.7pt); %POINT1
\draw [color=black] (4.0,-1.3849999999999998) circle (1.7pt); %POINT1
\draw [fill=redcolor, color=redcolor] (5.0,-2.7699999999999996) circle (1.7pt); %POINT1
\draw [fill=black, color=black] (6.0,0.0) circle (1.7pt); %POINT1

\end{tikzpicture}
\end{figure}
\end{ex}
\begin{de}[Equivalence of $g$-templates]
We say that two $g$-templates $f,f'$ over $I\subseteq \RR$ are \emph{$C$-close} if \begin{enumerate}
    \item \label{cond: partial^2 bound}For every $0\le l\le n$ and $t\in I$ we have $|\partial^2 f_{H, l}(t)-\partial^2 f_{H, l}'(t)|<C$.
    \item For every subinterval $[a,b]\subseteq I$ of length $|a-b| \ge 2C$ such that $\partial^2 f_{H, l}(t) \ge C$ and $f_{E, l}|_{[a,b]} \equiv E\in \cI_l$, we have $f_{E, l}'|_{[a+C, b-C]} \equiv E$. We require also the symmetric claim when swapping $f$ and $f'$. 
  \end{enumerate}
The concept of $C$-closeness defines a metric on the set of $g$-templates. We say that two $g$-templates are equivalent if they are $C$-close for some $C$. 
\end{de}

\begin{remark}\label{rem:bound}
Condition \ref{cond: partial^2 bound} is equivalent to bounding the supremum $|f_{H, l}(t) - f_{H, l}'(t)|$, with a different constant. We define it as in Condition \ref{cond: partial^2 bound} for technical reasons.
\end{remark}

$g$-templates are related to $g_t$-trajectories by the following definition.
\index{fLambda@$f^\Lambda$}\hypertarget{gb}{}
\begin{de}[$g$-template matching a lattice]\label{de:f Lambda matching template}
Let $\varepsilon_0$ be as in Remark \ref{rem: the good ep}, $0< \varepsilon <\varepsilon_0$, $C>0$ and let $I\subseteq \RR$ be an interval.
We say that a lattice $\Lambda \in X_n$ \emph{$(\varepsilon, C)$-matches} a $g$-template $f$ on $I$ if the following two conditions are satisfied.
\begin{enumerate}
  \item \label{cond: template height}
For every $t\in I$ and $1\le l\le n-1$, we have
\[\left|\HN(g_t \Lambda)_{H,l} - f_{H, l}(t)\right| < C.\]
\item\label{cond: template direction} Let $1\le l\le n-1$ and $[a,b]\subseteq I$ be an interval such that for every $t\in [a,b]$ we have $f_{E,l}(t) = E\in \cI_l$ and $\partial^2f_{H, l}(t)\ge C$. Then for every $t\in [a+C,b-C]$, we have $\HN(g_t\Lambda)_{V,l} \in U_\varepsilon(\gr^g_E)$.
\end{enumerate}
% TODO: Check compatibility of the definition.
% Moreover, this $g$-template is unique up to $g$-template equivalence. 
Note that such a template is unique up to $g$-template equivalence. 
If such a template exists for $I=\RR$ we denote it by $f^\Lambda$.
\end{de}
\begin{obs}
By the uniqueness property observed in Remark \ref{rem: the good ep}, if $\Lambda$ $(\varepsilon,C)$-matches $f$, then for every $\varepsilon'<\varepsilon$ there is $C'>0$ such that $\Lambda$ $(\varepsilon',C')$-matches $f$. 
\end{obs}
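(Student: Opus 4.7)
Set $C' := C + K$, where $K = K(\varepsilon')$ depends on $\varepsilon'$ and will be fixed below. Condition~\ref{cond: template height} of $(\varepsilon',C')$-matching is immediate, since $C' \ge C$ and that bound does not involve $\varepsilon$. For condition~\ref{cond: template direction}, fix $[a,b]\subseteq I$ with $f_{E,l}|_{[a,b]}\equiv E$ and $\partial^2 f_{H,l}\ge C'$, and fix $t\in[a+C',b-C']$; the goal is $\HN(g_t\Lambda)_{V,l}\in U_{\varepsilon'}(\gr_E^g)$. Because $C'\ge C$, the given $(\varepsilon,C)$-matching provides $\HN(g_s\Lambda)_{V,l}\in U_\varepsilon(\gr_E^g)$ for every $s\in[a+C,b-C]\supseteq[t-K,t+K]$.

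The pulled-back rank-$l$ HN sublattice $g_{-s}\HN(g_s\Lambda)_{\Gamma,l}\subseteq\Lambda$ is piecewise constant in $s$ and assumes only finitely many values $\Lambda_1,\dots,\Lambda_m\subseteq\Lambda$ on $[a+C,b-C]$ (the covolumes of these sublattices are uniformly bounded via condition~\ref{cond: template height}, and only finitely many sublattices of $\Lambda$ have covolume below any fixed threshold). For each $i$, apply Theorem~\ref{thm:lin path} to $\spa\Lambda_i$ at both scales $\varepsilon,\varepsilon'<\varepsilon_0$; by Remark~\ref{rem: the good ep} the multisets of the two resulting decompositions agree on overlapping intervals. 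On the activation subinterval $T_i:=\{s\in[a+C,b-C]:g_{-s}\HN(g_s\Lambda)_{\Gamma,l}=\Lambda_i\}$ one has $g_s\spa\Lambda_i=\HN(g_s\Lambda)_{V,l}\in U_\varepsilon(\gr_E^g)$, so $T_i$ is contained in the unique $\varepsilon$-interval $(\alpha_i,\beta_i)$ of $\spa\Lambda_i$'s orbit carrying multiset $E$; the nested $\varepsilon'$-interval $(\alpha_i',\beta_i')\subseteq(\alpha_i,\beta_i)$ with the same multiset satisfies $\alpha_i'-\alpha_i,\beta_i-\beta_i'\le K_0(\varepsilon')$ (the Theorem~\ref{thm:lin path} gap bound at scale $\varepsilon'$) and carries $g_s\spa\Lambda_i\in U_{\varepsilon'}(\gr_E^g)$. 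Hence the ``bad'' times $s\in T_i$ with $\HN(g_s\Lambda)_{V,l}\notin U_{\varepsilon'}(\gr_E^g)$ lie in $T_i\setminus(\alpha_i',\beta_i')$, of total length at most $2K_0(\varepsilon')$ per sublattice.

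\textbf{The main obstacle} is to show that, for $K$ chosen appropriately, the union over $i=1,\dots,m$ of these bad sets is disjoint from $[a+C',b-C']$. Each bad subset is adjacent to an endpoint of $T_i$, which is either an endpoint of $[a+C,b-C]$ (absorbed by taking $K\ge 2mK_0(\varepsilon')$ to buffer the boundary of the full interval) or an interior sublattice-transition time $s^*$ between some $\Lambda_i$ and $\Lambda_j$, at which both $g_{s^*}\spa\Lambda_i$ and $g_{s^*}\spa\Lambda_j$ lie in $U_\varepsilon(\gr_E^g)$. The delicate bookkeeping is to rule out new bad regions forming deep inside $[a+C,b-C]$ at such transitions: applying Remark~\ref{rem: the good ep} simultaneously to $\spa\Lambda_i$ and $\spa\Lambda_j$ at $s^*$ forces both $\varepsilon'$-intervals with multiset $E$ to extend past $s^*$ in the appropriate direction, so that the bad portions bordering $s^*$ cancel across the transition rather than accumulating. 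Combining these two bounds gives a suitable $K=O(mK_0(\varepsilon'))$, completing the $(\varepsilon',C')$-matching.
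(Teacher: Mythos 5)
Your proposal correctly identifies the real subtlety: the rank-$l$ Harder--Narasimhan sublattice can change across $[a+C,b-C]$, so one cannot directly apply Theorem~\ref{thm:lin path} to a single orbit $g_s\spa\Gamma$. But your handling of this has two gaps. First, the ``cancellation at transitions'' step is not justified. Remark~\ref{rem: the good ep} asserts uniqueness of the decomposition of Theorem~\ref{thm:lin path} for a \emph{single} $V\in\gr$; it gives no relation between the $\varepsilon'$-intervals of two \emph{different} subspaces $\spa\Lambda_i$ and $\spa\Lambda_j$ near a transition time $s^*$. The hypothesis $g_{s^*}\spa\Lambda_i\in U_\varepsilon(\gr_E^g)$ does not put $g_{s^*}\spa\Lambda_i$ in $U_{\varepsilon'}(\gr_E^g)$, so the bad portion of $T_i$ bordering $s^*$ may be genuinely nonempty, and nothing makes it cancel. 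Second, even granting that each bad set has length $O(K_0(\varepsilon'))$, the aggregate bound $K=O(mK_0(\varepsilon'))$ is not uniform: $m$, the number of distinct HN sublattices appearing on $[a+C,b-C]$, grows without bound as $[a,b]$ lengthens, so $C'=C+K$ would not be a single constant.

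What you are missing is that one may freely take $C'$ \emph{large compared to $C$}, and doing so removes the problem at its source: for $C'>4C$ the HN rank-$l$ sublattice cannot change on $[a,b]$. Indeed, Condition~\ref{cond: template height} gives $\partial^2\HN(g_s\Lambda)_{H,l}\ge\partial^2 f_{H,l}(s)-4C\ge C'-4C>0$ throughout $[a,b]$, so a rank-$l$ HN element exists; and if the sublattice changed from $\Gamma_1$ to $\Gamma_2$ at some $s^*\in[a,b]$, then with $k:=\rk(\Gamma_1+\Gamma_2)-l\ge 1$, the submodularity inequality $\cov(\Gamma_1\cap\Gamma_2)\cdot\cov(\Gamma_1+\Gamma_2)\le\cov\Gamma_1\cdot\cov\Gamma_2$ together with HN-minimality at ranks $l\pm k$ forces $\HN(g_{s^*}\Lambda)_{H,l-k}+\HN(g_{s^*}\Lambda)_{H,l+k}-2\HN(g_{s^*}\Lambda)_{H,l}\le 0$, whence $\partial^2\HN(g_{s^*}\Lambda)_{H,l}=0$, a contradiction. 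With a single sublattice $\Gamma$ on all of $[a,b]$, the orbit $g_s\spa\Gamma$ lies in $U_\varepsilon(\gr_E^g)$ for every $s\in[a+C,b-C]$, and Theorem~\ref{thm:lin path} at scale $\varepsilon'$ plus Remark~\ref{rem: the good ep} shows the $\varepsilon'$-interval carrying multiset $E$ contains $[a+C+K_0(\varepsilon'),b-C-K_0(\varepsilon')]$; taking $C'>4C+K_0(\varepsilon')$ completes the proof.
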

\begin{remark}\label{rem:comparison with Schmidt}
In \cite{S1} Schmidt describes a weaker form of the $g$-template, which is similar to the definition of $g$-templates, but does not remember the multisets. It equivalent to the following definition. A \textit{$g$-system} is a height sequence-valued map $f_{H,\bullet}:\RR\to \{\text{height sequences}\}$ such 
\begin{enumerate}
  \item For every $0\le l\le n$ the function $t\mapsto f_{H,l}(t)$ is piecewise linear.
  \item For every $1\le l\le n-1, t\in \RR$ for which $\partial^2f_{H,l}(t)>0$ we have that $f_{H,l}$ is locally convex near $t$ and 
  \[\frac{d}{dt}f_{H,l}(t)\in \{\eta_E:E\in \cI_l\}.\] 
  if $\frac{d}{dt}f_{H,l}(t)$ is defined.
\end{enumerate}
\end{remark}
Here are two examples which show that the multisets conditions of $g$-templates are necessary for the existence of a lattice that $(\varepsilon, C)$-matches a template.
\begin{cex}
Let $n=4$, $\eta_1=-2, \eta_2 = -1, \eta_3 = 1, \eta_4 = 2$. Define 
Let $f_{H,\bullet}:\RR\to \{\text{height sequences}\}$ be defined by \[f_{H,\bullet}(t) = \begin{cases}
0,& \text{if }t\le 0,\\
ta_l,&\text{if }t>0.
\end{cases}\]
where $a_0=0, a_1=-1, a_2=-1, a_3=-1/2, a_4=0$. 
Then $f_{H,l}(t)$ is indeed piecewise linear, but there is no $\Lambda$ that $(\varepsilon, C)$-matches $f$ for every $\varepsilon,C>0$. Indeed, assuming the contrary there would exist $0\subset \Gamma_1\subset\Gamma_2\subset \Lambda$ of ranks $1$ and $2$, respectively, such that for every $t\ge 0$ we have $\log \cov g_t\Gamma_l =f_{H,l}(t)+O(1)$, $l=1,2$. 
Combining Theorem \ref{thm:lin path} and Corollary \ref{cor: blade behavior over time} we get that 
\[\spa g_t\Gamma_1\xrightarrow{t\to \infty}\grln{1}{4}^g_{\{-1\}}\quad\text{and}\quad\spa g_t\Gamma_2\xrightarrow{t\to \infty}\grln{2}{4}^g_{\{-2,1\}}.\]
But this contradicts the fact that no space in $\grln{2}{4}^g_{\{-2,1\}}$ contains a space in $\grln{1}{4}^g_{\{-1\}}$.
\end{cex}
\begin{cex}
Let $n=4$, $\eta_1=-2, \eta_2 = \eta_3 = -1, \eta_3 = 4$. Let $T$ be arbitrarily large. 
Let $f_{H,\bullet}:\RR\to \{\text{height sequences}\}$ be defined by \[f_{H,\bullet}(t) = \begin{cases}
0,& \text{if }|t|\ge T,\\
|T-t|a_l,&\text{if }|t|<T.
\end{cases}\]
where $a_0=0, a_1=-1, a_2=-2, a_3=-1, a_4=0$. 
Then $f_{H,l}(t)$ is indeed piecewise linear, but there is no $\Lambda$ that $(\varepsilon, C)$-matches $f$ for every $\varepsilon,C>0$, and $T$ large enough as a fuction of $C, \varepsilon$. Indeed, assuming the contrary, there would exist $0\subset \Gamma_2\subset \Lambda$ of ranks $2$ such that for every $t$ with $|t|\le T$ we have $\log \cov g_t\Gamma_2 =-2|T-t|+O(C)$. 
This contradicts Theorem \ref{thm:lin path}, as the only multisets $E\in \cI_2$ with $\eta_E = \pm2$ are $\{-1,-1\}$ and $\{-2,4\}$, and $\{-1,-1\}\not \eteq\{-2,4\}$. 
\end{cex}
% \begin{obs}
% The $g$-template given by Theorem \ref{thm:Lattice approx} is unique up to $g$-template equivalence. 
% \end{obs}

% \begin{de}

% For a $g$-template $f$ that satisfies the conditions of Theorem \ref{thm:Lattice approx} we say that $\Lambda$ \emph{$C$-matches} $f$. Conversely, we denote this $g$-template by $f^\Lambda$, which is well defined up to $g$-templates equivalence.
% \end{de}
We can now state the main results of this paper.
% subsection a_g_template (end)
\subsection{Results} % (fold)
\label{sub:results}
\begin{thm}[Existence of Templates] \label{thm:Lattice approx}
For every $0<\varepsilon<\varepsilon_0$ there is a $C>0$ such that every $\Lambda\in X_n$ $(\varepsilon, C)$-matches a $g$-template $f$ on $\RR$. 
\end{thm}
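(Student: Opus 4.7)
The plan is to build $f^\Lambda$ directly from the Harder--Narasimhan data of the trajectory $(g_t\Lambda)_{t\in\RR}$, guided by the linear-algebra theorems of Subsection~\ref{sub:dynamics_of_linear_spaces}.

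First, I would inventory the blades: for each sublattice $\Gamma\subseteq\Lambda$ of rank $l$, Corollary~\ref{cor:blade behavior} applied to $\bl\Gamma$ supplies a piecewise-linear convex function $\varphi_\Gamma(t)$ with slopes in $\{\eta_E : E\in\cI_l\}$ satisfying $\log\cov g_t\Gamma = \varphi_\Gamma(t) + O_n(1)$, and Theorem~\ref{thm:lin path} shows that on each linear segment of slope $\eta_E$, outside buffer zones of universally bounded length, the direction $g_t\spa\Gamma$ lies in $U_\varepsilon(\gr^g_E)$, with successive multisets $\et$-increasing. I would then set $f_{H,l}(t)$ to be a convex piecewise-linear approximation of the lower envelope of $\{\varphi_\Gamma : \rk\Gamma = l\}$, which is exactly the Grayson--HN profile $\HN(g_t\Lambda)_{H,l}$ up to $O(1)$; and I would set $f_{E,l}(t)=E$ whenever the minimizing $\Gamma$ has $g_t\spa\Gamma\in U_\varepsilon(\gr^g_E)$, while declaring $f_{E,l}(t)=*$ whenever $l$ is a trivial place of the profile or the minimizer sits inside a Theorem~\ref{thm:lin path} buffer.

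Next I would check the $g$-template axioms and the two matching clauses of Definition~\ref{de:f Lambda matching template}. The derivative identity~\eqref{eq:diff eq} is immediate on each segment, since the slope of $\varphi_\Gamma$ equals $\eta_E$ whenever $g_t\spa\Gamma\in U_\varepsilon(\gr^g_E)$. The category-flow structure is inherited: for a fixed $\Gamma$ the transitions are $\et$-increasing by Theorem~\ref{thm:lin path}, and when the minimizer jumps from $\Gamma$ to $\Gamma'$ I would insert a null morphism. Matching condition~(1) holds by construction, and matching condition~(2) follows because $\partial^2 f_{H,l}\ge C$ throughout $[a,b]$ forces a single $\Gamma$ to realize the minimum across the whole interval -- a competitor $\Gamma'$ with different slope would intersect $\Gamma$ within a uniformly bounded gap -- after which Theorem~\ref{thm:lin path} places $g_t\spa\Gamma\in U_\varepsilon(\gr^g_E)$ on $[a+C,b-C]$.

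The main obstacle will be extracting a single constant $C=C(n,\varepsilon)$ that is uniform in $\Lambda$. Three sources of error must be absorbed: the $O_n(1)$ errors from Corollary~\ref{cor:blade behavior}, the buffer widths from Theorem~\ref{thm:lin path}, and the possibly intricate swap-pattern of minimizers along the lower envelope. The first two are universal. For the third, the decisive point is that in any window where two competing $\varphi_\Gamma$'s remain $O(1)$-close the second difference $\partial^2 f_{H,l}$ is necessarily small, so condition~(2) of Definition~\ref{de:f Lambda matching template} is vacuous in that window and the null-valued choice $f_{E,l}=*$ is admissible without violating the axioms. The uniqueness clause of Remark~\ref{rem: the good ep}, available precisely because $\varepsilon<\varepsilon_0$, then ensures that in every remaining window the multiset $E$ attached to $g_t\spa\Gamma$ is unambiguous, so the category-flow structure and the value of $C$ are well-posed and independent of $\Lambda$.
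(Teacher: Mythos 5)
Your overall plan (build $f$ from the Harder--Narasimhan data of $g_t\Lambda$, using Corollary~\ref{cor:blade behavior} for heights and Theorem~\ref{thm:lin path} for directions) is the right starting point and matches the paper's strategy. But the proposal has a real gap in how it reconciles the $E$-data with the $H$-data, which is precisely the crux of the paper's construction. Property~\ref{prop:compatibility} of Definition~\ref{de: template} demands that at every regular $t$ the places where $f_{E,l}(t)\neq *$ coincide exactly with the places where $\partial^2 f_{H,l}(t)>0$. You propose to declare $f_{E,l}(t)=*$ inside the Theorem~\ref{thm:lin path} buffer zones and near minimizer swaps, while keeping $f_{H,l}(t)$ an $O(1)$ approximation of $\HN(g_t\Lambda)_{H,l}$. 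These two choices are incompatible: $\partial^2 \HN(g_t\Lambda)_{H,l}$ can be large inside a buffer (the buffer marks a change of direction of a single $\Gamma$, not a shallow part of the profile), so setting $f_{E,l}=*$ there while $\partial^2 f_{H,l}>0$ violates the template axiom. Your auxiliary claim that close competitors force $\partial^2 f_{H,l}$ to be \emph{small} (which is correct, via the submodularity of $\log\cov$ and convexity of the profile) is also not enough: small is not zero, and the axiom requires zero.

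What is missing is the shift/convexification step the paper uses: replace $\tilde f_{H,l}(t)$ by the lower convex hull of $l\mapsto \tilde f_{H,l}(t)+l(n-l)C''$ for a sufficiently large universal $C''$ before defining the nontriviality intervals. This raises the interior values by a bounded amount (so matching condition~\eqref{cond: template height} survives) and simultaneously turns every small-$\partial^2$ place into a genuinely trivial one, so that $f_{E,l}=*$ can be declared there legitimately. Relatedly, the paper has to deal with $U_\Gamma$ failing to be an interval (a given $\Gamma$ can leave and re-enter $\HN(g_t\Lambda)$), which it handles via the basic intervals $\fV$, the trimmed intervals $U'\supseteq U''$, the complementary-interval set $S$, the retraction $\tau$, and Lemma~\ref{lem:S has long connected components}; your proposal glosses over this by treating the lower envelope as if the swap pattern were simple. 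You also do not check that the constructed $f_{E,\bullet}(t)$ is a nested direction filtration, which requires Lemma~\ref{lem: E_i to V_i is functorial}. Until the shift-and-convexify step is added (and the nestedness verified), the object you build is not a $g$-template in the sense of Definition~\ref{de: template}.
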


\begin{remark}\label{rem:SchmidtSummerer extension}
Theorem \ref{thm:Lattice approx} extends the results of \cite{SS} from the flow described in Eq. \eqref{eq:monoexpanding eigenvalue} to a general diagonal flow. In addition, it sharpen \cite[Proposition 2.2]{S1}.
\end{remark}

\begin{de}\label{de: local entropy}
If $E_\bullet\in \cI_\bullet^*$ is a direction filtration,
denote
\index{detla0@$\delta(E_\bullet)$}\hypertarget{gc}{}
\[\delta(E_\bullet) := \sum_{i=1}^k\sum_{\substack{\eta\in E_{l_i} \\\eta'\in E_{l_i}\setminus E_{l_{i-1}}}}(\eta-\eta')^+,\]
\index{plus@$(-)^+$}\hypertarget{gd}{}
where $a^+ = \max(a,0)$. 

For a $g$-template $f$ on $[0,\infty)$ define 
\index{detla1@$\Delta_0(f)$}\hypertarget{ge}{}
\index{detla2@$\Delta(f)$}\hypertarget{gf}{}
\[\Delta_0(f) = \liminf_{T\to \infty} \frac1T\int_0^T \delta(f_{E,\bullet}(t))dt,\]
and $\Delta(f) = \sup_{f'\sim f}\Delta_0(f')$, where the supremum is over all $f'$ which are equivalent to $f$.
\end{de}
Let $\cF$ be a set of $g$-templates that is closed under equivalence and $\Lambda$ be a lattice. 
\index{Y@$Y_{\Lambda, \cF}$}\hypertarget{gg}{}
Denote \[Y_{\Lambda, \cF} := \{h\in H:f^{h\Lambda}\in \cF\}\subseteq H.\]
The $g$-template $f^{h \Lambda}$ is defined as in Definition \ref{de:f Lambda matching template}.
\begin{thm}\label{thm: general scewed dimention formula}
Let $\cF$ be a set of $g$-templates that is closed under equivalence. 
Then for every $\Lambda\in X_n$,
\[\dim_\funH(Y_{\Lambda,\cF}; d_\varphi) = \sup_{f\in \cF}\Delta(f) = \sup_{f\in \cF}\Delta_0(f), \]
where $d_\varphi$ is the expansion metric on $H$ with respect to the conjugation by $g_t$, see Subsection \ref{sub:Case of interest}, and $\dim_\funH$ is the Hausdorff dimension.
\end{thm}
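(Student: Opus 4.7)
The plan is to prove a chain of inequalities
\[
\sup_{f\in \cF}\Delta_0(f)\ \le\ \sup_{f\in \cF}\Delta(f)\ \le\ \dim_\funH(Y_{\Lambda,\cF};d_\varphi)\ \le\ \sup_{f\in \cF}\Delta_0(f),
\]
where the middle inequality is trivial since $\Delta(f)\ge \Delta_0(f)$ by definition and $\cF$ is closed under equivalence (so $\sup_{f\in \cF}\Delta(f)=\sup_{f\in\cF}\Delta_0(f)$ follows once the outer inequalities are in place). The two nontrivial bounds will be obtained by playing the Dimension Game from \cite{DFSU}, as set up in Section \ref{sec:hausdorff_dimensions}: Alice corresponds to the subset $Y_{\Lambda,\cF}\subseteq H$, Bob to covering by $d_\varphi$-balls, and the value of the game equals $\dim_\funH(Y_{\Lambda,\cF};d_\varphi)$.

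For the \emph{lower bound} $\dim_\funH(Y_{\Lambda,\cF};d_\varphi)\ge \sup_{f\in \cF}\Delta_0(f)$, I would fix a template $f\in \cF$ and construct an Alice strategy realizing $\Delta_0(f)$. Given a target $f$, by the existence of templates (Theorem \ref{thm:Lattice approx}) one gets \emph{some} $f^\Lambda$; one must then perturb $\Lambda$ by elements of $H$ so as to steer its Harder-Narasimhan data to match $f$ along longer and longer time intervals. This is an iterative procedure driven by the geometric theorems of Sections \ref{sec:geometric_theorems} and \ref{sec:Flag_perturbation_theorem}: at each stage of the game, when $f_{E,\bullet}$ transitions from one multiset to a larger one, one applies the Flag Perturbation Theorem to modify $h\Lambda$ inside an $H$-neighborhood so that the new Harder-Narasimhan flag at the relevant level sits in the prescribed $U_\varepsilon(\gr^g_E)$. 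The local branching factor of such perturbations at scale $e^{-t}$ in $d_\varphi$ is exactly $\exp\bigl(\delta(f_{E,\bullet}(t))\,dt\bigr)$ — this is the content of the combinatorial identity behind Definition \ref{de: local entropy}, since each pair $(\eta,\eta')$ with $\eta\in E_{l_i},\eta'\in E_{l_i}\setminus E_{l_{i-1}}$ contributes an $H$-direction that expands at rate $(\eta-\eta')^+$ under $g_t$-conjugation. Averaging over $[0,T]$ and passing to the liminf yields precisely $\Delta_0(f)$ as Alice's payoff.

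For the \emph{upper bound} $\dim_\funH(Y_{\Lambda,\cF};d_\varphi)\le \sup_{f\in\cF}\Delta(f)$, I would construct a Bob strategy. For each stage $T$, decompose $Y_{\Lambda,\cF}$ according to which template class $f$ the lattice $h\Lambda$ matches up to time $T$; by Theorem \ref{thm:Lattice approx} and compactness, finitely many templates suffice on $[0,T]$ up to equivalence. For each such template, the number of $d_\varphi$-balls of radius $e^{-T}$ needed to cover the slice of $Y_{\Lambda,\cF}$ matching it is controlled from above by the same combinatorial expansion factor $\exp\!\int_0^T\!\delta(f_{E,\bullet}(t))\,dt$; this is where one really uses the supremum $\Delta(f)$ rather than $\Delta_0(f)$, because Bob is allowed to choose any template equivalent to the observed one. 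The covering estimate itself reduces to a counting of admissible perturbations in $H$ preserving the template, which is the same combinatorial content from the opposite direction, proved using the Grassmannian dynamics in Theorem \ref{thm:lin path} and Corollary \ref{cor:blade behavior}.

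The main obstacle I expect is the \emph{synchronization between geometry and combinatorics} at the irregular points of the category flow $f_{E,\bullet}$: transitions between multisets $E\et E'$ correspond to changes in which components of $\gr^g$ the Harder-Narasimhan subspaces approach, and one must show that the cost of traversing such a transition in either Alice's or Bob's strategy is absorbed into an $O(1)$ term in $d_\varphi$, so that asymptotically only the integral $\int_0^T\delta(f_{E,\bullet}(t))\,dt$ matters. This is precisely why the proof needs the Flag Perturbation Theorem of Section \ref{sec:Flag_perturbation_theorem} (to make transitions in one direction) and the uniqueness clause of Remark \ref{rem: the good ep} together with the bounded-cost bound in Theorem \ref{thm:lin path} (to make transitions in the other direction). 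Once both strategies are in place, the computation of the game's value — carried out in Section \ref{sec:computation_of_the_dimension} — closes the argument and yields the stated equality.
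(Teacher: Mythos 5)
Your overall route---reduce via Theorem \ref{thm: hausdorff game result} to the $(T,g)$-game, play Alice toward a fixed target template and Bob against whatever template emerges, with the per-step branching factor governed by $\delta(f_{E,\bullet}(t))$ through Lemma \ref{lem:the right counting theorem}---is the same as the paper's. But there is a genuine gap in the Alice direction which you touch on but do not resolve. You argue that the cost of each transition of $f_{E,\bullet}$ ``is absorbed into an $O(1)$ term in $d_\varphi$, so that asymptotically only the integral matters.'' The difficulty is not any one transition but the \emph{accumulation} of these $O(1)$ errors over infinitely many steps: Alice makes infinitely many changing and adding moves, and if at each one the observed height $\HN(g_{T_m}h_m\Lambda)_{H,l}$ drifts from $f_{H,l}(T_m)$ by a fresh $O(1)$, the total drift is unbounded, so the limit lattice $h_\infty\Lambda$ need not match $f$ up to equivalence at all (so $h_\infty$ may fail to lie in $Y_{\Lambda,\cF}$, making the payoff $-\infty$). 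The paper's resolution is substantive: one first replaces $f$ by a $C_2$-separated, $C_2$-significant approximant (Lemma \ref{lem:separated}), maintains an explicit error sequence $\bpi^m$, and at each transition \emph{re-anchors} the target template (Definition \ref{de: anchor}) so that the new filtration element lands exactly at its target height. A second accumulation problem your sketch omits entirely is that Bob's choices inject ``noise lattices'' between Alice's tracked filtration elements; controlling these requires the potential invariant $P_m$ of Section \ref{sub:proof_that_the_strategy_works} and the decay argument of Lemma \ref{lem: circular inductive lemma}, which in turn hinges on the non-scalar pair observation Lemma \ref{lem: not scalar at edge}. Without anchoring and noise control, Corollary \ref{cor: convex is harder} would not apply and one could not identify $\HN(g_{T_m}h_\infty\Lambda)$ with the constructed flag, so the Alice lower bound would fail.

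Your upper-bound sketch is also mismatched to the game: in the $(T,g)$-game Bob cannot ``decompose $Y_{\Lambda,\cF}$ by template class and cover each slice''; he chooses a single $h_{m+1}\in A_{m+1}$ reactively. The paper's Bob (Section \ref{sec:Bob_s_strategy}) plays an interruption strategy: inspect $\HN(g_{T_m}h_m\Lambda)$, sort indices by the vanishing numbers $\zeta_l$, and pick the $h_{m+1}$ disrupting the smallest prefix; then Lemma \ref{lem:the right counting theorem} caps the number of non-disrupting choices by $\exp(T\delta(\cdot))$, and only then is the per-step count integrated against the observed template, passing to a $\brho$-shift so that $\Delta(f)$ rather than $\Delta_0(f)$ appears. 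Your remark that the supremum over the equivalence class is what the upper bound naturally produces is correct and matches the paper, but the covering-by-template-class picture is not how the bound is extracted.
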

\index{D@$D$}\hypertarget{gh}{}
By Corollary \ref{cor: H dimension}, \[D:=\dim_\funH(H;d_\varphi) = \delta(\emptyset \subseteq \Eall) = \sum_{\eta,\eta'\in \Eall}(\eta-\eta')^+.\]
\begin{cor}\label{cor: general scewed dimention of divergent}
\index{Xi@$\Xi$}\hypertarget{gi}{}
Let $n\ge 3$. Denote $\Xi := \sum_{\eta\in \Eall}\eta^+$.
Then \[\dim_\funH({\rm Sing}(H, \Lambda; g_t); d_\varphi) = D-\Xi, \]
where ${\rm Sing}(H, \Lambda; g_t)$ is the set of $h\in H$ such that $g_th\Lambda$ diverges.
\end{cor}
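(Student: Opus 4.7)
The plan is to apply Theorem \ref{thm: general scewed dimention formula} to the family $\cF$ of $g$-templates associated to divergent trajectories and to reduce the corollary to a combinatorial linear programming problem. By Mahler's criterion, $g_th\Lambda$ diverges iff $\liminf_{t\to\infty}\min_{1\le l\le n-1}f^{h\Lambda}_{H,l}(t)=-\infty$, so I take
\[\cF:=\bigl\{f : \liminf_{t\to\infty}\min_{1\le l\le n-1}f_{H,l}(t)=-\infty\bigr\}.\]
The uniform bound on $|f_{H,l}-f'_{H,l}|$ between equivalent templates makes $\cF$ closed under equivalence, and $Y_{\Lambda,\cF}={\rm Sing}(H,\Lambda;g_t)$. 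Theorem \ref{thm: general scewed dimention formula} then reduces the corollary to proving $\sup_{f\in\cF}\Delta_0(f)=D-\Xi$.

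For the lower bound I would introduce the convex piecewise-linear auxiliary function $F(x):=\sum_{\eta\in\Eall}(\eta-x)^+$, which satisfies $F(0)=\Xi$ and $\delta(\emptyset\subset\{\eta\}\subset\Eall)=D-F(\eta)$ for each $\eta\in\Eall$. The linear program
\[\min\Bigl\{\sum_\eta p_\eta F(\eta) : \sum_\eta p_\eta\eta=0,\ p\ge 0,\ \sum_\eta p_\eta=1\Bigr\}\]
has optimum $F(0)=\Xi$ by Jensen's inequality applied to $F$, and it is achieved by a distribution $p^*$ supported on two consecutive values of $\Eall$ straddling $0$. Using the transition arrows $\{\eta\}\xrightarrow{\eteq}\{\eta'\}$ available for $\eta\le\eta'$ in $\cI_1^*$, together with null arrows to close each cycle, I build a periodic $g$-template whose level-$1$ component cycles through the states $\{\eta\}$ with time fractions $p^*_\eta$ and whose cycle lengths $T_k$ tend to $\infty$. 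Then $f_{H,1}(t)$ oscillates with unbounded amplitude, so $f\in\cF$, and a direct computation yields $\Delta_0(f)=D-\sum_\eta p^*_\eta F(\eta)=D-\Xi$.

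For the upper bound I would first prove the combinatorial lemma that, for every direction filtration $E_\bullet$ and every $l\in L(E_\bullet)$ with $1\le l\le n-1$,
\[\delta(E_\bullet)\le D-F(\eta_{E_l}).\]
Writing $\delta(E_\bullet)=\sum_{(\eta,\eta'):\ r(\eta)\le r(\eta')}(\eta-\eta')^+$, where $r(\eta)$ is the step at which $\eta$ enters the filtration, shows that coarsening by merging two adjacent levels only adds pairs to the sum, so $\delta(E_\bullet)\le\delta(\emptyset\subset E_l\subset\Eall)$; and for two-step filtrations the inequality $\sum_{\eta\in\Eall\setminus E,\eta'\in E}(\eta-\eta')^+\ge F(\eta_E)$ follows by induction on $|E|$ combined with convexity of $(\cdot)^+$. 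Given $f\in\cF$, choose $l^*$ and a subsequence $T_n\to\infty$ with $f_{H,l^*}(T_n)\to-\infty$. The LP dual supplies $\lambda\ge 0$ with $F(\eta)+\lambda\eta\ge\Xi$ for all $\eta\in\Eall$, so at every time $t$ with $l^*\in L(f_{E,\bullet}(t))$ the lemma gives $D-\delta(f_{E,\bullet}(t))\ge\Xi-\lambda\eta_{f_{E,l^*}(t)}$. Integrating and using that $\int_0^{T_n}\eta_{f_{E,l^*}(t)}\,dt$ differs from $f_{H,l^*}(T_n)-f_{H,l^*}(0)\to-\infty$ only by contributions coming from trivial-place times gives $\tfrac1{T_n}\int_0^{T_n}\delta(f_{E,\bullet}(t))\,dt\le D-\Xi+o(1)$, whence $\Delta_0(f)\le D-\Xi$.

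The main obstacle is controlling the contribution of times at which $l^*$ is a trivial place of $f_{E,\bullet}(t)$: at such times the combinatorial lemma does not apply directly, and $\tfrac{d}{dt}f_{H,l^*}(t)$ is determined by linear interpolation from neighboring nontrivial levels rather than by any single $\eta_{E_{l^*}}$. I would resolve this either by first refining $f$ via the shifting procedure of Subsection \ref{sub:shifting_templates} so that $l^*$ remains nontrivial outside a set of zero density, or by bounding the interpolated slopes against neighboring nontrivial levels and absorbing the difference into the $o(1)$ error. A short compactness argument then upgrades the pointwise/integrated inequality to the claimed $\liminf$ statement.
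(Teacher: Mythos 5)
Your lower bound does not produce a \emph{divergent} trajectory, only a divergent-on-average one, and this is a genuine gap. In the periodic template you describe, each cycle has $f_{E,1}$ pass through the chain $*\to\{\eta_-\}\to\{\eta_+\}\to *$ with a null arrow closing the period; at a null point of $f_{E,1}$ we have $1\nin L_f(t)$, and since no other level $1<l<n$ is ever nontrivial in your construction, convexity of the height sequence forces $f_{H,\bullet}\equiv 0$ there. Hence $\liminf_{t\to\infty}f_{H,1}(t)=-\infty$ but $\limsup_{t\to\infty}f_{H,1}(t)=0$, and Mahler's criterion requires $\lim_{t\to\infty}f^{h\Lambda}_{H,1}(t)=-\infty$, not a $\liminf$ condition, so your template does not lie in $\cF_{\rm sing}$. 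A telling symptom is that your argument never uses $n\ge 3$: it works verbatim for $n=2$, where the corollary is false (the set ${\rm Sing}(H,\Lambda;g_t)$ is then countable, while $D-\Xi=-\eta_1>0$), and Remark \ref{rem:diverent in average} is precisely about your $\liminf$ variant. The paper's Lemma \ref{lem: there is a good template} does real work here: the connecting template $f^\eta$ of Lemma \ref{lem:the connecting template} makes a rank-$2$ level nontrivial across the null point of level $1$, so that when $f_{E,1}$ passes through $*$ the value $f_{H,1}$ is pinned below the rank-$2$ height rather than returning to $0$, and the merger procedure pushes the local \emph{maxima} of $f_{H,1}$ (attained at the crossover points, with value $-\kappa d_m$) to $-\infty$. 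The rank-$2$ level is exactly why $n\ge 3$ is needed, and this whole mechanism is missing from your construction.

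Your upper bound is close in spirit to Lemmas \ref{lem:pre H inequality} and \ref{lem:upper bound diver}, rephrased in LP-dual language, but the obstacle you yourself flag at the end is not cosmetic. Fixing a single $l^*$ and integrating $\eta_{f_{E,l^*}(t)}$ breaks down at every $t\nin U_{l^*}(f)$: there $\tfrac{d}{dt}f_{H,l^*}(t)$ is an interpolation of $\eta_{f_{E,l}(t)}$ for neighboring nontrivial $l$, there is no $E\in\cI_{l^*}$ to feed your combinatorial lemma, and $\delta(f_{E,\bullet}(t))$ can be as large as $D$; the bad set can have positive density (e.g.\ $l^*=1$ trivial throughout while $f_{H,1}$ is driven to $-\infty$ by a nontrivial $l_1>1$). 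Your first proposed repair cannot work: the shift operation only takes lower convex hulls of $f_{H,\bullet}+\rho_\bullet$ with $\rho$ concave, hence can only \emph{decrease} $\partial^2 f_{H,l}$ and shrink $U_{l^*}(f)$, never enlarge it. The paper's resolution is the adaptive function $\alpha_f(t)=\min_{1\le l\le n-1}\beta_l f_{H,l}(t)$ of Subsection \ref{sub:nowhere_trivial_g}: the specific convex weights $\beta_l$ (equivalently, concavity of $1/\beta_l$) guarantee the minimizer $l_f(t)$ is always a nontrivial place, so $\tfrac{d}{dt}\alpha_f(t)=\beta_{l_f(t)}\eta_{f_{E,l_f(t)}(t)}$ holds at every $t\in\bigcup_l U_l(f)$, and the integral in Lemma \ref{lem: H inequality} telescopes with no bad set to control. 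Replace your fixed $l^*$ by this adaptive choice.
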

\begin{remark}\label{rem:diverent in average}
Corollary \ref{cor: general scewed dimention of divergent} holds for $n\ge 2$ if we replace divergence with divergence on average. The proof is similar to the proof provided here.
\end{remark}
We can pair both results with the Comparison Theorem \ref{thm:comparison}. We will construct two functions, $\overline F: [0,D]\to [0,N]$ and $\underline F:[0,N]\to [0,D]$, where $N=\dim H$ is the standard manifold dimension of $H$. 
\begin{cor}\label{cor: ineq general ineq}
Let $\cF$ be a set of $g$-templates that is closed under equivalence. Then for every $\Lambda\in X_n$,
\[\underline F^{-1}\left(\sup_{f\in \cF}\Delta_0(f)\right)\le \dim_\funH(Y_{\Lambda, \cF}; d_H) \le \overline F\left(\sup_{f\in \cF}\Delta_0(f)\right), \]
where $d_H$ is standard right-invariant metric on $H$.
In particular, if $n\ge 3$ then
\[\underline F^{-1}\left(D-\Xi\right)\le \dim_\funH({\rm Sing}(H, \Lambda; g_t); d_H) \le \overline F\left(D-\Xi\right).\]
\end{cor}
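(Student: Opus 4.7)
The plan is to obtain Corollary \ref{cor: ineq general ineq} as a direct consequence of Theorem \ref{thm: general scewed dimention formula} combined with the Comparison Theorem \ref{thm:comparison}, with the singular case handled by an additional invocation of Corollary \ref{cor: general scewed dimention of divergent}.

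First I would invoke Theorem \ref{thm: general scewed dimention formula} to extract the identity
\[\dim_\funH(Y_{\Lambda,\cF}; d_\varphi) = \sup_{f\in \cF}\Delta_0(f),\]
which computes the Hausdorff dimension of $Y_{\Lambda,\cF}$ with respect to the weighted expansion metric $d_\varphi$ on $H$.

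The second step is to convert this identity into a two-sided bound for the Hausdorff dimension with respect to the standard right-invariant metric $d_H$. Because $d_\varphi$ stretches different root directions in $H$ anisotropically (each root being rescaled by its own $g_t$-expansion rate), $d_\varphi$ and $d_H$ cannot be $\alpha$-H\"older equivalent for a single exponent $\alpha$, so the two metrics need not produce the same Hausdorff dimension. This discrepancy is precisely what the monotone envelopes $\overline F,\underline F$ of Theorem \ref{thm:comparison} are designed to capture; concretely, that theorem is to produce the abstract comparison
\[\underline F^{-1}\!\left(\dim_\funH(Y; d_\varphi)\right)\le \dim_\funH(Y; d_H) \le \overline F\!\left(\dim_\funH(Y; d_\varphi)\right)\]
valid for every $Y\subseteq H$. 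Substituting the value of $\dim_\funH(Y_{\Lambda,\cF}; d_\varphi)$ obtained in the first step then yields the first pair of inequalities of the corollary.

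For the statement about the singular set, I would first apply Corollary \ref{cor: general scewed dimention of divergent} (valid for $n\ge 3$) to obtain $\dim_\funH({\rm Sing}(H,\Lambda;g_t); d_\varphi) = D - \Xi$, and then feed this numerical value into the comparison bounds just derived. No serious obstacle is expected: all the substantive work has been done in Theorems \ref{thm: general scewed dimention formula} and \ref{thm:comparison}, and Corollary \ref{cor: ineq general ineq} is essentially a packaging of those two results. The only subtlety worth checking is that the monotone envelopes $\overline F,\underline F$ built in Theorem \ref{thm:comparison} have the right domain and codomain so that $\underline F^{-1}$ is well-defined at the argument $\sup_{f\in \cF}\Delta_0(f)\in[0,D]$, and that $\sup_{f\in\cF}\Delta_0(f)$ is genuinely bounded above by $D$; both points are guaranteed by Corollary \ref{cor: H dimension}, which identifies $D$ with $\dim_\funH(H; d_\varphi)$.
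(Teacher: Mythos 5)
Your proposal is correct and follows exactly the paper's intended route: Corollary \ref{cor: ineq general ineq} is presented in the paper as a direct pairing of Theorem \ref{thm: general scewed dimention formula} (or Corollary \ref{cor: general scewed dimention of divergent} for the singular case) with the Comparison Theorem \ref{thm:comparison}, with no further argument given, and that is precisely what you do. Your closing remark about the domain check via Corollary \ref{cor: H dimension} (ensuring $\sup_{f\in\cF}\Delta_0(f)\le D$ so that $\underline F^{-1}$ is applied within its range) is a sensible sanity check, though the paper leaves it implicit.
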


The following corollary extends the result of \cite{R} to general flows as in Eq. \eqref{eq:monoexpanding eigenvalue}. Together with Theorem \ref{thm:Lattice approx} it provides a complete classification of the possible 
functions $t,i\mapsto \HN(g_t\Lambda)_{H,i}$ up to a bounded constant for $t\ge 0$ and $0\le i\le n$, and completes the proof of the correction of \cite[Conjectures 2.3]{S1}.
The corollary follows directly from Theorem \ref{thm: general scewed dimention formula}, however it does not require the Hausdorff dimension machinery the proof of Theorem \ref{thm: general scewed dimention formula} does, and we will provide a proof that does not require this machinery at Subsection \ref{sub:existence_of_lattice_given_template}.
\begin{cor} \label{cor:template existence}
\index{Y@$Y_{\Lambda, f}$}\hypertarget{gj}{}
For every $g$-template $f$ the set 
\[Y_{\Lambda, f} := \{h\in H:f^{h\Lambda}\sim f\}\]
is non-empty.
\end{cor}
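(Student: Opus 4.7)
The plan is to construct $h\in H$ with $f^{h\Lambda}\sim f$ as the limit of a Cauchy sequence $(h_k)\subset H$ with respect to the metric $d_H$, where each $h_k$ is obtained from $h_{k-1}$ by a small perturbation in $H$ that extends the interval on which the resulting lattice $(\varepsilon,C)$-matches $f$. The argument will invoke only the Flag Perturbation Theorem from Section \ref{sec:Flag_perturbation_theorem} together with Theorem \ref{thm:Lattice approx} and Corollary \ref{cor:blade behavior}, avoiding the game-theoretic machinery used to prove Theorem \ref{thm: general scewed dimention formula}.

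Fix $0<\varepsilon<\varepsilon_0$ and a suitable matching constant $C$. Choose an initial $h_0\in H$ and a time $T_0>0$ such that $h_0\Lambda$ $(\varepsilon,C)$-matches $f$ on $[-T_0,T_0]$; this base case follows from Theorem \ref{thm:Lattice approx} applied to $\Lambda$, combined with a preliminary application of the Flag Perturbation Theorem to adjust the template on a bounded window. Inductively, given $h_k\in H$ such that $h_k\Lambda$ $(\varepsilon,C)$-matches $f$ on $[-T_k,T_k]$, the template $f$ may prescribe a new category-flow transition in some $f_{E,l}$ just outside this interval, say at time $\pm T_{k+1}$, requiring the rank-$l$ component of $\HN(g_{\pm T_{k+1}}h_{k+1}\Lambda)$ to lie in $U_\varepsilon(\gr^g_E)$. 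The Flag Perturbation Theorem produces $h_{k+1}=p_k h_k$ with $p_k$ close to the identity in $H$ that realizes this new direction; matching of the heights $f_{H,l}$ is then automatic via the ODE \eqref{eq:diff eq} combined with Corollary \ref{cor:blade behavior}, so only the direction data must be controlled.

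The main obstacle is keeping the perturbation $p_k$ small enough not to destroy the matching already established on $[-T_k,T_k]$. This is exactly what the expansion metric $d_\varphi$ quantifies: since $d_\varphi(g_t h_1 g_{-t},g_t h_2 g_{-t})=\exp(t)d_\varphi(h_1,h_2)$, a $d_\varphi$-perturbation of size $\exp(-T_k)$ conjugates to a $d_H$-perturbation of size at most $1$ for all $|t|\le T_k$, and hence moves $g_t h_k\Lambda$ by a uniformly bounded amount which is insufficient to leave the $U_\varepsilon$-neighborhoods or to shift the heights by more than an additive constant absorbable into $C$. Choosing $T_{k+1}$ large enough relative to the required perturbation scale ensures summability $\sum_k d_H(h_{k+1},h_k)<\infty$.

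By completeness of $H$ under $d_H$, the sequence $h_k$ converges to some $h\in H$, and by construction $h\Lambda$ $(\varepsilon,C')$-matches $f$ on every $[-T_k,T_k]$ for a fixed enlarged constant $C'$, hence on all of $\RR$. Therefore $f^{h\Lambda}\sim f$, so $h\in Y_{\Lambda,f}$, proving non-emptiness. The two delicate points are executing the base case cleanly (where large corrections may be needed if $f^\Lambda$ is far from $f$ near $t=0$) and calibrating the sequence $T_k$ so that the perturbation budget at step $k$ is simultaneously large enough to accommodate the next template transition and small enough to preserve prior matching.
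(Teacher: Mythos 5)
Your outline is essentially the paper's own proof: iteratively extend the interval of matching via $H$-perturbations whose $d_\varphi$-size $\exp(-T_k)$ is calibrated so that earlier matching survives; this is Alice's strategy from Section \ref{sec:alice_s_strategy} played against an arbitrary Bob, and the corollary is deduced in Subsection \ref{sub:existence_of_lattice_given_template} from Lemma \ref{lem: alice strategy good}. But there is a serious gap in your claim that matching of the heights is ``automatic via the ODE \eqref{eq:diff eq} combined with Corollary \ref{cor:blade behavior}, so only the direction data must be controlled.'' The filtration $\HN(g_th_k\Lambda)$ depends on \emph{all} sublattices of $\Lambda$, while the Flag Perturbation lemmas control only the finitely many lattices you have chosen to track. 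Corollary \ref{cor:blade behavior} tells you how the covolume of one fixed blade evolves; it does not prevent other sublattices --- the paper's ``noise lattices'' --- from acquiring small covolume under $g_t$ and intruding into $\HN(g_th_k\Lambda)$, thereby destroying Condition \ref{cond: template height} of Definition \ref{de:f Lambda matching template} even while your tracked filtration behaves perfectly. Handling this noise is the heart of Alice's strategy: the forbidden elements in the standard step (built on Lemma \ref{lem:the right counting theorem}), the potential invariant $P_m$, and the inductive decay argument of Lemmas \ref{lem: diffrential P_m behaviour} and \ref{lem: circular inductive lemma}, which exploits the non-scalar-pair decay identified in Lemma \ref{lem: final nonscalar has partial2}. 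Without a noise bound your limit $h$ need not satisfy $f^{h\Lambda}\sim f$.

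Two smaller issues. First, the additive constant in the height matching (Eq. \eqref{eq: bpi = det Gamma - f}) will drift with $k$ unless actively re-anchored, which is why the paper carries along the error function $\bpi^m$ and the anchoring operation of Definition \ref{de: anchor}; your estimate on the $d_\varphi$-size of the perturbation alone does not prevent cumulative drift of the covolumes. Second, both your base case and the applicability of each inductive step require first replacing $f$ by an equivalent $C_2$-separated, $C_2$-significant template normalized so that $f_{H,\bullet}(0)=0$ (Lemmas \ref{lem:separated} and \ref{lem: stability or sep+sig} together with the normalization in Subsection \ref{sub:preliminaries}); without separation the prescribed transitions in $f$ can accumulate too quickly for the Flag Perturbation lemmas to keep up, and your proposal omits this preparatory reduction.
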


% subsection results (end)
\subsection{Notations} % (fold)
\label{sub:notations}
\index{O@$O(-)$}\hypertarget{ha}{}
\index{Theta@$\Theta(-)$}\hypertarget{hb}{}
\index{Omega@$\Omega(-)$}\hypertarget{hc}{}
For every real-valued expression $F$ we will use the notation $O(F), \Theta(F)$, and $\Omega(F)$ to denote expressions that satisfy $|O(F)|<CF,~cF<\Theta(F)<CF,~\Omega(F)>cF$ for constants $C,c>0$ that depend only on $\Eall$. 

\index{#g@$\ggg$}\hypertarget{hd}{}
For two real-valued expressions $F,G$ we write $G\ggg F$ if there is an implicit function $\iota:(0,\infty)\to (0,\infty)$ such that $G>\iota(F)$. This notation will allow us to make statements of the form: ``Set $x\ggg y$. Then $x > y^3+y$''.
Hence, the function $\iota$ is determined implicitly after the expression involving $\ggg$ is stated. 
Thus, the notation $x\ggg y$ can be used as an abbreviation of an argument of the form ``Choose $x>\iota(y)$, where $\iota(y)$ will be determined later implicitly''.

\index{Bdxr@$B_d(x;r)$}\hypertarget{he}{}
For a metric space $(X,d)$, a point $x\in X$ and $r>0$ we will denote the closed ball around $x$ with radius $r$ by $B_d(x;r)$. We will use this notation even if $d$ is not a metric, but just a function $d:X\times X\to \RR$.
% subsection notations (end)
% section template_of_a_trajectory (end)
\section{Further Research} % (fold)
\label{sec:discussion}
There are many directions one can hope to further understand, as suggested in \cite[Section 6]{DFSU}. 
We mention here some which arise when considering general $1$-parameter flows.
\subsection{Comparison to the standard Hausdorff dimension} % (fold)
\label{sub:comparison_to_the_standard_hausdorff_dimension}
Corollary \ref{cor: ineq general ineq} bounds the standard Hausdorff dimension of trajectory sets in $H$. 
\begin{ex}\label{ex: n=3}
\cite{LSST} deals with the case $n=3$, $\eta_1=-1, \eta_2=w_1, \eta_3 = w_2$, where $w_1+w_2 \ge 0$.
Then $H = \left\{\begin{pmatrix}
  1& 0& 0\\
  *& 1& 0\\
  *& *& 1\\
\end{pmatrix}\right\}$, $H' = \left\{\begin{pmatrix}
  1& 0& 0\\
  *& 1& 0\\
  *& 0& 1\\
\end{pmatrix}\right\}$.
It is shown in \cite[Theorem 1.5]{LSST} that 
\[\dim_\funH ({\rm Sing}(H, \Lambda; g_t) \cap H';d_H) = 2-\frac{1}{1+w_1},\]
for every lattice $\Lambda$.
By \cite[Theorem 7.7, p. 104]{M} which glues the result to $H$-orbits, it follows that 
\[\dim_\funH ({\rm Sing}(H, \Lambda; g_t);d_H) \ge 3-\frac{1}{1+w_1}.\]
On the other hand, Corollary \ref{cor: ineq general ineq} implies that this lower bound is tight.
\end{ex}
Inspired by Example \ref{ex: n=3} we make the following conjecture.
\begin{conj}\label{conj: dim is upper}
Let $\cF$ be a set of $g$-templates, closed under equivalence. Then for every $\Lambda\in X_n$,
\[\dim_\funH(Y_{\Lambda, \cF}; d_H) = \overline F\left(\sup_{f\in \cF}\Delta_0(f)\right).\]
\end{conj}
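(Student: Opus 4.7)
The upper bound in Corollary \ref{cor: ineq general ineq} already gives $\overline F(\sup_{f\in\cF}\Delta_0(f))$, so the content of the conjecture is the matching lower bound. The discrepancy between $\underline F^{-1}$ and $\overline F$ is a linear-programming artifact of the way $d_H$ and $d_\varphi$ compare on root subgroups of $H$: on the one-dimensional root subgroup attached to a positive root $\alpha>0$ one has $d_H\asymp d_\varphi^{\alpha}$, so a fractal of $d_\varphi$-dimension $\delta_\alpha$ carried by that root contributes $\delta_\alpha/\alpha$ to the $d_H$-dimension. Writing $m_\alpha$ for the multiplicity of $\alpha$ as a root, the function $\overline F(\delta)$ is the maximum and $\underline F^{-1}(\delta)$ the minimum of $\sum_\alpha\delta_\alpha/\alpha$ over allocations with $\sum_\alpha\delta_\alpha=\delta$ and $0\le\delta_\alpha\le m_\alpha\alpha$.

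My plan is to refine Bob's strategy from Sections \ref{sec:Bob_s_strategy}--\ref{sec:computation_of_the_dimension} so that the fractal it builds has a prescribed greedy root allocation. Since each summand $(\eta-\eta')^+$ in
\[\delta(E_\bullet)=\sum_i\sum_{\substack{\eta\in E_{l_i}\\ \eta'\in E_{l_i}\setminus E_{l_{i-1}}}}(\eta-\eta')^+\]
is already tagged with the positive root $\alpha=\eta-\eta'$, one obtains a canonical splitting $\Delta_0(f)=\sum_\alpha\Delta_0^\alpha(f)$. Bob's fractal construction can then be carried out coordinate-by-coordinate, and a slicing argument on $H$ of the kind used in \cite[Theorem 7.7]{M} (invoked already in Example \ref{ex: n=3}) would yield
\[\dim_\funH(Y_{\Lambda,\{f'\sim f\}};d_H)\ \ge\ \sum_\alpha\Delta_0^\alpha(f)/\alpha.\]
To upgrade this to $\overline F(\sup_{f\in\cF}\Delta_0(f))$ I would exhibit, for every $\cF$ closed under equivalence, a template $f^\star\in\cF$ whose root allocation saturates the small-$\alpha$ roots first. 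The proposed surgery is local: on a long stable block of $f_{E,\bullet}$ with transition $E'\et E$, factor it through a chain $E'\et E''\et E$ that reroutes part of the root contribution from a large-$\alpha$ root to a small-$\alpha$ one, without changing $f_{H,\bullet}$ or the total $\Delta_0$. The Flag Perturbation Theorem of Section \ref{sec:Flag_perturbation_theorem} should ensure the modified template is still realised by lattices in $H\Lambda$, and iteration drives the allocation to the greedy LP optimum.

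The hard part will be this surgery step: the partial order $\eteq$ on each $\cI_l$ and the cross-level coupling in $\cI_\bullet^*$ impose combinatorial constraints that may block the desired rerouting, particularly when $\cF$ is a single equivalence class with an unfavourable root distribution (compare the counterexamples at the end of Section \ref{sub:a_g_template}). Overcoming this will require either a combinatorial density argument showing that the blocked contributions carry a vanishing fraction of $\Delta_0$, or a refined analysis of the fibres of the map $h\mapsto f^{h\Lambda}$: these fibres may themselves carry $d_H$-Hausdorff dimension in the missing root directions, which would furnish the extra dimension even when the $\cI_\bullet^*$-flow is rigid. A systematic treatment of these fibres, mirroring the Alice/Bob analysis of Section \ref{sec:computation_of_the_dimension} but inside a single equivalence class, is where the bulk of the new work will lie.
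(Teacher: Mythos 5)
The statement you are attempting to prove is Conjecture~\ref{conj: dim is upper} in the paper, which the paper explicitly leaves open. There is no ``paper's own proof'' to compare against; the paper only supplies the two-sided estimate of Corollary~\ref{cor: ineq general ineq}, and then, immediately after the conjecture, a heuristic based on a randomly constructed weighted Cantor set. Your sketch is built on exactly that heuristic, so you have correctly understood why the conjecture is plausible.

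But your text is a plan, not a proof, and it leaves two substantial gaps unfilled. The first is the one you name yourself: when $\cF$ is a single equivalence class $\{f'\sim f\}$ (a case the conjecture must cover), the per-root allocation $(\Delta_0^\alpha(f))_\alpha$ is forced by $f$ and will in general not be the greedy allocation that realizes $\overline F(\Delta_0(f))$. Your proposed ``surgery'' --- rerouting contributions among roots by factoring transitions through intermediate multisets --- and the fallback ``fibre'' idea of extracting extra $d_H$-dimension from the map $h\mapsto f^{h\Lambda}$ inside a fixed equivalence class are both entirely undeveloped. Neither the flag-perturbation machinery of Section~\ref{sec:Flag_perturbation_theorem} nor the Alice/Bob analysis of Sections~\ref{sec:alice_s_strategy}--\ref{sec:computation_of_the_dimension} is calibrated to the anisotropic metric $d_H$ in the way a matching lower bound for $\overline F$ would require; the entire game-theoretic apparatus is tuned to $d_\varphi$, and translating the counting estimates of Lemma~\ref{lem:the right counting theorem} into $d_H$-balls with prescribed per-root radii is exactly the step you would need to invent. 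The second gap is the slicing step: $H$ is a nilpotent group, not a Cartesian product of its root subgroups, so the Mattila product theorem you invoke does not apply directly --- in Example~\ref{ex: n=3} the paper only uses it after restricting to a two-dimensional abelian subgroup $H'$, and extending it to a Fubini-type Hausdorff-dimension lower bound over all of $H$ with the metric $d_H$ is a nontrivial geometric-measure-theoretic input that your sketch simply asserts. Since both the combinatorial saturation and the slicing lower bound are missing, what you have written cannot be read as a proof of Conjecture~\ref{conj: dim is upper}; it is, as you acknowledge, an outline of where the work would have to go.
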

A possible heuristic for the conjecture is the computation of the weighted planar Cantor-like set $C$, which is constructed by the following procedure with appropriate constants. Start with a box, and recursively choose $k$ sub-boxes to every box uniformly at random. The side-lengths of the boxes are weighted, i.e., the $l$-th side-length of a sub-box is $a_l<1$ times the $l$-th side-length of the original box. With probability $1$, the standard Hausdorff dimension of $C$ is given by the upper bound provided by the analog of Theorem \ref{thm:comparison}, depending on the Hausdorff dimension of $C$ with respect to the weighted metric.

\subsection{Dimension in smaller sets} % (fold)
\label{sub:dimension_in_smaller_sets}
It is common to consider the Hausdorff dimension of trajectory sets in the subgroups $H'\subseteq H$, as in the Dani Correspondence. A natural question is the following.
\begin{ques}
Let $\cF$ be a set of $g$-templates closed under equivalence, and let $\Lambda\in X_n$. Can one provide a combinatorial expression for
\[\dim_{\funH}(Y_{\Lambda, \cF}\cap H'; d_\varphi)?\] 
\end{ques} 
In general, one does not expect that the codimension of the intersection in $H'$ will coincide with the codimension of the original set, i.e., 
\[\dim_\funH(H'; d_\varphi)-\dim_\funH(Y_{\Lambda, \cF}\cap H'; d_\varphi) \neq \dim_\funH(H; d_\varphi)-\dim_\funH(Y_{\Lambda, \cF}; d_\varphi),\]
does not hold in general, even when it does not predict a negative dimension. Instead, we expect the dimension to be an integral involving quantities of the form $\dim_\funH(H'\cap H_{V_\bullet};d_\varphi)$ for various flags $V_\bullet$, ($H_{V_\bullet}$ is defined in Definition \ref{de: H_V_bullet}). 

\subsection{Different Lie groups} % (fold)
\label{sub:different_lie_groups}
It seems plausible that the techniques we use here can be generalized to general $1$-parameter semisimple flows in a semisimple real Lie group $G$, acting on $G/\Gamma$, where $\Gamma\subset G$ is a lattice. 
The proof of Lemma \ref{lem:single advance} stands out from the other parts of the paper by being mostly a Lie-theoretic result, and its generalization may require other tools.

Let $G$ be a complex semisimple Lie group, $B$ the Borel subgroup, and $B\subseteq P_1,...,P_r\subseteq G$ the maximal parabolic subgroups of $G$. These groups are all maximal parabolic subgroups of $G$ up to conjugation. 
Let $(g_t)_{t\in \RR}$ be a $1$-parameter semisimple flow on $G$. 
Define $(G/P)^g := \{p\in G/P:\forall t\in \RR, ~g_tp = p\}$ to be the set of fixed points in $G/P$ for every parabolic subgroup $P$.
In particular, $P$ may be the Borel subgroup $B$. 
Let $\cI_P$ denote the set of connected components of $(G/P)^g$.
Note that the quotient maps $G/B\to G/P_i$ are $g_t$-equivariant, and hence define natural maps $\pi_i:\cI_B\to \cI_{P_i}$.
For every parabolic subgroup $P$ and $E_1, E_2 \in \cI_P$, we write $E_1\eteq E_2$ if there is $p\in G/P$ such that $\lim\limits_{t\to -\infty} g_tp \in E_1$ and $\lim\limits_{t\to \infty} g_tp \in E_2$.

\begin{ques}\hfill
  \begin{itemize}
    \item Is $\eteq$ a partial order of $\cI_P$ for every parabolic $P\subseteq G$?
    \item Let $E_1, E_2\in \cI_B$. Do we have $E_1\eteq E_2$ if and only if $\pi_i(E_1)\eteq \pi_i(E_2)$ for every $1\le i\le r$?
  \end{itemize}
\end{ques}
For the relation between parabolic subgroups of $G$ and the cusps of $G/\Gamma$, see \cite{BS} for the theory and \cite{TW} for an application.

% subsection dimension_in_smaller_sets (end)

% A possible explanation to the first bullet. I dont know to prove the transitivity of the action centralizer of $g_t$ on $E\in \cI_B$. Using the analogous Lemma \ref{lem: mult map} and the transitivity before we get that...

% Let $\bG$ be a semisimple algebraic group over $\ZZ$. Let $\Gamma := \bG(\ZZ)\subset \Gamma := \bG(\ZZ)$ be an arithmetic lattice (By Borel–Harish-Chandra Theorem, see \cite{BHC}).
% Let $(g_t)_{t\in \RR}\subseteq G$ be a semisimple $1$-parameter flow
% One can replace the concept of Harder Narasimhan flag of a lattice by a collection of maximal parabolic subgroups defined over $\ZZ$ in $\bG$ for every element $g\Gamma\in G/\Gamma$. See \cite{TW} for a thorough explanation of the technique.

% Denote by $P_1,\cdot,P_r$ the different maximal parabolic subgroups of $\bG$ over $\CC$, up to conjugation.

% It seems plausible to define by $\cI_P$ the set of connected components of $(G/P)^g$ - the $g_t$-invariant elements in $G/P$ for every maximal parabolic $P$. We can now ask the purely lie-theoretic question:
% \begin{ques}
% Define the analogous partial order $\eteq$ on $\cI_P$. Does the analogous claim to Lemma \ref{lem:single advance} holds?
% \end{ques}
% More concretely, 
% subsection different_lie_groups (end)
% section discussion (end)

\section{Hausdorff Dimensions} % (fold)
\label{sec:hausdorff_dimensions}
\subsection{The Expansion Metric} % (fold)
\label{sub:the_expansion_metric}

\begin{de}\label{de: exp metric}
\index{boundedly expanding action}\hypertarget{hf}{}
Let $(X, d_X)$ be a metric space and $\varphi_t:X\to X$ be a group action $\RR\acts X$.
We say that $\varphi$ is a \emph{boundedly expanding action} if for some $0 < \alpha_\varphi' < \alpha_\varphi$ we have 
\[\forall t\in \RR,~\frac{d}{dt}\log d_X(\varphi_tx_1, \varphi_tx_2) \in [\alpha_\varphi, \alpha_\varphi'],\]
for every $x_1, x_2\in X$. 

From now on we assume that $\varphi$ is a boundedly expanding action.

Define $d_\varphi:X\times X\to [0,\infty)$ on $X$ by 
\[d_\varphi(x_1, x_2) := \begin{cases}\exp(-
\min \{t\in \RR:d_X(\varphi_tx_1, \varphi_tx_2) \ge 1\}), & \text{if }x_1 \neq x_2,\\
0,  & \text{if }x_1 = x_2,
\end{cases} \]
\index{expansion semi-metric}\hypertarget{hg}{}
$d_\varphi$ is called the \emph{expansion semi-metric}.
\end{de}
\begin{obs}
Note that 
\begin{align}\label{eq: d_phi equivariant}
d_\varphi(\varphi_tx_1, \varphi_tx_2) = d_t(x_1, x_2)\cdot \exp (t),
\end{align}
and the unit ball with respect to $d_\varphi$ coincides with the unit ball with respect to $d_X$.
These properties characterize $d_\varphi$.
\end{obs}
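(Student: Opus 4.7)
The three assertions unpack directly from the definition, once we use the bounded-expansion hypothesis to control, for each pair $x_1 \ne x_2$, the auxiliary function $\psi(t) := d_X(\varphi_t x_1, \varphi_t x_2)$. My first step is to observe that $\tfrac{d}{dt}\log \psi(t)$ is bounded below by a positive constant, so $\psi$ is continuous, strictly increasing, and ranges over $(0,\infty)$. Hence there is a unique $t_0 = t_0(x_1,x_2) \in \RR$ with $\psi(t_0) = 1$, the minimum appearing in Definition~\ref{de: exp metric} is attained at this $t_0$, and $d_\varphi(x_1,x_2) = \exp(-t_0)$.

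The equivariance identity \eqref{eq: d_phi equivariant} then follows by a change of variables in the defining minimum: the minimal $t$ with $d_X(\varphi_{t+s}x_1,\varphi_{t+s}x_2) \ge 1$ is $t_0(x_1,x_2) - s$, so $d_\varphi(\varphi_sx_1,\varphi_sx_2) = e^{-(t_0-s)} = e^{s}\,d_\varphi(x_1,x_2)$. For the unit-ball coincidence, monotonicity of $\psi$ gives $d_\varphi(x_1,x_2) \le 1 \iff t_0 \ge 0 \iff \psi(0) \le 1$, the last being exactly $d_X(x_1,x_2) \le 1$.

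For the characterization, I would suppose $d' : X \times X \to [0,\infty)$ satisfies both properties and deduce $d' = d_\varphi$. Fix $x_1 \ne x_2$. Equivariance rules out $d'(x_1,x_2) = 0$, since that would force $d'$ to vanish on the whole orbit pair, whereas the ball coincidence together with $\psi(t) \to \infty$ shows $d_X(\varphi_t x_1,\varphi_t x_2) > 1$, hence $d'(\varphi_t x_1, \varphi_t x_2) > 1$, for large $t$. Setting $t_1 := -\log d'(x_1,x_2)$, equivariance gives $d'(\varphi_{t_1}x_1,\varphi_{t_1}x_2) = 1$, so by the ball coincidence $\psi(t_1) \le 1$. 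For any $t > t_1$, equivariance yields $d'(\varphi_t x_1,\varphi_t x_2) > 1$, hence $\psi(t) > 1$; continuity of $\psi$ then forces $\psi(t_1) = 1$, i.e.\ $t_1 = t_0$, so $d'(x_1,x_2) = d_\varphi(x_1,x_2)$.

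There is no genuine obstacle here; the statement is essentially a bookkeeping exercise on the definition. The only delicate point is in the characterization, where the hypothesis only specifies the sublevel set $\{d \le 1\}$ but we need to pin down the level set $\{d = 1\}$; this is achieved by combining the multiplicative equivariance with the strict monotonicity and continuity of $\psi$ supplied by the bounded-expansion assumption.
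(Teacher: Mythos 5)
Your proof is correct. The paper states this as an observation without argument (observations end with $\clubsuit$ and carry no proof), so there is nothing to compare against; your write-up simply supplies the implicit details. The key mechanism you isolate — that the bounded-expansion hypothesis makes $\psi(t) = d_X(\varphi_t x_1, \varphi_t x_2)$ a strictly increasing continuous bijection onto $(0,\infty)$, so the minimum in Definition~\ref{de: exp metric} is attained at the unique level-$1$ time $t_0$ — is exactly what the paper is tacitly relying on, and it drives all three assertions cleanly. Your handling of the one delicate point in the characterization (that the ball coincidence gives only the sublevel set $\{d' \le 1\}$, and one must combine equivariance with the strict monotonicity and continuity of $\psi$ to recover the level set $\{d' = 1\}$, hence $t_1 = t_0$) is precise and necessary; a naive reading of the hypotheses does not immediately pin down $d'$ pointwise, and you are right to flag it. One cosmetic remark: the displayed equation in the paper has a typo, writing $d_t(x_1,x_2)$ where $d_\varphi(x_1,x_2)$ is meant, as your derivation makes clear.
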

In this subsection we fix a boundedly expanding group action $\RR\acts X$, denoted $\varphi$, and denote by $0<\alpha_\varphi<\alpha_\varphi'$ the corresponding quantities in Definition \ref{de: exp metric}.
\begin{lem}\label{lem:metric}
If $0<a\le \alpha_\varphi$ then $d_\varphi$ to the power of $a$, namely, $d_\varphi^a$, is a metric. 
\end{lem}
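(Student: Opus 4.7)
The plan is to reduce the triangle inequality for $d_\varphi^a$ to the critical exponent $a=\alpha_\varphi$, and then use an elementary $\ell^p$-norm comparison to pass to smaller exponents. Symmetry, non-negativity, and non-degeneracy of $d_\varphi$ are immediate from the definition and from strict monotonicity of $t\mapsto d_X(\varphi_t x_1,\varphi_t x_2)$ (which follows from the positive lower rate), so the content of the lemma is the triangle inequality.

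The key tool is the exponential contraction estimate obtained by integrating the hypothesis: for all $x,y\in X$ and all $s\le t$,
\[
d_X(\varphi_s x,\varphi_s y)\le d_X(\varphi_t x,\varphi_t y)\cdot e^{-\alpha_\varphi(t-s)}.
\]
Writing $s(x,y):=-\log d_\varphi(x,y)$ for the unique time at which $d_X(\varphi_t x,\varphi_t y)=1$, this yields, for every $T\le s(x,y)$,
\[
d_X(\varphi_T x,\varphi_T y)\le e^{-\alpha_\varphi(s(x,y)-T)}=e^{\alpha_\varphi T}\,d_\varphi(x,y)^{\alpha_\varphi}.
\]

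I first prove the triangle inequality $d_\varphi(x_1,x_3)^{\alpha_\varphi}\le d_\varphi(x_1,x_2)^{\alpha_\varphi}+d_\varphi(x_2,x_3)^{\alpha_\varphi}$ at the critical exponent. If $d_\varphi(x_1,x_3)$ does not exceed both of $d_\varphi(x_1,x_2)$ and $d_\varphi(x_2,x_3)$, the inequality is immediate. Otherwise set $T:=s(x_1,x_3)$, so that $T\le s(x_1,x_2)$ and $T\le s(x_2,x_3)$, and $d_X(\varphi_T x_1,\varphi_T x_3)=1$. Applying the $d_X$-triangle inequality and the key estimate to each of the pairs $(x_1,x_2)$ and $(x_2,x_3)$ at time $T$,
\[
1\le d_X(\varphi_T x_1,\varphi_T x_2)+d_X(\varphi_T x_2,\varphi_T x_3)\le e^{\alpha_\varphi T}\bigl(d_\varphi(x_1,x_2)^{\alpha_\varphi}+d_\varphi(x_2,x_3)^{\alpha_\varphi}\bigr).
\]
Multiplying by $e^{-\alpha_\varphi T}=d_\varphi(x_1,x_3)^{\alpha_\varphi}$ gives exactly the claim for exponent $\alpha_\varphi$.

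Finally, I descend from $\alpha_\varphi$ to an arbitrary $0<a\le\alpha_\varphi$. Set $p:=\alpha_\varphi/a\ge 1$ and put $U:=d_\varphi(x_1,x_2)^a$, $V:=d_\varphi(x_2,x_3)^a$, $W:=d_\varphi(x_1,x_3)^a$. The previous step reads $W^p\le U^p+V^p$, and the elementary inequality $U^p+V^p\le (U+V)^p$ for $p\ge 1$ (equivalently, $\|\cdot\|_{\ell^p}\le\|\cdot\|_{\ell^1}$ on $\RR^2_{\ge 0}$) gives $W\le U+V$, which is the triangle inequality for $d_\varphi^a$. No step is delicate; the only subtle point is the case split in the middle paragraph, which hinges on using the lower rate $\alpha_\varphi$ to control $d_X$ at times \emph{before} the crossing time $s(x_i,x_j)$.
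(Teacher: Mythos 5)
Your proof is correct and proceeds along essentially the same lines as the paper's: both hinge on integrating the lower expansion rate $\alpha_\varphi$ to bound $d_X$ at the crossing time of the dominant pair, and then applying the $d_X$-triangle inequality there. The only cosmetic difference is that you first establish the triangle inequality at the critical exponent $\alpha_\varphi$ and then descend to $a\le\alpha_\varphi$ via the $\ell^p$-norm comparison, whereas the paper handles $a$ directly by normalizing so that $d_\varphi(x_1,x_3)=1$, which forces the other two distances into $[0,1]$ and makes $d_\varphi^{\alpha_\varphi}\le d_\varphi^a$ pointwise.
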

\begin{proof}
It is clear that $d_\varphi^a$ is symmetric and that $d_\varphi^a(x,x) = 0$ for every $x\in X$. Since the action $\varphi$ is boundedly expanding, $d_\varphi^a(x_1,x_2)>0$ whenever $x_1\neq x_2$. 
It remains to show the triangle inequality for three different points $x_1, x_2, x_3\in X$. Assume $d_0 := d_\varphi(x_1, x_3)\ge d_\varphi(x_1, x_2), d_\varphi(x_2, x_3)$. By Eq. \eqref{eq: d_phi equivariant} w.l.o.g. we may assume that $d_\varphi(x_1, x_3) = 1$. Now note that since $d_\varphi(x_1, x_2) < 1$ we have $s_{12}:=-\log (d_\varphi(x_1, x_2)) \ge 0$.
% $d_X(g_{s_{12}}x_1, g_{s_{12}}x_2) = 1$. 
The bounded expansion property yields
\begin{align*}
-\log d_X(x_1, x_2) = &\log d_X(\varphi_{s_{12}}x_1, \varphi_{s_{12}}x_2)-\log d_X(x_1, x_2) \\
=&\int_{0}^{s_{12}}\frac d{dt}\log d_X(\varphi_{t}x_1, \varphi_{t}x_2) dt \ge s_{12}\alpha_\varphi,
\end{align*}
and hence $d_X(x_1, x_2) \le d_\varphi^{\alpha_\varphi}(x_1, x_2)\le d_\varphi^{a}(x_1, x_2) $. Similarly, $d_X(x_2, x_3)\le   d_\varphi^{a}(x_2, x_3)$. 
Therefore,
\begin{align*}
d_\varphi^a(x_1,x_3)
=1
=d_X(x_1, x_3) &\le d_X(x_1, x_2) + d_X(x_2, x_3) 
\\&
\le d_\varphi^{a}(x_1, x_2)+d_\varphi^{a}(x_2, x_3),
\end{align*}
as desired.
\end{proof}
\begin{de}
\index{Hausdorff dimension}\hypertarget{hh}{}
\index{dim@$\dim_\funH$}\hypertarget{hi}{}
Recall that for a set $A\subseteq X$ the Hausdorff dimension with respect to the metric $d_\varphi^a$, is given by
\[\dim_\funH(A; d_\varphi^a) := \inf\{d\ge 0:\mathcal H^d(A;d_\varphi^a) = 0\},\] 
where 
\[\mathcal H^d(A;d_\varphi^a) = \inf\left\{\sum_ir_i^d:\,
\begin{matrix}
\text{$A$ admits a cover with balls of}\\
\text{radii $r_i>0$ in the metric $d_\varphi^a$}
\end{matrix}
\right\},\]
is the Hausdorff measure.
Since a ball of radius $r$ with respect to $d_\varphi^{a_1}$ is a ball of radius $r^{a_2/a_1}$ with respect to $d_\varphi^{a_2}$ we see that, for every $0<a_1,a_2<\alpha_\varphi$,
\begin{align*}
\mathcal H^d(A;d_\varphi^{a_1}) &= \inf\left\{\sum_ir_i^d:
\begin{matrix}
\text{$A$ admits a cover with balls of}\\
\text{radii $r_i>0$ in the metric $d^{a_1}_g$}
\end{matrix}
\right\} \\&= 
\inf\left\{\sum_ir_i^{da_1/a_2}:
\begin{matrix}
\text{$A$ admits a cover with balls of}\\
\text{radii $r_i>0$ in the metric $d^{a_2}_g$}
\end{matrix}
\right\} \\&= \mathcal H^{da_1/a_2}(A;d^{a_2}_g),
\end{align*}
and hence $\dim_\funH(A; d_\varphi^{a_1}) = \frac{a_2}{a_1}\dim_\funH(A; d_\varphi^{a_2})$.
Thus, the quantity 
\[\dim_\funH(A;d_\varphi) := a\dim_\funH(A; d_\varphi^{a}),\]
\index{Hausdorff dimension!with respect to a semi-metric}\hypertarget{hj}{}
\noindent which is called \emph{Hausdorff dimension of $A$ with respect to the semi-metric $d_\varphi$},
is independent of $a$. 
\end{de}
% Recall that a metric space $(X, d_X)$ is called \emph{doubling} if there exists $M>0$ such that for every $R>0$ and $x\in X$ there exist $x_1,\dots,x_M \in X$ such that 
% \[B_{d_X}(x; R) \subseteq \bigcup_{i=1}^M B_{d_X}(x_i;R/2),\]
% where $B_{d_X}(x;R)$ is the ball of radius $R$ around $x$ with respect to $d_X$.
\begin{ex}
The simplest example is provided by $X = \RR^n$ and $\varphi_t \vec x = e^t \vec x$. Then $d_\varphi = d_X$ and the Hausdorff dimension $\dim_\funH(A;d_\varphi) = \dim_\funH(A)$ for every $A\subseteq X$. 
\end{ex}
% subsection the_expansion_metric (end)
\subsection{A Case of Interest} % (fold)
\label{sub:Case of interest}
Recall that $g_t$ is a diagonal $1$-parameter group, and $H$ is the expanding group of $g_t$, with the right-invariant Riemannian metric $d_H$.
% on it, as the restriction of the standard right invariant Riemannian metric $d_{\SL_n(\RR)}$. 
\index{phi@$\varphi_t$}\hypertarget{ia}{}
Denote $\varphi_t(h) := g_thg_{-t}$. 
\begin{lem}
The action $\varphi_t$ is boundedly expanding with respect to the metric $d_H$.
\end{lem}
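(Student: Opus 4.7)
The plan is to reduce the bounded-expansion estimate to the adjoint action of $g_t$ on the Lie algebra $\fh$ of $H$. First, since $\varphi_t$ is a group automorphism of $H$ and $d_H$ is right-invariant, one has
\[d_H(\varphi_t h_1, \varphi_t h_2) = d_H\bigl(\varphi_t h_1 \cdot (\varphi_t h_2)^{-1}, \Id\bigr) = d_H(\varphi_t(h_1 h_2^{-1}), \Id),\]
so it suffices to show that $t \mapsto \log d_H(\varphi_t h, \Id)$ has forward difference quotients lying in a fixed interval of positive numbers, uniformly in $h \in H$.

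By Observation \ref{obs: H coordinate}, $\fh$ admits the eigenspace decomposition $\fh = \bigoplus_{\eta_i > \eta_j} \RR E_{ij}$, and the adjoint action is diagonal on this basis: $\Ad(g_t) E_{ij} = e^{t(\eta_i - \eta_j)} E_{ij}$. Set
\[\alpha_\varphi := \min\{\eta_i - \eta_j : \eta_i > \eta_j\} > 0, \qquad \alpha_\varphi' := \max\{\eta_i - \eta_j : \eta_i > \eta_j\}.\]
For the Euclidean norm $\|\cdot\|$ on $\fh$ adapted to this eigenbasis, a direct coordinate computation yields the pointwise two-sided estimate
\[e^{t\alpha_\varphi}\|v\| \leq \|\Ad(g_t) v\| \leq e^{t\alpha_\varphi'}\|v\|, \qquad v \in \fh,\ t \geq 0.\]

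Next I would lift this infinitesimal estimate to the right-invariant path metric. Any path $\gamma:[0,1]\to H$ from $\Id$ to $h$ has length $\int_0^1 \|\dot\gamma(s)\gamma(s)^{-1}\|\,ds$; because $\varphi_t$ is a Lie group automorphism, one checks directly that the right-translated tangent vector of $\varphi_t\circ\gamma$ transforms by $\Ad(g_t)$, so the length of $\varphi_t\circ\gamma$ is squeezed between $e^{t\alpha_\varphi}$ and $e^{t\alpha_\varphi'}$ times the length of $\gamma$. Since $\gamma \mapsto \varphi_t\circ\gamma$ is a bijection between paths $\Id\to h$ and paths $\Id \to \varphi_t h$, taking the infimum gives
\[e^{t\alpha_\varphi}\, d_H(h, \Id) \leq d_H(\varphi_t h, \Id) \leq e^{t\alpha_\varphi'}\, d_H(h, \Id), \qquad t\geq 0.\]
Applying this with $h$ replaced by $\varphi_{t_0}h$, using $\varphi_{t_0+s} = \varphi_s\circ\varphi_{t_0}$, and dividing by $s>0$ produces the required difference-quotient bound $\alpha_\varphi \leq s^{-1}\bigl(\log d_H(\varphi_{t_0+s} h, \Id) - \log d_H(\varphi_{t_0} h, \Id)\bigr) \leq \alpha_\varphi'$.

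The one subtlety is measure-theoretic: the estimate above shows $t \mapsto \log d_H(\varphi_t h, \Id)$ is Lipschitz with forward difference quotients in $[\alpha_\varphi, \alpha_\varphi']$, so the derivative exists and lies in this interval almost everywhere. This is exactly what the subsequent Lemma \ref{lem:metric} needs (the proof there only ever integrates the derivative along an orbit segment), and Definition \ref{de: exp metric} is to be read in this a.e.\ sense; genuine pointwise differentiability can be obtained separately from smoothness of the Riemannian distance function off the cut locus, but is not required in the sequel.
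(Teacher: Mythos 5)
Your proof is correct and follows essentially the same route as the paper's: eigenspace decomposition of $T_{\Id}H$ under $\Ad(g_t)$ with eigenvalues $e^{t(\eta_i-\eta_j)}$, extension to all of $H$ via right-invariance, and lifting the infinitesimal estimate through path lengths to the Riemannian distance. Your closing observation on differentiability of $t\mapsto\log d_H(\varphi_t h,\Id)$ is a sensible clarification that the paper glosses over when it writes ``follows by the definition of the Riemannian metric,'' and it correctly notes that the a.e.\ (or difference-quotient) reading suffices for Lemma \ref{lem:metric} and Definition \ref{de: exp metric}.
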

\begin{proof}
The standard Riemannian metric $d_{\SL_n(\RR)}$ is define by the bilinear form $\bra A,B\ket_{\Id} = \tr {AB^t}$ on $T_{\Id}\SL_n(\RR) = \ker \tr$, where $\tr:M_{n\times n}(\RR)\to \RR$ is the trace map, and extended to every tangent space $T_p\SL_n(\RR), p\in \SL_n(\RR)$ by right translations.

Note that the eigenspaces of the $\varphi_t$-action on $T_{\Id}(H)$ are orthogonal with respect to this bilinear form, and that the different eigenvalues are $\exp (t(\eta-\eta'))$, where $\eta>\eta'\in \Eall$.

Consequently, for every vector $\vec v\in T_{\Id}(H)$ we have 
\[\frac{d}{dt}\|(\varphi_t)_* \vec v\|/\|\vec v\|\in [\alpha_\varphi, \eta_n-\eta_1],\]
where $\alpha_\varphi$ is the minimal positive distance between elements of $\Eall$. 

For every point $p\in H$ define the right translation $R_{p^{-1}}:H\to H$ to be the multiplication from the right by $p^{-1}$.
For every $\vec v\in T_pH$ we have 
\begin{align*}
\|(\varphi_t)_*\vec v\| &= \|(R_{E_t(p)^{-1}})_*(\varphi_t)_*\vec v \|=\|(\varphi_t)_*(R_{p^{-1}})_*\vec v\| \\&
\in [\alpha_\varphi, \eta_n-\eta_1]\cdot \|(R_{p^{-1}})_*\vec v\| = [\alpha_\varphi, \eta_n-\eta_1]\cdot \|\vec v\|
\end{align*}
In particular, $\varphi_t$ satisfies the bounded expansion property when it is tested on tangent vectors instead of distances. Now the assertion of the lemma follows by the definition of the Riemannian metric.
\end{proof}
\index{dphi@$d_\varphi$}\hypertarget{ib}{}
\index{alphaphi@$\alpha_\varphi$}\hypertarget{ic}{}
Therefore, there is a constant $\alpha_\varphi$ such that 
$d_\varphi^{\alpha_\varphi}$ is a metric. 
The notations $\varphi, d_\varphi$, and $\alpha_\varphi$ will be used for the rest of the paper.
% subsection Case of interest (end)
\subsection{Comparison with the Standard Hausdorff Dimension} % (fold)
\label{sub:comparison_with_standard_hausdorff_dimension}
Let $0 < \zeta_1\le\cdots\le \zeta_N$ be the differences $\{\eta_i-\eta_j > 0:1\le j\le i\le n\}$ in increasing order. 
Let $\overline F:\left[0, \sum_{i=1}^N\zeta_i\right] \to [0,N]$ be the piecewise linear function defined by the values
\index{Fbar@$\overline F$}\hypertarget{id}{}
\[\overline F\left(\sum_{i=1}^k \zeta_i\right)= k\quad 0\le k\le N,\]
and 
\index{Fubar@$\underline F$}\hypertarget{ie}{}
\[\underline F:[0,n]\to \left[0, \sum_{i=1}^N\zeta_i\right]\]
be the piecewise linear function defined by the values
\[\underline F\left(k\right)= \sum_{i=N-k+1}^N \zeta_i\quad 0\le k\le N.\]
The functions $\overline F$ and $\underline F^{-1}$ are related. Their graphs are composed of intervals of equal lengths and slopes. In $\overline F$ they are ordered from the largest slope to the smallest, and in $\underline F^{-1}$ the other way around.
\begin{thm}[Comparison Theorem]\label{thm:comparison}
Let $\varphi_t$ the flow on $H$ induced by conjugation with the diagonal flow $g_t$ and $d_\varphi$ be its expansion semi-metric.
Then for any set $X\subseteq H$
\begin{align}
\dim_\funH (X; d_H) \le \overline F(\dim_\funH (X; d_\varphi))
\end{align}
and
\begin{align}\label{eq: lower bound on dimention}
\dim_\funH (X; d_\varphi) \le \underline F(\dim_\funH (X; d_H)).
\end{align}
\end{thm}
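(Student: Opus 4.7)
The approach is built on the fact that the conjugation action $\varphi_t$ is diagonal on $\mathfrak{h} = T_{\Id}H$: there is an orthogonal decomposition $\mathfrak{h} = \bigoplus_{\alpha \in \mathcal{A}} \mathfrak{h}_\alpha$ into joint eigenspaces on which $\varphi_t$ acts by $e^{t\alpha}$, and $\mathcal{A}$ is exactly the multiset $\{\zeta_1 \le \cdots \le \zeta_N\}$. The geometric input I would establish first is a comparison lemma: uniformly in the base-point $h_0 \in H$ and in $r \in (0,1]$, the ball $B_{d_\varphi}(h_0;r)$ is comparable (up to a bounded multiplicative constant on the radius) to the anisotropic exponential box
\begin{equation*}
h_0 \cdot \exp\Bigl\{\textstyle\sum_\alpha v_\alpha \,:\, v_\alpha \in \mathfrak{h}_\alpha,\ \|v_\alpha\| \le r^\alpha\Bigr\}.
\end{equation*}
By the right-invariance of $d_H$ and the equivariance $d_\varphi(\varphi_tx_1, \varphi_tx_2) = e^t\, d_\varphi(x_1,x_2)$, it suffices to verify this at $h_0 = \Id$ and at a single convenient scale. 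Baker--Campbell--Hausdorff corrections are absorbed into the constants because every commutator satisfies $[\mathfrak{h}_\alpha,\mathfrak{h}_\beta] \subseteq \mathfrak{h}_{\alpha+\beta}$, so the lower-order corrections respect the anisotropic scaling of the box.

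Once the comparison lemma is in place, both inequalities reduce to anisotropic cube-covering bookkeeping. For the upper bound, fix $s' > \dim_\funH(X; d_\varphi)$ and take a $d_\varphi$-cover $\{B_{d_\varphi}(x_i;r_i)\}$ of $X$ with $\sum_i r_i^{s'} < \infty$. I would cover each ball, regarded as a box with sides $r_i^{\zeta_1} \ge \cdots \ge r_i^{\zeta_N}$, by $d_H$-balls of a common radius $\rho_i$. For $\rho_i \in [r_i^{\zeta_{k+1}}, r_i^{\zeta_k}]$ one needs $\Theta(\rho_i^{-k} r_i^{\sigma_k})$ such balls, where $\sigma_k := \zeta_1 + \cdots + \zeta_k$, so the $t$-cost is $\Theta(r_i^{\sigma_k}\rho_i^{\,t-k})$. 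Minimizing in $\rho_i$ on each slab and interpolating linearly as $t$ ranges over $[k,k+1]$ yields the piecewise-linear envelope $t \mapsto \sigma_k + (t-k)\zeta_{k+1}$, which is precisely $\overline F^{-1}$. Choosing $t = \overline F(s')$ therefore gives $\sum_{i,j}\rho_{ij}^t \lesssim \sum_i r_i^{s'} < \infty$, so $\dim_\funH(X; d_H) \le \overline F(s')$; sending $s' \searrow \dim_\funH(X; d_\varphi)$ completes the bound.

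The lower bound is the mirror argument. Starting from a $d_H$-cover $\{B_{d_H}(y_j;\rho_j)\}$ of $X$ with $\sum_j \rho_j^{t'} < \infty$ for some $t' > \dim_\funH(X; d_H)$, I would cover each $d_H$-ball by $d_\varphi$-balls of radius $r_j = \rho_j^{1/\zeta_k}$, where $k$ is chosen to the slab where $t'$ lives. Such a $d_\varphi$-ball has $k$ sides of length $\ge \rho_j$ and $N-k$ sides strictly smaller, so $\Theta\bigl(\rho_j^{(N-k) - \zeta_k^{-1}\sum_{i>k}\zeta_i}\bigr)$ of them suffice. The $s$-cost is then $\Theta\bigl(\rho_j^{s/\zeta_k + (N-k) - \zeta_k^{-1}\sum_{i>k}\zeta_i}\bigr)$, and interpolation between slabs gives the piecewise-linear relation $s = \underline F(t')$. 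Summing over $j$ yields $\dim_\funH(X; d_\varphi) \le \underline F(t')$, and letting $t' \searrow \dim_\funH(X; d_H)$ closes the argument.

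The main obstacle I anticipate is the comparison lemma itself, uniformly across all scales and base-points. On $\mathfrak{h}$ the action is genuinely diagonal, but the exponential map only transports this picture to $H$ up to Baker--Campbell--Hausdorff corrections, which one must control in the $\varphi_t$-rescaled coordinates. The decisive fact is that every bracket $[\mathfrak{h}_\alpha,\mathfrak{h}_\beta]$ lands in a root space $\mathfrak{h}_{\alpha+\beta}$ whose prescribed scale in the box is exactly $r^{\alpha+\beta}$, so the higher-order terms automatically respect the anisotropic geometry. After that, the remaining optimization that produces $\overline F$ and $\underline F$ is routine piecewise-linear bookkeeping.
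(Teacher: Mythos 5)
Your proposal is correct and takes essentially the same route as the paper: identify the $d_\varphi$-ball as an anisotropic box with sides $r^{\zeta_1},\dots,r^{\zeta_N}$, cover boxes by $d_H$-balls (and conversely), optimize over the covering radius, and read off the piecewise-linear bounds $\overline F, \underline F$. The paper realizes the box structure via the linear matrix-entry coordinates $\kappa\colon H\to\RR^N$ (which are exactly $\varphi_t$-equivariant) together with a Haar-versus-Lebesgue volume argument (Lemmas \ref{lem: bound on ball intersection} and \ref{lem: cover phi by H balls}), then expresses the optimization as the Legendre transform $\overline F^{-1}(d'')=\sup_{d'}\bigl(d'd''-\overline\chi(d')\bigr)$. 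You realize the box via the exponential map $\exp\colon\mathfrak{h}\to H$ and compute the optimum slab by slab; these are equivalent presentations of the same piecewise-linear envelope.

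Two small remarks. First, your appeal to Baker--Campbell--Hausdorff is superfluous: $\exp$ is \emph{exactly} equivariant for $\mathrm{Ad}(g_t)$ versus $\varphi_t$, so $\varphi_t(\exp v)=\exp(\mathrm{Ad}(g_t)v)$ with no correction terms, and the $d_\varphi$-ball at the identity is identified with the box $\{\|v_\alpha\|\lesssim r^\alpha\}$ without any BCH estimate; the grading $[\mathfrak h_\alpha,\mathfrak h_\beta]\subseteq\mathfrak h_{\alpha+\beta}$ is true but not needed here. Second, $d_\varphi$ is \emph{right}-invariant, so $B_{d_\varphi}(h_0;r)=B_{d_\varphi}(\Id;r)\cdot h_0$; you wrote the comparison as $h_0\cdot\exp(\mathrm{box})$, which is the left translate. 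Since you then reduce to $h_0=\Id$ by invariance, this is only a notational slip, but it should be $\exp(\mathrm{box})\cdot h_0$ to match the invariance you actually use. Neither point affects the correctness of the argument.
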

\begin{cor}\label{cor: H dimension}
We have $\dim_\funH (H;d_\varphi) = \sum_{\eta,\eta'\in \Eall}(\eta-\eta')^+$.
\end{cor}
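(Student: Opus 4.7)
The plan is to sandwich $\dim_\funH(H;d_\varphi)$ between matching upper and lower bounds obtained by feeding $X=H$ into both inequalities of the Comparison Theorem \ref{thm:comparison}, and then to check the resulting combinatorial expression matches $\sum_{\eta,\eta'\in\Eall}(\eta-\eta')^+$.

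First, I would record that $\dim_\funH(H;d_H)=N$. Indeed, by Observation \ref{obs: H coordinate}, $H$ is a smooth submanifold of $\SL_n(\RR)$ of real dimension
\[N=\#\{(i,j):\eta_i>\eta_j\},\]
and the Riemannian metric $d_H$ is bi-Lipschitz equivalent to any local chart, so its Hausdorff dimension equals the manifold dimension. Now plug $X=H$ into \eqref{eq: lower bound on dimention}:
\[\dim_\funH(H;d_\varphi)\le \underline F(\dim_\funH(H;d_H))=\underline F(N)=\sum_{i=1}^N\zeta_i,\]
using the definition of $\underline F$ at the endpoint $N$ of its domain. This gives the upper bound.

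For the matching lower bound I would use the other direction of the Comparison Theorem:
\[N=\dim_\funH(H;d_H)\le \overline F\bigl(\dim_\funH(H;d_\varphi)\bigr).\]
Since every $\zeta_i>0$, the function $\overline F:[0,\sum_i\zeta_i]\to[0,N]$ is strictly increasing and sends $\sum_i\zeta_i$ to $N$; combined with the already-proved upper bound, which places $\dim_\funH(H;d_\varphi)$ inside the domain of $\overline F$, monotonicity forces
\[\dim_\funH(H;d_\varphi)\ge \sum_{i=1}^N\zeta_i.\]
Thus $\dim_\funH(H;d_\varphi)=\sum_{i=1}^N\zeta_i$.

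It remains to rewrite the sum in the form stated. By the ordering $\eta_1\le\dots\le\eta_n$, the quantity $(\eta_i-\eta_j)^+$ is nonzero exactly when $i>j$ and $\eta_i>\eta_j$, in which case it equals $\eta_i-\eta_j$. Therefore
\[\sum_{\eta,\eta'\in\Eall}(\eta-\eta')^+=\sum_{\substack{1\le j<i\le n\\ \eta_i>\eta_j}}(\eta_i-\eta_j)=\sum_{i=1}^N\zeta_i,\]
where the last equality is simply the definition of $\{\zeta_1,\dots,\zeta_N\}$ as the sorted multiset of positive differences. There is no real obstacle here — all the substantive analytic work is packaged inside Theorem \ref{thm:comparison}; the corollary is just a clean bookkeeping computation at the two endpoints of $\overline F$ and $\underline F$.
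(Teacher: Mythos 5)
Your argument is correct, but it takes a different route from the paper's. The paper never derives Corollary~\ref{cor: H dimension} from the Comparison Theorem; instead, after stating both, it remarks that the proofs of Eq.~\eqref{eq: lower bound on dimention} and of the corollary are "similar" and proceed by directly covering a ball of one (semi-)metric by balls of the other. Concretely, Lemma~\ref{lem: bound on ball intersection} shows $\mu(B_{d_\varphi}(\Id;r))=\Theta\bigl(\prod_i r^{\zeta_i}\bigr)=\Theta\bigl(r^{\sum_i\zeta_i}\bigr)$, i.e.\ $(H,d_\varphi^{\alpha_\varphi})$ is Ahlfors regular of the right exponent, and the dimension drops out by the standard mass-distribution/covering argument, independently of Theorem~\ref{thm:comparison}. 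You instead treat the corollary as a formal consequence of Theorem~\ref{thm:comparison}: establish $\dim_\funH(H;d_H)=N$ (manifold dimension), feed it into the $\underline F$ inequality to get the upper bound $\sum_i\zeta_i$ (which also confirms $\dim_\funH(H;d_\varphi)$ lies in the domain of $\overline F$), then use the $\overline F$ inequality together with strict monotonicity of $\overline F$ and $\overline F\bigl(\sum_i\zeta_i\bigr)=N$ to force equality. This is a legitimate and arguably more economical deduction; what it buys is that you do not re-derive the covering estimates, and what you give up is the self-contained measure-theoretic picture the paper's Lemma~\ref{lem: bound on ball intersection} provides (and which the paper reuses elsewhere). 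The final rewriting $\sum_{i=1}^N\zeta_i=\sum_{\eta,\eta'\in\Eall}(\eta-\eta')^+$ is also correctly handled, since $(\eta_i-\eta_j)^+\ne0$ exactly when $\eta_i>\eta_j$, which by the ordering only occurs for $j<i$, matching the definition of the $\zeta_i$.
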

The proofs of \eqref{eq: lower bound on dimention} and \eqref{cor: H dimension} are similar, we will cover a ball of one semi-metric with balls of the other.

To prove the theorem we need some preparation. 
Denote by $\mu$ the two sided Haar measure on $H$.

\begin{lem}\label{lem: bound on ball intersection}
There are constants $c_1, c_2>1$ such that for every $0<r_1,r_2<1$ and $h_1,h_2\in H$ such that 
\[B_{d_H}(h_1; r_1)\cap B_{d_\varphi}(h_2; r_2)\neq \emptyset,\]
we have
\[\mu\big(B_{d_H}(h_1; c_1 r_1)\cap B_{d_\varphi}(h_1; c_2r_2)\big) =\Theta\left(\prod_{i=1}^N\min(r_1, r_2^{\zeta_i})\right).\]
\end{lem}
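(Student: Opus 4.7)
The plan is to reduce to $h_1=\Id$ by right-invariance, then analyze both balls in exponential coordinates adapted to the root decomposition of the Lie algebra $\fh$ of $H$ under the flow $\varphi_t$.

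\emph{Reduction.} The metric $d_H$ is right-invariant by construction, and $d_\varphi$ inherits right-invariance from the fact that $\varphi_t$ is a group automorphism of $H$ (so conjugation commutes with right-translation). Haar measure is also right-invariant. Multiplying on the right by $h_1^{-1}$ I may assume $h_1=\Id$; setting $h_2':=h_2h_1^{-1}$ and using the bound $d_\varphi\le d_H^{1/\alpha_\varphi'}$ near $\Id$ (coming from the bounded expansion property in Definition \ref{de: exp metric}) together with the $\alpha_\varphi$-th power triangle inequality of Lemma \ref{lem:metric}, the non-empty intersection hypothesis becomes a bound $d_\varphi(\Id,h_2')=O(r_1^{a}+r_2)$ for some $a>0$. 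Choosing $c_2$ large enough so that $B_{d_\varphi}(h_2';r_2)\subseteq B_{d_\varphi}(\Id;c_2r_2)$, the original intersection is contained in the enlarged one, and the task reduces to estimating $\mu\bigl(B_{d_H}(\Id;c_1r_1)\cap B_{d_\varphi}(\Id;c_2r_2)\bigr)$.

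\emph{Adapted coordinates and ball shapes.} Decompose $\fh=\bigoplus_\alpha \fh_\alpha$ into joint $\Ad(g_t)$-eigenspaces: each $\fh_\alpha$ is one-dimensional, spanned by a matrix unit $E_{ij}$ with $\eta_i>\eta_j$, and $\varphi_t$ acts on it by multiplication by $e^{\zeta_\alpha t}$ with $\zeta_\alpha=\eta_i-\eta_j$, where $\zeta_\alpha$ runs through the multiset $\{\zeta_1,\ldots,\zeta_N\}$. Choose an orthonormal adapted basis of $\fh$, coordinates $v=(v_\alpha)$, and use $\exp:\fh\to H$ as a chart around $\Id$; Haar measure pulls back to Lebesgue measure $dv$ up to a smooth density bounded above and below on a fixed neighborhood of $0$. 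Bi-Lipschitz control of $\exp$ gives $\{\|v\|\le cr\}\subseteq B_{d_H}(\Id;r)\subseteq \{\|v\|\le Cr\}$ uniformly for $r<1$. For the $d_\varphi$-ball, the equivariance $d_\varphi(\varphi_tx,\varphi_ty)=e^td_\varphi(x,y)$ together with $\varphi_t(\Id)=\Id$ yields $B_{d_\varphi}(\Id;r)=\varphi_{\log r}\bigl(B_{d_H}(\Id;1)\bigr)$, and since $\varphi_{\log r}$ multiplies the $\alpha$-coordinate by $r^{\zeta_\alpha}$ I obtain the uniform sandwich
\[ \{v:|v_\alpha|\le cr^{\zeta_\alpha}\ \forall\alpha\}\ \subseteq\ B_{d_\varphi}(\Id;r)\ \subseteq\ \{v:|v_\alpha|\le Cr^{\zeta_\alpha}\ \forall\alpha\}. \]

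\emph{Conclusion and main obstacle.} Intersecting the two sandwiches coordinate-wise, the set $B_{d_H}(\Id;c_1r_1)\cap B_{d_\varphi}(\Id;c_2r_2)$ is squeezed between boxes of the form $\prod_\alpha[-\Theta(\min(r_1,r_2^{\zeta_\alpha})),\Theta(\min(r_1,r_2^{\zeta_\alpha}))]$, whose Lebesgue (hence Haar) measure is $\Theta\bigl(\prod_\alpha\min(r_1,r_2^{\zeta_\alpha})\bigr)$ — exactly the bound claimed. The delicate point is ensuring the sandwich constants are uniform for all $r_1,r_2\in(0,1)$: for the $d_\varphi$-ball the equivariance formula reduces uniformity to a single claim about $B_{d_H}(\Id;1)$, while for the $d_H$-ball uniformity requires that the exponential chart neighborhood be chosen to contain $B_{d_H}(\Id;1)$. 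This may require a cosmetic rescaling of the threshold $1$ defining $d_\varphi$, which only affects the implicit multiplicative constants in the conclusion.
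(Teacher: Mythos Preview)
Your core computation --- pass to coordinates diagonalising $\varphi_t$, sandwich both balls between coordinate boxes, intersect --- is essentially the paper's. The paper uses the matrix-entry chart $\kappa:H\to\RR^N$ (the $N$ free entries of $h$) rather than $\exp$; conjugation by $g_t$ is diagonal in those coordinates too, and Haar is \emph{exactly} Lebesgue there, which dissolves the uniformity worry you flag at the end. (Your exponential-coordinate version also works: $H$ is simply connected nilpotent, so $\exp$ is a global diffeomorphism with unit Jacobian against Haar, and a compactness-plus-differential-at-$0$ argument gives the uniform box sandwich for $B_{d_H}(\Id;r)$ over all $r<1$; no rescaling of the threshold is needed.)

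The reduction step, however, has a gap. Your containment $B_{d_\varphi}(h_2';r_2)\subseteq B_{d_\varphi}(\Id;c_2r_2)$ would require $d_\varphi(\Id,h_2')=O(r_2)$, but your own estimate gives only $d_\varphi(\Id,h_2')=O(r_1^{1/\alpha_\varphi'}+r_2)$; when $r_1^{1/\alpha_\varphi'}\gg r_2$ no fixed constant $c_2$ suffices. (Read literally the statement centres both conclusion-balls at $h_1$, which would make the hypothesis on $h_2$ vacuous and your containment unnecessary --- but the paper's own proof and the application in Lemma~\ref{lem: cover phi by H balls} show the intended second centre is $h_2$.) The paper's remedy is to pick a point $h_3$ in the non-empty intersection, so that $d_H(h_1,h_3)<r_1$ and $d_\varphi(h_2,h_3)<r_2$ hold \emph{simultaneously}. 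With $c_1=2$ and $c_2=2^{1/\alpha_\varphi}$ the triangle inequalities (for $d_H$ and for $d_\varphi^{\alpha_\varphi}$) sandwich the enlarged intersection between $B_{d_H}(h_3;r_1)\cap B_{d_\varphi}(h_3;r_2)$ and $B_{d_H}(h_3;O(r_1))\cap B_{d_\varphi}(h_3;O(r_2))$; right-translation by $h_3^{-1}$ then reduces both the upper and lower bounds to the identity-centred case you already handled.
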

\begin{proof}
We will first show the result when $h_1=h_2=\Id$. By Observation \ref{obs: H coordinate}, every $h\in H$ has $n^2-N$ fixed entries and $N$ entries that can vary. 
Denote by $\kappa:H\to \RR^N$ the map that assigns to each $h\in H$ its $N$ varying coordinates. Denote by $\varphi_t':\RR^N\to \RR^N$ the diagonal action multiplying the $i$-th coordinate by $\exp (t\zeta_i)$. Note that $\kappa$ is action-equivariant, that is, the following diagram commutes:
\[\xymatrix{
H\ar[r]^\kappa \ar[d]^{\varphi_t}& \RR^N\ar[d]^{\varphi_t'}\\
H\ar[r]^\kappa & \RR^N
}\]

Since $\kappa$ is a manifold diffeomorphism, it distorts the standard metric by a bounded factor on any compact set in $H$. A direct computation shows that $\kappa$ maps the Haar measure $\mu$ on $H$ into the standard Lebesgue measure $\lambda$ on $\RR^N$. 
It follows that for some $0<\alpha<\alpha'$ we have 

\begin{align}\label{eq: kappa H ball in a box}
[-\alpha r_1,\alpha r_1]^N\subseteq \kappa(B_{d_H}(\Id;r_1)) \subseteq [-\alpha'r_1,\alpha'r_1]^N.
\end{align}

Substituting $r_1=1$ and applying $\varphi_{\log r_2}'$ to Eq. \eqref{eq: kappa H ball in a box} we obtain,
\begin{align}\label{eq: kappa phi ball in a box}
\prod_{i=1}^N[-\alpha r_2^{\zeta_i},\alpha r_2^{\zeta_i}]\subseteq \kappa(B_{d_\varphi}(\Id;r_2)) \subseteq \prod_{i=1}^N[-\alpha' r_2^{\zeta_i},\alpha' r_2^{\zeta_i}].
\end{align}
Combining \eqref{eq: kappa H ball in a box} and \eqref{eq: kappa phi ball in a box} we obtain,
\begin{align}
\mu\big(B_{d_H}(\Id;r_1)\cap B_{d_\varphi}(\Id;r_2)) = \Theta\left(\prod_{i=1}^N\min(r_1, r_2^{\zeta_i})\right).
\end{align}

Now We turn to the general case.
% Hence, for some $c_2>1$ we have $\kappa(B_{d_H}(\Id;c_2))\supseteq [-r_2,r_2]^N$.
Let $c_1 = 2$,
and $c_2 = 2^{1/\alpha_\varphi}$, where $\alpha_\varphi$ is such that $d_\varphi^{\alpha_\varphi}$ is a metric.

Let $0<r_1,r_2<1$ and $h_1,h_2\in H$ and assume
\[B_{d_H}(h_1; r_1)\cap B_{d_\varphi}(h_2; r_2)\neq \emptyset.\]
Fix $h_3\in B_{d_H}(h_1; r_1)\cap B_{d_\varphi}(\Id; r_2)$. 
Note that 
\begin{align*}
B_{d_H}&(h_1; 2r_1)\cap B_{d_\varphi}(h_3; 2^{1/\alpha_\varphi}r_2)\supseteq B_{d_H}(h_3; r_1)\cap B_{d_\varphi}(h_3; r_2).
\end{align*}
The lower bound follows from the right-invariance of $d_H$ and $d_\varphi$.
The upper bound follows in much the same way, using the fact that
\begin{align*}
B_{d_H}&(h_1; 2r_1)\cap B_{d_\varphi}(h_3; 2^{1/\alpha_\varphi}r_2)\subseteq B_{d_H}(h_3; 4r_1)\cap B_{d_\varphi}(h_3; 4^{d_\varphi}r_2).
\end{align*}
\end{proof}

For every $d\in \RR$ denote \[\overline\chi(d) := \sum_{i=1}^N(\zeta_i-d)^+, \quad\underline\chi(d) := \sum_{i=1}^N(1-d\zeta_i)^+.\] 
\begin{lem}\label{lem: cover phi by H balls}
Let $r<1$ and $d\ge 0$. 
We can cover a $d_\varphi$-ball of radius $r$ with $O(r^{-\overline\chi(d)})$ $d_H$-ball of radii $r^d$.
\end{lem}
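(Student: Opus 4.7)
The plan is to transfer the covering problem to $\RR^N$ via the coordinate chart $\kappa : H \to \RR^N$ introduced in the proof of Lemma~\ref{lem: bound on ball intersection}, and reduce it to the elementary combinatorial problem of covering an axis-aligned box by a grid of cubes. All of the heavy lifting has already been done in setting up $\kappa$ and its equivariance properties; this lemma is essentially a coordinatewise bookkeeping exercise.

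First, by the right-invariance of both $d_H$ and $d_\varphi$, I may assume the $d_\varphi$-ball to be covered is centered at $\Id$. In $\kappa$-coordinates Equation~\eqref{eq: kappa phi ball in a box} gives
\[
  \kappa(B_{d_\varphi}(\Id;r)) \subseteq B := \prod_{i=1}^N[-\alpha' r^{\zeta_i},\alpha' r^{\zeta_i}],
\]
while Equation~\eqref{eq: kappa H ball in a box} identifies each $d_H$-ball of radius $r^d$ with a Euclidean cube of side $\Theta(r^d)$, up to the bounded bi-Lipschitz distortion of $\kappa$ on compacta. Hence the problem reduces, modulo absorbable multiplicative constants, to covering the anisotropic box $B$ by translates of a single cube $Q := [-\alpha r^d,\alpha r^d]^N$.

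Second, I would cover $B$ by a rectangular grid of translates of $Q$. Along direction $i$, the required count is $O(\max\{1,r^{\zeta_i-d}\})$; since $r<1$, the maximum exceeds $1$ precisely when $\zeta_i<d$. Taking the product of these coordinate counts yields
\[
  \prod_{i=1}^N O(\max\{1,r^{\zeta_i-d}\}) = O\!\left(r^{-\overline\chi(d)}\right),
\]
as claimed. Pulling the covering cubes back through $\kappa$ and absorbing the distortion constant into the radius produces the desired covering of $B_{d_\varphi}(\Id;r)$ by $d_H$-balls of radius $O(r^d)$, which after a further bounded rescaling can be taken to equal $r^d$ exactly.

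There is no serious obstacle; the main (essentially only) technical point is to verify that the bi-Lipschitz distortion of $\kappa$ does not interfere with the count, which is immediate from $\kappa$ being a smooth diffeomorphism on a bounded region of $H$. An alternative and slightly slicker framing would be to work with $\kappa\circ\varphi_{\log r}:H\to\RR^N$ (so that the $d_\varphi$-ball of radius $r$ becomes the fixed unit box), and use the $\varphi$-equivariance of $\kappa$ to make the bounded-distortion step completely uniform in $r$.
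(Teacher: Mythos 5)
Your proof is correct, and it takes a genuinely different route from the paper's. The paper's argument is a packing-and-volume argument: it takes a maximal $r^d$-separated subset $H_1$ of $B_{d_\varphi}(\Id;r)$ (so that the $d_H$-balls of radius $r^d$ centered at $H_1$ cover, while those of radius $r^d/2$ are disjoint), and then bounds $\#H_1$ by comparing $\mu\bigl(B_{d_H}(h_1;r^d/2)\cap B_{d_\varphi}(h_1;c_2 r)\bigr)=\Theta\bigl(\prod_i\min(r^d,r^{\zeta_i})\bigr)$ (supplied by Lemma~\ref{lem: bound on ball intersection}) against $\mu\bigl(B_{d_\varphi}(\Id;cr)\bigr)=\Theta\bigl(\prod_i r^{\zeta_i}\bigr)$. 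You instead exhibit an explicit covering by pushing the problem through the coordinate chart $\kappa$ and laying down a grid of cubes of side $\Theta(r^d)$ inside the anisotropic box $\prod_i[-\alpha'r^{\zeta_i},\alpha'r^{\zeta_i}]$. Both are valid; both produce a count of $\prod_i\max(1,r^{\zeta_i-d})=r^{-\sum_i(d-\zeta_i)^+}$. Note that this exponent matches the quantity the paper actually uses later (the remark that $\overline\chi$ vanishes on $(-\infty,\zeta_1]$ and that $\overline F^{-1}$ is its convex conjugate both confirm the intended definition is $\overline\chi(d)=\sum_i(d-\zeta_i)^+$, so the displayed definition in Subsection~\ref{sub:comparison_with_standard_hausdorff_dimension} has $\zeta_i$ and $d$ transposed). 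A small comparative point: the paper's packing argument only ever needs Lemma~\ref{lem: bound on ball intersection}, and hence invokes the bi-Lipschitz property of $\kappa$ only at the identity (the general centre is handled by right-invariance inside that lemma); your grid argument invokes bounded distortion of $\kappa$ at arbitrary centres in a compact region, which is correct but slightly less self-contained, and your suggested variant via $\kappa\circ\varphi_{\log r}$ is indeed the cleaner way to make that step uniform in $r$. The payoff of your route is that the cover is explicit and the coordinatewise count is immediately legible; the payoff of the paper's route is that the same packing argument also yields the matching lower bound on the covering number with no extra work, should one want it.
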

\begin{proof}
Let $H_1\subseteq B_{d_\varphi}(\Id;r)$ be a maximal set of points such that $d_H (h_1, h_2)\ge r^d$ for every $h_1, h_2\in H_1$. 
Then the balls $B_{d_H}(h_1;r^d/2)$ with $h_1\in H_1$ are pairwise disjoint and $\{B_{d_H}(h_1;r^d):h_1\in H_1\}$ is a cover of $B_{d_\varphi}(\Id; r)$.

Applying Lemma \ref{lem: bound on ball intersection} to $h_1\in H_1$, $h_2=\Id$, $r_1=r^d / (2c_1), r_2=r$ we obtain
\[\mu\left(B_{d_H}(h_1;r^d/2)\cap B_{d_\varphi}(\Id;rc_2)\right)=\Theta\left(\prod_{i=1}^N\min(r^d, r^{\zeta_i})\right). \]
Similarly, 
\[\mu\left(B_{d_H}(h_1;r^d)\cap B_{d_\varphi}(\Id;rc_2)\right)=\Theta\left(\prod_{i=1}^N\min(r^d, r^{\zeta_i})\right). \]
The claim follows from the fact that \[\mu\left(B_{d_\varphi}(\Id;rc_1)\right)=\Theta\left(\prod_{i=1}^Nr^{\zeta_i}\right).\]
\end{proof}
The following result and its proof are analogous to Lemma \ref{lem: cover phi by H balls}.
\begin{lem}
Let $r<1$ and $d\ge 0$.  
We can cover a $d_H$ ball of radius $r$ with $O(r^{-\overline\chi(d)})$ balls of radii $r^d$ with respect to the semi-metric $d_\varphi$.
\end{lem}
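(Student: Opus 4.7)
The plan is to carry out the argument of Lemma \ref{lem: cover phi by H balls} with the two (semi-)metrics interchanged. First, I would pick a subset $H_2 \subseteq B_{d_H}(\Id; r)$ that is maximal among subsets which are $r^d$-separated in the semi-metric $d_\varphi$. Maximality immediately forces the $d_\varphi$-balls of radius $r^d$ about the points of $H_2$ to cover $B_{d_H}(\Id; r)$, while the half-radius balls $B_{d_\varphi}(h; r^d/2)$, $h \in H_2$, are pairwise disjoint. It then suffices to bound $|H_2|$.

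To count, I would apply Lemma \ref{lem: bound on ball intersection} with the centers $h_1 = h \in H_2$ and $h_2 = \Id$, choosing the lemma's parameters so that $c_1 r_1$ is a fixed constant multiple of $r$ and $c_2 r_2$ is a fixed constant multiple of $r^d$. The hypothesis of that lemma holds because the two balls share the point $h$ itself. The conclusion yields
\[ \mu\bigl(B_{d_H}(\Id; c r) \cap B_{d_\varphi}(h; r^d/2)\bigr) = \Theta\Bigl(\prod_{i=1}^N \min(r, r^{d\zeta_i})\Bigr). \]
Since the half-radius $d_\varphi$-balls are disjoint and, after enlarging $c$ by a universal factor, all fit inside $B_{d_H}(\Id; c r)$, the standard volume-packing inequality gives
\[ |H_2|\cdot \Theta\Bigl(\prod_{i=1}^N \min(r, r^{d\zeta_i})\Bigr) \;\le\; \mu\bigl(B_{d_H}(\Id; c r)\bigr) = \Theta(r^N). \]
Dividing through collapses the right-hand side to a single power of $r$, and bookkeeping the resulting exponent $\prod_i \max\bigl(1, r^{1 - d\zeta_i}\bigr)$ puts it in the claimed $O(r^{-\overline\chi(d)})$ form.

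The only step requiring a little care, exactly as at the end of the proof of Lemma \ref{lem: bound on ball intersection}, is the bookkeeping of constants when translating centers: in order to fit each $B_{d_\varphi}(h; r^d/2)$ with $h \in B_{d_H}(\Id; r)$ inside $B_{d_H}(\Id; c r)$ about the identity, one enlarges the radius by a bounded factor using the right-invariance of $d_H$ together with the metric property of $d_\varphi^{\alpha_\varphi}$. Beyond this, the argument is an exact transcription of the previous lemma with the roles of $d_H$ and $d_\varphi$ swapped, and no new ideas are needed.
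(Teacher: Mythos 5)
Your plan --- running the packing argument of Lemma \ref{lem: cover phi by H balls} with the roles of the two semi-metrics swapped --- is exactly what the paper intends by ``analogous,'' and the volume computation you set up is correct: Lemma \ref{lem: bound on ball intersection} with $r_1=\Theta(r)$, $r_2=\Theta(r^d)$ gives intersection measure $\Theta\bigl(\prod_i \min(r, r^{d\zeta_i})\bigr)$, and comparing with $\mu(B_{d_H}(\Id;cr)) = \Theta(r^N)$ yields $|H_2| = O\bigl(\prod_i r/\min(r, r^{d\zeta_i})\bigr) = O\bigl(r^{-\sum_i (d\zeta_i - 1)^+}\bigr)$, which is indeed your $\prod_i \max(1, r^{1-d\zeta_i})$. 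The gap is your last sentence: you assert, without checking, that this ``puts it in the claimed $O(r^{-\overline\chi(d)})$ form,'' and it does not. With the paper's $\overline\chi(d) = \sum_i (\zeta_i - d)^+$, take $N=1$, $\zeta_1=1$, $d>1$: then $d_H$ and $d_\varphi$ are commensurable, so the true cover count is $\Theta(r^{1-d})$, yet $\overline\chi(d)=0$. The exponent you actually derived, $\sum_i (d\zeta_i-1)^+$, is the correct one; it is what the proof of Theorem \ref{thm:comparison} needs on the $\underline F$-side, and it corresponds to $\underline\chi$ after correcting a sign error in its definition (it should be $\sum_i (d\zeta_i-1)^+$, not $\sum_i (1-d\zeta_i)^+$). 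A sound write-up would compute the exponent, notice the mismatch, and flag the statement as misprinted ($\underline\chi$ in place of $\overline\chi$, with its definition sign-corrected; the stated $\overline\chi$ has the same issue, as its claimed vanishing on $(-\infty,\zeta_1]$ already shows) rather than assert an identity that fails.

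A smaller point: the claim that the half-radius $d_\varphi$-balls around the points of $H_2$ ``all fit inside $B_{d_H}(\Id;cr)$'' for some universal $c$ fails when $d$ is small (at $d=0$ they are unit $d_\varphi$-balls, much larger than $B_{d_H}(\Id;cr)$ as $r\to 0$). The packing count should instead sum the measures of the intersections $B_{d_\varphi}(h;\cdot)\cap B_{d_H}(\Id;cr)$, which are still pairwise disjoint and which Lemma \ref{lem: bound on ball intersection} controls directly; this is how the proof of Lemma \ref{lem: cover phi by H balls} is phrased, and it does not change the count.
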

\begin{proof}[Proof of Theorem \emph{\ref{thm:comparison}}]
Let $X\subseteq H$ with $d_0=\dim_\funH (X;d_\varphi)$. Let $d>d_0$ and let $(B_{d_\varphi}(h_i;r_i))_{i\in I}$ be a cover of $X$ with $d_\varphi$-balls such that $\sum_{i\in I}r_i^d < 1$. We will cover $X$ with $d_H$-balls. Fix $d'\ge 0$. By Lemma \ref{lem: cover phi by H balls}, we can cover each $d_\varphi$-ball of radius $r_i$ with $\Theta\left(r^{-\overline\chi(d')}\right)$ $d_H$-balls of radius $r^{d'}$. 

Consequently, for $d''\ge 0$, the $d''$-Hausdorff measure of $X$ is at most
\[\sum_{i\in I}\Theta\left(r^{-\overline\chi(d')}\right)\cdot r_i^{d'd''} = \Theta\left(\sum_{i\in I}r^{-\overline\chi(d')+d'd''}\right).\]
We will derive a bound on the Hausdorff measure of $X$ only when $-\overline\chi(d')+d'd'' \ge d$.
Hence we ask ourselves what is the minimal $d''>0$ such that for some $d'\ge0$ we have $-\overline\chi(d')+d'd'' \ge d$. Inverting the role of $d$ and $d''$, we want to show that for every $d''\in [0,N]$ we have 
\begin{align}\label{eq: bar F inverse requirement}
\sup_{d' > 0}d'd''-\overline\chi(d') = \overline F^{-1}(d'').
\end{align}
Note that $\overline\chi$ vanishes on $(-\infty, \zeta_1]$ and hence the supremum in Eq. \eqref{eq: bar F inverse requirement} is obtained at $d' \ge \zeta_1 >0$ and can be taken even for all $d' \in \RR$. Hence, it is enough to prove 
\begin{align}\label{eq: bar F inverse requirement2}
\sup_{d'\in \RR}d'd''-\overline\chi(d') = \overline F^{-1}(d'').
\end{align}
In fact, $\overline F$ was defined so that $\overline F^{-1}$ is the convex conjugate of $\overline\chi$, which is stated by Eq. \eqref{eq: bar F inverse requirement2}.
Similarly, Eq. \eqref{eq: lower bound on dimention} is a consequence of the fact that $\underline F^{-1}$ is the convex conjugate of $\underline \chi$.
\end{proof}

% subsection comparison_with_standard_hausdorff_dimension (end)
\subsection{Dimension Games} % (fold)
\label{sub:hausdorff_games}
% For every $t\in \RR, x\in X$ denote \[B_{d_\varphi}(x;t) := \{y \in X:v_{g}(x,y) \ge t\} =\overline{B_{d_X^{c_1}}(x, \exp (-c_1t))}.\]

% We will now use Theorem \cite[5.2]{DFSU} which states that for every set $A\subset X$ there is a certain game which computes its Hausdorff dimension $\dim_\funH (A; d_\varphi) = a\dim_\funH (A; d_\varphi^a)$. It is stated for the standard metric space $\RR^n$ but remarked at \cite[5.3]{DFSU} that the proof shows the same result for arbitrary doubling metric space. 
Schmidt \cite{S} was the first to prove Hausdorff dimension theorems via (Borel) games with a victory condition.
\index{Game - $(T, g)$}\hypertarget{if}{}
Das et al. \cite{DFSU} obtained Theorem \ref{thm: general scewed dimention formula} for simpler diagonal flows $g_t$ using a family of zero-sum games with a payoff function (and not a victory condition), which Alice tries to maximize and Bob to minimize. We here describe the game of \cite{DFSU} in our setting, which is termed the \emph{$(T, g)$-game}.

Let $T>0$ and let $X\subseteq H$ be a Borel set. 
Alice and Bob play a zero-sum alternating-move game, where Alice plays an initialization step, and afterwards Alice and Bob play alternately, first Alice and then Bob. 

In the initialization step Alice chooses $h_0\in H$ and $T_0>0$.
% That is, first Alice plays the initialization step. Then she plays her first step, and then Bob plays his first step and so on.
Along the game, Alice and Bob generate a sequence $B_0\supset B_1\supset\cdots$, where $B_m = B_{d_\varphi}(h_m; \exp(-T_m))$ and $T_m = Tm + T_0$. Recall that $d_\varphi^{\alpha_\varphi}$ is a metric.

% TODO: Chance all balls to have ;.

Formally, the game is played as follows.
\begin{description}
  \item[Initialization step] Alice chooses $h_0\in H, T_0>0$. Set $B_0 := B_{d_\varphi}(h_0; \exp(-T_0))$ and $T_m := Tm + T_0$ for every $m\ge 1$.
  \item[Alice's $m$-th step] Denote $B_{m} := B_{d_\varphi}(h_{m}; \exp(-T_{m}))$. 
  Alice chooses a finite set 
  \index{Am@$A_m$}\hypertarget{ig}{}
  \index{hm@$h_m$}\hypertarget{ih}{}
  \begin{align}\label{eq: A_m location}
  A_{m+1}\subset B_{d_\varphi}(h_m; (1-\exp(-\alpha_\varphi T))^{1/\alpha_\varphi}\exp(-T_m))
  \end{align}
  such that $d_\varphi(h_{m+1}, h_{m+1}') > 3^{1/\alpha_\varphi}\exp(-T_{m})$ for every two distinct elements $h_{m+1}, h_{m+1}'\in A_{m+1}$.
  % The sets $B_{d_\varphi}(h_{m+1}'; \exp(-T_{m+1}))$ for $h_{m+1}'\in A_{m+1}$ must be all contained in $B_{m}$.
  % \footnotemark
  \item[Bob's $m$-th step] Bob chooses $h_{m+1}\in A_{m+1}$. 
\end{description}
\begin{remark}
The coefficient $(1-\exp(-\alpha_\varphi T))^{1/\alpha_\varphi}$ in Eq. \eqref{eq: A_m location} ensures that $B_{m+1}\subseteq B_m$.
\end{remark}
% \footnotetext{This condition is slightly weaker than the }
\index{hinfty@$h_\infty$}\hypertarget{ii}{}
Denote \[ h_\infty := \lim_{m\to \infty} h_m = \bigcap_{m=0}^\infty B_m\] 
and $A_\bullet = (A_m)_{m=1}^\infty$.
The payoff which Alice wants to maximize and Bob wants to minimize, is 
\begin{align}\label{eq: value definition}
\Delta(A_\bullet, h_\infty) := \begin{cases}\frac{1}{T}
\liminf_{m\to \infty}\frac1m\sum_{k=1}^m {\log \#A_k},&  \text{if }h_\infty\in X,\\
-\infty,& \text{otherwise.}
\end{cases}
\end{align}
Denote the value of this game by $D_{T, g}(X)$, i.e., Alice can guarantee any payoff less than $D_{T, g}(X)$, and Bob can deny payoffs higher than $D_{T, g}(X)$.

\index{Doubling metric}\hypertarget{ij}{}
Recall that a metric space $(X, d_X)$ is called \emph{doubling} if there exists $M>0$ such that for every $R>0$ and every point $x\in X$ there exist points $x_1,\dots,x_M \in X$ such that 
\[B_{d_X}(x; R) \subseteq \bigcup_{i=1}^M B_{d_X}(x_i;R/2).\]

The following theorem is a special case of \cite[Theorem 28.2]{DFSU}. Using \cite[Remark 28.3]{DFSU} we may apply it to any complete doubling space. $(H,d_\varphi^{\alpha_\varphi})$ is a doubling space by the compactness of $B_{d_\varphi^{\alpha_\varphi}}(1)$, the right-invariance of $d_\varphi$ and the behavior under the $\varphi$-action. Thereofre, we obtain the following result.
\begin{thm}\label{thm: hausdorff game result}
For any Borel set $Y\subseteq H$ we have $\lim_{T\to \infty}D_{Y, g}(X) = \dim_\funH(Y;d_\varphi)$.
\end{thm}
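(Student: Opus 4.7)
The plan is to recognize this statement as an application of the abstract framework in \cite{DFSU}. Specifically, \cite[Theorem~28.2]{DFSU}, together with \cite[Remark~28.3]{DFSU}, asserts that in any complete doubling metric space the value of the associated dimension game converges, as the resolution parameter $T$ tends to infinity, to the Hausdorff dimension of the Borel target set. My strategy is therefore to (1) replace the semi-metric $d_\varphi$ by the genuine metric $d_\varphi^{\alpha_\varphi}$; (2) verify that $(H, d_\varphi^{\alpha_\varphi})$ is complete and doubling; and (3) match the game defined in the paper with the abstract game of \cite{DFSU}, keeping track of the scaling factor $\alpha_\varphi$ relating the two notions of Hausdorff dimension.

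For completeness, I would invoke the estimate implicit in the proof of Lemma \ref{lem:metric}, namely $d_H(h_1,h_2) \le d_\varphi(h_1,h_2)^{\alpha_\varphi}$ whenever $d_\varphi(h_1,h_2)\le 1$. Any $d_\varphi^{\alpha_\varphi}$-Cauchy sequence is then eventually $d_H$-Cauchy and thus converges in $(H, d_H)$, since $H$ is a closed subgroup of $\SL_n(\RR)$. Continuity of $d_\varphi^{\alpha_\varphi}$ with respect to the smooth structure transfers the limit back to the expansion metric.

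For the doubling property, right-invariance of $d_\varphi$ (which follows from the right-invariance of $d_H$ and the fact that right translations commute with conjugation by $g_t$) reduces everything to centers at $\Id$. The dilation identity \eqref{eq: d_phi equivariant} says that $\varphi_{\log r}$ maps $B_{d_\varphi}(\Id;1)$ onto $B_{d_\varphi}(\Id;r)$, so it suffices to cover the unit ball $B_{d_\varphi}(\Id;1) = B_{d_H}(\Id;1)$ by finitely many $d_\varphi$-balls of radius $\tfrac{1}{2}$. This holds by compactness of the closed unit ball in $(H, d_H)$; calling the covering number $M$, transporting the cover by $\varphi_{\log r}$ and then by a right translation produces an $M$-element cover of every $d_\varphi$-ball, which in turn yields the doubling property for $d_\varphi^{\alpha_\varphi}$.

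The main obstacle, as I see it, is the bookkeeping in step (3). One must check that under the change of variable $\rho = \exp(-\alpha_\varphi T)$ the game above becomes the standard ``$\rho$-game'' of \cite{DFSU} on the metric space $(H, d_\varphi^{\alpha_\varphi})$: the separation threshold $3^{1/\alpha_\varphi}\exp(-T_m)$ translates to a separation of $3\rho^m$ in $d_\varphi^{\alpha_\varphi}$, and the shrinkage coefficient $(1-\exp(-\alpha_\varphi T))^{1/\alpha_\varphi}$ in Eq.~\eqref{eq: A_m location} is exactly what is needed to guarantee $B_{m+1} \subseteq B_m$ in this metric. The normalization $\tfrac{1}{T}$ in Eq.~\eqref{eq: value definition} differs from the standard normalization $\tfrac{1}{-\log \rho}$ by precisely $\alpha_\varphi$, and this factor cancels the $\alpha_\varphi$ in the identity $\dim_\funH(Y;d_\varphi) = \alpha_\varphi \dim_\funH(Y; d_\varphi^{\alpha_\varphi})$, so that the limit produced by \cite[Theorem~28.2]{DFSU} is exactly $\dim_\funH(Y; d_\varphi)$ as stated.
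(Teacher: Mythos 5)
Your proposal follows the same route as the paper: the theorem is recorded there as a direct specialization of \cite[Theorem 28.2]{DFSU} via \cite[Remark 28.3]{DFSU}, after noting that $(H, d_\varphi^{\alpha_\varphi})$ is a complete doubling metric space (by compactness of the unit ball, right-invariance of $d_\varphi$, and the dilation identity Eq.~\eqref{eq: d_phi equivariant}). You spell out the completeness and doubling verifications and the $\alpha_\varphi$-bookkeeping in more detail than the paper does, but the underlying argument is identical.
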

\subsection{Dimension Game Applied to Trajectory Sets} % (fold)
\label{sub:hausdorff_game_applied_to_traje}
Let $\cF$ be a set of $g_t$ templates closed under equivalence, and $\Lambda\in X_n$ be a lattice.
To prove Theorem \ref{thm: general scewed dimention formula}, we will apply Theorem \ref{thm: hausdorff game result} to the set 
$Y_{\Lambda, \cF} := Y_{\Lambda, \cF}\subseteq H$.

Defining the strategies seems complicated, as the resulting $h_\infty$ of a play is known only after the game is played, and hence $f^{h_\infty \Lambda}$ can be computed up to equivalence only at the end of the $(T, g)$-game.
Nonetheless, $f^{h_\infty\Lambda}$ can be approximated. 
\begin{obs}\label{obs: The behavior of templates}
Since $d_\varphi(h_m, h_\infty) \le \exp(-Tm)$,
\[h_t:= g_t h_mh_\infty^{-1} g_{-t}\]
is such that $d_\varphi(h_t,\Id)=O(\exp(t-Tm))$ for every $t\le Tm$. 
Using the equality $h_t g_th_{\infty}\Lambda = g_th_{m}\Lambda$ we deduce that 
$f^{h_\infty\Lambda}|_{(-\infty, Tm]}$ and $f^{h_m\Lambda}|_{(-\infty, Tm]}$ are uniformly equivalent, where the implicit constants are independent of $m$ and $T$. 
In other words, after $m$ steps of Alice and Bob we are able to approximate $f^{h_\infty\Lambda}$ up to time $Tm$. 
\end{obs}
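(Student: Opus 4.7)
The plan is to establish the three pieces of the observation in order: the bound on $d_\varphi(h_t, \Id)$, the algebraic identity $h_t g_t h_\infty \Lambda = g_t h_m \Lambda$, and the uniform equivalence of the two restricted templates.

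First, I would verify that $d_\varphi$ is right-invariant on $H$: since $\varphi_t(h_1 h) = \varphi_t(h_1)\varphi_t(h)$ and $d_H$ is right-invariant, we have $d_\varphi(h_1 h, h_2 h) = d_\varphi(h_1, h_2)$. Combining this with the equivariance identity \eqref{eq: d_phi equivariant} and the hypothesis $d_\varphi(h_m, h_\infty) \le \exp(-Tm)$, one directly computes
\[d_\varphi(h_t, \Id) = d_\varphi(\varphi_t(h_m h_\infty^{-1}), \Id) = \exp(t)\,d_\varphi(h_m h_\infty^{-1}, \Id) = \exp(t)\,d_\varphi(h_m, h_\infty) \le \exp(t - Tm),\]
which is the desired $O(\exp(t - Tm))$ bound. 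In particular, for $t \le Tm$ this is at most $1$, and then the bounded-expansion argument used inside the proof of Lemma \ref{lem:metric} yields $d_H(h_t, \Id) \le d_\varphi(h_t, \Id)^{\alpha_\varphi} \le \exp(\alpha_\varphi(t - Tm))$.

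The algebraic identity is immediate: $h_t g_t h_\infty = g_t h_m h_\infty^{-1} g_{-t} \cdot g_t h_\infty = g_t h_m$, and applying both sides to $\Lambda$ gives $h_t \cdot g_t h_\infty \Lambda = g_t h_m \Lambda$. Thus for every sublattice $\Gamma \subseteq \Lambda$ we have $g_t h_m \Gamma = h_t \cdot g_t h_\infty \Gamma$, and the lattice $g_t h_m \Lambda$ is obtained from $g_t h_\infty \Lambda$ by acting with the element $h_t$, whose operator norm and the operator norm of whose inverse are bounded uniformly in $m, T, t \le Tm$ by the previous step.

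The main step, then, is to transfer this $O(1)$ perturbation of lattices into an $O(1)$ perturbation of templates. For covolumes, the bound on $\|h_t\|, \|h_t^{-1}\|$ gives
\[\bigl|\log \cov(g_t h_m \Gamma) - \log \cov(g_t h_\infty \Gamma)\bigr| = O(1)\]
for every sublattice $\Gamma \subseteq \Lambda$ and every $t \le Tm$; applied to the Harder-Narasimhan sublattices on either side this forces $|\HN(g_t h_m \Lambda)_{H, l} - \HN(g_t h_\infty \Lambda)_{H, l}| = O(1)$. For the direction component, the action of $h_t$ on each $\gr$ is uniformly Lipschitz on the bounded set of $h_t$'s, so $\HN(g_t h_m \Lambda)_{V, l}$ and $\HN(g_t h_\infty \Lambda)_{V, l}$ lie within $d_{\gr}$-distance $O(1)$. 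Hence, if one of the lattices $(\varepsilon, C)$-matches a template $f$ in the sense of Definition \ref{de:f Lambda matching template}, the other $(\varepsilon', C')$-matches the same template for constants depending only on $\Eall$ (not on $m$ or $T$). The two templates $f^{h_m \Lambda}|_{(-\infty, Tm]}$ and $f^{h_\infty \Lambda}|_{(-\infty, Tm]}$ are therefore $C$-close for a uniform $C$.

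The only mild subtlety is that the definition of $C$-closeness is phrased in terms of second differences $\partial^2$ rather than absolute heights; but Remark \ref{rem:bound} ensures these formulations are equivalent up to adjusting $C$. The step I expect to take the most care is propagating the Grassmannian perturbation through Condition \ref{cond: template direction}, since one must confirm that on any long interval on which $f_{E,l} \equiv E$ and $\partial^2 f_{H,l}$ is large, the $O(1)$-perturbed $\HN_V$ still lies in $U_\varepsilon(\gr^g_E)$ after shrinking the interval by $O(1)$ on each end — this follows by combining the uniqueness property in Remark \ref{rem: the good ep} with the Lipschitz bound on the $h_t$-action.
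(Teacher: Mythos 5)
Your argument is correct; note that this Observation carries the $\clubsuit$ end-marker, meaning the paper states it without an explicit proof, and your write-up supplies precisely the verification a reader is expected to carry out. The right-invariance/equivariance computation of $d_\varphi(h_t,\Id)$, the algebraic identity, and the passage from an $O(1)$-bounded $h_t$ to a uniform $C$-closeness of the restricted templates (including the two delicate points you flag: Remark \ref{rem:bound} for the $\partial^2$ formulation, and the uniqueness in Remark \ref{rem: the good ep} plus the Lipschitz action on $\gr$ for Condition \ref{cond: template direction}) are exactly what is needed.
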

The strategies of Alice and Bob will be constructed based on Observation \ref{obs: The behavior of templates}. In each step the two players will consider $f^{h_\infty\Lambda}$ at time $Tm$ and choose their actions to ensure a certain behavior of $f^{h_\infty\Lambda}$ at time $T(m+1)$.

% subsection hausdorff_game_applied_to_traje (end)
% subsection hausdorff_games (end)
% section hausdorff_dimensions (end)

\section{Templates Approximation} % (fold)
\label{sec:approximation_of_templates}
In this section we will prove two results. The first is purely combinatorial, and approximates a template with a simpler one that will be used in Alice's strategy.
The second constructs a template for every $g_t$ lattice orbit.
\begin{de}[Nontrivial places at time $t$]
\index{Lft@$L_f(t)$}\hypertarget{ja}{}
Let $f$ be a $g$-template on $I$. For $t\in I$ denote \[L_f(t) := \{0\le l \le n: t\in U_l(f)\} = L(f_{H,\bullet}(t)),\]
that is, $L_f(t)$ consists of those $l$ for which $f_{E,l}(t)$ is not null. 

% Let 
% \[L_f^+(t):=\bigcap_{\varepsilon>0}\bigcup_{|t'-t|<\varepsilon} L_f(t').\]
% In other words, this is the set of $l\in L_f(t')$ which are there for some $t'$ arbitrarily close to $t$.
\end{de}

\begin{de}[Vertices of a $g$-template]
Let $f$ be a $g$-template.
\index{Vertex}\hypertarget{jb}{}
\index{Vertex!non-null}\hypertarget{jc}{}
\index{Vertex!null}\hypertarget{jd}{}
We term the irregular points $t$ of $f_{E, \bullet}$ \emph{vertices of $f$} and the irregular points of $f_{E, l}$ \emph{vertices of $f$ at $l$}. 
A vertex $t$ is called \emph{null} (resp. \emph{non-null}) if its corresponding arrow by the category flow $f_{E, l}$ is null (resp. non-null) in the category $\cI_l^*$.

Note that the null vertices of $f$ at $l$ are the endpoints of the nontriviality intervals $J\subseteq U_l(f)$.
In addition, note that there are no vertices at $l=0,n$.
% Let $t$ be a vertex of $f$ at $l_0$.
% Denote by $l_{-1}<l_0<l_1$ the adjacent indices of $l_0$ in $L_f(t)$. We say that the vertex $t$ at $l_0$ \emph{impacts} the indices $l_{-1},l_0,l_1$. The set of vertices $t$ that impact $l$ contains all nondifferentiable points of $\partial^2f_{H,l}(t)$, though not necessarily all such points are nondifferentiable.
\end{de}

\noindent\textbf{Example \ref{ex: a g template}, continued.}
Figure \ref{fig: a g template with verts} depicts a $g$-template with $\Eall = \{-1.2, 0.5, 0.7\}$. 
The green vertices are non-null and the blue vertices are null.
\null\nobreak\hfill\ensuremath{\diamondsuit}
% Note that the vertex $t_0$ at $1$ impacts $0,1,2$, while $t_1$ at $2$ impacts $0, 2, 3$. 

\begin{figure}[ht]
\caption{A $g$-template with its vertices.
}
\label{fig: a g template with verts}
\begin{tikzpicture}[line cap=round,line join=round,>=triangle 45,x=1.0cm,y=1.0cm]
\draw [color=black] (6.45, -1.4) node {$f_{H, 1}$};
\draw [color=redcolor] (6.45, -2.7) node {$f_{H, 2}$};

\clip(-6,-7) rectangle (6,.6);

\draw [color=black] (-5.5, -5.2) node {$f_{E, 1}:$};
\draw [color=redcolor] (-5.5, -6.2) node {$f_{E, 2}:$};

\draw [line width=2.pt, dash pattern=on 5pt off 5pt,] (-7.0,0.03) -- (-4,0.03);
\draw [line width=2.pt, dash pattern=on 5pt off 5pt,color=redcolor] (-7.2,-.03) -- (-4,-.03);
\draw [line width=1.pt, -to] (-7,0) -- (5.5,0) node[above] {$t$};

\draw [line width=2.pt, ] (-4,0.0) -- (-2,-2.4);
\draw [line width=2.pt, ] (-2,-2.4) -- (0.05882352941176476,-1.3705882352941174);
\draw [line width=2.pt, ] (1,-1.6999999999999997) -- (2,-2.8999999999999995);
\draw [line width=2.pt, ] (2,-2.899999999999999) -- (2.8947368421052624,-2.2736842105263153);
\draw [line width=2.pt, color=redcolor,] (-3,-0.6000000000000001) -- (2,-4.1);
\draw [line width=2.pt, color=redcolor,] (2,-4.1) -- (4,-5.1);
\draw [line width=2.pt, color=redcolor,] (4,-5.099999999999999) -- (7,-1.4999999999999982);

\draw [line width=2.pt, dash pattern=on 5pt off 5pt,] (0.05882352941176476,-1.3705882352941174) -- (1,-1.6999999999999997);
\draw [line width=2.pt, dash pattern=on 5pt off 5pt,] (2.8947368421052624,-2.2736842105263153) -- (4.0,-2.5499999999999994);
\draw [line width=2.pt, dash pattern=on 5pt off 5pt,] (4.0,-2.549999999999999) -- (7.0,-0.7499999999999982);
\draw [line width=2.pt, dash pattern=on 5pt off 5pt,color=redcolor,] (-4.0,0.0) -- (-3,-0.6000000000000001);

\begin{scriptsize}
\draw [fill=bluecolor] (-4,0.0) circle (2.5pt); %POINT1
\draw [fill=bluecolor] (-3,-0.6000000000000001) circle (2.5pt); %POINT1
\draw [fill=greencolor] (-2,-2.4) circle (2.5pt); %POINT1
\draw [fill=bluecolor] (0.05882352941176476,-1.3705882352941174) circle (2.5pt); %POINT1
\draw [fill=bluecolor] (1,-1.6999999999999997) circle (2.5pt); %POINT1
\draw [fill=greencolor] (2,-4.1) circle (2.5pt); %POINT1
\draw [fill=greencolor] (2,-2.899999999999999) circle (2.5pt); %POINT1
\draw [fill=bluecolor] (2.8947368421052624,-2.2736842105263153) circle (2.5pt); %POINT1
\draw [fill=greencolor] (4,-5.099999999999999) circle (2.5pt); %POINT1
\draw [fill=bluecolor] (7,-1.4999999999999982) circle (2.5pt); %POINT1
\end{scriptsize}

\draw [line width=2.pt, dash pattern=on 5pt off 5pt,] (-7,-5.5) -- (-4,-5.5);
\draw[color=black] (-4.5, -5.2) node {$*$};
\draw [line width=2.pt, ] (-4,-5.5) -- (-2,-5.5);
\draw[color=black] (-3.0, -5.2) node {$\{-1.2\}$};
\draw [line width=2.pt, ] (-2,-5.5) -- (0.05882352941176476,-5.5);
\draw[color=black] (-0.9705882352941176, -5.2) node {$\{0.5\}$};
\draw [line width=2.pt, dash pattern=on 5pt off 5pt,] (0.05882352941176476,-5.5) -- (1,-5.5);
\draw[color=black] (0.5294117647058824, -5.2) node {$*$};
\draw [line width=2.pt, ] (1,-5.5) -- (2,-5.5);
\draw[color=black] (1.4, -5.2) node {$\{-1.2\}$};
\draw [line width=2.pt, ] (2,-5.5) -- (2.8947368421052624,-5.5);
\draw[color=black] (2.5, -5.2) node {$\{0.7\}$};
\draw [line width=2.pt, dash pattern=on 5pt off 5pt,] (2.8947368421052624,-5.5) -- (7,-5.5);
\draw[color=black] (4.947368421052631, -5.2) node {$*$};
\draw [fill=bluecolor] (-7,-5.5) circle (2.5pt); %POINT1
\draw [fill=bluecolor] (-4,-5.5) circle (2.5pt); %POINT1
\draw [fill=greencolor] (-2,-5.5) circle (2.5pt) node[above] {$t_0$}; %POINT1
\draw [fill=bluecolor] (0.05882352941176476,-5.5) circle (2.5pt); %POINT1
\draw [fill=bluecolor] (1,-5.5) circle (2.5pt); %POINT1
\draw [fill=greencolor] (2,-5.5) circle (2.5pt); %POINT1
\draw [fill=bluecolor] (2.8947368421052624,-5.5) circle (2.5pt); %POINT1
\draw [fill=bluecolor] (7,-5.5) circle (2.5pt); %POINT1
\draw [line width=2.pt, dash pattern=on 5pt off 5pt,color=redcolor,] (-7,-6.5) -- (-3,-6.5);
\draw[color=redcolor] (-4.5, -6.2) node {$*$};
\draw [line width=2.pt, color=redcolor,] (-3,-6.5) -- (2,-6.5);
\draw[color=redcolor] (-0.5, -6.2) node {$\{-1.2, 0.5\}$};
\draw [line width=2.pt, color=redcolor,] (2,-6.5) -- (4,-6.5);
\draw[color=redcolor] (3.0, -6.2) node {$\{-1.2, 0.7\}$};
\draw [line width=2.pt, color=redcolor,] (4,-6.5) -- (7,-6.5);
\draw[color=redcolor] (5, -6.2) node {$\{0.5, 0.7\}$};
\draw [fill=bluecolor] (-7,-6.5) circle (2.5pt); %POINT1
\draw [fill=bluecolor] (-3,-6.5) circle (2.5pt); %POINT1
\draw [fill=greencolor] (2,-6.5) circle (2.5pt); %POINT1
\draw [fill=greencolor] (4,-6.5) circle (2.5pt) node[below] {$t_1$}; %POINT1
\draw [fill=greencolor] (7,-6.5) circle (2.5pt); %POINT1
%[Finished in 0.1s]

% \draw [color=black] (-4.8,-2.11) node {$f$};
\end{tikzpicture}
\end{figure}

\begin{de}[$C$-separated $g$-templates]
% A vertex $(t, f_{H,l}(t))$ is called \emph{null vertex} (resp. \emph{non-null vertex}) of $f$ at $l,t$ if $f_{E,l}(t)$ is an irregular point and is null point (resp. non-null).
\index{gtemplate@$g$-template!separated}\hypertarget{je}{}
A $g$-template $f$ is said to be \emph{$C$-separated} for $C>0$ if for every null vertex $t$ of $f$ there is no other vertex $t'$ of $f$ with $|t-t'| < C$. Moreover, $t$ cannot be a vertex of $f$ at two values of $l$. 
\end{de}

An implication of a template being $C$-separated is that it cannot change too often, and hence it will be useful in the construction of the strategy of Alice.

\begin{obs}\label{obs: partial2 comutation}
Let $0\le l_{-1}<l_0<l_1\le n$ be three indices and $a_\bullet$ be a height sequence. If $a_\bullet$ is linear on $[l_{-1}, l_0]$ and on $[l_0, l_1]$, then \[\partial^2a_{l} = \frac{1}{l_0-l_{-1}} a_{l_{-1}} + \frac{1}{l_1-l_0} a_{l_1} - \frac{l_1-l_{-1}}{(l_1-l_0)(l_0-l_{-1})}a_{l_0}.\]
\end{obs}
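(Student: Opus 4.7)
The plan is straightforward: this is a direct computation from the definition $\partial^2 a_{l_0} := a_{l_0-1} - 2a_{l_0} + a_{l_0+1}$, exploiting the piecewise linearity hypothesis (I read $\partial^2 a_l$ in the statement as $\partial^2 a_{l_0}$, since that is the only index at which the second difference need not vanish).

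First I would use the linearity of $a_\bullet$ on the interval $[l_{-1}, l_0]$ to write the value at $l_0-1$ as a linear interpolation,
\[a_{l_0-1} = a_{l_0} - \frac{a_{l_0}-a_{l_{-1}}}{l_0-l_{-1}},\]
and symmetrically, linearity on $[l_0, l_1]$ gives
\[a_{l_0+1} = a_{l_0} + \frac{a_{l_1}-a_{l_0}}{l_1-l_0}.\]
These formulas are valid even in the degenerate cases $l_{-1}=l_0-1$ or $l_1=l_0+1$, where they reduce to tautologies, so no separate case analysis is needed.

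Substituting into the definition of $\partial^2 a_{l_0}$, the two $a_{l_0}$ terms produced by the interpolations cancel the $-2a_{l_0}$ in $\partial^2$, leaving
\[\partial^2 a_{l_0} = \frac{a_{l_{-1}}}{l_0-l_{-1}} + \frac{a_{l_1}}{l_1-l_0} - \left(\frac{1}{l_0-l_{-1}} + \frac{1}{l_1-l_0}\right)a_{l_0}.\]
Putting the coefficient of $a_{l_0}$ over the common denominator $(l_1-l_0)(l_0-l_{-1})$ collapses it to $\frac{l_1-l_{-1}}{(l_1-l_0)(l_0-l_{-1})}$, which is exactly the asserted identity.

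There is really no main obstacle here; the observation is elementary and its proof is a one-line algebraic manipulation. The only thing to be a little careful about is notational: verifying that the index $l$ in the stated formula is meant to be $l_0$, and noting that the identity is really the statement that the single nonvanishing second difference of a function with two linear pieces meeting at $l_0$ equals the jump in slope at $l_0$, rewritten as a linear combination of the three relevant values.
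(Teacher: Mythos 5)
Your computation is correct. The paper states this as an Observation with no proof supplied (the $\clubsuit$ marks the end of the unproved statement), so there is nothing to compare against; your direct substitution of the two linear-interpolation identities for $a_{l_0\pm1}$ into the definition $\partial^2 a_{l_0} = a_{l_0-1} - 2a_{l_0} + a_{l_0+1}$, followed by combining the $a_{l_0}$ coefficients over a common denominator, is exactly the intended one-line verification, and your reading of the index $l$ as $l_0$ is the only sensible one.
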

%The following corollary follows from the Observation \ref{}
\begin{cor}\label{cor: impacts is nondif}
Let $f$ be a $g$-template on an interval $I$, let $1\le l \le n-1$ and let $t\in I$ be a point such that $\partial^2 f_{H,l}$ is not differentiable at $t$. Then there exists an index $l_0$ such that $t$ is a vertex of $f$ at $l_0$. Moreover one of the following is true:
\begin{itemize}
	\item $l=l_0$.
	\item $l$ is adjacent to $l_0$ in $L_f(t)$. 
\end{itemize}
\end{cor}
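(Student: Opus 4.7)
The plan is to establish the contrapositive of the moreover clause: assume $t$ is not a vertex of $f$ at any index $l_0$ satisfying the two listed conditions, and deduce that $\partial^2 f_{H, l}$ is differentiable at $t$. I read ``$l$ is adjacent to $l_0$ in $L_f(t)$'' to mean that no element of $L_f(t)$ lies strictly between $l$ and $l_0$. Under this reading, the hypothesis excludes vertices of $f$ at $l$, at the immediate neighbors $l_{-1}$ and $l_1$ of $l$ in $L_f(t)$, and at every index strictly between $l_{-1}$ and $l_1$.

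The case $l \notin L_f(t)$ is immediate. Since $t$ is not a vertex of $f$ at $l$, the category flow $f_{E, l}$ equals $*$ on a neighborhood of $t$, so $l \notin L_f(s)$ there and $f_{H,\bullet}(s)$ interpolates linearly across the index $l$; hence $\partial^2 f_{H, l}(s) \equiv 0$ on the neighborhood, which is differentiable at $t$.

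In the main case $l \in L_f(t)$, write $l_{-1}, l_1$ for the predecessor and successor of $l$ in $L_f(t)$. By the hypothesis, $t$ is not a vertex of $f$ at any index in $\{l_{-1}, l, l_1\} \cup (l_{-1}, l_1)$. Therefore, on a sufficiently small neighborhood of $t$, no index in $[l_{-1}, l_1]$ joins or leaves $L_f$, so $l_{-1}(s) \equiv l_{-1}$ and $l_1(s) \equiv l_1$; moreover each of the functions $f_{H, l_{-1}}(s), f_{H, l}(s), f_{H, l_1}(s)$ is linear in $s$ with slope $\eta_{f_{E,\cdot}(t)}$ by Definition~\ref{de: template}(\ref{prop:Derivative}). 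Plugging this into the formula of Observation~\ref{obs: partial2 comutation}, $\partial^2 f_{H, l}(s)$ becomes a fixed $\RR$-linear combination of three linear functions on the neighborhood, hence linear in $s$ and differentiable at $t$.

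The step I expect to require the most care is ruling out the emergence of a brand-new nontrivial place strictly between $l_{-1}$ and $l_1$ at $t$: this is a null-type vertex at some $l' \in (l_{-1}, l_1) \setminus \{l\}$, and such $l'$ typically lies outside $L_f(t)$ itself (by continuity, $\partial^2 f_{H, l'}(t) = 0$ at the moment of birth). Nonetheless, no element of $L_f(t)$ lies strictly between $l'$ and $l$, so $l'$ qualifies as adjacent to $l$ in the sense above and is excluded by the contrapositive hypothesis, pinning the local formula for $\partial^2 f_{H, l}$ in place.
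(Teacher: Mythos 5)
Your contrapositive argument is correct and follows the route the paper implies: the corollary is stated immediately after Observation~\ref{obs: partial2 comutation} with no separate proof, and your write-up supplies the implicit details, including the careful treatment of the one genuinely delicate point — a null vertex born at an index $l'\in(l_{-1},l_1)\setminus\{l\}$, which necessarily lies outside $L_f(t)$ since $\partial^2 f_{H,l'}(t)=0$ there. Your reading of ``$l$ is adjacent to $l_0$ in $L_f(t)$'' as ``no element of $L_f(t)$ lies strictly between $l$ and $l_0$'' (not requiring $l_0\in L_f(t)$) is the right one and is consistent with the paper's usage in the definition of vertex impact and in Remark~\ref{rem:impact + L}, so the excluded set of indices is exactly $[l_{-1},l_1]$ and the local-constancy and linearity arguments go through.
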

Corollary \ref{cor: impacts is nondif} motivates the following definition.
\begin{de}[Vertex impact]
Let $t$ be a vertex of a $g$-template $f$ at $l_0$.
\index{Vertex!impact}\hypertarget{jf}{}
Denote by $l_{-1}<l_0<l_1$ the adjacent indices of $l_0$ in $L_f(t)$. We say that the vertex $t$ at $l_0$ \emph{impacts} the indices $l_{-1},l_0,l_1$. The set of vertices $t$ that impact $l$ contains all nondifferentiability points of $\partial^2f_{H,l}(t)$, though not necessarily all these vertices are points of nondifferentiability.
\end{de}
\noindent\textbf{Example \ref{ex: a g template}, continued.}
In Figure \ref{fig: a g template with verts} the vertex $t_0$ at $l=1$ impacts $0,1,2$, while $t_1$ at $l=2$ impacts $0, 2, 3$. \null\nobreak\hfill\ensuremath{\diamondsuit}
% We now define significant templates, which combined with the separated property guarantees the template being stably separated.
\begin{de}[$C$-Significant $g$-templates]
\index{gtemplate@$g$-template!significant}\hypertarget{jg}{}
We say that a $g$-template $f$ is \emph{$C$-significant} for $C>0$ if for every $1\le l\le n-1$ that is impacted by a vertex $t$ of $f$ we have $\partial^2 f_{H,l}(t) > C$ or $\partial^2 f_{H,l}(t) = 0$. 
\end{de}
\begin{remark}\label{rem:impact + L}
Let $t$ be a vertex of a $g$-template $f$ at $l_0$. If $t$ is a null vertex, then $l_0\nin L_{f}(t)$ even though this vertex itself impacts $l_0$. In this case $\partial^2 f_{H,l_0}(t) = 0$, and hence to check whether $f$ is $C$-significant or not, one can ignore this impact instance. We still regard this phenomenon as impact in view of Corollary \ref{cor: impacts is nondif}. 
\end{remark}
\begin{lem}\label{lem:separated}
For every $C>0$ there exists $C'>0$ such that for every $g$-template $f$ on the interval $I$ there exists a $C$-significant, $C$-separated $g$-template $f'$ that is $C'$-close to $f$. 
\end{lem}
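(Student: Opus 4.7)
The plan is to construct $f'$ via two local modification passes applied to $f$, each pass perturbing heights by $O(C)$ and times of vertices by $O(C)$, and to choose $C'$ large enough to absorb the cumulative perturbation.

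\textbf{Pass 1 (enforcing significance).} I scan the (locally finite) set of vertices of $f$. For each vertex $t_0$ at level $l_0$, I check the impacted levels $l \in \{l_{-1}, l_0, l_1\}$. If some $\partial^2 f_{H,l}(t_0)$ lies in the bad range $(0, C)$, I modify $f$ on a short neighborhood of $t_0$ by one of two operations. The first is \emph{flattening}: on a short interval around $t_0$, replace $f_{H,l}$ by its piecewise-linear interpolation in $l$ from the surrounding nontrivial levels of $L_f(t_0)$ adjacent to $l$; this makes $l$ trivial there, setting $f'_{E,l} = *$, and by the convexity of height sequences the height change is at most $\partial^2 f_{H,l}(t_0) < C$. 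Outside the modification interval, $l$ was trivial already (or $f'_{E,l}$ is joined to the surrounding $f_{E,l}$ via null arrows), so continuity is maintained. The second is \emph{boosting}: translate $f_{H,l}$ downward by $O(C)$ so that $\partial^2 f_{H,l}(t_0) > C$; this can be done provided the surrounding $E$-structure permits inserting a compatible multiset $E\in\cI_l$ that carries the new slope via $\eta_E = \frac{d}{dt}f'_{H,l}$. The choice between the two is dictated by whether the neighboring pieces of $f_{E,l}$ already constrain $l$ to be nontrivial. In either case the derivative equation~\eqref{eq:diff eq} is maintained at regular points.

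\textbf{Pass 2 (enforcing separation).} After Pass 1, significance holds at all remaining vertices. I now scan the vertex set of the current template. If a null vertex at level $l$, which is an endpoint of a nontriviality interval of $f'_{E,l}$, is within $C$ in time of another vertex, I slide it away by $\le C$, extending or shrinking the nontriviality interval accordingly; this changes $f'_{H,l}$ by at most $C \cdot \max_{E\in\cI_l}|\eta_E|$. If two distinct irregular points coincide in time at two levels $l_1\neq l_2$, I likewise shift one of them by a small amount. Since only $O(1)$ perturbations occur per bounded subinterval of $I$ and each costs $O(C)$ in height, the pass terminates with accumulated deviation $O(C)$.

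\textbf{Main obstacle.} The crux is that a Pass~1 modification at level $l$ alters $f_{H,l}$ by $O(C)$, which shifts $\partial^2 f_{H,l\pm 1}$ and can create \emph{new} impacted pairs in the bad range at the adjacent levels. To prevent an uncontrolled cascade, I iterate Pass~1 with an increasing sequence of thresholds $C = C_1 < C_2 < \cdots < C_{n-1} =: C''$, enforcing the significance condition with threshold $C_l$ at level $l$; since any error propagated from level $l$ to level $l\pm1$ has magnitude $O(C_l)$, one chooses $C_{l+1}$ much larger so that newly created impacts already sit above the next threshold and never get re-modified. After $n-1$ rounds the template is $C$-significant with total height perturbation $O(C'')$, Pass~2 is applied once with separation shifts taken small enough not to reintroduce small-$\partial^2$ impacts, and setting $C' = O(C'')$ (polynomial in $C$ with coefficients depending on $n$ and $\max_i|\eta_i|$) gives the lemma.
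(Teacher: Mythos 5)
Your overall sketch correctly identifies the two obstacles (small $\partial^2$ at impacted levels, and vertex clustering) and even names the right worry (cascades), but the cascade fix you propose does not work because the propagation between levels is \emph{bidirectional}. When you decrease $f_{H,l}$ by $O(C_l)$ on a neighborhood of $t_0$, you increase $\partial^2 f_{H,l}$, but you simultaneously \emph{decrease} $\partial^2 f_{H,l-1}$ and $\partial^2 f_{H,l+1}$ by the same order. In the increasing-threshold sweep $C_1 < C_2 < \dots$, after Round $l$ you have perturbed $\partial^2 f_{H,l-1}$ by $O(C_l) \gg C_{l-1}$, which can knock a previously significant value $\partial^2 f_{H,l-1}(t) \in (C_{l-1}, C_l)$ back into the forbidden band $(0,C_{l-1})$ — or even below $0$, which is not a height sequence at all. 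Since that level has already been "finalized," the significance guarantee silently fails. The paper avoids this by working with a single globally defined shift sequence that is \emph{forced to be concave in $l$} (via $\rho_l = l(n-l)C_1 + \nu_J$), so the perturbation has a fixed sign structure and the lower convex hull automatically trivializes levels pushed below $0$; the remaining constraints on the per-nontriviality-interval parameters $(\nu_J)_{J \in \cG_f}$ are then solved \emph{simultaneously}, not level-by-level. That simultaneous solvability is a genuine combinatorial point: one must show the interval-intersection graph $\cG_f$ has bounded degeneracy (Lemma~\ref{lem:graph is constructible}, it is $(2n-4)$-constructible), which licenses a transfinite greedy assignment of the $\nu_J$'s (Lemma~\ref{lem:enticements are solvable}), after the constraints have been encoded as "enticements" with bounded multiplicity (Lemmas~\ref{lem: construction of enticements} and \ref{lem: construction of enticements2}). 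A level-by-level sweep does not respect this interval structure — nontriviality intervals at different levels interleave arbitrarily in time — so no fixed ordering of the levels with geometrically growing thresholds can absorb the backward cascade.

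A secondary concern: your local "flattening" and "boosting" must be shown to produce a valid $g$-template, i.e.\ continuity in $t$, convexity in $l$, and the derivative identity~\eqref{eq:diff eq} at every regular non-null point. Pasting a local translate of $f_{H,l}$ on a short neighborhood of $t_0$ and blending back does not obviously satisfy~\eqref{eq:diff eq} at the seams, nor does it guarantee the result is a height sequence once the neighbors are re-interpolated. The paper's shift via lower convex hull on a per-interval-constant shift sequence resolves all of these in one operation. You would need to replace your two passes by a construction that perturbs whole nontriviality intervals coherently and solves the induced constraints jointly; as written the argument has a hole exactly where the paper invests its main combinatorial effort.
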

The section will be divided as follows. In Subsection \ref{sub:shifting_templates} we introduce the shifting operation on $g$-templates. It is an operation that is able to perturb a $g$-template. 
In Subsection \ref{sub:proof_of_lemma_lem:significant} we introduce combinatorial tools that allow to shift a $g$-template to a significant one. 
In Subsection \ref{sub:properties_of_significant_templates} we discuss some properties of significant templates and complete the proof of Lemma \ref{lem:separated}. In Subsection \ref{sub:proof_of_lemma_thm:lattice approx} we prove Theorem \ref{thm:Lattice approx}.
\subsection{Shifting Templates} % (fold)
\label{sub:shifting_templates}
In this subsection we define the shift operation on templates.
\begin{de}[Lower convex hull]
\index{Lower convex hull}\hypertarget{jh}{}
Let $X\subset \RR$ be a finite set and let $f:X\to \RR$ be function.
The \emph{lower convex hull} $f^{\conv}:[\min X, \max X]\to \RR$ is the maximal convex function which satisfies 
$f^{\conv}(x)\le f(x)$ for every $x\in X$.

The \emph{lower convex hull of a sequence} $a_0,...,a_n$ is the lower convex hull of the function $l\mapsto a_l$, then restricted back to $\{0,...,n\}$:
\[((l\mapsto a_l)^{\conv}(l))_{l=0}^n;\]
\noindent it is alternatively referred to as \emph{convexification}.
\end{de}
\begin{de}[Shift of height sequences]
\index{Shift!of height sequence $a^\brho_\bullet$}\hypertarget{ji}{}
Let $a_\bullet$ be a height sequence and let $\brho=(\rho_l)_{l=0}^n$ be a concave sequence of numbers with $\rho_0=\rho_n=0$, i.e., the numbers $0=\rho_0, \rho_1,\dots,\rho_n=0$ satisfy $\rho_l\ge (\rho_{l-1}+\rho_{l+1})/2$.
Define the \emph{shifted height sequence} $a_\bullet^\brho$ to be the lower convex hull of the sequence $(a_l + \rho_l)_{l=0}^n$; see Example \ref{ex:shift of cross section}.
\end{de}

\begin{remark}\label{rem:flag shift is cont and support dependent}
The shift operation is continuous in $a_\bullet$ and $\brho$,
and the concavity of $\brho$ implies that the shift depends only on the values of $\brho$ in $L(a_\bullet)$. 
This yields an explicit formula for $a_\bullet^\brho$:
\[a^{\brho}_l = \min_{\substack{l_1\le l\le l_2\\ \alpha l_1+(1-\alpha) l_2 = l\\l_1, l_2\in L(a_\bullet)}}  \alpha (a_{l_1} + \rho_{l_1}) + (1-\alpha)(a_{l_2} + \rho_{l_2})\]
\end{remark}
\begin{ex}\label{ex:shift of cross section}
Figure \ref{fig: shift of cross section} shows a height sequence $a_\bullet$, a concave sequence $\brho$, and the height sequence $a_\bullet^\brho$. 
\end{ex}

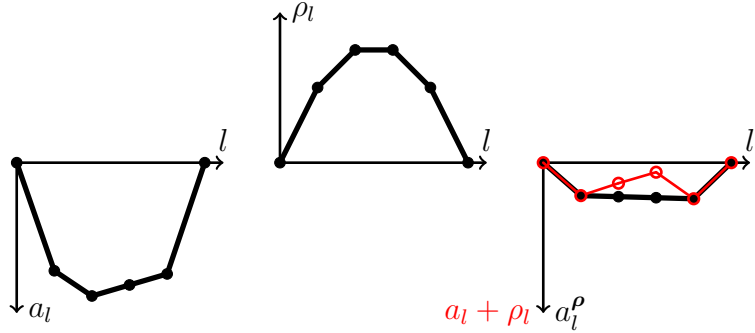
\begin{figure}
\caption{The graphs of $a_\bullet, \brho$,  $a_l+\rho_l$ and on it $a^\brho_\bullet$.}
\label{fig: shift of cross section}
% \definecolor{black}{rgb}{0.49019607843137253,0.49019607843137253,1.}
% \definecolor{black}{rgb}{0.26666666666666666,0.26666666666666666,0.26666666666666666}
\begin{tikzpicture}[line cap=round,line join=round,>=triangle 45,x=.5cm,y=.5cm]
\clip(-1,-5) rectangle (20,5);

\draw [line width=1.pt, -to] (0,0) -- (5.5,0) node[above] {$l$};
\draw [line width=1.pt, -to] (0,0) -- (0,-4) node[right] {$a_l$};

\draw [line width=1.pt, -to] (7,0) -- (12.5,0) node[above] {$l$};
\draw [line width=1.pt, -to] (7,0) -- (7,4) node[right] {$\rho_l$};

\draw [line width=1.pt, -to] (14,0) -- (19.5,0) node[above] {$l$};
\draw [line width=1.pt, -to] (14,0) -- (14,-4) node[left,color=redcolor] {$a_l+\rho_l$} node[right] {$a_l^\brho$};

\draw [line width=2.pt] (12.,0.)-- (11.,2.);
\draw [line width=2.pt] (10.,3.)-- (11.,2.);
\draw [line width=2.pt] (10.,3.)-- (9.,3.);
\draw [line width=2.pt] (8.,2.)-- (9.,3.);
\draw [line width=2.pt] (7.,0.)-- (8.,2.);
\draw [line width=2.pt] (5.,0.)-- (4.,-2.9603709831921243);
\draw [line width=2.pt] (2.,-3.5471207015622808)-- (4.,-2.9603709831921243);
\draw [line width=2.pt] (0.,0.)-- (1.,-2.8765495948535302);
\draw [line width=2.pt] (1.,-2.8765495948535302)-- (2.,-3.5471207015622808);
\draw [line width=2.pt] (14.,0.)-- (15.,-0.8765495948535302);
\draw [line width=2.pt] (19.,0.)-- (18.,-0.9603709831921243);
\draw [line width=2.pt] (15.,-0.8765495948535302)-- (18.,-0.9603709831921243);
\begin{scriptsize}
\draw [fill=black] (0.,0.) circle (2.0pt);
\draw [fill=black] (1.,-2.8765495948535302) circle (2pt);
\draw [fill=black] (2.,-3.5471207015622808) circle (2pt);
\draw [fill=black] (4.,-2.9603709831921243) circle (2pt);
\draw [fill=black] (5.,0.) circle (2.0pt);
\draw [fill=black] (7.,0.) circle (2.0pt);
\draw [fill=black] (8.,2.) circle (2.0pt);
\draw [fill=black] (9.,3.) circle (2.0pt);
\draw [fill=black] (10.,3.) circle (2.0pt);
\draw [fill=black] (11.,2.) circle (2.0pt);
\draw [fill=black] (3.,-3.2537458423772025) circle (2.0pt);
\draw [fill=black] (12.,0.) circle (2.0pt);
\draw [color=redcolor, fill=black,line width=1.pt] (14.,0.) circle (2.0pt);

\draw [color=redcolor, line width=1.pt] (14.,0.)-- (15.,-0.8765495948535302);
\draw [color=redcolor, line width=1.pt] (15.,-0.8765495948535302) -- (16.,-0.5471207015622808);
\draw [color=redcolor, line width=1.pt] (16.,-0.5471207015622808) -- (17.,-0.25374584237720255);
\draw [color=redcolor, line width=1.pt] (17.,-0.25374584237720255) -- (18.,-0.9603709831921243);
\draw [color=redcolor, line width=1.pt] (18.,-0.9603709831921243)--(19.,0.);

\draw [color=redcolor,fill=black, line width=1.pt] (15.,-0.8765495948535302) circle (2.0pt);
\draw [color=redcolor, line width=1.pt] (16.,-0.5471207015622808) circle (2.0pt);
\draw [color=redcolor, line width=1.pt] (17.,-0.25374584237720255) circle (2.0pt);
\draw [color=redcolor, fill=black, line width=1.pt] (18.,-0.9603709831921243) circle (2.0pt);
\draw [color=redcolor, fill=black, line width=1.pt] (19.,0.) circle (2.0pt);
\draw [fill=black] (17.,-0.9324305204125932) circle (2.0pt);
\draw [fill=black] (16.,-0.9044900576330619) circle (2.0pt);
\end{scriptsize}
\end{tikzpicture}
\end{figure}

\begin{de}[Shift of $g$-templates]
\index{Shift!sequence}\hypertarget{jj}{}
Let $f$ be a $g$-template on an interval $I$.
A \emph{shift sequence} is a sequence $\brho =(\rho_l)_{l=0}^n$ of $n+1$ locally constant functions $\rho_l: U_l(f) \to \RR$, such that $\rho_0\equiv \rho_n\equiv 0$, and for every $0 = l_0< \dots < l_k = n$ and $t\in U_{l_0}(f)\cap \dots\cap U_{l_k}(f)$ the piecewise linear function defined by $l_i\mapsto \rho_{l_i}(t)$ is concave. 

Let $\brho$ be a shift sequence.
For every $t\in I$, let $\bar\brho(t)$ be any extension of $\brho$ at $t$ to a concave sequence $0=\bar\rho_0(t), \bar\rho_1(t), \dots,\bar\rho_n(t) = 0$. 

Let us define the \emph{shifted $g$-template} $f^\brho$. 
\index{Shift!of $g$-template $f^\brho$}\hypertarget{baa}{}
For every $t$, set \[f_{H, \bullet}^{\brho}(t) = (f_{H, \bullet}(t))^{\bar\brho(t)};\]
this is independent of the choice of $\bar\brho$ and continuous by Remark \ref{rem:flag shift is cont and support dependent}. 

Denote by $U_l(f^\brho)$ the set of times $t$ for which $l\in \supp f^\brho_{H}(t)$ is a nontrivial index. 
This will later coincide with Definition \ref{de: U_l} of $U_l(f^\brho)$. 
Let $f^\brho_{E,l}$ be as follows:
\begin{itemize}
	\item  On $U_l(f^\brho)$ it coincides with $f_{E,l}$.
	\item Outside of $U_l(f^\brho)$ it is null.
	\item The connecting morphisms on the boundary $\partial U_l(f^\brho)$ are the null arrows.
\end{itemize}
One can verify that $f^\brho$ is a $g$-template.

% Now define $f^\brho_{E,l}$ to be the restriction of $f_{E,l}$ on $U_l(f^\brho)$ and null with the null morphisms outside. 
% One can now verify that the conditions of $g$-template follows directly from the construction and convexity properties.

For future use we note that
\begin{align}\label{eq: shift formula}
f^\brho_{H, l}:=\min_{\substack{l_1\le l\le l_2\\ \alpha l_1+(1-\alpha) l_2 = l\\l_1, l_2\in L_f(t)}}  \alpha (f_{H, l_1}(t) + \rho_{l_1}(t)) + (1-\alpha)(f_{H, l_2}(t)+\rho_{l_1}(t)).
\end{align}
\end{de}

\begin{lem}\label{lem: shift is equiv}
If $(\rho_l)_{l=0}^n$ are bounded by $C$, then $f^\brho$ is $3C$-close to $f$.
\end{lem}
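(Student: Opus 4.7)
The plan is to verify the two defining conditions of $3C$-closeness between $f$ and $f^\brho$ in turn.

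First, I would establish the pointwise height bound $|f_{H,l}(t)-f^\brho_{H,l}(t)|\le C$ for every $t\in I$ and $0\le l\le n$. Starting from the explicit shift formula~\eqref{eq: shift formula} and using $|\rho_{l_i}(t)|\le C$, the perturbation $\alpha\rho_{l_1}(t)+(1-\alpha)\rho_{l_2}(t)$ appearing inside the minimum lies in $[-C,C]$. Combined with the convexity inequality $\alpha f_{H,l_1}(t)+(1-\alpha)f_{H,l_2}(t)\ge f_{H,l}(t)$ valid for $l_1\le l\le l_2$ in $L_f(t)$, this yields the lower bound $f^\brho_{H,l}(t)\ge f_{H,l}(t)-C$. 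For the upper bound, taking $l_1=l_2=l$ in the minimum when $l\in L_f(t)$ (or any chord in $L_f(t)$ straddling $l$ otherwise) gives $f^\brho_{H,l}(t)\le f_{H,l}(t)+C$. Applying the triangle inequality to $\partial^2 a_l=a_{l-1}-2a_l+a_{l+1}$ converts this into the required $\partial^2$-bound and verifies condition~(1) via Remark~\ref{rem:bound}.

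Next, for the filtration-preservation condition~(2), by construction $f^\brho_{E,l}$ equals $f_{E,l}$ on $U_l(f^\brho)$ and is null elsewhere, and by Remark~\ref{rem:flag shift is cont and support dependent} we have $U_l(f^\brho)\subseteq U_l(f)$. The condition therefore reduces to showing: if $[a,b]\subseteq U_l(f)$ is an interval on which $f_{E,l}\equiv E$ and $\partial^2 f_{H,l}(t)\ge 3C$ throughout, then $[a+3C,b-3C]\subseteq U_l(f^\brho)$. I will prove this by showing that $l$ remains a corner of the lower convex hull defining $f^\brho_{H,\bullet}(t)$; equivalently, for every pair $l_1<l<l_2$ in $L_f(t)$ with $\alpha l_1+(1-\alpha)l_2=l$, the drop $\alpha f_{H,l_1}(t)+(1-\alpha)f_{H,l_2}(t)-f_{H,l}(t)$ strictly exceeds $\rho_l(t)-\alpha\rho_{l_1}(t)-(1-\alpha)\rho_{l_2}(t)$, whose absolute value is at most $2C$. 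By Observation~\ref{obs: partial2 comutation}, the narrowest chord (between the immediate neighbors $l_-<l<l_+$ of $l$ in $L_f(t)$) yields a drop of $\tfrac{(l-l_-)(l_+-l)}{l_+-l_-}\partial^2 f_{H,l}(t)$; wider chords produce larger drops by convexity of $f_{H,\bullet}(t)$. Thus the hypothesis $\partial^2 f_{H,l}\ge 3C$ (absorbing the combinatorial factors depending only on $\Eall$) is enough to keep the drop positive, so $l$ remains a corner of the convex hull.

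The hard part will be the corner-stability argument in the second step: I must verify that sufficiently ``deep'' corners of $f_{H,\bullet}(t)$ cannot be smoothed out by a bounded concave perturbation before convexification, and that wider chords cannot manufacture a new lower tangent that skips over $l$. The pointwise height bound in the first step is essentially a direct computation from the shift formula. The $3C$ shrinkage of $[a,b]$ into $[a+3C,b-3C]$ serves to absorb boundary effects at times where $L_f(t)$ changes and $\rho_\bullet(t)$ may be discontinuous, where otherwise-stable corners can be temporarily disturbed.
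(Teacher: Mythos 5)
Your proof has the right two steps, but each carries a gap traceable to a single omission: the nonnegativity $\rho_l\ge 0$, which holds automatically for any shift sequence (a concave sequence pinned to $\rho_0=\rho_n=0$ lies above its chord, hence is nonnegative). In step one you derive $|f_{H,l}-f^\brho_{H,l}|\le C$ and then invoke a ``triangle inequality'' on $\partial^2 a_l = a_{l-1}-2a_l+a_{l+1}$ with $a_l := f^\brho_{H,l}(t)-f_{H,l}(t)$. From $|a_l|\le C$ alone this yields only $|\partial^2 a_l|\le 4C$, which is not $<3C$ and so does not verify condition~(1) of $3C$-closeness. The paper's bound $\le 2C$ rests on the one-sided estimate $a_l\in[0,C]$ (the shifted hull lies between $f_{H,\bullet}$ and $f_{H,\bullet}+C$ because $\rho_l\ge 0$), and you never invoke it; your $[-C,C]$ range throws the sign away. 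The same slack propagates to your chord-stability argument for condition~(2): the tightest chord $(l_-,l_+)$ gives a drop $\tfrac{(l-l_-)(l_+-l)}{l_+-l_-}\partial^2 f_{H,l}(t)\ge \tfrac12\cdot 3C = \tfrac{3C}{2}$ (the factor attains its minimum $1/2$ at $l_\pm=l\pm 1$, so there is no $\Eall$-dependent factor to ``absorb''), while you bound the concavity defect by $2C$, and $\tfrac{3C}{2}<2C$ is the wrong way around. With $\rho_l\ge 0$ the defect is at most $\rho_l\le C<\tfrac{3C}{2}$ and the inequality closes, but the hand-wave as written does not.

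Separately, the second step is an avoidable re-derivation. Once $|\partial^2 f_{H,l}-\partial^2 f^\brho_{H,l}|\le 2C$ is in hand, the hypothesis $\partial^2 f_{H,l}\ge 3C$ immediately gives $\partial^2 f^\brho_{H,l}\ge C>0$, hence $[a,b]\subseteq U_l(f^\brho)$ and $f^\brho_{E,l}|_{[a,b]}\equiv f_{E,l}|_{[a,b]}\equiv E$ — no shrinkage of $[a,b]$ is needed. Your final remark that passing to $[a+3C,b-3C]$ ``absorbs boundary effects at times where $L_f(t)$ changes'' misdiagnoses the role of the shrinkage: it appears in the definition of $C$-closeness for other comparisons, not because the shift operation destabilizes corners near the ends of a nontriviality interval. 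The paper establishes condition~(2) on the full interval $[a,b]$, and so can you once the two corrections above are made.
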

\begin{proof}
Fix $t\in I$. To attain $f^\brho_H(t)$, we take $f_H(t)$, increase every value by $\rho_l(t)$ and then take the lower convex hull. It follows that $f_{H,l}(t) \le f^\brho_{H,l}(t) \le f_{H,l}(t)+C$, and hence $\left|\partial^2f_{H,l}(t) - \partial^2f^\brho_{H,l}(t)\right|\le 2C$. 
If $[a,b]\subseteq I$ is such that $\partial^2 f|_{[a,b]} \ge 3C$, then $[a,b]\subseteq U_l(f^\brho)$, and hence $f_E|_{[a,b]} \equiv f_E^\brho|_{[a,b]}$. 
\end{proof}

A useful shift sequence is the following.
\begin{de}[The independent shift sequence]
\index{Independent shift sequence}\hypertarget{bab}{}
Let $C>0$, let $f$ be a $g$-template on an interval $I$, and let $\nu_J\in [0,C]$ for $J\in \cG_f$,
where $\cG_f$ is as in Definition \ref{de: U_l}.
By convention, the unique nontriviality intervals $J$ of $f$ at $0$ and $n$ have $\nu_J=0$.

The \emph{independent shift sequence $\brho$ with parameters $C, (\nu_J)_{J\in \cG_f}$} is defined by $\rho_{l}|_J = l(n-l)C + \nu_J$ for every $J\in \pi_0(U_l(f))$.
From now on, we will use only independent shift sequences. A useful property of this sequence is that it can change independently on each interval.
\end{de}
% \begin{de}[Stably separated]\label{de:stably separated}
% A $g$-template $f$ is called \emph{$C$-stably separated} if for every $0<C'\le C$ and a tuple $(\nu_J)_{J_\in \cG_f}\in [0,C']^{\cG_f}$ the template $f^\brho$ is $C$-separated, where $\brho$ is the independent shift sequence with parameters $C, (\nu_J)_{J_\in \cG_f}$.
% \end{de}

% \begin{lem}\label{lem:separated}
% For every $C>0$ there exists $C'>0$ such that for every $g$-template $f$ on an interval $I$ there exists a $C$-stably separated $g$-template $f'$, which is $C'$-close to $f$. . 
% \end{lem}

\noindent\textbf{Example \ref{ex: a g template}, continued.}
Figures \ref{fig: example of shift} and \ref{fig: example of shift2} depict shifts of the $g$-template in Figure \ref{ex: a g template} and the original $g$-template in lighter colors behind.
We uses the independent shift sequence $\brho$ composed of the constant functions $0,1,1,0$, and $2\brho$.
Notice that the nontriviality intervals, i.e., the connected components of $U_l$ shrinks in Figure \ref{fig: example of shift}, and the interval $(t_0, t_1)$ even vanishes completely in Figure \ref{fig: example of shift2}, as the maximum of $\partial^2f_{H,1}|_{(t_0, t_1)}$ is $1.7$, which becomes negative when we add to it $\partial^2 \brho(t) = -2$. The condition $\partial^2 a_l + \partial^2\rho_l\le 0$ is sufficient for $l$ to be a trivial place of $a^\brho_\bullet$, though it is not necessary.
We see that the null (blue) vertices moved, and the non-null (green) did not. 
\null\nobreak\hfill\ensuremath{\diamondsuit}

\begin{figure}[ht]
\caption{A shift of a $g$-template.
% Some of the non-null vertices vanished, though.
}\label{fig:}
\label{fig: example of shift}
\begin{tikzpicture}[line cap=round,line join=round,>=triangle 45,x=1.0cm,y=1.0cm]

\draw [color=black] (6.45, -.8) node {$f_{H, 1}^\brho$};
\draw [color=lightblackcolor] (6.45, -1.3) node {$f_{H, 1}$};
\draw [color=redcolor] (6.45, -1.8) node {$f_{H, 2}^\brho$};
\draw [color=lightredcolor] (6.45, -2.7) node {$f_{H, 2}$};

\clip(-6,-7) rectangle (6,.6);

% \draw [color=black] (-5.5, -0.4) node {$f_{H, \bullet}:$};

\draw [color=black] (-5.5, -5.2) node {$f_{E, 1}:$};
\draw [color=redcolor] (-5.5, -6.2) node {$f_{E, 2}:$};

% \draw [line width=2.pt, color=lightblackcolor] (-7,0) -- (7,0);
\draw [line width=2.pt, color=lightblackcolor] (-4,0.0) -- (-2,-2.4);
\draw [line width=2.pt, color=lightblackcolor] (-2,-2.4) -- (0.05882352941176476,-1.3705882352941174);
\draw [line width=2.pt, color=lightblackcolor] (1,-1.6999999999999997) -- (2,-2.8999999999999995);
\draw [line width=2.pt, color=lightblackcolor] (2,-2.899999999999999) -- (2.8947368421052624,-2.2736842105263153);
\draw [line width=2.pt, color=lightredcolor,] (-3,-0.6000000000000001) -- (2,-4.1);
\draw [line width=2.pt, color=lightredcolor,] (2,-4.1) -- (4,-5.1);
\draw [line width=2.pt, color=lightredcolor,] (4,-5.099999999999999) -- (7,-1.4999999999999982);
% \draw [line width=2.pt, color=lightblackcolor] (-7,0) -- (7,0);
\draw [line width=2.pt, dash pattern=on 5pt off 5pt, color=lightblackcolor] (0.05882352941176476,-1.3705882352941174) -- (1,-1.6999999999999997);
\draw [line width=2.pt, dash pattern=on 5pt off 5pt, color=lightblackcolor] (2.8947368421052624,-2.2736842105263153) -- (4.0,-2.5499999999999994);
\draw [line width=2.pt, dash pattern=on 5pt off 5pt, color=lightblackcolor] (4.0,-2.549999999999999) -- (7.0,-0.7499999999999982);
\draw [line width=2.pt, dash pattern=on 5pt off 5pt,color=lightredcolor,] (-4.0,0.0) -- (-3,-0.6000000000000001);

\draw [line width=2.pt, dash pattern=on 5pt off 5pt,color=lightblackcolor] (-7.0,0.08) -- (-4,0.08);
\draw [line width=2.pt, dash pattern=on 5pt off 5pt,color=lightredcolor] (-7.2,-.08) -- (-4,-.08);
\draw [line width=1.pt, -to] (-7,0) -- (5.5,0) node[above] {$t$};

\draw [line width=2.pt, dash pattern=on 5pt off 5pt,] (-7.2,0.03) -- (-3.1666666666666665,0.03);
\draw [line width=2.pt, dash pattern=on 5pt off 5pt,color=redcolor] (-7,-.03) -- (-3.1666666666666665,-.03);

\begin{scriptsize}
\draw [color=lightblackcolor, fill=lightbluecolor] (-3,-0.6000000000000001) circle (2.5pt); %POINT1
\draw [color=lightblackcolor, fill=lightgreencolor] (-2,-2.4) circle (2.5pt); %POINT1
\draw [color=lightblackcolor, fill=lightbluecolor] (0.05882352941176476,-1.3705882352941174) circle (2.5pt); %POINT1
\draw [color=lightblackcolor, fill=lightbluecolor] (1,-1.6999999999999997) circle (2.5pt); %POINT1
\draw [color=lightblackcolor, fill=lightgreencolor] (2,-4.1) circle (2.5pt); %POINT1
\draw [color=lightblackcolor, fill=lightgreencolor] (2,-2.899999999999999) circle (2.5pt); %POINT1
\draw [color=lightblackcolor, fill=lightbluecolor] (2.8947368421052624,-2.2736842105263153) circle (2.5pt); %POINT1
\draw [color=lightblackcolor, fill=lightgreencolor] (4,-5.099999999999999) circle (2.5pt); %POINT1
\draw [color=lightblackcolor, fill=lightbluecolor] (7,-1.4999999999999982) circle (2.5pt); %POINT1
\end{scriptsize}

% \draw [line width=2.pt, ] (-7,0) -- (7,0);
\draw [line width=2.pt, ] (-3.1666666666666665,8.881784197001252e-16) -- (-2,-1.399999999999999);
\draw [line width=2.pt, ] (-2,-1.399999999999999) -- (-0.5294117647058831,-0.6647058823529406);
\draw [line width=2.pt, ] (1.5882352941176472,-1.4058823529411761) -- (2,-1.8999999999999995);
\draw [line width=2.pt, ] (2,-1.8999999999999995) -- (2.368421052631579,-1.642105263157894);
\draw [line width=2.pt, color=redcolor,] (-1.5789473684210524,-0.5947368421052626) -- (2,-3.099999999999999);
\draw [line width=2.pt, color=redcolor,] (2,-3.099999999999999) -- (4,-4.1);
\draw [line width=2.pt, color=redcolor,] (4,-4.099999999999999) -- (7,-0.4999999999999982);
% \draw [line width=2.pt, ] (-7,0) -- (7,0);
\draw [line width=2.pt, dash pattern=on 5pt off 5pt,] (-0.5294117647058831,-0.6647058823529406) -- (1.5882352941176472,-1.4058823529411761);
\draw [line width=2.pt, dash pattern=on 5pt off 5pt,] (2.368421052631579,-1.642105263157894) -- (4.0,-2.0499999999999994);
\draw [line width=2.pt, dash pattern=on 5pt off 5pt,] (4.0,-2.049999999999999) -- (7.0,-0.24999999999999822);
\draw [line width=2.pt, dash pattern=on 5pt off 5pt,color=redcolor,] (-3.1666666666666665,4.440892098500626e-16) -- (-2.0,-0.6999999999999995);
\draw [line width=2.pt, dash pattern=on 5pt off 5pt,color=redcolor,] (-2.0,-0.6999999999999995) -- (-1.5789473684210524,-0.5947368421052626);
\begin{scriptsize}
\draw [color=lightblackcolor, fill=lightbluecolor] (-4,0) circle (2.5pt); %POINT1

\draw [fill=bluecolor] (-3.1666666666666665,8.881784197001252e-16) circle (2.5pt); %POINT1
\draw [fill=greencolor] (-2,-1.399999999999999) circle (2.5pt); %POINT1
\draw [fill=bluecolor] (-1.5789473684210524,-0.5947368421052626) circle (2.5pt); %POINT1
\draw [fill=bluecolor] (-0.5294117647058831,-0.6647058823529406) circle (2.5pt); %POINT1
\draw [fill=bluecolor] (1.5882352941176472,-1.4058823529411761) circle (2.5pt); %POINT1
\draw [fill=greencolor] (2,-3.099999999999999) circle (2.5pt); %POINT1
\draw [fill=greencolor] (2,-1.8999999999999995) circle (2.5pt); %POINT1
\draw [fill=bluecolor] (2.368421052631579,-1.642105263157894) circle (2.5pt); %POINT1
\draw [fill=greencolor] (4,-4.099999999999999) circle (2.5pt); %POINT1
\draw [fill=bluecolor] (7,-0.4999999999999982) circle (2.5pt); %POINT1
\end{scriptsize}

\draw [line width=2.pt, dash pattern=on 5pt off 5pt, color=lightblackcolor] (-7,-5.7) -- (-4,-5.7);
\draw [line width=2.pt, color=lightblackcolor] (-4,-5.7) -- (-2,-5.7);
\draw [line width=2.pt, color=lightblackcolor] (-2,-5.7) -- (0.05882352941176476,-5.7);
\draw [line width=2.pt, dash pattern=on 5pt off 5pt, color=lightblackcolor] (0.05882352941176476,-5.7) -- (1,-5.7);
\draw [line width=2.pt, color=lightblackcolor] (1,-5.7) -- (2,-5.7);
\draw [line width=2.pt, color=lightblackcolor] (2,-5.7) -- (2.8947368421052624,-5.7);
\draw [line width=2.pt, dash pattern=on 5pt off 5pt, color=lightblackcolor] (2.8947368421052624,-5.7) -- (7,-5.7);
\draw [line width=2.pt, color=lightblackcolor] (-7,-5.8) -- (-7,-5.6000000000000005); %POINT2
\draw [line width=2.pt, color=lightblackcolor] (-4,-5.8) -- (-4,-5.6000000000000005); %POINT2
\draw [line width=2.pt, color=lightblackcolor] (-2,-5.8) -- (-2,-5.6000000000000005); %POINT2
\draw [line width=2.pt, color=lightblackcolor] (0.05882352941176476,-5.8) -- (0.05882352941176476,-5.6000000000000005); %POINT2
\draw [line width=2.pt, color=lightblackcolor] (1,-5.8) -- (1,-5.6000000000000005); %POINT2
\draw [line width=2.pt, color=lightblackcolor] (2,-5.8) -- (2,-5.6000000000000005); %POINT2
\draw [line width=2.pt, color=lightblackcolor] (2.8947368421052624,-5.8) -- (2.8947368421052624,-5.6000000000000005); %POINT2
\draw [line width=2.pt, color=lightblackcolor] (7,-5.8) -- (7,-5.6000000000000005); %POINT2
\draw [line width=2.pt, dash pattern=on 5pt off 5pt,color=lightredcolor,] (-7,-6.7) -- (-3,-6.7);
\draw [line width=2.pt, color=lightredcolor,] (-3,-6.7) -- (2,-6.7);
\draw [line width=2.pt, color=lightredcolor,] (2,-6.7) -- (4,-6.7);
\draw [line width=2.pt, color=lightredcolor,] (4,-6.7) -- (7,-6.7);
\draw [line width=2.pt, color=lightredcolor] (-7,-6.8) -- (-7,-6.6000000000000005); %POINT2
\draw [line width=2.pt, color=lightredcolor] (-3,-6.8) -- (-3,-6.6000000000000005); %POINT2
\draw [line width=2.pt, color=lightredcolor] (2,-6.8) -- (2,-6.6000000000000005); %POINT2
\draw [line width=2.pt, color=lightredcolor] (4,-6.8) -- (4,-6.6000000000000005); %POINT2
\draw [line width=2.pt, color=lightredcolor] (7,-6.8) -- (7,-6.6000000000000005); %POINT2

\draw [line width=2.pt, dash pattern=on 5pt off 5pt,] (-7,-5.5) -- (-3.1666666666666665,-5.5);
\draw[color=black] (-4.2894736842105265, -5.2) node {$*$};
\draw [line width=2.pt, ] (-3.1666666666666665,-5.5) -- (-2,-5.5);
\draw[color=black] (-2.583333333333333, -5.2) node {$\{-1.2\}$};
\draw [line width=2.pt, ] (-2,-5.5) -- (-0.5294117647058831,-5.5);
\draw[color=black] (-1.2647058823529416, -5.2) node {$\{0.5\}$};
\draw [line width=2.pt, dash pattern=on 5pt off 5pt,] (-0.5294117647058831,-5.5) -- (1.5882352941176472,-5.5);
\draw[color=black] (0.3, -5.2) node {$*$};
\draw [line width=2.pt, ] (1.5882352941176472,-5.5) -- (2,-5.5);
\draw[color=black] (1.4, -5.2) node {$\{-1.2\}$};
\draw [line width=2.pt, ] (2,-5.5) -- (2.368421052631579,-5.5);
\draw[color=black] (2.44, -5.2) node {$\{0.7\}$};
\draw [line width=2.pt, dash pattern=on 5pt off 5pt,] (2.368421052631579,-5.5) -- (7,-5.5);
\draw[color=black] (4.684210526315789, -5.2) node {$*$};
\draw [fill=bluecolor] (-7,-5.5) circle (2.5pt); %POINT1
\draw [fill=bluecolor] (-3.1666666666666665,-5.5) circle (2.5pt); %POINT1
\draw [fill=greencolor] (-2,-5.5) circle (2.5pt); %POINT1
\draw [fill=bluecolor] (-0.5294117647058831,-5.5) circle (2.5pt); %POINT1
\draw [fill=bluecolor] (1.5882352941176472,-5.5) circle (2.5pt); %POINT1
\draw [fill=greencolor] (2,-5.5) circle (2.5pt); %POINT1
\draw [fill=bluecolor] (2.368421052631579,-5.5) circle (2.5pt); %POINT1
\draw [fill=bluecolor] (7,-5.5) circle (2.5pt); %POINT1
\draw [line width=2.pt, dash pattern=on 5pt off 5pt,color=redcolor,] (-7,-6.5) -- (-1.5789473684210524,-6.5);
\draw[color=redcolor] (-4.2894736842105265, -6.2) node {$*$};
\draw [line width=2.pt, color=redcolor,] (-1.5789473684210524,-6.5) -- (2,-6.5);
\draw[color=redcolor] (0.21052631578947378, -6.2) node {$\{-1.2, 0.5\}$};
\draw [line width=2.pt, color=redcolor,] (2,-6.5) -- (4,-6.5);
\draw[color=redcolor] (3.0, -6.2) node {$\{-1.2, 0.7\}$};
\draw [line width=2.pt, color=redcolor,] (4,-6.5) -- (7,-6.5);
\draw[color=redcolor] (5.1, -6.2) node {$\{0.5, 0.7\}$};
\draw [fill=bluecolor] (-7,-6.5) circle (2.5pt); %POINT1
\draw [fill=bluecolor] (-1.5789473684210524,-6.5) circle (2.5pt); %POINT1
\draw [fill=greencolor] (2,-6.5) circle (2.5pt); %POINT1
\draw [fill=greencolor] (4,-6.5) circle (2.5pt); %POINT1
\draw [fill=greencolor] (7,-6.5) circle (2.5pt); %POINT1
%[Finished in 0.1s]

% \draw [color=black] (-4.8,-2.11) node {$f$};
\end{tikzpicture}

\end{figure}
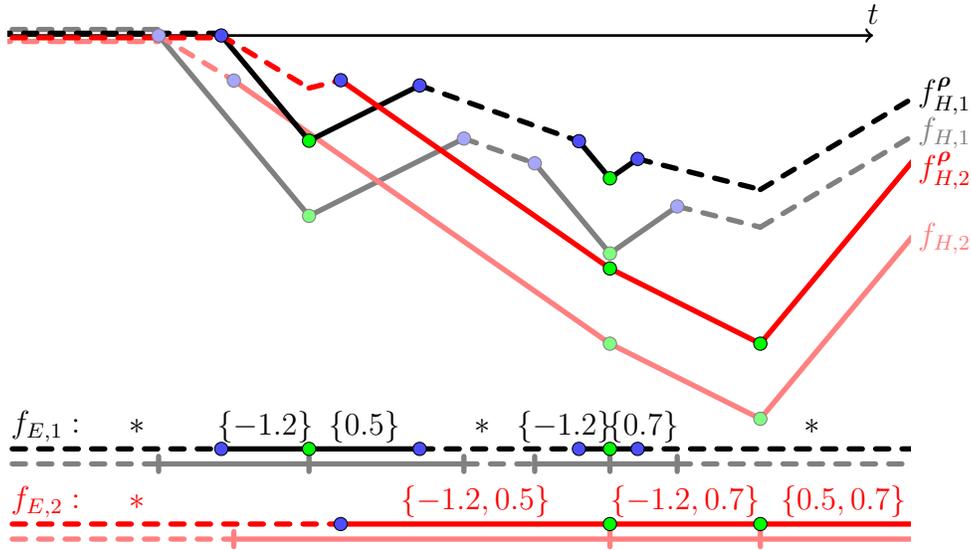

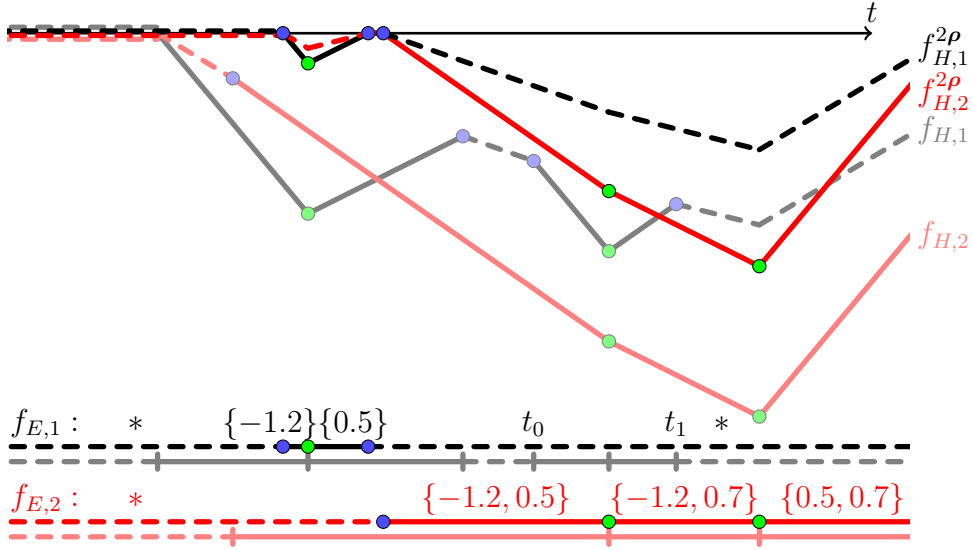
\begin{figure}[ht]
\caption{A shift of a $g$-template.
% Some of the non-null vertices vanished, though.
}
\label{fig: example of shift2}
\begin{tikzpicture}[line cap=round,line join=round,>=triangle 45,x=1.0cm,y=1.0cm]

\draw [color=black] (6.45, -.2) node {$f_{H, 1}^{2\brho}$};
\draw [color=lightblackcolor] (6.45, -1.3) node {$f_{H, 1}$};
\draw [color=redcolor] (6.45, -.8) node {$f_{H, 2}^{2\brho}$};
\draw [color=lightredcolor] (6.45, -2.7) node {$f_{H, 2}$};

\clip(-6,-7) rectangle (6,.6);

% \draw [color=black] (-5.5, -0.4) node {$f_{H, \bullet}:$};

\draw [color=black] (-5.5, -5.2) node {$f_{E, 1}:$};
\draw [color=redcolor] (-5.5, -6.2) node {$f_{E, 2}:$};

% \draw [line width=2.pt, color=lightblackcolor] (-7,0) -- (7,0);
\draw [line width=2.pt, color=lightblackcolor] (-4,0.0) -- (-2,-2.4);
\draw [line width=2.pt, color=lightblackcolor] (-2,-2.4) -- (0.05882352941176476,-1.3705882352941174);
\draw [line width=2.pt, color=lightblackcolor] (1,-1.6999999999999997) -- (2,-2.8999999999999995);
\draw [line width=2.pt, color=lightblackcolor] (2,-2.899999999999999) -- (2.8947368421052624,-2.2736842105263153);
\draw [line width=2.pt, color=lightredcolor,] (-3,-0.6000000000000001) -- (2,-4.1);
\draw [line width=2.pt, color=lightredcolor,] (2,-4.1) -- (4,-5.1);
\draw [line width=2.pt, color=lightredcolor,] (4,-5.099999999999999) -- (7,-1.4999999999999982);
% \draw [line width=2.pt, color=lightblackcolor] (-7,0) -- (7,0);
\draw [line width=2.pt, dash pattern=on 5pt off 5pt, color=lightblackcolor] (0.05882352941176476,-1.3705882352941174) -- (1,-1.6999999999999997);
\draw [line width=2.pt, dash pattern=on 5pt off 5pt, color=lightblackcolor] (2.8947368421052624,-2.2736842105263153) -- (4.0,-2.5499999999999994);
\draw [line width=2.pt, dash pattern=on 5pt off 5pt, color=lightblackcolor] (4.0,-2.549999999999999) -- (7.0,-0.7499999999999982);
\draw [line width=2.pt, dash pattern=on 5pt off 5pt,color=lightredcolor,] (-4.0,0.0) -- (-3,-0.6000000000000001);

\draw [line width=2.pt, dash pattern=on 5pt off 5pt,color=lightblackcolor] (-7.0,0.08) -- (-4,0.08);
\draw [line width=2.pt, dash pattern=on 5pt off 5pt,color=lightredcolor] (-7.2,-.08) -- (-4,-.08);
\draw [line width=1.pt, -to] (-7,0) -- (5.5,0) node[above] {$t$};

\draw [line width=2.pt, dash pattern=on 5pt off 5pt,] (-7.2,0.03) -- (-2.333333333333333,0.03);
\draw [line width=2.pt, dash pattern=on 5pt off 5pt,color=redcolor] (-7,-.03) -- (-2.333333333333333,-.03);

\begin{scriptsize}
\draw [color=lightblackcolor, fill=lightbluecolor] (-3,-0.6000000000000001) circle (2.5pt); %POINT1
\draw [color=lightblackcolor, fill=lightgreencolor] (-2,-2.4) circle (2.5pt); %POINT1
\draw [color=lightblackcolor, fill=lightbluecolor] (0.05882352941176476,-1.3705882352941174) circle (2.5pt); %POINT1
\draw [color=lightblackcolor, fill=lightbluecolor] (1,-1.6999999999999997) circle (2.5pt); %POINT1
\draw [color=lightblackcolor, fill=lightgreencolor] (2,-4.1) circle (2.5pt); %POINT1
\draw [color=lightblackcolor, fill=lightgreencolor] (2,-2.899999999999999) circle (2.5pt); %POINT1
\draw [color=lightblackcolor, fill=lightbluecolor] (2.8947368421052624,-2.2736842105263153) circle (2.5pt); %POINT1
\draw [color=lightblackcolor, fill=lightgreencolor] (4,-5.099999999999999) circle (2.5pt); %POINT1
\draw [color=lightblackcolor, fill=lightbluecolor] (7,-1.4999999999999982) circle (2.5pt); %POINT1
\end{scriptsize}

\draw [line width=2.pt, ] (-2.3333333333333335,0.0) -- (-2,-0.40000000000000036);
\draw [line width=2.pt, ] (-2,-0.40000000000000036) -- (-1.1999999999999993,0.0);
\draw [line width=2.pt, color=redcolor,] (-0.9999999999999997,1.1102230246251565e-16) -- (2,-2.0999999999999996);
\draw [line width=2.pt, color=redcolor,] (2,-2.0999999999999996) -- (4,-3.0999999999999996);
\draw [line width=2.pt, color=redcolor,] (4,-3.0999999999999996) -- (6.583333333333334,8.881784197001252e-16);
\draw [line width=2.pt, dash pattern=on 5pt off 5pt,] (-1.1999999999999993,0.0) -- (-0.9999999999999997,5.551115123125783e-17);
\draw [line width=2.pt, dash pattern=on 5pt off 5pt,] (-0.9999999999999997,5.551115123125783e-17) -- (2.0,-1.0499999999999998);
\draw [line width=2.pt, dash pattern=on 5pt off 5pt,] (2.0,-1.0499999999999998) -- (4.0,-1.5499999999999998);
\draw [line width=2.pt, dash pattern=on 5pt off 5pt,] (4.0,-1.5499999999999998) -- (6.583333333333334,4.440892098500626e-16);
\draw [line width=2.pt, dash pattern=on 5pt off 5pt,color=redcolor,] (-2.3333333333333335,0.0) -- (-2.0,-0.20000000000000018);
\draw [line width=2.pt, dash pattern=on 5pt off 5pt,color=redcolor,] (-2.0,-0.20000000000000018) -- (-1.1999999999999993,0.0);
\draw [line width=2.pt, dash pattern=on 5pt off 5pt,color=redcolor,] (-1.1999999999999993,0.0) -- (-0.9999999999999997,1.1102230246251565e-16);

\begin{scriptsize}
\draw [fill=bluecolor] (-2.3333333333333335,0.0) circle (2.5pt); %POINT1
\draw [fill=greencolor] (-2,-0.40000000000000036) circle (2.5pt); %POINT1
\draw [fill=bluecolor] (-1.1999999999999993,0.0) circle (2.5pt); %POINT1
\draw [fill=bluecolor] (-0.9999999999999997,1.1102230246251565e-16) circle (2.5pt); %POINT1
\draw [fill=greencolor] (2,-2.0999999999999996) circle (2.5pt); %POINT1
\draw [fill=greencolor] (4,-3.0999999999999996) circle (2.5pt); %POINT1
\draw [fill=bluecolor] (6.583333333333334,8.881784197001252e-16) circle (2.5pt); %POINT1
\end{scriptsize}

\draw [line width=2.pt, dash pattern=on 5pt off 5pt, color=lightblackcolor] (-7,-5.7) -- (-4,-5.7);
\draw [line width=2.pt, color=lightblackcolor] (-4,-5.7) -- (-2,-5.7);
\draw [line width=2.pt, color=lightblackcolor] (-2,-5.7) -- (0.05882352941176476,-5.7);
\draw [line width=2.pt, dash pattern=on 5pt off 5pt, color=lightblackcolor] (0.05882352941176476,-5.7) -- (1,-5.7);
\draw [line width=2.pt, color=lightblackcolor] (1,-5.7) -- (2,-5.7);
\draw [line width=2.pt, color=lightblackcolor] (2,-5.7) -- (2.8947368421052624,-5.7);
\draw [line width=2.pt, dash pattern=on 5pt off 5pt, color=lightblackcolor] (2.8947368421052624,-5.7) -- (7,-5.7);
\draw [line width=2.pt, color=lightblackcolor] (-7,-5.8) -- (-7,-5.6000000000000005); %POINT2
\draw [line width=2.pt, color=lightblackcolor] (-4,-5.8) -- (-4,-5.6000000000000005); %POINT2
\draw [line width=2.pt, color=lightblackcolor] (-2,-5.8) -- (-2,-5.6000000000000005); %POINT2
\draw [line width=2.pt, color=lightblackcolor] (0.05882352941176476,-5.8) -- (0.05882352941176476,-5.6000000000000005); %POINT2
\draw [line width=2.pt, color=lightblackcolor] (1,-5.8) -- (1,-5.6000000000000005); %POINT2
\draw [line width=2.pt, color=lightblackcolor] (2,-5.8) -- (2,-5.6000000000000005); %POINT2
\draw [line width=2.pt, color=lightblackcolor] (2.8947368421052624,-5.8) -- (2.8947368421052624,-5.6000000000000005); %POINT2
\draw [line width=2.pt, color=lightblackcolor] (7,-5.8) -- (7,-5.6000000000000005); %POINT2
\draw [line width=2.pt, dash pattern=on 5pt off 5pt,color=lightredcolor,] (-7,-6.7) -- (-3,-6.7);
\draw [line width=2.pt, color=lightredcolor,] (-3,-6.7) -- (2,-6.7);
\draw [line width=2.pt, color=lightredcolor,] (2,-6.7) -- (4,-6.7);
\draw [line width=2.pt, color=lightredcolor,] (4,-6.7) -- (7,-6.7);
\draw [line width=2.pt, color=lightredcolor] (-7,-6.8) -- (-7,-6.6000000000000005); %POINT2
\draw [line width=2.pt, color=lightredcolor] (-3,-6.8) -- (-3,-6.6000000000000005); %POINT2
\draw [line width=2.pt, color=lightredcolor] (2,-6.8) -- (2,-6.6000000000000005); %POINT2
\draw [line width=2.pt, color=lightredcolor] (4,-6.8) -- (4,-6.6000000000000005); %POINT2
\draw [line width=2.pt, color=lightredcolor] (7,-6.8) -- (7,-6.6000000000000005); %POINT2

\draw [line width=2.pt, dash pattern=on 5pt off 5pt,] (-7,-5.5) -- (-2.3333333333333335,-5.5);
\draw[color=black] (-4.2894736842105265, -5.2) node {$*$};
\draw [line width=2.pt, ] (-2.3333333333333335,-5.5) -- (-2,-5.5);
\draw[color=black] (-2.5, -5.2) node {$\{-1.2\}$};
\draw [line width=2.pt, ] (-2,-5.5) -- (-1.1999999999999993,-5.5);
\draw[color=black] (-1.4, -5.2) node {$\{0.5\}$};
\draw [line width=2.pt, dash pattern=on 5pt off 5pt,] (-1.1999999999999993,-5.5) -- (7,-5.5);
\draw[color=black] (3.5, -5.2) node {$*$};
\draw [fill=bluecolor] (-7,-5.5) circle (2.5pt); %POINT1
\draw [fill=bluecolor] (-2.3333333333333335,-5.5) circle (2.5pt); %POINT1
\draw [fill=greencolor] (-2,-5.5) circle (2.5pt); %POINT1
\draw [fill=bluecolor] (-1.1999999999999993,-5.5) circle (2.5pt); %POINT1
\draw [fill=bluecolor] (7,-5.5) circle (2.5pt); %POINT1
\draw [line width=2.pt, dash pattern=on 5pt off 5pt,color=redcolor,] (-7,-6.5) -- (-0.9999999999999997,-6.5);
\draw[color=redcolor] (-4.2894736842105265, -6.2) node {$*$};
\draw [line width=2.pt, color=redcolor,] (-0.9999999999999997,-6.5) -- (2,-6.5);
\draw[color=redcolor] (0.5000000000000002, -6.2) node {$\{-1.2, 0.5\}$};
\draw [line width=2.pt, color=redcolor,] (2,-6.5) -- (4,-6.5);
\draw[color=redcolor] (3.0, -6.2) node {$\{-1.2, 0.7\}$};
\draw [line width=2.pt, color=redcolor,] (4,-6.5) -- (6.583333333333334,-6.5);
\draw[color=redcolor] (5.1, -6.2) node {$\{0.5, 0.7\}$};
\draw [line width=2.pt, dash pattern=on 5pt off 5pt,color=redcolor,] (6.583333333333334,-6.5) -- (7,-6.5);
\draw[color=redcolor] (6.791666666666667, -6.2) node {$*$};
\draw [fill=bluecolor] (-7,-6.5) circle (2.5pt); %POINT1
\draw [fill=bluecolor] (-0.9999999999999997,-6.5) circle (2.5pt); %POINT1
\draw [fill=greencolor] (2,-6.5) circle (2.5pt); %POINT1
\draw [fill=greencolor] (4,-6.5) circle (2.5pt); %POINT1
\draw [fill=bluecolor] (6.583333333333334,-6.5) circle (2.5pt); %POINT1
\draw [fill=bluecolor] (7,-6.5) circle (2.5pt); %POINT1

\draw (1,-5.5) node[above]{$t_0$};
\draw (2.8947368421052624,-5.5) node[above]{$t_1$};
\end{tikzpicture}

\end{figure}

% Consider the above

% \begin{lem}[Reformulation of Lemma \ref{lem:significant}]
% For every $C>0$ there exists $C'>0$ such that for every $g$-template there exists a shift sequence $\brho$ bounded by $C'$ such that $f^\brho$ is $C$-significant. 
% \end{lem}

% subsection shifting_templates (end)
\subsection{Constructing a Significant Approximation} % (fold)
\label{sub:proof_of_lemma_lem:significant}
In this subsection 
% we will define the consept of a significant approximation of a template. 
we will develop a technique which will enable us to solve a large system of a type of linear inequalities, and thus construct a significant approximation and a separated approximation.

The following lemma is a step in the proof of Lemma \ref{lem:separated}
\begin{lem}\label{lem:significant}
For every $C>0$ there exists $C'>0$ such that for every $g$-template $f$ on the interval $I$ there exists a $C$-significant $g$-template $f'$, which is $C'$-close to $f$. 
\end{lem}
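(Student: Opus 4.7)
The plan is to take $f'$ to be a shifted template $f^\brho$ for a carefully chosen independent shift sequence $\brho$ with parameter $C_1\gg C$ (to be fixed later) and free values $(\nu_J)_{J\in\cG_f}$ lying in $[0,C_1]$. Since the total shift is then bounded by $O(n^2 C_1)$, Lemma \ref{lem: shift is equiv} automatically guarantees that $f^\brho$ is $O(n^2 C_1)$-close to $f$ irrespective of the particular $\nu_J$. What remains is to choose the $\nu_J$ so that $f^\brho$ is $C$-significant, and this single choice must handle every impacted-vertex constraint simultaneously.

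The first step is to analyze how the shift alters second differences. By Observation \ref{obs: partial2 comutation} combined with Eq. \eqref{eq: shift formula}, at each time $t$ with adjacent nontrivial places $l_{-1}<l<l_1$ in $L_{f^\brho}(t)$, the value $\partial^2 f^\brho_{H,l}(t)$ is an explicit convex combination of the shifted heights $f_{H,l_i}(t)+l_i(n-l_i)C_1+\nu_{J_i}$ for $i\in\{-1,0,1\}$, where $J_i$ is the nontriviality interval of $f$ at $l_i$ containing $t$. The strictly concave baseline $l\mapsto l(n-l)C_1$ contributes a negative quantity of order $-C_1$ to $\partial^2$, so any place $l$ at which $\partial^2 f_{H,l}(t)$ is small compared to $C_1$ is absorbed by the lower convex hull in the definition of $f^\brho$, automatically forcing $\partial^2 f^\brho_{H,l}(t)=0$. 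The role of the $\nu_J$ is to fine-tune the borderline cases that neither fall far below nor far above the threshold.

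Next I would specify the $\nu_J$ by a greedy procedure. Order $\cG_f$ lexicographically (by level $l$ and then by left endpoint of $J$) and at the $J$-th step pick $\nu_J\in[0,C_1]$ so that for each vertex $t\in\bar J$ impacting the level $l$ of $J$, the resulting $\partial^2 f^\brho_{H,l}(t)$ is either $>C$ or $=0$. Holding all previously chosen values fixed, $\partial^2 f^\brho_{H,l}(t)$ is piecewise linear (and locally monotone) in $\nu_J$, so the bad set $\{\nu_J\in[0,C_1] : \partial^2 f^\brho_{H,l}(t)\in(0,C]\}$ is a union of intervals of total length $O(C)$ per vertex. The combinatorial structure of a $g$-template bounds the number of vertices impacting the level of $J$ locally within $\bar J$, and a bookkeeping argument with $C_1$ chosen proportional to $n^2 C$ leaves enough room in $[0,C_1]$ to meet all constraints.

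The hard part will be controlling the non-locality introduced by the convexification: adjusting $\nu_J$ can create or absorb vertices at neighboring levels, so the exact set of constraints to be satisfied is not fixed in advance but evolves as the procedure unfolds. I would address this by arguing inductively that the processing order freezes the structure progressively --- once all intervals at levels $\le l$ are fixed, no later choice can alter the vertex structure at those levels --- and that the accumulated measure of bad sets at each step remains well below $C_1$. Verifying this invariant rigorously, and pinning down the precise dependence of the required $C_1$ on $n$ and $C$, is where the technical weight of the argument will lie.
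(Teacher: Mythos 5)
Your overall strategy — take $f'=f^\brho$ for an independent shift sequence with free values $\nu_J\in[0,C_1]$ and choose the $\nu_J$ greedily so that every impacted vertex has $\partial^2 f^\brho_{H,l}$ either $>C$ or $=0$ — is the same high-level idea the paper uses. But the two specific devices you propose to close the argument both fail, and they are precisely where the paper's technical work is concentrated.

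First, the invariant ``once all intervals at levels $\le l$ are fixed, no later choice can alter the vertex structure at those levels'' is false. By Observation \ref{obs: partial2 comutation}, $\partial^2 f^\brho_{H,l_0}(t)$ is a fixed affine combination of $f^\brho_{H,l_{-1}}(t)$, $f^\brho_{H,l_0}(t)$, and $f^\brho_{H,l_1}(t)$ where $l_{-1}<l_0<l_1$ are the \emph{adjacent} nontrivial places — so it depends on a level strictly above $l_0$. Choosing $\nu_{J'}$ for $J'$ at level $l_1>l_0$ changes $f^\brho_{H,l_1}$, hence changes $\partial^2 f^\brho_{H,l_0}$, and indeed can change whether $l_0\in L_{f^\brho}(t)$ at all (creating or destroying null vertices at $l_0$). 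Worse, the location $t_0$ of a null vertex of $f^\brho$ is itself determined by an equation involving the $\nu$'s, so the ``constraint to be checked'' moves as you set variables. The paper handles this not by freezing structure but by enumerating in advance, via Claim \ref{claim: threat classification}, every affine expression (``enticement'') that could produce a bad value of $\partial^2 f^\brho_{H,l_0}$: these expressions are affine in the $\nu_J$ with coefficients taking only finitely many values, including the cases where the vertex time $t_0$ is itself shift-dependent (Eq.~\eqref{eq:t_0 formula}). One then asks that \emph{all} these expressions avoid $(0,C]$, which suffices by Claim \ref{claim: threat classification} regardless of how the vertex structure actually rearranges.

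Second, your lexicographic ordering (by level, then left endpoint) does not deliver the needed bounded-degree property. In the greedy selection, when $\nu_J$ is chosen you must satisfy every enticement whose support lies in $\{J'':J''\preceq J\}$ and contains $J$; the number of such enticements must be bounded by a constant depending only on $n$ for the ``bad set'' in $[0,C_1]$ to have total length $o(C_1)$. With your ordering, if $J$ is long it can intersect arbitrarily many short intervals at lower levels, so the number of already-processed neighbors of $J$ — and hence the number of relevant enticements — is unbounded. The paper instead proves (Lemma \ref{lem:graph is constructible}) that the intersection graph $\cG_f$ is $(2n-4)$-constructible by peeling off inclusion-minimal intervals, and combines this with the $(r,R)$-finitariness of the enticement collection (Lemma \ref{lem: construction of enticements}) so that each $\nu_J$ faces only $O_n(1)$ constraints. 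Your proposal correctly flags non-locality as ``where the technical weight lies,'' but the invariant you offer to resolve it does not hold, and the ordering you choose does not give the degree bound the greedy argument needs.
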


In Subsubsection \ref{ssub:the_plan} we will go over the steps of the construction. In Subsubsection \ref{ssub:proof_of_combinatorial_lemmas} we will provide proofs for lemmas stated in Subsubsection \ref{ssub:the_plan}. Finally, in Subsubsection \ref{ssub:construction_of_enticements} we will characterize significant $g$-templates in a way that will enable us to construct such approximations.

\subsubsection{The Plan} % (fold)
\label{ssub:the_plan}
Our goal is to construct a $C$-significant template that is equivalent to $f$.
This will be obtained as a shift of $f$ by an independent shift sequence $\brho$ with parameters $C_1, (\nu_J)_{J\in \cG_f}$, where $\nu_J \in  [0,C_1]$ for every $J\in \cG_f$. 
We will see (Lemma \ref{lem: construction of enticements}) that the condition of being $C$-significant can be strengthened to a condition of the following form: 
\begin{de}
\index{Enticement}\hypertarget{bac}{}
Fix $C_1> C>0$, $\varepsilon>0$, and a ``variable set'' $\cG$.
An \emph{enticement on $\cG$}
(with parameters $C_1>C$, $\varepsilon$) is an affine function 
\begin{align*}
\varphi&:\prod_{J\in \cG}[0,C_1]\to \RR\\
\varphi&((\nu_J)_{J\in \cG}) = a+\sum_{J\in \cG'} a_J\nu_J;~~a, a_J\in \RR,
\end{align*}
where $\cG'\subseteq \cG$ is a finite subset and $|a_J|\ge \varepsilon$ for every $J\in \cG'$. 
\index{Enticement!support}\hypertarget{bad}{}
\index{Enticement!satisfaction}\hypertarget{bae}{}
We call $\cG'$ the \emph{support} of the enticement $\varphi$.
We say that a collection of values $(\nu_J)_{J\in \cG}\subseteq [0,C_1]$ \emph{satisfies} the enticement $\varphi$ if 
\[\varphi((\nu_J)_{J\in \cG})\nin [0,C].\]

\index{Enticement!solution}\hypertarget{baf}{}
Let $Z$ be an index set.
A \emph{a solution} of a collection of enticement $\{\varphi_z:z\in Z\}$ is a tuple $(\nu_J)_{J\in \cG}$ that satisfies all enticements in the collection.

\index{Enticement!finitary collection}\hypertarget{bag}{}
A collection of enticements $\{\varphi_z:z\in Z\}$ on $\cG$ is \emph{$(r,R)$-finitary} if
\begin{itemize}
\item For every $z\in Z$ the support of the enticement $\varphi_z$ has at most $r$ elements.
\item For every $\cG'\subseteq \cG$ there are at most $R$ enticements $\varphi_z$ with support $\cG'$, where $z\in Z$. 
\end{itemize}

% is \emph{$r, R$-finitary} if every $\varphi_z$ is supported on a set of at most $r$ elements and every $\cG'\subseteq \cG$ there are at most $R$ elements with support $\cG'$. 
\end{de}
We will need a collection of enticements whose index set $\cG$ has a graph structure. 
We will now introduce the desired properties of the graph.
\begin{de}[Enticement compatibility with a graph]
Let $\cG$ be a graph.
\index{Enticement!compatible with a graph}\hypertarget{bah}{}
An enticement $\varphi$ on $\cG$ is said to be \emph{compatible with the graph structure on $\cG$} if its support $\cG'$ is a clique in $\cG$.
From now on all enticements on a graph will be assumed to be compatible with it.
\end{de}
\begin{de}[$d$-constructible graph]
\index{Constructible graph}\hypertarget{bai}{}
A graph $\cG$ is said to be \emph{$d$-constructible} if there is a filtration of full subgraphs 
$\emptyset = \cG_0\subseteq \cG_1\subseteq\dots\subseteq \cG$ such that $\bigcup_m \cG_m = \cG$ and for every vertex $v\in \cG_m\setminus \cG_{m-1}$ we have $\deg_{\cG_{m}}(v)\le d$. 
\end{de}
% \begin{de}
% Let $Z$ be a set of indices and $d, r ,R\in \ZZ_{\ge 0}$ be natural numbers. \end{de}

\begin{lem}\label{lem:enticements are solvable}
Let $\{\varphi_z:z\in Z\}$ be a collection of enticements with parameters $C_1 > C, \varepsilon$ on a $d$-constructible graph $\cG$ which are $(r, R)$-finitary for some $r, R\in \ZZ_{\ge 0}$. If 
\[C_1 > \varepsilon^{-1} R\sum_{r' = 0}^{r-1} \binom{d}{r'} C,\] 
then there is a solution for the collection of enticements.
\end{lem}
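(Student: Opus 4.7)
The plan is to construct the solution $(\nu_J)_{J\in\cG}$ greedily, processing the vertices of $\cG$ in the order dictated by the $d$-constructible filtration $\emptyset = \cG_0 \subseteq \cG_1 \subseteq \cdots$. Enumerate the vertices as $v_1, v_2, \ldots$ so that every vertex of $\cG_m \setminus \cG_{m-1}$ comes after all vertices of $\cG_{m-1}$ (with arbitrary order within each layer). The key feature we will exploit is the defining property of $d$-constructibility: when $v_i$ first appears, that is, in $\cG_{m_i} \setminus \cG_{m_i-1}$, it has at most $d$ neighbors inside $\cG_{m_i}$, and in particular among the already processed vertices $v_1, \ldots, v_{i-1}$.

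To each enticement $\varphi_z$ assign the index $\tau(z) := \max\{i : v_i \in \cG'_z\}$ (putting aside for the moment the trivial case $\cG'_z = \emptyset$, which must be handled by the hypothesis that $\varphi_z$ is already a constant outside $[0,C]$). We will choose $\nu_{v_i}$ inductively so as to satisfy every enticement $\varphi_z$ with $\tau(z) = i$. Since every other variable $\nu_{v_j}$ appearing in $\varphi_z$ has $j < i$ and is thus already fixed, the evaluation of $\varphi_z$ becomes an affine function of $\nu_{v_i}$ alone, with leading coefficient $a_{v_i}$ of absolute value at least $\varepsilon$. The set of $\nu_{v_i} \in [0, C_1]$ that \emph{fails} to satisfy this single enticement is therefore an interval (the preimage of $[0,C]$) of length at most $C/\varepsilon$.

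It remains to count the enticements $\varphi_z$ with $\tau(z) = i$. Because $\cG'_z$ is a clique in $\cG$ with $v_i$ as its last-added vertex, $\cG'_z \setminus \{v_i\}$ is contained in the neighborhood $N_{\cG_{m_i}}(v_i)$, which by constructibility has size $\le d$. Since $|\cG'_z| \le r$, this remaining part has at most $r-1$ elements, so the number of possible supports is at most $\sum_{r'=0}^{r-1}\binom{d}{r'}$. By $(r,R)$-finitariness, for each such support there are at most $R$ enticements $\varphi_z$. The total Lebesgue measure of ``bad'' values in $[0,C_1]$ is therefore bounded by
\[R \sum_{r'=0}^{r-1}\binom{d}{r'} \cdot \frac{C}{\varepsilon},\]
which by hypothesis is strictly less than $C_1$. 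Hence the good set in $[0,C_1]$ has positive measure, and a valid choice of $\nu_{v_i}$ exists, completing the induction.

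The main obstacle is really just combinatorial bookkeeping rather than a deep idea: one must (a) choose the enumeration of vertices so that every enticement has a well-defined ``responsible'' vertex $\tau(z)$, (b) argue that the supports of such enticements lie inside the neighborhood of $v_{\tau(z)}$ in the layer where $v_{\tau(z)}$ first appears (so that the $d$-bound on degree translates into the counting factor $\sum_{r'=0}^{r-1}\binom{d}{r'}$), and (c) observe that after fixing all earlier variables, each enticement's bad set is a single interval, whose length is controlled by the lower bound $\varepsilon$ on the leading coefficient. Once these three observations are aligned, the numerical hypothesis on $C_1$ is tailored precisely to close each induction step.
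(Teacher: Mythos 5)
Your proof is correct and takes essentially the same approach as the paper: order the vertices by the constructibility filtration, assign each enticement to the last-processed vertex in its support, and bound the measure of bad values for each vertex by $\varepsilon^{-1}CR\sum_{r'<r}\binom{d}{r'}$ using the degree bound and finitariness. The only cosmetic difference is that the paper phrases the recursion as transfinite (since $\cG$ need not be countable, and merges well-orderings within layers), whereas you use an integer enumeration; this does not affect the substance of the argument.
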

We will define a collection of enticements for each $g$-template $f$. We start by defining the underlying graph $\cG_f$ as $\cG_f = \bigsqcup_{l=1}^{n-1}\pi_0(U_l(f))$ as in Definition \ref{de: U_l}.
Two vertices $J_1, J_2\in \cG_f$ are connected by an edge if the intervals intersect.
Abusing notation, we do not distinguish between a graph and its set of vertices.

\begin{lem}\label{lem:graph is constructible}
For every $g$-template $f$, the graph $\cG_f$ is $(2n-4)$-constructible.
\end{lem}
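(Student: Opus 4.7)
The plan is to filter the intervals of $\cG_f$ by a containment depth. For each vertex $J \in \cG_f$ I would define
\[d(J) := \#\{u \in \cG_f : J \subsetneq u\}.\]
The bound $d(J) \le n-2$ follows immediately from disjointness of the nontriviality intervals at each level: no interval at $J$'s own level strictly contains $J$, and at each of the $n-2$ other levels any two strict containers of $J$ would both contain points of $J$, contradicting disjointness, so at most one strict container can exist per other level.

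I would then set $\cG_m := \{J \in \cG_f : d(J) < m\}$, obtaining a filtration of full subgraphs $\emptyset = \cG_0 \subseteq \cG_1 \subseteq \cdots \subseteq \cG_{n-1} = \cG_f$, with $\cG_m \setminus \cG_{m-1}$ consisting of those $v$ with $d(v) = m-1$. The elementary identity $J \subsetneq K \Rightarrow d(J) \ge d(K)+1$ then rules out $u \subsetneq v$ for any neighbor $u \in \cG_m$ of $v \in \cG_m\setminus\cG_{m-1}$, since it would force $d(u) \ge d(v)+1 = m$, contradicting $u \in \cG_m$.

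The heart of the argument is a case analysis at a fixed other level $l' \ne l_v$. A neighbor $u$ of $v$ at level $l'$ must fall into one of three configurations: (A) $u = v$ as subsets of $\RR$; (B) $u$ overlaps $v$ properly, with exactly one endpoint of $v$ lying in the interior of $u$; or (C) $v \subsetneq u$. I would then verify that disjointness of the open intervals at level $l'$ prohibits any two of (A), (B), (C) from coexisting there; the typical check is that if $v \subsetneq u_1$ and a distinct $u_2$ at level $l'$ contains the left endpoint $a$ of $v$ in its interior, then $u_1$ and $u_2$ both meet $v$ just above $a$, violating disjointness, and similar arguments handle (A) vs (B) and (A) vs (C). Since (B) contributes at most two candidates (one per endpoint of $v$) while (A) and (C) each contribute at most one, we obtain at most two neighbors at level $l'$. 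Summing over the $n-2$ other levels yields $\deg_{\cG_m}(v) \le 2(n-2) = 2n-4$, completing the construction. The main obstacle I anticipate is the geometric bookkeeping in this case analysis — in particular confirming that the three configurations are genuinely exhaustive and mutually exclusive at each level, which is where openness of the intervals and the same-level disjointness must be used carefully.
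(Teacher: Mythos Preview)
Your proof is correct. The essential mechanism matches the paper's: arrange a filtration so that whenever a vertex $v$ is newly added, no neighbor in the current subgraph is strictly contained in $v$, and then bound the degree by counting at most two neighbors per other level.

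The specific filtration differs, however. The paper iteratively removes \emph{minimal} intervals: setting $A^1$ to be the inclusion-minimal elements of $\cG_f$, then $\cG^1 = \cG_f\setminus A^1$, and so on; this amounts to ordering by the length of the longest strictly descending chain below $J$. You instead order by $d(J)$, the number of strict containers above $J$. These two orderings can disagree (e.g.\ an inclusion-minimal interval lying under a long nested chain gets removed first by the paper but added last by you), yet both guarantee the crucial property that $u\subsetneq v$ is impossible for a neighbor $u$ of a freshly added $v$. Your invariant $d(J)$ is a bit more direct to define and bound.

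The degree count is also presented differently. The paper argues that any remaining neighbor of a minimal $J$ must contain one of $J$'s endpoints, and then invokes the fact that at most $n-2$ intervals can contain a given point. Your case split (A)/(B)/(C) per level is more explicit and in fact cleanly covers the edge case $u=v$ as subsets of $\RR$ at a different level, which the paper's ``contains an endpoint'' phrasing glosses over (though the bound still holds there). Either way one gets at most two neighbors per other level and hence $\deg_{\cG_m}(v)\le 2(n-2)$.
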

Finally we produce a collection of enticements, where the existence of a solution $(\nu_J)_{J\in \cG_f}$ guarantees that the $g$-template $f^\brho$ is $C$-significant, where $\brho$ is the independent shift sequence with parameters $C, (\nu_J)_{J\in \cG_f}$.

\begin{lem}\label{lem: construction of enticements}
There exists $\varepsilon > 0$ depending on $\Eall$ such that for every $C_1>C>0$ the following holds. For every $g$-template $f$ there exists a $(4, 49 \cdot 2^{4n}\cdot n^{12})$-finitary collection of enticements $\{\varphi_z:z\in Z\}$ on $\cG_f$ with parameters $C_1>C, \varepsilon$ such that for every solution $(\nu_J)_{J\in \cG_f}$ of $\{\varphi_z:z\in Z\}$, the $g$-template $f^\brho$ is $C$-significant, where $\brho$ is the independent shift sequence with parameters $C, (\nu_J)_{J\in \cG_f}$.
\end{lem}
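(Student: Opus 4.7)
The plan is to attach one enticement to each combinatorially distinct way that some vertex of $f^\brho$ might impact an index $l \in \{1, \dots, n-1\}$, encoding the value of $\partial^2 f^\brho_{H,l}(t)$ at that vertex as an affine function of the nearby $\nu_J$'s. Since $\rho_{l'}|_J = l'(n-l')C + \nu_J$, the shifted virtual value $g_{l'}(t) := f_{H, l'}(t) + \rho_{l'}(t)$ is affine in $\nu_{J_{l'}(t)}$, where $J_{l'}(t) \in \pi_0(U_{l'}(f))$ denotes the component containing $t$; by Eq.~\eqref{eq: shift formula} the shifted height $f^\brho_{H, l}(t)$ is the lower convex hull of the sequence $l' \mapsto g_{l'}(t)$. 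Consequently, on each combinatorial stratum where this convex hull has a fixed piecewise-linear structure, $\partial^2 f^\brho_{H, l}(t)$ is affine in the relevant $\nu_J$'s, and these affine expressions will play the role of $\varphi$.

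The candidate vertices fall into two categories: vertices of the underlying $f$, and new null vertices created when the convexification forces an index $l$ to exit from $L_{f^\brho}(t)$ as the $\nu$'s vary. At such a candidate $t$ and for each impacted index $l$, let $l_{-1} < l < l_1$ be the neighbors of $l$ in $L_{f^\brho}(t)$ (assuming $l \in L_{f^\brho}(t)$; otherwise $\partial^2 f^\brho_{H,l}(t) = 0$ and no enticement is needed). By Observation~\ref{obs: partial2 comutation} applied to the virtual heights,
\[\varphi = \frac{g_{l_{-1}}(t)}{l - l_{-1}} + \frac{g_{l_1}(t)}{l_1 - l} - \frac{l_1 - l_{-1}}{(l_1 - l)(l - l_{-1})}\, g_l(t),\]
which is affine in at most the three variables $\nu_{J_{l_{-1}}(t)}, \nu_{J_l(t)}, \nu_{J_{l_1}(t)}$; a fourth variable may be needed when $t$ is a vertex of $f$ at which the relevant component on one side of $t$ differs from that on the other, but never more. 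The coefficients $|a_J|$ are of the form $\tfrac{1}{l-l_{-1}}, \tfrac{1}{l_1-l}$ or $\tfrac{l_1-l_{-1}}{(l_1-l)(l-l_{-1})}$, each bounded below by $8/n^2$, so we may take $\varepsilon = 8/n^2$, which depends only on $\Eall$. All intervals in the support contain $t$, hence pairwise intersect, so they form a clique in $\cG_f$.

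To verify the finitary bounds, fix a clique $\cG' \subseteq \cG_f$ and count the enticements supported on $\cG'$. Each enticement is specified by (i) a choice of impact index $l$ and its neighbors $l_{-1}, l_1$, together with a bounded number of auxiliary indices needed to describe the neighboring strata, contributing at most $n^{12}$ index configurations; (ii) the multisets $f_{E, l'}(t) \in \cI_{l'}$ on each of the at most four intervals in the support, a set of size at most $2^n$, giving $2^{4n}$; and (iii) a constant number of discrete vertex-type choices (at most $49$), reflecting whether $t$ is an $f$-vertex or a null vertex of $f^\brho$ and on which side of $t$ the transitions occur. This gives the $(4, 49 \cdot 2^{4n} \cdot n^{12})$-finitary estimate.

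Finally, any solution $(\nu_J)$ produces a $C$-significant $f^\brho$: at every vertex $t$ impacting $l$, either $l \notin L_{f^\brho}(t)$ and $\partial^2 f^\brho_{H, l}(t) = 0$ automatically, or $l \in L_{f^\brho}(t)$ and $\partial^2 f^\brho_{H, l}(t)$ equals the value of the corresponding enticement; the defining condition $\varphi \notin [0, C]$, combined with the nonnegativity of $\partial^2$ on a convex sequence, forces $\partial^2 f^\brho_{H, l}(t) > C$. The main obstacle will be the bookkeeping of the second step: one must enumerate all combinatorial strata around each candidate vertex — in particular, correctly identifying the neighbors $l_{-1}, l_1 \in L_{f^\brho}(t)$, which itself depends on the $\nu_J$'s — and verify that across all strata the affine coefficients remain bounded below by a single $\varepsilon$ depending only on $\Eall$.
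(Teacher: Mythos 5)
Your overall plan is the right one, and your enticement formula via Observation~\ref{obs: partial2 comutation} correctly handles the case where the impacting vertex $t_0$ is a non-null vertex of the underlying $f$, since then $t_0$ is a fixed time independent of the $\nu_J$'s and the virtual heights $g_{l'}(t_0)$ are visibly affine in the nearby $\nu_J$'s. But there is a genuine gap in how you handle the other kind of impacting vertex.

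When $t_0$ is a \emph{null} vertex of $f^\brho$ at some $l_{1/2}$, its location is itself determined by the shift: it is the time where $l_{1/2}$ enters or exits $L_{f^\brho}(t)$, which is the solution of a linear equation in $t$ whose intercepts contain the $\nu_J$'s. So $t_0 = t_0(\nu)$ is an affine function of the $\nu_J$'s, and the quantity you must control is $\partial^2 f^\brho_{H,l_0}(t_0(\nu))$ --- the second difference at the \emph{moving} vertex, not at a frozen $t$. Plugging $t_0(\nu)$ into your expression for $\varphi$ and using that the slopes $a_i = \eta_{f_{E,l_i}(t)}$ are constants, one sees (and this is the content of the paper's Claim~\ref{claim: threat classification}, Cases (2) and (3), with the explicit substitution in Eq.~\eqref{eq:t_0 formula}) that the composed expression is still affine in $\nu$, but the coefficients are no longer $\tfrac{1}{l-l_{-1}}$, $\tfrac{1}{l_1-l}$, $\tfrac{l_1-l_{-1}}{(l_1-l)(l-l_{-1})}$: they are rational expressions in the $l_i$'s \emph{and} the slopes $a_i$, of the form displayed in the paper's $c_0, c_{1/2}, c_1$. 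Consequently your claimed $\varepsilon = 8/n^2$ is unsupported; the correct argument only says that the possible nonzero $c_i$'s range over a finite set determined by $\Eall$ (there are $\leq n^4 \cdot 2^{4n}$ possibilities for $(l_i, a_i)$), so some $\varepsilon(\Eall) > 0$ works, with no clean closed form. This is also where the fourth variable $\nu_{J_{1/2}}$ genuinely enters: it comes from the null vertex at $l_{1/2}$, not from mismatch of components across a fixed $f$-vertex as you suggest. A subsidiary consequence is that some coefficients $c_i$ can vanish, so the support of the enticement may be a strict subset of $\{J_{-1}, J_0, J_{1/2}, J_1\}$, and one must check separately that the degenerate case (all relevant $c_i = 0$) never forces $\partial^2 f^\brho_{H,l_0}(t_0) \in (0,C]$; the paper handles this by noting that in that case the whole expression is identically zero. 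Your finitary count of $49 \cdot 2^{4n} \cdot n^{12}$ is in the right ballpark but the $2^{4n}$ factor should be attributed to the slope data $a_i \in \{\eta_E : E \subseteq \Eall\}$ that appears in the $c_i$'s, not to the choice of multisets $f_{E,l'}(t)$ as such.
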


Lemma \ref{lem:significant} is a direct combination of Lemmas \ref{lem: construction of enticements}, \ref{lem:graph is constructible} and \ref{lem:enticements are solvable}.

These lemmas also show that Lemma \ref{lem:significant} holds with any
\[C_1 > \varepsilon^{-1} 49 \cdot 2^{4n}\cdot n^{12}\sum_{r = 0}^{3} \binom{2n-4}{r} C.\] 

% subsubsection the_plan (end)
\subsubsection{Proof of Lemmas \emph{\ref{lem:enticements are solvable}} and \emph{\ref{lem:graph is constructible}}} % (fold)
\label{ssub:proof_of_combinatorial_lemmas}
\begin{proof}[Proof of Lemma \emph{\ref{lem:enticements are solvable}}]
We will choose $\nu_J$ one by one. 
For every $m$ choose a well ordering $\prec$ on $\cG_m\setminus \cG_{m-1}$ and merge the well orderings into a single well ordering on $\cG$, still denoted $\prec$, such that $\cG_1\setminus \cG_{0} \prec \cG_2\setminus \cG_{1}\prec\cdots$.
Note that by the degree condition, for every $J$ there are at most $d$ neighbors which are less or equals to $J$ according to $\prec$.

% Let $\alpha$ be the ordinal corresponding the well order on $G$, and for every $\beta<\alpha$ let $J_\beta$ be the $\beta$ element in $G$.
We will construct the sequence $\nu_{J}$ for every $J$ by transfinite recursion. Fix $J\in \cG$ and suppose that we already defined $(\nu_{J'})_{\{J'\in \cG:J'\prec J\}}$.
Since there are at most $d$ neighbors of $J$ which precede $J$ under $\prec$, it follows that 
there are at most $\sum_{r'=0}^{r-1} \binom {d}{r'}$ cliques of size at most $r$ that contain $J$ and are contained in $\{J'\in \cG:J'\preceq J\}$. Hence, there are at most $R\sum_{r'=0}^{r-1} \binom {d}{r'}$ enticements whose support is contained in $\{J'\in \cG:J'\preceq J\}$ and contains $J$. 

Each of the enticements imposes a linear condition on $\nu_J$ of the form $a_J\nu_J + a\nin [0,C]$ for $|a_J|\ge \varepsilon$, which fails to occur on an interval of length at most $C/\varepsilon$. 
Consequently, if $C_1 > \varepsilon^{-1}CR\sum_{r'=0}^{r-1} \binom {d}{r'}$, then there is a possible value of $\nu_J$ that satisfies these enticements.
\end{proof} 
\begin{remark}
Although we used transfinite recursion, when we use this lemma the ordinal will not exceed $n\omega$.
\end{remark}
\begin{proof}[Proof of Lemma \emph{\ref{lem:graph is constructible}}]
We will use the fact that no $n$ intervals in $\cG_f$ have a common intersection. 
Denote by $A^1$ the set of $J\in \cG_f$ that are minimal with respect to interval inclusion. 
Note that $\deg_{\cG_f}(J)\le 2n-4$ for every $J\in A^1$, because
\begin{itemize}
	\item 
	by the minimality condition,
	every neighbor must contain one of $J$'s endpoints, and 
	\item 
	by the intersection condition,	at most $n-2$ intervals can contain every endpoint. 
\end{itemize}
Let $\cG^1 := \cG_f\setminus A^1\subseteq \cG^0 := \cG$.
Let $A^2\subseteq \cG^1$ be the set of all intervals that are minimal with respect to interval inclusion in $\cG^1$. As above, for every $J\in A^2$ we have $\deg_{\cG^1}(J)\le 2n-4$. 

Continue this process and construct a decreasing sequence of subgraphs $\cG^0\supseteq \cG^1\supseteq \cG^2\supseteq \cdots$ such that for every $m$ and every $J\in \cG^m\setminus \cG^{m+1}$ we have $\deg_{\cG_m}(J)\le 2n-4$. 
We have $\cG^n=\emptyset$ because the length of the maximal increasing sequence $J_1\subset J_2\subset\dots \subset J_k$ of elements in $\cG_m$ decreases by $1$ as long as $\cG_m$ is non-empty. 
We have thus constructed the desired filtration.
\end{proof}

% subsubsection proof_of_combinatorial_lemmas (end)

\subsubsection{Construction of Enticements} % (fold)
\label{ssub:construction_of_enticements}
Throughout this section we consider a tuple of variables $(\nu_J)_{J\in \cG_f}$ and the independent shift sequence $\brho$ with parameters $C,(\nu_J)_{J\in \cG_f}$.
Recall that $\rho_{l}|_J = l(n-l)C + \nu_J$. 
For every $0\le l\le n$ define a function $\tilde f_{H,l}^\brho:U_l(f)\to \RR$ by
\[\forall t\in U_l(f), ~~\tilde f_{H,l}^\brho(t) := f_{H,l}(t) + \rho_{l}(t).\]

In view of Observation \ref{obs: partial2 comutation}, we can formulate the following claim.
\begin{claim}\label{claim: threat classification}
Let $t_0$ be a vertex of $f^\brho$ that impacts $l_0$, and assume $\partial^2 f_{H, l_0}^\brho(t_0)\neq 0$.
Then there exists $l_{-1}<l_0<l_1\in 
%L_{f^\brho}(t_0)\subseteq 
L_{f}(t_0)$, not necessarily adjacent, such that
\begin{align}
% \nonumber
\partial^2 f_{H, l_0}^\brho(t_0) 
% & \\
\label{eq:delta_by_f_01m1}=& \frac{1}{l_0-l_{-1}} \tilde f_{H, l_{-1}}^\brho(t_0) + \frac{1}{l_1-l_{0}}\tilde f^\brho_{H, l_1}(t_0)\\\nonumber& - \frac{l_1-l_{-1}}{(l_1-l_0)(l_0-l_{-1})}\tilde f_{H, l_0}^\brho(t_0)).
% \\
% &\label{comp:delta_by_rho}= \partial^2f_{H,l_0}(t_0) + 2(\alpha \rho_{l_{-1}}(t_0) + (1-\alpha)\rho_{l_1}(t_0) - \rho_l(t_0)).
\end{align} 
In addition, one of the following holds:
\begin{enumerate}[label=\emph{(\arabic*)}, ref=\arabic*]
\item \label{type:ver 1} For some $l\in\{l_0, l_{-1}, l_1\}$ the time $t_0$ is a non-null vertex of $f$ at $l$.

\item \label{type:ver 2} For some $l_{1/2} \in L_f(t_0), l_0 < l_{1/2} < l_1$ the time $t_0$ is a null vertex of $f^\brho$, at $l_{1/2}$, and moreover,
\begin{align}\label{eq:t_0 by 0, 2, 3}
0
=& \frac{1}{l_{1/2}-l_0} \tilde f_{H, l_0}^\brho(t_0) + \frac{1}{l_{1}-l_{1/2}}\tilde f^\brho_{H, l_1}(t_0) \\&\nonumber- \frac{l_1-l_0}{(l_1-l_{1/2})(l_{1/2}-l_0)}\tilde f_{H, l_{1/2}}^\brho(t_0).
\end{align}

\item \label{type:ver 3} For some $l_{-1/2} \in L_f(t_0), l_{-1} < l_{-1/2} < l_0$ the time $t_0$ is a null vertex of $f^\brho$, at $l_{-1/2}$, and moreover,
\begin{align*}
0
=& \frac{1}{l_0-l_{-1/2}} \tilde f_{H, l_0}^\brho(t_0) + \frac{1}{l_{-1/2}-l_{-1}}\tilde f^\brho_{H, l_{-1}}(t_0)\\& - \frac{l_0-l_{-1}}{(l_0-l_{-1/2})(l_{-1/2}-l_{-1})}\tilde f_{H, l_{-1/2}}^\brho(t_0)).
%\\&= \partial^2f_{H,l_{-1/2}}(t_0) + 2(\beta \rho_{l_0}(t_0) + (1-\beta)\rho_{l_{-1}}(t_0) - \rho_{l_{-1/2}}(t_0))
\end{align*}

% In this case we also have 
% \begin{align}
% \partial^2 f_{H, l_0}^\brho(t_0) =
% &\label{comp:delta_by_f_023} 2(\gamma \tilde f_{H, l_1}^\brho(t_0) + (1-\gamma)\tilde f^\brho_{H, l_{-1/2}}(t_0) - \tilde f_{H, l_0}^\brho(t_0))
% \end{align}
% where $\gamma l_1 + (1-\gamma)l_{-1/2} = l_0$. 
\end{enumerate}
\end{claim}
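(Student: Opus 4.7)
The proof proceeds in two phases. First, I will establish \eqref{eq:delta_by_f_01m1} by applying Observation \ref{obs: partial2 comutation} to suitable indices $l_{-1}<l_0<l_1$ in $L_f(t_0)$. Second, I will case-analyse the vertex $t_0$ of $f^\brho$ to obtain one of (1), (2), (3).

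For the first phase: the hypothesis $\partial^2 f^\brho_{H,l_0}(t_0)\ne 0$ together with convexity of $f^\brho_{H,\bullet}(t_0)$ yields $\partial^2 f^\brho_{H,l_0}(t_0)>0$, so $l_0$ is a strict corner of the piecewise-linear function $f^\brho_{H,\bullet}(t_0)$. By \eqref{eq: shift formula} this function is the lower convex hull of $\tilde f^\brho_{H,\bullet}(t_0)$ restricted to $L_f(t_0)$, and any strict corner must be a contact point of the hull, so $l_0\in L_f(t_0)$ and $\tilde f^\brho_{H,l_0}(t_0)=f^\brho_{H,l_0}(t_0)$. Taking $l_{-1}<l_0<l_1$ to be the adjacent elements of $L_{f^\brho}(t_0)$ (which all lie in $L_f(t_0)$), the hull is linear on $[l_{-1},l_0]$ and on $[l_0,l_1]$ and agrees with $\tilde f^\brho_{H,\bullet}$ at $l_{-1},l_0,l_1$; Observation \ref{obs: partial2 comutation} then gives \eqref{eq:delta_by_f_01m1}.

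For the second phase: $t_0$ is a vertex of $f^\brho$ impacting $l_0$, hence a vertex at some index $l^*$ whose adjacent indices in $L_{f^\brho}(t_0)$ include $l_0$, so either $l^*\in\{l_{-1},l_0,l_1\}$ or $l^*$ transitions in/out of $L_{f^\brho}(t)$ across $t_0$. In the non-null case, $f^\brho_{E,l^*}$ coincides with $f_{E,l^*}$ on a neighborhood of $t_0$, so $t_0$ is also a non-null vertex of $f$ at $l^*\in\{l_{-1},l_0,l_1\}$, yielding case (1). In the null case, continuity of $f^\brho_{H,\bullet}$ forces $\partial^2 f^\brho_{H,l^*}(t_0)=0$, so $l^*\notin L_{f^\brho}(t_0)$; combined with the impact condition, $l^*$ must lie strictly in $(l_0,l_1)$ or $(l_{-1},l_0)$. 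Being a null vertex at $l^*$ is precisely the threshold condition that the linear segment of the hull joining the adjacent corners passes through $\tilde f^\brho_{H,l^*}(t_0)$; applying Observation \ref{obs: partial2 comutation} with $\partial^2=0$ to this collinear triple delivers \eqref{eq:t_0 by 0, 2, 3} in case (2), and its left-right symmetric analogue in case (3).

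The main technical obstacle is justifying the threshold condition in the null-vertex step: one must show that a null vertex of $f^\brho$ at $l^*\in(l_0,l_1)$ forces $l^*$ to cross between being and not being a contact point of the convex hull at $t_0$, and that by continuity of $\tilde f^\brho_{H,\bullet}$ in $t$ this transition happens exactly on the collinearity locus rather than strictly before or after. Once this is pinned down, Observation \ref{obs: partial2 comutation} produces the required equations directly.
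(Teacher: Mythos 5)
Your proof is correct and follows essentially the same route as the paper's. Both establish \eqref{eq:delta_by_f_01m1} by taking $l_{-1},l_1$ to be the neighbours of $l_0$ in $L_{f^\brho}(t_0)$ and invoking Observation \ref{obs: partial2 comutation} after noting that the lower convex hull $f^\brho_{H,\bullet}(t_0)$ of $\tilde f^\brho_{H,\bullet}(t_0)$ agrees with $\tilde f^\brho_{H,\bullet}(t_0)$ at its corners; and both then case-split according to whether the vertex at $t_0$ lives at $l_0$ (or one of $l_{\pm1}$) — giving (1) — or at a new index $l^*\nin L_{f^\brho}(t_0)$ pinched between consecutive corners — giving (2) or (3). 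The ``main technical obstacle'' you flag at the end (that the null vertex sits exactly on the collinearity locus at $t_0$) is handled in the paper exactly the way you suggest: on the nontriviality interval $J\subset U_{l_{1/2}}(f^\brho)$ that ends at $t_0$, $l_{1/2}$ is a contact point, so by continuity of $\tilde f^\brho_{H,l_{1/2}}$ and $f^\brho_{H,l_{1/2}}$ one still has $f^\brho_{H,l_{1/2}}(t_0)=\tilde f^\brho_{H,l_{1/2}}(t_0)$ while the corner has disappeared, forcing collinearity of $l_0, l_{1/2}, l_1$; you should carry out this sentence rather than only pointing at it, but doing so closes the gap and does not change the approach.
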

\begin{proof}%\renewcommand{\qedsymbol}{$\triangle$}
Let $t_0$ be a non-null vertex of $f^\brho$ that impacts $l_0$. 
Choose $l_{-1}<l_0 < l_1$ to be the indices in $L_{f^\brho}(t)$ near $l_0$. 
% Since $l_0, l_{-1}, l_{-1}$ are vertices of $f_H(t_0)$ it follows that $l_0, l_{-1}, l_1\in L_{f^\rho}(t)$ and hence
It follows that $\tilde f_{H,l_i}^\brho(t) = f_{H,l_i}^\brho(t)$ for $i= 0, 1, 2$, and using Observation \ref{obs: partial2 comutation}, Eq. \eqref{eq:delta_by_f_01m1} holds.

Note that either $f^\brho$ has a vertex $t_0$ at $l_0$, 
or $f_{H, l_0}^\brho$ is linear near $t_0$. 
If $f^\brho$ has a vertex $t_0$ at $l_0$, then it must be a non-null vertex, as $\partial^2f_{H, l_1}^\brho(t_0)>0$, and 
Case \eqref{type:ver 1} holds. 

Otherwise, $f_{H, l_0}^\brho$ is linear near $t_0$. 
Since the vertex $t_0$ impacts $l_0$, it follows that $t_0$ is a vertex at some $l'\neq l$. 
Assume $l'>l$.
Since $l_1\in L_{f^\brho}(t)$, it follows that $l'\le l_1$. We distinguish three cases.

% If $f_{H, l_0}^\brho(t)$ is linear near $t$ it follows that either $f_{H, l_0+1}^\brho(t)$ or $f_{H, l_0-1}^\brho(t)$ are not linear at $t=t'$, 

% . Assume $f_{H, l+1}^\brho(t)$ is not linear at $t=t_0$ (the other case will be symmetric). 

% % Todo: get rid of the $L_f(\dots)$. It is irrelevant.

% For every $t$ close enough to $t_0$ we will consider the element $\min(l\in L_{f^\brho}(t):l>l_0)$ as a function of $t$. We will solve separately the following two cases

\textbf{Case $l'=l_1$:}
The vertex $t_0$ of $f^\brho$ at $l_1$ must be a non-null vertex as $l_1\in L_{t_0}(f^\brho)$, and Case \eqref{type:ver 1} holds.

\textbf{Case $l'=l_{1/2}<l_1$:}
Since $l_{1/2}\nin L_{t_0}(f^\brho)$, it follows that $t_0$ is a null vertex of $f^\brho$ at $l_{1/2}$.
Consequently, there is a connected component $J$ of $U_{l_{1/2}}(f^\brho)$ such that $t_0$ is an endpoint of $J$. 

Therefore, $f^\brho_{H, l_{1/2}}(t_0) = \tilde f^\brho_{H, l_{1/2}}(t_0)$, and hence Eq.
% s \ref{comp:delta_by_f_013} and 
\eqref{eq:t_0 by 0, 2, 3} holds.
This yields Case \eqref{type:ver 2}.

The symmetric alternative $l' < l$ (as opposed to the assumption above $l'>l$), yields Cases \eqref{type:ver 1} and \eqref{type:ver 3}. 
\end{proof}

\begin{proof}[Proof of Lemma \emph{\ref{lem: construction of enticements}}]
We will first construct a collection of enticements to ensure each of the Cases \eqref{type:ver 1}, \eqref{type:ver 2}, \eqref{type:ver 3} of Claim \ref{claim: threat classification} fails to give a $\partial^2f^\brho_{H,l_0}(t_0) \in (0,C]$, for every vertex $t_0$ that impacts $l_0$.
We will then verify that it is $(4, 49 \cdot 2^{4n}\cdot n^{12})$-finitary.
To avoid Case \eqref{type:ver 1}, note that for every $J\in U_l(f)$ there are at most $l(n-l)\le n^2$ non-null vertices of $f$ at $l$ in an interval $J\in \pi_0(U_l)$. This is the length of the maximal sequence $E_0 \et E_1 \et\cdots \et E_k$. 

For every $l_{-1}<l_0 < l_1$, every $J_i\in U_{l_i}(f)$, and every vertex $t_0\in J_{-1}\cap J_{0}\cap J_{1}$ of $f$ at $l_{-1},l_0$ or $l_1$, the right-hand side of Eq.
\eqref{eq:delta_by_f_01m1} defines an enticement with support $\{J_{i}:i=-1, 0, 1\text{ and }l_i \neq 0,n\}$, 
\begin{align*}
\varphi((\nu_J)_{J\in \cG_f}):=&
 \frac{ f_{H, l_{-1}}(t_0) + l_{-1}(n-l_{-1})C_1 + \nu_{J_{-1}}}{l_0-l_{-1}} \\&
+\frac{ f_{H, l_1}(t_0)+ l_1(n-l_1)C_1 + \nu_{J_1}}{l_1-l_{0}} \\&
- \frac{l_1-l_{-1}}{(l_1-l_0)(l_0-l_{-1})} (f_{H, l_0}(t_0)+l_0(n-l_0)C_1 + \nu_{J_0}).
\end{align*}

In each set of three nontriviality intervals there are at most $3n^2$ vertices, hence there are at most $3n^2$ such enticements.
Since some of these intervals may be for $l=0,n$, we actually obtain at most $6n^2$ such enticements with support $\cG'\subseteq \cG_f$ with $\#\cG'\le 3$. 

Case \eqref{type:ver 2} is more tricky. 
Fix $l_{-1}<l_0<l_{1/2}<l_1$, $J_i\in U_{l_i}$ for $i=-1, 0, 1/2, 1$, and linear parts $a_it + b_i - l(n-l)C_1$ of $f_{H, l_i}(t)$, where $J_{-1}\cap J_0\cap J_{1/2}\cap J_1\neq \emptyset$.
Since in Case \eqref{type:ver 2} $t_0$ is defined by Eq. \eqref{eq:t_0 by 0, 2, 3}. 
set 
\begin{align}\label{eq:t_0 formula}
t_0 :=& -\frac{\frac{1}{l_{1/2}-l_0}(b_0 + \nu_{J_0})+\frac{1}{l_1-l_{1/2}}(b_1 + \nu_{J_1}) -\frac{l_1-l_0}{(l_1-l_{1/2})(l_{1/2}-l_0)} (b_{1/2} + \nu_{J_{1/2}})}{\frac{1}{l_{1/2}-l_0} a_0 + \frac{1}{l_1-l_{1/2}} a_1 -\frac{l_1-l_0}{(l_1-l_{1/2})(l_{1/2}-l_0)} a_{1/2}}\\
=\nonumber&-\frac{(l_1-l_{1/2})(b_0 + \nu_{J_0})+(l_{1/2}-l_0)(b_1 + \nu_{J_1}) - (l_1-l_0)(b_{1/2} + \nu_{J_{1/2}})}{(l_1-l_{1/2}) a_0 + (l_{1/2}-l_0) a_1 - (l_1-l_0)a_{1/2}},
\end{align}
which is the solution of the equation
\[\frac{a_0t_0 + b_0 + \nu_{J_0}}{l_{1/2}-l_{0}} +\frac{a_1t_0 + b_1 + \nu_{J_1}}{l_1-l_{1/2}}	 - \frac{(l_1-l_0)(a_{1/2}t_0 + b_{1/2} + \nu_{J_{1/2}})}{(l_1-l_{1/2})(l_{1/2}-l_0)} = 0.\] 
We need to consider only the case where $t_0$ is well defined, that is, the denominator in Eq. \eqref{eq:t_0 formula} does not vanish.

Substituting Eq. \eqref{eq:t_0 formula} in Eq. \eqref{eq:delta_by_f_01m1} we see that it is enough to show that $\varphi((\nu_J)_{J\in \cG_f})\nin (0,C]$, where
\begin{align*}\label{eq: the enticement}
\varphi((\nu_J)_{J\in \cG_f}):=&\frac{a_{-1}t_0 + b_{-1} + \nu_{J_{-1}}}{l_0-l_{-1}} + \frac{a_1t_0 + b_1 + \nu_{J_1}}{l_1-l_0} \\&-\frac{(l_1-l_{-1})(a_0t_0 + b_0 + \nu_{J_0})}{(l_1-l_0)(l_0-l_{-1})}
\\=&
\sum_{i=-1,0,1/2,1} c_i(b_i + \nu_{J_i}),
\end{align*}
where 
\begin{align*}
c_{-1} &:= \frac{1}{l_0-l_{-1}},\\
c_{0} &:= 
\frac{a_{1/2}l_1 - a_{-1}l_1 - a_1l_{1/2} + a_{-1}l_{1/2} + a_1l_{-1} - a_{1/2}l_{-1}}{(l_{1} - l_{1/2})a_0 + (l_{1/2}-l_0)a_{1} - (l_{1}-l_0)  a_{1/2})(l_1-l_0)},\\
c_{1/2} &:= 
\frac{-a_1l_0 + a_{-1}l_0 + a_0l_1 - a_{-1}l_1 - a_0l_{-1} + a_1l_{-1}
}{(l_{1} - l_{1/2})a_0 + (l_{1/2}-l_0)a_{1} - (l_{1}-l_0)  a_{1/2})(l_1-l_0)},\\
c_{1} &:= \frac{
\begin{pmatrix*}[l]
-2a_{1}l_0^2 + a_{1/2}l_0^2 + a_{-1}l_0^2 + 2a_0l_0l_{1} - a_{1/2}l_0l_{1} - a_{-1}l_0l_{1} - a_0l_0l_{1/2}
\\
+ 2a_{1}l_0l_{1/2} - a_{-1}l_0l_{1/2} - a_0l_{1}l_{1/2} + a_{-1}l_{1}l_{1/2} 
- a_0l_0l_{-1} + 2a_{1}l_0l_{-1}
\\
 - a_{1/2}l_0l_{-1} - a_0l_{1}l_{-1} + a_{1/2}l_{1}l_{-1} 
+ 2a_0l_{1/2}l_{-1} - 2a_{1}l_{1/2}l_{-1}
\end{pmatrix*}
}{
((l_{1} - l_{1/2})a_0 + (l_{1/2}-l_0)a_{1} - (l_{1}-l_0)  a_{1/2})  (l_{1}-l_0)  (l_0-l_{-1})}.
\end{align*}
% *** Do we need $c_{1/2}\neq 0$???
The tricky part is that some of the $c_i$-s may vanish, and hence the support of $\varphi$ might not contain all $J_{-1}, J_0, J_{1/2}, J_1$. 
We will get that $\varphi$ is an enticement with support $\{J_i:c_i\neq 0\text{ and }l_i\neq 0,n\}$.

Note that $\varphi$ is not an enticement if $\{J_i:c_i\neq 0\text{ and }l_i\neq 0,n\}=\emptyset$.
If this happens, then $c_i(b_i + \nu_{J_i})=0$ for every $i=-1, 0, 1/2, 1$. Hence, $\varphi = 0$ and will not yield a contradiction to $f$ being $C$-significant. 

Consequently, if $\{J_i:c_i\neq 0\text{ and }l_i\nin \{0,n\}\}\neq \emptyset$ is non-empty, then $\varphi$ is an enticement on it. 
We will see that there are only finitely many possibilities for $c_i$ and hence we may choose $\varepsilon$ so that $\varepsilon<|c_i|$ whenever $c_i\ne 0$.
% Since $\nu_{J_1}$ do not appear in \eqref{eq:t_0 formula} it follows that $c_1 = \alpha\neq 0$, and hence our enticement will have a nontrivial support.

We now count the enticements that we constructed in Case \eqref{type:ver 2} given a support $\cG'$.

Each $a_i$ is the slope of $f_{H,{l_i}}$ at some $t\in U_{l_i}(f)$ and hence $a_i\in \{\eta_E:E\subseteq \Eall\}$, where $i=-1, 0, 1/2, 1$. Consequently, $a_i$ has at most $2^n$ possibilities.
The values of $(c_i)_{i=-1, 0, 1/2, 1}$ depend only on $(l_{i})_{i=-1, 0, 1/2, 1}$ and $(a_i)_{i=-1, 0, 1/2, 1}$, which have at most $n^4$ and $(2^n)^4$ possibilities, respectively. Consequently, the tuple $(c_i)_{i=-1, 0, 1/2, 1}$ has at most $n^4\cdot 2^{4n}$ possibilities.

For every $i=-1, 0, 1/2, 1$ the value $b_i$ has at most $n^2$ possibilities, given $J_i$. For every set $\emptyset \ne \cG'\subseteq \cG_f$ of size $r$ we need to choose the role of each $J\in \cG'$ and there at most $24$ possibilities. 
Altogether we have constructed at most $24 \cdot 2^{4n}\cdot n^{12}$ enticements with support $\cG'$. 

Case \eqref{type:ver 3} is analogous to Case \eqref{type:ver 2} and yields $24 \cdot 2^{4n}\cdot n^{12}$ enticements for every $\emptyset\ne \cG'\subseteq \cG_F$. 
We thus obtain at most $2\cdot 24 \cdot 2^{4n}\cdot n^{12} + 2n^6 \le 49 \cdot 2^{4n}\cdot n^{12}$ enticements. 

Since we can consider only enticements on sets of intersecting intervals, the enticements are compatible with the graph.
By Lemma \ref{lem:enticements are solvable}, there is a solution $(\nu_J)_{J\in \cG_f}$ to this collection of enticements.
Claim \ref{claim: threat classification} ensures that if $(\nu_J)_{J\in \cG_f}$ satisfy these enticements, then $f^\brho$ is $C$-significant.
% TODO Go over all places with vertices (or vertex) and make sure it is the right terminology.
\end{proof}

% subsubsection construction_of_enticements (end)

\subsection{Properties of Significant Templates} % (fold)
\label{sub:properties_of_significant_templates}
We provide useful properties of significant templates, which in particular will allow us to prove Lemma \ref{lem:separated}.
Recall that the implicit constants in the notations $O(-), \Omega(-),$ and $\Theta(-)$ depends only on $\Eall$ (see Subsection \ref{sub:notations}).
\begin{obs}
For every $g$-template $f$, and $0\le l \le n$, the function $f_{H,l}$ has at most $2^n$ possible slopes depending on $\Eall$ and hence $f_{H,l}$ is $O(1)$-Lipschitz. Consequently, so is the function $t\mapsto\partial^2f_{H,l}$.
\end{obs}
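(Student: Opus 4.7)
The plan is to prove the slope bound directly from the defining differential equation \eqref{eq:diff eq} and then deduce both Lipschitz statements. First I would fix $l$ and exploit the fact that the category flow $f_{E,\bullet}$ has only locally finitely many vertices (by Example \ref{ex:category flow tachles}), so between consecutive vertices the direction filtration $f_{E,\bullet}(t)$ is constant. By Property (\ref{prop:Derivative}) of Definition \ref{de: template}, the function $t\mapsto f_{H,l}(t)$ is then piecewise linear, with slope $\eta_{f_{E,l}(t)}=\sum_{\eta\in f_{E,l}(t)}\eta$ on any interval during which $l\in L_f(t)$. Since $f_{E,l}(t)\in \cI_l$ is a sub-multiset of $\Eall$ of size $l$, summing over $l$ shows that the total number of such slopes, ranging over all regular nontrivial points, is bounded by the number of sub-multisets of the multiset $\Eall$ of size $n$, i.e. by $2^n$.

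For times $t$ at which $l\notin L_f(t)$, the continuous height sequence $f_{H,\bullet}(t)$ is reconstructed by convexity from its values at the nontrivial places in $L_f(t)$; concretely, if $l_1<l<l_2$ are the adjacent indices in $L_f(t)$, then
\[f_{H,l}(t)=\frac{l_2-l}{l_2-l_1}\,f_{H,l_1}(t)+\frac{l-l_1}{l_2-l_1}\,f_{H,l_2}(t).\]
Differentiating in $t$, the slope of $f_{H,l}$ is the same convex combination of $\eta_{f_{E,l_1}(t)}$ and $\eta_{f_{E,l_2}(t)}$, and so is bounded in absolute value by $\max_{E\subseteq \Eall}|\eta_E|\le n\max_i|\eta_i|=O(1)$. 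Combining with the case $l\in L_f(t)$, the slope of $f_{H,l}$ lies in a set of finitely many values bounded in terms of $\Eall$ and bounded in absolute value by $O(1)$. Since the set of vertices is locally finite and $f_{H,l}$ is continuous across it, piecewise linearity with uniformly bounded slope yields the $O(1)$-Lipschitz bound.

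Finally, the second difference $\partial^2 f_{H,l}(t)=f_{H,l-1}(t)-2f_{H,l}(t)+f_{H,l+1}(t)$ is a fixed finite linear combination of $O(1)$-Lipschitz functions and therefore is itself $O(1)$-Lipschitz. The only subtlety is bookkeeping of the two cases $l\in L_f(t)$ versus $l\notin L_f(t)$ and noting that the count $2^n$ is a convenient loose bound; it is not needed for the Lipschitz conclusion, which rests only on the uniform boundedness of the slopes.
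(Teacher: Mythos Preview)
Your argument is correct and is precisely the unpacking the paper leaves implicit; the statement is marked as an observation and carries no proof in the paper. One clarification worth recording: the literal count ``at most $2^n$'' applies to the slopes at times $t\in U_l(f)$, where indeed $\frac{d}{dt}f_{H,l}(t)=\eta_{f_{E,l}(t)}\in\{\eta_E:E\subseteq\Eall\}$, and this is the only form in which the slope set is used later (e.g.\ in the proof of Lemma~\ref{lem: construction of enticements}). At trivial times the interpolated slope you compute need not lie in $\{\eta_E:E\subseteq\Eall\}$, so the global slope set can exceed $2^n$ values; you correctly note this is immaterial for the Lipschitz conclusion, which is all that is required downstream (Lemmas~\ref{lem:long intervals} and~\ref{lem: not many vertices}).
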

\begin{lem}\label{lem:long intervals}
For every $C$-significant $g$-template $f$ on an interval $I$ and every $0\le l \le n$, every connected component $J$ of $U_l(f)$ that has no common endpoint with $I$ is of length $\Omega(C)$.
Moreover, there exists $J'\subset J$ such that $\partial^2f_{H,l}(t)|_{J'} \ge C$ and the total length of $J\setminus J'$ is $O(C)$. 
\end{lem}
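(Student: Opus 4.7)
The plan is to study the continuous piecewise linear function $g(t) := \partial^2 f_{H,l}(t)$ on $J$ and use $C$-significance to prevent its interior corners from having $g\in (0, C]$. Since the claim is vacuous for $l\in \{0,n\}$ (where $U_l(f)=I$ and thus any component shares endpoints with $I$), I will assume $1\le l\le n-1$.

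First I would collect the basic properties of $g$ on $J$. The function $g$ is continuous since $f_{H,\bullet}$ is, and it is piecewise linear: on each region where $f_{E,\bullet}$ is constant, each $f_{H,l'}(t)$ evolves linearly with slope $\eta_{E_{l'}}$ when $l'\in L_f$ and, when $l'$ is trivial, with slope obtained by piecewise linear interpolation between neighboring nontrivial places---a rational convex combination of $\eta_E$'s with denominators at most $n$. Hence the slopes of $g$ take finitely many values depending only on $\Eall$, and in particular the positive ones are bounded below by some $c_0 = \Omega(1)$ and above by $O(1)$. Moreover, by the remark following Definition~\ref{de: U_l}, $g>0$ in the interior of $J$, while $g\to 0$ at each of the two endpoints of $J$: these endpoints lie in the interior of $I$, $l$ is trivial on their outer side so $g\equiv 0$ there, and continuity pulls this value to the boundary of $J$.

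Next I would apply Corollary~\ref{cor: impacts is nondif}: every corner (non-differentiability point) of $g$ is a vertex of $f$ impacting $l$. By $C$-significance, at every such vertex $g$ is either $>C$ or $=0$; but the interior of $J$ rules out $g=0$, so every interior corner of $g$ has $g>C$. In particular, on any subarc of $J$ where $g$ stays in $(0,C)$, the function $g$ has no corner and is a single linear piece with slope in the finite set above.

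Starting from the left endpoint of $J$ and following $g$ until it first attains $C$ therefore traces out a single linear piece of positive slope at most $O(1)$ growing from $0$ to $C$, hence of length $\Omega(C)$; the symmetric argument at the right endpoint yields another such piece of length $\Omega(C)$. These two pieces are disjoint, since any excursion of $g$ from $\ge C$ down below $C$ and back up would create a local minimum inside $J$---a corner---with value in $(0,C)$, contradicting $C$-significance. Defining $J':=\{t\in J:g(t)\ge C\}$ thus gives a single subinterval of $J$ with $|J\setminus J'|=O(C)$, proving both assertions. The main technical subtlety is confirming that the slopes of $g$, including those produced by interpolation across trivial places, still form a finite set bounded away from $0$ in absolute value when positive; this follows from the bounded-denominator structure noted above.
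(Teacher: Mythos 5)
Your proof is correct and follows essentially the same approach as the paper: the paper likewise observes that $\partial^2 f_{H,l}$ is $O(1)$-Lipschitz with finitely many possible slopes, is positive on $J$, vanishes at the endpoints, that by $C$-significance its maximum on $J$ must exceed $C$, and that $J'=\{t\in J:\partial^2 f_{H,l}(t)\ge C\}$ is a single interval because a disconnected $J'$ would force a local minimum (a corner, hence a vertex impacting $l$) with value in $(0,C)$. You spell out somewhat more explicitly than the paper does why the slopes on $J\setminus J'$ are bounded away from zero and why $g$ must actually attain the value $C$; these points are present but terser in the paper's argument (which simply asserts the derivative on $J\setminus J'$ is $\Theta(1)$). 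No gaps.
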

\begin{proof}
The function $t\mapsto\partial^2f_{H,l}(t)$ is $O(1)$-Lipschitz, is positive in $J$, and vanishes at the boundary of $J$.
Since $f$ is $C$-significant, the maximum in $J$ must be at least $C$, and consequently, $J$ is of length $\Omega(C)$.

Moreover, the set $J' := \{t\in J:\partial^2f_{H,l}(t)\ge C\}$ must be an interval since if it is not connected, then the minimum of $\partial^2f_{H,l}(t)$ on $(\conv J')\setminus J'$ is less than $C$ and is attained at a vertex that impacts $l$, which contradicts the $C$-significance of $f$. 

Since the derivative of $\partial^2f_{H,l}(t)$ at $J\setminus J'$ is $\Theta(1)$ we deduce the desired result.
\end{proof}
\begin{lem} \label{lem: not many vertices}
For every $C$-significant $g$-template $f$ and every interval $[a,b]\subset \RR$, there are most $2n^3 + O((b - a) / C))$ vertices of $f$ in $[a,b]$.
\end{lem}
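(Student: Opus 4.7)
The plan is to count vertices level by level. Since a vertex of $f$ is by definition an irregular point of $f_{E,\bullet}$, it is an irregular point of $f_{E,l}$ for some $1 \le l \le n-1$ (by convention there are no vertices at $l=0$ or $l=n$), so summing the counts at each level $l$ provides an upper bound on the total. At a fixed $l$, I will split the vertices at $l$ into non-null vertices (interior transitions inside a nontriviality interval) and null vertices (endpoints of nontriviality intervals that lie in the interior of $I$), and bound the count per nontriviality interval.

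To bound the non-null vertices within a single nontriviality interval, I will invoke the explicit description in Example \ref{ex:category flow tachles}: over each such interval the category flow is encoded by a strict chain $E_{1} \et E_{2} \et \cdots \et E_{r}$ in $\cI_l$ separated by $r-1$ non-null transitions. I plan to show that any strict chain in $(\cI_l,\eteq)$ has length at most $l(n-l)+1$, by encoding $E \in \cI_l$ as a sorted tuple of positions $(i_1 \le \cdots \le i_l)$ of its elements within $\Eall$ and observing that a strict step $E \et E'$ must increase $\sum_j i_j$ by at least $1$, while this sum ranges over an interval of length $l(n-l)$. Adding the two endpoint null vertices, each nontriviality interval at level $l$ carries at most $l(n-l)+2$ vertices.

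Next, I will count the nontriviality intervals at level $l$ that meet $[a,b]$. Lemma \ref{lem:long intervals}, which is the one place where $C$-significance is genuinely used, gives that every nontriviality interval disjoint from $\partial I$ has length $\Omega(C)$ with implicit constant depending only on $\Eall$. Hence the intervals contained in $[a,b]$ number at most $O((b-a)/C)$, and at most two further intervals can meet $[a,b]$ while extending past its endpoints. Combining with the per-interval bound, the number of level-$l$ vertices in $[a,b]$ is at most $(l(n-l)+2)\bigl(O((b-a)/C) + 2\bigr)$.

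Summing over $l=1,\dots,n-1$, the $(b-a)/C$-contribution is $O((b-a)/C)$ after absorbing the $l(n-l)+2 \le n^2$ factor into the implicit constant, while the constant contribution is at most $2\sum_{l=1}^{n-1}(l(n-l)+2) = 2\bigl(\tfrac{n^3-n}{6} + 2(n-1)\bigr) \le 2n^3$, yielding the claimed bound $2n^3 + O((b-a)/C)$. No genuine obstacle is anticipated: all substantive content lives in the chain-length estimate for $(\cI_l,\eteq)$ and in Lemma \ref{lem:long intervals}; the rest is bookkeeping from Example \ref{ex:category flow tachles}.
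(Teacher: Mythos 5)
Your argument is essentially identical to the paper's: fix a level $l$, use Lemma \ref{lem:long intervals} (the only place $C$-significance enters) to bound the number of nontriviality intervals of $f$ at $l$ meeting $[a,b]$ by $2 + O((b-a)/C)$, bound the vertices per interval by $l(n-l)+2$, multiply, and sum over $l$. The one small addition you make is an explicit proof of the chain-length bound $l(n-l)+1$ in $(\cI_l, \eteq)$ via the sum-of-positions argument, which the paper simply asserts as ``at most $l\cdot(n-l)+2$ vertices''; that is a welcome clarification but not a different route.
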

\begin{proof}
We will prove that for every $l$ there are at most $2n^2 + O((a-b)/C)$ vertices of $f$ at $l$ in $[a,b]$. Indeed, by Lemma \ref{lem:long intervals}, the interval $I$ intersects at most $(2 + O((a-b)/C))$ connected components of $U_l(f)$ and each of them contains at most $l\cdot (n-l) + 2\le n^2$ vertices. The result follows.
\end{proof}

\begin{lem}\label{lem: construction of enticements2}
There exists $N = N(\Eall)$ that satisfies the following property.
For every $5C_1$-significant $g$-template $f$,
there exists a $(6, N)$-finitary set of enticements on $\cG_f$ such that for any solution $(\nu_J)_{J\in \cG_f}$ of it the $g$-template 
$f^\brho$ is $C$-separated, where $\brho$ is the independent shift sequence with parameters $C_1, (\nu_J)_{J\in \cG_f}$. 
\end{lem}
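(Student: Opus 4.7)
The plan is to construct a collection of enticements whose simultaneous satisfaction forces $f^\brho$ to be $C$-separated. The key observation, extracted from Claim \ref{claim: threat classification}(2) and Eq.~\eqref{eq:t_0 formula}, is that the time coordinate of a potential null vertex of $f^\brho$ at level $l_{1/2}$ is an explicit affine function of at most three of the parameters $\nu_J$---one from each of the three levels $l_0<l_{1/2}<l_1$ involved in the collapse. Non-null vertices of $f^\brho$, on the other hand, are themselves non-null vertices of the unshifted template $f$ (the shift does not introduce new transitions in the direction filtration), and hence their time coordinates are independent of $\brho$. Consequently, the signed difference $\tau = \tau_1-\tau_2$ between the time coordinates of any two candidate vertices of $f^\brho$ is affine in at most six parameters $\nu_J$, which is the source of the support bound $6$.

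For every unordered pair of candidate vertices of $f^\brho$ in which at least one vertex is null, I introduce two enticements $\varphi_1 = \tau$ and $\varphi_2 = \tau + C$. If $(\nu_J)_{J \in \cG_f}$ satisfies both, then $\tau \notin [0,C]$ and $\tau \notin [-C,0]$, so $\tau \notin [-C,C]$; in particular the two vertex times are distinct and at distance $> C$. This handles both requirements in the definition of $C$-separatedness: that no other vertex lies within $C$ of a null vertex, and that a null vertex is not simultaneously a vertex at two different values of $l$ (the latter would produce $\tau = 0 \in [0,C]$).

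As in the proof of Lemma \ref{lem: construction of enticements}, after dropping those $\nu_J$ whose coefficient vanishes, the remaining coefficients of $\tau$ are explicit rational functions of the indices $l_i$ and the slopes $a_i$ of $f_{H,l_i}$, so they take only finitely many nonzero values; we may take $\varepsilon > 0$ to be the minimum of their absolute values. For the finitariness count: each single candidate vertex is specified by a triple of intervals in $\cG_f$, by an assignment of admissible indices $l_0 < l_{1/2} < l_1$ (at most $n^3$ choices), by the slopes $a_0, a_{1/2}, a_1$ (at most $2^{3n}$ choices), and by linear offsets $b_i$ of $f_{H,l_i}$ on the chosen intervals (at most $n^2$ per interval, using the bound on non-null vertices guaranteed by $5C_1$-significance via Lemma \ref{lem:long intervals} and Lemma \ref{lem: not many vertices}). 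Pairing two such data and multiplying by the six permutations of interval roles yields a bound $N = N(\Eall)$ on the number of enticements supported on any fixed clique $\cG' \subseteq \cG_f$ of size at most $6$, making the collection $(6,N)$-finitary.

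The main obstacle is the degenerate case in which $\tau$ collapses to a nonzero constant $c$ with $|c| < C$, independent of all $\nu_J$: no enticement on $\nu$ can then enforce separation. My plan for ruling this out is to use the $5C_1$-significance hypothesis: by Lemma \ref{lem:long intervals} every nontriviality interval of $f$ has length $\Omega(C_1)$, so when $C_1$ is chosen sufficiently large relative to $C$ the two candidate vertices involved in a degenerate pair are forced either into essentially disjoint configurations (in which case the difference is automatically $> C$) or into a shared configuration that, by a case analysis paralleling the treatment of the coefficients $c_i$ in Lemma \ref{lem: construction of enticements}, produces a genuine dependence of $\tau$ on some $\nu_J$. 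This degeneracy analysis is the principal technical work; once it is settled, the enumeration and finitariness bookkeeping proceeds routinely as above.
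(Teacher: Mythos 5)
Your approach mirrors the paper's: extract the affine formula for a null vertex's time as in Eq.~\eqref{eq:t_0 formula}, observe that the time coordinates of non-null vertices are unchanged by the shift, encode separation as two enticements $\tau, \tau+C$, and count the finitary data as in Lemma~\ref{lem: construction of enticements}. The paper itself is quite terse on the degeneracy you worry about, but it is precisely the point that must be nailed down, and your proposal leaves it as an open plan ("this degeneracy analysis is the principal technical work") rather than an argument. As written, the proof is therefore incomplete.

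The degeneracy is in fact impossible, and the argument is short. For a null vertex of $f^\brho$ at level $l_0$ with adjacent places $l_{-1}<l_0<l_1$ in $L_{f^\brho}(t_0)$, one has $\partial t_0/\partial\nu_{J_0} = \tfrac{l_1-l_{-1}}{(l_1-l_0)(l_0-l_{-1})D}$, and this is nonzero because $D = \tfrac{d}{dt}\partial^2 f^\brho_{H,l_0}\neq 0$ by Lemma~\ref{lem: not scalar at edge}. Against a non-null vertex $t'$ (constant in $\nu$), this coefficient survives in $\tau=t_0-t'$, so there is no degeneracy. Against another null vertex $t_0'$: if the two are endpoints of the same nontriviality interval, Lemma~\ref{lem:long intervals} already gives $|t_0-t_0'|=\Omega(C_1)>C$ and no enticement is needed; otherwise, assume WLOG $l_0<l_0'$. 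The coefficient of $\nu_{J_0}$ in $\tau$ can cancel only if $J_0=J_{-1}'$, and the coefficient of $\nu_{J_0'}$ in $\tau$ can cancel only if $J_0'=J_1$, which forces $l_0=l_{-1}'$ and $l_1=l_0'$. Eliminating $D'$ from the two vanishing conditions yields $\tfrac{l_1-l_{-1}}{l_0-l_{-1}} = \tfrac{l_1'-l_1}{l_1'-l_0}$, which is impossible since the left side exceeds $1$ while the right is less than $1$. Hence $\tau$ always has a nonzero coefficient on at least one of $\nu_{J_0},\nu_{J_0'}$, it is a genuine enticement, and the remainder of your argument goes through.
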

\begin{proof}
Consider a tuple of variables $(\nu_J)_{J\in \cG_f}$ and consider the independent shift sequence $\brho$ with parameters $C,(\nu_J)_{J\in \cG_f}$.
Each null vertex of $f^\brho$ at $l_0$ is an endpoint of an interval $J_0\in \pi_0(U_{l_0}(f^\brho))$. 
As in the proof of Lemma \ref{lem: construction of enticements}, $t_0$ can be defined by an equation 
\begin{align}
\frac{1}{l_0-l_{-1}} \tilde f_{H, l_{-1}}^{\brho} (t_0)+ \frac{1}{l_1-l_0}\tilde f_{H, l_1}^{\brho}(t_0) - \frac{l_1-l_{-1}}{(l_1-l_0)(l_0-l_{-1})}\tilde f_{H, l_0}^{\brho}(t_0) = 0.
\end{align}
As in the proof of Lemma \ref{lem:significant}, the value $f_{l_i}(t_0)$ is one of at most $n^2$ possible linear expression of the form $a_it_0 + b_0 - l_i(n-l_i)C_1$, and hence 
\[t_0 = -\frac{\frac{1}{l_0-l_{-1}}(b_{-1} + \nu_{-1}) + \frac{1}{l_1-l_0}(b_1 + \nu_1) -\frac{l_1-l_{-1}}{(l_1-l_0)(l_0-l_{-1})} (b_0 + \nu_0)}{\frac{1}{l_0-l_{-1}} a_{-1} + \frac{1}{l_1-l_0}a_1 - \frac{l_1-l_{-1}}{(l_1-l_0)(l_0-l_{-1})}a_0}.\]
If $t_0\in J_0\cap J_{-1}\cap J_1$ is a null vertex of $f^\brho$ at $l_0$, then it must lie in the closure $\overline {J_0\setminus J_0'}$, which consists of at most two intervals of length $O(C_1)$, and hence there are at most $O(n^3)$ non-null vertices of $f$ that may be close to $t_0$. For every such vertex $t'$, the condition that $t_0$ lies at distance less than $O(C)$ from $t'$ is an enticement satisfaction condition, with a support consisting of at most $3$ intervals (not exactly $3$ as $l_{-1}$ or $l_1$ may be $0$ or $n$). 

Similarly, the condition that two such null vertices lie at distance less than $C$ is an enticement satisfaction condition, with support contained in the set of the intervals involved. There may be at most $6$ intervals, three defining each null vertex. 

As in Lemma \ref{lem: construction of enticements}, we obtain a finitary set of enticements.
\end{proof}

\begin{proof}[Proof of Lemma \emph{\ref{lem:separated}}]
Let $f$ be a $g$-template. By Lemma \ref{lem:significant}, we may assume that $f$ is $5C_1$-significant.
Let $Z$ be the set of all enticements on $\cG$ constructed in Lemmas \ref{lem: construction of enticements} and \ref{lem: construction of enticements2}. 
It is $(6, N)$-finitary with $N$ and $\varepsilon$ depending only on $\Eall$. 
By Lemma \ref{lem:enticements are solvable} we can solve them with $(\nu_J)_{J\in \cG_f}\subseteq [0,C_1]$, provided that $C_1$ is large enough. 

The solution produces an independent shift sequence $\brho$ such that $f^\brho$ is $C$-significant and $C$-separated by Lemmas \ref{lem: construction of enticements} and \ref{lem: construction of enticements2}, and is $3C_1$-close to $f$ by Lemma \ref{lem: shift is equiv}. 
\end{proof}

\begin{lem}\label{lem: stability or sep+sig}
Let $f$ be a $C$-separated and $C$-significant $g$-template on an interval $I$, such that $f_{H,l}(t_0) = 0$ for every $0\le l\le n$ and every endpoint $t_0$ of $I$. Then there is and $\alpha$, $0<\alpha<1$, which depends only on $\Eall$, such that there is a one-to-one correspondence between nontriviality intervals of $f$ and nontriviality intervals of $f^\brho$, where $\brho$ is the independent shift sequence with parameters $\alpha C, (\nu_J)_{J\in \cG_f}\in [0,\alpha C]^{\cG_f}$. 

Moreover, $f^\brho$ is $C/2$-significant and $C/2$-separated in a uniform sense in $\nu_J$.
That is, for every pair $(\nu_J)_{J\in \cG_f}, (\nu_J')_{J\in \cG_f}\in [0,\alpha C]^{\cG_f}$ and let $\brho, \brho'$ be the corresponding independent shift sequences. Then for every null vertex $t_0$ of $f$ and every vertex $t_1$ of $f$, the corresponding vertices of $f^\brho$, namely $t_0^\brho, t_1^{\brho'}$, satisfy $|t_0^\brho - t_1^{\brho'}|\ge C/2$. 
\end{lem}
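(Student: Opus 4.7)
The plan is to show that for sufficiently small $\alpha$, the shift $f^\brho$ is a small structural perturbation of $f$, and then track vertex displacements quantitatively. Since $\rho_l \le (l(n-l)+1)\alpha C = O(\alpha C)$ and by Lemma \ref{lem: shift is equiv}, $f^\brho$ is $O(\alpha C)$-close to $f$; in particular $|\partial^2 f^\brho_{H,l}(t) - \partial^2 f_{H,l}(t)| = O(\alpha C)$ at every point where both quantities are computed with the same pair of neighbors in $L_f(t)$.

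First I would verify that $L_{f^\brho}(t) = L_f(t)$ pointwise. No new nontrivialities appear: for $l_0 \notin L_f(t)$ with neighbors $l_{-1} < l_0 < l_1$ in $L_f(t)$, the strict concavity of the map $l \mapsto l(n-l)$ gives $\rho_{l_0}(t) > \tfrac{l_1-l_0}{l_1-l_{-1}}\rho_{l_{-1}}(t) + \tfrac{l_0-l_{-1}}{l_1-l_{-1}}\rho_{l_1}(t)$, so shifting raises the candidate point $(l_0, f_{H,l_0}(t))$ strictly above the line through the shifted neighbors, and formula \eqref{eq: shift formula} keeps $l_0 \notin L_{f^\brho}(t)$. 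No nontrivialities are lost: by $C$-significance and Lemma \ref{lem:long intervals}, each component $J \in \pi_0(U_l(f))$ contains a subinterval $J'$ on which $\partial^2 f_{H,l} \ge C$ and whose complement in $J$ has total length $O(C)$, and the $O(\alpha C)$ perturbation preserves $\partial^2 f^\brho_{H,l} \ge C/2 > 0$ on $J'$.

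Next I would control vertex movements. Non-null vertices are determined by transitions in the $E$-component, which is unaltered by the shift (the component $f^\brho_{E,l}$ coincides with $f_{E,l}$ on $U_l(f^\brho) = U_l(f)$). A null vertex $t_0$ of $f$ at $l_0$ is the endpoint of some $J \in \pi_0(U_{l_0}(f))$, characterized by $\partial^2 f_{H,l_0}(t_0) = 0$ with respect to the neighbors $l_{-1} < l_0 < l_1$ in $L_f(t_0)$ and the local linear pieces $a_i t + b_i$ of $f_{H,l_i}$. The analogous equation for $f^\brho$ perturbs the right-hand side by $O(\alpha C)$; since $t \mapsto \partial^2 f_{H,l_0}(t)$ has slope $\Omega(1)$ on the relevant one-sided neighborhood of $t_0$ (the slopes $a_i$ are drawn from the finite set $\{\eta_E : E \in \cI_{l_i}\}$), the shifted null vertex $t_0^\brho$ satisfies $|t_0 - t_0^\brho| = O(\alpha C)$, uniformly in $(\nu_J) \in [0,\alpha C]^{\cG_f}$.

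Finally I would choose $\alpha$ so small that $|t_0^\brho - t_0| \le C/4$ for every null vertex and every admissible $\brho$, and $O(\alpha C) \le C/2$. For any null vertex $t_0$ and any other vertex $t_1$ of $f$ with corresponding shifts $\brho, \brho'$, the $C$-separatedness of $f$ gives $|t_0 - t_1| \ge C$; since non-null vertices do not move and null vertices move by at most $C/4$, the triangle inequality yields $|t_0^\brho - t_1^{\brho'}| \ge C - C/4 - C/4 = C/2$, which is the uniform $C/2$-separatedness. For $C/2$-significance, at every vertex of $f^\brho$ impacting some $l$ (these correspond bijectively to vertices of $f$ impacting $l$), $\partial^2 f^\brho_{H,l}$ differs from $\partial^2 f_{H,l}$ at the original vertex by $O(\alpha C) < C/2$, so values $\ge C$ stay $\ge C/2$ while values exactly $0$ stay $0$ because the nontriviality structure is preserved. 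The main obstacle is the second step: carefully ruling out spurious interactions at the boundaries of nontriviality intervals, where the piecewise constants $\nu_J$ may jump; the hypothesis $f_{H,l}(t_0)=0$ at endpoints of $I$ ensures that the shift vanishes consistently on $\partial I$, so no boundary artefact can disturb the bijection.
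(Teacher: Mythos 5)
Your overall plan — track how the shift perturbs $\partial^2 f_{H,l}$ by $O(\alpha C)$ via Lemma~\ref{lem: shift is equiv}, use Lemma~\ref{lem:long intervals} to see that each nontriviality interval survives, and then combine the vertex displacement bound with $C$-separatedness of $f$ via the triangle inequality — is the right reconstruction of what the paper dismisses in one line (``follows from the $C$-significance of $f$''). Two points need attention.

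First, the claim ``$L_{f^\brho}(t) = L_f(t)$ pointwise'' is false as stated: the shift generically \emph{shrinks} nontriviality intervals (this is visible in Figure~\ref{fig: example of shift}, and indeed Lemma~\ref{lem:significant} is designed to make intervals shrink). What Eq.~\eqref{eq: shift formula} immediately gives is the inclusion $L_{f^\brho}(t) \subseteq L_f(t)$ (your strict-concavity argument for $l_0 \notin L_f(t)$ appeals to $\rho_{l_0}(t)$, but $\rho_{l_0}$ is only defined on $U_{l_0}(f)$; the inclusion instead comes directly from the formula being a minimum over $l_1, l_2 \in L_f(t)$). The statement of the lemma only asks for a bijection between \emph{connected components} of the nontriviality sets, not pointwise equality, so this is what you should aim for.

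Second, your ``no nontrivialities are lost'' step shows each $J \in \pi_0(U_l(f))$ contains a nonempty sub-interval $J'$ on which $\partial^2 f^\brho_{H,l} > 0$, hence $J$ meets at least one component of $U_l(f^\brho)$; but it does not rule out $J \cap U_l(f^\brho)$ breaking into \emph{several} components, which would destroy the one-to-one correspondence and invalidate the later identification of $t_0^\brho$ as ``the'' corresponding vertex. To close this you should invoke the unimodality established inside the proof of Lemma~\ref{lem:long intervals}: $J' = \{t \in J : \partial^2 f_{H,l}(t) \ge C\}$ is a single interval, and on each component of $J \setminus J'$ the function $\partial^2 f_{H,l}$ is monotone with slope $\Theta(1)$ (taking values from the finite set of differences of exterior eigenvalues). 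Combined with $C$-separatedness (no vertex of $f$ within $C$ of $\partial J$, so $L_f(t)$ and the relevant $\rho_{l_i}$ are locally constant near $\partial J$) and the $O(\alpha C)$ closeness, this forces $J \cap U_l(f^\brho)$ to be a single interval containing $J'$, for $\alpha$ small. Once that is in place, the rest of your argument — non-null vertices are fixed since $\partial^2 f_{H,l} > C$ there keeps them inside $U_l(f^\brho)$ with $f^\brho_{E,l} = f_{E,l}$; null vertices move by $O(\alpha C)$ because they solve a linear equation whose perturbation is $O(\alpha C)$ and whose slope is $\Omega(1)$; the triangle inequality gives $C/2$-separatedness for all pairs $(\nu_J),(\nu'_J)$; and $C/2$-significance follows from the $O(\alpha C)$ bound on $|\partial^2 f^\brho_{H,l} - \partial^2 f_{H,l}|$ at the (unmoved) non-null vertices — is sound.
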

\begin{proof}
The first assertion follows from the $C$-significance of $f$.
The second assertion follows from the first.
\end{proof}

\subsection{Proof of the Template Existence Theorem \ref{thm:Lattice approx}} % (fold)
\label{sub:proof_of_lemma_thm:lattice approx}
Let $\Lambda$ be a lattice, and for every $t\in \RR$ consider the filtration $\HN(g_t\Lambda)$. For every $\Gamma\subseteq \Lambda$ of rank $l$ denote by
$$U_\Gamma = \{t\in \RR:g_t\Gamma\in \HN(g_t\Lambda)\}.$$
For many $\Gamma$ this set is empty, yet for some it is not.

The set $U_\Gamma$ is not necessarily an interval. 

\begin{ex}
Let $\Lambda\in X_2$ be a random rank-$2$ lattice and $\Lambda' = 2\Lambda \bigoplus \tfrac{1}{4}\ZZ\subseteq \RR^3$ be a rank-$3$ lattice. Let $g_t = \exp(t\diag(-1, 1))$ and $g_t' = \exp(t\diag(-1, 1, 0))$ be diagonal flows.
For every $t$ we have $\frac{1}{4}\ZZ e_3\subseteq g_t'\Lambda' = 2(g_t\Lambda)\bigoplus \tfrac{1}{4}\ZZ$. 
Since $g_t$ act ergodicly on $X_2$, it follows that $g_t\Lambda$ equidistributes in $X_2$ for $t\to \infty$ with probability one, and hence $\frac{1}{4}\ZZ e_3$ enters and leaves $\HN(g_t'\Lambda')$ infinitely many times as $t\to \infty$.
\end{ex}

For every $\Gamma$ denote by $\fV_\Gamma:=\pi_0(U_\Gamma)$ the set of connected components of $U_\Gamma$.
\index{Basic intervals}\hypertarget{baj}{}
Denote $\fV:=\bigsqcup_{\Gamma\subseteq \Lambda}\fV_\Gamma$, and call the intervals in $\fV$ \emph{basic intervals}. 
This union is disjoint so that if a set $U$ appears in $\fV_{\Gamma_1}$ and $\fV_{\Gamma_2}$ for two different lattices $\Gamma_1$ and $\Gamma_2$, then $U$ appears twice in $\fV$. 
For every $U\in \fV$ denote by $\Gamma_U$ the lattice $\Gamma$ for which $U\in \fV_\Gamma$. 

Let $\varepsilon>0$ and let $C_0>0$ be the corresponding constant provided by Theorem \ref{thm:lin path}. We will prove Theorem \ref{thm:Lattice approx} for a tuple $(\varepsilon, C''')$, where $C'''$ will be defined later. It will be sufficient to prove the theorem for $\varepsilon$ small enough. 
Fix $U\in \fV$.
By Theorem \ref{thm:lin path}, there exists a set of disjoint intervals 
\[\fU_U = \{(a_{i,U}, b_{i,U}):i=1,\dots,r_U\},\]
in $U$ which cover $U$ except for a set of volume $C_0$, and
% \item 
a sequence of multisets
\[E_{1, U}\et E_{2, U}\et \cdots\et E_{r_U, U}\]
% \end{enumerate}
such that 
\[a_{1, U}<b_{1, U}<a_{2, U}<b_{2, U}<\dots<a_{r_U, U} < b_{r_U, U},\]
and $g_t\Gamma_U\in U_\varepsilon\left(\gr_{E_{i, U}}^g\right)$ for every $t\in (a_{i,U}, b_{i,U})$.

Let $U'\subseteq U$ be the interval shorter by $C$ from the right, for some $C>0$ to be determined later.
Define
\begin{align*}
\fU'_U = \{I\cap U':I\in \fU_{U},I\cap U'\neq \emptyset\}
:=\{(a'_{i,U}, b'_{i,U}):i=1,\dots,r_U'\},
\end{align*}
and denote the corresponding multisets in $\cI_l$ by
\[E_{1, U}'\et E_{2, U}'\et \cdots\et E_{r_U', U}'.\] 
Denote by $a_0'$ and $a_{r_U'+1}'$ the endpoints of $U'$.
% and 
% \[a_{1, \Gamma}<b_{1, \Gamma}<a_{2, \Gamma}<b_{2, \Gamma}<\dots<a_{r_\Gamma, \Gamma} < b_{r_\Gamma, \Gamma},\] 
% such that $\fU'_\Gamma =\{(a_{i, \Gamma}, b_{i, \Gamma}):1\le i\le r_\Gamma\}$ and $\rho'((a_{i, \Gamma}, b_{i, \Gamma})) = E_{i, \Gamma}$. Note that $\sum_{i=1}^{r_\Gamma-1}(a_{i+1, \Gamma} - b_{i, \Gamma}) < C$. 

\begin{de}
\index{Complementary interval}\hypertarget{bba}{}
For every $U\in \fV$ such that $U'\neq \emptyset$, the intervals $[b'_{i, U}, a'_{i+1, U}]$ for $0\le i\le r_{U}'$ are called \emph{complementary intervals constructed from $U$}. 
Let \[S:=\bigcup_{U\in \fV}\bigcup_{i=0}^{r_U'}[b_{i, U}', a_{i+1,U}']\subseteq \RR\] be the union of the complementary intervals. 
\end{de}
\begin{lem}\label{lem:S has long connected components}
If $C>(n-1)C_0$, then 
every connected component of $S$ is of length less than $C$, and hence does not intersect two different $U'_1,U'_2$ corresponding to basic intervals $U_1, U_2 \in \fV$ such that $\rk\Gamma_{U_1} = \rk\Gamma_{U_2}$.
\end{lem}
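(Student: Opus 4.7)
My plan is to prove the lemma by induction on the number of ranks involved. Specifically, I will establish the stronger claim that for every subset $R \subseteq \{1, \ldots, n-1\}$ of ranks, every connected component of
\[
S_R := \bigcup_{\substack{U \in \fV \\ \rk \Gamma_U \in R}} \bigcup_{i=0}^{r_U'} [b_{i,U}', a_{i+1, U}']
\]
has length at most $|R| \cdot C_0$. Applying this to $R = \{1, \ldots, n-1\}$ then yields the desired bound $(n-1)C_0 < C$. The second assertion of the lemma is an easy consequence: if a connected component $K$ of $S$ intersected two distinct $U_1', U_2'$ of the same rank (with $U_1 = (u_1, v_1)$ to the left of $U_2 = (u_2, v_2)$), the gap $[v_1 - C, u_2]$ between them would have length at least $C$ and would be contained in $K$ by connectedness, contradicting $|K| < C$.

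Two geometric estimates drive the proof. First, within each individual basic interval $U$, the total length of complementary intervals in $U'$ is at most $C_0$, because their union is contained in $U \setminus \bigcup_i (a_{i,U}, b_{i,U})$, whose total length is bounded by $C_0$ via Theorem \ref{thm:lin path}. Second, two disjoint shortened basic intervals of the same rank are separated by a gap of length at least $C$, since shortening by $C$ from the right widens any preexisting gap between them by exactly $C$.

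The base case $|R| = 1$ is immediate from the first estimate together with the disjointness of the complementary intervals contributing to $S_l$: within a single basic interval they are separated by good intervals of $\fU'_U$, and across distinct rank-$l$ basic intervals the basic intervals themselves are disjoint. For the inductive step, let $K$ be a connected component of $S_R$ with $|R| = r \ge 2$, and suppose for contradiction that for some $l \in R$, $K$ meets two distinct rank-$l$ basic intervals $U_1, U_2$. Let $G$ be the gap between $U_1'$ and $U_2'$. Then $|G| \ge C$ by the second estimate, and $G \subseteq K$ by connectedness. As I address in the next paragraph, $G \subseteq S_{R \setminus \{l\}}$. The inductive hypothesis applied to $R \setminus \{l\}$ then forces $G$ to sit inside a connected component of $S_{R \setminus \{l\}}$ of length at most $(r-1)C_0 \le (n-2)C_0 < C$, contradicting $|G| \ge C$. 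Hence $K$ meets at most one rank-$l$ basic interval for each $l \in R$, and so $|K| \le \sum_{l \in R} |K \cap S_l| \le r \cdot C_0$ by the first estimate.

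The main technical step, on which I expect to spend most care, is verifying that $G \cap S_l = \emptyset$. Enumerate $U_1, \ldots, U_k$ ($k \ge 2$) as the exhaustive list of rank-$l$ basic intervals meeting $K$, ordered from left to right. Any rank-$l$ basic interval outside this list is disjoint from $K$ and hence from $G \subseteq K$, so it cannot contribute to $S_l \cap G$. Among the listed ones, $U_2, \ldots, U_k$ lie strictly to the right of $G$, while $G \cap U_1$ is contained in $U_1 \setminus U_1'$, the removed rightmost $C$-portion of $U_1$, in which no complementary interval of $U_1$ lies. This closes the inductive contradiction and completes the proof.
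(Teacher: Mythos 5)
Your proof is correct and rests on the same two geometric estimates the paper uses: each basic interval contributes complementary intervals of total length at most $C_0$ (via Theorem~\ref{thm:lin path}), and shortened basic intervals of the same rank are separated by gaps of length at least $C$. The paper packages these into a minimal-counterexample argument: supposing a component $S_0$ contains complementary intervals $I_1, I_2$ from two same-rank basic intervals, it chooses a pair whose gap $(b_1', a_2')$ is smallest, argues by minimality that this gap is covered by contributions from at most one basic interval per rank, so $a_2'-b_1' \le (n-1)C_0 < C$, and contradicts the $\ge C$ separation; the length bound and the non-intersection claim of the lemma are then left implicit. Your induction on the set $R$ of ranks, with the stronger quantitative claim that components of $S_R$ have length at most $|R|\,C_0$, makes the recursion hiding in the paper's minimality condition explicit; the gap $G$ between the two leftmost same-rank $U_j'$ in your inductive step, which you show lands in $S_{R\setminus\{l\}}$, plays exactly the role of the paper's gap $(b_1', a_2')$. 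Your version also has the virtue of matching the lemma's stated logical order (length bound first, non-intersection as a consequence) and of spelling out the final count $|K|\le \sum_{l\in R}|K\cap S_l|\le |R|\,C_0$, which the paper's write-up leaves to the reader. The only caveat, shared with the paper, is that both arguments are a hair loose about the closed endpoints of complementary intervals touching the endpoints of $G$ (or of $(b_1',a_2')$); this is harmless since those overlaps are measure zero and the inequalities carry slack.
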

\begin{proof}
Let $S_0$ be a connected component of $S$. 
We will show that $S_0$ does not contain complementary intervals of two different intervals $U_1,U_2\in \fV$ with $\rk\Gamma_{U_1} = \rk\Gamma_{U_2}$.

Assume, by contradiction, that $I_1 = [a'_1, b'_1], I_2 = [a'_2, b'_2]$ with $b'_1<a'_2$ are complementary intervals contained in $S_0$ constructed from basic intervals $U_1, U_2$ respectively with lattices $\Gamma_{U_1}, \Gamma_{U_2}$ of the same rank. 
Assume that $I_1, I_2$ satisfy the following minimality property: there is no such tuple of complementary intervals both intersecting $(b'_1, a'_2)$. 

All complementary intervals that intersect $(b'_1, a'_2)$ consist of at most $n-1$ basic intervals, corresponding to lattices of different ranks. By Theorem \ref{thm:lin path}, every basic interval has complementary intervals of total length at most $C_0$. Hence, $(b'_1, a'_2)$ is covered by complementary intervals of total length at most $(n-1)C_0$. Consequently, $a'_2-b'_1\le (n-1)C_0$.
On the other hand, 
the distance between $U'_{1}, U'_{2}$ is at least $C$, and hence $a'_2-b'_1>C$. 
This contradicts the assumption $C>(n-1)C_0$.
\end{proof}

Let us define for every $U\in \fV$ an interval $U''\subseteq U'$. If the left endpoint of $U'$ is not in $S$, set $U'' := U'$. Otherwise, the left endpoint of $U'$ is in $S$; let $S_0$ be the connected component of $S$ that contains that point and set $U'' := U'\setminus S_0$. 

We next define for every $l$ a category flow $\tilde f_{E,l}:\RR\to \cI^*_l$.
\begin{claim}\label{claim:tau existence}
There exists an increasing function $\tau:\RR\to \RR$ such that 
\begin{itemize}
\item
For every $t\in \RR$ we have $\tau(t)\nin S$.
\item 
For every $t\in \RR$ we have $t-C \le \tau(t)\le t$.
\item 
If $t\in U''$, then $\tau(t)\in U''$ for every $U\in \fV$. 
\end{itemize}
\end{claim}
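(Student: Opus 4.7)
The plan is to construct $\tau$ as a non-decreasing function that coincides with the identity away from widened versions of the components of $S$, and is locally constant on each widening with value just to the left of the corresponding component.

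The essential input is Lemma~\ref{lem:S has long connected components}: for $C > (n-1)C_0$, every connected component of $S$ has length strictly less than $C$. Hence if $t \in S$ lies in a component $S_0$, then $\inf S_0 > t - C$, so a leftward shift out of $S_0$ costs less than the budget $C$.

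Concretely, for each connected component $S_0 = [c_0, d_0]$ of $S$ I would choose a target $v(S_0) \in (c_0 - \epsilon_0, c_0) \setminus S$, with $\epsilon_0 > 0$ small enough that the widened intervals $[v(S_0), d_0]$ are pairwise disjoint across distinct components of $S$, and that $[v(S_0), c_0] \subseteq U''$ for every $U \in \fV$ with $S_0 \cap U'' \neq \emptyset$. Then set $\tau(t) = v(S_0)$ for $t \in [v(S_0), d_0]$ and $\tau(t) = t$ for $t$ outside the union of all such widened intervals. Monotonicity is built in: on each widening $\tau$ is constant equal to $v(S_0)$, while the identity outside reaches $v(S_0)$ exactly at its left endpoint and only resumes after $d_0$. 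The first two conditions of the claim are then immediate from the choice of $v(S_0)$ (namely $v(S_0) \nin S$ and $v(S_0) > c_0 - \epsilon_0 > t - C$ for $t \in S_0$). For the third condition, suppose $t \in U''$. If $t \nin S$ then $\tau(t) = t \in U''$. If $t \in S_0$ then $S_0$ differs from the component $S_0(U)$ removed when forming $U'' = U' \setminus S_0(U)$; hence $\inf S_0 > \sup S_0(U)$, which (or which exceeds) the left endpoint of $U''$, so an open left-neighborhood of $\inf S_0$ is contained in $U''$, and $v(S_0) \in U''$ upon taking $\epsilon_0$ smaller than the width of this neighborhood.

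The main obstacle is the coherent simultaneous choice of the pairs $(v(S_0), \epsilon_0)$, which is straightforward when the components of $S$ are locally finite: each $\epsilon_0$ is simply taken smaller than the positive gap between $c_0$ and the nearest component of $S$ to its left, and smaller than the distance from $c_0$ to the left boundary of each $U''$ with $S_0 \cap U'' \neq \emptyset$. In the event that components of $S$ accumulate from the left at some point, one enumerates the components in a well-order and selects $\epsilon_0$ recursively, using the strict length bound from Lemma~\ref{lem:S has long connected components} to ensure each individual $\epsilon_0$ can be made arbitrarily small without violating the total shift budget. The resulting $\tau$ satisfies all three properties required in the claim.
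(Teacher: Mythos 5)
Your construction is essentially the paper's: write $S$ as a union of its components, widen each component $[c_0,d_0]$ leftward to a point $v(S_0)$ chosen in the gap to its left (so $v(S_0)\nin S$), close enough to $c_0$ to stay within the length budget $C$ and to remain inside every $U''$ that meets the component, and then let $\tau$ be the identity outside the widened intervals and constant equal to $v(S_0)$ on each one. The paper's $a_i'$ plays the role of your $v(S_0)$, and its justification that such a choice is possible (namely that $a_i\in U''$ for every relevant $U$, giving room to the left of $a_i$) is the same observation you make via ``$\inf S_0 > $ left endpoint of $U''$''; the difference is purely notational.
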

\begin{proof}
Write $S$ as $S=\bigsqcup_{i\in Z} [a_i, b_i]$, with $Z = [i_0, i_1]\cap \ZZ \subseteq \ZZ$ an index set of consecutive numbers such that $a_i<b_i<a_{i+1}$ for every $i_0\le i<i_1$ such that $i+1\in Z$. Note that $i_0$ may be $-\infty$ and $i_1$ may be $\infty$. 
If $i_0\neq \infty$  set $b_{i_0-1} = -\infty$ and if $i_1\neq \infty$  set $a_{i_1+1} = \infty$.

Fix $i\in Z$. By Lemma \ref{lem:S has long connected components}, $b_i - a_i < C$. For every basic interval $U$, by the definition of $U''$, if $U''\cap [a_i,b_i]\neq \emptyset$, then $a_i\in U''$. 
Consequently, we can choose $b_{i-1} < a_i' < a_i$ such that $b_i- a_i'<C$ and $a_i'\in U''$ for every $U''$ which intersects $[a_i,b_i]$. If $i_1\neq \infty$, set $a_{i_1+1}' = \infty$. 

Note that \[\bigcup_{i\in [i_0, i_1+1]} (b_{i-1}, a_{i}']\cup  \bigcup_{i\in Z} (a_{i}', b_{i}] = \RR.\]

The function 
\[\tau(t) = \begin{cases}
t, & \text{if }t\in (b_{i-1}, a_{i}'] \text{ for }i\in [i_0, i_1+1],\\
a_i', & \text{if }t\in (a_{i}', b_i] \text{ for }i\in Z,
\end{cases}\]
enjoys the desired properties.
% We will show that the target set is non-empty for every $t\in \RR$.
% By Lemma \ref{lem:S has long connected components} the set $[t-C, t]\cap\setminus S_0$ is non-empty. If it's intersection with $\bigcap_{t\in U''}U''$ is empty then we must have $t\in S$ as otherwise the intersection would contain every $t'<t$ close enough to $t$. 

% Fix a connected component 
% By the definition of $U''$, if $S_0\cap U'' \neq \emptyset$ for a connected component $S_0$ of $S$ then $U''$ contains the left endpoint $a_0$ of $S_0$ and hence $[t-C, t]\cap \bigcap_{t\in U''}U''\setminus S$ contains any point $t'<a_0$ which is close enough to $a_0$. 

% For any connected component $S_0$ of $S$ choose $t'(S_0)$ such that $[t-C, t]\cap \bigcap_{t\in U''}U''\setminus S$ contains 

% We wish to define $\tau(t) = \sup_{t'\le t} \tau_0(t)$, but then $\tau(t)$ may be the left endpoint of a connected component of $S$, which is unacceptable. To avoid this we will strengthen Condition \eqref{eq: tau0 cond}. 
% For every connected component 
\end{proof}
Fix a function $\tau$ as in Claim \ref{claim:tau existence}. 
Define the category flow $\tilde f_{E,l}:\RR\to \cI^*_l$ as follows:
For every $t\in \RR$, let $U$ be the unique basic interval with $\rk \Gamma_U = l$ and $\tau(t)\in (a'_{i, U}, b'_{i, U})$ and set $\tilde f'_{E,l}(t):= E_{i,U}'$, where $1\le i\le r_U'$.
Define now \[T_l=\{t\in \RR:\tilde f'_{E,l}\text{ is not locally constant at }t\},\]
and $\tilde f_{E,l}(t) = \tilde f_{E,l}'(t)$
at every point $t\nin T_l$, and undefined for $t\in T_l$.
For every $t\in T_l$ define the connecting morphisms: 
For every $U\in \fV$ with $\rk \Gamma_U = l$, if $t\in U''$, denote by $i_\pm$ the indices such that $\lim_{s\searrow 0} \tilde f_{E,l}(t\pm s)=E_{i_\pm,U}'$. 
By the monotonicity of $\tau$ we get that $i_- < i+$ and hence $E_{i_-,U}'\et E_{i_+,U}'$. 
Therefore, we may choose the connecting morphism of $\tilde f_{E,l}$ at $t$ to be $E_{i_-,U}'\xrightarrow\eteq E_{i_+,U}'$
For \[t\in T_l\setminus\bigcup_{\substack{U\in \fV\\\rk \Gamma_U = l}}U'',\]
we define the connecting morphism to be the null arrow.

By Lemma \ref{lem: E_i to V_i is functorial} below, if $\varepsilon>0$ is small enough, then 
 $\tilde f_{E,\bullet}(t)$ forms a direction filtration. 
 % at every point $t$. there exists point $t'$ such that all $\tilde f_{E,l}(t)$ are defined by the intervals in $\fU''_{\Gamma}$ which contains $t'$ for $\Gamma\subseteq \Lambda$. 

For every $U\in \fV$, fix $t_U\in U''$, and define $\tilde f_{H,l}|_{U''}$ as follows: set $\tilde f_{H,l}(t_U) := \log \cov g_{t_U}\Gamma_U$, and extrapolate $\tilde f_{H,l}$ to $U''$ by Eq. \eqref{eq:diff eq}. 
% In particular define $\tilde f_{H,0} - \tilde f_{H,n}\equiv 0$.

Note that for every interval $(a'_{i, U}, b'_{i, U})$, if $a'_{i, U} - b'_{i, U} > C$, then $\tilde f_{E,l}|_{(a'_{i, U}+C, b'_{i, U})} \equiv E_{i,U}'$. 
Corollary \ref{cor:blade behavior} implies that the blade $\bl\, \Gamma$ satisfies 
\begin{align}\label{eq:tilde f small close to hn}
\left|\tilde f_{H,l}(t) - \log\|g_t\bl\, \Gamma\| \right|= O(C),
\end{align}
for every $t\in U''_\Gamma$. 

Let $C''>0$, to be determined later.
For every $t\in \RR$ define $f_{H,\bullet}(t)$ to be the lower convex hull of the function
\begin{align}\label{eq: defining f to lat}
l\mapsto &\tilde f_{H,l}(t) + l(n-l)C'', \\\nonumber&\text{ for every }0\le l\le n, \text{ for which }f_{H,l}(t)\text{ is defined.}
\end{align}

For every $U\in \fV$ with $l_0:=\rk \Gamma_U$ and every endpoint $t_0$ of $U''$ which corrsponds to an endpoint $t_1$ of $U$ we have $|t_0-t_1| \le C$. 
Since
\[\partial^2 \HN(g_t\Lambda)_{H,l_0}(t_1) = 0,\]
the Lipschitz property of $t\mapsto \HN(g_t\Lambda)_{H,l}(t)$ implies that
\[\partial^2 \HN(g_t\Lambda)_{H,l_0}(t_0) = O(C).\]
Consequently,
\[\lim_{U''\ni t\to t_0}\partial^2 \tilde f_{H,l_0}(t) = O(C).\]
Hence if $C''=O(C)$ is large enough, then 
\[\lim_{U''\ni t\to t_0}\partial^2 (\tilde f_{H,l_0}(t)-l(n-l)C'') < 0.\]
Consequently, with this choice of $C''$, near $t_0$ the index $l$ does not correspond to an extreme point of the lower convex hull of the function in Eq. \eqref{eq: defining f to lat}, hence removing it does not cause discontinuity of the lower convex hull. 

From Eq. \eqref{eq:tilde f small close to hn} we have 
$\tilde f_{H, l}(t) + l(n-l)C'' \ge \HN(g_t\Lambda)_{H,l}(t)$
 for every $t\in U_l(f)$, provided that $C''=O(C)$ is sufficiently large compared to $C$. By the definition of $f_{H,\bullet}$, we get that 
$f_{H, l}(t)\ge \HN(g_t\Lambda)_{H,l}(t)$.

By Eq. \eqref{eq:tilde f small close to hn}, for every $t\in U''$ we have 

\begin{align}\label{eq: f is close to hn}
f_{H, l}(t)\le \HN(g_t\Lambda)_{H,l}(t) + O(C'').
\end{align}
Due to the Lipschitz continuity of both functions, we get that Eq. \eqref{eq: f is close to hn} holds also for every  $t\in U$. 
Consequently, 
\begin{align}\label{}
f_{H, l}(t) \ge \HN(g_t\Lambda)_{H,l}(t)^{\brho}, 
\end{align}
where $\brho = (l(n-l)C''')_{l=0}^n$
for some $C''' = O(C'')$, and hence Eq. \eqref{eq: f is close to hn} holds for every $t\in \RR$. 

Define \[U_l(f):=\{t\in \RR: \partial^2f_{H,l}(t)>0\},\] and 
\[f_{E,l}(t):=
\begin{cases}
\tilde f_{E,l}(t),&\text{if }t\in U_l(f),\\
*,&\text{if }t\nin U_l(f).
\end{cases}
\]

The $g$-template $f$ we defined satisfies the first condition of $(\varepsilon, C''')$-matching (Definition \ref{de:f Lambda matching template}) for some constant, since 
\eqref{eq: f is close to hn} holds for every $t\in \RR$.

To show that the second condition is also satisfied, let $[a,b]$ be an interval on which $f_{E,l}(t) = E$. Then $[a,b]\subseteq U''$ for some $U \in \fV$ with $\rk \Gamma_U=l$. 
The definition of $\tilde f_{E,l}$ implies that $\tau(t)\in (a'_{i,U}, b'_{i,U})$,  where $E_{i,U}' = E$. It follows that $a'_{i,U}\le a$ and $b'_{i,U}\ge b-C'$, and hence 
$\spa_\RR(\Gamma_U)\in U_\varepsilon(\gr^g_E)$ for every $t\in [a+C', b-C']$. 

% subsection proof_of_lemma_thm:lattice approx (end)
% section approximation_of_templates (end)

\section{Results on the Grassmanian} % (fold)
\label{sec:geometric_theorems}
In this section we will prove Theorem \ref{thm:lin path}, which describes the non-recurrent nature of the $g_t$-action on $\gr$, and Lemma \ref{lem: E_i to V_i is functorial}, which asserts a functorial behavior of the following correspondence with respect to inclusion: a space $V\in U_\varepsilon(\gr_E^g)$ corresponds to $E\in \cI_l$.
We used both results in Section \ref{sec:approximation_of_templates}, and we will use them in the following sections as well.
\subsection{Proof of Theorem \ref{thm:lin path}} % (fold)
\label{sub:grasmanian_theorems}
It follows from Theorem \ref{thm:lin path} that the $g_t$-action on $\gr$ is non-recurrent. A simpler proof for the non-recurrence asserts that $g_t\acts\gr$ is a gradient flow with respect to some function $\rho:\gr\to \RR$. We will not follow this direction, as it is harder to analyze how the flow connects the invariant points. 

Instead, we will prove a monotonicity result in a more direct way. First, in case $l=1$, for every $v\in \RR^n$ we want to quantify the fact that along the trajectory $\{g_tv:t\in \RR\}$, the coordinates with larger index increase faster. 

We will exhibit quadratic forms $Q_s:\RR^n\to \RR$ 
that separate eigenspaces of smaller eigenvalue and eigenspaces of larger eigenvalue.
For every fixed $v\in \RR^n$, the sign of $Q_s(g_tv)$ will be monotone nondecreasing in $t$.
Then for an $l$-dimensional subspace $V\subseteq \RR^n$ we can compute the signature of each $Q_s|_{g_tV}$ and control its location.

Recall that the $\eta_j$-s satisfy $\eta_1\le \eta_2\le \dots \le \eta_n$.
Denote by $k$ the number of different values of $\eta$ in $\Eall$, and denote these values by $\bar\eta_1<\bar\eta_2<\dots<\bar\eta_{k}$. Set $\bar\eta_0 = -\infty$. 
For every $0\le s\le k$ denote by $Q_s^+, Q_s^-, Q_s$ the quadratic forms on $\RR^n$:
\[Q_s^+ := \sum_{\eta_j > \bar\eta_s}x_j^2, \quad Q_s^- := \sum_{\eta_j \le \bar\eta_s}x_j^2, \quad Q_s := Q_s^+ - Q_s^-.\]

An immediate use of this definition is the following monotonicity result.

\begin{lem}\label{lem:monotonicity}
For every $s=0,1,\dots,k,$ and $ v\in \RR^n\setminus \{0\},$ if $Q_s(v) \ge 0$, then $Q_s(g_tv) > 0$ for every $t>0$. 
\end{lem}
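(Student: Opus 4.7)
The plan is to prove the lemma by a direct estimate on how the two pieces $Q_s^+$ and $Q_s^-$ rescale under $g_t$, exploiting the spectral gap between $\bar\eta_s$ and $\bar\eta_{s+1}$.

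First I would dispose of the two extremal cases. When $s=0$ the set $\{j:\eta_j\le \bar\eta_0=-\infty\}$ is empty, so $Q_0(v)=\|v\|^2$ and $Q_0(g_tv)=\|g_tv\|^2>0$ for every $v\neq 0$ and every $t$. When $s=k$ the $+$ part is empty, so $Q_k(v)=-\|v\|^2<0$ for every nonzero $v$, and the hypothesis $Q_s(v)\ge 0$ is vacuous. So from now on assume $1\le s\le k-1$, in which case $\bar\eta_{s+1}>\bar\eta_s$.

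Next, write $v=(x_1,\dots,x_n)$ in the eigenbasis of $g_t$, so that $g_tv$ has $j$-th coordinate $e^{t\eta_j}x_j$. For indices $j$ contributing to $Q_s^+$ we have $\eta_j\ge\bar\eta_{s+1}$, and for indices $j$ contributing to $Q_s^-$ we have $\eta_j\le\bar\eta_s$. Hence for every $t>0$,
\begin{align*}
Q_s^+(g_tv) &= \sum_{\eta_j>\bar\eta_s} e^{2t\eta_j}x_j^2 \ge e^{2t\bar\eta_{s+1}}\,Q_s^+(v),\\
Q_s^-(g_tv) &= \sum_{\eta_j\le\bar\eta_s} e^{2t\eta_j}x_j^2 \le e^{2t\bar\eta_s}\,Q_s^-(v),
\end{align*}
with the first inequality strict whenever $Q_s^+(v)>0$.

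Finally I would combine these. The assumption $Q_s(v)\ge 0$ together with $v\ne 0$ forces $Q_s^+(v)>0$: otherwise $Q_s^-(v)\le Q_s^+(v)=0$, giving $v=0$. Using this together with $Q_s^+(v)\ge Q_s^-(v)$ and $\bar\eta_{s+1}>\bar\eta_s$, we compute for $t>0$:
\[
Q_s(g_tv)=Q_s^+(g_tv)-Q_s^-(g_tv)\;>\;e^{2t\bar\eta_{s+1}}Q_s^+(v)-e^{2t\bar\eta_s}Q_s^-(v)\;\ge\;e^{2t\bar\eta_s}\bigl(Q_s^+(v)-Q_s^-(v)\bigr)=e^{2t\bar\eta_s}Q_s(v)\ge 0,
\]
which gives $Q_s(g_tv)>0$ as desired. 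I do not anticipate a serious obstacle here: the only subtlety is ensuring a strict, not merely nonnegative, inequality, which is handled by observing that $Q_s^+(v)$ must actually be positive under the hypotheses.
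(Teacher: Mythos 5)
Your approach is the same as the paper's: treat $s=0,k$ separately, observe that $Q_s^+(v)>0$, and then compare $Q_s^\pm(g_tv)$ with $Q_s^\pm(v)$ via the spectral gap $\bar\eta_{s+1}>\bar\eta_s$. The conclusion is correct, but one justification is misplaced. You claim that ``the first inequality [is] strict whenever $Q_s^+(v)>0$,'' i.e.\ that $Q_s^+(g_tv)>e^{2t\bar\eta_{s+1}}Q_s^+(v)$. That is not true: if the positive part of $v$ is supported entirely in the $\bar\eta_{s+1}$-eigenspace (which is compatible with $Q_s^+(v)>0$), then $Q_s^+(g_tv)=e^{2t\bar\eta_{s+1}}Q_s^+(v)$ exactly, and likewise $Q_s^-(g_tv)=e^{2t\bar\eta_s}Q_s^-(v)$ can be an equality. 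Consequently the first inequality in your displayed chain should be $\ge$, not $>$. The strictness in fact lives in the \emph{second} inequality of your chain: since $t>0$, $\bar\eta_{s+1}>\bar\eta_s$, and $Q_s^+(v)>0$, one has $e^{2t\bar\eta_{s+1}}Q_s^+(v)>e^{2t\bar\eta_s}Q_s^+(v)$, so
\[
e^{2t\bar\eta_{s+1}}Q_s^+(v)-e^{2t\bar\eta_s}Q_s^-(v)\;>\;e^{2t\bar\eta_s}\bigl(Q_s^+(v)-Q_s^-(v)\bigr)\ge 0,
\]
which is exactly where the paper puts the strict step. With the $>$ and $\ge$ swapped in your chain, the argument is correct and matches the paper's proof.
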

\begin{proof}
The result is trivial for $s=0,k$. Otherwise, $Q_s^+(v)$ must be positive, otherwise $Q_s^+(v)\ge Q_s^-(v)\ge 0$ and then $v=0$. This yields
\begin{align*}
Q_s(g_tv) &= 
Q_s^+(g_tv) - Q_s^-(g_tv) 
\\&\ge
\exp(2\bar\eta_{s+1} t)Q_s^+(v) - \exp(2\bar\eta_{s} t)Q_s^-(v) 
\\&>
\exp(2\bar\eta_{s} t)Q_s^+(v) - \exp(2\bar\eta_{s} t)Q_s^-(v)
\\& = 
\exp(2\bar\eta_{s} t)Q_s(v) \ge 0.
\end{align*}

% \begin{align*}
% Q_s(g_tv) &= 
% \mathrlap{Q_s^+(g_tv)}\hphantom{\exp(2\bar\eta_{s+1} t)Q_s^+(v)} - Q_s^-(g_tv) 
% \\&\ge
% \exp(2\bar\eta_{s+1} t)Q_s^+(v) - \exp(2\bar\eta_{s} t)Q_s^-(v) 
% \\&>
% \mathrlap{\exp(2\bar\eta_{s} t)Q_s^+(v)}\hphantom{\exp(2\bar\eta_{s+1} t)Q_s^+(v)} - \exp(2\bar\eta_{s} t)Q_s^-(v)
% \\& = 
% \exp(2\bar\eta_{s} t)Q_s(v) \ge 0.
% \end{align*}
\end{proof}
We will extend the phenomenon expressed by Lemma \ref{lem:monotonicity} to $l$-dimensional subspaces.
Let $V\in\gr$. Fix $0\le s\le k$ and denote by $(\sigma^0_s(V), \sigma^+_s(V), \sigma^-_s(V))$ the signature of $Q_s|_{V}$. 

There is a subspace $V_0\subseteq V$ with $\dim V_0 = \sigma^0_s(V) + \sigma^+_s(V)$ such that $Q_s(v) \ge 0 $ for every $v\in V_0$. Lemma \ref{lem:monotonicity} implies that $Q_s(g_tv)>0$ for every $t>0$ and $v\in V_0\setminus \{0\}$, and hence $\sigma^+_s(g_tV) \ge \sigma^0_s(V) + \sigma^+_s(V)$. 
It follows that
$\sigma^+_s(g_tV)$ is monotone nondecreasing and $\sigma^-_s(g_tV)$ is monotone nonincreasing. 
Since $\sigma^+_s(g_tV)\le l$ for every $t$, it follows that $\sigma^0_s(g_tV) \neq 0$ for at most $l$ values of $t$.

Note that $\sigma^+(V)\ge r$ is an open condition on $V$, and hence $\{t\in \RR : \sigma^+_s(g_tV)\ge r\}$ is an open set for every fixed $V$. Similarly, $\{t\in \RR:\sigma^-_s(g_tV) \ge r\}$ is open.

For every $E\in \cI_l$ define
\[U_E := \left\{t\in \RR:\forall s=1,\dots,k. ~~ \substack{\sigma^+_s(g_tV) \,=\,\#\{\eta\in E:\eta>\bar\eta_s\}\\~\sigma^-_s(g_tV) \,=\,\#\{\eta\in E:\eta\le\bar\eta_s\}}\right\}. \]
This is an open set since we can replace the two equalities in its definition with $\ge$ and preserve the set. 
The union $\bigcup_{E\in \cI_l} U_E$ covers all points $t\in \RR$ exactly once, except maybe $lk$ points in which $\sigma_s^0(g_tV)\neq 0$. By the monotonicity condition, each $U_E$ is an interval.
By definition, $Q_s$ in nondegenerate on $g_tV$ for every $t\in U_E$. 

The following lemma completes the proof of Theorem \ref{thm:lin path}.
\begin{lem}
For every $\varepsilon>0$ there exists ${t_0}>0$ such that if $[t-{t_0}, t+{t_0}]\subseteq U_E$, then $g_tV$ lies in an $\varepsilon$-neighborhood $U_\varepsilon(\gr_E^g)$.
\end{lem}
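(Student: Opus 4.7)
The plan is to argue by contradiction, using compactness of $\gr$ combined with the monotonicity of $Q_s$ from Lemma \ref{lem:monotonicity} and a Pl\"ucker analysis of orbit limits at $\pm\infty$. Suppose the conclusion fails; then there exist $\varepsilon>0$ and sequences $W_n\in\gr$, $t_n\to\infty$ with $g_sW_n\in U_E$ for every $s\in[-t_n,t_n]$ yet $d_\gr(W_n,\gr_E^g)\ge\varepsilon$. Extracting a subsequential limit $W_n\to W$, the target is the contradiction $W\in\gr_E^g$.

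The first step is to show $g_tW\in U_E$ for every $t\in\RR\setminus\{0\}$. Let $r^\pm_s$ denote the target signatures of $E$. Since $\{V:\sigma^+_s(V)\ge k\}$ is open in $\gr$ (by continuity of quadratic-form eigenvalues), the functions $\sigma^\pm_s$ are lower semicontinuous, so for any fixed $t$, once $n$ is large we have $g_tW_n\in U_E$ and hence $\sigma^\pm_s(g_tW)\le r^\pm_s$ in the limit. For the matching lower bound I pick, for each $s$, a subspace $V^+_{s,n}\subseteq W_n$ of dimension $r^+_s$ on which $Q_s$ is positive definite; passing to a further subsequence in the compact incidence variety of pairs $(V,W)$ with $V\subseteq W$, I obtain $V^+_s\subseteq W$ with $Q_s|_{V^+_s}\ge 0$. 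Lemma \ref{lem:monotonicity} then gives $Q_s|_{g_tV^+_s}>0$ strictly for every $t>0$, so $\sigma^+_s(g_tW)\ge r^+_s$ and hence equality; the symmetric argument with $V^-_{s,n}$ handles $\sigma^-_s(g_tW)$ for $t<0$. The refined inequality $\sigma^+_s(g_{t'}W)\ge\sigma^+_s(g_tW)+\sigma^0_s(g_tW)$ for $t'>t$ (immediate from the proof of Lemma \ref{lem:monotonicity}) then forces $\sigma^0_s(g_tW)=0$ wherever $\sigma^+_s$ is already pinned at $r^+_s$ on both sides, propagating $g_tW\in U_E$ to every $t\ne 0$.

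The second step passes from ``orbit in $U_E$ for all $t\ne 0$'' to $W\in\gr_E^g$ via the Pl\"ucker embedding. Write $\bl W=\sum_{E'\in\cI_l}v_{E'}$, where $v_{E'}$ lies in the $e^{t\eta_{E'}}$-eigenspace of $g_t$ acting on $\bigwedge^l\RR^n$. As $t\to+\infty$ the projective class $[g_t\bl W]$ converges to the class of the component with maximal $\eta_{E'}$ among those with $v_{E'}\ne 0$, and this limit represents some $W_+\in\gr^g$. Since $g_tW\in U_E$ for large $t>0$, $W_+\in\overline{U_E}\cap\gr^g$; a short computation using lower semicontinuity of $\sigma^\pm_s$ together with the identity $r^+_s+r^-_s=l$ yields $\overline{U_E}\cap\gr^g=\gr_E^g$, so $W_+\in\gr_E^g$, and symmetrically $W_-\in\gr_E^g$. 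Because $W_\pm\in\gr_E^g$ share the common eigenvalue sum $\eta_E$, the maximal and minimal exponents in the Pl\"ucker decomposition of $\bl W$ must coincide, forcing $\bl W$ to be a single $g_t$-eigenvector of $\bigwedge^l\RR^n$. Hence $g_tW=W$ for all $t$, so $W\in\gr^g$, and the signature condition pins $W\in\gr_E^g$, giving the required contradiction.

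The main obstacle I anticipate is the second step---converting the orbit condition into genuine $g_t$-invariance of $W$. The Pl\"ucker argument is clean in principle, but depends on two subsidiary facts that should be handled carefully: (i) the identification $\overline{U_E}\cap\gr^g=\gr_E^g$, which uses the combinatorial observation that the pair of signatures $(r^+_s,r^-_s)_s$ together with $r^+_s+r^-_s=l$ determines $E$ uniquely as a multiset, and (ii) the existence of \emph{genuine} (not merely subsequential) orbit limits $W_\pm$ in $\gr$ under the diagonal flow, which is standard for algebraic $\RR$-actions on projective varieties but worth verifying directly.
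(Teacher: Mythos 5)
Your proof is correct but takes a genuinely different route from the paper. The paper's argument is direct and constructive: starting from a $V$ with $[-t_0,t_0]\subseteq U_E$, it builds two filtrations $V_s^-$ (on which $Q_s$ is negative definite after flowing for time $t_0$) and $V_s^+$ (on which $Q_s$ is positive definite after flowing for time $-t_0$), intersects them into a direct-sum decomposition $V=\bigoplus_s W_s$, and then estimates directly from the eigenvalue gaps that each $W_s$ lies within distance $\delta\|v\|$ of the eigenspace $V_{\bar\eta_s}$, with $\delta$ explicit in $t_0$; this gives an explicit formula for $t_0(\varepsilon)$. Your proof replaces this by a compactness/contradiction argument: extract a subsequential limit $W$ whose entire orbit stays in the signature locus, then use Pl\"ucker orbit limits at $t\to\pm\infty$ together with the identification $\overline{U_E}\cap\gr^g=\gr_E^g$ to force the leading and trailing exponents of $\bl W$ to coincide, hence $W\in\gr_E^g$, contradicting $d_\gr(W,\gr_E^g)\ge\varepsilon$. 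Both are valid; the paper's version buys explicit quantitative control of $t_0$, while yours is more conceptual but purely existential. Two small points worth tightening if you write this up: (a) the claim $\overline{U_E}\cap\gr^g=\gr_E^g$ rests on the fact that $Q_s$ is definite on each $g_t$-eigenspace $V_\eta$, so $\sigma_s^0\equiv 0$ on $\gr^g$ and then the two closed inequalities $\sigma_s^\pm\le r_s^\pm$ with $r_s^++r_s^-=l$ force equality, pinning down $E$ — you should spell this out rather than leaving it as a ``short computation''; and (b) the Pl\"ucker decomposition of $\bl W$ should be indexed by the distinct eigenvalues $\lambda$ rather than by multisets $E'\in\cI_l$ (distinct multisets can share an exponent), although this is harmless since $\gr$ is closed in $\mathbb{P}(\bigwedge^l\RR^n)$ and the leading component is therefore automatically a blade.
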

\begin{proof}
Fix $\varepsilon>0$ and let ${t_0}>0$ to be determined later. 
Assume w.l.o.g. that $t=0$. 
We will construct inductively a filtration 
\begin{align}\label{eq:squence of definite -}
\{0\}=V_0^-\subseteq V_1^-\subseteq\dots \subseteq V_k^- = V
\end{align}
such that $Q_s$ is negative definite on $g_{t_0}V^-_s$ and $\dim V^-_s = \sigma^-_s(V)$. Let $s=0,...,k-1$ and assume we already defined $V^-_s$. Since $Q_s$ is negative definite on $g_{t_0}V^-_s$ and $Q_{s+1}\le Q_s$ pointwise, it follows that $Q_{s+1}$ is negative definite on $g_{t_0}V^-_s$. 
We will extend the negative definiteness of $Q_{s+1}$ from $g_{t_0}V_s^-$ to $g_{t_0} V^-_{s+1}$ of dimension $\sigma^-_s(V) = \sigma^-_s(g_{t_0}V)$. 
The orthogonal space $W$ to $g_{t_0}V^-_s$ in $g_{t_0}V$ with respect to $Q_{s+1}$ is such that $Q_{s+1}|_{W}$ has signature $(0, \sigma_{s+1}^- -\sigma_{s}^-, \sigma_{s+1}^+)$. Hence, there is $W_0\subseteq W$ of dimension $\sigma_{s+1}^- -\sigma_{s}^-$ such that $Q_{s+1}$ is negative definite on $W_0$. It follows that $Q_{s+1}$ is negative definite on $g_{t_0}V^-_s \oplus W_0$, and hence we can define $V_{s+1}^-:=V_{s}^- \oplus g_{-{t_0}}W_0$. 

Similarly we can define 
\begin{align*}%\label{eq:squence of definite +}
0=V_k^+\subseteq V_{k-1}^+\subseteq\dots \subseteq V_0^+ = V,
\end{align*}
with the property that $Q_s$ is positive definite on $g_{-{t_0}} V_s^-$.

It follows that $V_s^+\cap V_s^- = 0$. Denote $W_s := V_s^-\cap V_{s-1}^+$ for every $s=1,\dots,k$. One can show by induction that $\bigoplus_{s=1}^n W_s = V$. 

Fix $\delta>0$. 
Recall that $V_\eta$ denotes the $\exp(t\eta)$-eigenspace of $g_t$. 
We will show that we can choose ${t_0}$ large enough such that for every $v\in W_{s_0}$ we have $d(v, V_{\bar\eta_{s_0}})\le \delta\|v\|$. Indeed, denote by 
\[v^0\in V_{\bar\eta_{s_0}}, ~~v^+\in \bigoplus _{s=s_0+1}^kV_{\bar\eta_s}, ~~v^-\in \bigoplus _{s=1}^{s_0-1}V_{\bar\eta_s},\] 
the orthogonal projections of $v$ on the corresponding spaces. Since the three spaces are orthogonal and span $\RR^n$, we have $v=v^0+v^++v^-$. 
Since $Q_{s-1}(g_{-t_0}v) \ge 0$, 
\begin{align*}
0&\le\|g_{-{t_0}}v^+\|^2 + \|g_{-{t_0}}v^0\|^2 - \|g_{-{t_0}}v^-\|^2 \\&\le 
\exp(-2{t_0}\bar\eta_{s_0})\|v^+ + v_0\|^2 - \exp(-2{t_0}\bar\eta_{{s_0}-1})\|v^-\|^2\\&\le 
\exp(-2{t_0}\bar\eta_{s_0})\|v\|^2 - \exp(-2{t_0}\bar\eta_{{s_0}-1})\|v^-\|^2.
\end{align*}
Consequently, $\|v^-\| \le \exp(t_0(\bar\eta_{{s_0}-1} - \bar\eta_{{s_0}}))\|v\|$. 
Similarly, $\|v^+\| \le \exp(t_0(\bar\eta_{s_0} - \bar\eta_{s_0+1}))\|v\|$. 
In particular, if \[t_0 = -\left(\log \frac12\delta\right)/\min(\bar\eta_{s} - \bar\eta_{s+1}:s=1,\dots,k-1),\]
then $\|v^+\|, \|v^-\| \le \frac12\delta\|v\|,$ and so $d(v, V_{\bar\eta_{s_0}}) \le \delta\|v\|$.

Finally, for every $\varepsilon>0$ there exists $\delta>0$ such that the following holds. 
For every $E\in \cI_l$ which gives multiplicity $\alpha_\eta$ to $\eta$,
if $V'\in \gr$ has a decomposition $V' = \bigoplus_{\eta\in \Eall} W_\eta'$ 
with $\dim W_\eta' = \alpha_{\eta}$ such that for every $v\in W_\eta'$ one has 
$d(v,V_\eta)\le \delta\|v\|$,
then $V'\in U_\varepsilon(\gr^g_E)$. Indeed, the projection of $W_\eta'$ to $V_\eta$ is $\delta$ close to $W_\eta'$. Hence, the direct sum of the projections is close to $V'$. 

We deduce that $V\in U_\varepsilon(\gr^g_E)$, as desired.
\end{proof}
% section geometric_theorems (end)

\subsection{Functoriality of Directions} % (fold)
 \label{sub:functoriality_of_directions} 
We end the section by proving that directions are functorial, which was used in the proof of Theorem \ref{thm:Lattice approx}, and will be used in the sequel. 
\begin{lem}\label{lem: E_i to V_i is functorial}
There exists $\varepsilon>0$ such that if 
\begin{enumerate}[label=\emph{\alph*)}, ref=(\alph*)]
\item
$l_1<l_2$,
\item
$E_i\in \cI_{l_i}\text{ and } V_i\in U_\varepsilon(\grl{l_i}^g_{E_i})\text{ for }i=1,2,$ and
\item
 $V_1\subset V_2$,
\end{enumerate}
then $E_1\subset E_2$.
\end{lem}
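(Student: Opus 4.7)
The plan is to pick, for $i=1,2$, approximants $V_i' \in \grl{l_i}^g_{E_i}$ with $d_\gr(V_i, V_i') < \varepsilon$, and exploit the orthogonal projection $\pi : \RR^n \to V_2'$. The key structural observation is that $\pi$ is $g_t$-equivariant: since the eigenspaces $V_\eta$ of $g_t$ are mutually orthogonal coordinate subspaces, and $V_2'$ (being $g_t$-invariant) decomposes as the orthogonal direct sum $V_2' = \bigoplus_{\eta \in \Eall} (V_2' \cap V_\eta)$, the orthogonal projection onto $V_2'$ is the direct sum of the orthogonal projections from each $V_\eta$ onto $V_2' \cap V_\eta$, which are automatically $g_t$-equivariant.

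Next, I would show that $\pi$ is almost the identity on $V_1'$. For any unit vector $v \in V_1'$ choose $w \in V_1$ with $\|v-w\| = O(\varepsilon)$; since $V_1 \subset V_2$ and $d_\gr(V_2, V_2') < \varepsilon$, we also have $\|w - \pi(w)\| = O(\varepsilon)$, so
\[
\|v - \pi(v)\| \le \|v-w\| + \|w - \pi(w)\| + \|\pi(w-v)\| = O(\varepsilon).
\]
For $\varepsilon$ small enough, $\pi|_{V_1'}$ is therefore injective, and $W := \pi(V_1') \subseteq V_2'$ is an $l_1$-dimensional subspace with $d_\gr(V_1', W) = O(\varepsilon)$.

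Finally, by $g_t$-equivariance, $W$ is a $g_t$-invariant subspace of $V_2'$, so it inherits the eigenspace decomposition $W = \bigoplus_\eta (W \cap V_\eta)$ with each summand contained in $V_2' \cap V_\eta$. Hence $W \in \grl{l_1}^g_{E'}$ for some submultiset $E' \subseteq E_2$. On the other hand, $W$ is $O(\varepsilon)$-close to $V_1' \in \grl{l_1}^g_{E_1}$; choosing $\varepsilon$ small enough that the implicit $O(\varepsilon)$ is less than $\varepsilon_0$ from Remark \ref{rem: the good ep}, the disjointness of the $\varepsilon_0$-neighborhoods of the various strata $\grl{l_1}^g_{E''}$ forces $E' = E_1$, and we conclude $E_1 = E' \subseteq E_2$. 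The only non-routine point is to verify quantitatively that $d_\gr(V_i, V_i') < \varepsilon$ yields the operator-norm closeness between unit vectors used above, but this is a standard property of the Grassmannian metric.
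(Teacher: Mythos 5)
Your proof is correct, but it takes a genuinely different route from the paper's. The paper argues by contradiction and compactness: assuming $E_1 \not\subset E_2$, an eigenvalue analysis shows $V_1 \not\subset V_2$ whenever $V_i \in \grl{l_i}^g_{E_i}$, so the closed incidence variety $X = \{(V_1,V_2): V_1 \subset V_2\}$ is disjoint from the compact product $\grl{l_1}^g_{E_1}\times\grl{l_2}^g_{E_2}$, and disjoint compact sets are at positive distance; that distance serves as $\varepsilon$. Your approach instead constructs an explicit $g_t$-invariant witness: you show orthogonal projection onto $V_2'$ is $g_t$-equivariant (because the $V_\eta$ are mutually orthogonal and $V_2'$ splits as $\bigoplus_\eta (V_2'\cap V_\eta)$), verify that this projection is nearly the identity on $V_1'$, and conclude that $W := \pi(V_1')$ is an $l_1$-dimensional $g_t$-invariant subspace of $V_2'$, hence lies in $\grl{l_1}^g_{E'}$ for some submultiset $E' \subseteq E_2$. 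Closeness of $W$ to $V_1'$ together with the separation of the strata from Remark \ref{rem: the good ep} then forces $E' = E_1$. The paper's argument is shorter and purely soft; yours is slightly longer but constructive and explains mechanistically where the inclusion $E_1 \subset E_2$ comes from. Both are valid. One small remark: in the chain of estimates for $\|v - \pi(v)\|$ you implicitly use that $\pi$ is a contraction so $\|\pi(w-v)\| \le \|w-v\|$; this is fine but worth stating.
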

\begin{proof}
Assume that to the contrary, $E_1\not\subset E_2$. 
Denote 
\[X:=\{(V_1, V_2)\in \grl{l_1}\times \grl{l_2}:V_1\subset V_2\}.\]
Note that $X$ is closed.
By analyzing the eigenvalues of $g_t$, for every $V_1\in \grl{l_1}^g_{E_1}$ and $V_2\in \grl{l_2}^g_{E_2}$ we see that $V_1\not\subset V_2$ .
That is, 
\[X\cap \left(\grl{l_1}^g_{E_1}\times\grl{l_2}^g_{E_2}\right)=\emptyset.\] 
Consequently, using the metric 
\[d_\Pi((V_1,V_2), (V_1', V_2')) = \max(d_{\grl{l_1}} (V_1, V_1'), d_{\grl{l_2}} (V_2, V_2')),\]
we obtain the desired result for $\varepsilon = d_{\Pi}(X, \grl{l_1}^g_{E_1}\times\grl{l_2}^g_{E_2})$, which is positive since it is the distance between two disjoint compact sets.
\end{proof}
% subsection functoriality_of_directions (end) 

\section{Flag Perturbation Theorems} % (fold)
\label{sec:Flag_perturbation_theorem}
In this section we will prove three results on perturbation of flags, namely Lemmas \ref{lem:the right counting theorem}, \ref{lem:single advance}, and \ref{lem: enlarging the flag}. These results will be used to define Alice's strategy in Section \ref{sec:alice_s_strategy} and the first will be used to analyze Bob's strategy in Section \ref{sec:computation_of_the_dimension}. 

From a more conceptual viewpoint, Lemma \ref{lem:the right counting theorem} is the motivation for the definitions of $\delta(\cdot)$ and $\Delta_0(\cdot)$ (see Definition \ref{de: local entropy}), while Lemmas \ref{lem:single advance} and \ref{lem: enlarging the flag} are the motivation for the combinatorics of category flows.
\subsection{The Counting Lemma} % (fold)
\label{sub:the_counting_lemma}
% Variation of Lemma \ref{lem:counting} will be use in the construction of Alice's strategy, in steps where direction filtration of a $g$-template $f$ does not change along an interval.
\begin{de}\label{de: H_V_bullet}
\index{H@$H$!$H_V$}\hypertarget{bbb}{}
Let $0\le l\le n$, $E\in \cI_l$ and $V\in \gr^g_{E}$. 
Denote $H_V:=\{h\in H:hV = V\}$. 
Let $E_\bullet$ be a direction filtration and $V_\bullet$ be a flag such that $L(V_\bullet) = L(E_\bullet)$ and $V_l\in \gr^g_{E_l}$ for every $l\in L(V_\bullet)$.
Denote \[H_{V_\bullet}:=\bigcap_{l\in L(V_\bullet)} H_{V_l}.\]
\end{de}
Recall that for every direction filtration $E_\bullet$ we set 
\[\delta(E_\bullet) = \sum_{i=1}^k\sum_{\substack{\eta\in E_{l_i} \\\eta'\in E_{l_i}\setminus E_{l_{i-1}}}}(\eta-\eta')^+,\]
and recall the expansion metric $d_\varphi$ on $H$ as defined in Subsection \ref{sub:Case of interest} and Definition \ref{de: exp metric}.
This subsection analyzes the subgroup $H_{V_\bullet}\subseteq H$ and proves a quantitative claim analogous to the equality 
\[\delta(E_\bullet) = \dim_\funH(H_{V_\bullet}; d_\varphi),\] 
where $L(V_\bullet) = L(E_\bullet)$ and $V_l\in \gr^g_{E_l}$ for every $l\in L(V_\bullet)$.
Specifically, Lemma \ref{lem:counting} gives an upper bound on the number of points one can choose in $B_{d_\varphi}(\Id;1)\cap H_{V_\bullet}$ that  are far from one another.
% In this section we state and prove the main counting theorem, which is the core of the dimension computation.
\begin{lem}\label{lem:counting}
Let $V_\bullet$ be a flag, and let $E_\bullet$ be a direction filtration with $L(E_\bullet) = L(V_\bullet)$ such that 
\begin{align}\label{eq: V compatible}
\forall l\in L(V_\bullet),~V_l\in \gr^g_{E_l}.
\end{align} Let $t>0$.

There exists a set $A\subset B_{d_\varphi}(\Id; 1)$ of size $\Theta(\exp(t\delta(E_\bullet)))$ that satisfies the following conditions:
\begin{enumerate}[label=\emph{(\arabic*)}, ref=\arabic*]
  \item\label{cond: distance} $d_\varphi(h_1, h_2) \ge \exp(-t)$ for every distinct elements $h_1, h_2\in A$. 
  \item\label{cond:lies on _E_i}$hV_l\in \gr^g_{E_l}$ for every $h\in A$, $l\in L(V_\bullet)$, i.e., $A\subseteq H_{V_\bullet}$. 
\end{enumerate}
Moreover, there exists no such $A$ larger than $\Theta(\exp(t\delta(E_\bullet)))$ even if we weaken Condition \emph{\eqref{cond:lies on _E_i}} to
\begin{enumerate}[label={\emph{($\ref{cond:lies on _E_i}'$)}}, ref={$\ref{cond:lies on _E_i}'$}]
\item \label{cond: lies near _E_i} 
For every $h\in A$ there exists $h'\in H$ with $d_\varphi(h,h')<\exp(-t)$ and $h'V_l\in \gr^g_{E_l}$ for every $l\in L(V_\bullet)$. 
\end{enumerate}
\end{lem}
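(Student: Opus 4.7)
The plan is to identify $H_{V_\bullet}$ as a subgroup of $H$ cut out by the vanishing of certain matrix entries, show that its ``weighted dimension'' under $d_\varphi$ equals $\delta(E_\bullet)$, and then conclude by a packing/volume comparison analogous to Lemma \ref{lem: bound on ball intersection}.

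First I would choose an adapted orthonormal basis of $\RR^n$ so that each $V_l$ with $l \in L(V_\bullet)$ is spanned by a subset of the basis vectors. This is possible by successively splitting eigenspaces along the flag: each $V_l$ is $g$-invariant with eigenvalue multiset $E_l$, so the intersections $V_l \cap V_\eta$ are subspaces whose dimensions we can read off from $E_l$. The change of basis lies in $H^0$, and conjugation by $H^0$ preserves $H$, $H^{-0}$ and commutes with $g_t$, so it preserves the metric $d_\varphi$ and the group $H_{V_\bullet}$. In this adapted basis, every basis vector $e_j$ has an eigenvalue $\eta_j$ and a minimal level $\tau(j) := \min\{l\in L(V_\bullet) : e_j \in V_l\}$. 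An element $h \in H$ fixes $V_l$ setwise iff $h_{ij} = 0$ whenever $\tau(j) \le l < \tau(i)$. Intersecting over $l \in L(V_\bullet)$ and combining with Observation \ref{obs: H coordinate}, we get
\[
H_{V_\bullet} = \{h\in \SL_n(\RR) : h_{ij}=\delta_{ij} \text{ unless } \eta_i > \eta_j \text{ and } \tau(i)\le \tau(j)\}.
\]

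Next I would verify the identity
\[
\sum_{\substack{1\le i,j\le n \\ \eta_i>\eta_j,\ \tau(i)\le \tau(j)}} (\eta_i-\eta_j) \;=\; \delta(E_\bullet)
\]
by grouping the left-hand side according to $\tau(j)$. For fixed $l_i$ the inner indices $j$ with $\tau(j)=l_i$ correspond exactly to the new elements $\eta'\in E_{l_i}\setminus E_{l_{i-1}}$, while the indices $i$ with $\eta_i>\eta_j$ and $\tau(i)\le l_i$ correspond to the elements $\eta\in E_{l_i}$ with $\eta>\eta'$; their contribution is $(\eta-\eta')^+$, matching the definition of $\delta(E_\bullet)$. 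For the construction of $A$, we apply the coordinate map $\kappa_{V_\bullet}:H_{V_\bullet}\to \RR^{N'}$ sending $h$ to its free entries, exactly as in Lemma \ref{lem: bound on ball intersection}; this is a diffeomorphism that intertwines $\varphi_t$ with the diagonal action scaling the $(i,j)$-coordinate by $\exp(t(\eta_i-\eta_j))$. The unit $d_\varphi$-ball in $H_{V_\bullet}$ is comparable to a cube, and scaling down by $\varphi_{-t}$ shows that a maximal $\exp(-t)$-separated subset $A\subseteq B_{d_\varphi}(\Id;1)\cap H_{V_\bullet}$ has size $\Theta\bigl(\prod_{(i,j)\text{ free}} \exp(t(\eta_i-\eta_j))\bigr)=\Theta(\exp(t\delta(E_\bullet)))$, giving the desired lower bound.

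For the sharp upper bound under the weaker hypothesis (\ref{cond: lies near _E_i}), any such $A$ lies in the $\exp(-t)$-neighborhood (in $d_\varphi$) of $H_{V_\bullet}\cap B_{d_\varphi}(\Id;2)$. Writing the Haar measure in the $H$-coordinates of Lemma \ref{lem: bound on ball intersection}, this neighborhood is comparable to a product of a unit cube in the $N'$ free directions with a $\exp(-t)$-cube in the $N-N'$ transverse ones, so its Haar measure is $\Theta(\exp(-t(D-\delta(E_\bullet))))$. Since the balls $B_{d_\varphi}(h;\exp(-t)/2)$ for $h\in A$ are disjoint and each has Haar measure $\Theta(\exp(-tD))$, summing gives $|A|\le \Theta(\exp(t\delta(E_\bullet)))$. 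The main obstacle is carrying out the coordinate analysis in a way that is uniform over all choices of $V_\bullet$ with the given combinatorial type $E_\bullet$—in particular checking that the thickening constants in the volume comparison depend only on $\Eall$, not on $V_\bullet$—which is handled once we observe that the adapted-basis change is an $H^0$-conjugation and $H^0$ acts by isometries of $d_\varphi$.
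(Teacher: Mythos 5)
Your approach is essentially the same as the paper's: pass to an adapted basis so that each $V_l$ becomes a coordinate subspace, describe $H_{V_\bullet}$ in coordinates, verify that the sum of the free weights equals $\delta(E_\bullet)$, and run a packing/volume argument with the cube structure. The coordinate identity and the upper-bound comparison are both correct (the paper does the upper bound with the Haar measure on $H_{V_\bullet}$ rather than the ambient Haar measure on $H$, but this is a cosmetic difference).

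There is, however, one step in your argument that is false as stated: conjugation by a general element of $H^0$ does \emph{not} preserve $d_\varphi$. The metric $d_H$ on $H$ is only right-invariant, and conjugation by $c\in H^0$ distorts it by a factor depending on $\operatorname{Ad}(c)$. What you need is that the basis change can be taken in $O_g = O(n)\cap H^0$, the orthogonal matrices commuting with $g_t$: since both bases are orthonormal and respect the $g_t$-eigenspace decomposition, the change-of-basis matrix lies in $O_g$, and conjugation by an orthogonal matrix preserves the right-invariant Riemannian metric $d_H$ (because the bilinear form $\tr AB^t$ on $T_{\Id}\SL_n(\RR)$ is $\operatorname{Ad}(O(n))$-invariant), hence also $d_\varphi$. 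This is exactly the normalization the paper performs via the transitive action of $O_g$ on flags satisfying \eqref{eq: V compatible}. Replace "$H^0$ acts by isometries of $d_\varphi$" with "$O_g$ acts by isometries of $d_\varphi$" and the proof is sound.
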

Fix a flag $V_\bullet$ and a direction filtration $E_\bullet$.
\index{Og@$O_g$}\hypertarget{bbc}{}
We will express Condition \eqref{eq: V compatible} in coordinates.
% the data $V_l\in \gr^g_E$ coordinatewise. 
Denote by $O_g$ the group of orthogonal matrices that commute with $g_t$. 
Since the eigenspaces of $g_t$ are orthogonal, it follows that \[O_g = \prod_{V\text{ eigenspace of }g_t}O(V),\]
and hence $O_g$ acts transitively on the set of flags which satisfy \eqref{eq: V compatible}. 
We can thus assume that $V_l = \spa \{e_j:j\in F_l\}$, where $F_l \subseteq \{1,\dots,n\}$. 

\index{eab@$e_{a,b}$}\hypertarget{bbd}{}
\index{ei@$e_{i}$}\hypertarget{bbe}{}
Denote by $(e_i)_{i=1}^n$ the standard basis of $\RR^n$ and by $(e_{a,b})_{a,b=1}^n$ the standard basis of $M_{n\times n}(\RR)$. 
\begin{lem}\label{lem: H_V is the only preserving}
Let $E\in \cI_l$ and $V = \spa(e_j:j\in F)\in \gr^g_{E}$ for some $F\subseteq \{1,2,\dots,n\}$ of size $l$ for which $\{\eta_j:j\in F\} = E$ as multisets, that is, $V\in \gr_E^g$.
The group $H_V$ satisfies 
\begin{align*}
H_V=\,& \Id + \spa(\{e_{a,b}:\eta_a > \eta_b, b\in F\implies a\in F\})\\
     =\,& \{h\in H:hV \in \gr^g_{E}\}.
\end{align*}
\end{lem}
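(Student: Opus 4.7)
The plan is to prove the two equalities separately: the first by a direct coordinate computation using the description of $H$ in Observation \ref{obs: H coordinate}, and the second by exploiting the contracting dynamics of $H$ under conjugation by the flow.

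For the first equality, by Observation \ref{obs: H coordinate} every $h \in H$ has the form $h = \Id + \sum_{a,b\,:\,\eta_a > \eta_b} c_{a,b}\, e_{a,b}$, so that for each $j \in F$,
\[he_j = e_j + \sum_{a\,:\,\eta_a > \eta_j} c_{a,j}\, e_a.\]
Since $h$ is unipotent, the condition $hV = V$ reduces to $hV \subseteq V = \spa(e_k : k \in F)$, which is equivalent to requiring $c_{a,j} = 0$ whenever $j \in F$, $\eta_a > \eta_j$, and $a \notin F$. Phrased as a support condition on $h - \Id$, this is exactly $h - \Id \in \spa(\{e_{a,b} : \eta_a > \eta_b,~b \in F \Rightarrow a \in F\})$. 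Conversely, any $h$ of this form maps each $e_j$ with $j \in F$ into $V$, giving $hV \subseteq V$ and hence $hV = V$ by invertibility.

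For the second equality, the inclusion $H_V \subseteq \{h \in H : hV \in \gr^g_E\}$ is immediate from $hV = V \in \gr^g_E$. For the reverse inclusion, I use that $H$ contracts to $\Id$ under the relevant conjugation: by definition $g_{-t} h g_t \to \Id$ as $t \to \infty$, which upon setting $s = -t$ gives $g_s h g_{-s} \to \Id$ as $s \to -\infty$. Since $V$ is $g_t$-invariant,
\[g_s(hV) = (g_s h g_{-s})(g_s V) = (g_s h g_{-s})\, V \xrightarrow{s \to -\infty} V.\]
If moreover $hV \in \gr^g_E \subseteq \gr^g$, then $g_s(hV) = hV$ for all $s$, so passing to the limit forces $hV = V$, i.e., $h \in H_V$.

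The main obstacle is conceptually light. The coordinate bookkeeping for the first equality is routine once one identifies the correct column-support condition on $h - \Id$, and the second equality only requires tracking the correct direction in which $g_s$-conjugation contracts $H$ to $\Id$. In fact, the argument only uses the $g_t$-invariance of $hV$ (not the full eigenvalue-multiplicity data encoded in $E$), so the second equality in fact holds with $\gr^g_E$ replaced by $\gr^g$.
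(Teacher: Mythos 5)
Your proof is correct and follows essentially the same route as the paper: the first equality is a direct column-support computation from the coordinate description of $H$, and the second equality follows by noting $g_{-t}hg_tV = hV$ for all $t$ (using $g_t$-invariance of $V$ and $hV$) together with $g_{-t}hg_t\to\Id$. Your closing observation that the argument works with $\gr^g$ in place of $\gr^g_E$ is accurate but does not change the substance.
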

\begin{proof}
The first equality follows from the definition, and similarly 
\[H_V\subseteq \{h\in H:hV \in \gr^g_{E}\}.\]
Assume to the contrary that $hV\in \gr^g_{E}$, but $hV\neq V$. 
Since $V$ and $hV$ are $g_t$-invariant, $g_{-t}hg_tV = hV$. On the other hand, $g_{-t}hg_t\xrightarrow{t\to \infty} \Id$. The result follows.
\end{proof}
By Lemma \ref{lem: H_V is the only preserving}, \begin{align*}
H_{V_\bullet} =& \bigcap_{l\in L(V_\bullet)}H_{V_l}\\
       =& \Id+\spa(\{e_{a,b}:\eta_a > \eta_b,\forall l\in L(V_\bullet),~b\in F_l\implies a\in F_l\})\\
      = & \{h\in H:\forall l\in L(V_\bullet),~hV_l\in \gr\}.
\end{align*}
We want to find a large set $A\subseteq B_{d_\varphi}(\Id;1)\cap H_{V_\bullet}$ whose elements are $\exp (-t)$ far from one another with respect to the semi-metric $d_\varphi$. 
Since $d_\varphi$ is multiplied by $\exp (t)$ after applying $\varphi_t$, we will find a large set $\varphi_tA\subseteq B_{d_\varphi}(\Id; \exp (t))$
whose elements are at distance $1$ from one another.
We will use a volume argument. Let $\mu$ be the bi-invariant Haar measure on $H_{V_\bullet}$, which exists since $H_{V_\bullet}$ is nilpotent.

\begin{lem}\label{lem: C action on measure in H_V}
The $\varphi_t$-action on the measure satisfies \[(\varphi_t)_* \mu = \exp(-t\delta(E_\bullet))\mu.\]
\end{lem}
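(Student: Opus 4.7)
The plan is to reduce the statement to a computation on the Lie algebra $\mathfrak{h}_{V_\bullet}$ of $H_{V_\bullet}$ and then verify, by bookkeeping, that the resulting exponent matches $\delta(E_\bullet)$.

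First, since $O_g$ acts transitively on the flags satisfying \eqref{eq: V compatible}, I would choose coordinates in which $V_l = \spa(e_j : j \in F_l)$ for subsets $F_{l_0} \subset F_{l_1} \subset \cdots \subset F_{l_k}$ with $\{\eta_j : j \in F_{l_i}\} = E_{l_i}$ (as multisets). By the preceding description, $H_{V_\bullet}$ is a unipotent algebraic subgroup of $H$ whose Lie algebra $\mathfrak{h}_{V_\bullet}$ has the basis
\[B := \{e_{a,b} : \eta_a > \eta_b,\ \forall l\in L(V_\bullet),\ b\in F_l \Rightarrow a\in F_l\}.\]
Being unipotent and nilpotent, $\exp : \mathfrak{h}_{V_\bullet}\to H_{V_\bullet}$ is a polynomial diffeomorphism with unipotent Jacobian, and hence identifies Lebesgue measure on $\mathfrak{h}_{V_\bullet}$ with the bi-invariant Haar measure $\mu$ up to a multiplicative constant that is irrelevant for the statement.

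Second, I would use the fact that $\varphi_t$ is a group automorphism of $H_{V_\bullet}$, so it commutes with $\exp$, and its differential $d\varphi_t : \mathfrak{h}_{V_\bullet}\to \mathfrak{h}_{V_\bullet}$ equals the restriction of $\Ad(g_t)$. On each basis element we have the explicit eigenvalue
\[d\varphi_t(e_{a,b}) = g_t\, e_{a,b}\, g_{-t} = \exp\bigl(t(\eta_a-\eta_b)\bigr) e_{a,b}.\]
Hence $d\varphi_t$ is diagonal in the basis $B$, so the change-of-variables formula on the Lie algebra gives
\[(\varphi_t)_*\mu = \bigl|\det d\varphi_t\bigr|^{-1}\mu = \exp\!\Bigl(-t\!\!\sum_{e_{a,b}\in B}(\eta_a-\eta_b)\Bigr)\mu.\]

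Finally, I would identify the exponent with $\delta(E_\bullet)$. Set $i(j) := \min\{i : j \in F_{l_i}\}$. The condition defining $B$ is exactly $i(a) \le i(b)$ together with $\eta_a > \eta_b$; moreover $\eta_a - \eta_b > 0$ on $B$, so we can drop the positivity constraint and replace the summand by $(\eta_a - \eta_b)^+$. Regrouping by $i = i(b)$ and translating back via $E_{l_i} \setminus E_{l_{i-1}} \leftrightarrow \{\eta_b : i(b)=i\}$ and $E_{l_i} \leftrightarrow \{\eta_a : i(a)\le i\}$ yields
\[\sum_{e_{a,b}\in B}(\eta_a-\eta_b) \;=\; \sum_{i=1}^k \sum_{\substack{\eta\in E_{l_i}\\ \eta'\in E_{l_i}\setminus E_{l_{i-1}}}}(\eta-\eta')^+ \;=\; \delta(E_\bullet),\]
completing the proof. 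The ``main obstacle'' is really only the last bookkeeping identification, which is entirely combinatorial and straightforward once the correspondence $b \leftrightarrow i(b)$ is set up; all the Lie-theoretic input is packaged into the observation that $\Ad(g_t)$ acts diagonally on the chosen basis.
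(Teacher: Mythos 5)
Your proof is correct and follows essentially the same approach as the paper: both compute the eigenvalues $\exp(t(\eta_a-\eta_b))$ of $\varphi_t$ on the tangent space / Lie algebra at the identity and deduce the scaling factor as the (inverse of the) product. The paper packages this via the Radon--Nikodym derivative of the invariant volume form, while you push through the exponential map and the change-of-variables formula; these are interchangeable. One point where you add value is that the paper's ``a direct computation shows that the product is $\exp(t\delta(E_\bullet))$'' is spelled out by your reindexing via $i(j)=\min\{i:j\in F_{l_i}\}$ and the translation $E_{l_i}\setminus E_{l_{i-1}}\leftrightarrow\{\eta_b:i(b)=i\}$, $E_{l_i}\leftrightarrow\{\eta_a:i(a)\le i\}$, which is exactly the bookkeeping the paper leaves implicit.
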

\begin{proof}
By the uniqueness of the Haar measure, it follows that $(\varphi_t)_* \mu = \alpha_t\mu$ for some $\alpha_t>0$. Since $H_{V_\bullet}$ is a Lie group, $\mu$ is the integral with respect to an invariant volume form. 
Hence, to compute the Radon-Nikodym derivative $d(\varphi_t)_* \mu/d\mu\equiv \alpha_t$, it suffices to compute the action of $(\varphi_t)_*$ locally on $\Omega^N_{\Id}H_{V_\bullet}$, where $N = \dim H_{V_\bullet}$. 
The eigenvalues of $(\varphi_t)_*$ on $T_{\Id}H_{V_\bullet}$ are
\[\exp(t(\eta_a-\eta_b))\text{ such that }\eta_a>\eta_b\text{ and } \forall l\in L(V_\bullet),~ b\in F_l\implies a\in F_l.\]
The eigenvalues of the dual action are the reciprocals of these eigenvalues, and hence the eigenvalue of the desired exterior power is the reciprocal of the product. 
A direct computation shows that the product is $\exp(t\delta(E_\bullet))$, as desired.
\end{proof}
\begin{proof}[Proof of Lemma \emph{\ref{lem:counting}}]
Let $\varphi_tA\subseteq B_{d_\varphi}(\Id; \exp(t))\cap H_{V_\bullet}$ be a maximal set such that the $d_\varphi$-distance between any two of its points is at least one. 
By maximality, every $h\in B_{d_\varphi}(\Id; \exp(t))\cap H_{V_{\bullet}}$ is at $d_\varphi$-distance at most $1$ from $A$.
Consequently, 
\begin{align*}
(\#A)\cdot \mu(B_{d_\varphi}(\Id;1))&\ge \mu(B_{d_\varphi}(\Id; \exp(t))) \\
&= \exp(t\delta(E_\bullet)) (\varphi_t)_*\mu(B_{d_\varphi}(\Id; \exp(t)))\\
&= \exp(t\delta(E_\bullet)) \mu(\varphi_{-t}B_{d_\varphi}(\Id; \exp(t)))\\
&= \exp(t\delta(E_\bullet)) \mu(B_{d_\varphi}(\Id; 1)),
\end{align*}
which yields the desired bound on $\#A$. 

A similar volume argument provides an upper bound on $\#A$ if $A$ satisfies the weaker Condition \eqref{cond: lies near _E_i} instead of (\ref{cond:lies on _E_i}). 
Recall that $d_\varphi^{\alpha_\varphi}$ is a metric (see Subsection \ref{sub:Case of interest}). 
For every point $h\in \varphi_tA$ there is a point $h'\in H_{V_\bullet}\cap B_{d_\varphi}(h;1)$.
It follows that
\[\mu(B_{d_\varphi^{\alpha_\varphi}}(h;2)) \ge \mu(B_{d_\varphi^{\alpha_\varphi}}(h';1) = \mu(B_{d_\varphi}(\Id;1)).\] 
In addition,
\[B_{d_\varphi^{\alpha_\varphi}}(h;2)\cap H_V\subseteq B_{d_\varphi^{\alpha_\varphi}}(h';3)\cap H_V\subseteq B_{d_\varphi^{\alpha_\varphi}}(\Id;\exp(\alpha_\varphi t)+3).\]
Consequently, the integral
\[\iota := \int_{B_{d_\varphi^{\alpha_\varphi}}(\Id; \exp(\alpha_\varphi t) + 3)}\#\left(A\cap B_{d_\varphi^{\alpha_\varphi}}(h_1;2)\right)d\mu(h_1)\]
counts with weight of at least $\mu(B_{d_\varphi^{\alpha_\varphi}}(\Id;1))$ every point in $A$, and hence $\iota\ge \mu(B_{d_\varphi^{\alpha_\varphi}}(\Id;1))\#A$. 
On the other hand, since $d_\varphi^{\alpha_\varphi}$ is a doubling metric, it follows that $\#\left(A\cap B_{d_\varphi^{\alpha_\varphi}}(2;h_1)\right) = O(1)$. 
Thus we get the desired upper bound.
\end{proof}

% subsection the_counting_theorem (end)
\subsection{Adjustment of Lemma \ref{lem:counting} to Almost Invariant Spaces} % (fold)
\label{sub:adjustment_to_}

\begin{de}\label{de: approx H}
\index{Vto@$V^\to$}\hypertarget{bbf}{}
Let $0\le l\le n$.
For every $V\in \gr$ denote $V^{\to} := \lim_{t\to\infty} g_tV$.
This is a non-continuous mapping $\gr\to \gr^g$.
For every $E\in \cI_l$, denote 
\index{Gr@$\gr$!toE@$\gr_{\to E}$}\hypertarget{bbg}{}
\[\gr_{\to E}:=\left\{V\in \gr:\lim_{t\to \infty}g_tV\in \gr^g_E\right\},\]
and
\index{H@$H$!HVtoE@$H_{V\to E}$}\hypertarget{bbh}{}
\[H_{V\to E}:= \{h\in H:hV\in \gr_{\to E}\}.\]
Let $E_\bullet$ be a direction filtration and 
$V_\bullet$ a flag with $L(V_\bullet)=L(E_\bullet)$ and $V_l\in \gr_{\to E_l}$ for every $l\in L(V_\bullet)$. Denote $H_{V_\bullet\to E_\bullet} := \bigcap_{l\in L(V_\bullet)} H_{V_l\to E_l}$. 
\end{de}

\noindent\textbf{Example \ref{ex: gr inv}, continued.}
To describe explicitly $\grln{2}{4}_{\to E}$ for $E\in \cI_2$, we will use a variation of the concept of Schubert cells. We will not use this representation in the rest of the paper, and instead use the more indirect description
$\grln{2}{4}_{\to E} = H^-\grln{2}{4}^g_{E}$.
For every $M\in M_{4\times 2}(\RR)$ and a subset $F\subseteq \{1,2,3,4\}$, denote by $M_{F}$ the $(\#F)\times 2$ matrix composed of the $F$ rows of $M$. We get
\begin{align*}
\grln{2}{4}_{\to\{-1, 0\}} 
&= \left\{{\rm Im}(M):M=\begin{pmatrix}
	1&0\\
	0&*\\
	0&*\\
	0&0\\
\end{pmatrix},\, \rk M =2 \right\}\\
&=\{\spa(e_1, v):0\neq v\in \spa(e_2, e_3)\},\\
\grln{2}{4}_{\to\{-1, 1\}} 
&= \left\{{\rm Im}(M):M=\begin{pmatrix}
	1&*\\
	0&*\\
	0&*\\
	0&1\\
\end{pmatrix} \right\}\\
&= \{\spa(e_1, v+e_4):v\in \spa(e_2, e_3)\},\\
\grln{2}{4}_{\to\{0,1\}} 
&= \left\{{\rm Im}(M):M=\begin{pmatrix}
	*&*\\
	*&*\\
	*&*\\
	0&1\\
\end{pmatrix}, \,\rk M_{\{2,3,4\}} = 2 \right\}\\
&= \left\{\spa(e_4+u, v + \alpha e_1):\begin{matrix*}[l]
	0\neq v\in \spa(e_2, e_3),\\
	\alpha \in \RR,\text{ and}\\
	u\in \spa(e_1, e_2, e_3)
\end{matrix*}\right\},\\
\grln{2}{4}_{\to\{0, 0\}} 
&= \left\{{\rm Im}(M):M=\begin{pmatrix}
	*&*\\
	1&0\\
	0&1\\
	0&0\\
\end{pmatrix}\right\}\\
&= \{\spa(e_2+\alpha e_1, e_3+\beta e_1):\alpha, \beta\in \RR\},
\end{align*}
where $*$ represents any real number.
Let 
$V_0 = \spa(e_1+e_2 + e_3,e_4)\in \grln{2}{4}_{\to \{0,1\}}$ and 
$V_1 = \spa(e_1 + e_2, e_1 + e_3)\in \grln{2}{4}_{\to \{0,0\}}$. 
Then 
\begin{align*}
H_{V_0\to \{0,1\}} =& \left\{\begin{pmatrix}
1&0&0&0\\
a&1&0&0\\
b&0&1&0\\
c&d&e&1
\end{pmatrix}:a,b,c,d,e\in \RR, (a,b)\neq (-1,-1)\right\},\\
H_{V_1\to \{0,0\}} =& \left\{\begin{pmatrix}
1&0&0&0\\
a&1&0&0\\
b&0&1&0\\
c&-c&-c&1
\end{pmatrix}:a,b,c\in \RR, a+b+1\neq 0\right\}.
\end{align*}
\null\nobreak\hfill\ensuremath{\diamondsuit}
\begin{remark}\label{rem: not group}
The set $H_{V_\bullet\to E_\bullet}$ is not a group. It has the property that, for every $h_0\in H_{V_\bullet\to E_\bullet}$,
\[H_{V_\bullet\to E_\bullet} = H_{h_0V_\bullet\to E_\bullet}h_0.\]
\end{remark}
Our next goal is to prove Lemma \ref{lem: understanding V to E}, which establishes a local bijection between $H_{V\to E}$ and $H_{V^{\to}}$.
\begin{lem}\label{lem: making q0}
Let $E_\bullet$ be a direction filtration and let
$V_\bullet$ be a flag with $L(V_\bullet)=L(E_\bullet)$ and $V_l\in \gr_{\to E_l}$ for every $l\in L(V_\bullet)$. There exists a matrix $q_0\in H^-$ such that $q_0\cdot V_l^\to = V_l$ for all $l\in L(V_\bullet)$. 
Moreover, 
for $E_\bullet$ fixed we can choose the mapping $V_\bullet\mapsto q_0$ so that it is differentiable as a map from a manifold of flags to $H^-$.
% the construction of $q_0$ is differentiable in the choice of $V_\bullet$ and satisfies $q_0=\Id$ whenever $V_\bullet = V_\bullet^\to$.
\end{lem}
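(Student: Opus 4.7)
Set $W_l := V_l^\to \in \gr^g_{E_l}$, so $W_\bullet$ is a $g_t$-invariant flag. Since each $W_l$ is a direct sum of $g_t$-eigenspaces and the $W_l$'s are nested, we may choose an orthonormal basis $e_1,\dots,e_n$ of $\RR^n$, consisting of $g_t$-eigenvectors and compatible with $W_\bullet$, so that $W_l = \spa(e_j: j\in F_l)$ for nested index sets $F_{l_1}\subset\dots\subset F_{l_k}$. On a local chart this basis may be chosen to depend smoothly on $V_\bullet$ (via a local section of the $O_g$-principal bundle over a chart of the relevant Grassmannian), and patched globally via a partition of unity if needed.

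In this basis, the hypothesis $V_l\in \gr_{\to E_l}$ with $V_l^\to=W_l$ forces $V_l$ to be the graph of a unique linear map $\phi_l:W_l\to W_l^\perp$ satisfying $\phi_l(e_j)\in \spa(e_i: i\notin F_l,\,\eta_i<\eta_j)$ for each $j\in F_l$---the inequality $\eta_i<\eta_j$ is precisely the contraction condition needed for $g_t V_l\to W_l$. Write $\phi_l(e_j)=\sum_{i\notin F_l,\,\eta_i<\eta_j}c_{l,i,j}\,e_i$ and define $q_0 := \Id+N\in H^-$, where $N\in \fh^-$ has entries
\[
N_{ij}:=\begin{cases} c_{l,i,j}, & \text{if } \eta_i<\eta_j \text{ and } l=\min\{l'\in L(V_\bullet):\, j\in F_{l'},\, i\notin F_{l'}\},\\ 0,&\text{otherwise.}\end{cases}
\]
Then $q_0\in H^-$ by construction (recall Observation \ref{obs: H coordinate}), and $V_\bullet\mapsto q_0$ is polynomial in the $c_{l,i,j}$'s, hence smooth on each chart.

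It remains to verify $q_0 W_{l_r}=V_{l_r}$ for each $l_r$. The ``smallest-$l$'' rule implies that for $j\in F_{l_s}\setminus F_{l_{s-1}}$ (with $F_{l_0}:=\emptyset$), $q_0 e_j$ equals exactly $e_j+\phi_{l_s}(e_j)$, the canonical basis vector of $V_{l_s}$---all entries $N_{ij}$ with $i\in F_{l_s}$ vanish on this column by the rule. Thus $\{q_0 e_j: j\in F_{l_r}\}\subset V_{l_r}$; ordering these vectors by layer $s=1,\dots,r$ and projecting onto $\spa(e_j: j\in F_{l_r})$ yields a block lower-triangular matrix with identity blocks on the diagonal (the only other contributions come from $e_i$'s in strictly later layers), so these $|F_{l_r}|$ vectors are linearly independent and hence span all of $V_{l_r}$.

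\textbf{Main obstacle.} The delicate point is that a single $q_0\in H^-$ must simultaneously satisfy $q_0 W_l=V_l$ at every level $l\in L(V_\bullet)$. A naive choice $N_{ij}=c_{l,i,j}$ with $l$ any constraining level is overdetermined: for nested $V_{l_1}\subset V_{l_2}$ the coefficients $c_{l_1,i,j}$ and $c_{l_2,i,j}$ differ by polynomial cross-terms inherent in the two graph parametrizations. The ``smallest-$l$'' rule canonically assigns each entry of $N$ to a single level, and the nested structure $F_{l_1}\subset\dots\subset F_{l_k}$ then guarantees that the contributions from lower-level entries fit together correctly when forming $q_0 W_{l_r}$ at each stage.
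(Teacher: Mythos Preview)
Your proof is correct and takes a genuinely different route from the paper's. The paper proceeds by induction on $n$: it first handles the single space $V_{l_{k-1}}$ (via essentially your graph map, setting $q_0'(v)=v+\phi(v)$ on $V_{l_{k-1}}^\to$ and $q_0'=\Id$ on its orthogonal complement), then recurses on the sub-flag $(q_0')^{-1}V_\bullet$ inside $V_{l_{k-1}}^\to$ with the restricted flow, and takes $q_0=q_0'q_0''$. Your direct ``smallest-$l$'' assignment rule bypasses the induction entirely and produces $q_0$ in closed form; the key observation that $e_j+\phi_{l_s}(e_j)\in V_{l_s}\subset V_{l_r}$ for $s\le r$ is exactly what makes the nested constraints compatible. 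One refinement: your partition-of-unity remark is both unnecessary and would not work as written, since averaging candidate $q_0$'s in $H^-$ need not preserve the condition $q_0 W_l=V_l$. In fact your $q_0$ is canonical---on each orthogonal summand $W_{l_s}\ominus W_{l_{s-1}}$ it equals $w\mapsto w+\phi_{l_s}(w)$, which depends only on $W_\bullet$ and $V_\bullet$ and not on the adapted basis---so global smoothness is immediate once you note that $V_\bullet\mapsto W_\bullet$ and $V_\bullet\mapsto\phi_l$ are smooth. Your triangularity argument for linear independence is also superfluous, since $q_0\in H^-$ is unipotent and hence invertible.
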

\begin{proof}
We will first show the claim for a single space $V\in \gr_{\to E}$.
Let $W = (V^\to)^\perp$ be the orthogonal complement of $V^{\to}$. Since $V^{\to}$ decomposes as a direct sum of its intersections with eigenspaces $V_\eta$ of $g_t$,
it follows that $W$ is the direct sum of the orthogonal complements of $V^{\to}\cap V_\eta$ in $V_\eta$, and hence is is $g_t$-invariant.

Since $g_tV\xrightarrow{t\to \infty} V^{\to}$ and $V^{\to}\cap W = \{0\}$, it follows that $(g_tV)\cap W = \{0\}$. Since $W$ is $g_t$-invariant we conclude that $V\cap W = \{0\}$. Therefore for every $v\in V^{\to}$ there exists a unique $q_0(v)\in V$ such that $q(v)-v\in W$. 
Extend $q_0$ to a linear map $q_0:\RR^n\to \RR^n$, by $q_0(w) = w$ for every $w\in W$. 
As a linear map $q_0:\RR^n\to \RR^n$, we think of $q_0$ as a matrix.

We claim that $q_t:= g_tq_0g_{-t}\xrightarrow{t\to \infty}\Id$. 
Indeed, $q_t$ is the identify map on $W$. 
We will show that $q_t$ converges to the identify map on $V^\to$. 
For every $v\in V^{\to}$, the element $q_tv = g_tqg_{-t}v$ lies in $g_tV$ and is the unique element such that $q_tv-v\in W$. 
Since $g_tV\xrightarrow{t\to \infty} V^{\to}$, it follows that $g_tq_0g_{-t}v\xrightarrow{t\to \infty} v$. 
Therefore, $g_tq_0g_{-t}\xrightarrow{t\to \infty} \Id$, and hence $q_0\in H^-$. 
Note that for fixed $E$, the mapping $V\mapsto q$ is differentiable as a map from $\gr_{\to E}$ to $H^-$. 

To prove the general case, denote $L(V_\bullet) = \{0=l_0<l_1<\cdots<l_k=n\}$.
Let $q_0'$ be the matrix in $H^-$ such that $q_0'V_{l_{k-1}}^\to=V_{l_{k-1}}$. 
By induction on $n$, applying the lemma in dimension $l_{k-1}<n$ to the flow 
\[g'_t = \frac{g_t|_{V_{l_{k-1}}^\to}}{(\det g_t|_{V_{l_{k-1}}^\to})^{1/l_{k-1}}},\]
the flag $\{0\}=(q_0')^{-1}V_0\subset V_{l_1}\dots\subset (q_0')^{-1}V_{l_{k-1}}$, and the direction filtration $\emptyset=E_0\subset E_{l_1}\subset\dots\subset E_{k-1}$,
there exists $q_0'':V_{l_{k-1}}^\to \to V_{l_{k-1}}^{\to}$ such that $q_0''V_{l_i}^\to = q_0'^{-1}V_{l_i}$ for every $0\le i\le k-1$ and $g_t'q_0''g_{-t}'\xrightarrow{t\to \infty}\Id$. 
We can extend $q_0''$ to a map $q_0'':\RR^n\to \RR^n$ by $q_0''|_{(V_{l_{k-1}}^\to)^\perp} = \Id$. This map satisfies $g_tq_0''g_{-t}\xrightarrow{t\to \infty}\Id$, hence $q_0=q_0'q_0''$ is the desired matrix. 
The result follows.
\end{proof}
\begin{remark}\label{rem: q0 epsilon small}
For every $\varepsilon>0$, if $d_{\gr}(V_l, V_l^\to) < \varepsilon$ for every $l\in L(V_\bullet)$, then, by the differentiability of the construction of $q_0$, we get that $d_{H^-}(q_0,\Id) = O(\varepsilon)$. 
\end{remark}
\begin{lem}\label{lem: H is H}
For every $0\le l\le n, E\in \cI_l$, and $V\in \gr_E^g$, we have 
\[H_{V\to E}=H_V.\]
\end{lem}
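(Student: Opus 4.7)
The inclusion $H_V \subseteq H_{V \to E}$ is essentially trivial: since $V \in \gr_E^g$ is $g_t$-invariant, $h \in H_V$ gives $(hV)^\to = V^\to = V \in \gr_E^g$, so $hV \in \gr_{\to E}$.

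For the reverse inclusion I will fix $h \in H$ with $(hV)^\to \in \gr_E^g$ and show that the blade $v = \bl V \in \bigwedge^l \RR^n$ is fixed by (the $\bigwedge^l$-action of) $h$, whence $hV = V$. The structural input is Observation \ref{obs: H coordinate}: writing $h = \Id + N$ with $N$ in the Lie algebra of $H$, the operator $N$ sends each $g_t$-eigenspace $V_\eta \subseteq \RR^n$ into $\bigoplus_{\eta' > \eta} V_{\eta'}$. I would choose a $g_t$-eigenbasis $v_1, \dots, v_l$ of $V$ whose weight multiset is $E$, so that $v = v_1 \wedge \cdots \wedge v_l$, and expand
\[
hv \;=\; (v_1 + Nv_1)\wedge \cdots \wedge (v_l + Nv_l).
\]
Grouping by $g_t$-weights yields $hv = v + \sum_{\alpha > \eta_E}w_\alpha$, where $w_\alpha$ denotes the $g_t$-weight-$\alpha$ component. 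The summand $v$ is the unique weight-$\eta_E$ contribution, because any other summand replaces some $v_k$ by a weight-component of $Nv_k$ of strictly higher weight, forcing the total weight to exceed $\eta_E$.

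To kill the tail, suppose for contradiction that some $w_\alpha \neq 0$ with $\alpha > \eta_E$, and let $\alpha^* = \max\{\alpha : w_\alpha \neq 0\}$. Then as $t \to \infty$,
\[
g_t hv \;=\; e^{t\eta_E}v + \sum_{\alpha > \eta_E}e^{t\alpha}w_\alpha
\]
is asymptotically dominated by $e^{t\alpha^*}w_{\alpha^*}$. Under the Pl\"ucker embedding $\gr \hookrightarrow \mathbb{P}(\bigwedge^l\RR^n)$, the line $[g_t hv]$ is precisely the image of $g_t(hV)$, and the continuity of the embedding forces its limit $[w_{\alpha^*}]$ to coincide with the Pl\"ucker image of $(hV)^\to$. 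But any Pl\"ucker image of a point in $\gr_E^g$ is a $g_t$-eigenvector of weight $\eta_E$ (its blade can be written as a wedge of eigenvectors whose weight multiset is $E$), so $\alpha^* = \eta_E$, contradicting $\alpha^* > \eta_E$. Hence $hv = v$ and $h \in H_V$.

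I expect the only spot deserving care is the equality case in the weight bound above — that the only weight-$\eta_E$ summand in the expansion of $hv$ is $v$ itself — which is immediate from Observation \ref{obs: H coordinate} but worth verifying explicitly. Everything else is routine Pl\"ucker-embedding bookkeeping and a standard dominant-term analysis in projective space.
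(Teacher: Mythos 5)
Your proof is correct, but it follows a genuinely different path from the paper's. The paper argues entirely inside the Grassmannian: it uses $h\in H_{V\to E}$ to get $g_t hV\to(hV)^\to\in\gr_E^g$ as $t\to+\infty$, and separately uses $g_{-t}hg_t\to\Id$ together with the $g_t$-invariance of $V$ to get $g_{-t}hV=g_{-t}hg_tV\to V\in\gr_E^g$ as $t\to+\infty$; then it invokes Theorem~\ref{thm:lin path} (the monotone, non-recurrent structure of the $g_t$-flow on $\gr$) to conclude that a trajectory asymptotic to $\gr_E^g$ on both ends must stay in $\gr_E^g$ for all $t$, hence is constant and equals $V$. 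You instead work linearly in the Pl\"ucker embedding: you decompose $hv$ by $g_t$-weights using $h-\Id\in\mathrm{Lie}(H)$ (Observation~\ref{obs: H coordinate}), observe the unique lowest-weight term is $v$ itself, and kill the higher-weight tail by noting that the dominant term $w_{\alpha^*}$ would have to be the blade of $(hV)^\to$, which is a weight-$\eta_E$ vector. Both approaches ultimately exploit the $t\to\infty$ asymptotics, but yours avoids Theorem~\ref{thm:lin path} entirely at the cost of an explicit wedge-product computation, while the paper's avoids coordinates at the cost of quoting that theorem. Your uniqueness-of-weight argument is sound (replacing any $v_k$ by a component of $Nv_k$ strictly raises the total weight, so no cancellation can occur at weight $\eta_E$), and the limit identification via the closed Pl\"ucker embedding is legitimate since $(hV)^\to:=\lim_{t\to\infty}g_thV$ exists in $\gr$ by hypothesis; so the only real bookkeeping point you flag is indeed the one that needs checking, and it checks out.
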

\begin{proof}
By the definitions of the sets, $H_V\subseteq H_{V\to E}$. Let $h\in H_{V\to E}$. 
By the definition of $H_{V\to E}$, \[g_thV\xrightarrow{t\to \infty} V^\to \in \gr^g_E.\]
Next, the definition of $H$ implies that $g_{-t}hg_{-t}\xrightarrow{t\to\infty} \Id$, and thanks to the $g_t$-invariance of $V$, 
\begin{align}\label{eq: tends -infty}
g_{-t}hV = g_{-t}hg_{t}V\xrightarrow{t\to \infty} V \in \gr_E^g.
\end{align}
Consequently, $d_{\gr}(g_thV, \gr_E^g)\xrightarrow{t\to \pm\infty}0$, and hence, by Theorem \ref{thm:lin path}, we get that $g_thV\in \gr_E^g$ for every $t$ and in particular $t\mapsto g_thV$ is constant. Eq. \eqref{eq: tends -infty} implies that $hV = V$, hence $h\in H_V$.
\end{proof}
Recall that $H^{-0} = H^-H^0$ is the group of matrices $A\in \SL_n(\RR)$ such that $A_{ij}\neq 0$ only if $\eta_i \le \eta_j$. 
\begin{lem}\label{lem: understanding V to E}
Let $E_\bullet$ be a direction filtration and 
let $V_\bullet$ be a flag with $L(V_\bullet)=L(E_\bullet)$ and $V_l\in \gr_{\to E_l}$ for every $l\in L(V_\bullet)$. Let $q_0\in H^-$ be a map such that $q_0V_l^{\to} = V_l$ for every $l\in L(V_\bullet)$. 
Then there exist open sets $U_1\subseteq H_{V_\bullet\to E_\bullet}$ and $U_2\subseteq H_{V_\bullet^{\to}}$ containing $\Id$, a diffeomorphism $\check \funh:U_1\to U_2$, and a map $\check \funq:U_1\to H^{-0}$, such that 
\[hq_0 = \check \funq(h)\check \funh(h),\]
for every $h\in U_1$. 
Moreover, for every compact set $K\subseteq H$ and every $\varepsilon_0>0$ there exists $\varepsilon_K>0$ such that if $d_{H^-}(q_0,\Id) < \varepsilon_K$, then $U_1$ and $U_2$ contains $H_{V_\bullet\to E_\bullet}\cap K$ and $H_{V_\bullet^{\to}}\cap K$, respectively.
Furthermore, for every $h\in U_1\cap K$, we have $d_H(\check \funh(h), h) <\varepsilon_0$.
\end{lem}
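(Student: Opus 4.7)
The strategy is to define $\check\funq$ and $\check\funh$ via the $H^{-0}\cdot H$ factorization of Lemma \ref{lem: mult map}: for $h\in H$ with $hq_0$ in the (open, identity-containing) image of that multiplication map, write uniquely $hq_0=\check\funq(h)\check\funh(h)$ with $\check\funq(h)\in H^{-0}$, $\check\funh(h)\in H$. Smoothness of both maps is then automatic. The whole content of the lemma reduces to the equivalence
\begin{equation*}
h\in H_{V_\bullet\to E_\bullet}\quad\Longleftrightarrow\quad \check\funh(h)\in H_{V_\bullet^\to},
\end{equation*}
together with uniform control on compact sets.

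For the equivalence, I would first note the geometric identity $\gr_{\to E_l}=H^{-0}\cdot\gr^g_{E_l}$: every $V\in\gr_{\to E_l}$ equals $q^-V^\to$ for some $q^-\in H^-$ by Lemma \ref{lem: making q0}, while conversely $g_tq^-V^\to=(g_tq^-g_{-t})V^\to\to V^\to\in\gr^g_{E_l}$, and $H^0$ preserves $\gr^g_{E_l}$ since it commutes with $g_t$. Substituting $q_0V_l^\to=V_l$ converts $hV_l\in\gr_{\to E_l}$ into $hq_0\in H^{-0}\cdot\mathrm{Stab}(V_l^\to)$, and intersecting over $l\in L(V_\bullet)$ gives $h\in H_{V_\bullet\to E_\bullet}\iff hq_0\in H^{-0}\cdot\mathrm{Stab}(V_\bullet^\to)$.

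The main step is a local splitting of $\mathrm{Stab}(V_\bullet^\to)$ mirroring Lemma \ref{lem: mult map}. Since each $V_l^\to$ is $g_t$-invariant, $\mathrm{Stab}(V_\bullet^\to)$ is $\Ad(g_t)$-stable, so its Lie algebra is a sum of $\Ad(g_t)$-eigenspaces, which refine the decomposition $\mathfrak{sl}_n=\mathfrak h^{-0}\oplus\mathfrak h$. Hence
\begin{equation*}
\mathrm{Lie}(\mathrm{Stab}(V_\bullet^\to))=\bigl(\mathrm{Lie}(\mathrm{Stab}(V_\bullet^\to))\cap\mathfrak h^{-0}\bigr)\oplus\bigl(\mathrm{Lie}(\mathrm{Stab}(V_\bullet^\to))\cap\mathfrak h\bigr),
\end{equation*}
and the multiplication $(\mathrm{Stab}(V_\bullet^\to)\cap H^{-0})\times H_{V_\bullet^\to}\to\mathrm{Stab}(V_\bullet^\to)$ is a local diffeomorphism near $\Id$, by the same Lie-algebraic argument as in Lemma \ref{lem: mult map}. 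Therefore any $s\in\mathrm{Stab}(V_\bullet^\to)$ near $\Id$ decomposes as $s=q''h''$ with $q''\in\mathrm{Stab}(V_\bullet^\to)\cap H^{-0}$ and $h''\in H_{V_\bullet^\to}$; writing $hq_0=q's=(q'q'')h''$, uniqueness of the $H^{-0}\cdot H$ factorization forces $\check\funh(h)=h''\in H_{V_\bullet^\to}$. The converse is immediate from $H^{-0}\cdot H_{V_\bullet^\to}\subseteq H^{-0}\cdot\mathrm{Stab}(V_\bullet^\to)$. The inverse map $H_{V_\bullet^\to}\to H_{V_\bullet\to E_\bullet}$ is built symmetrically using the mirror $H\cdot H^{-0}$ factorization applied to $h'q_0^{-1}$, yielding the diffeomorphism $\check\funh:U_1\to U_2$.

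For the final clause, fix compact $K\subset H$ and $\varepsilon_0>0$. The image of the multiplication map in Lemma \ref{lem: mult map} is open in $\SL_n(\RR)$ and contains $K$ (since every $h\in K$ factors trivially as $\Id\cdot h$), so by compactness it contains a uniform neighborhood of $K$; hence for $q_0$ with $d_{H^-}(q_0,\Id)$ smaller than some threshold $\varepsilon_K=\varepsilon_K(K,\varepsilon_0)$ the decomposition $hq_0=\check\funq(h)\check\funh(h)$ is defined for every $h\in K$, and continuity forces $(\check\funq(h),\check\funh(h))\to(\Id,h)$ uniformly as $q_0\to\Id$, giving $d_H(\check\funh(h),h)<\varepsilon_0$. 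The local equivalence propagates from a neighborhood of $\Id$ to all of $K$ by translating to each $h_0\in K$ via $h\mapsto hh_0^{-1}$ together with the shifted flag $h_0V_\bullet$, whose own pinning element in $H^-$ remains near $\Id$. The main obstacle is the $\Ad(g_t)$-splitting of $\mathrm{Lie}(\mathrm{Stab}(V_\bullet^\to))$: without the $g_t$-invariance of each $V_l^\to$, $\mathrm{Stab}(V_\bullet^\to)$ would not be $\Ad(g_t)$-stable, the crucial decomposition of its Lie algebra would fail, and the identification $\check\funh(h)\in H_{V_\bullet^\to}$ would collapse.
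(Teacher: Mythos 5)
Your construction of $\check\funh,\check\funq$ via the factorization of Lemma \ref{lem: mult map}, the reduction of the lemma to the equivalence $h\in H_{V_\bullet\to E_\bullet}\iff\check\funh(h)\in H_{V_\bullet^{\to}}$, and the closing compactness argument all match the paper's proof. There is, however, a gap in the step ``intersecting over $l\in L(V_\bullet)$ gives $h\in H_{V_\bullet\to E_\bullet}\iff hq_0\in H^{-0}\cdot\mathrm{Stab}(V_\bullet^{\to})$.'' From $hq_0 \in H^{-0}\cdot\mathrm{Stab}(V_l^{\to})$ for every $l$, you only obtain $hq_0\in\bigcap_l\bigl(H^{-0}\cdot\mathrm{Stab}(V_l^{\to})\bigr)$, which a priori is strictly larger than $H^{-0}\cdot\bigcap_l\mathrm{Stab}(V_l^{\to}) = H^{-0}\cdot\mathrm{Stab}(V_\bullet^{\to})$: an intersection of products of subgroups is not in general the product of the intersections. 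Your subsequent splitting of $\mathrm{Stab}(V_\bullet^{\to})$ then begins ``writing $hq_0 = q's$ with $s\in\mathrm{Stab}(V_\bullet^{\to})$,'' which is precisely the membership that has not yet been established, so the argument is circular there.

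The repair is to apply your $\Ad(g_t)$-stable splitting to each $\mathrm{Stab}(V_l^{\to})$ separately rather than to the intersection: for each $l$ write $hq_0 = q_l s_l$ with $q_l\in H^{-0}$, then $s_l = q_l'h_l'$ with $q_l'\in\mathrm{Stab}(V_l^{\to})\cap H^{-0}$ and $h_l'\in H_{V_l^{\to}}$, and uniqueness of the $\mLU$-factorization of $hq_0$ forces $\check\funh(h)=h_l'\in H_{V_l^{\to}}$; intersecting over $l$ at this final stage yields $\check\funh(h)\in H_{V_\bullet^{\to}}$. This recovers exactly the structure of the paper's proof, which is a chain of per-$l$ equivalences ending with Lemma \ref{lem: H is H}. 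The one genuine difference that remains is the source of the per-$l$ fact $\check\funh(h)V_l^{\to}\in\gr_{\to E_l}\iff\check\funh(h)\in H_{V_l^{\to}}$: the paper invokes Lemma \ref{lem: H is H}, whose proof uses the non-recurrence of the $g_t$-flow on the Grassmannian (Theorem \ref{thm:lin path}), whereas your $\Ad(g_t)$-eigenspace decomposition of the Lie algebra of $\mathrm{Stab}(V_l^{\to})$ gives a purely Lie-algebraic local proof of the same statement, which avoids the dynamics and is sufficient here since only a statement near $\Id$ is needed.
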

\begin{proof}
The multiplication map
\begin{align*}
\mLU&: H\times H^{-0}\to \SL_n(\RR) 
\\ 
\mLU&:(h,  q^{-0})\mapsto q^{-0}h
\end{align*}
is one-to-one and has an open image.
Set
\begin{align*}
U_1'& := \{h\in H: hq_0\in {\rm Im}(\mLU)\},\\
U_2'& := \{h\in H: hq_0^{-1}\in {\rm Im}(\mLU)\}.
\end{align*}
For every $h\in U_1'$ denote 
\index{h@$\check \funh$}\hypertarget{bbi}{}
\index{q@$\check \funq$}\hypertarget{bbj}{}
\begin{align}\label{eq: tilde h tilde q}
(\check \funh(h), \check \funq(h)) := \mLU^{-1}(hq_0) 
\end{align}
and note that the equality 
$hq_0 = \check \funq(h)\check \funh(h)$
implies that $\check \funh$ induces an between $\check \funh:\tilde U_1' \to \tilde U_2'$. 
Note that $\Id\in U_1', U_2'$.

The sets $U_1 := U_1'\cap H_{V_\bullet \to E_\bullet}$ and 
$U_2 := U_2'\cap  H_{V_\bullet^{\to}}$ 
have the property that, for every $h\in U_1'$, we have 

\begin{align}
h\in U_1&\iff\label{eq:iff1} 
\forall l\in L(V_\bullet),~ hV_l\in \gr_{\to E_l} 
\\&\iff \label{eq:iff2} 
\forall l\in L(V_\bullet),~ hq_0V_l^{\to}\in \gr_{\to E_l}
\\&\iff\label{eq:iff3} 
\forall l\in L(V_\bullet),~ \check \funq(h) \check \funh (h)V_l^{\to}\in \gr_{\to E_l}
\\&\iff\label{eq:iff4} 
\forall l\in L(V_\bullet),~ \check \funh (h)V_l^{\to}\in \gr_{\to E_l}
\\&\iff\label{eq:iff5} 
\forall l\in L(V_\bullet),~ \check \funh (h)\in H_{V_l^{\to}},
\end{align}
where
\eqref{eq:iff1} follows from the definition of $U_1$,
\eqref{eq:iff2} follows from the definition of $q_0$, 
\eqref{eq:iff3} follows from Eq. \eqref{eq: tilde h tilde q},
\eqref{eq:iff4} follows from the fact that $H^-$ preserves $\gr_{\to E}$, and
\eqref{eq:iff5} follows from Lemma \ref{lem: H is H}. 

Therefore $U_1, U_2$ satisfy the desired condition. 

Note that the smaller $q_0$ is, the closer $hq_0, hq_0^{-1}$ are to $H\subset {\rm Im}(\mLU)$ and hence the second condition follows.
\end{proof}
Fix a direction filtration $E_\bullet$ and a flag 
$V_\bullet$ with $L(V_\bullet)=L(E_\bullet)$ and $V_l\in \gr_{\to E_l}$ for every $l\in L(V_\bullet)$. 
Let $q_0\in H^-$ be the element from Lemma \ref{lem: making q0} which satisfies $q_0 V_\bullet^{\to} = V_\bullet$. 
Lemma \ref{lem: understanding V to E} gives us a measure $\mu' = \check \funh^{-1}_*\mu$ on $H_{V_l\to E_l}$, where $\mu$ is the invariant measure on $H_{V_\bullet^\to}$. 

To extend Lemma \ref{lem:counting} to $H_{V_\bullet\to E_\bullet}$ we need to compute the volume $\mu'(B_{d_\varphi}(h;\exp(-t)))$ for $h\in H_{V_\bullet\to E_\bullet}$.

\begin{lem}\label{lem: measure comp}
There exists $\varepsilon>0$ such that if $d_\varphi(q_0, \Id)<\varepsilon$, then every $h_0\in H_{V_\bullet\to E_\bullet}$ with $d_\varphi(h_0,\Id)\le 1$ satisfies
\[\mu'(B_{d_\varphi}(h_0; \exp(-t))) = \Theta(\exp(-t\delta(E_\bullet)))\text{ as }t\to \infty.\]
\end{lem}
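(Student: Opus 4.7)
The plan is to reduce to a scaling computation for the Haar measure $\mu$ on $H_{V_\bullet^\to}$ by showing that the diffeomorphism $\check\funh$ is bi-Lipschitz with respect to $d_\varphi$ with distortion tending to $1$ as $\varepsilon \to 0$. Combined with Lemma \ref{lem: C action on measure in H_V}, the right-invariance of $d_\varphi$ and the bi-invariance of $\mu$ (which holds since $H_{V_\bullet^\to}$ is nilpotent), this will yield the desired $\Theta(\exp(-t\delta(E_\bullet)))$ bound. More precisely, I will prove the existence of a constant $L = L(\varepsilon)$ with $L \to 1$ as $\varepsilon \to 0$ such that, for all $h_1, h_2 \in H_{V_\bullet\to E_\bullet}$ with $d_\varphi(h_i, \Id) \le 2$,
\[
L^{-1}\,d_\varphi(h_1, h_2) \;\le\; d_\varphi(\check\funh(h_1), \check\funh(h_2)) \;\le\; L\, d_\varphi(h_1, h_2).
\]

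To prove this bi-Lipschitz bound, I would use the right-invariance of $d_\varphi$ (which follows from the right-invariance of $d_H$ together with the identity $g_t h_1 h_3 g_{-t} = (g_t h_1 g_{-t})(g_t h_3 g_{-t})$). From the defining relation $\check\funh(h_i) = \check\funq(h_i)^{-1} h_i q_0$, a direct computation gives
\[
\check\funh(h_1)\check\funh(h_2)^{-1} = \check\funq(h_1)^{-1}(h_1 h_2^{-1})\check\funq(h_2).
\]
Writing $a := \check\funq(h_1)^{-1}$, $c := \check\funq(h_2)$, and $x := h_1 h_2^{-1}$, by Remark \ref{rem: q0 epsilon small} (applied to $h_i V_\bullet$ which still lies close to $V_\bullet^\to$) we have $a, c \in H^{-0}$ with $d_H(a, \Id), d_H(c, \Id) = O(\varepsilon)$. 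For any $t \ge 0$, the definition of $H^{-0}$ yields that $\varphi_t(a) = g_t a g_{-t}$ has its $H^-$-component contracting and its $H^0$-component invariant, so $d_H(\varphi_t(a), \Id), d_H(\varphi_t(c), \Id) = O(\varepsilon)$ uniformly in $t \ge 0$. Since $\varphi_t$ is a group automorphism, $\varphi_t(axc) = \varphi_t(a)\varphi_t(x)\varphi_t(c)$, and standard Lipschitz estimates for multiplication in the Lie group $(H, d_H)$ give
\[
\bigl|\, d_H(\varphi_t(axc), \Id) - d_H(\varphi_t(x), \Id)\,\bigr| \;\le\; C\varepsilon
\]
uniformly in $t \ge 0$. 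The bounded expansion property of $\varphi_t$ (Subsection \ref{sub:Case of interest}) guarantees that $t \mapsto d_H(\varphi_t(x), \Id)$ grows at rate at least $\alpha_\varphi > 0$ where it is nontrivial, so the threshold time $\min\{t : d_H(\varphi_t(\cdot), \Id) \ge 1\}$ appearing in the definition of $d_\varphi$ shifts by at most $O(\varepsilon/\alpha_\varphi)$ when passing from $x$ to $axc$. Exponentiating and invoking right-invariance yields the claimed bi-Lipschitz bound with $L = \exp(O(\varepsilon))$, which can be made $\le 2$ by shrinking $\varepsilon$.

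With the bi-Lipschitz property in hand, for $t$ large enough so that both balls below lie inside the compact region guaranteed by Lemma \ref{lem: understanding V to E}, the inclusions
\[
B_{d_\varphi}(\check\funh(h_0); L^{-1}e^{-t}) \cap H_{V_\bullet^\to} \;\subseteq\; \check\funh\bigl(B_{d_\varphi}(h_0; e^{-t})\bigr) \;\subseteq\; B_{d_\varphi}(\check\funh(h_0); L e^{-t}) \cap H_{V_\bullet^\to}
\]
hold, and applying $\mu$ together with the definition $\mu' = \check\funh^{-1}_*\mu$ sandwiches $\mu'(B_{d_\varphi}(h_0; e^{-t}))$ between $\mu(B_{d_\varphi}(\check\funh(h_0); L^{\pm 1} e^{-t}))$. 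Using right-invariance of $d_\varphi$ and bi-invariance of $\mu$ these reduce to $\mu(B_{d_\varphi}(\Id; L^{\pm 1} e^{-t}))$, and then using the equivariance $\varphi_{-t} B_{d_\varphi}(\Id; r) = B_{d_\varphi}(\Id; r e^{-t})$ together with Lemma \ref{lem: C action on measure in H_V}, both bounds equal $\exp(-t\delta(E_\bullet)) \cdot \Theta(1)$, completing the proof.

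The main obstacle is the bi-Lipschitz step: one must show that the $H^{-0}$-conjugations $a, c$ on either side do not grow under $\varphi_t$ for $t \ge 0$, which is precisely where the decomposition $H^{-0} = H^- H^0$ and the definition of $H^-$ as the contracting subgroup for $\varphi_t$ enter critically; if instead $a$ or $c$ had an expanding component (which would happen if we tried to carry out the argument for $t \to -\infty$), the distortion would blow up and the lemma would fail.
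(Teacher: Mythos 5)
Your proof is correct and takes a genuinely different route from the paper's. The paper proves the conjugation equivariance $\check\funh'(g_shg_{-s})=g_s\check\funh(h)g_{-s}$ and $\check\funq'(g_shg_{-s})=g_s\check\funq(h)g_{-s}$, where $\check\funh',\check\funq'$ are the decomposition maps attached to $q_1=g_sq_0g_{-s}$ (an element even closer to $\Id$ when $s\ge 0$), and uses this to transport the $\exp(-t)$-ball to a unit ball, on which $\check\funh'$ is pointwise close to the identity; the case $h_0\neq\Id$ is then handled separately via an auxiliary measure $\mu''$ on $H_{V'_\bullet\to E_\bullet}$ and a bounded Radon--Nikodym derivative. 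You instead establish a global bi-Lipschitz estimate for $\check\funh$ in the $d_\varphi$-semi-metric, from which the measure estimate follows at once by the right-invariance of $d_\varphi$ and the bi-invariance of $\mu$, handling all base points $h_0$ uniformly. The two arguments are morally equivalent --- the paper's conjugation trick is exactly what drives the uniform-in-$s\ge 0$ smallness of $\varphi_s(a),\varphi_s(c)$ in your estimate --- but your version packages the scale-by-scale comparison into a single inequality and avoids the separate treatment of the base point.

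Two presentational points would need fixing for a clean write-up. First, the assertion $d(\check\funq(h_i),\Id)=O(\varepsilon)$ does not really come from Remark \ref{rem: q0 epsilon small}, which concerns $q_0$ rather than $\check\funq(h)$; it comes from the differentiable dependence of $\check\funq$ on $q_0$ together with $\check\funq\equiv\Id$ when $q_0=\Id$ and the boundedness of $h_i$, as used explicitly in the proof of Corollary \ref{cor: blade behavior over time}. Second, ``standard Lipschitz estimates for multiplication in the Lie group $(H,d_H)$'' is a misnomer: $a,c\in H^{-0}$ are not in $H$, and a Lipschitz constant for multiplication cannot be uniform in $s$ since $\varphi_s(x)$ is unbounded. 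What you actually need is the right-invariance of $d_{\SL_n(\RR)}$ (of which $d_H$ is the restriction), which gives directly, by two applications of the triangle inequality together with right-translation,
\[
\bigl|d_{\SL_n(\RR)}(\varphi_s(axc),\Id)-d_{\SL_n(\RR)}(\varphi_s(x),\Id)\bigr|\le d_{\SL_n(\RR)}(\varphi_s(a),\Id)+d_{\SL_n(\RR)}(\varphi_s(c),\Id);
\]
the nonexpansion of $H^{-0}$ under $\varphi_s$ for $s\ge 0$ then yields the uniform $O(\varepsilon)$ bound, and the rest of your argument goes through.
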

\begin{proof}
Let $\check \funh, \check \funq$ be as in Lemma \ref{lem: understanding V to E}. If $d_\varphi(q_0, \Id)$ is small enough, then $B_{d_\varphi}(\Id; 2^{1/\alpha_\varphi}) \cap H_{V_\bullet\to E_\bullet}$ is in the domain of $\check \funh, \check \funq$ and similarly $B_{d_\varphi}(\Id; 2^{1/\alpha_\varphi}) \cap H_{V_\bullet}$ is in the image of $\check \funh$.

We first show the result when $h_0=\Id$. 
We want to calculate 
\begin{align}\label{eq: ball around Id measure}
\mu(&\{h\in H_{V_\bullet}:\check \funh^{-1}(h) \in B_{d_\varphi}(\Id; \exp(-t))\})\\
 &=\nonumber
\mu\left(\check \funh(B_{d_\varphi}(\Id; \exp(-t))\cap H_{V_\bullet\to E_\bullet})\right).
\end{align}
Let $\check \funh', \check \funq'$ be the functions generated by Lemma \ref{lem: understanding V to E}  applied to $E_\bullet, q_1 V_\bullet$, and $q_1$, where $q_1 = g_tq_0g_{-t}$, and $q_1V_\bullet^\to$. 
These functions satisfy 
\begin{align}\label{eq: h'q' behavior}
\check \funq'(h)\check \funh'(h) = hq_1 = hg_tq_0g_{-t}.
\end{align}
Substituting $h = g_thg_{-t}$ in Eq. \eqref{eq: h'q' behavior} we get 
\[\check \funq'(g_thg_{-t})\check \funh'(g_thg_{-t}) = g_th_0g_{-t} = (g_{t}\check \funq(h)g_{-t})(g_t\check \funh(h) g_{-t}).\]
Since the multiplication map $\mLU:H^{-0}\times H\to \SL_n(\RR)$ is one-to-one, it follows that 
\[\check \funq'(g_thg_{-t}) = g_t\check \funq(h)g_{-t}\text{ and }\check \funh'(g_thg_{-t}) = g_t\check \funh(h)g_{-t}.\]
Conjugating by $g_t$ we get that the quantity in Eq. \eqref{eq: ball around Id measure} equals 
\begin{align*}
\exp(-t \delta(E_\bullet)
\mu\left(\check \funh'(B_{d_\varphi}(\Id; 1)\cap H_{V_\bullet\to E_\bullet})\right).
\end{align*}
Since $q_1$ is close to $\Id$, Lemma \ref{lem: understanding V to E} implies that the map $\check \funh'(h)|_{B_{d_\varphi}(\Id; 2^{1/\alpha_\varphi})}$ is close to the identity map, and hence
\[\mu(\check \funh'(B_{d_\varphi}(\Id; 1)\cap H_{g_tV_\bullet\to E_\bullet})) = \Theta(1),\]
and hence our prediction of the $\mu'$-measure of a small ball around $\Id$ is correct.

For $h_0\neq \Id$, write $\check \funq(h_0) = q_2q_2^0$, where $q_2\in H^-$ and $q_2^0\in H^0$. 
Since
\[\forall l\in L(V_\bullet), \:h_0q_0V_l^{\to} = \check \funq(h_0)\check \funh(h_0)V_l^\to = q_2q_2^0V_l^{\to},\]
it follows that for $V_l':= h_0 V_l$ we have $(V_l')^{\to} = q_2^0V_l^{\to}$. 
We can now construct $\mu''$ on $H_{V_\bullet'\to E_\bullet}$ as we constructed $\mu'$. In much the same way, since $d_{H^-}(q_0,\Id)$ is arbitrarily small and $h_0$ lies in a compact set, we deduce from Lemma \ref{lem: understanding V to E} that $d_{H^-}(\check \funq(h_0), \Id) = d_{H^-}(q_1,\Id)$ is arbitrarily small, and hence 
\[\mu''(B_{d_\varphi}(\Id; \exp(-t))\cap H_{V_\bullet'\to E_\bullet}) = \Theta(\exp(-t \delta(E_\bullet)).\]
By Lemma \ref{lem: understanding V to E}, if $d_{H^-}(q_0, \Id)$ is small enough, then both $H_{V_\bullet\to E_\bullet}$ and $H_{V_\bullet'\to E_\bullet}$ are manifolds in a bounded set.
Since $\mu'', \mu'$ are smooth measures with positive density, and since
 $H_{V_\bullet\to E_\bullet}h_0^{-1} = H_{V_\bullet'\to E_\bullet}$,
we obtain that the Radon-Nikodym derivative $(R_{h_0^{-1}})_* \mu'/\mu''$ is bounded on $B_{d_\varphi}(\Id; \exp(-t))\cap H_{V_\bullet'\to E_\bullet}$, where $R_{h_0^{-1}}$ is the map of multiplication from the right by $h_0^{-1}$, and $(R_{h_0^{-1}})_*$ is the pushforward of measure. Hence,
\begin{align*}
\mu'(B_{d_\varphi}(h; \exp(-t))\cap H_{V_\bullet\to E_\bullet}) =&\, (R_{h_0^{-1}})_*\mu'(B_{d_\varphi}(\Id; \exp(-t))\cap H_{V_\bullet'\to E_\bullet})\\
=&\,\Theta\left(\mu''(B_{d_\varphi}(\Id; \exp(-t))\cap H_{V_\bullet'\to E_\bullet})\right) \\=&\,
\Theta\exp(-t \delta(E_\bullet)),
\end{align*}
as desired.
\end{proof}

\begin{de}
\index{Gr@$\gr$!toE@$\gr_{\to E}^{\varepsilon, g}$}\hypertarget{bca}{}
For every $l=0,\dots,n$ and $E\in \cI_l$ we let $\gr_{\to E}^{\varepsilon, g}$ denote the set of vector spaces $V\in \gr_{\to E}$ such that $d_{\gr}(V,V^{\to})<\varepsilon$. 
\end{de}

Lemma \ref{lem: C action on measure in H_V} shows that 
\begin{align}\label{eq:volume ball H_V}
\mu(B_{d_\varphi}(h;\exp(-t)) = \Theta(\exp(-t\delta(E_\bullet))),
\end{align}
for every $h\in H_{V_\bullet}$ and $t>0$. Combined with standard covering arguments, Eq. \eqref{eq:volume ball H_V} proves Lemma \ref{lem:counting}. Replacing Eq. \eqref{eq:volume ball H_V} with Lemma \ref{lem: measure comp} in the proof of Lemma \ref{lem:counting} we can now prove the following.
\begin{lem}\label{lem:the right counting theorem}
There is $\varepsilon_0>0$ such that the following holds.
Let $V_\bullet$ be a flag and let $E_\bullet$ be a direction filtration with $L(V_\bullet) = L(E_\bullet)$, $t>0$, and $V_l\in \gr_{\to E_l}^{\varepsilon_0, g}$ for every $l\in L(V_\bullet)$.
There is a set $A\subset B_{d_\varphi}(\Id; 1)$ of size $\Theta(\exp(t\delta(E_\bullet)))$ that satisfies the following conditions.
\begin{enumerate}[label=\emph{(\arabic*)}, ref=\arabic*]
  \item $d_\varphi(h_1, h_2) \ge \exp(-t)$ for all pairs of distinct elements $h_1, h_2\in A$;
  \item\label{cond:element to E_i2} We have $h\in H_{V_\bullet\to E_\bullet}$ for every $h\in A$. 
\end{enumerate}
Moreover, there is no such set $A$ larger than $\Theta(\exp(t\delta(E_\bullet))),$ even if we weaken Condition \emph{\eqref{cond:element to E_i2}} to 
\begin{enumerate}[label={\emph{($\ref{cond:element to E_i2}'$)}}, ref={$\ref{cond:element to E_i2}'$}]
\item \label{cond: lies near _E_i2} For every $h\in A$ there exists $h'\in H_{V_\bullet\to E_\bullet}$ such that $d_\varphi(h,h')<\exp(-t)$. 
\end{enumerate}
\end{lem}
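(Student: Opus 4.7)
The plan is to mimic the proof of Lemma~\ref{lem:counting}, replacing the Haar measure on the group $H_{V_\bullet}$ with the measure $\mu' = (\check\funh^{-1})_*\mu$ on the (non-group) set $H_{V_\bullet\to E_\bullet}$ constructed in Subsection~\ref{sub:adjustment_to_}, and using the uniform ball-volume estimate of Lemma~\ref{lem: measure comp} in place of Lemma~\ref{lem: C action on measure in H_V}. First, I would choose $\varepsilon_0$ small enough that the hypothesis $V_l\in\gr_{\to E_l}^{\varepsilon_0,g}$ together with Lemma~\ref{lem: making q0} and Remark~\ref{rem: q0 epsilon small} produces $q_0\in H^-$ with $q_0V_l^\to=V_l$ and with $d_{H^-}(q_0,\Id)$ small enough for Lemmas~\ref{lem: understanding V to E} and~\ref{lem: measure comp} to apply on a sufficiently large compact neighborhood of $\Id$. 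The resulting measure $\mu'$ then satisfies
\[\mu'\bigl(B_{d_\varphi}(h_0;\exp(-t))\bigr)=\Theta\bigl(\exp(-t\delta(E_\bullet))\bigr)\]
for every $h_0\in B_{d_\varphi}(\Id;2)\cap H_{V_\bullet\to E_\bullet}$ and $t\ge 0$; in particular $\mu'(B_{d_\varphi}(\Id;1)\cap H_{V_\bullet\to E_\bullet})=\Theta(1)$.

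For the existence statement, take $A\subseteq B_{d_\varphi}(\Id;1)\cap H_{V_\bullet\to E_\bullet}$ to be a maximal set of points pairwise at $d_\varphi$-distance $\ge\exp(-t)$. By maximality, the balls $\{B_{d_\varphi}(h;\exp(-t)):h\in A\}$ cover $B_{d_\varphi}(\Id;1)\cap H_{V_\bullet\to E_\bullet}$, so summing $\mu'$-measures yields $\Theta(1)\le \#A\cdot\Theta(\exp(-t\delta(E_\bullet)))$, i.e.\ $\#A=\Omega(\exp(t\delta(E_\bullet)))$. Condition~\eqref{cond:element to E_i2} holds by construction and Condition~(1) is immediate.

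For the upper bound under the weaker Condition~\eqref{cond: lies near _E_i2}, for each $h\in A$ fix a shadow $h'_h\in H_{V_\bullet\to E_\bullet}$ with $d_\varphi(h,h'_h)<\exp(-t)$; all shadows lie in $B_{d_\varphi}(\Id;2)\cap H_{V_\bullet\to E_\bullet}$. I would then run the double-counting integral
\[\iota:=\int_{B_{d_\varphi^{\alpha_\varphi}}(\Id;2)\cap H_{V_\bullet\to E_\bullet}}\#\bigl\{h\in A:d_\varphi^{\alpha_\varphi}(x,h)\le 2\exp(-\alpha_\varphi t)\bigr\}\,d\mu'(x).\]
The triangle inequality for the metric $d_\varphi^{\alpha_\varphi}$ shows that the set of admissible $x$ for a fixed $h\in A$ contains $B_{d_\varphi^{\alpha_\varphi}}(h'_h;\exp(-\alpha_\varphi t))\cap H_{V_\bullet\to E_\bullet}$, whose $\mu'$-measure is $\Theta(\exp(-t\delta(E_\bullet)))$, so $\iota\ge \#A\cdot\Theta(\exp(-t\delta(E_\bullet)))$. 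Conversely, $A$ is $\exp(-\alpha_\varphi t)$-separated in the doubling metric $d_\varphi^{\alpha_\varphi}$, so the integrand is uniformly $O(1)$ and the integration domain has $\mu'$-measure $\Theta(1)$, giving $\iota=O(1)$. Hence $\#A=O(\exp(t\delta(E_\bullet)))$.

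The one substantive ingredient beyond Lemma~\ref{lem:counting} is the uniform volume estimate of Lemma~\ref{lem: measure comp}, which handles precisely the obstacle that $H_{V_\bullet\to E_\bullet}$ is not a group and carries no invariant measure; once that estimate is in hand, the argument above is a routine covering/double-counting computation, so I expect no further difficulty.
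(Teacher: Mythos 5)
Your proposal is correct and follows exactly the route the paper indicates: the paper states Lemma~\ref{lem:the right counting theorem} as following from the proof of Lemma~\ref{lem:counting} with the Haar-measure ball estimate~\eqref{eq:volume ball H_V} replaced by the $\mu'$-estimate of Lemma~\ref{lem: measure comp}, which is precisely the substitution you make, together with the standard maximal-separated-set and double-counting arguments. The only detail worth flagging is that Lemma~\ref{lem: measure comp} is stated for $h_0\in B_{d_\varphi}(\Id;1)$ while your shadows live in $B_{d_\varphi}(\Id;2)$; this is harmless since the estimate extends to any fixed compact neighborhood of $\Id$ (at the cost of adjusting the implicit constants), exactly as the paper's proof of Lemma~\ref{lem:counting} already does.
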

% subsection adjustment_to_ (end)
\subsection{The Changing Step Lemma} % (fold)
\label{sub:the_change-step_lemma}
In view of Observation \ref{obs: The behavior of templates}, the following lemma will construct Alice's step whenever $f^{h_\infty\Lambda}_{E,\bullet}$ changes from time $T_m$ to time $T_{m+1}$ via a non-null vertex. In these steps $\#A_{m+1} = 1$, that is, Alice's choice determines $h_{m+1}$.
Consequently, Lemma \ref{lem:single advance} below will prove the existence of an element $h\in B_{d_\varphi}(\Id;1)$ with a certain behavior.
\begin{de}\label{de:order definition}
\index{#@$\eteq$!$E_\bullet \eteq E'_\bullet$}\hypertarget{bcb}{}
Let $E_\bullet, E'_\bullet$ be direction filtrations with $L(E_\bullet) = L(E'_\bullet)$. 
We write $E_\bullet \eteq E'_\bullet$ if $E_l\eteq E_l'$ for every $l\in L(E_\bullet)$.
\end{de}
\begin{remark}\label{rem:order definition}
In Definition \ref{de:order definition}, the conditions $E_l\eteq E_l'$ are required independently for each $l$. That is, for every $l\in L(E_\bullet)$ we can increase some of the elements in $E_l$ to obtain $E_{l}'$. 
The increments are not required to be compatible for different values $l$. We will see (Definition \ref{de:elementary flip}) a precise formulation of increasing all sets in a compatible way and show (Theorem \ref{thm: decomposition to elementary}) that the definitions are equivalent.
\end{remark}
\begin{lem}\label{lem:single advance}
Let 
$E_\bullet\eteq E_\bullet'$ be direction filtrations
and let $V_\bullet$ be a flag such that $L(V_\bullet) = L(E_\bullet)$ and $V_l\in \gr_{\to E_l}$ for every $l\in L(E_\bullet)$.
There exists $h\in B_{d_\varphi}(\Id;1)$ such that 
$hV_l \in \gr_{\to E_l'}$ for every $l\in L(V_\bullet)$.
Moreover, the convergence rate of $\lim_{t\to \infty}g_thV_l \in \gr^g_{E_l'}$ 
is uniform on compact sets of possible $V_\bullet$-s.
\end{lem}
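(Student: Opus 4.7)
The plan is to reduce $E_\bullet \eteq E_\bullet'$ to a finite sequence of \emph{elementary flips} and build $h$ iteratively as a composition of small transvections in $H$. The upcoming Theorem \ref{thm: decomposition to elementary}, together with Definition \ref{de:elementary flip}, will supply such a decomposition $E_\bullet = E^{(0)}_\bullet \eteq \cdots \eteq E^{(N)}_\bullet = E'_\bullet$, where each step replaces a single value $\eta$ by some $\eta' > \eta$ in a manner compatible across the levels of the filtration. Proving the lemma at one elementary step with $d_\varphi(h_k, \Id)$ arbitrarily small suffices: iterating $V_\bullet^{(k+1)} := h_{k+1} V_\bullet^{(k)}$ with $V_\bullet^{(0)} := V_\bullet$ and taking $h := h_N \cdots h_1$ produces the desired element, and the triangle inequality for $d_\varphi^{\alpha_\varphi}$ keeps $d_\varphi(h, \Id) \le 1$.

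For a single elementary flip, by $O_g$-transitivity on flags compatible with $E_\bullet$, I would choose coordinates so that $V_l^\to = \spa(e_j : j \in F_l)$ for nested index sets $F_l \subseteq \{1,\dots,n\}$. The elementary flip supplies indices $b, a$ with $\eta_b = \eta$ and $\eta_a = \eta' > \eta$, chosen so that ``$b \in F_l$ and $a \notin F_l$'' holds exactly at the levels where the flip acts non-trivially, while at other levels the pair $\{a, b\}$ is positioned (both in $F_l$, or both out) so that a transvection along $(a,b)$ preserves $V_l^\to$. Then I set $h := \Id + s\, e_{a,b}$ for small $s > 0$; this is in $H$ because $\eta_a > \eta_b$, and $d_\varphi(h, \Id) = O(s)$.

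To verify the limit, write $V_l = q_0 V_l^\to$ with $q_0 \in H^-$ (Lemma \ref{lem: making q0}). For $j \in F_l$, the vector $g_t h q_0 e_j$ splits into an Oseledec part $g_t q_0 e_j \sim e^{t\eta_j} e_j$ and an extra $s\,(q_0 e_j)_b\, e^{t\eta_a} e_a$ contribution, the latter nonzero precisely when $j = b$ (always) or when $(q_0)_{b,j} \neq 0$ (only for $\eta_j > \eta_b$). Elementary Gaussian elimination within the basis $\{g_t h q_0 e_j : j \in F_l\}$ cancels the $e_a$-components of all $j \neq b$ against that of $j = b$, yielding a basis whose individual directions converge to $\{e_j : j \in F_l \setminus \{b\}\} \cup \{e_a\}$; hence $\lim_{t\to\infty} g_t h V_l \in \gr^g_{E'_l}$. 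At the trivial levels one has $hV_l^\to = V_l^\to$ by construction, and combined with $q_0 \in H^-$ this gives $hV_l \in \gr_{\to E_l}$. Uniformity on compact families of flags is immediate from the continuous dependence of $q_0$ on $V_\bullet$ (Remark \ref{rem: q0 epsilon small}) together with the fact that the decay rates controlling the error terms are governed by the positive gaps in $\Eall$.

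The main obstacle is the combinatorial step: verifying that every $E_\bullet \eteq E_\bullet'$ decomposes into elementary flips admitting compatible indices $(a, b)$ at all levels simultaneously. This is a statement about the poset $(\cI^*_\bullet, \eteq)$ that is essentially Lie-theoretic in the $\SL_n$ setting, and is the very step the author flags in Subsection \ref{sub:different_lie_groups} as the hardest to extend to general semisimple groups.
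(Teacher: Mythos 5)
Your plan follows the paper's: decompose $E_\bullet \eteq E_\bullet'$ into elementary flips (Theorem \ref{thm: decomposition to elementary}), realize each flip by a transvection $h = \Id + s\, e_{a,b}$ with $\eta_a > \eta_b$, and compose (the paper's transitivity Claim \ref{claim:transitive} shrinks the constituent factors by conjugating with $g_{t_0}$ rather than by appealing to the triangle inequality for $d_\varphi^{\alpha_\varphi}$, but either works for the reduction to one flip). There is, however, a genuine gap at the \emph{trivial} levels of the flag. You assert that $hV_l^\to = V_l^\to$ combined with $q_0 \in H^-$ gives $hV_l = h q_0 V_l^\to \in \gr_{\to E_l}$; this is false in general, because $h$ and $q_0$ do not commute and the $e_a$-component that $h$ injects into $h q_0 e_j$ can carry a weight strictly larger than every weight in $E_l$, pushing the limit out of $\gr^g_{E_l}$. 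Concretely, take $\Eall=\{-3,-1,1,3\}$, a flip with $\eta_b=-3$, $\eta_a=3$ (matrix indices $b=1$, $a=4$), and a lower trivial level $l=1$ with $F_1=\{2\}$, $E_1=\{-1\}$. With $V_1 = q_0\spa(e_2)$ and $(q_0)_{12} \ne 0$ (allowed, since $\eta_1<\eta_2$), one computes
\[
g_t h q_0 e_2 = e^{-t}e_2 + (q_0)_{12}\, e^{-3t} e_1 + s(q_0)_{12}\, e^{3t} e_4,
\]
whose direction tends to $\spa(e_4)$, so $hV_1 \in \gr_{\to\{3\}}$, not $\gr_{\to\{-1\}}$. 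Your Gaussian elimination cannot repair this: there is no vector in $\{g_t h q_0 e_j : j\in F_1\}$ with a stronger $e_4$-component to cancel against, since neither $a$ nor $b$ lies in $F_1$.

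What is missing is the paper's Claim \ref{claim: enough when source is inv}: before the transvection computation one first reduces to flags $V_\bullet$ with $V_l \in \gr^g_{E_l}$, i.e.\ $q_0 = \Id$, so the cross-terms above simply do not arise and the transvection stabilizes each trivial $V_l^\to$ exactly. That reduction is not a formality: it uses Claim \ref{claim: uniform convergence meaning} together with the local diffeomorphism $\mLU:H\times H^{-0}\to\SL_n(\RR)$ from Lemma \ref{lem: mult map} to re-factor $h_0 q_0^{-1} = q_1 h_1$ and extract an honest $h_1 \in H$ that does the job. Insert that reduction and the rest of your argument — the nontrivial-level wedge computation and the elementary-flip decomposition, which you rightly flag as the combinatorial heart — goes through.
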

The proof of Lemma \ref{lem:single advance} occupies the rest of this subsection.
We will break it up into several claims.
\begin{claim}[Transitivity of Lemma \ref{lem:single advance}]\label{claim:transitive}
The lemma is transitive with respect to $E_\bullet \eteq E_\bullet'$. That is,
let $E_\bullet\eteq E_\bullet'\eteq E_\bullet''$, 
and assume the statement of the lemma holds for the tuples $E_\bullet \eteq E'_\bullet$ and $E_\bullet'\eteq E_\bullet''$. Then the statement also holds for $E_\bullet\eteq E''_\bullet$. 
\end{claim}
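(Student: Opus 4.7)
The plan is to exploit the scaling law \eqref{eq: d_phi equivariant}, $d_\varphi(\varphi_t h_1, \varphi_t h_2) = \exp(t)\, d_\varphi(h_1, h_2)$, together with the fact that the conditions ``$hV_l \in \gr_{\to E_l}$'' only concern the forward limit $\lim_{t\to\infty} g_t hV_l$, which is invariant under conjugation of $h$ by $g_s$ followed by reapplication (up to relabelling the flag). In particular, from the hypothesis of the lemma one obtains a shrunken version: given $W_\bullet$ with $W_l \in \gr_{\to E_l}$ and $s > 0$, by applying the assumed lemma to the flag $g_s W_\bullet$ (which also satisfies $g_s W_l \in \gr_{\to E_l}$, with the same forward limit) one gets $\tilde h \in B_{d_\varphi}(\Id; 1)$ such that $\tilde h g_s W_l \in \gr_{\to E'_l}$, and then $h := g_{-s} \tilde h g_s$ satisfies $d_\varphi(h, \Id) \le \exp(-s)$ and $h W_l \in \gr_{\to E'_l}$ (since $g_t h W_l = g_{t-s} \tilde h g_s W_l$ still has the correct limit as $t \to \infty$).

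With this small-ball version available for each of the two pairs $E_\bullet \eteq E'_\bullet$ and $E'_\bullet \eteq E''_\bullet$, I would proceed as follows. Fix $s > 0$ to be chosen. First, invoke the shrunken version for $E_\bullet \eteq E'_\bullet$ on $V_\bullet$ to produce $h_1$ with $d_\varphi(h_1, \Id) \le \exp(-s)$ and $h_1 V_l \in \gr_{\to E'_l}$. Next, invoke the shrunken version for $E'_\bullet \eteq E''_\bullet$ on the flag $h_1 V_\bullet$ (whose components now lie in $\gr_{\to E'_l}$, as required) to produce $h_2$ with $d_\varphi(h_2, \Id) \le \exp(-s)$ and $h_2 h_1 V_l \in \gr_{\to E''_l}$.

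Finally set $h := h_2 h_1$. Using that $d_\varphi^{\alpha_\varphi}$ is a metric (Lemma \ref{lem:metric}) and right-invariance of $d_\varphi$, one has
\[
d_\varphi^{\alpha_\varphi}(h_2 h_1, \Id) \le d_\varphi^{\alpha_\varphi}(h_2 h_1, h_1) + d_\varphi^{\alpha_\varphi}(h_1, \Id) = d_\varphi^{\alpha_\varphi}(h_2, \Id) + d_\varphi^{\alpha_\varphi}(h_1, \Id) \le 2 \exp(-s \alpha_\varphi),
\]
so choosing $s$ sufficiently large (depending only on $\alpha_\varphi$) forces $h \in B_{d_\varphi}(\Id; 1)$. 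By construction $h V_l \in \gr_{\to E''_l}$ for each $l \in L(V_\bullet)$.

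The main subtlety to watch, and the place where the ``moreover'' clause must be propagated, is step two: applying the hypothesis to $h_1 V_\bullet$ requires this flag to vary in a compact set of admissible input data. This is automatic because $h_1$ ranges in the precompact ball $B_{d_\varphi}(\Id; \exp(-s))$ and $V_\bullet$ is assumed to range in a compact set, so $h_1 V_\bullet$ lies in a fixed compact set of flags with components in $\gr_{\to E'_\bullet}$. The uniform convergence in the hypothesis then transfers: $g_t h V_l \to$ an element of $\gr^g_{E''_l}$ at a rate depending only on the compact set in which $V_\bullet$ was chosen, completing the proof.
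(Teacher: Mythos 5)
You take essentially the same route as the paper's proof: apply the hypothesis twice to forward-pushed flags, conjugate the outputs back by $g_t$ to shrink them, and use the metric $d_\varphi^{\alpha_\varphi}$ together with right-invariance to keep the product inside the unit ball. Your ``shrunken lemma'' packaging is a clean equivalent of the paper's inline $g_{t_0}$, $g_{t_1}$ conjugations.

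One stated justification is not correct as written, though the conclusion it aims at is true. You claim that $h_1V_\bullet$ ``lies in a fixed compact set of flags with components in $\gr_{\to E'_\bullet}$'' is ``automatic because $h_1$ ranges in the precompact ball $B_{d_\varphi}(\Id;\exp(-s))$ and $V_\bullet$ is assumed to range in a compact set''. That reasoning only yields precompactness of $\{h_1V_\bullet\}$ in $\gr$. Since $\gr_{\to E'_l}$ is not closed in $\gr$, precompactness together with pointwise membership in $\gr_{\to E'_l}$ does not, on its own, prevent the closure from including a limit point outside $\gr_{\to E'_l}$ (e.g., landing in $\gr^g_{\tilde E}$ for some $\tilde E \et E'_l$). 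What actually places the closure inside $\gr_{\to E'_l}$ is the ``moreover'' clause of the \emph{first} application: by Claim \ref{claim: uniform convergence meaning}, uniform convergence of $g_t h_1 V_l$ to $\gr^g_{E'_l}$ on the compact set $K$ is equivalent to the closure of $\{h_1 V_l : V_\bullet \in K\}$ lying inside $\gr_{\to E'_l}$, which is exactly the compact-input hypothesis the second application needs. Replacing the ``automatic'' sentence with an explicit appeal to the first application's uniform-convergence clause completes the argument.
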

\begin{proof}
Let $V_\bullet$ be a flag such that  $L(V_\bullet) = L(E_\bullet)$, and $V_l\in \gr_{\to E_l}$ for every $l\in L(V_\bullet)$. Fix $t_0, t_1 > 0$ to be determined later.
By the statement of the lemma applied to $E_\bullet\eteq E'_\bullet$ and the flag $g_{t_0}V_\bullet$, 
there exists $h_0\in B_{d_\varphi}(\Id;1)$ such that 
$h_0g_{t_0}V_l\in \gr_{\to E_l'}$ for all $l\in L(E_\bullet)$. 
By the statement of the lemma for $E'_\bullet\eteq E''_\bullet$ and the flag $g_{t_1}h_0g_{t_0}V_\bullet$ there exists $h_1\in B_{d_\varphi}(\Id;1)$ such that $h_1g_{t_1}h_0g_{t_0}V_l\in \gr_{\to E_l''}$ for all $l\in L(E_\bullet)$. 

And hence also $g_{-t_1-t_0}h_1g_{t_1+t_0}g_{-t_0}h_0g_{t_0}V_l\gr_{\to E_l''}$. 
If $t_0, t_1$ are large enough, both $g_{-t_0}h_0g_{t_0}$ and $g_{-t_1-t_0}h_1g_{t_1+t_0}$ are close to $\Id$, and their product lies in $B_{d_\varphi}(\Id;1)$. 
\end{proof}
\begin{claim}[The meaning of uniform convergence]\label{claim: uniform convergence meaning}Let $0\le l \le n$, $E\in \cI_l$ and $K\subseteq \gr_{\to E}$. The convergence $g_tV\to V^{\to}$ is uniform for $V\in K$ if and only if the \emph(compact\emph) closure $\bar K\subseteq \gr$ is a subset of $\gr_{\to E}$. 
\end{claim}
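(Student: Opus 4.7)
I would separate the two implications.

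\emph{Forward direction.} Suppose uniform convergence holds. Pick $V\in\bar K$ and a sequence $V_n\in K$ with $V_n\to V$. For each $\varepsilon>0$ the uniformity yields some $T$ with $d_{\gr}(g_tV_n,V_n^{\to})<\varepsilon$, and since $V_n^\to\in\gr_E^g$ this places $g_tV_n\in \overline{U_\varepsilon(\gr_E^g)}$. Fixing $t\ge T$ and letting $n\to\infty$, continuity of $g_t$ gives $g_tV\in \overline{U_\varepsilon(\gr_E^g)}$. Since $\varepsilon$ is arbitrary, $d_{\gr}(g_tV,\gr_E^g)\to 0$ as $t\to\infty$; together with closedness of $\gr_E^g$ in $\gr$ and the existence of $V^{\to}=\lim_{t\to\infty} g_tV$, this forces $V^{\to}\in\gr_E^g$, so $V\in\gr_{\to E}$.

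\emph{Backward direction, Step 1.} For the converse, assume $\bar K\subseteq\gr_{\to E}$. Fix $\delta<\varepsilon_0$. For each $V_0\in\bar K$, Theorem \ref{thm:lin path} provides $T_{V_0}$ with $g_tV_0\in U_{\delta/3}(\gr_E^g)$ for all $t\ge T_{V_0}$ (the final phase). Uniform continuity of $(V,t)\mapsto g_tV$ on the compact box $\bar K\times[T_{V_0},T_{V_0}+C+1]$ produces a neighborhood $U_{V_0}\ni V_0$ such that $g_tV\in U_{\delta/2}(\gr_E^g)$ for all $V\in U_{V_0}$ and $t\in[T_{V_0},T_{V_0}+C+1]$, where $C$ is the total-gap constant of Theorem \ref{thm:lin path}. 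For any $V\in U_{V_0}\cap\gr_{\to E}$, the gap bound forces some $t^*$ in that interval to lie inside an actual phase of $V$; by the disjointness of phase neighborhoods guaranteed by Remark \ref{rem: the good ep} (since $\delta<\varepsilon_0$), that phase must be $E$, and by strictness of the $\et$-chain it is the final phase. Hence $g_tV\in U_\delta(\gr_E^g)$ for all $t\ge T_{V_0}$. A finite subcover of $\bar K$ by such $U_{V_i}$'s then yields a single $T^*$ with $g_tV\in U_\delta(\gr_E^g)$ for all $V\in K$ and $t\ge T^*$.

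\emph{Backward direction, Step 2, and the main difficulty.} The remaining task is to upgrade ``close to $\gr_E^g$'' into ``close to the specific limit $V^{\to}$''. I would establish the following modulus estimate: there exists $\psi(\delta)\to 0$ as $\delta\to 0$ such that $d_{\gr}(V,V^{\to})\le\psi(\delta)$ for every $V\in U_\delta(\gr_E^g)\cap\gr_{\to E}$. Applied to $g_tV$, which again lies in $\gr_{\to E}$ and satisfies $(g_tV)^{\to}=V^{\to}$, this closes the proof by choosing $\delta$ small. The heuristic justification is a local normal form near the smooth fixed subvariety $\gr_E^g$: its normal bundle splits under $g_t$ into strictly stable and strictly unstable weight spaces, points of $\gr_{\to E}$ lie in the stable bundle, and $V\mapsto V^{\to}$ is the projection along stable fibers, which restricts to the identity on $\gr_E^g$. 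The main difficulty is producing this local continuity rigorously, since a purely topological compactness argument would be circular. The cleanest route I see is to pass to Plücker coordinates in $\bigwedge^l\RR^n$ and describe $V\mapsto V^{\to}$ as the spectral projection onto the $\eta_E$-weight component of $\bl V$; the continuity and the $O(\delta)$ rate can then be read off from the exponential control on non-dominant weight components provided by Corollary \ref{cor:blade behavior}.
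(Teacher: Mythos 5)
Your forward direction is correct, though it runs differently from the paper's: you take a pointwise limit along a sequence $V_n\to V$ and use closedness of $\gr_E^g$, while the paper pushes $K$ forward by $g_{t_0}$ into the compact set $Q_0\gr_E^g$ (for $Q_0\subseteq H^-$ a compact neighborhood of $\Id$) and then observes $\bar K\subseteq g_{-t_0}Q_0\gr_E^g\subseteq\gr_{\to E}$. Either works. The genuine gap, which you yourself flag, is Step 2 of the backward direction, and your proposed repair is the wrong tool: Corollary \ref{cor:blade behavior} controls the scalar $\|g_tw\|$ for a blade $w$, not the Grassmannian distance $d_{\gr}(V,V^\to)$, so it does not deliver the modulus estimate $d_{\gr}(V,V^\to)\le\psi(\delta)$ on $U_\delta(\gr_E^g)\cap\gr_{\to E}$ without substantial additional work.

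Moreover, your worry that ``a purely topological compactness argument would be circular'' is unfounded, and that is exactly where the fix lies. Lemma \ref{lem: making q0} already produces, for each $V\in\gr_{\to E}$, an element $q_0(V)\in H^-$ with $V=q_0(V)V^\to$, and it asserts that $V\mapsto q_0(V)$ is differentiable, hence continuous; therefore $V\mapsto V^\to=q_0(V)^{-1}V$ is continuous on $\gr_{\to E}$, with no circularity. This closes Step 2 by a routine compactness argument: if $V_n\in\gr_{\to E}$ with $d_{\gr}(V_n,\gr_E^g)\to 0$ but $d_{\gr}(V_n,V_n^\to)\ge\epsilon$, pass to a subsequence $V_n\to W\in\gr_E^g$; continuity gives $V_n^\to\to W^\to=W$, hence $d_{\gr}(V_n,V_n^\to)\to 0$, a contradiction. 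Better still, once you see that $q_0(\cdot)$ is continuous you can bypass your two-step structure entirely, as the paper does: $q_0(\bar K)$ is a compact subset of $H^-$, the conjugation $q\mapsto g_tqg_{-t}$ contracts to $\Id$ uniformly on compact subsets of $H^-$, and $g_tV=\big(g_tq_0(V)g_{-t}\big)V^\to$, so the uniform convergence is immediate. The Theorem \ref{thm:lin path} open-cover machinery in your Step 1 is then not needed.
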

\begin{proof}
By Lemma \ref{lem: making q0}, for every $V\in K$ there exists $q_0(V)$ such that $V = q_0(V)V^\to$ and moreover, the function $V\mapsto q_0(V)$ is continuous. Therefore, if $K\subseteq \gr_{\to E}$ is compact, then so is $q_0(K)$, and $g_tq_0(K)g_{-t}$ contracts uniformly.

To prove the other direction, by Lemma \ref{lem: making q0} once again, if $Q_0\subseteq H^-$ is a compact neighborhood of $\Id$, then $Q_0 \gr^g_{E}$ is a compact neighborhood of $\gr^g_{E}$ in $\gr_{\to E}$. Therefore, if the convergence rate $g_t V \to V^{\to}$ is uniform for $V\in K$, there exists $t_0>0$ such that $g_{t_0}K\subseteq Q_0\gr^g_{E}$, and hence $K\subseteq g_{-t_0}Q_0\gr^g_{E}$ as desired.
\end{proof}
From Claim \ref{claim: uniform convergence meaning} it easily follows that the uniform convergence satisfies transitivity as well.

\begin{claim}[Reduction to the case where $V_\bullet$ is $g$-invariant]\label{claim: enough when source is inv}
It is enough to prove Lemma \emph{\ref{lem:single advance}} for flags $V_\bullet$ satisfying $V_l\in \gr_{E_l}^g$ for every $l\in L(E_\bullet)$. 
\end{claim}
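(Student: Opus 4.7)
\emph{Plan.} Suppose the lemma is known for flags whose members lie in $\gr^g_{E_l}$, and take an arbitrary flag $V_\bullet$ with $V_l\in\gr_{\to E_l}$ for every $l\in L(V_\bullet)$. By Lemma \ref{lem: making q0}, there is a single $q_0\in H^-$ with $q_0 V_l^\to=V_l$ for all $l$. The limit flag $V_\bullet^\to$ is $g$-invariant, so the assumed case produces an $h^\ast\in B_{d_\varphi}(\Id;1)$ with $h^\ast V_l^\to\in\gr_{\to E_l'}$ for every $l\in L(V_\bullet)$. The natural attempt $h=q_0 h^\ast q_0^{-1}$ sends $V_l$ into $\gr_{\to E_l'}$ (since $H^-$ preserves each $\gr_{\to E}$), but it need not lie in $H$; the plan is to use $g_{t_0}$-conjugation to make $q_0$ negligible and then extract an $H$-component.

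For large $t_0>0$, set $q_0^{(t_0)}:=g_{t_0}q_0g_{-t_0}\in H^-$; this tends to $\Id$ as $t_0\to\infty$. Then $q_0^{(t_0)}\,h^\ast\,(q_0^{(t_0)})^{-1}$ lies arbitrarily close to $h^\ast\in H$, so by Lemma \ref{lem: mult map} it lies in the image of $\mLU$ and decomposes uniquely as
\[
q_0^{(t_0)}\,h^\ast\,(q_0^{(t_0)})^{-1}=q'\,h'',\qquad q'\in H^{-0},\ h''\in H,
\]
with $q'\to\Id$ and $h''\to h^\ast$ as $t_0\to\infty$ (by continuity of $\mLU^{-1}$ near $h^\ast$). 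Define $h:=g_{-t_0}h''g_{t_0}\in H$. Using $V_l=q_0 V_l^\to$ and $g_{t_0}V_l=q_0^{(t_0)}V_l^\to$, one computes
\[
h V_l=g_{-t_0}h''(q_0^{(t_0)}V_l^\to)=g_{-t_0}(q')^{-1}q_0^{(t_0)}h^\ast V_l^\to.
\]
Since $h^\ast V_l^\to\in\gr_{\to E_l'}$ and the groups $H^-$, $H^{-0}$, and $\{g_t\}\subseteq H^0$ each preserve $\gr_{\to E_l'}$ (as is checked from the definitions), the right-hand side lies in $\gr_{\to E_l'}$, as required.

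It remains to control the size of $h$. Since $g_{t}$ expands $d_\varphi$ by $e^{t}$, one has $d_\varphi(h,\Id)=e^{-t_0}\,d_\varphi(h'',\Id)$. For $t_0$ large, $h''$ is as close to $h^\ast$ as we wish (in $d_H$, hence in $d_\varphi$ near the identity), and in particular $d_\varphi(h'',\Id)\le 2$ say; choosing $t_0\ge\log 2$ then yields $h\in B_{d_\varphi}(\Id;1)$. The uniformity statement is handled by the same construction applied pointwise: on a compact family of admissible $V_\bullet$, the map $V_\bullet\mapsto q_0$ is smooth (Lemma \ref{lem: making q0}), the limit flags $V_\bullet^\to$ trace out a compact subset of the $g$-invariant Grassmannian, the assumed uniform convergence on that compact family provides $h^\ast$ with uniform convergence rate by Claim \ref{claim: uniform convergence meaning}, and the decomposition and conjugation above preserve uniform bounds. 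The main technical subtlety — and essentially the only place where some care is required — is this last uniformity step, which rests on verifying that $h''$ stays in a compact neighborhood of $h^\ast$ uniformly in $V_\bullet$, so that $hV_l$ remains in a fixed compact subset of $\gr_{\to E_l'}$ as $V_\bullet$ varies.
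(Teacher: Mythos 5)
Your proposal is correct, and it is essentially a variant of the paper's argument with a different choice of factorization. Both proofs use the same two ingredients — Lemma \ref{lem: making q0} to produce $q_0\in H^-$ with $q_0 V_\bullet^\to=V_\bullet$, and the open multiplication map $\mLU:H\times H^{-0}\to\SL_n(\RR)$ of Lemma \ref{lem: mult map} to peel off an $H$-component — and both invoke Claim \ref{claim: uniform convergence meaning} to recast uniformity of convergence as compactness in $\gr_{\to E_\bullet'}$. The difference is in how the factorization is deployed: the paper rescales so that both $q_0$ and the assumed solution $h_0$ become $O(\varepsilon)$-small, then factors the near-identity product $h_0 q_0^{-1}=q_1 h_1$ and takes $h_1$; you instead shrink only $q_0$ (by conjugating with $g_{t_0}$), factor the near-$h^\ast$ element $q_0^{(t_0)}h^\ast(q_0^{(t_0)})^{-1}=q'h''$, and conjugate back, $h=g_{-t_0}h''g_{t_0}$. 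The paper thereby uses openness of ${\rm Im}(\mLU)$ near $\Id$, while you use openness near an arbitrary $h^\ast\in H\subset{\rm Im}(\mLU)$; both are justified by Lemma \ref{lem: mult map}. The paper's version is a little tidier because it never needs to conjugate back at the end, but your route is sound. Two small points to make fully explicit if you wrote this up: (i) the uniform choice of $t_0$ over a compact $K^{\rm src}$ relies on the continuity of $V_\bullet\mapsto q_0$ from Lemma \ref{lem: making q0}, so that $q_0(K^{\rm src})$ is compact and $g_{t_0}\cdot g_{-t_0}$-conjugation makes it uniformly small; and (ii) since $d_\varphi$ is only a quasi-metric, the bound $d_\varphi(h'',\Id)=O(1)$ should be argued via compactness of $h''$ near $h^\ast$ (with $h^\ast\in B_{d_\varphi}(\Id;1)$), rather than a naive triangle inequality — which is how you in fact phrased it.
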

\begin{proof}
Assume that Lemma \ref{lem:single advance} holds whenever $V_l\in \gr_{E_l}^g$ for every $l\in L(E_\bullet)$. By Claim \ref{claim: uniform convergence meaning}, there exist compact sets $K_l\subseteq \gr_{\to E_l'}$ such that for every flag $V_\bullet$ with $L(V_\bullet) = L(E_\bullet)$ and $V_l\in \gr_{E_l}^g$ for every $l\in L(E_\bullet)$ there is $h\in B_{d_\varphi}(\Id;1)$ such that $hV_l\in K_l$ for every $l\in L(E_\bullet)$. 
We will show that for every compact set $K_l^{\rm src}\subseteq \gr_{\to E_l}$ there exists $K_l^{\rm tar}\subseteq \gr_{\to E_l'}$, such that for every flag $V_\bullet$ with $V_l\in K_l^{\rm src}$ for every $l\in L(E_\bullet)$ there exists $h\in B_{d_\varphi}(\Id;1)$ with $hV_l\in K_l^{\rm tar}$. This, together with Claim \ref{claim: uniform convergence meaning}, extablishes Lemma \ref{lem:single advance}.

Fix $\varepsilon>0$ and a flag $V_\bullet$ with $L(V_\bullet) = L(E_\bullet)$ such that $V_l\in K_l^{\rm src}$ for every $l\in L(V_\bullet)$.
After applying $g_t$, we may assume $K_l^{\rm src}\subseteq U_\varepsilon(\gr^g_{E_l})$. Hence, for some $q_0\in B_{d_{H^-}}(\Id;O(\varepsilon))$ we have $V_l = q_0V_l^{\to}$. 
After applying $g_{\log \varepsilon}$ to $K_l$ we may assume that if $V_l^\to \in\gr^g_{E_l}$, then there exists $h_0\in B_{d_\varphi}(\Id; \varepsilon)$ such that $h_0V_l^\to = h_0q_0^{-1}V_l\in K_l$. 
Since $h_0$ and $q_0$ are arbitrarily small, there exist $q_1\in B_{d_{H^{-}}}(\Id;1)$ and $h_1\in B_{d_\varphi}(\Id; 1)$ such that $q_1h_1  = h_0q_0^{-1}$. 
It follows that $h_1V_l\in B_{H^{-}}(\Id;1)K_l=:K_l^{\rm tar}$, as desired.
\end{proof}
To employ Claim \ref{claim:transitive} we will investigate the poset $\cI_\bullet$ of direction filtrations with dimension sequence $L = \{0=l_0<l_1<\dots<l_k=n\}$,
where the partial order is given by $E_\bullet\eteq E_\bullet'$, as defined in Definition \ref{de:order definition}.
\begin{de}[Multiplicity of elements in multisets]
\index{multxe@$\mult(x;E)$}\hypertarget{bcc}{}
Let $E$ be a multiset. For every element $x$ we denote by $\mult(x;E)$ the multiplicity of $x$ in $E$.
\end{de}
\begin{de}[Elementary flip]\label{de:elementary flip}
\index{Elementary flip}\hypertarget{bcd}{}
We say that $E_\bullet \et E_\bullet'$ is an \emph{elementary flip} if there exist $\eta < \eta' \in \Eall$ and $1\le a<b\le k$
such that 
\begin{align}\label{eq:def of flip}
E_{l_i}' = \begin{cases}
E_{l_i} - \{\eta\} + \{\eta'\}, &\text{if } a\le i<b,\\
E_{l_i},& \text{otherwise.}
\end{cases}
\end{align}
We use $+$ and $-$ instead of $\cup$ and $\setminus$, as these are multiset operations, under which multiplicities are added or subtracted. 
In this case $E_\bullet \et E_\bullet'$ is called an \emph{$(\eta, \eta', a,b)$-elementary flip}. We think of an elementary flip as an operation transforming $E_\bullet$ into $E_\bullet'$.
Note that a direction filtration $E_\bullet\in \cI_\bullet$ has an $(\eta, \eta', a,b)$-elementary flip $E_\bullet \et E_\bullet'$ if and only if the following condition of multiplicities is satisfied:
\[\mult(\eta;E_{l_{a-1}}) < \mult(\eta; E_{l_a})\text{ and }\mult(\eta'; E_{l_{b-1}}) < \mult(\eta'; E_{l_b}).\]
Indeed, otherwise the unique $E_\bullet'$ defined by Eq. \eqref{eq:def of flip} will fail to satisfy $E_{l_{a-1}}'\subset E_{l_a}'$ or $E_{l_{b-1}}'\subset E_{l_b}'$, respectively.
\end{de}
\begin{thm}[The poset $\cI_\bullet$ is spanned by elementary flips]\label{thm: decomposition to elementary}
If $E_\bullet\eteq E_\bullet'$, then there exists a sequence of direction filtration $E_\bullet = E_\bullet^0 \et E_\bullet^1 \et \cdots \et E_\bullet^r = E_\bullet'$, where each consecutive $E_\bullet^j \et E_\bullet^{j+1}$ is an elementary flip.
\end{thm}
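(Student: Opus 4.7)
The plan is to prove the theorem by induction on the nonnegative integer
\[
\Phi(E_\bullet, E_\bullet') := \sum_{i=0}^{k}\bigl(\eta_{E_{l_i}'} - \eta_{E_{l_i}}\bigr).
\]
The condition $E_{l_i}\eteq E_{l_i}'$ is equivalent to $\sum_{\tilde\eta\le r}(\mult(\tilde\eta;E_{l_i})-\mult(\tilde\eta;E_{l_i}'))\ge 0$ for every $r$, so Abel summation yields $\eta_{E_{l_i}}\le \eta_{E_{l_i}'}$; hence $\Phi\ge 0$ with equality precisely when $E_\bullet=E_\bullet'$. Moreover an $(\eta,\eta',a,b)$-elementary flip applied to $E_\bullet$ strictly decreases $\Phi$ by $(\eta'-\eta)(b-a)>0$, so the theorem reduces to the following key lemma: whenever $E_\bullet\et E_\bullet'$, there exists an elementary flip $E_\bullet\et E_\bullet''$ with $E_\bullet''\eteq E_\bullet'$.

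To construct such a flip I would work with the discrepancy function $m_\eta(i) := \mult(\eta;E_{l_i})-\mult(\eta;E_{l_i}')$ and the partial sums $\sigma_i(r) := \sum_{\tilde\eta\le r}m_{\tilde\eta}(i)$, which satisfy $\sigma_i(r)\ge 0$ for every $i$ and $r$. A direct computation shows that after replacing $E_{l_i}$ by $E_{l_i}'' = E_{l_i}-\{\eta\}+\{\eta'\}$ for $a\le i<b$, the comparison $E_{l_i}''\eteq E_{l_i}'$ holds if and only if $\sigma_i(\tilde\eta)\ge 1$ for every $\tilde\eta\in [\eta,\eta')\cap \Eall$, since the swap increases $\#\{x\in E_{l_i}'':x>\tilde\eta\}$ by exactly $1$ precisely on this range of thresholds and leaves the count invariant for all other $\tilde\eta$.

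The parameters of the flip will be chosen as follows. First pick any index $i_0$ with $E_{l_{i_0}}\ne E_{l_{i_0}}'$, and among all pairs $\eta<\eta'$ in $\Eall$ with $m_\eta(i_0)>0$ and $m_{\eta'}(i_0)<0$, select one which is minimal in the sense that no element of $\Eall$ strictly between $\eta$ and $\eta'$ has negative discrepancy at $i_0$; such a minimal pair exists by inspecting the jumps of $\sigma_{i_0}$. Then let $[a,b)$ be the maximal interval of consecutive indices containing $i_0$ on which both $m_\eta(i)>0$ and $m_{\eta'}(i)<0$. Maximality together with the monotonicity of $\mult(\tilde\eta;E_{l_i})$ in $i$ along the inclusion chain will yield the multiplicity inequalities $\mult(\eta;E_{l_{a-1}})<\mult(\eta;E_{l_a})$ and $\mult(\eta';E_{l_{b-1}})<\mult(\eta';E_{l_b})$ which are required for the elementary flip to be well-defined.

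The main obstacle will be verifying the preservation condition $\sigma_i(\tilde\eta)\ge 1$ throughout the whole flipped interval $[a,b)$ and for every threshold $\tilde\eta\in [\eta,\eta')\cap \Eall$, not only at the anchor index $i_0$. The na\"ive choice of $(\eta,\eta')$ based purely on the single index can fail: a simple three-step example (with $\Eall=\{0,1,2\}$, $E_\bullet = \emptyset,\{0\},\{0,1\},\{0,1,2\}$ and $E_\bullet'=\emptyset,\{2\},\{0,2\},\{0,1,2\}$) shows that a surplus at an intermediate value $\tilde\eta\in(\eta,\eta')$ located at a different index can invalidate the flip, which is exactly the reason for the more restrictive minimality condition imposed on $(\eta,\eta')$. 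Once this choice is made, the verification reduces to checking $\sigma_i(\eta)\ge 1$, which follows from the jump relation $\sigma_i(\eta)=\sigma_i(\eta^-)+m_\eta(i)\ge m_\eta(i)\ge 1$ on $[a,b)$, and to showing that $\sigma_i$ cannot drop below $1$ as $r$ ranges over $[\eta,\eta')$; the latter rests on the fact that by minimality of $(\eta,\eta')$, every $\tilde\eta\in(\eta,\eta')\cap \Eall$ has $m_{\tilde\eta}(i)\ge 0$ along $[a,b)$, so that $\sigma_i$ is monotone nondecreasing across the window.
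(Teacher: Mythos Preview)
Your reduction to the key lemma (find a single elementary flip $E_\bullet\et E_\bullet''$ with $E_\bullet''\eteq E_\bullet'$) is the same as the paper's, and your bookkeeping with $m_\eta(i)$ and $\sigma_i(r)$ is correct: after an $(\eta,\eta',a,b)$-flip one indeed needs $\sigma_i(\tilde\eta)\ge 1$ for all $i\in[a,b)$ and all $\tilde\eta\in[\eta,\eta')\cap\Eall$. The problem is your selection of $(\eta,\eta',a,b)$. You anchor the pair $(\eta,\eta')$ at an \emph{arbitrary} index $i_0$ and then assert that ``by minimality of $(\eta,\eta')$, every $\tilde\eta\in(\eta,\eta')\cap\Eall$ has $m_{\tilde\eta}(i)\ge 0$ along $[a,b)$''. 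But minimality was imposed only at $i_0$; nothing prevents $m_{\tilde\eta}(i)<0$ at another $i\in[a,b)$. A concrete failure: take $\Eall=\{0,0,1,1,2,2,3,3\}$, $L=\{0,2,4,8\}$, and
\[
E_\bullet:\ \emptyset\subset\{0,1\}\subset\{0,0,1,2\}\subset\Eall,\qquad
E_\bullet':\ \emptyset\subset\{1,3\}\subset\{0,1,1,3\}\subset\Eall.
\]
Then $m_\bullet(1)=(1,0,0,-1)$ and $m_\bullet(2)=(1,-1,1,-1)$. With $i_0=1$ the \emph{only} candidate pair is $(\eta,\eta')=(0,3)$, which is trivially ``minimal''; the maximal interval with $m_0>0$ and $m_3<0$ is $[a,b)=\{1,2\}$. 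Your flip gives $E_{l_2}''=\{0,1,2,3\}$, but $\{0,1,2,3\}\not\eteq\{0,1,1,3\}$ since $\sigma_2(1)=m_0(2)+m_1(2)=0$. So the flip overshoots. (The same example shows the claim about $m_{\tilde\eta}(i)\ge 0$ is false: $m_1(2)=-1$.)

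What is missing is a \emph{global} choice of the window and the value pair, not one anchored at a single index. The paper achieves this by working with the ``row increments'' $\tau(i,\eta)=\sigma_i(\eta)-\sigma_{i-1}(\eta)$: it first takes $i_2$ to be the \emph{smallest} index at which some $\tau(i,\eta)$ is negative, so that $\tau(i,\eta)\ge 0$ for all $i<i_2$; this is exactly what guarantees $\sigma_i(\tilde\eta)\ge 1$ throughout the flipped range. It then reads off $\eta_{s_2}$ from the negative jump of $\tau(i_2,\cdot)$, backs up to some $i_1<i_2$ where $\tau(i_1,\eta_{s_2-1})>0$, and picks $\eta_{s_1}$ from the positive run of $\tau(i_1,\cdot)$. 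The multiplicity inequalities then fall out of the sign changes of $\tau$ at the chosen endpoints. Your ``maximality of $[a,b)$'' argument for the multiplicity inequalities is also incomplete: at the left boundary the condition that fails could be $m_{\eta'}(a-1)\ge 0$ rather than $m_\eta(a-1)\le 0$, and the former gives no information about $\mult(\eta;E_{l_{a-1}})$ versus $\mult(\eta;E_{l_a})$. In short, the idea is right but the selection rule needs to be globalised; the paper's choice of $i_2$ as the first row with a negative $\tau$ is precisely the missing ingredient.
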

\begin{proof}
\textbf{The plan:}
Since $\cI_\bullet$ is a finite poset, it is enough to show that for every $E_\bullet\et E'_\bullet$ there is an elementary flip $E_\bullet\et E_\bullet''$ such that $E_\bullet''\eteq E_\bullet'$. 

To use the fact $E_\bullet \et E'_\bullet$, define 
\begin{align}\label{eq: def sigma}
\sigma:&\{0,...,k\}\times ({\rm set}(\Eall)\cup \{-\infty\})\to \NN_0,\\
\nonumber\sigma(i,\eta) :&= \#\left(E_{l_i}\cap (-\infty, \eta]\right) = \sum_{\substack{\tilde\eta\in {\rm set}(\Eall)\\\tilde\eta\le \eta}}\mult(\tilde\eta,E_{l_i}).
\end{align}
\index{N0@$\NN_0$}\hypertarget{bce}{}
Here ${\rm set}(\Eall)$ stands for the set of distinct elements in $\Eall$ and $\NN_0 = \{0,1,2,...\}$ is the set of natural numbers.
Define $\sigma'$ in a similar fashion with respect to $E'_\bullet$ instead of $E_\bullet$. 
Note that $E_\bullet \eteq E_\bullet'$ if and only if $\sigma(i,\eta)\ge \sigma'(i,\eta)$ for all $i,\eta$. 

If $E_\bullet\et E_\bullet''$ is an $(\eta, \eta', a,b)$-elementary flip, then the functions $\sigma, \sigma''$ satisfy that $\sigma'' = \sigma - \mathbbm{1}_R$, where $R$ is the rectangle $\{(i,\tilde\eta):a\le i<b, \eta\le \tilde\eta<\eta' \}$. We need the function $\sigma''$ to remain greater or equal to $\sigma'$ to ensure $E''_\bullet\eteq E_\bullet$. 
To help us find a suitable rectangle, we will identify values $0<i_0<i_2$ such that $\sigma(i,\eta) - \sigma'(i,\eta)$ is zero for $0\le i<i_0$ and all $\eta\in \Eall\cup \{-\infty\}$ and nonzero for all $i_0\le i < i_2$ and some $\eta\in \Eall$. Moreover $i\mapsto \sigma(i,\eta) - \sigma'(i,\eta)$ will be weakly increasing in $i$ for $0\le i < i_2$ and every $\eta\in \Eall\cup \{\infty\}$. 

\textbf{Additional notations:}
Since we are interested in monotonicity properties of $\sigma-\sigma'$, define the  function $\tau:\{1,...,k\}\times ({\rm set}(\Eall)\cup \{-\infty\})\to \ZZ$ by
\begin{align*}
\tau(i, \eta):=\sigma(i, \eta) - \sigma'(i, \eta) - \sigma(i - 1, \eta) + \sigma'(i-1, \eta).
\end{align*}

By definition, for every $\eta\in \Eall\cup \{-\infty\}$ and $0\le i\le k$ we have 
\begin{align}\label{eq:sigma tau connection}
\sigma(i,\eta)-\sigma'(i,\eta) = \sum_{i'=0}^i\tau(i', \eta) = - \sum_{i'=i+1}^k\tau(i', \eta).
\end{align}
The last equality holds since $\sigma(k,\eta) = \sigma'(k,\eta)$, because $E_{l_k} = E_{l_k}' = \Eall$.
Denoting $D_i := E_{l_i}-E_{l_{i-1}}$ and $D_i' := E_{l_i}' - E_{l_{i-1}}'$, we have
\[\tau(i, \eta) = \#\{\tilde\eta\in D_i-D_i':\tilde \eta\ge \eta\}.\]
Note that here we regard $D_i$ and $D_i'$ as multisets with possibly negative multiplicities. 

\textbf{Choosing indeices:}
The minimal index $i_0$ for which the function $\eta\mapsto \tau(i_0, \eta)$ is nonzero is the minimal $i_0$ for which $E_{l_{i_0}}\neq E_{l_{i_0}}'$ (which exists because $E_\bullet\neq E_\bullet'$), and satisfies 
\[ \tau(i_0, \eta) = \sum_{i=0}^{i_0}\tau(i, \eta) = \sigma(i_0, \eta) - \sigma'(i_0, \eta) \ge 0\quad\text{for all }\eta\in \Eall.\]
Since $E_{i_0} \et E_{i_0}'$, it follows that $\tau(i_0, \eta) = \sigma(i_0, \eta) - \sigma'(i_0, \eta)$ is positive for some $\eta\in \Eall$. 

Similarly, looking at the right-hand side of Eq. \eqref{eq:sigma tau connection} we see that $\tau$ attains a negative value for some $i$, and we let $i_2$ be the minimal index such that $\eta\mapsto \tau(i_2, \eta)$ attains a negative value. 
It follows from the equality
\[\tau(i_2, \eta_{n}) = \#E_{l_{i_2}} - \#E_{l_{i_2}}' - \#E_{l_{i_2-1}} + \#E_{l_{i_2-1}}' = 0,\]
that there exists an index $1<s_2\le n$ such that $\tau(i_2, \eta_{s_2-1}) < 0$ but $\tau(i_2, \eta_{s}) \ge 0$ for all $s\ge s_2$. 

% Interpreting the inequality $\tau_{i_2}(\eta_{s_2-1}) < \tau_{i_2}(\eta_{s_2})$ we get that 
% \[\#\{\eta_{s_2}\in E_i\} - \#\{\eta_{s_2}\in E_{i-1}\} - \#\{\eta_{s_2}\in E_i'\} + \#\{\eta_{s_2}\in E_{i-1}'\}>0\]
Since 
\[\sum_{i=1}^{i_2} \tau(i, \eta_{s_2-1}) = \sigma(i, \eta_{s_2-1}) - \sigma'(i, \eta_{s_2-1})\ge 0,\]
it follows that for some $i_1<i_2$ we have $\tau(i_1, \eta_{s_2-1}) > 0$. 
Let $s_1\ge 1$ be the minimal index such that $\tau(i, \eta_{s}) > 0$ for every $\eta_{s_1}\le \eta_s\le \eta_{s_2-1}$. 

\textbf{Obtaining the desired flip:}
We claim that we can perform an $(\eta_{s_1}, \eta_{s_2}, i_1, i_2)$-elementary flip to $E_\bullet$, and obtain a sequence $E''_\bullet$ for which $E''_\bullet \eteq E'_\bullet$. 

To change one occurrence of $\eta_{s_1}$ to $\eta_{s_2}$ between the indices $l_{i_1}$ and $l_{i_2-1}$, while preserving the requirement that the resulting sequence $E_\bullet''$ is a direction filtration (i.e, the multisets $E''_\bullet$ are nested, see Eq. \eqref{eq: directed filtration}), it is sufficient and necessary to show that 
\begin{align}\label{eq: flip is legal}
\mult(\eta_{s_1}; E_{l_{i_1-1}}) < \mult(\eta_{s_1}; E_{l_{i_1}})\text{ and }\mult(\eta_{s_2}; E_{l_{i_2-1}}) < \mult(\eta_{s_2}; E_{l_{i_2}})
\end{align}
Since 
\[\tau(i_1, \eta_{s_1}) - \tau(i_1, \eta_{s_1-1}) = \mult(\eta_{s_1}; E_{l_{i_1}})-\mult(\eta_{s_1}; E_{l_{i_1-1}})\]
and 
\[\tau(i_2, \eta_{s_2}) - \tau(i_2, \eta_{s_2-1}) = \mult(\eta_{s_2}; E_{l_{i_2}}) - \mult(\eta_{s_2}; E_{l_{i_2-1}}),\]
inequalities \eqref{eq: flip is legal} are equivalent to
\begin{align}
\tau(i_1, \eta_{s_1}) > \tau(i_1, \eta_{s_1-1})\text{ and }\tau(i_2, \eta_{s_2-1}) < \tau(i_2, \eta_{s_2}),
\end{align}
which follows from the choices of $s_1, s_2$. 
Define
\[E_{l_i}'' := \begin{cases}
E_{l_i} - \{\eta_{s_1}\} + \{\eta_{s_2}\}, &\text{if } i_1\le i<i_2,\\
E_{l_i},& \text{otherwise.}
\end{cases}\]

To show that $E_\bullet''\eteq E_\bullet'$,  define $\sigma''$ as in Eq. \eqref{eq: def sigma} with respect to $E''_\bullet$. 
By the definition of $E''_\bullet$ we obtain 

\[\sigma''(i,\eta) = 
\begin{cases}
\sigma(i, \eta)-1, &\text{if }i_1\le i<i_2\text{ and }\eta_{s_1}\le \eta <\eta_{s_2},\\
\sigma(i, \eta),   &\text{otherwise.}
\end{cases}\]
We need to show that $\sigma'' \ge \sigma'$ pointwise, which is equivalent to \[\sigma(i,\eta) > \sigma'(i,\eta)\text{ for every }i_1\le i< i_2, \eta_{s_1}\le \eta <\eta_{s_2}.\]
These inequalities follow from the fact that $\sigma(i,\eta) - \sigma'(i,\eta) = \sum_{i'\le i}\tau(i', \eta)$, which is a sum of nonnegative numbers by the definition of $i_2$ and where the summand $\tau(i_1, \eta)$ is positive by the definition of $\eta_{s_1}$. 

Altogether we have shown that there is $E_\bullet''$ such that $E_\bullet \et E''_\bullet \eteq E_\bullet'$, and the tuple $E_\bullet\et E''_\bullet$ is an elementary flip.
Iterating this procedure for $E''_\bullet \eteq E_\bullet'$ yields the desired result. 
\end{proof}

\begin{ex}
Let $n=4, \Eall = \{-2, -1, 1, 2\}$, and $l_i=i$ for $i=0,\dots,4$.
Denote 
\[E_\bullet = \{\emptyset \subset \{-2\}\subset \{-2, 1\}, \{-2, -1, 1\}, \Eall\},\]
and
\[E_\bullet' = \{\emptyset \subset \{2\}\subset \{-1, 2\}, \{-1, 1, 2\}, \Eall\}.\]
We will describe in detail the process elaborated in the proof of Theorem \ref{thm: decomposition to elementary}.
First, compute $\sigma, \sigma', \tau$.
\newcounter{nodecount}
\newcommand\tabnode[1]{\addtocounter{nodecount}{1}\tikz\node(\arabic{nodecount}){#1};}
\newcommand\mytabnode[2]{\tikz\node(#1){#2};}
\tikzstyle{every picture}+=[remember picture,baseline]

\begin{center}
% \begin{table}[ht]
%   \begin{minipage}[b]{1 \linewidth}\centering
%    \begin{tabular}{cccccc}
% % Turn the cells needed for defining the circling paths into nodes with our     custom command
% \tabnode{Long ago}& \tabnode{A few days ago}& \tabnode{Day before         yesterday}& \tabnode{Yesterday} &  &  \\
%      & & &  & \tabnode{Today} & \tabnode{Now}\\
%      \tabnode{Long time in the future}& \tabnode{A few days on}&     \tabnode{Day after tomorrow}& \tabnode{Tomorrow} &  & \\
%      \end{tabular}
%    \end{minipage}

% \begin{tikzpicture}[overlay]
% % Define the circle paths
% \draw [blue](1.north west) -- (1.north east) -| (7.north east) |- (7.south     west) |- (1.north west);
% \draw [red](8.south west) -| (2.north east) -| (2.south west) |- (8.south     west);
% \draw [yellow](9.south west) -| (3.north east) -| (3.south west) |- (9.south     west);
% \draw [green](8.south west) -| (3.north east) -| (2.south west) |- (8.south     west);
% \end{tikzpicture}
% \end{table}

\begin{tabular}{l}
$\sigma(i,\eta):$\\
\begin{tabular}{|c|| c c c c c |} 
 \hline
 \backslashbox{$i$}{$\eta$}
 % $\sigma(i,\eta)$ 
 & $-\infty$ & $-2$ & $-1$ & $1$ & $2$ \\
 \hline\hline
 \tabnode{0} & \tabnode{$0$} & \tabnode{$0$} & \tabnode{$0$} & \tabnode{$0$} & \tabnode{$0$} \\
 \hline
 \tabnode{$1$} & \tabnode{$0$} & \mytabnode{nodeaa}{$1$} & \mytabnode{nodeab}{$1$} & \tabnode{$1$} & \tabnode{$1$} \\ 
 \hline
 \tabnode{2} & \tabnode{$0$} & \tabnode{$1$} & \tabnode{$1$} & \tabnode{$2$} & \tabnode{$2$} \\ 
 \hline
 \tabnode{3} & \tabnode{$0$} & \tabnode{$1$} & \tabnode{$2$} & \tabnode{$3$} & \tabnode{$3$} \\ 
 \hline
 \tabnode{4} & \tabnode{$0$} & \tabnode{$1$} & \tabnode{$2$} & \tabnode{$3$} & \tabnode{$4$} \\
 \hline
\end{tabular}
\end{tabular}
\begin{tikzpicture}[overlay]
\draw [blue](nodeaa.north west) -- (nodeab.north east) -| (nodeab.south east) |- (nodeaa.south west) |- (nodeaa.north west);
\end{tikzpicture}
~~~
\begin{tabular}{l}
$\sigma'(i,\eta)$:\\
\begin{tabular}{|c|| c c c c c |} 
 \hline
 \backslashbox{$i$}{$\eta$} & $-\infty$ & $-2$ & $-1$ & $1$ & $2$ \\
 \hline\hline
 0 & $0$ & $0$ & $0$ & $0$ & $0$ \\
 \hline
 1 & $0$ & $0$ & $0$ & $0$ & $1$ \\ 
 \hline
 2 & $0$ & $0$ & $1$ & $1$ & $2$ \\ 
 \hline
 3 & $0$ & $0$ & $1$ & $2$ & $3$ \\ 
 \hline
 4 & $0$ & $1$ & $2$ & $3$ & $4$ \\
 \hline
\end{tabular}
\end{tabular}
\begin{tabular}{l}
$\tau(i,\eta)$:\\
\begin{tabular}{|c|| c c c c c |} 
 \hline
 \backslashbox{$i$}{$\eta$} & $-\infty$ & $-2$ & $-1$ & $1$ & $2$ \\
 \hline\hline
 1 & $0$ & $1$ & $1$ & $1$ & $0$ \\ 
 \hline
 2 & $0$ & $0$ & $-1$ & $0$ & $0$ \\ 
 \hline
 3 & $0$ & $0$ & $1$ & $0$ & $0$ \\ 
 \hline
 4 & $0$ & $-1$ & $-1$ & $-1$ & $0$ \\
 \hline
\end{tabular}
\end{tabular}
\end{center}
Hence, $i_0 = 1$, $i_2 = 2$, $s_2 = 3$, $\eta_{s_2} = 1$, $i_1 = 1$, $s_1 = 1, \eta_{s_1} = -2$. Consequently, we obtain the elementary flip
$E_\bullet\et E_\bullet^{(1)}$, where \[E_\bullet^{(1)} = \{\emptyset \subset \{1\}\subset \{-2, 1\}, \{-2, -1, 1\}, \Eall\}.\]
The blue rectangle in the $\sigma(i,\eta)$ table denotes the rectangle $R$ for which $\sigma^{(1)} = \sigma-\mathbbm{1}_R$. 
Continuing the process for $E_\bullet^{(1)}\eteq E_\bullet'$, we have 
\begin{center}
\begin{tabular}{l}
$\sigma^{(1)}(i,\eta):$\\
\begin{tabular}{|c|| c c c c c |} 
 \hline
 \backslashbox{$i$}{$\eta$} & $-\infty$ & $-2$ & $-1$ & $1$ & $2$ \\
 \hline\hline
 \tabnode{0} & \tabnode{$0$} & \tabnode{$0$} & \tabnode{$0$} & \tabnode{$0$} & \tabnode{$0$} \\
 \hline
 \tabnode{1} & \tabnode{$0$} & \tabnode{$0$} & \tabnode{$0$} & \mytabnode{nodeba}{$1$} & \tabnode{$1$} \\ 
 \hline
 \tabnode{2} & \tabnode{$0$} & \tabnode{$1$} & \tabnode{$1$} & \tabnode{$2$} & \tabnode{$2$} \\ 
 \hline
 \tabnode{3} & \tabnode{$0$} & \tabnode{$1$} & \tabnode{$2$} & \mytabnode{nodebb}{$3$} & \tabnode{$3$} \\ 
 \hline
 \tabnode{4} & \tabnode{$0$} & \tabnode{$1$} & \tabnode{$2$} & \tabnode{$3$} & \tabnode{$4$} \\
 \hline
\end{tabular}
\end{tabular}
\begin{tikzpicture}[overlay]
\draw [blue](nodeba.north west) -- (nodeba.north east) -| (nodebb.south east) |- (nodebb.south west) |- (nodeba.north west);
\end{tikzpicture}
~~~
\begin{tabular}{l}
$\tau(i,\eta):$\\
\begin{tabular}{|c|| c c c c c |} 
 \hline
 \backslashbox{$i$}{$\eta$} & $-\infty$ & $-2$ & $-1$ & $1$ & $2$ \\
 \hline\hline
 1 & $0$ & $0$  & $0$  & $1$  & $0$ \\ 
 \hline
 2 & $0$ & $1$  & $0$  & $0$  & $0$ \\ 
 \hline
 3 & $0,$ & $0$  & $1$  & $0$  & $0$ \\ 
 \hline
 4 & $0$ & $-1$ & $-1$ & $-1$ & $0$ \\
 \hline
\end{tabular}
\end{tabular}
\end{center}
Hence, $i_0 = 1$, $i_2 = 4$, $s_2 = 4$, $\eta_{s_2} = 2$, $i_1 = 1$, $s_1 = 3, \eta_{s_1} = 1$.
Consequently, we obtain the elementary flip
$E_\bullet^{(1)} \et E_\bullet^{(2)}$, where 
\[E_\bullet^{(2)} = 
\{\emptyset \subset \{2\}\subset \{-2, 2\}, \{-2, -1, 2\}, \Eall\}.\]
\begin{center}
\begin{tabular}{l}
$\sigma^{(2)}(i,\eta):$\\
\begin{tabular}{|c|| c c c c c |} 
 \hline
 \backslashbox{$i$}{$\eta$} & $-\infty$ & $-2$ & $-1$ & $1$ & $2$ \\
 \hline\hline
 \tabnode{0} & \tabnode{$0$} & \tabnode{$0$} & \tabnode{$0$} & \tabnode{$0$} & \tabnode{$0$} \\
 \hline
 \tabnode{1} & \tabnode{$0$} & \tabnode{$0$} & \tabnode{$0$} & \tabnode{$0$} & \tabnode{$1$} \\ 
 \hline
 \tabnode{2} & \tabnode{$0$} & \tabnode{$1$} & \tabnode{$1$} & \tabnode{$1$} & \tabnode{$2$} \\ 
 \hline
 \tabnode{3} & \tabnode{$0$} & \tabnode{$1$} & \mytabnode{nodec}{$2$} & \tabnode{$2$} & \tabnode{$3$} \\ 
 \hline
 \tabnode{4} & \tabnode{$0$} & \tabnode{$1$} & \tabnode{$2$} & \tabnode{$3$} & \tabnode{$4$} \\
 \hline
\end{tabular}
\end{tabular}
\begin{tikzpicture}[overlay]
\draw [blue](nodec.north west) -- (nodec.north east) -| (nodec.south east) |- (nodec.south west) |- (nodec.north west);
\end{tikzpicture}
~~~
\begin{tabular}{l}
$\tau(i,\eta):$\\
\begin{tabular}{|c|| c c c c c |} 
 \hline
 \backslashbox{$i$}{$\eta$} & $-\infty$ & $-2$ & $-1$ & $1$ & $2$ \\
 \hline\hline
 1 & $0$ & $0$  & $0$  & $0$  & $0$ \\ 
 \hline
 2 & $0$ & $1$  & $0$  & $0$  & $0$ \\ 
 \hline
 3 & $0$ & $0$  & $1$  & $0$  & $0$ \\ 
 \hline
 4 & $0$ & $-1$ & $-1$ & $0$ & $0$ \\
 \hline
\end{tabular}
\end{tabular}
\end{center}
Hence, $i_0 = 2$, $i_2 = 4$, $s_2 = 3$, $\eta_{s_2} = 1$, $i_1 = 3$, $s_1 = 2, \eta_{s_1} = -1$.
Consequently, we obtain the elementary flip
$E_\bullet^{(2)} \et E_\bullet^{(3)}$, where 
\[E_\bullet^{(3)} = 
\{\emptyset \subset \{2\}\subset \{-2, 2\}, \{-2, 1, 2\}, \Eall\}.\]
We can continue the algorithm for one last step, or alternatively note that $E_\bullet^{(3)}\et E_\bullet'$ is an elementary flip.
\end{ex}

\begin{proof}[Proof of Lemma \emph{\ref{lem:single advance}}]
\ref{claim:transitive} and Theorem \ref{thm: decomposition to elementary} shows that it is enough to prove Lemma \ref{lem:single advance} whenever $E_\bullet\et E_\bullet'$ is an elementary flip. 
By Claim \ref{claim: enough when source is inv}, we assume $V_\bullet$ is a flag with $V_l\in \gr_{E_l}^g$. Hence, $V_\bullet = uV_\bullet^{(0)}$, where $u\in O_g$, $V_l^{(0)} = \spa \{e_j:j\in F_l\}$, $F_l \subseteq \{1,\dots,n\}$ and $E_l=\{\eta_j:j\in F_l\}$. 
Since $E_\bullet\et E_\bullet'$ is an $(\eta, \tilde \eta,a,b)$-elementary flip, there exist $j_0$ with $\eta_{j_0}=\eta$ and $j_0\in F_a\setminus F_{a-1}$, and $j_1$ with $\eta_{j_1} = \tilde\eta$ and $j_1\in F_b\setminus F_{b-1}$. 
The matrix $h = \Id+\xi e_{j_1,j_0}$ lies in $B_{d_\varphi}(\Id;1)$ provided $\xi=\Theta(1)>0$ is small enough. It preserves the spaces $V_{l_i}^{(0)}$ for $i<a$ or $i\ge b$. If $a\le i < b$, then the blade corresponding to $hV_{l_i}^{(0)}$ is 
\begin{align}
\label{eq:sigle advance}
(e_{j_0}+\xi e_{j_1})\wedge \bigwedge_{j\in F_{l_i}\setminus \{j_0\}}e_j = \pm\bigwedge_{j\in F_{l_i}}e_j \pm\xi \bigwedge_{j\in F_{l_i}\cup \{j_1\}\setminus \{j_0\}}e_j.
\end{align}

The expression in Eq. \eqref{eq:sigle advance} is the sum of two $g_t$-eigenvectors, with respective eigenvalues $\exp\left(t\sum_{\eta\in E_i}\eta\right)$ and 
$\exp\left(t\sum_{\eta\in E_i'}\eta\right)$. The second eigenvalue is exponentially larger than the first, and so
\[g_thV_{l_i}^{(0)} \xrightarrow{t\to \infty}\spa\{e_j:j\in F_{l_i}\cup \{j_1\}\setminus \{j_0\}\}.\] 
It follows that $(hV_{l}^{(0)})^{\to}\in \gr_{\to E_{l}}$ for every $l\in L$, and the first claim of Lemma \ref{lem:single advance} holds for $V_\bullet^{(0)}$. 
As for $V_\bullet$, we have that $h' := uhu^{-1}\in B_{d_\varphi}(\Id; 1)$ satisfies $h'V_\bullet = uhV_\bullet^{(0)}$ and $h'V_{l_i}\in \gr_{\to E_l'}$ for every $l\in L$. Moreover, $h'V_{l}$ lies in the compact set $O_ghV_{l}^{(0)} \subseteq V_{l}\in \gr_{\to E_l'}$ for every $l\in L$. Claim \ref{claim: uniform convergence meaning} implies the desired for $V_\bullet$. 

% For every flag $V_\bullet$ with $L(V_\bullet)=L$ and $V_{l_i}\in \grl{l_i}_{E_i}^g$ we have that $V_\bullet=uV_\bullet^{(0)}$ for some $u\in O_g$ and hence $uhu^{-1}V_{l_i}$ lies in the compact set $O_ghV_{l_i}^{(0)}$. 
% By Claim \ref{claim: uniform convergence meaning} if follows that $g_t(uhu^{-1})V_i$ converges uniformly to $\grl{l_i}_{E_i}^g$. 
% By Claim \ref{claim: enough when source is inv} we proved the lemma whenever $E_\bullet<E_\bullet'$ is an elementary flip. 
% Theorem \ref{claim: decomposition to elementary} and the transitive nature of the lemma finishes the proof.
\end{proof}
% subsection the_change-step_lemma (end)
\subsection{Adding Spaces to the Flag} % (fold)
\label{sub:adding_spaces_to_the_flag}
In view of Observation \ref{obs: The behavior of templates}, the following lemma will construct Alice's step whenever $f^{h_\infty\Lambda}_{E,\bullet}$ changes from time $T_m$ to time $T_{m+1}$ via a null vertex $*\to E$ at $l$. In these steps $\#A_{m+1} = 1$, that is, Alice's choice determines $h_{m+1}$.
As in Subsection \ref{sub:single_dimension_analysis}, Lemma \ref{lem: enlarging the flag} below will prove the existence of an element $h\in B_{d_\varphi}(\Id;1)$ with a certain behavior.

Recall that a sublattice $\Gamma\subseteq \Lambda$ is called \emph{primitive} if there is no larger sublattice $\Gamma\subset \Gamma'\subseteq \Lambda$ with $\spa(\Gamma')=\spa(\Gamma)$.
\begin{claim}\label{claim: lattice approximation}
Let $\Lambda$ be a lattice, and denote by $\lambda_1(\Lambda)$ the size of the shortest nonzero vector in $\Lambda$. 
For every $l=0,\dots,n, V\in \gr$ and $\varepsilon>0$ there exists a primitive sublattice $\Gamma\subseteq \Lambda$ of rank $l$ and covolume $\cov\Gamma$ such that
\begin{align}\label{eq: log cov sublattice}
\log \cov\Gamma = \frac{l}{n}\log \cov \Lambda + O(\log \lambda_1(\Lambda) - \log \varepsilon),
\end{align}
 and $\spa \Gamma\in U_\varepsilon(V)$. 
\end{claim}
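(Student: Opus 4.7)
The proof proceeds by applying Minkowski's second theorem to a rescaled copy of $\Lambda$. The idea is to define a linear map that stretches the direction $V^\perp$, making lattice vectors nearly parallel to $V$ the shortest in the rescaled lattice.

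Concretely, I would define $A_T\colon\RR^n\to \RR^n$ to act as the identity on $V$ and as multiplication by $T$ on $V^\perp$, so $\det A_T = T^{n-l}$ and $\cov(A_T\Lambda) = T^{n-l}\cov(\Lambda)$. Minkowski's second theorem applied to $A_T\Lambda$ produces linearly independent $v_1,\dots,v_n\in\Lambda$ with $\|A_T v_i\|\le c_n\lambda_i(A_T\Lambda)$ and $\prod_{i=1}^n \lambda_i(A_T\Lambda)=\Theta(T^{n-l}\cov(\Lambda))$. Decomposing $v_i = v_{i,\parallel}+v_{i,\perp}$ along $V$ and $V^\perp$, one has $T\|v_{i,\perp}\|\le\|A_T v_i\|$ and $\|v_i\|\ge\lambda_1(\Lambda)$, so the angle between $v_i$ and $V$ is at most $\|A_T v_i\|/(T\lambda_1(\Lambda))$.

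I would then choose $T$ of order $(\cov(\Lambda)^{1/n}/(\varepsilon\lambda_1(\Lambda)))^{n/l}$, the smallest value for which this angle bound becomes $\le\varepsilon$ for each of the first $l$ Minkowski vectors. Letting $\Gamma$ be the primitive sublattice of $\Lambda$ containing $v_1,\dots,v_l$, the small angle combined with linear independence gives $\spa\Gamma\in U_\varepsilon(V)$, while $\cov\Gamma\le\|v_1\wedge\cdots\wedge v_l\|=O\bigl(\prod_{i=1}^l\|A_T v_i\|\bigr)$. The elementary inequality $\prod_{i=1}^l\lambda_i \le \bigl(\prod_{i=1}^n\lambda_i\bigr)^{l/n}$ for nondecreasing $\lambda_i$, which reduces to the fact that the average of the smallest $l$ elements of a nondecreasing sequence is at most the overall average, then gives $\prod_{i=1}^l\|A_T v_i\|=O\bigl((T^{n-l}\cov(\Lambda))^{l/n}\bigr)$. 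Substituting the chosen $T$ and taking logarithms yields the claimed bound.

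The main obstacle is obtaining the $l/n$ coefficient on $\log\cov\Lambda$: the straightforward bound $\prod_{i=1}^l\lambda_i\le\lambda_l^l$ gives an incorrect coefficient, and one must combine the monotonicity inequality above with the explicit choice of $T$ to extract the correct exponent. When $\Lambda$ is far from balanced---for instance, when $\lambda_1(\Lambda)$ is much smaller than $\cov(\Lambda)^{1/n}$---the error term absorbs the resulting discrepancy, which is why the error $O(\log\lambda_1-\log\varepsilon)$ is required rather than just $O(-\log\varepsilon)$.
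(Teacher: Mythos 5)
The idea—apply Minkowski's second theorem to the stretched lattice $A_T\Lambda$ and use the successive-minima inequality $\prod_{i=1}^l\lambda_i\le(\prod_{i=1}^n\lambda_i)^{l/n}$—is a genuine alternative to the paper's route, which normalizes $\cov\Lambda=1$, reduces to the case $l=1$ by choosing a lattice vector near each element of an orthonormal basis of $V$, and applies Minkowski's first theorem with a box. But your argument has a real gap at exactly the point the paper's reduction to $l=1$ is designed to handle.

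The gap is the span-closeness step for $l\ge 2$. You pass from ``each $v_i$ makes a small angle with $V$'' to ``$\spa\Gamma\in U_\varepsilon(V)$,'' but the second does not follow from the first plus linear independence. Two independent vectors each nearly parallel to a plane $V$ can span a plane far from $V$: if their projections to $V$ are nearly collinear, the difference points almost entirely into $V^\perp$. One needs a quantitative non-degeneracy estimate, e.g., that $\|\pi_V v_1\wedge\cdots\wedge\pi_V v_l\|$ is comparable to $\prod\|v_i\|$. Minkowski reduction of $A_T\Lambda$ does give $\|A_Tv_1\wedge\cdots\wedge A_Tv_l\|=\Theta(\prod\|A_Tv_i\|)$, but this does not transfer to the $v_i$ after applying $A_T^{-1}$: since $A_T^{-1}$ acts as the identity on $V^\perp$-directions of the blade (up to a $T^{-k}$ weight), the span of the $v_i$'s can still carry a large $V^\perp$-component. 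The paper avoids this entirely by treating $l=1$ and summing $l$ rank-$1$ lattices, each near one vector of an orthonormal basis $w_1,\dots,w_l$ of $V$; the orthonormality of the $w_i$ supplies the non-degeneracy for free.

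There is also a quantitative problem with $T$. Plugging your $T\asymp(\cov\Lambda^{1/n}/(\varepsilon\lambda_1))^{n/l}$ into $\log\cov\Gamma\le\frac{l}{n}\bigl((n-l)\log T+\log\cov\Lambda\bigr)+O(1)$ yields coefficient $1$ on $\log\cov\Lambda$, not $l/n$; and for $l\ge 2$ that $T$ is not large enough to force the angle bound anyway (bounding $\lambda_l(A_T\Lambda)$ via $\lambda_l^{n-l+1}\le\prod_{i\ge l}\lambda_i(A_T\Lambda)$ and $\lambda_i\ge\lambda_1(\Lambda)$ requires $T\gtrsim\cov\Lambda/(\varepsilon^{n-l+1}\lambda_1^n)$). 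The clean fix for the coefficient is the same one the paper uses: rescale at the outset so $\cov\Lambda=1$; then $T$ and the error bound depend only on $\varepsilon$ and $\lambda_1$, and the $l/n$ term is trivial. That still leaves the span-closeness gap, which is the substantive obstruction.
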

\begin{remark}\label{rem: implicit constant}
As always, the implicit constant in the big $O$ notation in Eq. \eqref{eq: log cov sublattice} depends on $n$, see Subsection \ref{sub:notations}. 
\end{remark}
\begin{proof}
It is sufficient to construct a non-primitive sublattice $\Gamma\subseteq \Lambda$. Indeed, given a non-primitive sublattice $\Gamma\subseteq \Lambda$ that satisfies the  other condition of the lemma, the lattice $\Gamma'=\spa \Gamma\cap \Lambda$ is primitive with the same span and smaller covolume. Since $\lambda_1(\Gamma') \ge \lambda_1(\Lambda)$, Minkowski's Theorem (\cite[Chapter III, Theorem II]{C}) implies that 
$\cov(\Gamma') =\Omega(\lambda_1(\Lambda)^{\rk \Gamma})$, hence $\Gamma'$ satisfies the desired condition. 

Up to multiplication by a scalar, assume that $\cov \Lambda = 1$. 
It is enough to prove the claim for $l=1$. 
Indeed, fixing an orthogonal basis $(v_i)_{i=1}^l$ and applying the theorem to each of the linear spaces $\RR v_i$ for $i=1,...,l$ yields $l$ rank-$1$ sublattices of $\Lambda$ whose sum is the sought-for sublattice.
After rotating $\Lambda$ and $V$ we may assume $V = \{x\in \RR^n:x_1=\dots=x_{n-1} = 0\}$.
By Minkowski's Theorem, there exists a vector $v=(v_i)_{i=1}^n\in \Lambda\setminus \{0\}$ satisfying $|v_1|,\dots,|v_{n-1}| \le \varepsilon_1$ and $|v_n|\le \varepsilon_1^{1-n}$ for every $\varepsilon_1>0$. Note that if $\varepsilon_1 \le \lambda_1(\Lambda) / 2\sqrt n$, then 
\[\|v\|^2 \le v_1^2 + \dots+ v_{n-1}^2 + v_n^2\le (n-1)\varepsilon_1^2 + v_n^2 \le \lambda_1(\Lambda)^2/2 + v_n^2.\]
Consequently, $|v_n| = \Omega(\lambda_1(\Lambda))$, and hence 
\[d_{\grl 1}(\RR v, V) O(d_{\RR^n}(v, V)/\|v\|) = O(\varepsilon/v_n) = O(\varepsilon_1/\lambda_1(\Lambda)).\] 
Therefore, there is a choice of $\varepsilon_1 = \Theta(\varepsilon\lambda_1(\Lambda))$ for which $d_{\grl1}(\RR v,V) \le \varepsilon$. 
Consequently, for $\Gamma = \ZZ v$ we have $\cov \Gamma = O((\varepsilon\lambda_1(\Lambda))^{1-n})$, as desired.
\end{proof}
\begin{remark}\label{rem:Findind a lattice}
The bound in Claim \ref{claim: lattice approximation} is not optimal. Though deriving the optimal bound may be of interest, it will not improve qualitatively the results of this paper.
\end{remark}
\begin{de}[Relative Covolume]\label{de: relatively covolume}
\index{Relative covolume}\hypertarget{bcf}{}
\index{cov@$\cov(\Gamma_2/\Gamma_1/\Gamma_0)$}\hypertarget{bcg}{}
Let $\Gamma_{0} \subseteq \Gamma_1\subseteq \Gamma_2$ be lattices in $\RR^n$ of ranks $l_{0}\le l_1\le l_2$, respectively.
Define the \emph{relative covolume} of $\Gamma_0, \Gamma_1, \Gamma_2$ by
\[\log\cov(\Gamma_2/\Gamma_1/\Gamma_0) :=\log \cov \Gamma_1 - \frac{l_2-l_1}{l_2-l_0}\log \cov \Gamma_0 - \frac{l_1-l_0}{l_2-l_0}\log \cov \Gamma_{2}.\]
The relative covolume is the signed height of $p_{\Gamma_1}=(\rk \Gamma_1, \log \cov\Gamma_1)$ above the interval $[p_{\Gamma_0}, p_{\Gamma_2}]$. It is also the normalized covolume of $\pi(\Gamma_1)$ in $\pi(\Gamma_2)$ where $\pi:\RR^n\to \spa(\Gamma_0)^\perp$, and we normalize so that $\cov(\pi(\Gamma_2)) = 1$.
\end{de}
\begin{lem}[Adding a lattice to a flag]\label{lem: enlarging the flag}
Let $1>\varepsilon>0$, let $\Lambda$ be a lattice, let $E_\bullet$ be a direction filtration and let $\Gamma_\bullet$ be a filtration such that $L(E_\bullet) = L(\Gamma_\bullet)$ and
\[\spa \Gamma_l\in\gr_{\to E_l}^{\varepsilon, g}.\]
% $\emptyset = E_0\subset E_1\subset\dots\subset E_k=\Eall$. 
Let
$l_0<l_1\in L(\Gamma_\bullet)$ be consecutive elements with $l_0+2\le l_1$, let $E_{l_0}\subset E\subset E_{l_1}$,
and assume that $\partial^2\HN(\Gamma_{l_1}/\Gamma_{l_0})_{H,l} \le C$ for every $l=1,\dots,l_1-l_0$.
Then there exists $h\in H_{\spa \Gamma_\bullet\to E_\bullet}$ with $d_\varphi(h,\Id) \le 1$ and $h\Gamma_{l_0}\subseteq\Gamma\subseteq h\Gamma_{l_1}$ such that 
\[\spa\Gamma\in \gr_{\to E}^{O(\varepsilon), g}\]
and 

\begin{align}\label{eq: Gamma cov}
\log \cov (h\Gamma_{l_1}/\Gamma/ h\Gamma_{l_0}) = O(C-\log \varepsilon).
\end{align}

% Furthermore, if $\spa \Gamma_i \in U_\varepsilon(\grl{l_i}_{E_i}^g)$ then we can choose $h\in B_{d_\varphi}(\Id; \varepsilon)$, $\spa\Gamma\in \gr_{\to E}\cap U_{O(\varepsilon)}(\gr_{E}^g)$ but only $\log \cov \Lambda = O(C) - \log \varepsilon$. 
\end{lem}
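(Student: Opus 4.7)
The plan is to take $h=\Id$ (which lies in $H_{\spa\Gamma_\bullet\to E_\bullet}$ by hypothesis, as each $\spa\Gamma_l\in\gr_{\to E_l}$) and to construct $\Gamma$ as a sublattice of $\Gamma_{l_1}$ containing $\Gamma_{l_0}$ by applying Claim \ref{claim: lattice approximation} in the quotient lattice, after first normalizing the flag to be $g_t$-invariant.

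Use Lemma \ref{lem: making q0} to choose $q_0\in H^-$ with $q_0\cdot\spa\Gamma_l^\to=\spa\Gamma_l$ for all $l\in L(\Gamma_\bullet)$; by Remark \ref{rem: q0 epsilon small}, $d_{H^-}(q_0,\Id)=O(\varepsilon)$. Set $\Gamma_l^\to:=q_0^{-1}\Gamma_l$, so $\spa\Gamma_l^\to=(\spa\Gamma_l)^\to\in\gr^g_{E_l}$. Since $E_{l_0}\subset E\subset E_{l_1}$, the $g_t$-invariant space $\spa\Gamma_{l_1}^\to/\spa\Gamma_{l_0}^\to$ contains a $g_t$-invariant subspace of direction $E\setminus E_{l_0}$, which lifts to a $g_t$-invariant $V_0\subseteq\spa\Gamma_{l_1}^\to$ of dimension $l$ containing $\spa\Gamma_{l_0}^\to$ with $V_0\in\gr^g_E$. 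Let $\pi:\spa\Gamma_{l_1}^\to\to V_{\mathrm{quot}}:=(\spa\Gamma_{l_0}^\to)^\perp\cap\spa\Gamma_{l_1}^\to$ be the orthogonal projection and put $\bar\Lambda:=\pi(\Gamma_{l_1}^\to)$, a rank-$N$ lattice in the $g_t$-invariant space $V_{\mathrm{quot}}$, with $N:=l_1-l_0$.

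To arrange a good $\lambda_1$ bound, rescale by the scalar $\alpha:=(\cov\bar\Lambda)^{-1/N}$ and set $\tilde\Lambda:=\alpha\bar\Lambda$, so $\cov\tilde\Lambda=1$. The HN height sequence of $\tilde\Lambda$ vanishes at $l=0$ and $l=N$, is convex, and satisfies $\partial^2\HN_{H,l}(\tilde\Lambda)=\partial^2\HN(\Gamma_{l_1}/\Gamma_{l_0})_{H,l}\le C$; these three facts force $|\HN_{H,l}(\tilde\Lambda)|=O(C)$ for all $l$, and combined with $\log\lambda_1\ge\HN_{H,1}$ (HN extremality on rank-$1$ primitive sublattices) and Minkowski inside the first HN filter, this gives $|\log\lambda_1(\tilde\Lambda)|=O(C)$. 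Apply Claim \ref{claim: lattice approximation} to $\tilde\Lambda$ with target subspace $\pi(V_0)$ and precision $\varepsilon$: this yields a primitive sublattice $\tilde\Gamma\subseteq\tilde\Lambda$ of rank $l-l_0$ with $\spa\tilde\Gamma\in U_\varepsilon(\pi(V_0))$ and $\log\cov\tilde\Gamma=O(\log\lambda_1(\tilde\Lambda)-\log\varepsilon)=O(C-\log\varepsilon)$. Un-rescale to $\bar\Gamma:=\alpha^{-1}\tilde\Gamma\subseteq\bar\Lambda$, lift to $\Gamma^\to:=\pi^{-1}(\bar\Gamma)\cap\Gamma_{l_1}^\to$ (a rank-$l$ sublattice of $\Gamma_{l_1}^\to$ containing $\Gamma_{l_0}^\to$), and set $\Gamma:=q_0\Gamma^\to$, so $\Gamma_{l_0}\subseteq\Gamma\subseteq\Gamma_{l_1}$.

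It remains to verify the two conclusions. Using the multiplicativity $\cov\Gamma^\to=\cov\Gamma_{l_0}^\to\cdot\cov\bar\Gamma$ and $\cov\Gamma_{l_1}^\to=\cov\Gamma_{l_0}^\to\cdot\cov\bar\Lambda$, together with the unimodularity of $q_0$, a direct computation gives
\[\log\cov(\Gamma_{l_1}/\Gamma/\Gamma_{l_0})=\log\cov\bar\Gamma-\tfrac{l-l_0}{N}\log\cov\bar\Lambda=\log\cov\tilde\Gamma=O(C-\log\varepsilon),\]
which is \eqref{eq: Gamma cov}. For the direction, $\spa\Gamma^\to\supseteq\spa\Gamma_{l_0}^\to$ with $\pi(\spa\Gamma^\to)\in U_\varepsilon(\pi(V_0))$ forces $d_{\gr}(\spa\Gamma^\to,V_0)=O(\varepsilon)$, so $\spa\Gamma=q_0\spa\Gamma^\to$ lies within $O(\varepsilon)$ of $q_0V_0$, which belongs to $\gr_{\to E}^{O(\varepsilon),g}$ (limit $V_0\in\gr^g_E$, and $d_{H^-}(q_0,\Id)=O(\varepsilon)$); continuity of the local $H^-\times\gr^g_E$ trivialization of $\gr_{\to E}$ near $\gr^g_E$ then gives $\spa\Gamma\in\gr_{\to E}^{O(\varepsilon),g}$.

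\textbf{Main obstacle.} The technical heart of the argument is the rescaling-and-HN-bound step: translating the second-difference hypothesis $\partial^2\HN(\Gamma_{l_1}/\Gamma_{l_0})_{H,l}\le C$ into the effective control $|\log\lambda_1(\tilde\Lambda)|=O(C)$ after normalization to unit covolume, and then checking that Claim \ref{claim: lattice approximation} applied to $\tilde\Lambda$ (and un-rescaled) produces precisely the relative covolume in \eqref{eq: Gamma cov}. A subsidiary point requiring care is showing that $\spa\Gamma\in\gr_{\to E}^{O(\varepsilon),g}$ (not merely in $U_{O(\varepsilon)}(\gr^g_E)$), for which one invokes the local $H^-\times\gr^g_E$ product structure of $\gr_{\to E}$ implicit in Lemma \ref{lem: making q0} and its continuity under small perturbations of $V_\bullet$.
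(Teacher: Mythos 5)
Your construction of the intermediate lattice $\Gamma$ via the quotient $\Gamma_{l_1}/\Gamma_{l_0}$ and Claim~\ref{claim: lattice approximation} is essentially the same as the paper's, and the rescaling argument for the $\lambda_1$ bound and the relative-covolume bookkeeping are sound. However, the choice $h=\Id$ creates a genuine gap in the direction statement $\spa\Gamma\in\gr_{\to E}^{O(\varepsilon),g}$.

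Claim~\ref{claim: lattice approximation} gives you a lattice whose span is \emph{metrically} $O(\varepsilon)$-close to $V_0$ (equivalently, to $q_0V_0\in\gr_{\to E}$), but metric closeness to $\gr_E^g$ does not imply membership in $\gr_{\to E}$. For a non-maximal multiset $E$, $\gr_{\to E}$ is a proper (lower-dimensional) stratum of the Grassmannian: a generic perturbation of a point of $\gr_E^g$ lands in $\gr_{\to E'}$ for a strictly larger $E'\rhd E$, not in $\gr_{\to E}$. (Concretely, if $\pi(\spa\Gamma^\to)$ is close to $\pi(V_0)$ but has a nonzero component along a higher-weight eigenspace of $V_{\mathrm{quot}}$, then $\lim_{t\to\infty}g_t\pi(\spa\Gamma^\to)$ is the highest-weight direction it touches, not $\pi(V_0)$.) The appeal to ``continuity of the local $H^-\times\gr^g_E$ trivialization'' parametrizes $\gr_{\to E}$ from within, but says nothing about whether a nearby point of $\gr$ belongs to it.

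This is precisely what the nontrivial $h$ in the paper's proof is for. There one chooses a matrix $m\in\SL_n(\RR)$ with $d_{\SL_n(\RR)}(m,\Id)=O(\varepsilon_1)$, $m\spa\Gamma = V$ (a genuine point of $\gr_{\to E}$), and $mV_l=V_l$; then Lemma~\ref{lem: mult map} factors $m=q_1h$ with $q_1\in H^{-0}$, $h\in H$, both $O(\varepsilon_1)$-close to $\Id$. Since $H^{-0}$ preserves $\gr_{\to E}$ and $V\in\gr_{\to E}$, one gets $h\spa\Gamma = q_1^{-1}V\in\gr_{\to E}$, and similarly $hV_l = q_1^{-1}V_l\in\gr_{\to E_l}$, which is exactly the membership $h\in H_{\spa\Gamma_\bullet\to E_\bullet}$ and $\spa(h\Gamma)\in\gr_{\to E}$. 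You cannot dispense with this step; with $h=\Id$ the conclusion $\spa\Gamma\in\gr_{\to E}^{O(\varepsilon),g}$ is simply false in general.
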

\begin{proof}
Denote $V_l = \spa \Gamma_l$ for every $l\in L(\Gamma_\bullet)$. 
We will first find $V_{l_0}\subset V\subset V_{l_1}$ such that $V\in \gr_{\to E}$. 
By Lemma \ref{lem: making q0}, there exists $q_0\in H^-$ such that $V_l = q_0V_l^\to $ for all $l\in L(\Gamma_\bullet)$. In addition, $d_{H^-}(q_0, \Id) = O(\varepsilon)$.
By decomposing $V_{l_0}^\to$ and $V_{l_1}^\to$ into $g_t$-eigenspaces, we deduce that there exists $V'\in \gr_E^g$ such that $V_{l_0}^\to\subset V'\subset V^\to_{l_1}$. Set $V = q_0V'$ and note that $V^\to = V'$. 

We will apply Claim \ref{claim: lattice approximation} to $\Gamma_{l_1}/\Gamma_{l_0}$ and $V/V_{l_0}$ in place $\Gamma$ and $V$. Note that $\log \lambda_1(\Gamma_{l_1}/\Gamma_{l_0}) = O(C)$ since the height $\HN(\Gamma_{l_1}/\Gamma_{l_0})_{H,1}$ depends linearly on $\partial^2\HN(\Gamma_{l_1}/\Gamma_{l_0})_{H,l}$ for $l=1,...,l_1-l_0-1$, which are bounded by $C$. Then there exists $\Gamma'\subseteq \Gamma_{l_1}/\Gamma_{l_0}$ with $d(\spa \Gamma', V/V_{l_0})\le \varepsilon_1$ and 
\[\log \cov\Gamma' = O(C-\log \varepsilon_1) + \frac{l_{l_1}-l}{l_{l_1}-l_{l_0}}\left(\log\cov \Gamma_{l_1}- \log\cov \Gamma_{l_0}\right),\] 
where $\varepsilon_1$ will be chosen later.
Let $\Gamma$ denote the inverse image of $\Gamma'$ in $\Lambda$; note that $\Gamma_{l_0}\subset\Gamma\subset\Gamma_{l_1}$. 
Then the covolume of $\Gamma$ satisfies Eq. \eqref{eq: Gamma cov}, provided that $\varepsilon_1 = \Theta(\varepsilon)$.
Since $\Gamma$ is $\varepsilon_1$ close to $V$ and $\varepsilon_1$ can be chosen sufficiently small, there exists $m\in \SL_n(\RR)$ with $m\spa\Gamma = V$ and $mV_l=V_l$ with $d_{\SL_n(\RR)}(m,\Id) = O(\varepsilon_1)$.
If $\varepsilon_1>0$ is sufficiently small, by Lemma \ref{lem: mult map} we can write $m = q_1h$ for $q_1\in H^{-0}$ and $h\in H$ with $d_{\SL_n(\RR)}(q_1,\Id), d_{\SL_n(\RR)}(h,\Id) = O(\varepsilon_1)$. 
It follows that $h\in B_{d_\varphi}(\Id;1)$, 
$hV_l = q_1^{-1}V_l \in \gr_{\to E_l}$, and $h\Gamma = q_1^{-1}V\in \gr_{\to E_l}$. 
Since $d_{\SL_n(\RR)}(q_0,\Id) = O(\varepsilon)$, we see that 
\begin{align*}
d_\gr(&\spa\Gamma, \gr_{E}^g) \le 
d_\gr(q_1^{-1}V,V) + d_\gr(V, \gr_{E}^g)  \\&
\le 
d_\gr(q_1^{-1}V,V) + d_\gr(q_0V^\to, V^\to) \\ &\le 
O(d_{\SL_n(\RR)}(q_1, \Id)+d_{\SL_n(\RR)}(q_0,\Id)) = O(\varepsilon+\varepsilon_1).
\end{align*}
Note that we can choose $\varepsilon_1 = \Theta(\varepsilon)$. The result follows.
\end{proof}
\begin{remark}\label{rem: where h}
In Lemma \ref{lem: enlarging the flag} we can require $h$ to be in any neighborhood of $\Id$. Such a choice will only change the implicit constant.
\end{remark}
% subsection adding_spaces_to_the_flag (end)

% section Flag_perturbation_theorem (end)
\section{Alice's Strategy} % (fold)
\label{sec:alice_s_strategy}
Fix a $g$-template $f$ and a lattice $\Lambda\in X_n$ of covolume $1$ for the rest of the section. Let $T>0$ be a real number. Its choice will be formalized in Subsection \ref{sub:preliminaries}.
Recall that \[Y_{\Lambda, f}:=Y_{\Lambda, [f]}=\{h\in H:f^{h\Lambda}\sim f\}\subseteq H,\]
where $[f]$ is the set of $g$-template which are equivalent to $f$.

In this section we will construct Alice's strategy in the $(T, g)$-game of $Y_{\Lambda,f}$, see Subsection \ref{sub:hausdorff_games}, and study some of its properties as explained in Subsection \ref{sub:hausdorff_game_applied_to_traje}.
\begin{de}
A vertex $t_0$ of $f$ at $l$ is called a vertex of type $E_-\to E_+$ if $f_{E,l}(t)$ changes from $E_-$ to $E_+$ near $t_0$, i.e., 
\[E_\pm = \lim_{\varepsilon\searrow 0} f(t_0\pm \varepsilon).\]
\end{de}
Informally, and somewhat imprecisely, Alice's strategy goes as follows: 	
The initialization step is $h_0=\Id, T_0 = 0$. This defines $T_m= mT$.
In the $m$-th step Alice chooses a set $A_{m+1}$ of possible choices of $h_{m+1}$ to ensure that the following three conditions are satisfied:
\begin{enumerate}
	\item \label{cond: L=L}$L_f(T_{m+1}) = L(\HN(g_{T_{m+1}}h_{m+1}\Lambda))$.
	\item $\forall l\in L_f(T_{m+1}),~ \HN(g_{T_{m+1}}h_{m+1}\Lambda)_{V, l}\in \gr_{\to f_{E, l}({T_{m+1}})}$.
	\item \label{cond: error=O(1)} $\forall l\in L_f(T_{m+1}), ~ \HN(g_{T_{m+1}}h_{m+1}\Lambda)_{H, l}= f_{H, l}(T_{m+1}) + O(1)$.
\end{enumerate}
There will be three different types of steps.
In every step in which $f_{E,\bullet}(T_m) = f_{E,\bullet}(T_{m+1})$, we use Lemma \ref{lem:the right counting theorem} to construct a set $A_{m+1}$ of size $\Theta(\exp(T\delta(f_E(Tm))))$.
These steps are called \textbf{Standard Steps}.
If $f$ has a null vertex of type $E\to *$ in $[T_m,T_{m+1}]$, we will use a Standard Step as well.
If $f$ has a null vertex of type $*\to E$ in $[T_m,T_{m+1}]$, we will use Lemma \ref{lem: enlarging the flag} to construct a singleton $A_{m+1} = \{h_{m+1}\}$. These steps are called \textbf{Adding Steps}.
If $f$ has a non-null vertex in $[T_m,T_{m+1}]$, we will use Lemma \ref{lem:single advance} to construct a singleton $A_{m+1} = \{h_{m+1}\}$. These steps are called \textbf{Changing Steps}.

There are several problems with this approach:
\begin{enumerate}
	\item\label{prob: lattice enter time} We are not able to make a lattice enter the filtration $\HN(g_{T_m}h_m\Lambda)$ at the exact step we need.
	\item\label{prob: noise lattice} There may be undesired elements in $\HN(g_th_m\Lambda)_{V, l}$. 
	\item\label{prob: drift} The implicit constant in Condition \ref{cond: error=O(1)} might increase with $m$. 
\end{enumerate}

\subsubsection{Analogy to Error Correction} % (fold)
\label{ssub:analogy}
To illustrate the idea behind the solution of Problems \eqref{prob: lattice enter time} and \eqref{prob: drift}, we consider an analogy.
Suppose Alice has to drive an infinite route in a gridlike city. A navigation app provides to Alice a list of directions, of the form ``After 3 junctions turn left''.
Unfortunately, Alice always makes mistakes and never turns at the right junction.
If the app does not update its instructions, Alice will eventually drift far from her original route. 
Therefore, after every turn, the app moderately updates the instructions. 

The original route of Alice is analogous to the $g$-template $f$. The route she finally takes is the $g$-template $f^{h_\infty \Lambda}$. The turns Alice makes are analogous to the vertices of the $g$-template. The updates of the app are analogous to perturbing the $g$-template as described below, see Subsubsections \ref{ssub:the_adjustable f} and \ref{ssub:updating pi m, Gamma m}.

The analogous concept to $C$-separated $g$-template is a list of instructions with bounded amount of turns every $C$ kilometers.
To add Bob to the analogy, we can think of Bob as a random noise, which is weaker the more Alice focuses, and
Alice wants to focus the least, while still reaching her target.
% subsubsection analogy (end)
\subsubsection{Overview of the Strategy} % (fold)
\label{ssub:formal strategy}

% The problem with this strategy is that the $g$-template $f^{h\Lambda}$ is not precisely $f$. There will be small errors. 

Formally, a strategy is a choice of a move $(h_0,T_0)$ in the initialization step and a map 
\[\sigma:(h_0, T_0; A_1, h_1,\dots,A_m, h_m)\mapsto A_{m+1},\] which assigns for every $m\ge 0$ and history of moves up to Alice's $m$-th step (excluded), the move of Alice in the current step $m$.
To compute the value of the game it is enough to define $\sigma_{\rm Alice}$ for histories that are consistent with $\sigma_{\rm Alice}$ up to step $m$. 

% Such a strategy $\sigma_{\rm Alice}$ needs not to be defined at all histories; for each $m$ it is sufficient to define $\sigma_{\rm Alice}$ at histories that are consistent with $\sigma_{\rm Alice}$ up to step $m$. 
We will use auxiliary information, that is, Alice will maintain a filtration $\Gamma_\bullet^m$ of $\Lambda$ and a locally constant error function $\bpi^m = (\pi_l^m)_{l=0}^n$ with $\pi_l^m:U_l(f)\to \RR$. 
The strategy we will define will be a map 
\[\sigma_{\rm Alice}:(h_0, T_0; A_1, h_1,\dots,A_m, h_m, \Gamma^m_\bullet, \bpi^m)\mapsto (A_{m+1}, \Gamma^{m+1}_\bullet, \bpi^{m+1}).\]

For each history, the computation of $A_{m+1}, \Gamma_{\bullet}^{m+1}$, and $\bpi^{m+1}$ will have one of several forms.
For each step we will decide whether it is a \textbf{Standard Step}, an \textbf{Adding Step}, or a \textbf{Changing Step}.
In addition, we have three ways to update $\Gamma^m_\bullet$ to $\Gamma^{m+1}_\bullet$: we can \textbf{preserve} it, and then $\Gamma^{m+1}_\bullet = \Gamma^{m}_\bullet$; we can \textbf{add} a lattice to it, and then $\Gamma^{m+1}_\bullet \supset \Gamma^{m}_\bullet$; or we can remove a lattice from it, and then $\Gamma^{m+1}_\bullet \subset \Gamma^{m}_\bullet$. 
Further, there are two ways to update $\bpi^m$: we can either \textbf{anchor} it (see Subsubsection \ref{ssub:updating pi m, Gamma m}) or not anchor it, and then $\bpi^{m+1} = \bpi^{m}$. 
% Alice's strategy will follow a single $g$-template $f$. 
% Essentially, it goes as follows.
% Given a lattice $\Lambda$, and a step size $T$ she will iteratively approximate an element $h\in H$. 
% In the $m$-th step Alice will produces a set $A_{m+1}$ of possible $h_{m+1}$-s and Bob will choose one of them.

% Her goal is that for $h = \lim_{m\to \infty} h_m$ will satisfy that
% $f(h\Lambda)$ is equivalent to $f$ on $[0,mT]$. 
% % $d_\varphi(h,h_m) \le \exp(-mT)$. She will make sure that the $g$-template of the lattice 

% % In the  she creates a set $A_{m+1}$

% % HEr goal is to 

% The two sources of errors are 
% \begin{itemize}
%   \item The $g$-template $f^{h\Lambda}_E$ does not change at the correct places.
%   \item Additional undesired $g$-templates may arise undesirably. 
% \end{itemize}
% To deal with them we will replace $f$ by a $g$-template we can easily perturb to be precisely like $f^{h\Lambda}$. 

% subsubsection subsubsection_name (end)
\subsection{Preliminaries} % (fold)
\label{sub:preliminaries}
Fix $C_2\ggg C_1\ggg T$ (see Subsection \ref{sub:notations} for the definition of $\ggg$). The strategy will depend on $T, C_1, C_2$.
Recall that $f$ is a $g$-template fixed for the entire section. Let $f'$ be the $g$-template $f$ shifted so that $f'_{H,l}(0) = 0$ for every $l$ with $0\le l\le n$. To shift $f$ to $f'$ we use the independent shift sequence with parameters $C$ and $(0)_{J\in \cG_f}$, for a large enough $C$. By Lemma \ref{lem: shift is equiv}, the $g$-templates $f, f'$ are equivalent. 
By Lemma \ref{lem:separated}, there is a $C_2$-significant and $C_2$-separated $g$-template $f''$ that is equivalent to $f'$. By the proof of Lemma \ref{lem:separated}, we get that $f''$ is a shift of $f'$, hence $f''_{H,l}(0) = 0$ for every $l$ with $0\le l\le n$. Since we care about $f$ only up to equivalence, we may assume that $f$ is $C_2$-significant, $C_2$-separated, and $f_{H,l}(0)=0$ for every $l$ with $0\le l\le n$.
\subsubsection{Adjusting $f$} % (fold)
\label{ssub:the_adjustable f}
Define the shift sequence 
\begin{align}\label{eq: what is brho}
\brho := (2l(n-l)C_1+ C_1)_{l=1}^{n-1}.
\end{align}
For every sequence of locally constant functions $\bpi = (\pi_l)_{l=1}^{n-1}$, $\pi_l:U_l(f)\to [-C_1, C_1]$, we get that $\brho+\bpi$ is an independent shift sequence with parameters $2C_1, C_1+\bpi$. 

By Lemma \ref{lem: stability or sep+sig}, since $C_2$ is large enough with respect to
$C_1$, for every nontriviality interval $J$ of $f$ at $l$ there exists a nontriviality interval $J'$ of $f^\brho$ at $l$ with $J'\subseteq J$. 
In addition, $f^\brho$ is also $C_2/2$-separated and $C_2/2$-significant.
\subsubsection{Preliminaries on Directions} % (fold)
\label{ssub:preliminaries_on_directions}
\begin{de}[Final direction]
\index{Final multiset}\hypertarget{bch}{}
Let $E_{-1} \subset E_0\subset E_1$ be submultisets of $\Eall$. We say that $E_0$ is \emph{final} in $E_{-1}\subset E_1$ if it is the maximum with respect to the partial order $\eteq$ of all $E$ of size $\#E_0$ which satisfy $E_{-1}\subset E \subset E_{1}$. 
\end{de}
\begin{obs}\label{obs: final implications}
Let $E_\bullet$ be a direction filtration and $E_{l}\subset E_{l'}$ be two consecutive elements in $E_\bullet$. Let $E$ be a multiset between them, i.e., $E_{l}\subset E\subset E_{l'}$.
Then $\delta(E_\bullet) \ge \delta(E_\bullet \cup \{E\})$, with equality if and only if $E$ is final in $E_{l}\subset E_{l'}$.
\end{obs}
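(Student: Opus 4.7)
The plan is to reduce the inequality to a single-step comparison and then read off the equality case directly.

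First I will observe that inserting $E$ between $E_l$ and $E_{l'}$ does not affect the contributions to $\delta$ coming from any other consecutive pair of the filtration; those pairs appear identically in $E_\bullet$ and in $E_\bullet \cup \{E\}$. Writing $A := E_l$, $B := E$, $C := E_{l'}$ (so $A \subset B \subset C$), the claim therefore reduces to comparing the single-step contribution
\[
S_0 := \sum_{\eta \in C,\ \eta' \in C \setminus A}(\eta - \eta')^+
\]
with the sum of the two new step contributions
\[
S_1 := \sum_{\eta \in B,\ \eta' \in B \setminus A}(\eta - \eta')^+ \;+\; \sum_{\eta \in C,\ \eta' \in C \setminus B}(\eta - \eta')^+.
\]

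Next I will expand $S_0$ using the two disjoint decompositions $C = B \sqcup (C\setminus B)$ (applied to the outer variable) and $C\setminus A = (B\setminus A) \sqcup (C\setminus B)$ (applied to the inner variable). A short bookkeeping step produces the identity
\[
S_0 \;=\; S_1 \;+\; \sum_{\eta \in C \setminus B,\ \eta' \in B \setminus A}(\eta - \eta')^+,
\]
and since $(\eta - \eta')^+ \ge 0$ this yields at once $\delta(E_\bullet) - \delta(E_\bullet \cup \{E\}) = S_0 - S_1 \ge 0$, proving the inequality.

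For the equality case, $S_0 = S_1$ holds if and only if every summand of the residual sum vanishes, i.e., $\eta \le \eta'$ for all $\eta \in C \setminus B$ and all $\eta' \in B \setminus A$. It remains to check that this is exactly the condition that $B$ is final in $A \subset C$. Unwinding the definition of $\eteq$ on multisets of equal cardinality (via the decreasing-rearrangement characterization), $B$ is the $\eteq$-maximum among multisets $B'$ with $A \subset B' \subset C$ and $\#B' = \#B$ precisely when $B \setminus A$ is the multiset of the $\#B - \#A$ largest elements of $C \setminus A$, which is equivalent to every element of $B \setminus A$ being at least every element of $C \setminus B$. There is no real obstacle here; the only mildly delicate point is this last unpacking of $\eteq$, which I will handle by noting that swapping any $\eta' \in B\setminus A$ with a strictly larger $\eta \in C\setminus B$ would strictly increase $B$ under $\eteq$, so maximality of $B$ forbids such pairs.
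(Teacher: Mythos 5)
Your proof is correct. The paper states this as an Observation without supplying a proof (observations in this paper end with $\clubsuit$ and are generally left unproved), so there is no argument of the author's to compare against; your decomposition-of-$S_0$ identity $S_0 = S_1 + \sum_{\eta\in C\setminus B,\,\eta'\in B\setminus A}(\eta-\eta')^+$ is exactly the bookkeeping that makes the monotonicity and the equality case transparent, and your unpacking of the $\eteq$-maximality (via the swap argument showing that any inversion $\eta>\eta'$ with $\eta\in C\setminus B$, $\eta'\in B\setminus A$ produces a strictly $\eteq$-larger competitor $B-\{\eta'\}+\{\eta\}$, which is still sandwiched between $A$ and $C$) is the right way to identify "final" with "the $\#B-\#A$ largest elements of $C\setminus A$."
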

\begin{de}[Scalar pairs]\label{de: scalar pair}
\index{Scalar pair}\hypertarget{bci}{}
We say that a pair of multisets $E\subset E'$ is a \emph{scalar pair} if $E' - E$ has no two distinct elements (i.e., has one element, possibly with multiplicity). 
\end{de}
\begin{obs}\label{obs: all between scalar}
If $E\subset E'$ is a scalar pair, then for every $l$ with $\#E<l<\#E'$ there is a unique $E\subset E'' \subset E'$ of size $l$, and it is final in $E\subset E'$.
\end{obs}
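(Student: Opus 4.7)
The plan is to unpack the definition of a scalar pair and show both claims essentially fall out immediately.

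First, I would translate the hypothesis into a concrete form. Since $E \subset E'$ is a scalar pair, the multiset difference $E' - E$ consists of a single element repeated with multiplicity; write $E' - E = \{\eta, \eta, \ldots, \eta\}$ with $k = \#E' - \#E$ copies of some $\eta \in \Eall$. Any multiset $E''$ with $E \subset E'' \subset E'$ and $\#E'' = l$ must satisfy $\emptyset \subsetneq E'' - E \subsetneq E' - E$, so $E'' - E$ is a submultiset of $k$ copies of $\eta$, forcing $E'' - E$ to be $(l - \#E)$ copies of $\eta$. This gives both existence (add $l-\#E$ copies of $\eta$ to $E$) and uniqueness.

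For the finality claim, recall that $E''$ is final in $E \subset E'$ if it is the maximum under $\eteq$ among all submultisets of size $l$ strictly between $E$ and $E'$. Since we have just shown that only one such $E''$ exists, it is automatically the maximum of a singleton set and hence final.

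The entire argument is essentially bookkeeping with the definitions, so there is no serious obstacle; the only point to be slightly careful about is that multiset operations count multiplicities, which is already emphasized in the paper (see Definition \ref{de:elementary flip}). I would write the proof in just a few lines as follows:

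\begin{proof}
Write $E' - E = \{\eta, \ldots, \eta\}$ with $k = \#E' - \#E$ copies of a single $\eta \in \Eall$, which is possible by Definition \ref{de: scalar pair}. Any multiset $E''$ with $E \subset E'' \subset E'$ satisfies $E'' - E \subset E' - E$, so $E'' - E$ consists only of copies of $\eta$. The size constraint $\#E'' = l$ then forces $E'' - E$ to be exactly $l - \#E$ copies of $\eta$, proving both existence and uniqueness of $E''$. Since there is exactly one such $E''$, it is trivially the maximum with respect to $\eteq$ among multisets of this size lying between $E$ and $E'$, hence it is final.
\end{proof}
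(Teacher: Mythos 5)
Your proof is correct, and since the paper leaves this Observation unproved (it is marked with $\clubsuit$ and no argument is supplied), there is no paper proof to compare against; the unpacking you give — $E'' - E$ must be a submultiset of $k$ copies of a single $\eta$, so its size determines it, and a singleton candidate set trivially has a maximum — is exactly the intended bookkeeping.
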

We will treat scalar pairs differently because of the following property they enjoy (which is the reason for their name):
\begin{obs}[Scalar action]\label{obs: scalar action}
Let $E\subset E'$ be a scalar pair, with $l:=\#E, l':=\#E'$.
Let $V\in \gr_{E}^g$ and $V'\in \grl{l'}_{E'}^g$ be two linear spaces with $V\subset V'$. 
Then the induced action $g_t\acts V'/V$ is multiplication by a scalar. 
\end{obs}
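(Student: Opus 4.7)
The plan is to verify that the eigenvalues of the induced $g_t$-action on $V'/V$ form exactly the multiset $E' - E$, and then use the definition of a scalar pair together with the semisimplicity of $g_t$ to conclude that $g_t|_{V'/V}$ acts as a scalar.

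First I would note that since $g_t$ is a diagonal flow, its restriction to the invariant subspace $V'$ is diagonalizable (with real eigenvalues). The invariant subspace $V \subseteq V'$ then decomposes as $V = \bigoplus_{\eta}(V \cap V_\eta \cap V')$, where $V_\eta$ denotes the $\exp(t\eta)$-eigenspace of $g_t$ in $\RR^n$. Choosing bases of each $V \cap V_\eta \cap V'$ and extending to bases of each $V_\eta \cap V'$ by eigenvectors, we obtain a basis of $V'$ consisting of $g_t$-eigenvectors, a subset of which forms a basis of $V$. The images in $V'/V$ of the remaining eigenvectors then form a basis of $V'/V$, each of them being a $g_t$-eigenvector for the induced action. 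Counting eigenvalues with multiplicity, the multiset of eigenvalues of $g_t$ on $V'/V$ is precisely $\{\exp(t\eta) : \eta \in E' - E\}$.

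By the definition of a scalar pair (Definition~\ref{de: scalar pair}), the multiset difference $E' - E$ consists of a single value $\eta$ repeated $l' - l$ times. Therefore every eigenvalue of $g_t$ acting on $V'/V$ equals $\exp(t\eta)$. Since we have exhibited a basis of $V'/V$ consisting of $g_t$-eigenvectors, the induced action is diagonalizable, and a diagonalizable operator with a single eigenvalue must be a scalar multiple of the identity. Hence $g_t$ acts on $V'/V$ as multiplication by $\exp(t\eta)$, as claimed. The only step requiring any care is the first one (the semisimplicity argument guaranteeing that a basis of $V$ by $g_t$-eigenvectors can be extended to one of $V'$), but this is immediate from the diagonal nature of $g_t$.
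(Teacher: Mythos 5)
The paper marks this statement as an Observation (it ends with the $\clubsuit$ symbol from the ``clubsending'' style), so no explicit proof is given — the claim is left as a routine verification. Your proof is correct and is the natural argument one would write: since $g_t$ is diagonalizable, the $g_t$-invariant subspaces $V\subset V'$ decompose compatibly into intersections with the eigenspaces $V_\eta$, a basis of eigenvectors of $V$ extends to one of $V'$, and the induced eigenvalues on $V'/V$ form the multiset $\{\exp(t\eta):\eta\in E'-E\}$. The scalar-pair hypothesis says this multiset has only one distinct element, and a diagonalizable operator with a single eigenvalue is a scalar. Nothing is missing.
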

\begin{lem}\label{lem: final nonscalar has partial2}
Let $E_{-1}\subset E_0\subset E_1$ be multisets of sizes $l_{-1}<l_0<l_1$ respectively, such that $E_0$ is final in $E\subset E_1$. %Assume in addition that $E_{-1}\subset E_1$ is not a scalar pair.
Then
\begin{align*}
\eta_{E_0} \ge \frac{l_0-l_{-1}}{l_1-l_{-1}}\eta_{E_1} + \frac{l_1-l_0}{l_1-l_{-1}}\eta_{E_{-1}},
\end{align*}
and equality holds if and only if $E_{-1}\subset E_1$ is a scalar pair.
\end{lem}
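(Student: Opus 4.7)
The plan is to reduce the statement to a clean fact about sums of the top $k$ entries of a sorted multiset. Since $E_0$ is final in $E_{-1}\subset E_1$, it is obtained from $E_{-1}$ by adjoining the $l_0-l_{-1}$ largest elements of the difference multiset $D:=E_1-E_{-1}$. So if I set $m:=l_1-l_{-1}$ and $k:=l_0-l_{-1}$ and list $D$ as $d_1\ge d_2\ge\cdots\ge d_m$, then
\[
\eta_{E_0}-\eta_{E_{-1}}=\sum_{i=1}^k d_i, \qquad \eta_{E_1}-\eta_{E_{-1}}=\sum_{i=1}^m d_i.
\]
Subtracting $\eta_{E_{-1}}$ from both sides of the desired inequality and multiplying by $m$, the claim reduces to
\[
(m-k)\sum_{i=1}^k d_i \;\ge\; k\sum_{j=k+1}^m d_j.
\]

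The key identity is then to rewrite the left minus right side as a double sum:
\[
(m-k)\sum_{i=1}^k d_i - k\sum_{j=k+1}^m d_j
=\sum_{i=1}^k\sum_{j=k+1}^m (d_i-d_j).
\]
Every term $d_i-d_j$ is non-negative because $i\le k<j$ gives $d_i\ge d_k\ge d_{k+1}\ge d_j$, establishing the inequality. Moreover, this double sum vanishes exactly when $d_i=d_j$ for all $i\le k<j$, which forces $d_1=d_2=\cdots=d_m$, i.e., all elements of $D=E_1-E_{-1}$ coincide. By Definition~\ref{de: scalar pair}, this is exactly the condition that $E_{-1}\subset E_1$ is a scalar pair. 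Conversely, if $E_{-1}\subset E_1$ is a scalar pair, then the finality of $E_0$ forces $E_0-E_{-1}$ to consist of $k$ copies of the common value, and the inequality becomes an equality by direct substitution.

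This is essentially a one-paragraph calculation once the finality of $E_0$ is translated into ``take the $k$ largest elements of $D$,'' so I do not anticipate any real obstacle; the only care required is to verify that ``final'' as defined via the partial order $\eteq$ indeed corresponds to choosing the top $k$ elements of $D$, which follows directly from the definition of $\eteq$ (``$E_1\eteq E_2$ iff $E_2$ has at least as many elements above every threshold $r$ as $E_1$ does'').
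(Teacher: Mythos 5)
Your proof is correct and follows essentially the same route as the paper: both translate finality of $E_0$ into ``take the $l_0-l_{-1}$ largest elements of $E_1-E_{-1}$'' and then reduce to an averaging inequality for sorted multisets. The paper proves that averaging inequality by observing that the partial sums of the increasingly sorted difference multiset form a convex sequence, whereas you expand the same quantity as the manifestly nonnegative double sum $\sum_{i\le k}\sum_{j>k}(d_i-d_j)$; the two are equivalent elementary facts, and your version has the minor advantage of making the equality case transparent in a single line.
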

\begin{proof}
Denote the values with multiplicity of $E_1 - E_{-1}$ by 
\[\eta'_1\le \eta'_2\le \dots \le \eta'_{l_1-l_{-1}}.\] 
This sequence is constant if and only if $E_{-1}\subset E_1$ is a scalar pair.
It follows that the sequence $l_0'\mapsto \sum_{i=1}^{l_0'-l_{-1}}\eta'_i$ for $l_{-1}\le l_0'\le l_1$ is convex, and is linear if and only if $E_{-1}\subset E_1$ is a scalar pair.
The result follows since $E_0 = E_{-1} + \{\eta_i':i=l_0-l_{-1}+1,\dots,l_1-l_{-1}\}$.
% Denote the values with multiplicity of $E_1 - E_{-1}$ by \[\eta'_1\ge \eta'_2\ge \dots \ge \eta'_{l_1-l_{-1}}.\] 
% This sequence is constant if and only if $E_{-1}\subset E_1$ is a scalar pair.
% It follows that the sequence $l_0'\mapsto \sum_{i=1}^{l_0'-l_{-1}}\eta'_i$ for $l_{-1}\le l_0'\le l_1$ is concave, and is linear if and only if $E_{-1}\subset E_1$ is a scalar pair.
% The result follows since $E_0 = E_{-1} + \{\eta_i':i=1,\dots,l_0-l_{-1}\}$.
\end{proof}
\begin{lem}\label{lem: not scalar at edge}
Let $f$ be a $g$-template, let $J$ be a nontriviality interval of $f$ at $l_0$ and let $t_0$ one of its endpoints. 
Let $E_\bullet = \lim\limits_{J\ni t\to t_0}f_{E, \bullet}(t)$, and 
let $l_{-1}, l_1$ be the adjacent elements in $L(E_\bullet)$ to $l_0$ satisfying $l_{-1}<l_0<l_1$. 
Then $E_{l_{-1}}\subset E_{l_{1}}$ is not a scalar pair.
\end{lem}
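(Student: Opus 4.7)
\bigskip
\noindent\textbf{Proof plan.} The plan is to derive a contradiction from the scalar-pair assumption by showing that it would force $f_{H,l_0}$ to coincide, near $t_0$, with the chord between $(l_{-1}, f_{H,l_{-1}})$ and $(l_1, f_{H,l_1})$, which would contradict the fact that $l_0$ is a nontrivial place of $f_{H,\bullet}(t)$ throughout $J$.

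First I would set up the auxiliary ``gap'' function
\[
g(t) \;:=\; \tfrac{l_1-l_0}{l_1-l_{-1}}\, f_{H,l_{-1}}(t) \;+\; \tfrac{l_0-l_{-1}}{l_1-l_{-1}}\, f_{H,l_1}(t) \;-\; f_{H,l_0}(t).
\]
Since $f_{E,\bullet}$ is locally constant away from vertices of $f$ and $E_\bullet = \lim_{J\ni t\to t_0} f_{E,\bullet}(t)$, there is an open subinterval $J_0\subseteq J$ with $t_0\in \overline{J_0}$ on which $f_{E,\bullet}\equiv E_\bullet$. By compatibility (property \eqref{prop:compatibility}), $L_f(t) = L(E_\bullet)$ for every $t\in J_0$, so $l_{-1}<l_0<l_1$ are consecutive nontrivial places of the convex height sequence $f_{H,\bullet}(t)$. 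Convexity together with $l_0\in L_f(t)$ then gives $g(t)>0$ on $J_0$, while continuity of the $f_{H,l_j}$ and the fact that $t_0$ is an endpoint of the nontriviality interval $J$ of $f$ at $l_0$ force $g(t_0)=0$ (the point $(l_0,f_{H,l_0}(t_0))$ lies on the chord through the adjacent nontrivial places).

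Next I would exploit the scalar-pair hypothesis combinatorially. If $E_{l_{-1}}\subset E_{l_1}$ is scalar with $E_{l_1}-E_{l_{-1}} = \{\eta,\dots,\eta\}$, then by Observation \ref{obs: all between scalar} the unique intermediate set of size $l_0$ is $E_{l_0} = E_{l_{-1}} + \{\underbrace{\eta,\dots,\eta}_{l_0-l_{-1}}\}$, giving
\[
\eta_{E_{l_0}} \;=\; \tfrac{l_1-l_0}{l_1-l_{-1}}\,\eta_{E_{l_{-1}}} \;+\; \tfrac{l_0-l_{-1}}{l_1-l_{-1}}\,\eta_{E_{l_1}}.
\]
Now by property \eqref{prop:Derivative} of a $g$-template, on $J_0$ one has $\tfrac{d}{dt}f_{H,l_j}(t) = \eta_{E_{l_j}}$ for $j=-1,0,1$, so the linear identity above yields $g'(t)\equiv 0$ on $J_0$. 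Hence $g$ is constant on $J_0$, and by continuity that constant equals $g(t_0)=0$, contradicting $g>0$ on $J_0$.

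The argument is short and the only real subtlety is the bookkeeping in the first paragraph: one must ensure that $l_{-1},l_0,l_1$ are \emph{consecutive} in $L_f(t)$ on a one-sided neighborhood of $t_0$ inside $J$, so that convexity of $f_{H,\bullet}(t)$ gives the strict chord inequality $g>0$. This is immediate from the local constancy of $f_{E,\bullet}$ on $J_0$ and from the definition of $E_\bullet$ as the one-sided limit.
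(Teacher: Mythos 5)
Your proof is correct and is essentially the paper's argument: both work on the vertex-free one-sided subinterval $J_0$, observe that the chord-gap $g$ (which is a positive scalar multiple of $\partial^2 f_{H,l_0}$ by Observation \ref{obs: partial2 comutation}) is linear in $t$ there with $g>0$ on $J_0$ but $g(t_0)=0$, and then use the derivative condition \eqref{eq:diff eq} to pin $g'$ down in terms of $\eta_{E_{l_{-1}}},\eta_{E_{l_0}},\eta_{E_{l_1}}$. The only cosmetic difference is that you derive the scalar-pair $\Rightarrow$ affine-$\eta$ identity by hand (and argue by contradiction), whereas the paper cites the equality case of Lemma \ref{lem: final nonscalar has partial2} directly.
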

\begin{proof}
Let $J_0\subseteq J$ be an open interval in which $f$ has no vertices and such that $t_0$ is an endpoint of $J_0$. 
By the definition of $E_\bullet$, for every $t\in J_0$ we have $f_{E,\bullet}(t) = E_\bullet$. 
Consequently, $t\mapsto f_{H,l}(t)$ is linear in $J_0$ for every $l$ with $0\le l\le n$, and so is $t\mapsto \partial^2f_{H,l_0}(t)$.
Since $t_0\nin U_{l_0}(f)$, it follows that $\partial^2f_{H,l_0}(t_0) = 0$, while for every $t\in J_0$ we have $\partial^2f_{H,l_0}(t_0) \neq 0$. Consequently, $\frac{d}{dt}\partial^2f_{H,l_0}(t)$ does not vanish for $t\in J_0$.

From Observation \ref{obs: partial2 comutation} we get that 
\begin{align}\label{eq:local partial2}
\partial^2&f_{H,l_0}(t)
\\\nonumber &
= \frac{1}{l_0-l_{-1}} f_{H, l_{-1}} + \frac{1}{l_1-l_0} f_{H, l_{1}} - \frac{l_1-l_{-1}}{(l_1-l_0)(l_0-l_{-1})} f_{H, l_{0}}.
\end{align}
Differentiating Eq. \eqref{eq:local partial2} we obtain
\begin{align*}
0\neq&\frac{d}{dt}\partial^2f_{H,l_0}(t)
% \\ &
= \frac{1}{l_0-l_{-1}} \eta_{E_{l_{-1}}} + \frac{1}{l_1-l_0} \eta_{E_{l_{1}}} - \frac{l_1-l_{-1}}{(l_1-l_0)(l_0-l_{-1})} \eta_{E_{l_{0}}},
\end{align*}
which together with Lemma \ref{lem: final nonscalar has partial2} implies that $E_{l_{-1}}\subset E_{l_{1}}$ is not a scalar pair.
\end{proof}
% subsubsection preliminaries_on_directions (end)

\subsubsection{Constants Order of Magnitude} % (fold)
\label{ssub:constants_order_of_magnitude}
Our computations will use several universal constants, which depend on $\Eall$ but not on $T$, $f$, and $\Lambda$, and should obey certain relations. Specifically, we require that 
\index{gammastar@$\gamma_*$}\hypertarget{bcj}{} 
\begin{align}\label{eq: gamma dependency}
\gamma_{10} \ggg \gamma_9  \ggg \gamma_8 \ggg \gamma_7 \ggg \gamma_6 \ggg \gamma_5 \ggg \gamma_4 \ggg \gamma_3, \gamma_3' \ggg \gamma_2 \ggg \gamma_1, \gamma_1' \ggg 0.
\end{align}
Note that Eq. \eqref{eq: gamma dependency} states that both $\gamma_4 \ggg \gamma_3\ggg \gamma_2$ and $\gamma_4 \ggg \gamma_3'\ggg \gamma_2$ hold, and a similar interpretation applies to $\gamma_1, \gamma_1'$.
We will choose $\varepsilon>0$ small enough to satisfy several conditions that will be detailed throughout the proof. In particular, $\varepsilon$ is small enough so that the conclusions of Lemma \ref{lem:the right counting theorem}, Theorem \ref{thm:lin path}, and Lemma \ref{lem: E_i to V_i is functorial} are satisfied.
% subsubsection constants_order_of_magnitude (end)
% subsection preliminaries (end)

\subsubsection{Convenient Assumption} % (fold)
\label{ssub:convenient_assumption}
Throughout the section we evaluate the category flow of various $g$-templates on $\NN_0T=\{T_m:m\ge 0\}$. 
The category flows might be undefined at $T_m$ for some $m\ge 0$. 
To avoid these technicalities we will assume that all $g$-templates have no vertices on $\NN_0T$, i.e., they define direction filtrations on $\NN_0T$. 
To justify this assumption we describe two operations that guarantee that there will be no such vertices in all templates we ever use. 

First, translate $f$ by a random small number to obtain a $g$-template $f'$ with no vertices on $\NN_0T$. 
We can therefore assume w.l.o.g. that $f$ has no vertices on $\NN_0T$. The other $g$-templates we will encounter are shifts of $f$, namely $f^\brho$ for some values of $\brho$. Since $f$ has no non-null vertices on $\NN_0T$, it follows that $f^\brho$ has no such vertices as well. To ensure $f^\brho$ has no null vertices on $\NN_0T$, we perturb $\brho$. The perturbations affect Subsubsection \ref{ssub:the_adjustable f} below and the anchoring operation that we will define in Subsubsection \ref{ssub:updating pi m, Gamma m}.
% , in which we are given one shift sequence $\bpi$ for which $f^{\brho+\bpi}$ has no vertices in $\NN_0T$. 
% The anchoring operation changes $\bpi$ at a single nontriviality interval, to obtain another shift sequence $\bpi'$. To make sure $f^{\brho+\bpi}$ has no null vertices in $\NN_0T$, we will perturb the updated value. Since the construction is indifferent to perturbations, 
This procedure will not affect the arguments.
% subsubsection convenient_assumption (end)
\subsection{Setup for the Strategy \texorpdfstring{$\sigma_{\rm Alice}$}{of Alice}} % (fold)
\label{sub:the_strategy}
\subsubsection{The State} % (fold)
\label{ssub:the_state}
In each step we keep a state that consists of 
\begin{enumerate}[label=(S.\arabic*), ref=(S.\arabic*)]
  \item 
$h_{m}\in H$ and $T_m = Tm$. 
  \item 
  An error function $
  \index{pim@$\widetilde \bpi^m$}\hypertarget{bda}{}
  \bpi^m=(\pi_l^m)_{l=1}^{n-1}$, where $\pi_l^m: U_l(f)\to [-C_1, C_1]$ is locally constant.
  \item
  \index{Gammam@$\Gamma_\bullet^m$}\hypertarget{bdb}{}
  A filtration $\Gamma_\bullet^m$ of $\Lambda$.
\end{enumerate}
\index{Lambdamtilde@$\widetilde \Lambda^m$}\hypertarget{bdc}{}
\index{Gammamtilde@$\widetilde \Gamma^m_\bullet$}\hypertarget{bdd}{}
Denote $\widetilde \Lambda^m := g_{T_m}h_m\Lambda$ and $\widetilde \Gamma_l^m := g_{T_m}h_m\Gamma_l^m$ for every $l\in L(\Gamma^m_\bullet)$. 
We require the data in the state to satisfy the following conditions:
\begin{enumerate}[resume,label=(S.\arabic*), ref=(S.\arabic*)]
\item \label{asser: existence of lattices}
$L(\Gamma^m_\bullet)\subseteq L_f(T_m)$. We will see in Remark \ref{rem: where L = L} that a stronger condition on $L(\Gamma^m_\bullet)$ actually holds.
\item \label{asser: approximation of flag}
For every $l\in L(\Gamma^m_\bullet)$ we have 
\begin{align}\label{eq: aligning Gamma}
\spa\widetilde \Gamma^m_l\in \gr_{\to f_{E,l}(T_m)}^{\varepsilon, g}
\end{align}
and 
\begin{align}\label{eq: bpi = det Gamma - f}
\log \cov(\widetilde\Gamma^m_l) = f_{H,l}^{\brho + \bpi^m}(T_m) + O(1),
\end{align}
where $\brho$ is defined as in Eq. \eqref{eq: what is brho}.% and $\bl\Gamma_l$ is defined in Definition \ref{de: blade}.
\item \label{asser: bound on noise}
\index{Noise lattice}\hypertarget{bde}{}
For every two consecutive $l<l'\in L(\Gamma^m_\bullet)$, consider lattices $\widetilde\Gamma^m$ with $\widetilde\Gamma^m_l\subset \widetilde\Gamma^m \subset \widetilde\Gamma^m_{l'}$ that project to elements in $\HN(\widetilde\Gamma^m_{l'}/\widetilde\Gamma^m_l)$.
We call such $\widetilde \Gamma^m$ a \emph{noise lattice}. 
We require that the noise lattices are insignificant, in the sense that 
\begin{align}\label{eq: O(T) bound on noise}
\log\cov(\widetilde\Gamma^m_{l'}/\widetilde\Gamma^m/\widetilde \Gamma^m_l)=O(T).
\end{align}
The choice of $\widetilde \Gamma^m$ is equivalent to the choice of $\Gamma$ such that $\Gamma^m_l\subset \Gamma \subset \Gamma^m_{l'}$, where $\widetilde \Gamma^m = g_{T_m}h_m\Gamma$.
\end{enumerate}

The strategy selects a finite set $A_{m+1}$ of possible matrices $h_{m+1}\in B_{d_\varphi}(h_m; (1-\exp(-\alpha_\varphi T))^{1/\alpha_\varphi}\exp(-T_m))$, where $\alpha_\varphi$ is a constant for which $d_\varphi^{\alpha_\varphi}$ is a metric, see Subsection \ref{sub:Case of interest}.
The set $A_{m+1}$ is such that $d_\varphi(h_{m+1}, h_{m+1}')\ge 3^{1/\alpha_\varphi}\exp(-T_{m+1})$ for every pair of distinct elements $h_{m+1}, h_{m+1}'\in A_{m+1}$.

\index{Amp1tilde@$\widetilde A_{m+1}$}\hypertarget{bdf}{}
Instead of $A_{m+1}$, we can consider the set 
\begin{align}\label{eq: tilde A m}
\widetilde A_{m+1} := g_{T_m}A_{m+1}h_m^{-1}g_{-T_m},
\end{align}
which satisfies 
\index{BT@$B_T$}\hypertarget{bdg}{}
\begin{itemize}
  \item $\widetilde A_m \subseteq B_T:= B_{d_\varphi}(\Id;(1-\exp(-\alpha_\varphi T))^{1/\alpha_\varphi})$. 
  \item $d_\varphi(h_{m+1}, h_{m+1}')\ge 3^{1/\alpha_\varphi}\exp(-T)$ for every pair of distinct  $\widetilde h_{m+1}, \widetilde h_{m+1}'\in \widetilde A_{m+1}$.
\end{itemize}
This translation is possible due to the right-invariance of the semi-metric $d_\varphi$. The set $B_T$ is chosen in view of Eq. \eqref{eq: A_m location} in the definition of the dimension game. It has the properties that, for every $h_{m+1}'\in A_m$,
\begin{align}\label{eq: ball containment}
B_{d_\varphi}(h_{m+1};\exp(-T_{m+1})) \subseteq B_{d_\varphi}(h_m;\exp(-T_{m})).
\end{align}
Sometimes the set $A_{m+1}$ will consist of a single element. In that case, choosing $A_{m+1}$ is equivalent to choosing the unique element $h_{m+1}$ in it.
% Denote $B_T := B_{d_\varphi}(\Id;(1-\exp(-\alpha_\varphi T))^{1/\alpha_\varphi})$.

\begin{de}[Alignment of noise lattices]
\index{Final noise lattice}\hypertarget{bdh}{}
\index{Aligned noise lattice}\hypertarget{bdi}{}
Let $l_0<l_1$ be two consecutive elements in $L(\Gamma^m_\bullet)$.
We say that a noise lattice $\Gamma$ with $\Gamma^m_{l_0}\subset\Gamma\subset\Gamma_{l_1}^m$ is \emph{$E$-aligned} at step $m$ if $\widetilde \Gamma^m = g_{T_m}h_m\Gamma$ satisfies $\spa \widetilde \Gamma^m\in U_\varepsilon(\gr_E)$, for some $f_{E,l_0}(T_m)\subset E \subset f_{E,l_1}(T_m)$, where $\rk \Gamma = l$. We say that $\Gamma$ is \emph{final} at step $m$ if it is $E$ aligned and $E$ is
final in $f_{E,l_0}(T_m)\subset f_{E,l_1}(T_m)$.
\end{de}

\subsubsection{Initialization} % (fold)
\label{subs:initialization}
Sets $h_0 = \Id$, $T_0 = 0$, and $\bpi^0 = 0$ everywhere. Since $f_{H,l}(0)=0$ for every $l$ with $0\le l\le n$, it follows that $L_{f^{\brho+\bpi^0}}(t) = \{0,n\}$, and so we choose $\Gamma_\bullet^0 := (\{0\}\subseteq \Lambda)$.

% subsubsection initialization (end)
% Denote $\Lambda_n = g_{T_m}h_m\Lambda$.
\subsubsection{Updating $\bpi^m$ and $\Gamma^m_\bullet$.} % (fold)
\label{ssub:updating pi m, Gamma m}
In this subsubsection we describe how $\bpi^m, \Gamma^m_\bullet$ are updated at each step.

There are three different ways to update $\Gamma^m_\bullet$ to $\Gamma^{m+1}_\bullet$: \textbf{Preserve}, \textbf{Add}, and \textbf{Remove}, which respectively sets $\Gamma^{m+1}_\bullet = \Gamma^{m}_\bullet$, adds a single element to $\Gamma^m_\bullet$ to get $\Gamma^{m+1}_\bullet$, and removes a single element from $\Gamma^m_\bullet$ to get $\Gamma^{m+1}_\bullet$.
We will use preserve and remove at Standard Steps, preserve at Changing Steps, and add at Adding Steps.

\begin{de}[Anchoring]\label{de: anchor}
\index{Anchor}\hypertarget{bdj}{}
By default, $\bpi^{m+1}=\bpi^m$.
After every adding or changing step, we will use the \textbf{Anchor} operation. For each $1\le l\le n-1$, to anchor $\pi^{m+1}_l$ is to fix $\pi^{m+1}_l(T_{m+1})$ so that equality is attained in the formula
\[\log \cov(\widetilde \Gamma_l^{m+1}) = f_{E,l}(T_{m+1}) + \rho_l(T_{m+1}) + \pi^{m+1}_l(T_{m+1}),\]
i.e.,
\[\pi^{m+1}_l(T_{m+1}) := \log \cov(\widetilde \Gamma_l^{m+1}) - f_{E,l}(T_{m+1}) - \rho_l(T_{m+1}).\]
Note that this operation changes $\pi^m_l$ on the entire connected component of $T_m$ on $U_l(f)$.
This is the only operation that changes $\bpi^m$. If we do not anchor $\pi^{m+1}_l$, then $\pi^{m+1}_l = \pi^{m}_l$. 
\end{de}

% The behavior of the filtration $\Gamma_l^m$ is simpler. 
% In each step we \textbf{Add} a single lattice to the filtration $\Gamma_l^m$, \textbf{Remove} a single lattice, or \textbf{Preserve} it.

% subsubsection updating pi m, Gamma m (end)
% subsection the_strategy (end)
\subsection{Geometric Picture} % (fold)
\label{sub:geometric_picture}
We will describe here geometric features of the behavior of the $(T, g)$-game. 

\index{hm0m1@$\widetilde h^{m_0\to m_1}$}\hypertarget{bea}{}
Assume the game has been played, and has generated a sequence $h_0,A_1, h_1,\ldots$. 
% Recall that $h_\infty = \lim_{m\to \infty}h_m$.
For every $0\le m_0 \le m_1< \infty$ such that $m_0 <\infty$, denote 
\[\widetilde h^{m_0\to m_1}:= g_{T_{m_0}}h_{m_1}h_{m_0}^{-1}g_{-T_{m_0}}.\]
Eq. \eqref{eq: ball containment} implies that $d_\varphi(h_{m_0}, h_{m_1}) \le \exp(-T_{m_0})$ and for every $m_0\le m_1\le m_2$ we have $d_\varphi(\widetilde h^{m_0\to m_1}, \widetilde h^{m_0\to m_2}) \le \exp(-(T_{m_1}-T_{m_0}))$. 
Hence, the limit $\lim_{m\to\infty}h_m$ exists. Denote $h_\infty:=\lim_{m\to\infty}h_m$, and for every $0\le m_0<\infty$ denote 
\[\widetilde h^{m_0\to \infty}:= g_{T_{m_0}}h_{\infty}h_{m_0}^{-1}g_{-T_{m_0}}.\]

For every $m\ge 0$ denote $\widetilde h^{m+1} = \widetilde h^{m\to m+1}$. 
Note that 
\begin{align}
h_m = g_{-T_m}\widetilde h^m g_T\widetilde h^{m-1}g_T\cdots g_T\widetilde h_1 g_{T_0}.
\end{align}

\begin{remark}[Relative notation]\label{rem:relative notation}
Some objects are defined from the absolute perspective of time $0$, like $\Lambda$, $\Gamma^m_\bullet$, $h_m$, and $A_m$. 
Sometimes it is more convenient to consider these objects from the perspective of time $T_m$. Whenever we do so, we denote the relative objects with $\widetilde\ $ (tilde). 
For example, $\widetilde \Gamma_\bullet^{m} = g_{T_m}h_m\Gamma_\bullet^{m}$, $\widetilde h_{m+1} = g_{T_m} h_{m+1}h^{-1}_mg_{-T_m}$, and $\widetilde A_{m+1} = g_{T_m} A_{m+1}h^{-1}_mg_{-T_m}$ are the relative versions of $\Gamma^m_\bullet$, $h_m$, and $A_m$, respectively. The advantage of the relative perspective (with tilde) is that the formulas do not involve $T_m$, and are time invariant. The advantage of the absolute perspective is the locally constant nature of various definitions, such as $\Gamma_\bullet^{m}$. 
\end{remark}

The following is a corollary of Lemma \ref{lem: making q0}. 
\begin{cor}\label{cor: blade behavior over time}
There exists an $\varepsilon_0>0$ small enough such that the following holds. 
For every $\varepsilon$ with $0<\varepsilon<\varepsilon_0$, $l$ with $0\le l\le n$, two integers $0\le m_0\le m_1$, and $V\in \gr$ such that $\widetilde V^{m}:=g_{T_m}h_mV\in \gr_{\to E}$ 
for all $m$ with $m_0\le m\le m_1$ and $\widetilde V^{m_0} \in \gr_{\to E}^{\varepsilon, g}$, we have
\[\log d_H(\widetilde V^{m_1}, \gr_{E}^g) = -\Omega(-\log \varepsilon + (T_{m_1}-T_{m_0})),\]
i.e., $d_H(\widetilde V^{m_1}, \gr_{E}^g)$ decays exponentially in $m_1-m_0$.
In addition, for all $l$-blades $w\in \bigwedge^l V$
we have 

\begin{align}\label{eq: change norm}
\log \|g_{T_{m_1}}h_{m_1}w\| = \log \|g_{T_{m_0}}h_{m_0}w\| + (m_1-m_0)T\eta_E + O(1).
\end{align}

\end{cor}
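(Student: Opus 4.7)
The plan is to inductively factor each $\widetilde V^m$ through $\gr_E^g$ as a small perturbation along $H^-$, and track the exponential contraction of this perturbation under the iterated action. First I apply Lemma~\ref{lem: making q0} together with Remark~\ref{rem: q0 epsilon small} to the base case $\widetilde V^{m_0}\in \gr_{\to E}^{\varepsilon, g}$, producing $Q^{m_0}\in H^-$ with $d_{H^-}(Q^{m_0},\Id)=O(\varepsilon)$ and $W^{m_0}:=(\widetilde V^{m_0})^{\to}\in \gr_E^g$ such that $\widetilde V^{m_0}=Q^{m_0}W^{m_0}$. For the inductive step, suppose $\widetilde V^m=Q^m W^m$ with $W^m\in \gr_E^g$ and $d_{H^-}(Q^m,\Id)$ small; writing $\widetilde V^{m+1}=g_T\widetilde h^{m+1}\widetilde V^m$ and noting that $\widetilde h^{m+1}\in B_T\subseteq B_{d_\varphi}(\Id;1)$ lies in a fixed compact set, the product $\widetilde h^{m+1}Q^m$ is a small perturbation of $\widetilde h^{m+1}\in H$, which lies in the image of $\mLU$ by Lemma~\ref{lem: mult map}. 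Smoothness of $\mLU^{-1}$ near $\widetilde h^{m+1}$ yields $\widetilde h^{m+1}Q^m=q_-'q_0'h'$ with $q_-'\in H^-$, $q_0'\in H^0$, $h'\in H$, and $d(q_-',\Id),\,d(q_0',\Id),\,d(h',\widetilde h^{m+1})=O(d_{H^-}(Q^m,\Id))$.

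The critical step is to show $h' W^m=W^m$. The hypothesis $\widetilde V^{m+1}\in\gr_{\to E}$, combined with the facts that $g_T$ and $H^-$ preserve $\gr_{\to E}$ and that $H^0$ preserves $\gr_E^g$, implies $h' W^m\in\gr_{\to E}$; Lemma~\ref{lem: H is H} then forces $h'\in H_{W^m}$. Setting $W^{m+1}:=q_0' W^m\in\gr_E^g$ and $Q^{m+1}:=g_T q_-' g_{-T}\in H^-$, I obtain $\widetilde V^{m+1}=Q^{m+1}W^{m+1}$ together with the contraction
\[
d_{H^-}(Q^{m+1},\Id)\le C_0\,e^{-cT}\,d_{H^-}(Q^m,\Id),
\]
where $C_0$ depends only on the fixed compact set $B_{d_\varphi}(\Id;1)$ and $c>0$ is the smallest positive eigenvalue gap of $g_1|_{H^-}$. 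For $T$ sufficiently large, $C_0e^{-cT}\le e^{-c'T}$ for some $c'>0$, and iterating gives $d_{H^-}(Q^{m_1},\Id)=O(\varepsilon\,e^{-c'(T_{m_1}-T_{m_0})})$. Since $d_\gr(\widetilde V^{m_1},W^{m_1})=O(d_{H^-}(Q^{m_1},\Id))$, the first assertion follows.

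For the blade statement, I track $w^m:=g_{T_m}h_m w\in\bigwedge^l\widetilde V^m$. Writing $w^m=Q^m u^m$ with $u^m\in\bigwedge^l W^m$, the above decomposition shows that $w^{m+1}=\exp(T\eta_E)\,Q^{m+1}u^{m+1}$, where $u^{m+1}:=q_0' h' u^m\in\bigwedge^l W^{m+1}$ and the factor $\exp(T\eta_E)$ arises because $g_T$ acts on the one-dimensional space $\bigwedge^l W^{m+1}$ as the scalar $\exp(T\eta_E)$. The key observation is that every element of $H$ is unipotent (in a basis ordered by increasing $\eta_i$ it is lower triangular with $1$s on the diagonal), so $h'|_{W^m}$ has determinant~$1$ and $\|h' u^m\|=\|u^m\|$; combined with $\|q_0' u\|,\|Q^m u\|=\|u\|(1+O(\varepsilon_m))$ for $\varepsilon_m:=d_{H^-}(Q^m,\Id)$, this yields $\log\|w^{m+1}\|-\log\|w^m\|=T\eta_E+O(\varepsilon_m)$. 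Summing over $m_0\le m<m_1$ and using the geometric series $\sum_m\varepsilon_m=O(\varepsilon)$ gives $\log\|w^{m_1}\|-\log\|w^{m_0}\|=(m_1-m_0)T\eta_E+O(1)$.

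The main obstacle is deducing $h'\in H_{W^m}$ from the trajectory assumption; this rests entirely on the rigidity Lemma~\ref{lem: H is H}, since without it the factor $h'$ could move $W^m$ within $\gr_E^g$ and the contraction mechanism would break down. The unipotency observation in the blade estimate is secondary but essential, as it prevents the per-step $O(1)$ error from accumulating linearly in $m_1-m_0$.
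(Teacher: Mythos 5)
Your proof is correct in substance but takes a genuinely different route from the paper. You run an explicit step-by-step induction: at each step $m\to m+1$ you decompose $\widetilde h^{m+1}Q^m = q_-'q_0'h'$ directly from Lemma~\ref{lem: mult map}, use Lemma~\ref{lem: H is H} to force $h'W^m=W^m$, and iterate the $H^-$-contraction under conjugation by $g_T$. The paper instead does a single decomposition: it invokes the prepackaged Lemma~\ref{lem: understanding V to E} once, applied to the \emph{aggregate} increment $\widetilde h^{m_0\to m_1}=g_{T_{m_0}}h_{m_1}h_{m_0}^{-1}g_{-T_{m_0}}\in B_{d_\varphi}(\Id;1)$ (which lies in $H_{\widetilde V^{m_0}\to E}$ by the trajectory hypothesis), and then conjugates the resulting $H^{-0}$-factor by $g_{(m_1-m_0)T}$ in one shot. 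Both arguments pivot on the same rigidity input, Lemma~\ref{lem: H is H}, so you have essentially re-derived the content of Lemma~\ref{lem: understanding V to E} inside your inductive step; the paper's approach avoids that by using the lemma as a black box. One point where your route gives slightly less: the per-step contraction $d_{H^-}(Q^{m+1},\Id)\le C_0 e^{-cT}d_{H^-}(Q^m,\Id)$ has a constant $C_0$ coming from the derivative bound on $\mLU^{-1}$ over the compact set $B_T$, and you need $C_0 e^{-cT}<1$, i.e.\ $T$ bounded below, for the iteration to contract and for $\sum_m\varepsilon_m$ to converge. The paper's single-shot proof has no such requirement, and indeed the corollary is stated with an $O(1)$ (uniform in $T$, per the convention in Subsection~\ref{sub:notations}). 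This gap is harmless in the game applications where $T\to\infty$, and can be patched by grouping steps into blocks of a bounded number so the blockwise contraction factor is $<1$, but as written your induction silently assumes $T$ large while the statement does not. The unipotency observation (so that $h'$ acts trivially on $\bigwedge^l W^m$) is correct and matches the role played in the paper by the stabilizer identity $\check\funh(\widetilde h^{m_0\to m_1})(\widetilde V^{m_0})^\to=(\widetilde V^{m_0})^\to$.
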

\begin{proof}
Since $\widetilde V^{m_0}\in \gr_{\to E}^{\varepsilon, g}$, by Lemma \ref{lem: making q0} there exists $q_0\in H^-$ such that $d_{H^-}(q_0, \Id) = O(\varepsilon)$ and $\widetilde V^{m_0} = q_0(\widetilde V^{m_0})^\to$. Apply Lemma \ref{lem: understanding V to E} to $\widetilde V^{m_0}, q_0$, and $E$. We obtain two open sets $U_1\subseteq H_{\widetilde V^{m_0}\to E}, U_2\subseteq H\big((\widetilde V^{m_0})^\to\big)$ and two maps 
\[\check \funh:U_1\xrightarrow\sim U_2,\text{ and }\check \funq:U_2 \to H^{-0},\]
where $\check \funh$ is a diffeomorphism such that for every $h\in U_1$ we have 
\begin{align}\label{eq: hq behanior}
hq_0 = \check \funq(h)\check \funh(h). 
\end{align}
Moreover, since $d_{H^-}(q_0,\Id)$ is arbitrarily small, $U_1$ and $U_2$ are arbitrarily large. Specifically, for some $\varepsilon_0>0$ small enough which depends only on $n$, we get that $B_{d_\varphi}(\Id;1)\subseteq U_1$.
We will apply these functions to $\widetilde h^{m_0\to m_1}$:
\begin{align*}
\widetilde V^{m_1} &= g_{(m_1-m_0)T} \widetilde h^{m_0\to m_1} \widetilde V^{m_1} =  g_{(m_1-m_0)T} \widetilde h^{m_0\to m_1} q_0 \cdot (\widetilde V^{m_1})^\to\\&=
g_{(m_1-m_0)T}\check \funq\big(\widetilde h^{m_0\to m_1}\big) \check \funh\big(\widetilde h^{m_0\to m_1}\big) \cdot (\widetilde V^{m_1})^\to 
\\&= 
g_{(m_1-m_0)T}\check \funq\big(\widetilde h^{m_0\to m_1}\big) \cdot (\widetilde V^{m_1})^\to 
\\&= 
g_{(m_1-m_0)T}\check \funq\big(\widetilde h^{m_0\to m_1}\big) g_{-(m_1-m_0)T}\cdot (\widetilde V^{m_1})^\to.
\end{align*}
Since 
\begin{itemize}
  \item 
$\check \funq$ depends differentiably on $q_0$,
\item if $q_0=\Id$ then $\check \funq\equiv \Id$,
\item $d_{H^-}(q_0, \Id) = O(\varepsilon)$,
\end{itemize}
it follows that for all $h\in B_{d_\varphi}(\Id;1)$ we have 
$d_{H^{-0}}(\check \funq(h),\Id) = O(\varepsilon)$ for every $h\in B_{d_\varphi}(\Id;1)$. In particular, $d_{H^{-0}}(\check \funq(\widetilde h^{m_0\to m_1}),\Id)=O(\varepsilon)$. 
Write $\check \funq\big(\widetilde h^{m_0\to m_1}\big) = q^-_1q_1^0$ where $q_1^-\in H^-, q_1^0\in H^0$. We deduce that $d_{H^{-1}}(q^-_1, \Id) = O(\varepsilon)$. 
Altogether, we get
\begin{align*}
d_{\gr}&(\widetilde V^{m_1}, \gr_E^g) \\&\le 
d_{\gr}(g_{(m_1-m_0)T}\check \funq\big(\widetilde h^{m_0\to m_1}\big) g_{-(m_1-m_0)T}\cdot (\widetilde V^{m_1})^\to, \gr_E^g)
\\&=d_{\gr}(g_{(m_1-m_0)T}q_1^-q_1^0 g_{-(m_1-m_0)T}\cdot (\widetilde V^{m_1})^\to, \gr_E^g)
\\&=d_{\gr}(g_{(m_1-m_0)T}q_1^- g_{-(m_1-m_0)T}q_1^0\cdot (\widetilde V^{m_1})^\to, \gr_E^g)
\\&\le d_{\gr}(g_{(m_1-m_0)T}q_1^- g_{-(m_1-m_0)T}q_1^0\cdot (\widetilde V^{m_1})^\to, q_1^0\cdot (\widetilde V^{m_1})^\to)
\\&= O\left(d_{H^-}(g_{(m_1-m_0)T}q_1^- g_{-(m_1-m_0)T}, \Id)\right).
\end{align*}
The distance result follows from the uniform contraction rate of conjugation of $g_t$ on $H^-$.

To show Eq. \eqref{eq: change norm}, note that 
\[g_{T_{m_1}}h_{m_1}w = g_{(m_1-m_0)T} \widetilde h^{m_0\to m_1}g_{T_{m_0}}h_{m_0}w.\]
By Eq. \eqref{eq: hq behanior}, we can factor $\widetilde h^{m_0\to m_1}$ as 
\[\widetilde h^{m_0\to m_1} = \check \funq(\widetilde h^{m_0\to m_1})\check \funh(\widetilde h^{m_0\to m_1})q_0^{-1}.\]
Further by the definition of $q_0$, \[q_0^{-1}\widetilde V^{m_0} = q_0^{-1}g_{T_{m_0}}h_{m_0}V \in \gr_{E}^g,\]
and hence $q_0^{-1}g_{T_{m_0}}h_{m_0}w\in \bigwedge^l(q_0^{-1}g_{T_{m_0}}h_{m_0}V)$ is a $g_t$-eigenvector of $\bigwedge^l \RR^n$, with eigenvalue $\exp(\eta_E t)$.
Consequently, 
\begin{align*}
\log \|g_{T_{m_1}}h_{m_1}w\| &= 
\log \|g_{(m_1-m_0)T} \widetilde h^{m_0\to m_1}g_{T_{m_0}}h_{m_0}w\|
\\&=
\log \|g_{(m_1-m_0)T} \check \funq(\widetilde h^{m_0\to m_1})\check \funh(\widetilde h^{m_0\to m_1})q_0^{-1}g_{T_{m_0}}h_{m_0}w\|
\\&=
\log \|g_{(m_1-m_0)T} \check \funh(\widetilde h^{m_0\to m_1})q_0^{-1}g_{T_{m_0}}h_{m_0}w\| + O(1).
\end{align*}
The last equality holds because \[d_{H^{-0}}(g_{(m_1-m_0)T} \check \funq(\widetilde h^{m_0\to m_1})g_{-(m_1-m_0)T}, \Id) = O(1).\]
Since $\check \funh(\widetilde h^{m_0\to m_1})$ preserves $(\widetilde V^{m_0})^\to = q_0^{-1}g_{T_{m_0}}h_{m_0}V$, it stabilizes the vector $q_0^{-1}g_{T_{m_0}}h_{m_0}w$ and hence
\begin{align*}
\log \|g_{T_{m_1}}h_{m_1}w\| &= 
\log \|g_{(m_1-m_0)T} q_0^{-1}g_{T_{m_0}}h_{m_0}w\| + O(1)
\\&=
(m_1-m_0)T\eta_E +
\log \|q_0^{-1}g_{T_{m_0}}h_{m_0}w\| + O(1)
\\&=
(m_1-m_0)T\eta_E +
\log \|g_{T_{m_0}}h_{m_0}w\| + O(1),
\end{align*}
as desired.
\end{proof}
% subsection geometric_picture (end)
\subsection{The Different Steps} % (fold)
\label{sub:the_different_steps}
Here we describe the three types of steps of $\sigma_{\rm Alice}$: adding, changing and standard. For each type we provide the condition that should be satisfied so that step $m$ has this type, the way $A_{m+1}$ is chosen, and the way $\bpi^m$ and $\Gamma^m_\bullet$ are chosen.

% subsubsection adding_step (end)
\subsubsection{Adding Step} % (fold)
\label{ssub:adding_step}
  % \hypertarget{foo}{}
 \index{Steps of Alice!adding}\hypertarget{beb}{}
To determine whether step $m$ is an Adding Step we do as follows.
Consider a null vertex $t_0$ of $f$ at $l$ of the form $*\to E$. By Lemma \ref{lem: stability or sep+sig}, for every independent shift sequence $\brho'$ with parameters $2C_1, (\nu_J)_{J\in \cG_f}\in [0,2C_1]^{\cG_f}$ there is a unique vertex of $f^{\brho'}$ that corresponds to $t_0$, denoted $t_0^{\brho'}$. Since $t_0^{\brho'}$ depends continuously on $\brho$, the following set is an interval:
\[[a_{t_0}, b_{t_0}] := \left\{t_0^{\brho'}:\begin{matrix*}[l] \brho'
\text{ is an independent shift sequence}
\\
\text{with parameters $2C_1, (\nu_J)_{J\in \cG_f}\in [0,2C_1]^{\cG_f}$}
\end{matrix*} \right\}.\]
Since $f$ is $C_1$-separated, $b_{t_0}-a_{t_0} = \Theta(C_1)$.
Denote $m_{t_0}^a := \lfloor a_{t_0}/T\rfloor$ and $m_{t_0}^b:=\lfloor b_{t_0}/T\rfloor$. 
By Lemma \ref{lem: stability or sep+sig}, for every such $\brho'$ the $g$-template $f^{\brho'}$ has exactly one vertex in $[a_{t_0}, b_{t_0}]$, namely $t_0^{\brho'}$. 
Denote by $m_{t_0}$ the first $m'\in [m_{t_0}^{a},m_{t_0}^{b}]$ for which \[L_f^{\brho+\bpi^{m_{t_0}^a}}(T_{m'+1})= L_f^{\brho+\bpi^{m_{t_0}^a}}(T_{m'}) \cup \{l\}.\]
As we will see below, there will be only Standard Steps at $m \in [m_{t_0}^a,m_{t_0}^b] \setminus \{m_{t_0}\}$,
and Standard Steps do not change $\bpi^m$. Hence, $m_{t_0}$ is the unique $m'\in [m_{t_0}^{a},m_{t_0}^{b}]$ for which 
\[L_f^{\brho+\bpi^{m_{t_0}^a}}(T_{m'}) = L_f^{\brho+\bpi^{m_{t_0}^a}}(T_{m_{t_0}^a}),\] and 
\[L_f^{\brho+\bpi^{m_{t_0}^a}}(T_{m'+1}) = L_f^{\brho+\bpi^{m_{t_0}^a}}(T_{m_{t_0}^a})\sqcup \{l\} = L_f^{\brho+\bpi^{m_{t_0}^a}}(T_{m_{t_0}^b+1}).\]

\noindent\hypertarget{cond:add}{\textbf{Condition-Adding:}}
Step $m$ is an \textbf{Adding Step} if $m \in [m_{t_0}^a, m_{t_0}^b]$ and $m=m_{t_0}$ for some null vertex $t_0$ of $f$ of type $*\to E$. Note that there may exist a unique such null vertex $t_0$ with $m \in [m_{t_0}^a, m_{t_0}^b]$. 
  %Moreover, if $m\ge m_{t_0}^a$ then $m_{t_0}$ can be computed.

The following condition is equivalent to \hyperlink{cond:add}{Condition-Adding}:

\noindent\textbf{Condition-Adding$'$:} 
Step $m$ is an \textbf{Adding Step} if \[L(\Gamma^m_\bullet) = L_{f^{\brho+\bpi^{m}}}(T_{m}) \subsetneq L_{f^{\brho+\bpi^{m}}}(T_{m+1}).\]

\noindent\textbf{Construction of $A_{m+1}$:}
When $m$ is an Adding Step, $A_{m+1}$ consists of a single element $h_{m+1}$. Let us explain how $\widetilde h_{m+1}$ is calculated.
% By the separated condition of $f^{\brho+\bpi^m}$ and Assertion \ref{asser: existence of lattices} we will have $L(\Gamma^m_\bullet) = L_{T_{m}}(f^{\brho+\bpi^m})$.
% Since $f$ is separated there is a unique $l$ such that there is a null vertex $*\to E$ in $[T_m, T_{m+1}]$ of $f^{\brho+\bpi^m}$ at some $l$. 
Denote by $l^-<l<l^+$ the consecutive elements of $l$ in $L_{T_{m+1}}(f^{\brho+\bpi^m})$. 
We will use Lemma \ref{lem: enlarging the flag} to obtain a lattice $\Gamma$ with $\widetilde \Gamma^m_{l^-} \subset \Gamma \subset \widetilde\Gamma_{l^+}^m$ and an element $\widetilde h_{m+1}\in B_T\cap H_{\spa\Gamma_\bullet^m\to f_{E,l}^{\brho+\bpi^m}}$ such that
\begin{align}\label{eq:adding step - new space}
\widetilde h_{m+1}\spa\Gamma\in \gr^{O(\varepsilon), g}_{\to f^{\brho+\bpi^m}_{E,l}(T_{m+1})}. 
\end{align}

% Define $\Gamma^{m+1}_\bullet$ to be the flag $\Gamma^m_\bullet$ with $h_m^{-1}g_{-T_m}\Gamma$ added to it. 
% She anchors $\bpi_l^{m+1}$ to the newly add $g_{-T_m}h_m^{-1}\Gamma$. 

To use Lemma \ref{lem: enlarging the flag} we have to obtain some bound on the noise lattices lying between $\Gamma_l^m$ and $\Gamma_{l'}^m$. 
In Corollary \ref{cor: bound on noise} below we will find $\gamma_4 > 0$ depending only on $\Eall$
such that every noise lattice $\Gamma'$ at an Adding Step with
$\widetilde \Gamma_l^m\subset \Gamma'\subset \Gamma_{l'}^m$ 
has $\log \cov (\widetilde\Gamma_{l'}^m/\Gamma'/\widetilde\Gamma_l^m) \le \gamma_4 T$.

% This is possible since 
% $f_{E, l^-}^{\brho + \bpi^m}(T_m)\subseteq f_{E, l_{l'}^m}^{\brho + \bpi^m}(T_m)$ 
% is not a scalar pair. Indeed, if it was a scalar pair then the slope of $\partial^2 f_{H, l}^{\brho + \bpi^m}(T_{m+1})$ which contradicts the existence of the null vertex.

% fact that $f_{H, l}^{\brho + \bpi^m}(T_{m+1})$ becomes a vertex of the flag-weights $f_{H, *}^{\brho + \bpi^m}(T_{m+1})$ but the same cannot be said at time $T_m$. 

% It follows that $\log \cov (\widetilde\Gamma_{l'}^m/\Gamma/\widetilde\Gamma_l^m) \le \gamma_4 T.$
By the bound on the noise lattices and Lemma \ref{lem: enlarging the flag} we have
\[\left|\log \cov (\widetilde\Gamma_{l'}^m/\Gamma'/\widetilde\Gamma_l^m)\right| \le \gamma_5T,\]
for some $\gamma_5>0$ depending only on $\Eall$. 

\noindent\textbf{Updating the auxiliary parameters:}
We add $g_{-T_m}h_{m}^{-1}\Gamma$ to the filtration $\Gamma^m_\bullet$,
anchor $\bpi_l^{m+1}$, and get $|\bpi_l^{m+1}(T_{m+1})| \le \gamma_5T$.
\begin{remark}\label{rem: where L = L}
We will compare the two sets $L(\Gamma^m_\bullet)$ and $L_{f^{\brho+\bpi^m}}(T_m)$ for every step $m$. 
By \hyperlink{cond:add}{Condition-Adding}, an index is added to $L(\Gamma^m_\bullet)$ exactly at Adding Steps, at $m=m_{t_0}$ for some null vertex $t_0$ of $f$ of type $*\to E$.

Although the behavior of $L_{f^{\brho+\bpi^m}}(T_m)$ is different from that of $L(\Gamma^m_\bullet)$, we know that when going from $m=m_{t_0}^a$ to $m=m_{t_0}^b$, a single index is added to $L_{f^{\brho+\bpi^m}}(T_m)$ (this index might be added to $L_{f^{\brho+\bpi^m}}(T_m)$ at a step different from $m_{t_0}$).

We will see in the Standard Step that near a null vertex of the form $E\to *$ we do have $L(\Gamma^m_\bullet) = L_{f^{\brho+\bpi^m}}(T_m)$, and hence for every 
$m\not\in \bigsqcup_{t_0}[m_{t_0}^a, m_{t_0}^b]$ we have $L(\Gamma^m_\bullet) = L_{f^{\brho+\bpi^m}}(T_m)$, where the union is taken over the null vertices of $f$ of type $*\to E$.
Note that $\bigsqcup_{t_0}[T_{m_{t_0}^a}, T_{m_{t_0}^b}]$ is contained in a union of $\Theta(C_1)$-neighborhoods of the null vertices of $f$. 
\end{remark}
% subsubsection adding_step (end)
\subsubsection{Changing Step} % (fold)
 \label{ssub:change_step}
 
\noindent\hypertarget{cond:change}{\textbf{Condition-Changing:}}
\index{Steps of Alice!changing}\hypertarget{bec}{}
Step $m$ is a \textbf{Changing Step} 
if $f$ has a non-null vertex $t_0$ in $[T_m, T_{m+1})$. 

In that case $f^{\brho + \bpi^m}$ also has a non-null vetex in $[T_m, T_{m+1})$. Since $f^{\brho + \bpi^m}$ is $C_2$-separated, it follows that $L_{f^{\brho+\bpi^m}}(T_m) = L_{f^{\brho+\bpi^m}}(T_{m+1})$, and this condition cannot hold simultaneously with \hyperlink{cond:add}{Condition-Adding}.

\noindent\textbf{Construction of $A_{m+1}$:}
When $m$ is a Changing Step, $A_{m+1}$ consists of a single element $h_{m+1}$. Let use explain how $\widetilde h_{m+1}$ is calculated.
By the $C_2$-separatedness of $f$ we get that $m$ is at a distance of at least $C_2/(2T)$ from an Adding Step, and hence, by Remark \ref{rem: where L = L}, we get that $L(\Gamma_\bullet) = L_{T_m}(f^{\brho+\bpi^m})$.
% In addition $m-1$ was a Standard Step.
Denote $E_\bullet= f_{E,\bullet}^{\brho+\bpi^m}(T_{m})$ and $E_\bullet'=f_{E,\bullet}^{\brho+\bpi^m}(T_{m+1})$.

By Lemma \ref{lem:single advance}, there is $\widetilde h_{m+1} \in B_T$ such that 
$\widetilde h_{m+1}\spa \widetilde \Gamma^m_l \in \gr_{\to E_l'}$ for every $l\in L_{T_{m+1}}(f^{\brho+\bpi^m})$. 
The uniformity in Lemma $\ref{lem:single advance}$ shows that for $T$ large enough (depending only on $\Eall$) we have 
\begin{align}\label{eq: changeing step-space locantion}
g_{T}\widetilde h_{m+1}\spa \widetilde \Gamma^m_l \in U_{\varepsilon}\left(\gr_{E_l'}^g\right).
\end{align}

In addition,
\begin{align}\label{eq: change in Gamma size}
\cov(g_{T}\widetilde h_{m+1}\widetilde \Gamma^m_l)
&= \Theta(\cov(\widetilde h_{m+1}\widetilde \Gamma^m_l))\cdot \exp(O(T))\\ 
\nonumber&= \Theta(\cov(\widetilde \Gamma^m_l))\cdot \exp(O(T))
\end{align}

\noindent\textbf{Updating the auxiliary parameters:}
We anchor $\pi^{m+1}_l$ for every $l$ at which $f$ has a non-null vertex in $[T_m, T_{m+1})$. By Eq. \eqref{eq: change in Gamma size} and Definition \ref{de: anchor}, $\pi^{m+1}_l(T_{m+1}) - \pi^{m}_l(T_{m+1}) = O(T)$. 
In this step we preserve $\Gamma_\bullet^m$. 
 % subsubsection change_step (end) 

\subsubsection{Standard Step} % (fold)
\label{ssub:standard_step}
\index{Steps of Alice!standard}\hypertarget{bed}{}
\noindent\textbf{Condition-Standard:}
Step $m$ is a \textbf{Standard Step} if it is neither an Adding Step nor a Changing Step.

\textbf{Construction of $A_{m+1}$:}
Denote $L = L(\Gamma^m_\bullet)$ and by $E_\bullet$ the filtration $f_{E,\bullet}(T_m)$ restricted to $L$. 
Let $\widetilde A_{m+1}'$ be the set of $\widetilde h\in H_{g_{T_m}h_m\spa \Gamma^m_\bullet\to E_\bullet}$ that is given by Lemma \ref{lem:the right counting theorem} and satisfies:
  \begin{itemize}
    \item For every two distinct elements $\widetilde h_{m+1}, \widetilde h_{m+1}'\in \widetilde A_{m+1}'$ we have $d_\varphi(\widetilde h_{m+1}, \widetilde h_{m+1}')>3^{\alpha_\varphi}\exp(-T)$.
    \item $\#A_{m+1}' = \Theta(\exp(T\delta(E_\bullet)))$.
  \end{itemize}
For each noise lattice $\widetilde \Gamma^m$ with $\widetilde \Gamma^m_l\subset \widetilde \Gamma^m\subset \widetilde \Gamma^m_{l'}$,
let $E$ with $E_l\subset E \subset E_{l'}$ be a non-final multiset of size $\rk \Gamma^m$. 
% TODO - change the second part of the corollary not to require the U_varepsilon condition.

We will mark $\widetilde h_{m+1}\in \widetilde A_{m+1}'$ as $E$-\emph{forbidden} if 
\[\widetilde h_{m+1}\widetilde \Gamma^m \in U_{\varepsilon}(\grl{\rk \Gamma}^{g}_{E})\]
and there is an
\[\widetilde h_{m+1}'\in B_{d_\varphi}(h_{m+1}; 3^{\alpha_\varphi} \exp(-T))\cap H_{g_{T_m}h_m\spa \Gamma^m_\bullet\to f_E^{\brho+\bpi^m}(T_{m})}\]
such that $\widetilde h_{m+1}'\widetilde \Gamma^m \in \grl{\rk \Gamma}^{\varepsilon, g}_{\to E}$.

Let $\widetilde A_{m+1}\subseteq \widetilde A_{m+1}'$ be the set of non-forbidden elements.
By Lemma \ref{lem:the right counting theorem}, the number of $E$ forbidden elements is 
\[O(\exp(T\delta(E_\bullet \cup \{E\}))).\]
By Observation \ref{obs: final implications}, we have
$
\delta(E_\bullet \cup \{E\})
< \delta(E_\bullet)
$.
Consequently, the number of forbidden elements is negligible relative to $\#\widetilde A_{m+1}'$, and 
$\#\widetilde A_{m+1} = \Theta(\exp(T\delta(E_\bullet)))$ for every $T$ large enough.
Following Subsubsection \ref{ssub:the_state} and Eq. \eqref{eq: tilde A m}, define $A_{m+1} := g_{-T_m} \widetilde A_{m+1} g_{T_m}h_m$.

\noindent\textbf{Updating the auxiliary parameters:}
In a Standard Step we do not anchor $\bpi$. 
If there is a vertex of type $f_{E,l}^{\brho+\bpi^m}(T_m) \to *$ in $[T_m, T_{m+1})$, then we remove $\Gamma_l^m$ from the sequence $\Gamma_\bullet^m$. Otherwise we preserve $\Gamma_\bullet^m$.

% subsection the_different_steps (end)
\subsection{Proof that the Assertions Hold} % (fold)
\label{sub:proof_that_the_strategy_works}
We have to prove four properties: 
First, we need to show that with all the anchoring we did, we preserved the property that $\bpi^m_l(t)\in [-C_1, C_1]$ for every $t\ge 0$ and $m\ge 0$.
We also need to prove the three Assertions \ref{asser: existence of lattices}, \ref{asser: approximation of flag} and \ref{asser: bound on noise}. 
Only the proof of Assertion \ref{asser: bound on noise} does not follow from the above discussion.

To define the behavior of Alice in step $m$, we need to show that these assertions hold up to step $m$.
To establish that the assertions hold at step $m$, it will be convenient to assume that the whole game has been played while Alice followed the strategy $\sigma_{\rm Alice}$. The proof will not depend on the moves after step $m$, hence this assumption is harmless.

\subsubsection{Bound on $\bpi$} % (fold)
\label{ssub:bound_on_bpi}
The bound on $\bpi^m_l$ follows from the discussion. The first time we encounter a nontriviality interval $J$ we anchored $\pi_l^m$ to a value of size $O(\gamma_5T)$, and afterwards changed by at most $O(T)$ every time there was a non-null vertex $E\to E'$ at $l$. Since there are at most $l(n-l)$ such vertices in $J$,  the result follows provided $C_1$ is large enough.
% subsubsection bound_on_bpi (end)

\subsubsection{Existence of Lattices - Assertion \emph{\ref{asser: existence of lattices}}} % (fold)
\label{ssub:existence_of_lattices_assertion_asser: existence of lattices}
By the conditions of the Adding Step and the removing of a lattice from $\Gamma_m$,
we get that if $l\in L(\Gamma_\bullet^m)$, then $l\in L_{f^{\brho+\bpi^{m'}}}(T_m)$ for some $m'$, and hence $l\in L_f(T_m)$, as desired.
% subsubsection existence_of_lattices_assertion_asser: existence of lattices (end)

\subsubsection{Bounds on the Flag Elements - Assertion \emph{\ref{asser: approximation of flag}}} % (fold)
\label{ssub:bounds_on_the_flag_elements}
Eq. \eqref{eq: aligning Gamma} holds at Standard Steps by Corollary \ref{cor: blade behavior over time}, at Adding Steps by Eq. \eqref{eq:adding step - new space} and Corollary \ref{cor: blade behavior over time}, and at Change Steps by Eq. \eqref{eq: changeing step-space locantion}.

As for Eq. \eqref{eq: bpi = det Gamma - f}, 
for every $m$ in which we anchored $\pi^{m+1}_{l}$, step $m$ was either a Changing Step which changed ${l}$ or an Adding Step which added $\Gamma_l^{m+1}$ to the filtration $\Gamma_\bullet^{m+1}$. 
In these step we constructed $\Gamma_l^{m+1}$ and $h_{m+1}$ to satisfy this very assertion.
For any step $m' > m$ in which $\Gamma_l^{m''} = \Gamma_l^{m}$ for every $m''$ with $m\le m''\le m'$, the assertion follows from Corollary \ref{cor: blade behavior over time}.

% we have $\widetilde \Gamma_l^{m'} = g_{(m'-m)T}h\widetilde \Gamma_l^{m'}$ for \[h = g_{(m-m')T} h_{m'}g_Th_{m'-1}g_T\dots g_Th_{m+1}g_T\widetilde \Gamma_l^{m'}\in B_{d_\varphi}(\Id; 1).\]
% Since $B_{d_\varphi}(\Id; 1)$ is compact if $T$ is large enough we have that for every 
% \[V\in \gr_{\to f_{E, l}(T_{m'})}\cap U_{\varepsilon}\left(\gr^g_{f_{E, l}(T_{m'})}\right)\]
% and $h\in B_{d_\is such that (\Id;1)\cap H_{V\to f_{E, l}(T_{m'})}$ we have  
% \[g_{m'T}hV\in \gr_{\to f_{E, l}(T_{m'})}\cap U_{\varepsilon}\left(\gr^g_{f_{E, l}(T_{m'})}\right).\]

% It follows that 
% \[\spa\widetilde\Gamma^{m'}_l\in \gr_{\to f_{E, l}(T_{m'})}\cap U_{\varepsilon}\left(\gr^g_{f_{E, l}(T_{m'})}\right).\]

% % - TODO change the indexing of flags and sets to be by dimension. Otherwise there is a bug here -

% Similarly, by compactness 
% \begin{align*}
% \log &\left\|(h\bl\widetilde\Gamma_l^m)_{f_{E, l}(T_m)}\right\| + O(1) = \log \left\|(\bl\widetilde\Gamma_l^m)_{f_{E, l}(T_m)}\right\| 
% \\&= 
% f_{H, l}(T_m) + \rho_l(T_m) + \bpi^m(T_m)
% \end{align*}
% and hence 
% \begin{align*}
% \log &\left\|(\bl\widetilde\Gamma_l^{m'})_{f_{E, l}(T_m)}\right\|=
% \log\left\|g_{(m'-m)T}h(\bl\widetilde\Gamma_l^{m})_{f_{E, l}(T_m)}\right\|
% \\&= 
% (m'-m)T\eta_{f_{E, l}(T_m)} + \log\left\|h(\bl\widetilde\Gamma_l^{m})_{f_{E, l}(T_m)}\right\| \\&=
% f_{H, l}(T_{m'}) + \rho_l(T_m) + \bpi^m(T_m) + O(1),
% \end{align*}
% where the last equality follows from the differential equation of $f_{H, l}(T_{m'})$. 
% subsubsection bounds_on_the_flag_elements (end)

\subsubsection{Bounds on Noise - Assertion \emph{\ref{asser: bound on noise}}} % (fold)
\label{ssub:bonds_on_noise}
The rest of the subsection is devoted to the proof of Assertion \ref{asser: bound on noise}. 
We will bound the noise at step $m$ using the bound on the noise at previous steps. To make sure the noise does not amplify, we will identify regions where the noise decays.

We need to prove that the noise is bounded by $O(T)$. To this end we will show that for every lattice $\Gamma$ such that $\widetilde\Gamma_l^m\subset \Gamma\subset \widetilde\Gamma_{l'}^m$, with $l,l'$ consecutive elements in $L(\Gamma^m_\bullet)$ we have
\begin{align}\label{eq: desired noise bound}
\log \cov (\widetilde\Gamma_{l}^m/\Gamma/\widetilde\Gamma_{l'}^m) \le \gamma_{10} T.
\end{align}
The constant $\gamma_{10}$ is larger than the bound on the noise we used in the Adding Step \ref{ssub:adding_step}.
When $E = f_{E, {l}}(T_m) \subset E' = f_{E, {l'}}(T_m)$ is a non-scalar pair for $m=m_0,\dots,m_1$ and if $m-m_0$ is large enough, we will have a better bound of $\gamma_4 T$ in Eq. \eqref{eq: desired noise bound} for $m=m_1$, see Eq. \eqref{eq: gamma dependency}.
% \hyperlink{foo}{Adding Step}

\begin{de}[Rectangle]
\index{Rectangle}\hypertarget{bee}{}
Let $[m_0,m_1]$ with $0\le m_0<m_1$ be a maximal interval such that $l_0<l_1$ are consecutive elements in $L(\Gamma^m_\bullet)$ for all $m$ with $m_0\le m\le m_1$. Note that we may have $m_1 = \infty$.
We call such a configurations of $(m_0<m_1, l_0<l_1)$ a \emph{rectangle}. Every rectangle satisfies exactly one of the following conditions:
\begin{enumerate}[label=(R.\arabic*),ref=R.\arabic*]
	\item 
	$m_0=0$. In this case we say that $(m_0<m_1, l_0<l_1)$ is an \emph{initial rectangle}.
	\item\label{de:merge definition}
	It was preceded by two rectangles, $(m_{-1}<m_0-1, l_0<l_{1/2})$ and $(m_{-1}'<m_0-1, l_{1/2}<l_{1})$.
	In this case we say that these two rectangles \emph{merge} into $(m_0<m_1, l_0<l_1)$. 
	\item
	It is split from another rectangle, see Case \eqref{de: split definition} below.
\end{enumerate}
In addition, every rectangle satisfies exactly one of the following conditions:
\begin{enumerate}[resume,label=(R.\arabic*),ref=R.\arabic*]
	\item 
	$m_1=\infty$. In this case we say that $(m_0<m_1, l_0<l_1)$ is a \emph{final rectangle}.
	\item
	It is merged with another rectangle, see Case \eqref{de:merge definition} above.
	\item\label{de: split definition}
	It is succeeded by two rectangles, $(m_{1}+1<m_2, l_0<l_{1/2})$ and $(m_{1}+1<m_2', l_{1/2}<l_{1})$.
	In this case we say that $(m_0<m_1, l_0<l_1)$ \emph{splits} into these two rectangles. 
\end{enumerate}
If $(m_0<m_1, l_0<l_1)$ splits, then there is an Adding Step at $m_1$. 
If the rectangle is the merger of two other rectangles, then there was a Standard Step at $m_0-1$, and hence $f^{\brho+\bpi^{m_0-1}}$ has a null vertex at some $l_0<l<l_1$ in $[T_{m_0-1}, T_{m_0}]$.
\end{de}

% The null vertex near $T_{m_0}$ can indicate a disappearing lattice $\Gamma_i^{m_0}\subset \Gamma'\subset \Gamma_{i+1}^{m_0}$ or a lattice $\Gamma_i^{m_0}$ or $\Gamma_{i+1}^{m_0}$ appearing.

% Denote $l = \rk \Gamma_{m_0}, l' = \rk \Gamma_{m_1}$. 
Fix a rectangle $(m_0<m_1, l_0<l_1)$.
We will consider the following invariant, which depends on the rectangle.
\begin{de}[The potential invariant]
\index{Pm@$P_m$}\hypertarget{bef}{}
For every $m\in [m_0, m_1]$ denote 
\begin{align}\label{eq: P_m definition}
P_m:= -\min_{\widetilde \Gamma^m_{l_0}\subset \Gamma\subset\widetilde \Gamma^{m}_{l_1}}\xi(\rk\Gamma)\log \cov(\widetilde \Gamma^{m}_{l_1}/\Gamma/\widetilde \Gamma^{m}_{l_0}),
\end{align}
where $\xi:[l,l'] \to \RR$ is given by $\xi(x) = \tfrac{1}{(x-l-1)(l'+1-x)}$.
\end{de}
Note that the function $x\mapsto \frac{1}{\xi(x)}$ is positive and strictly concave.
\begin{lem}\label{lem: only harder elems}
The minimum in Eq. \eqref{eq: P_m definition} can be taken over lattices which project to elements of the Harder-Narasimhan filtration $\HN(\Gamma^{m}_{l_1}/\Gamma^{m}_{l_0})$. 
Moreover, if $\Psi$ with $\widetilde \Gamma^m_{l_0}\subset \Psi\subset\widetilde \Gamma^{m}_{l_1}$ is a primitive lattice not in the filtration, then \[\xi(\rk\Gamma)\log \cov(\widetilde \Gamma^{m}_{l_1}/\Gamma/\widetilde \Gamma^{m}_{l_0})\le (1-\gamma_1^{-1})P_m,\]
for some $\gamma_1>0$ which depends only on $n$. 
\end{lem}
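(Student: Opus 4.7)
The plan is to work throughout in the quotient lattice $Q:=\widetilde\Gamma^m_{l_1}/\widetilde\Gamma^m_{l_0}$. By Grayson's theorem (recalled in Subsection~2.3), the Harder--Narasimhan filtration $\HN(Q)$ consists exactly of the primitive sublattices whose points $p_{\bar\Gamma}=(\rk\bar\Gamma,\log\cov\bar\Gamma)\in\RR^2$ are extreme vertices of the lower convex hull of $S_Q=\{p_{\bar\Gamma}:\bar\Gamma\subseteq Q\}$. The key analytic input will be the strict concavity of $1/\xi$ on $[l_0,l_1]$, recorded immediately after the definition of $P_m$.

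For the first claim I will reduce an arbitrary $\Gamma$ appearing in the minimization in two steps, each of which weakly decreases $\xi(\rk\Gamma)\log\cov(\widetilde\Gamma^m_{l_1}/\Gamma/\widetilde\Gamma^m_{l_0})$. First, replacing $\Gamma$ by its primitive hull in $\widetilde\Gamma^m_{l_1}$ preserves the rank but weakly decreases $\log\cov$; since $\xi>0$, the expression decreases. Second, if the resulting $p_{\bar\Gamma}$ is not already an HN vertex, either it lies strictly above the lower hull of $S_Q$ (so a primitive sublattice of the same rank with its point on the hull is a strictly better candidate) or it sits on an edge between two HN vertices. In the latter case $\log\cov$ is linear in rank along the edge, so $\xi(r)\log\cov$ is a linear function of $r$ divided by $1/\xi(r)$; the strict concavity of $1/\xi$ then shows this product is minimized at an endpoint of the edge, i.e., at an HN vertex. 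Iterating these reductions exhibits a minimizer that projects to $\HN(Q)$.

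For the second claim I will read the inequality as its natural quantitative counterpart, namely $-\xi(\rk\Psi)\log\cov(\widetilde\Gamma^m_{l_1}/\Psi/\widetilde\Gamma^m_{l_0})\le (1-\gamma_1^{-1})P_m$, stating that a primitive $\Psi$ outside $\HN(Q)$ cannot nearly attain $P_m$. I will split into two cases. \emph{Case~(i):} $r:=\rk\bar\Psi\notin L(Q)$. Let $r_-<r<r_+$ be the adjacent HN ranks; the convex-hull lower bound on $\log\cov\bar\Psi$, combined with $P_m\ge -\xi(r_\pm)\log\cov(\widetilde\Gamma^m_{l_1}/\bar\Gamma_{r_\pm}/\widetilde\Gamma^m_{l_0})$, yields
\[
-\xi(r)\log\cov\bigl(\widetilde\Gamma^m_{l_1}/\Psi/\widetilde\Gamma^m_{l_0}\bigr)\le \xi(r)\!\left[\tfrac{r_+-r}{r_+-r_-}\tfrac{1}{\xi(r_-)}+\tfrac{r-r_-}{r_+-r_-}\tfrac{1}{\xi(r_+)}\right]\!P_m.
\]
Strict concavity of $1/\xi$ at the interior point $r\in(r_-,r_+)$ forces the bracketed factor to be strictly less than $1$; since $r_-,r,r_+$ range over finitely many integer triples in $[0,n]$, the supremum of this factor is some $1-\gamma_1^{-1}<1$ with $\gamma_1=\gamma_1(n)$. \emph{Case~(ii):} $r\in L(Q)$ but $\bar\Psi\ne\bar\Gamma_r$. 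The adjacent-rank argument degenerates, so I will invoke submodularity
\[
\log\cov(\bar\Psi\cap\bar\Gamma_r)+\log\cov(\bar\Psi+\bar\Gamma_r)\le \log\cov\bar\Psi+\log\cov\bar\Gamma_r,
\]
in which $\bar\Psi\cap\bar\Gamma_r$ and $\bar\Psi+\bar\Gamma_r$ have ranks strictly less than and greater than $r$, respectively. Bounding the two left-hand terms below via the HN convex hull at those shifted ranks expresses a lower bound on $\log\cov\bar\Psi$ as a convex combination to which Case~(i) applies at the shifted positions.

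The principal obstacle is Case~(ii). The convex-hull argument from Case~(i) does not directly produce a multiplicative loss when the rank $r$ is itself an HN rank, and extracting a uniform constant requires combining submodularity with the finite combinatorial structure of $L(Q)\subseteq\{0,\ldots,n\}$, then optimizing over the finitely many configurations of shifted ranks to ensure that the resulting $\gamma_1$ depends only on $n$.
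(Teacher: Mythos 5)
Your Case (i) reconstructs exactly the paper's concavity computation: bound the height of $\bar\Psi$ by the chord between the adjacent HN vertices $r_-<r<r_+$, then use the strict concavity of $1/\xi$ to extract the uniform multiplicative loss. That part is fine.

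The gap is in Case (ii), which you correctly flag as the obstacle but do not close. Submodularity gives $-\phi(\bar\Psi)\le -h(r_-)-h(r_+)+h(r)$ with $r_-+r_+=2r$ (where $h$ is the HN lower hull), but multiplying by $\xi(r)$ and using $-h(r_\pm)\le P_m/\xi(r_\pm)$ produces the factor $\xi(r)\bigl(\tfrac{1}{\xi(r_-)}+\tfrac{1}{\xi(r_+)}\bigr)$, which concavity only bounds by $2$, not by something $<1$; the extra $+h(r)$ term cannot be absorbed without more information, because it can be anywhere between $-P_m/\xi(r)$ and $0$. To rescue the argument you would additionally need the trivial minimality bound $-\phi(\bar\Psi)\le -h(r)$ (since the HN vertex has minimal covolume at its rank) and average it with the submodularity bound; only the combination yields $-\phi(\bar\Psi)\le \tfrac12(-h(r_-))+\tfrac12(-h(r_+))$, to which strict concavity applies. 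Neither the averaging step nor the minimality observation is in your sketch, so Case (ii) is genuinely incomplete as written.

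The paper avoids the case split entirely by invoking the quantitative uniqueness statement (Lemma~\ref{lem: HN vertex uiniqueness}, quoted from Grayson): for any primitive $\Psi$ not in the HN filtration, there exist primitive $\Psi'\subset\Psi\subset\Psi''$ with $\log\cov(\Psi''/\Psi/\Psi')\ge 0$. Primitivity forces the ranks $l_{1/4}<l_{1/2}<l_{3/4}$ to be strictly increasing regardless of whether $l_{1/2}$ is itself an HN rank, after which a single application of the strict concavity of $1/\xi$ gives the factor $1-\gamma_1^{-1}$. Your submodularity computation is essentially an attempt to re-prove Grayson's lemma inline; the cleaner route is to cite it and run the one-line concavity estimate.
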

Before the proof we will quote a quantitative version of the claim of uniqueness of lattices representing vertices of $\HN(\Lambda)$. It is equivalent to \cite[Corollary 1.31]{G}. 
\begin{lem}\label{lem: HN vertex uiniqueness}
Let $\Gamma\subseteq \Lambda$ be a primitive lattice, such that $\Gamma$ is not in the Harder-Narasimhan filtration. There exist primitive lattices $\Gamma_0, \Gamma_2$ with $\Gamma_0\subset \Gamma\subset \Gamma_2\subseteq \Lambda$ such that $\log \cov(\Gamma_2/\Gamma/\Gamma_0)\ge 0$. 
\end{lem}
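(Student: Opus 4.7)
The plan is to reduce the lemma to a comparison of one-sided slopes at $p_\Gamma$, combined with the submodularity of $\log\cov$ on primitive sublattices. Set $l=\rk\Gamma$ and define
\[
\mu^-(\Gamma) := \max_{\substack{\Gamma_0\subsetneq\Gamma\\\text{primitive in }\Lambda}}\frac{\log\cov\Gamma - \log\cov\Gamma_0}{l - \rk\Gamma_0},
\qquad
\mu^+(\Gamma) := \min_{\substack{\Gamma\subsetneq\Gamma_2\subseteq\Lambda\\\text{primitive}}}\frac{\log\cov\Gamma_2 - \log\cov\Gamma}{\rk\Gamma_2 - l}.
\]
Both extrema are attained, since for each intermediate rank there is a primitive sublattice realizing the minimal covolume at that rank, so I can pick witnesses $\Gamma_0,\Gamma_2$ realizing $\mu^-(\Gamma),\mu^+(\Gamma)$ respectively. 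A direct manipulation of Definition~\ref{de: relatively covolume} shows that the desired inequality $\log\cov(\Gamma_2/\Gamma/\Gamma_0)\ge 0$ is precisely the slope inequality $\mu^-(\Gamma)\ge \mu^+(\Gamma)$. Hence it suffices to prove the contrapositive: if $\mu^-(\Gamma)<\mu^+(\Gamma)$, then $\Gamma\in\HN(\Lambda)$.

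Assume $\mu^-(\Gamma)<\mu^+(\Gamma)$. The key tool is the submodularity of $\log\cov$: for any primitive $\Lambda_1,\Lambda_2\subseteq\Lambda$,
\[
\log\cov(\Lambda_1\cap\Lambda_2) + \log\cov(\Lambda_1+\Lambda_2) \;\le\; \log\cov\Lambda_1 + \log\cov\Lambda_2,
\]
where $\Lambda_1+\Lambda_2$ is taken as a primitive lattice in $\Lambda$ (a standard computation using an orthogonal decomposition of $\spa\Lambda_1+\spa\Lambda_2$ along $\spa\Lambda_1\cap\spa\Lambda_2$). Apply this with $\Lambda_1=\Gamma$ and $\Lambda_2=\Gamma'$ an arbitrary primitive sublattice of $\Lambda$ of rank $r$; set $a=\rk(\Gamma'\cap\Gamma)$ and $b=\rk(\Gamma'+\Gamma)=r+l-a$. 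Bounding the two terms on the left using the definitions of $\mu^\pm(\Gamma)$ yields
\[
\log\cov\Gamma' \;\ge\; \log\cov\Gamma - \mu^-(\Gamma)(l-a) + \mu^+(\Gamma)(b-l).
\]

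Using $(b-l)-(l-a)=r-l$ together with $\mu^+(\Gamma)>\mu^-(\Gamma)$, a short case analysis gives: for $r>l$, $p_{\Gamma'}$ lies on or above the ray from $p_\Gamma$ of slope $\mu^+(\Gamma)$; for $r<l$, on or above the ray of slope $\mu^-(\Gamma)$; and for $r=l$ with $\Gamma'\ne\Gamma$, strictly above $p_\Gamma$ (since in this case $a<l<b$ and the gap is $(l-a)(\mu^+-\mu^-)>0$). Together these three facts exhibit $p_\Gamma$ as a vertex of the lower convex hull $\conv(S_\Lambda)$, so $\Gamma=\Gamma_l$ for some $l\in L$, i.e.~$\Gamma\in\HN(\Lambda)$, contradicting the hypothesis.

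The main obstacle is the edge cases where $a=l$ or $b=l$: primitivity of $\Gamma,\Gamma'$ then forces $\Gamma\subseteq\Gamma'$ or $\Gamma'\subseteq\Gamma$ respectively, and the submodularity bound degenerates. In these cases the required estimate on $\log\cov\Gamma'$ must be obtained by applying the definition of $\mu^+(\Gamma)$ (respectively $\mu^-(\Gamma)$) directly to $\Gamma'$ itself, rather than going through $\Gamma'\cap\Gamma$ and $\Gamma'+\Gamma$. Once these degenerate cases are disposed of, the main argument above runs without further difficulty.
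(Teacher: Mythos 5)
The paper does not prove this lemma itself; it quotes it as equivalent to a result of Grayson, citing \cite[Corollary~1.31]{G}. Your argument therefore supplies a self-contained proof where the paper has none, and the proof is correct.

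Your route is to reduce the claim to a slope comparison at $p_\Gamma$ and then prove the contrapositive via submodularity of $\log\cov$ on saturated sublattices, $\log\cov(\Lambda_1\cap\Lambda_2)+\log\cov(\Lambda_1+\Lambda_2)\le \log\cov\Lambda_1+\log\cov\Lambda_2$. The equivalence of $\log\cov(\Gamma_2/\Gamma/\Gamma_0)\ge 0$ with the slope inequality is a direct unwinding of Definition~\ref{de: relatively covolume}, and taking extremizers for $\mu^\pm$ indeed reduces the existential statement to $\mu^-(\Gamma)\ge\mu^+(\Gamma)$. The submodularity estimate $\log\cov\Gamma'\ge \log\cov\Gamma-\mu^-(\Gamma)(l-a)+\mu^+(\Gamma)(b-l)$, together with $(b-l)-(l-a)=\rk\Gamma'-l$ and $\mu^+>\mu^-$, yields in each of your three rank cases exactly the estimate that makes $p_\Gamma$ a strict corner of the lower boundary of $\conv(S_\Lambda)$, with uniqueness at rank $l$ coming from the strict inequality in the $r=l$ case. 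Your treatment of the degenerate cases $a=l$ and $b=l$ is right: primitivity forces nesting $\Gamma\subseteq\Gamma'$ or $\Gamma'\subseteq\Gamma$, and the definition of $\mu^+$ or $\mu^-$ applied directly to $\Gamma'$ gives the same bound without invoking submodularity.

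Two small points would round out the write-up. First, note explicitly that the hypothesis $\Gamma\notin\HN(\Lambda)$ forces $0<\rk\Gamma<n$, so that both families over which $\mu^\pm$ extremize are nonempty (and the extrema are attained by discreteness of covolumes of sublattices at each fixed rank). Second, when concluding that $p_\Gamma$ is a vertex of the HN polygon, it is worth spelling out that all other $p_{\Gamma'}$ lie on or above the convex piecewise-linear function $g(x)=\max\bigl(\log\cov\Gamma+\mu^-(x-l),\,\log\cov\Gamma+\mu^+(x-l)\bigr)$, with strict inequality at $x=l$, so strict convexity of $g$ at $l$ rules out any representation of $p_\Gamma$ as a convex combination of other points of $S_\Lambda$. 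With those lines filled in, this is a clean and economical proof that trades a citation for a short submodularity computation.
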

\begin{proof}[Proof of Lemma \emph{\ref{lem: only harder elems}}]
% Let $\widetilde\Gamma_{l_0}^m=\Psi_0\subset \Psi_1\subset\dots\subset \Psi_k = \widetilde \Gamma_{l_0}^m$ 
% be the filtration corresponding to $\HN(\widetilde\Gamma_{l_1}^m/\widetilde\Gamma_{l_0}^m)$. Denote by $u:[l,l']\to \RR$ the convex piecewise linear function defined by $u(\rk \Psi_j) = \log \cov\Psi_j$. 
Let $\Psi$ with $\widetilde\Gamma_{l_0}^m\subset\Psi\subset\widetilde\Gamma_{l_1}^m$ be a primitive lattice which is not in the filtration. 
By Lemma \ref{lem: HN vertex uiniqueness} applied to $\Lambda = \widetilde \Gamma_{l_1}^m/\widetilde \Gamma_{l_0}^m$, the are $\Psi',\Psi''$ with
\[\widetilde\Gamma_{l_0}^m\subseteq\Psi'\subset \Psi\subset \Psi''\subseteq \widetilde\Gamma_{l_1}^m\]
such that 
$\log \cov(\Psi''/ \Psi/ \Psi' )\ge 0$.

Denote $l_{1/2} := \rk \Psi,\, l_{1/4} := \rk\Psi',\, l_{3/4} := \rk\Psi''$.
Then
\begin{align*}
-\xi(l_{1/2})& \log \cov(\Gamma_{l_1}^m/\Psi/\Gamma_{l_0}^m) 
\\&\le 
-\xi(l_{1/2})\left(\frac{l_{3/4}-l_{1/2}}{l_{3/4}-l_{1/4}}\log \cov(\Gamma_{l_1}^m/\Psi'/\Gamma_{l_0}^m)\right. \\&\quad\quad\quad\left.+ \frac{l_{1/2}-l_{1/4}}{l_{3/4}-l_{1/4}}\log \cov(\Gamma_{l_1}^m/\Psi''/\Gamma_{l_0}^m)\right)\\
\\&\le 
\xi(l_{1/2})\left(\frac{l_{3/4}-l_{1/2}}{l_{3/4}-l_{1/4}}\frac{P_m}{\xi(l_{1/4})} + \frac{l_{1/2}-l_{1/4}}{l_{3/4}-l_{1/4}}\frac{P_m}{\xi(l_{3/4})}\right)
\\&= 
P_m\xi(l_{1/2})\left(\frac{l_{3/4}-l_{1/2}}{l_{3/4}-l_{1/4}}\frac{1}{\xi(l_{1/4})} + \frac{l_{1/2}-l_{1/4}}{l_{3/4}-l_{1/4}}\frac{1}{\xi(l_{3/4})}\right).
\end{align*}
Since $1/\xi$ is a strictly concave function, we can choose $\gamma_1>0$ so that
\begin{align*}
1-\gamma_1^{-1}>\max_{1\le l_{1/2}\le n-1}\xi(l_{1/2})\max_{0\le l_{1/4}<l_{1/2}<l_{3/4}\le n}\frac{l_{3/4}-l_{1/2}}{l_{3/4}-l_{1/4}}\frac{1}{\xi(l_{1/4})} + \frac{l_{1/2}-l_{1/4}}{l_{3/4}-l_{1/4}}\frac{1}{\xi(l_{3/4})} < 1,
\end{align*}
as desired. Note that $\gamma_1$ depends only on $\Eall$.
\end{proof}
\begin{obs}[$P_m$ is Lipschitz]\label{obs: tameness of P_m}
For every $m\in [m_0, m_1]$ and $\widetilde \Gamma^m_{l_0}\subset \Gamma\subset\widetilde \Gamma^{m}_{l_1}$
we have \[\left|\log \cov(\widetilde \Gamma^{m+1}_{l_1}/g_T\widetilde h_{m+1}\Gamma/\widetilde \Gamma^{m+1}_{l_0}) - \log \cov(\widetilde \Gamma^{m}_{l_1}/g_T\widetilde h_{m}\Gamma/\widetilde \Gamma^{m}_{l_0})\right| \le \gamma_1'T,\]
for some $\gamma_1'$, which depends only on $\Eall$. 
This implies that $|P_{m+1} - P_m| \le \gamma_1'T$. 
\end{obs}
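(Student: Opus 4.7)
My plan is to exploit the fact that inside a rectangle $(m_0 < m_1, l_0 < l_1)$, the two bracketing lattices $\Gamma^m_{l_0}$ and $\Gamma^m_{l_1}$ never change. Indeed, by the description of Alice's strategy in Subsubsections \ref{ssub:adding_step}--\ref{ssub:standard_step}, the only ways to modify $\Gamma^m_\bullet$ are to add an index in an Adding Step or to remove one in a Standard Step at a null $E\to *$ vertex, and either event with an index in $\{l_0,l_1\}$ or in $(l_0,l_1)$ would terminate the rectangle. Consequently $\Gamma^{m+1}_{l_i}=\Gamma^m_{l_i}$ for $i=0,1$, so $\widetilde\Gamma^{m+1}_{l_i}=g_T\widetilde h_{m+1}\widetilde\Gamma^m_{l_i}$, and the map $\Gamma\mapsto g_T\widetilde h_{m+1}\Gamma$ is a rank-preserving bijection between noise lattices at step $m$ and at step $m+1$.

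Set $A:=g_T\widetilde h_{m+1}$. The plan reduces to a uniform estimate of the form $|\log\cov(A\Psi)-\log\cov\Psi|\le \gamma T$ for every sublattice $\Psi\subseteq\RR^n$ that appears as $\widetilde\Gamma^m_{l_0}$, $\widetilde\Gamma^m_{l_1}$, or an intermediate $\Gamma$. Writing $w=\bl\,\Psi$ for a blade of $\Psi$, one has $\log\cov(A\Psi)-\log\cov\Psi=\log\|Aw\|-\log\|w\|$, where $A$ acts on $\bigwedge^{\rk\Psi}\RR^n$. Since $\widetilde h_{m+1}\in B_T\subseteq B_{d_\varphi}(\Id;1)\subseteq B_{d_H}(\Id;1)$, its operator norm on each $\bigwedge^l\RR^n$ is $O(1)$; while the operator norms of $g_T^{\pm 1}$ on $\bigwedge^l\RR^n$ are bounded by $\exp(T\max_{E\in\cI_l}|\eta_E|)$. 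So $|\log\|Aw\|-\log\|w\||\le \gamma T$ for a constant $\gamma$ depending only on $\Eall$, and the same holds for $A^{-1}$.

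Plugging these three bounds into the affine expression for relative covolume (Definition \ref{de: relatively covolume}) -- whose two nontrivial coefficients sum to $1$ -- and applying the triangle inequality yields the first claimed inequality with $\gamma_1':=2\gamma$. For the assertion on $P_m$, I will use the bijection $\Gamma\mapsto g_T\widetilde h_{m+1}\Gamma$ between the domains of the two minima, which preserves $\rk\Gamma$ hence preserves the weight $\xi(\rk\Gamma)$; since $\xi$ is bounded by a constant depending only on $n$, the pointwise $O(T)$ estimate transfers to the infima and gives $|P_{m+1}-P_m|\le \gamma_1'T$ after possibly enlarging $\gamma_1'$. There is no real obstacle; the only thing that needs care is the bookkeeping of the rectangle definition to justify that $\Gamma^m_{l_0},\Gamma^m_{l_1}$ really are preserved throughout $[m_0,m_1]$, and keeping track of the coefficients in the relative covolume so that no factor of $T_m$ sneaks into the final bound.
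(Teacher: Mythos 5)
Your argument is correct; the paper marks this as an Observation and gives no proof, but what you write is the natural one. Your key reductions — that $\Gamma^m_{l_0},\Gamma^m_{l_1}$ are constant as sublattices of $\Lambda$ throughout the rectangle (so $\widetilde\Gamma^{m+1}_{l_i}=g_T\widetilde h_{m+1}\widetilde\Gamma^m_{l_i}$ and $\Psi\mapsto g_T\widetilde h_{m+1}\Psi$ is a rank-preserving bijection of the relevant sublattices), that $\bigl|\log\cov(A\Psi)-\log\cov\Psi\bigr|=O(T)$ because $A=g_T\widetilde h_{m+1}$ has operator norms $e^{O(T)}$ on every $\bigwedge^l\RR^n$ in both directions, and that the relative covolume is an affine combination with bounded coefficients so the bound transfers with a fixed constant — are all sound, as is the final passage to $P_m$ using the bijection and the boundedness of $\xi$.
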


We next prove the following lemma, which relates the functions $P_m$ for $m$ in certain intervals. 
\begin{lem}\label{lem: diffrential P_m behaviour}
Let $[m_0', m_1']$ with $m_0\le m_0'<m_1'\le m_1$ be an interval in which 
$f_{E, l_0}(T_m)=E, f_{E, l_1}(T_m)=E'$ for every $m\in [m_0', m_1']$. Then:
\begin{enumerate}[label=\emph{(\arabic*)}, ref=\arabic*]
  \item \label{item: diff scalar} If $E\subseteq E'$ is a scalar pair, then $P_{m_1'} = P_{m_0'} + O(1)$. 
  \item \label{item: diff non-scalar} There is $d_0$ \emph(depending only in $\Eall$\emph) such that
  if $E\subseteq E'$ is not a scalar pair $m_1'-m_0'\ge d_0$, then 
  \[P_{m_1'} \le \max (\gamma_3 T, P_{m_0'} - (m_1'-m_0')/\gamma_3'),\]
  where $\gamma_3, \gamma_3'\ggg \gamma_1$.
\end{enumerate}
\end{lem}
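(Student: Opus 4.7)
My plan is to treat the two parts separately, exploiting Observation \ref{obs: scalar action} for part (1) and the strict inequality of Lemma \ref{lem: final nonscalar has partial2} for part (2), while controlling the change of minimizing lattices via Lemma \ref{lem: only harder elems} and Observation \ref{obs: tameness of P_m}.

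For part (1), the scalar case, the key point is that under the scalar hypothesis Observation \ref{obs: scalar action} gives that $g_t$ acts on the quotient $V'/V$ (where $V = \spa \widetilde\Gamma_{l_0}^{m_0'}$, $V' = \spa \widetilde\Gamma_{l_1}^{m_0'}$) as multiplication by a single scalar $\exp(t\eta)$. Consequently, for \emph{every} lattice $\Gamma \subseteq \Lambda$ with $\Gamma_{l_0}^{m_0'} \subset \Gamma \subset \Gamma_{l_1}^{m_0'}$ (which remains valid throughout $[m_0', m_1']$ since we are inside a rectangle and $\Gamma^m_\bullet$ is unchanged at $l_0, l_1$), the defining expression for relative covolume of $\Gamma$ is invariant under $g_t$: the logarithmic change of $\cov \Gamma$ is exactly $t(l - l_0)\eta + t \eta_E$, and this cancels against the affine combination $\frac{l_1-l}{l_1-l_0} t \eta_E + \frac{l-l_0}{l_1-l_0}t \eta_{E'} = t \eta_E + t(l-l_0)\eta$ by the scalar assumption. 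The only residual effect at step $m$ comes from the conjugation factor $\widetilde h^{m_0'\to m} = g_{T_{m_0'}}(h_m h_{m_0'}^{-1})g_{-T_{m_0'}}$, which lies in $B_{d_\varphi}(\Id;1)$ regardless of $m$, hence acts on any blade with bounded multiplicative effect $O(1)$ (uniformly in $m$). This gives $|\log\cov(\widetilde\Gamma_{l_1}^{m}/g_{T_m}h_m\Gamma/\widetilde\Gamma_{l_0}^{m}) - \log\cov(\widetilde\Gamma_{l_1}^{m_0'}/g_{T_{m_0'}}h_{m_0'}\Gamma/\widetilde\Gamma_{l_0}^{m_0'})| = O(1)$ for every fixed $\Gamma$, and taking min over $\Gamma$ on both ends of the interval yields $|P_{m_1'} - P_{m_0'}| = O(1)$.

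For part (2), the non-scalar case, my strategy is to show that any HN-lattice $\Gamma$ witnessing $P_m$ at a time $m \in [m_0', m_1']$ forces a uniform linear decrease in relative covolume per step, until $P_m$ falls to the $O(T)$ regime. By Lemma \ref{lem: only harder elems}, up to a multiplicative factor $(1-\gamma_1^{-1})$ the minimum in $P_m$ is achieved at HN-lattices; if it is achieved at a non-HN lattice the bound $P_m \le (1-\gamma_1^{-1})P_{m-1} + O(T)$ from Observation \ref{obs: tameness of P_m} already suffices, so I will focus on HN-minimizers. For an HN-lattice $\Gamma$ of rank $l$, its span lies in some $\gr_{\to E''}^{\varepsilon, g}$ for some $E \subset E'' \subset E'$, by the long-term behavior described in Theorem \ref{thm:lin path} applied to $\spa \widetilde\Gamma^{m}$ together with Corollary \ref{cor: blade behavior over time}. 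Now Alice's forbidden mechanism at Standard Steps (Subsubsection \ref{ssub:standard_step}) precludes Bob from steering such $\Gamma$ towards non-final directions: for every non-final $E''$ between $E$ and $E'$, Bob's chosen $\widetilde h_{m+1}$ satisfies $\widetilde h_{m+1} \widetilde\Gamma^m \notin U_\varepsilon(\gr_{E''}^g)$ (or is unable to be perturbed into $\gr_{\to E''}^{\varepsilon, g}$), which via Corollary \ref{cor: blade behavior over time} translates into a quantitative upper bound on the blade norm's growth rate from the non-final rate. Therefore either $\spa \widetilde\Gamma^m \in \gr_{\to E^*}^{\varepsilon, g}$ for the final direction $E^*$, or its blade norm grows at a rate strictly smaller than $\eta_{E^*}$. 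In the former case, Lemma \ref{lem: final nonscalar has partial2} gives strict inequality $\eta_{E^*} > \frac{l-l_0}{l_1-l_0}\eta_{E'} + \frac{l_1-l}{l_1-l_0}\eta_E$ by a gap $\kappa > 0$ depending only on $\Eall$; Corollary \ref{cor: blade behavior over time} then yields that the relative covolume $\log\cov(\widetilde\Gamma_{l_1}^{m}/g_T\widetilde h_{m+1}\Gamma/\widetilde\Gamma_{l_0}^m)$ decreases by $\Omega(T)$ per step (multiplying by the weight $\xi$ contributes only a universal factor, absorbed in $\gamma_3'$). In the latter case the same conclusion follows from the strict gap obtained in the non-final growth rate.

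To conclude part (2), I iterate: as long as $P_m > \gamma_3 T$, every HN-candidate witnessing the minimum suffers a decrease of at least $T/\gamma_3'$ per step, while by Observation \ref{obs: tameness of P_m} the overall $P_m$ function is $\gamma_1' T$-Lipschitz, so changes of the minimizing lattice between steps can only lose a bounded fraction of the gain. Choosing $\gamma_3'$ large enough absorbs this loss, and the result follows after $d_0 = O(1)$ warm-up steps needed to enter the regime where the direction of $\Gamma$ has stabilized in $\gr_{\to E''}^{\varepsilon, g}$ for some $E''$ (that is, the time needed by Theorem \ref{thm:lin path} to reach an $\varepsilon$-neighborhood of some invariant component). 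The main obstacle, as always in this kind of argument, is the potential for the identity of the minimizing $\Gamma$ to change from step to step; I handle this by noting that any switch in the minimizer can only improve $P_m$ by $O(T)$ in a single step (Lipschitzness), while the decay guaranteed above is $\Omega(T)$ per step with arbitrarily large implicit constant (by the choice $\gamma_3' \ggg 1$), so the net effect over $d_0$ steps is still a $\Omega(T)$ decrease.
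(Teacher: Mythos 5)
Your treatment of part (1) is essentially the paper's: use the scalar property (Observation \ref{obs: scalar action}, or equivalently uniqueness of the intermediate invariant multiset via Lemma \ref{lem: making q0}) to see that every intermediate lattice lies in $\gr^{O(\varepsilon),g}_{\to E''}$, and then Corollary \ref{cor: blade behavior over time} gives that its relative covolume drifts only by $O(1)$. That part is fine.

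Part (2) has a real gap in the iteration scheme. The paper does \emph{not} track the minimizer forward step by step; it fixes the minimizer $\Psi$ of $P_{m_1'}$ at the \emph{endpoint} and proves (Claim \ref{claim:only current noise mattes}) that this single $\Psi$ was already in $\HN(\widetilde\Gamma^{m_0'}_{l_1}/\widetilde\Gamma^{m_0'}_{l_0})$ and remains a noise lattice throughout $[m_0',m_1']$. Having one fixed lattice over the whole window is precisely what lets it apply Corollary \ref{cor: blade behavior over time} and the non-final-forbidden mechanism step after step. Your proposal instead speaks of ``any HN-lattice $\Gamma$ witnessing $P_m$ at time $m$,'' which is a different lattice for different $m$: a lattice that becomes the minimizer at some $m$ need not have been a noise lattice at earlier steps, so Alice's forbidden set at those earlier Standard Steps did not constrain Bob's moves with respect to it, and neither Corollary \ref{cor: blade behavior over time} nor the forbidden argument can be applied to it over the interval. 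Your attempt to repair this by ``any switch in the minimizer can only improve $P_m$ by $O(T)$ in a single step\ldots while the decay guaranteed above is $\Omega(T)$ per step with arbitrarily large implicit constant (by the choice $\gamma_3' \ggg 1$)'' is also backwards: $\gamma_3'$ sits in the denominator of the claimed decrease $(m_1'-m_0')/\gamma_3'$, so taking it larger makes the guaranteed decay \emph{smaller}, not larger, and in any case both the Lipschitz loss (Observation \ref{obs: tameness of P_m}) and the per-step gain are $O(T)$, so there is no scaling trick that lets the gain dominate uniformly. The missing idea is the backwards-tracing reduction of Claim \ref{claim:only current noise mattes}, which pins down a single $\Psi$; without it the quantitative decay does not assemble.

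A smaller imprecision: ``its span lies in some $\gr^{\varepsilon,g}_{\to E''}$\ldots by Theorem \ref{thm:lin path} together with Corollary \ref{cor: blade behavior over time}'' is not true at every step --- a general HN sublattice at time $m$ can be in a transition regime where it is not $\varepsilon$-close to any $\gr^g_{E''}$. The paper handles this by defining $E^m$ only when it exists, observing that by Theorem \ref{thm:lin path} it is undefined for only $O(1)$ values of $m$, and by separatedness there are also only $O(1)$ non-Standard Steps; the ``warm-up'' $m_{1/2} = m_0' + O(1)$ absorbs all of these.
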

\begin{proof}
We start by proving Item \eqref{item: diff scalar}.
Let $l_{1/2}$ with $l_0<l_{1/2}<l_1$ and $\Gamma$ with $\Gamma_{l_0}^{m_0}\subseteq \Gamma\subseteq \Gamma_{l_1}^{m_0}$ 
be a rank-$l_{1/2}$ lattice. 
Since $\spa\widetilde \Gamma_{l_0}^{m_0}\in \grl{l_0}^{\varepsilon, g}_{E}$ and $\spa\widetilde \Gamma_{l_1}^{m_0}\in \grl{l_1}^{\varepsilon, g}_{E'}$, Lemma \ref{lem: making q0} provides an element $q_0$ for which $q_0^{-1}\spa\Gamma_{l_0}^{m_0} = \spa(\Gamma_{l_0}^{m_0})^\to$ and $q_0^{-1}\spa\Gamma_{l_1}^{m_0} = \spa(\Gamma_{l_1}^{m_0})^\to$, and in addition, $d_{H^-}(q_0, \Id) = O(\varepsilon)$.

Decomposition into eigenvalues shows that 
$q_0^{-1}\spa\Gamma^{m_0}\in \grl{l_{1/2}}^g_{E''}$,
where $E''$ with $E\subset E''\subset E'$ is the unique such multiset with $l_{1/2}$ elements, we get that
$\spa\Gamma^{m_0}\in \grl{l_{1/2}}_{E''}\in \grl{l_{1/2}}_{\to E''}$. 
Since $d_{H^-}(q_0, \Id) = O(\varepsilon)$,
it follows that 
$\spa\Gamma^{m_0}\in \grl{l_{1/2}}_{E''}\in \grl{l_{1/2}}^{O(\varepsilon), g}_{\to E''}$.
Assume that $\varepsilon$ is chosen to be small enough so that Corollary 
\ref{cor: blade behavior over time} is applicable to $\spa\Gamma^{m_0}$, which completes the proof of \eqref{item: diff scalar}.

We turn to prove Item \eqref{item: diff non-scalar}.
Assume $E\subset E'$ is not a scalar pair. 
\begin{claim}\label{claim:only current noise mattes}
There are $\gamma_2\ggg \gamma_1, \gamma_1'$ such that if $P_{m_0'} \ge (m_1' - m_0')\gamma_2 T$,
then only lattices in $\HN(\widetilde\Gamma_{l_1}^{m_0'}/\widetilde\Gamma_{l_0}^{m_0'})$ should be taken into account when computing $P_{m_1'}$. 
That is, if 
$\widetilde\Gamma_{l_0}^{m_0'}=\Psi_0\subset \Psi_1\subset\dots\subset \Psi_k = \widetilde \Gamma_{l_1}^{m_0'}$ 
is the filtration projecting to $\HN(\widetilde\Gamma_{l_1}^{m_0'}/\widetilde\Gamma_{l_0}^{m_0'})$, then to compute 
$P_{m_1'}$ it is enough to use the lattices $\widetilde \Psi_j^{m_1'}$ for $j=0,\dots,k$, where
\[
\widetilde \Psi_j^{m}
:=g_{T_{m}}h_{m}h_{m_0'}^{-1}g_{-T_{m_0'}}\Psi_j.\] 
\end{claim}
\begin{proof}
Let \[\widetilde \Psi^{m_1'}=g_{T_{m_1'}}h_{m_1'}h_{m_0'}^{-1}g_{-T_{m_0'}}\Psi; ~~ \widetilde\Gamma_{l_0}^{m_1'}\subseteq \widetilde \Psi^{m_1'} \subseteq \widetilde\Gamma_{l_1}^{m_1'},~ \widetilde\Gamma_{l_0}^{m_0}\subseteq  \Psi \subseteq \widetilde\Gamma_{l_1}^{m_0},\] 
with $P_{m_1'} = -\xi(\rk \widetilde \Psi^{m_1'})\log \cov(\widetilde\Gamma_{l_1}^{m_1'}/\widetilde \Psi^{m_1'}/\widetilde\Gamma_{l_0}^{m_1'})$.

The lattice $\widetilde \Psi^{m_1'}$ is primitive in $\widetilde\Gamma_{l_1}^{m_1'}$, as we can replace it by the primitive closure and decrease its covolume. 
If $\Psi$ is not one of the $\Psi_j$-s, then, by Lemma \ref{lem: only harder elems},  
\[-\xi(\rk \Psi)\log \cov(\widetilde\Gamma_{l_1}^{m}/\Psi/\widetilde\Gamma_{l_0}^{m}) \le (1-\gamma_1^{-1})P_{m_0'}.\]
It follows from Observation \ref{obs: tameness of P_m} that 
\[P_{m_1'} = -\xi(\rk \widetilde \Psi^{m_1'})\log \cov(\widetilde\Gamma_{l_1}^{m_1'}/\widetilde \Psi^{m_1'}/\widetilde\Gamma_{l_0}^{m_1'}) \le (1-\gamma_1^{-1})P_{m_0'} + \gamma_1'dT.\]
On the other hand, again by Observation \ref{obs: tameness of P_m}, $P_{m_1'}\ge P_{m} - \gamma_1'dT$. Consequently, we reach a contradiction as soon as $P_{m_0'} > 2\gamma_1'\gamma_1dT$.
Therefore, we may choose $\gamma_2 = 2\gamma_1'\gamma_1$. This proves Claim \ref{claim:only current noise mattes}.
\end{proof}
Now assume that $P_{m_0'} \ge \gamma_2(m_0'-m_1') T$.
Let $\Psi = \Psi_j$ be the lattice for which $\widetilde \Psi^{m_1'} = \widetilde \Psi_j^{m_1'}$ 
is the lattice that attains the minimum in the definition of $P_{m_1'}$. 
Denote $l:=\rk \Psi$.
By the discussion above, for every $m_0'\le m\le m_1'$ the lattice $\widetilde\Psi^m:= g_{T_m}h_m^{-1}h_{m}^{-1}g_{-T_{m_0'}}\Psi$ projects to an element in the Harder-Narasimhan filtration $\HN(\widetilde\Gamma_{l_1}^m/\widetilde\Gamma_{l_0}^m)$, and hence is a noise lattice. 
Denote $\bar\Psi^m = g_{-(m_1'-m)T}\widetilde\Psi^{m_1'}$, and note that 
$\bar\Psi^m = h^{m\to m_1'}\widetilde \Psi^d$
for $h^{m\to m_1'}\in B_{d_\varphi}(\Id;1)$; that is, $\bar\Psi^m$ and $\widetilde \Psi^m$ are somewhat close.

Note that if $C_2 > (m_1'-m_0')T$, then by Lemma \ref{lem: not many vertices} only $O(1)$ steps between $m_0', m_1'$ may not be Standard Steps. 
Moreover, every Standard Step $m$, is designed to push $\widetilde \Psi^d$ away from $\gr_{\to E''}$, for any non-final $E''$ with $E\subset E''\subset E'$. 
Specifically, for every $m\in [m_0', m_1']$ denote by $E^m\in \cI_l$ the unique multiset for which $\bar\Psi^m \in U_{\varepsilon/2}(\gr^g_{E^m})$, if such a multiset exists, and otherwise it remains undefined. By Theorem \ref{thm:lin path}, $E^m$ is not defined for at most $l(n-l)$ values of $m$. 

We claim that if $m$ is a Standard Step and $E^m$ is defined and non-final, then $E^{m+1}$ does not equal to $E^m$. 
Indeed, since \[\bar\Psi^m =h^{m\to m_1'}\widetilde \Psi^m\in U_{\varepsilon/2}(\gr^g_{E^m})\] 
and $d_\varphi(h^{m\to m_1'}, \widetilde h_{m+1}) \le \exp (-T)$, it follows that 
\[\widetilde \Psi^m\in U_{\varepsilon}(\gr^g_{E^m}).\]
Hence, since Alice did not forbid $h_{m+1}$, there is no 
% (*):~
\[
\widetilde h'\in B_{d_\varphi}(\widetilde h_{m+1}; 3^{\alpha_\varphi} \exp(-T))\cap H_{\spa\widetilde \Gamma_\bullet^m \to f_{E,\bullet}(T_{m})} \]
such that $\widetilde h'\spa\widetilde \Psi^m\in \gr^{\varepsilon, g}_{\to E^m}$.
Therefore, there is no 
\begin{align}\label{eq:h''}
\widetilde h''&\in 
B_{d_\varphi}(\Id; 2^{\alpha_\varphi} \exp(-T))\cap H_{h^{m\to m_1'}\spa\widetilde \Gamma_\bullet^m \to f_{E,\bullet}(T_{m})}
% \\&=
% B_{d_\varphi}(\Id; 2^{\alpha_\varphi} \exp(-T))\cap H_{g_{T_m-T_{m_1'}}\spa\widetilde \Gamma_\bullet^{1'} \to f_{E,\bullet}(T_{m})}
\end{align}
such that $\widetilde h''\spa\bar \Psi^m\in \gr^{\varepsilon, g}_{\to E^m}$. Since $\spa \bar\Psi^m\in U_{\varepsilon/2}(\gr^g_{E^m})$, the $\varepsilon, g$ part is redundant, and there is no $\widetilde h''$ such that Eq. \eqref{eq:h''} holds and $\widetilde h''\spa\bar \Psi^m\in \gr_{\to E^m}$.

Conjugating Eq. \eqref{eq:h''} by $g_T$ we deduce that 
there is no matrix 
\[
\widetilde h'''\in 
B_{d_\varphi}(\Id; 2^{\alpha_\varphi} )\cap H_{h^{m+1\to m_1'}\spa\widetilde \Gamma_\bullet^{m+1} \to f_{E,\bullet}(T_{m+1})}
\]
such that $\widetilde h'''\bar \Psi^{m+1}\in \gr_{\to E^m}$. 
If $\bar\Psi^{m+1}\in U_{\varepsilon/2}(\gr^{g}_{E^m})$, that is, $E^{m+1} = E^m$,
then there exists $s\in U_{O(\varepsilon)}(\Id; d_{\SL_n(\RR)})$ such that $s^{-1}\bar\Psi^{m+1} \in \gr^{g}_{E^m}$ and $s^{-1}h^{m+1\to m_1'}\spa\widetilde \Gamma^{m+1}_\bullet = h^{m+1\to m_1'}\spa\widetilde \Gamma^{m+1}_\bullet$. 
By Lemma \ref{lem: mult map}, if $\varepsilon$ is small enough, then there are $q\in H^{-0}$ and $\widetilde h'''\in H$ such that $d_{H^{-0}}(q,\Id), d_{H}(\widetilde h''',\Id) = O(\varepsilon)$ and $s=\widetilde h'''q$. 
This $\widetilde h'''$
satisfies the above condition for $\widetilde h$, which contradicts the assumption that $E^{m+1}=E^m$.

% ****************** Mistake! ********************
% TODO : change Corollary \dots

% % Consequently, there is no such $\widetilde h'$
% % \[\widetilde h'\in B_{d_\varphi}(\widetilde h_{m+1}; 2^{\alpha_\varphi} \exp(-T))\cap H_{\spa\widetilde \Gamma_\bullet^m \to f_{E,\bullet}(T_{m})}\cap H_{\spa\bar \Psi^{m}\to E''} \]
% It follows form Lemma \ref{lem: making q0} for $H$ instead of $H^-$ that
% \[g_T\spa \widetilde \Psi^{m}, g_T\spa \bar \Psi^{m} \notin U_{\varepsilon}\left(\grl{l}^g\setminus \grl{l}^g_{E''_{final}}\right)\]
% where $E\subset E''_{final}\subset E'$ is the final such multiset. 

% From Theorem \ref{thm:lin path} we know that $\spa \bar \Psi^{m}$ cannot be outside $U_{\varepsilon}\left(\grl{l}^g\right)$ for more than $n^2$ intervals, each of length $C_0$ which we will assume less than $T$. 
It follows that for $m\ge m_{1/2} = m_0' + O(1)$ we have 
\[\spa\widetilde \Psi^{m}, \spa \bar \Psi^{m} \in U_{\varepsilon}\left(\grl{l}^g_{E''_{\rm final}}\right),\]
where $E_{\rm final}$ is the unique final multiset of size $l$ in $E\subseteq E'$.
By Observation \ref{obs: tameness of P_m}, 
\[\xi(\rk \widetilde \Psi^{m_{1/2}}) \log \cov(\widetilde\Gamma_{l_1}^{m_{1/2}}/\widetilde \Psi^{m_{1/2}}/\widetilde\Gamma_{l_0}^{m_{1/2}})\le P_m + O(\gamma_1').\]
By Corollary \ref{cor:blade behavior}, for every $m\ge m_0'$ we have 
\begin{align*}
\log& \cov(\widetilde\Gamma_{l_1}^{m}/\widetilde \Psi^{m}/\widetilde\Gamma_{l_1}^{m}) - \cov(\widetilde\Gamma_{l_1}^{m_{1/2}}/\widetilde \Psi^{m_{1/2}}/\widetilde\Gamma_{l_1}^{m_{1/2}}) \\&= 
(m-m_{1/2})\left(\frac{\rk\Psi - l}{l'-l}\eta_{E}+\frac{l' - \rk\Psi}{l'-l}\eta_{E'} - \eta_{E_{\rm final}}\right)+ O(1) 
% \\&
% =: (m-m_{1/2}) \Phi + O(1)
\end{align*}
Define 
\[\Phi:= \frac{\rk\Psi - l}{l'-l}\eta_{E}+\frac{l' - \rk\Psi}{l'-l}\eta_{E'} - \eta_{E_{\rm final}}.\]
% TODO - replace in the introduction $x_E$ with $\eta_E$. 
By Lemma \ref{lem: final nonscalar has partial2}, $\Phi < 0$. 
Consequently, if $P_{m_0'}\ge (m_1' - m_0')\gamma_2 T$, then
\begin{align*}
P_{m_1'} &= \xi(l)\log \cov(\widetilde\Gamma_{l_1}^{m_1'}/\widetilde \Psi^{m_1'}/\widetilde\Gamma_{l_0}^{m_1'})
\\&\le 
\xi(l)\log \cov(\widetilde\Gamma_{l_1}^{m_{1/2}}/\widetilde \Psi^{m_{1/2}}/\widetilde\Gamma_{l_0}^{m_{1/2}}) + (m_1'-m_{1/2}) \Phi+O(1)
\\&=P_{m_0'} + (m_{1/2}-m_0')\gamma_1' + (m_1'-m_{1/2}) \Phi + O(1).
\end{align*}
Hence, if $m_1-m_0 \ggg -\gamma_1/\Phi$, then \[P_{m_1'} - P_{m_0} < (m_1'-m_0)\frac{\Phi}{2}.\]
The result follows, with $d_0$ being the implicit lower bound on $m_1'-m_0$.
\end{proof}

We next show that $P_m$ is bounded by $\gamma_7 T$. 
We will do so by induction on all rectangles $(m_0<m_1, l_0<l_1)$, sorted by $m_0$.
That is, when proving that $P_m$ is bounded by $\gamma_7 T$ for $(m_0<m_1, l_0<l_1)$, we assume this claim holds for all rectangles $(m_0'<m_1', l_0'<l_1')$ with $m_0'<m_0$.
The tricky part is to show that the implicit constant does not increase at every step of the induction. 
What makes this possible is the decaying behavior of $P_m$ in case $2$ of Lemma \ref{lem: diffrential P_m behaviour}, and the fact that this case occurs near every null point.
% The tool we will use to show that it does not happen is part $2$ of the lemma above, which shows that even if the $P_m$ is a large multiple of $T$ at the beginning of the rectangle, it will go right back to $\gamma_3 T$ at the end of it.

% Another problem is that when we go two such intervals into one, that is, 
% If $[m_0, m_1], [m_0', m_1]$ are such interval with 
% \[\Gamma_i^m, \Gamma_{i+1}^m : m\in [m_0,m_1], ~~\Gamma_{i+1}^m, \Gamma_{i+2}^m : m\in [m_0',m_1].\]
% and $\Gamma_{i+1}$ is removed at step $m_1$. 
\begin{lem}\label{lem: circular inductive lemma}
There exist $\gamma_9>0$ and bounds $\gamma_8>\gamma_7 >\gamma_3$ that depend only on $\Eall$, such that 
for every rectangle $(m_0 < m_1, l_0<l_1)$ we have
\[
P_m \le 
\begin{cases}
\gamma_8 T, & \text{if }m_0\le m< m_0 + \gamma_9,\\
\gamma_7 T, & \text{if } m_0 + \gamma_9 \le m \le m_1.
\end{cases}
\]
Moreover, if the rectangle splits \emph{(}see Definition \emph{\ref{de: split definition}}\emph{)} then $P_{m_1}<\gamma_3T$. 
% % $\Gamma^{m_1}_i$ and $\Gamma^{m_1}_{i+1}$ at step $m_1$ then 
% \[
% P_{m_1}\le \gamma_3.
% % : m_1 - C_2/T + \min(\gamma_7/1/\gamma_3')
% \]
\end{lem}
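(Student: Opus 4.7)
The plan is to prove this by strong induction on rectangles ordered by their starting time $m_0$. At each step of the induction, I would establish the two bounds on $P_m$ throughout the rectangle $[m_0,m_1]$ and, when applicable, the stronger ``moreover'' bound $P_{m_1} < \gamma_3 T$ at the terminal time.

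First I would bound $P_{m_0}$ at the start of each rectangle, splitting into cases by how the rectangle arose. For the initial rectangle $(0<m_1,0<n)$, the value $P_0$ depends only on $\Lambda$, which is absorbed into $\gamma_7 T$ since $T$ may be taken large. For a rectangle that is a child of a splitting parent $(m_{-1}<m_0-1,l_0<l_1)$, the inductive moreover clause gives $P_{m_0-1}^{\mathrm{par}}<\gamma_3 T$; the Adding Step at $m_0-1$ supplies, via Lemma \ref{lem: enlarging the flag}, a lattice $\Gamma_{l_{1/2}}^{m_0}$ with relative covolume controlled by $\gamma_5 T$, and every noise lattice in either child is also a noise lattice in the parent (after suitably re-weighting $\xi$ to reflect the shrunken rank range), so $P_{m_0}\le\gamma_7 T$ with room to spare. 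For a rectangle merged from two parents, noise lattices of rank $\ne l_{1/2}$ factor through the removed lattice $\Gamma_{l_{1/2}}^{m_0-1}$ and are controlled by one of the parental bounds $\gamma_7 T$; the rank-$l_{1/2}$ lattice is $\Gamma_{l_{1/2}}^{m_0-1}$ itself, whose relative covolume is bounded by $O(T)$ because $f^{\brho+\bpi^{m_0-1}}$ has a null vertex of type $E\to *$ at $l_{1/2}$ near $T_{m_0-1}$, forcing $f_{H,l_{1/2}}^{\brho+\bpi^{m_0-1}}(T_{m_0})$ to be within $O(T)$ of the convex interpolation between its neighbors.

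Having $P_{m_0}\le\gamma_7 T$ in hand, I would propagate the bound across $[m_0,m_1]$. The non-null vertices of $f$ at $l_0$ or $l_1$ partition $[m_0,m_1]$ into at most $l_0(n-l_0)+l_1(n-l_1)=O(1)$ sub-intervals on each of which $E_{l_0}=f_{E,l_0}(T_m),E_{l_1}=f_{E,l_1}(T_m)$ are constant. On each sub-interval Lemma \ref{lem: diffrential P_m behaviour} controls $P$: in a scalar phase $P$ changes only by $O(1)$, and in a non-scalar phase $P$ either is already below $\gamma_3 T$ or decays linearly at rate $1/\gamma_3'$. Combining this with Observation \ref{obs: tameness of P_m} (Lipschitz growth of $\gamma_1'T$ per step) over the finitely many sub-intervals shows that $P_m$ is bounded uniformly throughout the rectangle by $\gamma_7 T$ once one is at least $\gamma_9$ steps past $m_0$; for the initial transient I set $\gamma_8:=\gamma_7+\gamma_9\gamma_1'$.

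The hardest part is the moreover clause: if the rectangle splits at $m_1$, I must show $P_{m_1}<\gamma_3 T$. Splits occur at Adding Steps, which happen within an $O(C_1)$ window of a null vertex of $f$ of type $*\to E$ at some $l_{1/2}$ with $l_0<l_{1/2}<l_1$. The key is Lemma \ref{lem: not scalar at edge}: at the endpoint of the nontriviality interval for $l_{1/2}$, the multisets $E_{l_0},E_{l_1}$ are not a scalar pair. Since $f$ is $C_2$-separated and $C_2\ggg T\cdot\gamma_3'\cdot(\text{constants depending on }\Eall)$, the non-scalar phase preceding the split has length at least $\Theta(C_2)$, i.e.\ at least $\Theta(C_2/T)$ steps, which is much larger than the time $\gamma_3'\cdot(\gamma_8-\gamma_3)$ needed by Lemma \ref{lem: diffrential P_m behaviour}(2) to drive $P$ from $\gamma_8 T$ down below $\gamma_3 T$. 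This both forces $P_{m_1}<\gamma_3 T$ and closes the induction.
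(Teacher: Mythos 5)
Your proposal follows the paper's induction scheme closely (same ordering of rectangles by $m_0$, same three cases at the start of a rectangle, same propagation via sub-intervals of constancy for $(f_{E,l_0},f_{E,l_1})$, same use of Lemma \ref{lem: diffrential P_m behaviour} and Observation \ref{obs: tameness of P_m}), so I will focus on the gaps.

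The main one is in the propagation. You write ``shows that $P_m$ is bounded uniformly throughout the rectangle by $\gamma_7 T$ once one is at least $\gamma_9$ steps past $m_0$,'' but as stated nothing forces $P$ to come \emph{down} to $\gamma_7 T$: from $P_{m_0}=O(\gamma_7 T)$ (and in the merged case this really is a multiple of $\gamma_7 T$, not $\le\gamma_7 T$, after you combine the two parental potentials with their incompatible $\xi$-weights and the $O(T)$ contribution of the removed $\Gamma_{l_{1/2}}^{m_0-1}$), the scalar sub-intervals give $O(1)$ drift, transitions give $O(\gamma_1'T)$, and this accumulates across the induction — the next merged rectangle inherits an inflated starting bound, and the $\gamma$'s can no longer be chosen uniformly in $\Eall$. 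What kills the accumulation in the paper is that the \emph{first} sub-interval after a merge is guaranteed to be a non-scalar phase of length $\Omega(C_2/T)$, by the combination of Lemma \ref{lem: not scalar at edge} (applied to the null vertex $E\to *$ at $l_{1/2}$) and $C_2/2$-separatedness of $f^{\brho+\bpi^{m_0-1}}$; this is what drives $P$ down to $\gamma_3 T<\gamma_7 T$ within the initial $\gamma_9$ steps and resets the budget. You correctly invoke exactly this mechanism for the ``moreover'' clause, but it must also be invoked for the basic $\gamma_7 T$ bound, otherwise the inductive constants blow up. A secondary imprecision is the sentence ``noise lattices of rank $\ne l_{1/2}$ factor through the removed lattice $\Gamma_{l_{1/2}}^{m_0-1}$'': a noise lattice $\Gamma$ with $\Gamma_{l_0}\subset\Gamma\subset\Gamma_{l_1}$ need not contain nor be contained in $\Gamma_{l_{1/2}}^{m_0-1}$ (and a rank-$l_{1/2}$ noise lattice need not equal it); relating $P_{m_0}$ to the parental potentials actually requires a submodularity-type comparison via $\Gamma\cap\Gamma_{l_{1/2}}^{m_0-1}$ and $\Gamma+\Gamma_{l_{1/2}}^{m_0-1}$. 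The paper is terse on this point too, but the nesting claim as you wrote it is false.
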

\begin{cor}\label{cor: bound on noise}
At every Adding Step the noise is bounded by $\gamma_4$. Formally, if Alice attempts to add a lattice $\Gamma_l^{m+1}$ at step $m$, and if $l^-<l<l^+$ are the consecutive elements of $l$ in $L_{T_{m+1}}(f^{\brho+\bpi^m})$, then for every $\widetilde \Gamma_{l^-}^m\subset\Gamma\subset \widetilde \Gamma_{l^+}^m$ we have 
\[\log \cov(\widetilde \Gamma_{l^+}^m/\Gamma/ \widetilde \Gamma_{l^-}^m) \ge -\gamma_4T.\]
\end{cor}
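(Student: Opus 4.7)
The plan is to deduce Corollary \ref{cor: bound on noise} directly from the ``moreover'' clause of Lemma \ref{lem: circular inductive lemma}, once we translate the statement from the language of noise lattices into the language of the potential invariant $P_m$.

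First I would observe that an Adding Step at $m$ is precisely a splitting event for the rectangle with $l$-coordinates $(l^-, l^+)$. More concretely, before step $m$ the indices $l^-<l^+$ are consecutive in $L(\Gamma^m_\bullet)$, while at step $m+1$ Alice inserts $\Gamma^{m+1}_l$ between them, so the preceding rectangle $(m_0<m_1=m,\ l_0=l^-<l_1=l^+)$ ends and bifurcates into two new rectangles $(m+1<\cdot,\ l^-<l)$ and $(m+1<\cdot,\ l<l^+)$. Consequently the final clause of Lemma \ref{lem: circular inductive lemma} applies and yields $P_m<\gamma_3 T$.

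Next I would unwind the definition of $P_m$. By Eq.~\eqref{eq: P_m definition}, for every lattice $\Gamma$ with $\widetilde\Gamma^m_{l^-}\subset\Gamma\subset\widetilde\Gamma^m_{l^+}$ we have
\[
\xi(\rk\Gamma)\bigl(-\log\cov(\widetilde\Gamma^m_{l^+}/\Gamma/\widetilde\Gamma^m_{l^-})\bigr)\le P_m<\gamma_3 T,
\]
where $\xi(x)=\tfrac{1}{(x-l^--1)(l^++1-x)}$ is bounded below by a positive constant $c_\xi>0$ depending only on $\Eall$ (since $\rk\Gamma$ ranges over a bounded subset of $\{l^-+1,\dots,l^+-1\}\subseteq\{1,\dots,n-1\}$). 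Dividing through,
\[
-\log\cov(\widetilde\Gamma^m_{l^+}/\Gamma/\widetilde\Gamma^m_{l^-})<\gamma_3 T/c_\xi.
\]

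Finally, setting $\gamma_4:=\gamma_3/c_\xi$ (which is compatible with the hierarchy $\gamma_4\ggg\gamma_3$ fixed in Eq.~\eqref{eq: gamma dependency}) gives the desired bound $\log\cov(\widetilde\Gamma^m_{l^+}/\Gamma/\widetilde\Gamma^m_{l^-})\ge -\gamma_4 T$. The only non-routine ingredient is invoking the ``splits'' case of Lemma \ref{lem: circular inductive lemma}; since that lemma is stated as already proven in the excerpt, the deduction here is essentially bookkeeping. The main conceptual point to verify carefully is the identification of the Adding-Step geometry with the splitting case of a rectangle, which follows immediately from Condition-Adding$'$ and the definition of the rectangle succession rules.
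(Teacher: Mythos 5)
Your proof is correct and follows the same route as the paper: the paper's own argument is a single sentence invoking the ``splits'' clause of Lemma~\ref{lem: circular inductive lemma} to get $P_m < \gamma_3 T$ for the rectangle $(m_0<m,\,l^-<l^+)$, exactly as you do. Your additional bookkeeping (identifying the Adding Step with a rectangle split via Condition-Adding$'$, and unwinding the definition of $P_m$ using a lower bound $c_\xi$ on $\xi$) just makes explicit what the paper absorbs into the choice $\gamma_4 \ggg \gamma_3$ in Eq.~\eqref{eq: gamma dependency}.
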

\begin{proof}[Proof of Corollary \emph{\ref{cor: bound on noise}}]
The claim follows from the fact that in this case there is a rectangle $(m_0<m, l^-<l^+)$ that splits, and hence $P_m<\gamma_3T$. 
\end{proof}
Recall that Corollary \ref{cor: bound on noise} was used in the definition of the Adding Step. We turn to the proof of Lemma \ref{lem: circular inductive lemma}.
\begin{proof}[Proof of Lemma \emph{\ref{lem: circular inductive lemma}}]
We distinguish three cases, depending on whether the rectangle is a merger of two rectangles, is split from another rectangle or is an initial rectangle. 

\textbf{Rectangle is merged:}
Assume the rectangle is merged from, say, 
$(m_{-1} < m_0-1, l_0<l_{1/2})$ and
$(m_{-1}' < m_0-1, l_{1/2}<l_1)$. The induction hypothesis on the two rectangles bounds the corresponding potential invariants: $P'_{m_0-1}, P''_{m_0-1}\le \gamma_7 T'$. 
Since $\widetilde \Gamma_{l_{1/2}}^{m_0-1}$ was removed at step $m_0-1$,
and
by Eq. \eqref{eq: bpi = det Gamma - f} we get that $\cov(\widetilde\Gamma^{m_0-1}_{l_{1}}/\widetilde\Gamma^{m_0-1}_{l_{1/2}}/\widetilde\Gamma^{m_0-1}_{l_{0}}) = O(T)$. 
It follows that the bound on $P'_{m_0-1},P''_{m_0-1}$ gives a bound on $P_m = O(T+ P''_{m_0-1}+P''_{m_0-1}) = O(\gamma_7T)$. 

Since $f^{\brho+\bpi^{m_0-1}}$ has a null vertex at $[T_{m_0-1}, T_{m_0})$
and $f^{\brho+\bpi^{m_0-1}}$ is $C_2/2$-separated it follows that for all $m_0\le m \le m_0+C_2/(2T)$ the multisets $f_{E,l_0}(T_m)$ and $f_{E,l_1}(T_m)$ are constant.
By Lemma \ref{lem: not scalar at edge}, these $f_{E,l_0}(T_m)\subset f_{E,l_1}(T_m)$ do not form a scalar pair.

Let $a_0,\dots,a_1$ be the shortest sequence of numbers with $m_0=a_0<a_1<\dots<a_k =m_1$ such that $m\mapsto (f_{E,l_0}(T_m), f_{E,l_1}(T_m))$ is constant for $a_i\le m\le a_{i+1}-1$ for each $0\le i \le k-1$.
Since there are at most $O(n^2)$ vertices in every nontriviality interval, it follows that $k=O(n^2)$.
Lemma \ref{lem: diffrential P_m behaviour} implies that $P_m$ decays on $[a_0, a_{1}-1]$, and Observation \ref{obs: tameness of P_m} and Lemma \ref{lem: diffrential P_m behaviour} imply that $P_m$ does not expand more than $O(\gamma_1d_0T)$ over the succeeding intervals.

\textbf{Rectangle is split:} 
Suppose that the rectangle did split from another rectangle, w.l.o.g., $(m_{-1}<m_0-1, l_0 < l_2)$ split into $(m_0<m_1, l_0<l_1)$ and $(m_0<m_1', l_1<l_2)$. 
By the construction of the Adding Step,  $\cov (\Gamma_{l_2}^{m_0}/\Gamma_{l_1}^{m_0}/\Gamma_{l_0}^{m_0}) = O(\gamma_5T)$. In addition, if $P_m'$ is the potential invariant of the rectangle $(m_{-1}<m_0-1, l_0 < l_2)$, then by induction $P_{m_0-1}'\le \gamma_3T$.
This yields the bound  $P_{m_0} = O(\gamma_5T)$. As in the previous case, we obtain a bound on $P_{m}$, for all $m_0\le m\le m_1$. 

\textbf{Rectangle is initial:} At step $0$, the only noise lattices are the lattices in $\HN(\Lambda)$. It follows that $P_0 = O(T)$, provided $T$ is large enough. As in the previous cases, we obtain a bound on $P_{m}$ for all $m_0\le m\le m_1$.

To bound $P_{m_1}$ in the case that the rectangle splits, note that since $f^{\brho+\bpi^{m_1}}$ is separated we get that $f_{E,l_1}(T_m), f_{E,l_2}(T_m)$ are constant for $m_1-C_2/(2T)\le m \le m_1$, and hence the claim follows from Lemmas \ref{lem: diffrential P_m behaviour} and \ref{lem: not scalar at edge}. 
\end{proof}

\subsection{Existence of a Lattice with a Given Template - Proof of Corollary \ref{cor:template existence}} % (fold)
\label{sub:existence_of_lattice_given_template}
% TODO: make the introduction better here.
In this subsection we will prove Corollary \ref{cor:template existence} which states that every template describes at most one lattice. 
Let $\sigma_{\rm Alice}$ be the strategy constructed above for the $(T, g)$-game. 
Let $h_0,h_1,...$ be the output of a $(T, g)$-game where Alice plays via $\sigma_{\rm Alice}$ and Bob plays arbitrarily. It is sufficient to prove that the lattice $h_\infty\Lambda$ satisfies $f^{h_\infty\Lambda}\sim f$.
\begin{lem}\label{lem: alice strategy good}
Let $h_0,h_1,...$ be the output of a $(T, g)$-game where Alice plays via $\sigma_{\rm Alice}$ and Bob plays arbitrarily. Then $f^{h_\infty\Lambda}\sim f$.
\end{lem}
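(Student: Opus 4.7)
The plan is to show that $h_\infty\Lambda$ $(\varepsilon, C)$-matches a $g$-template equivalent to $f$ for some $\varepsilon<\varepsilon_0$ and some $C$ depending only on $\Eall$, $T$, and $C_1$. By Observation \ref{obs: The behavior of templates}, the templates $f^{h_\infty \Lambda}|_{(-\infty, T_m]}$ and $f^{h_m \Lambda}|_{(-\infty, T_m]}$ are uniformly equivalent with constants independent of $m$, so it suffices to verify that $\widetilde\Lambda^m := g_{T_m}h_m\Lambda$ exhibits the behavior prescribed by $f$ at time $T_m$, uniformly in $m$; sending $m\to\infty$ then yields a template matched by $h_\infty\Lambda$ on all of $\RR$ that is equivalent to $f$.

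First I would exploit the state invariants \ref{asser: existence of lattices}--\ref{asser: bound on noise} maintained by $\sigma_{\rm Alice}$. Assertion \ref{asser: approximation of flag} gives
\[
\spa\widetilde\Gamma^m_l\in \gr^{\varepsilon,g}_{\to f_{E,l}(T_m)}\quad\text{and}\quad\log\cov\widetilde\Gamma^m_l = f^{\brho+\bpi^m}_{H,l}(T_m)+O(1),
\]
for every $l\in L(\Gamma^m_\bullet)$. Since $\brho$ is fixed and $|\pi^m_l|\le C_1$ uniformly in $m,l$, Lemma \ref{lem: shift is equiv} implies that $f^{\brho+\bpi^m}$ is uniformly equivalent to $f$. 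Combined with Corollary \ref{cor: blade behavior over time}, which shows that after a bounded time $\spa\widetilde\Gamma^m_l$ is genuinely $\varepsilon$-close to $\gr^g_{f_{E,l}(T_m)}$, this yields Conditions \ref{cond: template height} and \ref{cond: template direction} of Definition \ref{de:f Lambda matching template} for $\widetilde\Lambda^m$ against $f^{\brho+\bpi^m}$, provided one knows $\{\widetilde\Gamma^m_l\}$ coincides with the Harder--Narasimhan filtration $\HN(\widetilde\Lambda^m)$ up to a uniform additive error.

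Establishing that coincidence is the technical heart of the argument. Two things must be shown. On the one hand, each $\widetilde\Gamma^m_l$ is an HN vertex up to $O(1)$ covolume: if it were not, some primitive sub-lattice would dominate it in covolume, but by Lemma \ref{lem: HN vertex uiniqueness} this forces an intermediate noise lattice $\Psi$ with relative covolume $\ge 0$, contradicting Assertion \ref{asser: bound on noise} because the shift $\brho$ places $f^{\brho+\bpi^m}_{H,l}$ high enough above the secant line $[p_{\widetilde\Gamma^m_{l^-}}, p_{\widetilde\Gamma^m_{l^+}}]$ to leave a gap larger than $O(T)$; this is where the choice $C_1\ggg T$ enters. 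On the other hand, $\HN(\widetilde\Lambda^m)$ has no additional vertices at ranks $l\notin L(\Gamma^m_\bullet)$: by Remark \ref{rem: where L = L}, for $l\notin L(\Gamma^m_\bullet)$ we also have $l\notin L_f(T_m)$ outside a $\Theta(C_1)$-neighborhood of null vertices, and the height of $f^{\brho+\bpi^m}_{H,l}(T_m)$ at such $l$ lies strictly above the interpolation of its neighbors by at least $\Omega(C_1)$, again ruling out an HN vertex by Assertion \ref{asser: bound on noise}; the narrow windows around null vertices contribute only bounded additive error to the matching.

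Putting these together, $\widetilde\Lambda^m$ $(\varepsilon,C)$-matches $f^{\brho+\bpi^m}$ at $T_m$ with constants uniform in $m$, hence $h_m\Lambda$ $(\varepsilon,C)$-matches a template equivalent to $f$ on $(-\infty, T_m]$. Passing to the limit via Observation \ref{obs: The behavior of templates} (and uniqueness of the matched template up to equivalence) yields $f^{h_\infty\Lambda}\sim f$. The principal obstacle is the uniform rigidity of the HN filtration against the $O(T)$-sized noise of Assertion \ref{asser: bound on noise}; this is precisely the role of the shift $\brho$ and the constant hierarchy $C_2\ggg C_1\ggg T$ fixed at the start of Section \ref{sec:alice_s_strategy}.
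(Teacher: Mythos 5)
Your proposal follows the same overall route as the paper's proof: reduce to verifying the behavior of $\widetilde\Lambda^m$ at discrete times $T_m$ via Observation \ref{obs: The behavior of templates}, use the state invariants \ref{asser: existence of lattices}--\ref{asser: bound on noise} together with the constant hierarchy $C_2\ggg C_1\ggg T$ to identify $\widetilde\Gamma^m_\bullet$ with $\HN(\widetilde\Lambda^m)$ up to bounded error, then derive the two matching conditions of Definition \ref{de:f Lambda matching template} and pass to the limit. That is exactly what the paper does, so the structure is right.

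Two points in your sketch are imprecise in ways that do not survive as written. First, you say that ``$\HN(\widetilde\Lambda^m)$ has no additional vertices at ranks $l\notin L(\Gamma^m_\bullet)$'' and that $f^{\brho+\bpi^m}_{H,l}(T_m)$ ``lies strictly above the interpolation of its neighbors by $\Omega(C_1)$.'' Neither of these is correct: for $l\notin L_f(T_m)$, $f^{\brho+\bpi^m}_{H,l}(T_m)$ \emph{is} the linear interpolation (it cannot lie strictly above it), and the HN filtration of $\widetilde\Lambda^m$ generically \emph{does} contain vertices at ranks $l\notin L(\Gamma^m_\bullet)$, namely the noise lattices. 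What is true, and what you should say, is that by Assertion \ref{asser: bound on noise} every such vertex has covolume within $O(T)$ of the secant line of $\widetilde\Gamma^m_\bullet$, hence within $O(C_1)$ of $f_{H,l}(T_m)$, which is what Condition \ref{cond: template height} requires. Second, your direct appeal to Lemma \ref{lem: HN vertex uiniqueness} to show each $\widetilde\Gamma^m_l$ is an HN vertex leaves a gap: that lemma only produces \emph{some} primitive $\Gamma_0\subset\widetilde\Gamma^m_l\subset\Gamma_2$ with nonnegative relative covolume, and there is no a priori reason these $\Gamma_0,\Gamma_2$ are related to $\widetilde\Gamma^m_{l^\pm}$ or to lattices you control. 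The paper packages this step as Corollary \ref{cor: convex is harder}, whose $(1)\Rightarrow(2)$ direction uses induction on $n$ precisely to pin down $\Gamma_0,\Gamma_2$; invoking that corollary (convexity of $l\mapsto\log\cov\widetilde\Gamma'^m_l$ plus strict convexity of order $\Theta(C_1)$ at $L(\Gamma^m_\bullet)$, both of which follow from your assertions) closes the argument cleanly. With these two repairs, your sketch becomes the paper's proof.
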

Before the proof we will prove the following corollary of Lemma \ref{lem: HN vertex uiniqueness}.
\begin{cor}\label{cor: convex is harder}
Let $\Lambda$ be a lattice and let $\Gamma_\bullet$ be a filtration of $\Lambda$ composed of primitive sublattices. 
Let $\Gamma'_\bullet$ be the filtration $\Gamma_\bullet$ obtained by adding any two consequtive $\Gamma_{l}\subset\Gamma_{l'}$ the inverse image of $\HN(\Gamma_{l'}/\Gamma_l)$. Consider the piecewise linear function $f:[0,n]\to \RR$ defined by its values on $L(\Gamma'_\bullet)$ as $f(l)= \log \cov\Gamma'_l$. The following statements are equivalent:
\begin{enumerate}[label={\emph{(\arabic*)}}, ref={\arabic*}]
  \item \label{cond: convex func} $f$ is convex and strictly convex at $L(\Gamma_\bullet)\setminus \{0,n\}$. 
  \item \label{cond: filtration contained} $\Gamma_\bullet$ is contained in $\HN(\Lambda)$. 
  \item \label{cond: filtration equal} $\Gamma_\bullet'=\HN(\Lambda)$. 
 \end{enumerate}
\end{cor}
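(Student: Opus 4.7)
The plan is to prove the cyclic chain of implications $(3)\Rightarrow(2)$, $(3)\Rightarrow(1)$, $(2)\Rightarrow(3)$, and $(1)\Rightarrow(3)$, which together yield the equivalences. The two implications out of $(3)$ are essentially tautological: if $\Gamma'_\bullet=\HN(\Lambda)$, then $\Gamma_\bullet\subseteq \Gamma'_\bullet=\HN(\Lambda)$ is (2); and because the points $p_{\Gamma'_l}$ for $l\in L(\HN(\Lambda))$ are by definition the extreme points of the lower boundary of $\conv(S_\Lambda)$, the piecewise linear graph of $f$ is strictly convex at every $l\in L(\Gamma'_\bullet)\setminus\{0,n\}$, in particular at $L(\Gamma_\bullet)\setminus\{0,n\}$, giving (1).

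For $(2)\Rightarrow(3)$ the key step will be the following lemma: whenever $\Gamma_l\subsetneq \Gamma_{l'}$ both lie in $\HN(\Lambda)$, the HN vertices of $\Gamma_{l'}/\Gamma_l$ are precisely the images of the HN vertices $\Psi$ of $\Lambda$ satisfying $\Gamma_l\subseteq \Psi\subseteq \Gamma_{l'}$. To prove this, I would compare the two lower boundaries: let $\ell$ denote the lower boundary of $\conv(S_\Lambda)$ and $\ell'$ the lower boundary of $\conv(S_{\Gamma_{l'}/\Gamma_l})$ translated back into the coordinates of $\Lambda$. Because the translation sends $S_{\Gamma_{l'}/\Gamma_l}$ into $S_\Lambda$, one has $\ell'\ge \ell$ on $[l,l']$; conversely, the chain property of $\HN(\Lambda)$ forces every HN vertex $\Psi$ of $\Lambda$ with $l\le \rk\Psi\le l'$ to satisfy $\Gamma_l\subseteq \Psi\subseteq \Gamma_{l'}$, so its point lies in $S_{\Gamma_{l'}/\Gamma_l}$ and is already on $\ell$, pinning $\ell'=\ell$ on $[l,l']$ and equating their corners. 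Applying this chunk-by-chunk to consecutive pairs in $\Gamma_\bullet$ shows that inserting the HN of the quotients recovers exactly $\HN(\Lambda)$, i.e.\ $\Gamma'_\bullet=\HN(\Lambda)$.

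For $(1)\Rightarrow(3)$ I will invoke the standard characterization of the Harder--Narasimhan filtration from \cite{G}: a filtration of $\Lambda$ by primitive sublattices coincides with $\HN(\Lambda)$ if and only if its consecutive quotients are semistable and its consecutive slopes strictly increase. For $\Gamma'_\bullet$, each consecutive quotient is by construction a consecutive HN-piece inside some $\Gamma_{l_{i+1}}/\Gamma_{l_i}$, hence semistable; primitivity of the inserted sublattices descends from the primitivity of $\Gamma_{l_{i+1}}$ together with the primitivity of the HN vertices of the quotient. The slopes strictly increase within each chunk $[l_i,l_{i+1}]$ by the HN property of the corresponding quotient, and at the chunk boundaries $l_i\in L(\Gamma_\bullet)\setminus\{0,n\}$ precisely by the strict convexity assumption (1); no other transitions between chunks occur because every $\Gamma_{l_i}$ itself remains in $\Gamma'_\bullet$.

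The main obstacle will be the convex-hull comparison used in $(2)\Rightarrow(3)$, where one has to argue that the lower boundary of the smaller convex hull coincides with that of the larger one on the relevant range of ranks. The slope-plus-semistability characterization invoked in $(1)\Rightarrow(3)$ is a standard reformulation of \cite{G}, and the primitivity and slope-comparison bookkeeping is routine.
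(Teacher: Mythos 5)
Your proposal is correct, and it takes a genuinely different route from the paper for the nontrivial direction. The paper dispatches $(3)\iff(2)\implies(1)$ as immediate from the definitions (in particular treating $(2)\implies(3)$ as definitional), and then proves $(1)\implies(2)$ by induction on $n$: assuming $(1)$ but $\Gamma_l\notin\HN(\Lambda)$ for some interior $l\in L(\Gamma_\bullet)$, it applies the inductive hypothesis to $\Gamma_l$ and $\Lambda/\Gamma_l$ to identify $\Gamma'_\bullet$ on either side of $l$, and then uses Lemma \ref{lem: HN vertex uiniqueness} (Grayson's Corollary 1.31) to produce $\Gamma_0\subset\Gamma_l\subset\Gamma_2$ witnessing a failure of strict convexity at $l$. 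You instead prove $(1)\implies(3)$ directly by invoking the semistability-plus-strictly-monotone-slopes characterization of the HN filtration: each quotient of $\Gamma'_\bullet$ is semistable by construction, strict slope increase within chunks is automatic from the HN construction, and strict slope increase at chunk boundaries is exactly the strict-convexity hypothesis. Both routes lean on Grayson's theory; the paper's avoids stating the full characterization but needs the induction, while yours is shorter and more global. You also spell out $(2)\implies(3)$ via a transparent convex-hull comparison (lower boundary of $\conv(S_{\Gamma_{l'}/\Gamma_l})$ versus that of $\conv(S_\Lambda)$ on $[l,l']$), which fills in what the paper passes over as ``follows from the definition''; that argument is correct, using the chain property of $\HN(\Lambda)$ to show the smaller hull still attains every corner of the larger one on the range of ranks, and the uniqueness of the lattice realizing each HN corner to upgrade equality of boundaries to equality of filtrations.
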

\begin{proof}[Proof of Lemma \emph{\ref{lem: alice strategy good}}]
The verification that $\eqref{cond: filtration equal} \iff \eqref{cond: filtration contained} \implies \eqref{cond: convex func}$ follows from the definition of the Harder-Narasimhan filtration and the definition of $f$. 
To show that $\eqref{cond: convex func}\implies \eqref{cond: filtration contained}$, we use induction on $n$. For $n=1$ there is nothing to prove. Assume \eqref{cond: convex func} holds for $\Lambda$, but not $\eqref{cond: filtration contained}$, i.e., for some $l\in L(\Gamma_\bullet)$ with $l\neq 0,n$ we have $\Gamma_l\nin \HN(\Lambda)$.  
By the induction hypothesis applied to $\Gamma_l, \Lambda/\Gamma_l$, we conclude that $\Gamma'_\bullet$ is composed of $\HN(\Gamma_l)$ and the inverse image of $\HN(\Lambda/\Gamma_l)$ in $\Lambda$. 
Applying Lemma \ref{lem: HN vertex uiniqueness} one contradicts the strict convexity of $f$ at $l$, as desired.
\end{proof}

\begin{proof}
By Observation \ref{obs: The behavior of templates}, $f^{h_\infty\Lambda}|_{[0,T_m]}\sim f^{h_m\Lambda}|_{[0,T_m]}$, 
where the implicit constants are independent of $m,T$. 
Hence, $|f^{h_m\Lambda}_{H,i}(t) - f^{h_\infty\Lambda}_{H,i}(t)| = O(1)$ for every $0\le i\le n$ and $t\in [0,T_m]$. 
By Remark \ref{rem: where L = L}, for every $m$ for which $T_m$ lies outside a $\Theta(C_1)$-neighborhood of the vertices of $f$ we have $L(\Gamma_\bullet^m) = L_f(T_m)$. Since $f$ is $C_2$-significant, if $T_m$ is outside a (maybe larger) $O(C_1)$-neighborhood of the vertices of $f$, $\partial^2f(T_m)\in \{0\}\cup [\Theta(C_1), \infty)$. 
By Assertion \ref{asser: approximation of flag} Eq. \eqref{eq: bpi = det Gamma - f} and Assertion \ref{asser: bound on noise} Eq. \eqref{eq: O(T) bound on noise} we can apply Corollary \ref{cor: convex is harder} to the filtration $\widetilde \Gamma_\bullet^m$ of $\widetilde \Lambda^m$ and deduce that 
$\HN(\widetilde \Lambda^m)$ contains $\tilde\Gamma^m_\bullet$ and that 
$\HN(\widetilde \Lambda^m)_{H,l} = f_{H,l}(T_m)+O(C_1)$ for every $0\le l\le n$.
Since $h^{m\to \infty}\widetilde \Lambda^m = g_{T_m}h_\infty\Lambda$ and $d_H(h^{m\to \infty}, \Id) = O(1)$ (see Subsection \ref{sub:geometric_picture}) it follows that for every $m$ such that $T_m$ is outside an $O(C_1)$-neighborhood of the vertices of $f$ we have $\HN(g_{T_m}h_\infty\Lambda)_{H,l} = f_{H,l}(T_m)+O(C_1)$. 
Since for every $t\ge 0$ there is such an $m$ with $t-T_m = O(C_1)$, we get that for every $t\ge 0$ 
\begin{align}\label{eq: height is right}
\HN(g_{t}h_\infty\Lambda)_{H,l} = f_{H,l}(t)+O(C_1).
\end{align}
This proves Condition \ref{cond: template height} of Definition \ref{de:f Lambda matching template} of matching templates. As for condition \ref{cond: template direction}, 
let $[a,b]$ be an interval such that $f_{E,l}(t) = E\in \cI_l$
and $\partial^2f_{H, l}(t)\ge C$ for every $t\in [a,b]$, 
for some $C>0$ to be determined later. 
By Eq. \eqref{eq: height is right}, if $C = \Theta(C_1)$,
then $\HN(g_{t}h_\infty\Lambda)_{H,l} = \Theta(C)$ for every $t\in [a,b]$. 
It follows from Assertion \ref{asser: bound on noise} that $h_\infty^{-1}g_{-T_m}\HN(g_{T_{m}}h_\infty\Lambda)_{\Gamma,l}$ is not a noise lattice at step $m$, and hence $\Gamma_l^m = h_\infty^{-1}g_{-T_m}\HN(g_{T_{m}}h_\infty\Lambda)_{\Gamma,l}$. 
Assertion \ref{asser: approximation of flag} and Eq. \eqref{eq: aligning Gamma} 
now implies that $\spa(\tilde\Gamma^m_l)\in U_\varepsilon(\gr^g_E)$. 
% Let $m_0 := \lceil a/T\rceil, m_1:=\lfloor b/T\rfloor$. 
Using arguments similar to those used to prove Corollary \ref{cor: blade behavior over time}, we conclude that for every 
$t\in [a+O(|\log\varepsilon|),b-O(|\log\varepsilon|)]$
we have $\HN(g_{T_{m}}h_\infty\Lambda)_{\Gamma,l} \in U_\varepsilon(\gr_E^g)$.
This shows that $h_\infty\Lambda$ $(\varepsilon, C)$-matches $f$. 
\end{proof}

% subsection existence_of_lattice_given_template (end)

% % By the previous Lemma \ref{lem: diffrential P_m behaviour} 
% First we will show the Moreover claim. 
% If there is a lattice added between $\Gamma_i^{m_1}, \Gamma_{i+1}^{m_1}$ then there is a null vertex at of $f^{\brho+\bpi^m}\in [T_{m_1}, T_{m_1}+1]$. Then since $f^{\brho+\bpi^m}$ is $C_2$ separated it follows that the steps $m=\lceil m_1 - C_2/T\rceil,\dots,m_1-1$ were all with the values of $f_{E, i}, f_{E, i+1}$, and hence by Lemma \ref{lem: diffrential P_m behaviour}. The Moreover claim follows.

% For the rest of the proof we will distinguish into two cases. 
% If one of the $\Gamma_i^{m_0}, \Gamma_{i+1}^{m_0}$ was added at stage $m = m_0-1$ or not.
% It was added, w.l.o.g. it was $\Gamma_{i+1}^{m_0}$, by the discussion on lattice addition to the flag 
% \[\left|\log \cov \Gamma_{i+2}^{m_0}/\Gamma_{i+1}^{m_0}/\Gamma_i^{m_0}\right|\le (\gamma_4 + \gamma_5)T.\]
% Since the interval before $[m_0, m_1]$ in which $\Gamma_i^{m_0}, \Gamma_{i+2}^{m_0}$ are a consecutive elements in $\Gamma_\bullet^m$ ends with $P'_{m_0-1}\le \gamma_4'$ we get that every lattice $\Psi$ has $\xi'(\Psi)\log\cov \Gamma_{i+2}^{m_0-1}/\Psi/\widetilde\Gamma_i^{m_0-1} \le \gamma_4'T$, where $\xi', P'$
% TODO: make order in the alphas.
% Todo: finish the proof with better notations.
% subsubsection bonds_on_noise (end)
% subsection proof_that_the_strategy_works (end)
% section alice_s_strategy (end)
\section{Bob's Strategy} % (fold)
\label{sec:Bob_s_strategy}
Bob's strategy is a function $\sigma_{\rm Bob}^T:(T_0, h_0, A_1, h_1,\dots, A_{m+1})\mapsto h_{m+1}$, where $h_{m+1}\in A_{m+1}$. As it turns out, the construction of Bob's strategy is simpler than the that of Alice's strategy. While Alice tries to create a good template, Bob's goal is to interfere with her attempts. 

Let $\Lambda$ be a lattice.
At each step $m$, the set $A_{m+1}$ satisfies 
\begin{itemize}
  \item $A_{m+1}\subseteq g_{-T_m}B_Tg_{T_m}h_m$.
  \item $d_\varphi(h_{m+1}, h_{m+1}') \ge \exp(-3^{\alpha_\varphi} T_{m+1})$ for every $h_{m+1}, h_{m+1}'\in A_{m+1}$.
\end{itemize}
Equivalently, denoting $\widetilde A_{m+1}:=g_{T_m}A_{m+1}h_{m}^{-1}g_{-T_m}$, we have
\begin{itemize}
  \item $\widetilde A_{m+1}\subseteq B_T$.
  \item $d_\varphi(h_{m+1}, h_{m+1}') \ge \exp(-3^{\alpha_\varphi} T))$ for every $\widetilde h_{m+1}, \widetilde h_{m+1}'\in A_{m+1}$.
\end{itemize}
Bob needs to choose $h_{m+1}\in A_{m+1}$, or, alternatively, $\widetilde h_{m+1}=g_{T_m}h_{m+1}h_{m}^{-1}g_{-T_m}\in \widetilde A_{m+1}$. 

In the end Alice and Bob generate $h_\infty := \lim_{m\to \infty} h_m \in H$. Bob's goal is to minimize $\Delta(f^{h_\infty\Lambda})$. 
\subsection{Point Sorting in a Height Sequences} % (fold)
\label{sub:point_sorting_in_a_height_sequence}
\index{Vanishing number}\hypertarget{beg}{}
To construct Bob's strategy, we need to introduce an order on the indices of a height sequence. 
Define $\brho(\zeta) = (\zeta l(n-l))_{l=0}^l$ for every $\zeta>0$. 
For every height sequence $a_\bullet$, define the \emph{vanishing number} $\zeta_l(a_\bullet)$ of any index $1\le l\le n-1$ by 
\[\zeta_l(a_\bullet) = 
\min\{\zeta>0:l\nin L(a_\bullet^{\brho(\zeta)})\}.\]
The number $\zeta_l(a_\bullet)$ behaves similarly to $\partial^2a_l$: they vanish simultaneously in height sequences and one can show that $\zeta_l(a_\bullet) = \Theta(\partial^2a_l)$.

% subsection point_sorting_in_a_height_sequence (end)
\subsection{The Strategy} % (fold)
\label{sub:the_strategy2}
For every $m$, consider the lattice $\widetilde \Lambda^m := g_{T_m}h_m\Lambda$ and its Harder-Narasimhan filtration $\HN(\widetilde \Lambda^m)$.
% \[\HN(\widetilde \Lambda^m): ~0=\widetilde \Gamma_0^m\subset \widetilde \Gamma_1^m\subset \dots\subset \Gamma_{k^m}^m = \widetilde \Lambda^m.\]
% Let $u:[0,n]\to \RR$ denote the convex piecewise-linear map defined by 
% \[u(l_i^m) = \log \cov \widetilde \Gamma^m; ~~l_i^m:=\rk \widetilde\Gamma_i^m\]
Let $\varepsilon>0$ be sufficiently small to satisfy Theorem \ref{thm:lin path} and Lemma \ref{lem: E_i to V_i is functorial}.
Consider the subset of indices
\[L_0^m := \{l\in L(\HN(\widetilde \Lambda^m))\setminus\{0,n\}: ~\exists E(m,l)\in \cI_{l}, ~\spa\widetilde \Gamma_l^m\in U_\varepsilon(\gr^g_{E(m,l)}\}.\]
By Lemma \ref{lem: E_i to V_i is functorial}, the sequence $(E(m,l))_{l\in L_0^m}$ forms a direction filtration without the extremal multisets $\emptyset, \Eall$. 
Sort the set $L_0^m$ by the value of $\zeta_l(\HN(\widetilde \Lambda^m)_{H,\bullet})$ for $l\in L_0^m$, that is, $L_0^m = \{l_1^m, l_2^m, \dots,l_{k^m}^m\}$, where 
\[\zeta_{l_1^m}(\HN(\widetilde \Lambda^m)_{H,\bullet}) \ge \zeta_{l_2^m}(\HN(\widetilde \Lambda^m)_{H,\bullet})\ge\dots\ge \zeta_{l_{k^m}^m}(\HN(\widetilde \Lambda^m)_{H,\bullet}).\]
Let $i^m$ denote the first index of $l^m_\bullet$ with the following property: there is $\widetilde h_{m+1}\in \widetilde A_{m+1}$ such that for every $h'\in B_{d_\varphi}(h_{m+1}; 3^{\alpha_\varphi}\exp(-T))$ one has  
\[\exists~ 1\le i\le i^m,~h'\Gamma_{i_k}\nin H_{\spa\widetilde \Gamma_{l_i^m}^m\to E(m,l_i^m)}.\]
\index{Steps of Bob!idle}\hypertarget{beh}{}
\index{Steps of Bob!interruption}\hypertarget{bei}{}
If such $i^m$ exists, the strategy selects this $\widetilde h_{m+1}$. In this case we call the step an \textbf{Interruption Step}, and say that Bob \emph{interrupted $i^m$ indices}. 
If no such $i^m$ exists, let $\widetilde h_{m+1}\in \widetilde A_{m+1}$ be arbitrary, and call step $m$ an \textbf{Idle Step}.
% subsection the_strategy (end)

% section Bob_s_strategy (end)

\section{Computation of the Dimension --- Proof of Theorem \ref{thm: general scewed dimention formula}} % (fold)
\label{sec:computation_of_the_dimension}
\subsection{Computation of the Payoff} % (fold)
\label{sub:computation_of_alice_s_payoff}
We fix a family $\cF$ of $g$-templates that is invariant to equivalence of $g$-templates.
For every lattice $\Lambda$, we wish to prove that 
\[\dim_\funH(Y_{\Lambda, \cF}; d_\varphi) = \sup_{f\in \cF}\Delta_0(f).\]
We will use the Dimension Game introduced in Subsection \ref{sub:hausdorff_games}. 

To prove the lower bound, it is enough to fix $f_0\in \cF$ with $\Delta_0(f_0)\ge \sup_{f\in \cF}\Delta_0(f) - \varepsilon$, and to prove that Alice has a strategy $\sigma_{\rm Alice}^T$ that guarantees that the payoff is at least $D_{\rm Alice}^T$, for some $D_{\rm Alice}^T$ such that $\lim_{T\to \infty}D_{\rm Alice}^T \ge \Delta_0(f)$. Any $h_\infty$ attained with Alice's strategy will have $f^{h_\infty\Lambda}\sim f_0$ by \ref{lem: alice strategy good}.

To prove the upper bound, we have to prove that Bob has a strategy $\sigma_{\rm Bob}^T$ that guarantees that the payoff is at most $D_{\rm Bob}^T$, for some $D_{\rm Bob}^T$ such that \[\lim_{T\to \infty}D_{\rm Bob}^T \le \sup_{f\in \cF}\Delta_0(f).\]

\subsection{The Payoff Guaranteed by Alice's Strategy} % (fold)
\label{sub:alice_s_strategy_value}
Fix $\varepsilon>0$ as in Section \ref{sec:alice_s_strategy} and let
$f\in \cF$ with $\Delta_0(f)\ge \sup_{f\in \cF}\Delta_0(f) - \varepsilon$. To apply Alice's strategy we have to choose $C_2\ggg C_1\ggg T$ and approximate $f$ by a $C_2$-separated and $C_2$-significant $g$-template. 
By Lemma \ref{lem:separated}, there is an equivalent $g$-template $f'\sim f$ that is $C_2$-separated and $C_2$-significant. The construction of $f'$ was carried out by constructing a proper shift sequence $\brho$ and getting $f'=f^\brho$. The following claim will allow us to replace $f$ by $f'$. 
\begin{claim}
For every $g$-template $f$ and every shift sequence $\brho$ we have $\Delta_0(f^\brho)\ge \Delta_0(f)$. 
\end{claim}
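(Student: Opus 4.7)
The plan is to argue pointwise: at (almost) every $t$, the filtration $f^\brho_{E,\bullet}(t)$ is obtained from $f_{E,\bullet}(t)$ by \emph{removing} some intermediate indices (without changing the multisets at the remaining indices). Once this is established, Observation \ref{obs: final implications} will give $\delta(f^\brho_{E,\bullet}(t)) \ge \delta(f_{E,\bullet}(t))$ pointwise, and the result follows by integrating and taking the $\liminf$.

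First I would verify that $L_{f^\brho}(t) \subseteq L_f(t)$ for every $t$. By the explicit formula in Remark \ref{rem:flag shift is cont and support dependent}, the shifted height sequence $f^\brho_{H,\bullet}(t)$ is the lower convex hull of the finite set of points $\{(l, f_{H,l}(t) + \bar\rho_l(t)) : l \in L_f(t)\}$. The extreme points of such a lower convex hull are a subset of the points used to define it, so $L(f^\brho_{H,\bullet}(t)) \subseteq L(f_{H,\bullet}(t))$, i.e.\ $L_{f^\brho}(t) \subseteq L_f(t)$. Moreover, by the definition of the shifted $g$-template, on every nontriviality interval of $f^\brho$ at $l$ the category flow satisfies $f^\brho_{E,l} = f_{E,l}$; hence at each index $l \in L_{f^\brho}(t)$, the two multisets $f^\brho_{E,l}(t)$ and $f_{E,l}(t)$ coincide. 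Therefore $f^\brho_{E,\bullet}(t)$ is indeed obtained from $f_{E,\bullet}(t)$ by deleting some intermediate indices (retaining $0$ and $n$) and keeping the remaining multisets unchanged.

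Next I would invoke the monotonicity of $\delta$. Observation \ref{obs: final implications} states that for consecutive entries $E_l \subset E_{l'}$ of a direction filtration and any intermediate multiset $E$, one has $\delta(E_\bullet) \ge \delta(E_\bullet \cup \{E\})$; that is, \emph{inserting} an intermediate element can only (weakly) decrease $\delta$. Equivalently, \emph{deleting} an intermediate element can only (weakly) increase $\delta$. Applying this iteratively to the chain of deletions that takes $f_{E,\bullet}(t)$ to $f^\brho_{E,\bullet}(t)$, we conclude
\begin{equation*}
\delta(f^\brho_{E,\bullet}(t)) \ge \delta(f_{E,\bullet}(t))
\end{equation*}
for every regular $t$ (the exceptional vertices form a locally finite, hence measure-zero, set, and both sides are uniformly bounded in $t$ by a constant depending only on $\Eall$).

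Finally, integrating over $[0,T]$ and passing to the $\liminf$ yields
\begin{equation*}
\Delta_0(f^\brho) \;=\; \liminf_{T\to\infty}\frac{1}{T}\int_0^T\delta(f^\brho_{E,\bullet}(t))\,dt \;\ge\; \liminf_{T\to\infty}\frac{1}{T}\int_0^T\delta(f_{E,\bullet}(t))\,dt \;=\; \Delta_0(f),
\end{equation*}
as claimed. There is no real obstacle here; the only thing to be careful about is the pointwise bookkeeping at boundaries of nontriviality intervals and at vertices, and since these contribute a measure-zero set the integral comparison is unaffected.
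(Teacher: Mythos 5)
Your proof is correct and follows essentially the same route as the paper's: both arguments reduce to the pointwise inequality $\delta(f_{E,\bullet}(t)) \le \delta(f^\brho_{E,\bullet}(t))$, which holds because $f^\brho_{E,\bullet}(t)$ is obtained from $f_{E,\bullet}(t)$ by deleting intermediate multisets and $\delta$ can only increase under such deletions (Observation \ref{obs: final implications}). You have simply spelled out the step showing $L_{f^\brho}(t)\subseteq L_f(t)$ via Remark \ref{rem:flag shift is cont and support dependent}, which the paper leaves implicit.
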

\begin{proof}
To prove the result it is enough to show that $\delta(f_{E,\bullet}(t))\le \delta(f_{E,\bullet}^\brho(t))$ (see Definition \ref{de: local entropy} of shifting a height sequence). This inequality holds since $\delta$ is monotone nonincreasing with respect to adding multisets and the fact that each multiset in $f_{E,\bullet}^\brho(t)$ appears also in $f_{E,\bullet}(t)$.
\end{proof}

We can thus assume w.l.o.g. that $f$ is $C_2$-significant and use Alice's strategy as defined in Section \ref{sec:alice_s_strategy}.
Let $A_0, h_0, A_1, h_1, \ldots$ be the choices made by Alice and Bob along the game. Note that Bob may play arbitrarily, not necessarily according to the strategy $\sigma_{\rm Bob}^T$ described in Section \ref{sec:Bob_s_strategy}.
By Lemma \ref{lem: alice strategy good}, the resulting matrix $h_\infty = \lim_{m\to \infty}$ satisfies $f^{h_\infty\Lambda}\sim f$. It remains to compute the first option of Eq. \eqref{eq: value definition}, namely $\frac{1}{T}
\liminf_{m\to \infty}\frac1m\sum_{k=1}^m {\log \#A_k}$.

The number of elements in $A_{m+1}$ is larger than $1$ only if $m$ is a Standard Step. 
In every Standard Step we have $\#A_{m+1} = \Theta(\exp(T\delta(f_E^{\brho+\bpi^m}(T_m)))$, and hence $\log \#A_{m+1} = T\delta(f_E^{\brho+\bpi^m}(T_m)) + O(1)$. 
Thus,
\[\liminf_{m\to \infty}\frac{1}{Tm}\sum_{m'=0}^m \log \#A_{m'} = \liminf_{m\to \infty}\frac{1}{m}\sum_{\substack{m'=0\\m'\text{ is a Standard Step}}}^m \delta(f_E^{\brho+\bpi^{m'}}(T_{m'}))+O(T^{-1}).\]
We will restrict the set of $m'$-s we sum over even further to obtain $f_E^{\brho+\bpi^{m'}}(T_{m'})=f_E(T_{m'})$. 

For every $t>0$ such that $\partial^2f_{H, l}(t) \nin (0,4C_1]$ for every $1\le l\le n-1$ 
we have $f_E^{\brho+\bpi^{m'}}(t) = f_E^{\brho+\bpi^{m'}}(t)$.
% then since $\partial^2(\brho_l+\bpi_l^{m'})$ is at most $4C_1$ we get that $\partial^2f_{H, l}(t)\neq 0 \iff \partial^2f_{H, l}^{\brho+\bpi^{m'}}(t)\neq 0$. 
Since $f$ is $C_2$-significant, we have $\partial^2f_{H, l}(t)\le (0,4C_1]$ only if $t$ is in an $O(C_1)$-neighborhood of a null vertex of $f$. 

In addition, if $T_m$ is not in an $O(C_1)$-neighborhood of any vertex, $m$ is a Standard Step. 

Therefore, 
\begin{align}\label{eq: delta bound}
&\sum_{\substack{m'=0\\m'\text{ is a Standard Step}}}^m \delta(f_E^{\brho+\bpi^{m'}}(T_{m'})) 
\ge 
\frac{1}{T}\int_{J_m}\delta(f_E(t))dt
\\\nonumber&~~~\ge
\frac{1}{T}\int_{0}^{T_m}\delta(f_E(t))dt - O(C_1\cdot\#\{\text{vertices of $f$ until $T_m+O(C_1)$}\})
\\\nonumber&~~~\ge
\frac{1}{T}\int_{0}^{T_m}\delta(f_E(t))dt - O(C_1\cdot(2n^3+O(T_m/C_2))) = O(C_1 \cdot (1+T_m/C_2)),
\end{align}
where $J_m$ is the set of points $0\le t\le T_m$ such that $t$ is not in an $O(C_1)$-neighborhood of a vertex of $J$. 
Note that inequality \eqref{eq: delta bound} follows from Lemma \ref{lem: not many vertices}. 
Consequently, 
\[\liminf_{m\to \infty}\frac{1}{Tm}\sum_{m'=0}^m \log \#A_{m'} \ge \Delta_0(f) - O(TC_1/C_2).\]
If we choose a sequence of $C_2(T)\ggg C_1(T)\ggg T\to \infty$ such that $TC_1(T)/C_2(T)\xrightarrow{T\to \infty} 0$, we obtain the upper bound.
% subsection alice_s_strategy_value (end)
\subsection{The Payoff Guaranteed by Bob's Strategy} % (fold)
\label{sub:Bob_s_strategy_value}
In this subsection we prove the lower bound. To this end we fix $\varepsilon>0$ and consider Bob's strategy $\sigma_{\rm Bob}^T$, both described in Section \ref{sec:Bob_s_strategy}. 
We then show that when Bob follows this strategy, no matter how Alice plays (not necessarily according to the strategy described in Section \ref{sec:alice_s_strategy}), the payoff is at most $D_{\rm Bob}^T$, where $\lim_{T\to\infty} D_{\rm Bob}^T = \sup_{f\in \cF} \Delta_0(f)$.

As before, denote by $(T_0, h_0,A_1, h_1, \cdots)$ the choices of Alice and Bob along the game.
Denote $\bar\Lambda^m := g_{T_m}h_\infty\Lambda$, where $h_\infty:= \lim_{\to \infty} h_m$, and consider $f:=f^{h_\infty\Lambda}$. 
We will bound $\Delta(f)$ using $\#A_{m+1}$. Let $C$ be the constant given by Theorem \ref{thm:Lattice approx} for $\varepsilon$.
Assume that $h_\infty\Lambda$ $(\varepsilon, C)$-matches $f$.
Fix $\zeta_1 > C$ and $\zeta_2 > 4C$ to be determined later. Let $C_1>0$, to be defined later as well.

\begin{lem}\label{lem: bobs strategy interrupts}
Let $\zeta\ggg T$ and let $m\ge 0$ be a step for which $f_{E,\bullet}^{\brho(\zeta)}(t) = f_{E,\bullet}^{\brho(\zeta + \zeta_2)}(t)$ is constant for every $t\in [T_m - C - C_1, T_{m+1} + C + C_1]$. 
Then $\#A_{m+1} = O(\exp (T\delta(f_{E,\bullet}^{\brho(\zeta)}(T_m))))$.
\end{lem}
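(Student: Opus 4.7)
The plan is to transport the stability hypothesis on $f^{\brho(\zeta)}$ into a geometric constraint on $\widetilde A_{m+1}$, namely that it is contained in the $3^{1/\alpha_\varphi}\exp(-T)$-neighbourhood of $H_{\spa\widetilde\Gamma_\bullet^m\to E_\bullet}$ for $E_\bullet:=f_{E,\bullet}^{\brho(\zeta)}(T_m)$ and a suitable filtration $\Gamma_\bullet\subseteq\Lambda$, and then invoke the upper bound in Lemma \ref{lem:the right counting theorem} under the weakened condition \ref{cond: lies near _E_i2} to conclude.

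First I would extract the filtration $\Gamma_\bullet$ from the matching. The stability $f_{E,\bullet}^{\brho(\zeta)}=f_{E,\bullet}^{\brho(\zeta+\zeta_2)}$ on the thickened interval $[T_m-C-C_1,T_{m+1}+C+C_1]$ translates, via convex-hull bookkeeping on $f_{H,\bullet}+\zeta l(n-l)$, into $\partial^2 f_{H,l}(t)\ge \Omega(\zeta_2)$ for every $l\in L(E_\bullet)$ and every $t$ in that interval. Condition \ref{cond: template direction} of Definition \ref{de:f Lambda matching template} applied to $h_\infty\Lambda$ then provides primitive $\Gamma_l\subseteq\Lambda$ of rank $l$ with $\spa g_th_\infty\Gamma_l\in U_\varepsilon(\gr^g_{E_l})$ and $\log\cov g_th_\infty\Gamma_l=f_{H,l}(t)+O(C)$ throughout the interior. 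Since $\widetilde h^{m\to\infty}\in B_{d_\varphi}(\Id;1)$ is volume-preserving and bounded in $\SL_n(\RR)$, the analogous statements hold for $\widetilde\Gamma_l^m:=g_{T_m}h_m\Gamma_l$ with only $O(1)$ worse constants, each $\widetilde\Gamma_l^m$ belongs to $\HN(\widetilde\Lambda^m)$, and the vanishing number $\zeta_l$ of $\HN(\widetilde\Lambda^m)_{H,\bullet}$ satisfies $\zeta_l=\Omega(\zeta_2)-O(C_1)$. Competing indices $l'\in L_0^m\setminus L(E_\bullet)$ have $\zeta_{l'}\le \zeta+O(C_1)$ by the same bookkeeping, so provided $\zeta_2\ggg C_1$ the indices of $L(E_\bullet)$ occupy the top of Bob's sorted list, with $E(m,l)=E_l$ for every $l\in L(E_\bullet)$.

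Next I would run Bob's strategy by contradiction. Suppose $\#\widetilde A_{m+1}$ exceeds the claimed bound by a large constant. Since $\widetilde A_{m+1}\subseteq B_T\subseteq B_{d_\varphi}(\Id;1)$ with pairwise separation at least $3^{1/\alpha_\varphi}\exp(-T)$, the upper bound half of Lemma \ref{lem:the right counting theorem} forces the existence of $\widetilde h_{m+1}\in\widetilde A_{m+1}$ whose entire $3^{1/\alpha_\varphi}\exp(-T)$-neighbourhood is disjoint from $H_{\spa\widetilde\Gamma_\bullet^m\to E_\bullet}$. By the previous paragraph this is precisely an interruption-triggering configuration in Section \ref{sec:Bob_s_strategy} at some index $i^m\le \#L(E_\bullet)$, so $\sigma_{\rm Bob}^T$ selects this $\widetilde h_{m+1}$ and step $m$ breaks the alignment of some $\Gamma_l$, $l\in L(E_\bullet)$, with $E_l$.

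Finally I would propagate the broken alignment. Since $\widetilde h_{m+1}\spa\widetilde\Gamma_l^m\notin\gr_{\to E_l}$, the trajectory $t\mapsto \spa g_th_{m+1}\Gamma_l$ is asymptotic to some $\gr^g_{E'_l}$ with $E'_l\neq E_l$, and Theorem \ref{thm:lin path} forces it to leave $U_\varepsilon(\gr^g_{E_l})$ within bounded time past $T_m$ and stay out thereafter; in particular at some $t^*\in[T_{m+1}+O(1),T_{m+1}+C+C_1]$. Passing from $h_{m+1}$ to $h_\infty$ through $\widetilde h^{m+1\to\infty}\in B_{d_\varphi}(\Id;\exp(-T))$ changes $\spa g_t(\cdot)\Gamma_l$ by only $O(\exp(-T))$ in $d_{\gr}$, so $\spa g_{t^*}h_\infty\Gamma_l\notin U_\varepsilon(\gr^g_{E_l})$, contradicting the $(\varepsilon,C)$-matching of $h_\infty\Lambda$ to $f$ on the stability interval. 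Hence $\#A_{m+1}=O(\exp(T\delta(E_\bullet)))$. The main obstacle is the constant calibration: $\zeta_2$ must comfortably dominate $C_1$ so that $L(E_\bullet)$ really occupies the top of Bob's sorted list with $E(m,l)=E_l$, while $C+C_1$ must exceed the bounded time required for the Grassmannian divergence to overtake the tolerance $\varepsilon$, all under the scaling $\zeta\ggg T$ and the earlier prescriptions $\zeta_1>C$, $\zeta_2>4C$.
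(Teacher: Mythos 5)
Your Steps 1 and 2 (extracting the filtration $\Gamma_\bullet$ from the matching, establishing the $\zeta_2$-gap in the sorted vanishing numbers so that $L(E_\bullet)$ sits at the top of Bob's list with $E(m,l)=E_l$, and invoking the contrapositive of Lemma \ref{lem:the right counting theorem}) follow the same outline as the paper. The genuine gap is in Step 3. Bob's interruption condition asserts that for every $h'$ in the small ball around his choice, some $h'\spa\widetilde\Gamma^m_{l_i}\notin\gr_{\to E_{l_i}}$; you specialize to $h'=\widetilde h^{m\to\infty}$ and try to contradict the $(\varepsilon,C)$-matching by claiming the trajectory $t\mapsto \spa g_t h_{m+1}\Gamma_l$ ``must leave $U_\varepsilon(\gr^g_{E_l})$ within bounded time past $T_m$.'' That deduction is false. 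Theorem \ref{thm:lin path} only bounds the \emph{total} transition time $\sum_k(a_{k+1}-b_k)$; the individual intervals $(a_k,b_k)$ on which $g_tV$ sits in $U_\varepsilon(\gr^g_{E_k})$ can be arbitrarily long. So $\widetilde h^{m\to\infty}\spa\widetilde\Gamma^m_l\notin\gr_{\to E_l}$ (a statement about the limit as $t\to\infty$) is perfectly consistent with the trajectory remaining in $U_\varepsilon(\gr^g_{E_l})$ throughout $[T_m-C-C_1,T_{m+1}+C+C_1]$ and for an arbitrarily long time thereafter, which is all the matching can be probed against. There is no contradiction, and no way to pin $t^*$ into the interval where matching constrains the lattice.

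What the paper actually does is constructive, and this is what is missing from your argument. Rather than showing $\widetilde h^{m\to\infty}$ itself sits in $H_{\spa\widetilde\Gamma^m_\bullet\to E_\bullet}$ (it typically does not), the paper uses the long stability window $[T_m-C_1,T_{m+1}+C_1]$ together with Theorem \ref{thm:lin path} and Remark \ref{rem: the good ep} to get the sharpened containment $\spa g_{T_{m+1}}h_\infty\Gamma_{l_i}\in U_{\varepsilon_1}(\gr^g_{E_{l_i}})$ for a much smaller $\varepsilon_1$, then produces a matrix $M$ with $d_{\SL_n(\RR)}(M,\Id)=O(\varepsilon_1)$ that forces $M\spa g_{T_{m+1}}h_\infty\Gamma_{l_i}\in\gr^g_{E_{l_i}}$ exactly, decomposes $M=q_1h'$ via Lemma \ref{lem: mult map}, and conjugates by $g_T$ to obtain $h''$; the element $h''\widetilde h^{m\to\infty}$ then sits in the ball around Bob's $\widetilde h_{m+1}$ and in $H_{\spa\widetilde\Gamma^m_\bullet\to E_\bullet}$, blocking any interruption at index $\le r$. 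This positive construction of the corrector, not an escape-time argument, is what certifies that Bob's interruption condition cannot be triggered within $L(E_\bullet)$. You should also be careful in Step 2: Lemma \ref{lem:the right counting theorem} requires the flag to satisfy $V_l\in\gr_{\to E_l}^{\varepsilon_0,g}$, whereas the matching only gives $V_l\in U_\varepsilon(\gr^g_{E_l})$, and these are not the same condition — the constructed corrector is also what bridges that gap.
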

\begin{proof}
% Fix $T_m\in [t_0, t_1-T]$
Denote the distinct elements of $L(f_{E,\bullet}^{\brho(\zeta)}(T_m))$ by $\{0,l_1,\dots,l_r,n\}$ and fix $t\in [T_m - C - C_1, T_{m+1} + C + C_1]$. 
The elements $\left(\zeta_{l_i} (f_{H,\bullet}(t))\right)_{i=1}^r$ are the $r$ largest elements in $\{\zeta_l (f_{H,\bullet}(t)):l=1,\dots,n-1\}$, and are larger than their successors by at least $\zeta_2$.
Since $h\Lambda$ $(\varepsilon, C)$-matches $f$, it follows that $\left(\zeta_{l_i}(\HN(g_{T_m}h\Lambda)_{H,\bullet}(t))\right)_{i=1}^{r}$ are the $r$ largest elements in $\{\zeta_l\HN(g_{t}h\Lambda)_{H,\bullet}:l=1,\dots,n-1\}$and are larger than their successors by at least $\zeta_2-4C$.
Recall from Subsection \ref{sub:geometric_picture} that $d_{H}(g_{T_m}h_m h_{\infty}^{-1}g_{-T_m}) \le 1$. 
Hence, if $\zeta_2-4C$ is large enough, 
$\left(\zeta_{l_i}(\HN(g_{T_m}h_m\Lambda)_{H,\bullet})\right)_{i=1}^r$ are the largest among all $\{\zeta_l(\HN(g_{T_m}h_m\Lambda)):1\le l\le n-1\}$.

Denote by $\Gamma_{l_i}\subseteq \Lambda$ the lattices that satisfy $g_{t}h\Gamma_{l_i} = \HN(g_{t}h\Lambda)_{\Gamma,l_i}$ for $t=T_{m}, T_{m+1}$.
Then $g_{t}h_{m+1}\Gamma_{l_i} = \HN(g_{t}h_m\Lambda)_{\Gamma,l_i}$.

Fix $\varepsilon_1>0$, and choose $C_1$ such that if for some $1\le l\le n-1$ and $V\in \gr$ we have $g_tV\in U_\varepsilon(\gr^g_E)$ for every $t \in [-C_1, C_1]$, then $g_tV\in U_{\varepsilon_1}(\gr^g_E)$. Such a constant exists by Theorem \ref{thm:lin path}.
Since $h\Lambda$ $(\varepsilon, C)$-matches $f$, it follows that
\[\forall 1\le i\le r,~\spa g_{t}h\Gamma_{l_i} \in U_\varepsilon\hspace{-3pt}\left(\grl{l_i}^g_{f_{E, l_i}}\right)\text{ for }T_m - C_1\le t \le T_{m+1}+C_1.\]
% It follows that 
Consequently
\[\spa g_{t}h\Gamma_{l_i} \in U_{\varepsilon_1}\hspace{-4pt}\left(\grl{l_i}^g_{f_{E, l_i}}\right)\text{ for }t=T_m, T_{m+1}.\]

Let $M\in \SL_n(\RR)$ be a matrix with $d_{\SL_n(\RR)}(M, I) = O(\varepsilon_1)$ such that \[\forall 1\le i\le r,~M\spa g_{T_{m+1}}h\Gamma_{l_i} \in \grl{l_i}^g_{f_{E, l_i}(T_{m+1})}.\]
Using Lemma \ref{lem: mult map} we get $h'\in B_{d_{H}}(\Id;  O(\varepsilon_1))\subseteq B_{d_{H}}(\Id; 1)$ such that
\[\forall 1\le i\le r,~ h'\spa g_{T_{m+1}}h\Gamma_{l_i} \in \grl{l_i}^{O(\varepsilon_1), g}_{\to f_{E, l_i}}.\]
Then, $h'':=g_{-T}h'g_T\in B_{d_\varphi}(\Id;\exp(-T))$ satisfies 
$h''g_{T_m}h_\infty\Gamma \in \grl{l_i}^g_{\to f_{E, l_i}}$.
Since $d_\varphi(h_\infty, h_{m+1}) \le \exp(-T_{m+1})$, we have $g_{T_m}h_\infty h_{m+1}^{-1}g_{-T_m}\in B_{d_\varphi}(\Id;\exp(-T))$. Observe that
\[d_\varphi(h''\cdot g_{T_m}hh_{m+1}^{-1}g_{-T_m}, \Id)\le 2^{\alpha_\varphi}.\]
% If $\varepsilon_1$ is small enough as a function of $\varepsilon,T$ we get that 
Since $\spa g_{T_{m}}h_\infty\Gamma_{l_i}\in U_{\varepsilon_1}\hspace{-4pt}\left(\grl{l_i}^g_{f_{E, l_i}}\right)$
we have \[\spa g_{T_{m}}h_{m+1}\Gamma_{l_i} = (g_{T_m}h_\infty h_{m+1}^{-1}g_{-T_m})^{-1}g_{T_{m}}h\Gamma_{l_i} \in U_\varepsilon(\grl{l_i}^g_{f_{E, l_i}}),\]
provided that $\varepsilon_1$ is small enough as a function of $\varepsilon$, and $T$ is large enough.

% Consequently

% Consequently, we have 
% \[\spa\HN(g_{T_m}h_{m+1}\Lambda)_{l_i} \in U_{2\varepsilon}\left(\grl{l_i}^g_{f_{E, l_i}}\right)\]

% % Note that if $\zeta$ is large enough relatively to $T$ then $g_T\HN(g_{T_m}h\Lambda)_{l_i} = \HN(g_{T_{m+1}}h\Lambda)_{l_i}$.
% We see that $L$ are the indices with maximal $\partial^2 $ $H$

As a result, the $m$-th step is either an Idle Step 
or an Interruption Step, in which Bob interrupted more than $r$ indices. 
Otherwise, Bob's choice for $\widetilde h_{m+1}$ contradicts the existence of $h''$. 
% would move one of the spaces $\spa\HN(g_{T_m}h_m\Lambda)_{l_i}$ so that 
% \[\spa\HN(h_{m+1}g_{T_m}h_m\Lambda)_{l_i}, \spa\HN(g_{T_m}h\Lambda)_{l_i} \nin U_\varepsilon\left(\grl{l_i}^g_{f_{E, l_i}}\right).\]
By Lemma \ref{lem:the right counting theorem}, we conclude that $\#A_{m+1} = O(\exp (T\delta(f_{E,\bullet}^{\brho(\zeta)}(t))))$.
\end{proof}
\begin{cor}\label{cor: bobs strategy rulse}
There exists a $\zeta_1 \ggg T$ such that the following is true. 
Let $\zeta_3>0$ and let $m\ge 0$ be a step for which $f^{\brho(\zeta_3)}$ has no non-null vertices $t\in [T_m-C-C_1, T_{m+1}+C+C_1]$.
Then $\#A_{m+1} = O(\exp (T\delta(f_{E,\bullet}^{\brho(\zeta_1+\zeta_3)}(T_m)))$.
\end{cor}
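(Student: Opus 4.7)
The plan is to apply Lemma \ref{lem: bobs strategy interrupts} at an auxiliary value $\zeta^\ast\in[\zeta_3,\zeta_3+\zeta_1]$ and then upgrade the bound to $\zeta_1+\zeta_3$ by monotonicity of $\delta$. First I would prove a shift-monotonicity principle: for any height sequence $a_\bullet$ with $a_0=a_n=0$ and any $0\le\zeta\le\zeta'$, one has $L(a_\bullet^{\brho(\zeta')})\subseteq L(a_\bullet^{\brho(\zeta)})$. This reduces to comparing lower convex hulls: since $\brho(\zeta'-\zeta)$ is concave with vanishing endpoints, if $l\nin L(a^{\brho(\zeta)})$ is witnessed by interpolation over $l_-,l_+\in L(a^{\brho(\zeta)})$, then by concavity the same interpolation still puts $l$ strictly above the chord of $a+\brho(\zeta')$. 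As a consequence, the non-null vertices of $f^{\brho(\zeta_3+\zeta)}$ form a subset of those of $f^{\brho(\zeta_3)}$, so for every $\zeta\ge\zeta_3$ the template $f^{\brho(\zeta)}$ has no non-null vertices in $J:=[T_m-C-C_1,T_{m+1}+C+C_1]$. Therefore any change of $f^{\brho(\zeta)}_{E,\bullet}(t)$ over the product $[\zeta_3,\infty)\times J$ is a null vertex, i.e., an index leaving the nontrivial set $L$.

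Next I would bound the bad parameter set. For each $l\in\{1,\ldots,n-1\}$ and $t\in J$ let $\zeta^\ast_l(t)\in[\zeta_3,\infty]$ denote the unique $\zeta\ge\zeta_3$ at which $l$ leaves $L(f^{\brho(\zeta)}_{H,\bullet}(t))$. Locally, when the indices $l_-,l_+$ adjacent to $l$ in $L$ are fixed and $f_{H,\bullet}$ is affine in $t$, the equation $\partial^2 f^{\brho(\zeta)}_{H,l}(t)=0$ (with the discrete Laplacian computed relative to $l_-,l_+$) is linear in $\zeta$ with a nonzero coefficient depending only on $(l_-,l,l_+)$, while the $t$-coefficients come from slopes of $f_{H,\bullet}$ in the finite set $\{\eta_E:E\subseteq\Eall\}$. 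Since only finitely many configurations occur and $\zeta^\ast_l$ remains continuous across transitions, $\zeta^\ast_l(t)$ is piecewise linear with Lipschitz constant depending only on $\Eall$. Hence $I_l:=\zeta^\ast_l(J)$ is an interval of length $O(|J|)=O(C_1)$, and the bad set
\[ B:=\bigcup_{l=1}^{n-1}I_l\subseteq[\zeta_3,\infty) \]
is a union of at most $n-1$ intervals of total length $O(nC_1)$. Outside $B$, $f^{\brho(\zeta)}_{E,\bullet}(t)$ is constant in $t$ on $J$, hence constant on any product $[\zeta,\zeta+\zeta_2]\times J$ disjoint from $B$.

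Choose $\zeta_1\ggg n(C_1+\zeta_2)$, which still satisfies $\zeta_1\ggg T$ since $\zeta_2,C_1\ggg T$. The set of starts $\zeta^\ast\in[\zeta_3,\zeta_3+\zeta_1]$ with $[\zeta^\ast,\zeta^\ast+\zeta_2]\cap B\ne\emptyset$ is a union of at most $n-1$ intervals of length $O(C_1+\zeta_2)$, so by pigeonhole one may pick $\zeta^\ast\in[\zeta_3,\zeta_3+\zeta_1]$ with $[\zeta^\ast,\zeta^\ast+\zeta_2]\cap B=\emptyset$. Then $f_{E,\bullet}^{\brho(\zeta^\ast)}(t)=f_{E,\bullet}^{\brho(\zeta^\ast+\zeta_2)}(t)$ constantly on $J$, and Lemma \ref{lem: bobs strategy interrupts} applied to this $\zeta^\ast$ yields $\#A_{m+1}=O(\exp(T\delta(f_{E,\bullet}^{\brho(\zeta^\ast)}(T_m))))$. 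Finally, $\zeta^\ast\le\zeta_3+\zeta_1$ combined with shift-monotonicity gives $L(f^{\brho(\zeta_3+\zeta_1)}_{H,\bullet}(T_m))\subseteq L(f^{\brho(\zeta^\ast)}_{H,\bullet}(T_m))$, so the direction filtration at $\zeta_3+\zeta_1$ is a coarsening of that at $\zeta^\ast$; iterating Observation \ref{obs: final implications} yields $\delta(f_{E,\bullet}^{\brho(\zeta^\ast)}(T_m))\le\delta(f_{E,\bullet}^{\brho(\zeta_3+\zeta_1)}(T_m))$, completing the proof. The main obstacle is the Lipschitz bound on $\zeta^\ast_l(t)$: one must verify that each admissible configuration $(l_-,l,l_+)$ yields a nondegenerate linear equation in $\zeta$ (so the ratio defining $\zeta^\ast_l$ has bounded magnitude), and that $\zeta^\ast_l(t)$ remains continuous across transitions in the supporting pair, which ultimately follows from the joint continuity of $\partial^2 f^{\brho(\zeta)}_{H,l}(t)$ in $(\zeta,t)$.
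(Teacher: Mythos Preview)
Your approach is essentially the paper's: find $\zeta^\ast\in[\zeta_3,\zeta_3+\zeta_1]$ where Lemma~\ref{lem: bobs strategy interrupts} applies by bounding the measure of bad shift parameters, then upgrade via monotonicity of $\delta$ under coarsening. The paper asserts in one line that the set of $\zeta$ for which $\psi_\zeta$ is non-constant has length $O(T)$; your Lipschitz analysis of the vanishing thresholds $\zeta^\ast_l(t)$ (which are exactly the vanishing numbers $\zeta_l(f_{H,\bullet}(t))$ of Subsection~\ref{sub:point_sorting_in_a_height_sequence}) supplies the missing justification.

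One bookkeeping slip: you treat $C_1$ and $\zeta_2$ as $\ggg T$, but in this section they are constants depending only on $\Eall$ (the $C_1$ here is chosen in the proof of Lemma~\ref{lem: bobs strategy interrupts} via Theorem~\ref{thm:lin path}, and $\zeta_2>4C$; neither is the $C_1\ggg T$ of Section~\ref{sec:alice_s_strategy}). Thus $|J|=T+2C+2C_1=O(T)$, not $O(C_1)$, and your bad set $B$ has length $O(nT)$. The pigeonhole then directly requires $\zeta_1\ggg T$, which is exactly the hypothesis; your detour through ``$\zeta_1\ggg n(C_1+\zeta_2)$ hence $\ggg T$'' is based on a misreading of the constants, but the corrected bound yields the same conclusion.
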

\begin{proof}
For every $\zeta>0$ consider the map \[\psi_\zeta:[0,\zeta_2]\times [T_m-C-C_1, T_{m+1}+C+C_1]\to \{\text{height sequences}\},\]
which assigns 
\[(\zeta_0,t)\mapsto f_{E, \bullet}^{\brho(\zeta + \zeta_0)}(t).\]
Note that if $f$ has no non-null vertex in $[T_m-C-C_1, T_{m+1}+C+C_1]$, then the total length of 
$\left\{\zeta>0:\psi_\zeta\text{ is non-constant}
\right\}
$ is $O(T)$.
We get that $\psi_\zeta$ is constant
for some $\zeta_3\le \zeta\le \zeta_1+\zeta_3$ provided that $\zeta_3$ is large enough. 
Hence, the conditions of Lemma \ref{lem: bobs strategy interrupts} are met for $f^{\brho(\zeta_1)}$ and $\zeta$. 
The result follows since $\delta(f_{E,\bullet}^{\brho(\zeta)}(T_m))$ is nondecreasing as we increase $\zeta$ to $\zeta_1+\zeta_3$. 
\end{proof}

By Lemma \ref{lem:the right counting theorem}, for the empty direction filtration $0\subseteq \Eall$ one has that $\#A_{m+1} = O(\exp(DT))$.

By Corollary \ref{cor: bobs strategy rulse}, for every $\zeta_3>0$ we have
\begin{align*}
\sum_{m'=1}^m& \#A_{m'} \le \int_{0}^{T_{m+1}} \log \delta(f_{E,\bullet}^{\brho(\zeta_1+\zeta_3)}(t))dt + O(m) \\&~~+ D(2C+T+1)\#\{\text{non-null vertices of $f^{\brho(\zeta_3)}$ until $T_{m+1}$}\}.
\end{align*}

Since $t\mapsto \zeta_l(f_{H,\bullet}(t))$ is a Lipschitz function, $\max_{t\in J}\zeta_l(f_{H,\bullet}(t))=O(\vol(J))$ for every nontriviality interval $J$ of $f$ at $l$. 
Consequently, only connected components $J$ of $U_l(f)$ with $\vol(J) = \Omega(\zeta_3)$ are such that the function
$\zeta_l(f_{H,\bullet}^{\brho(\zeta_3)}(t))|_J$ does not vanish. 
There are at most $O(Tm/\zeta_3 + 1)$ connected components of $U_l(f)$ that are larger than $\zeta_3$ and intersect $[0,T_{m+1}]$. Each of them contains at most $n^2$ non-null vertices.
It follows that the number of non-null vertices of $f^{\brho(\zeta_3)}$ until $T_{m+1}$ is $O(Tm/\zeta_3+1)$. 
Consequently, 
\[\sum_{m'=1}^m \#A_{m'} \le \int_{0}^{T_{m+1}} \log \delta(f_{E,\bullet}^{\brho(\zeta_1+\zeta_3)}(t))dt + O(m+T^2m/\zeta_3),
\]
and hence for every $\zeta_3>0$ the payoff is 
\begin{align*}
D_{\rm Bob}^T &:= \liminf_{m\to \infty} \frac{1}{Tm}\sum_{m'=1}^m \#A_{m'} \\&\le 
\liminf_{m\to \infty}\int_{0}^{T_{m+1}} \log \delta(f_{E,\bullet}^{\brho(\zeta_1+\zeta_3)}(t))dt + O(T^{-1}+T/\zeta_3)\\ &= \Delta_0(f^{\brho(\zeta_1 + \zeta_3)}) + O(T^{-1} + T/\zeta_3).
\end{align*}
Therefore, $D_{\rm Bob}^T\le \Delta(f) + O(T^{-1})$, as desired.
% subsection Bob_s_strategy_value (end)
% \begin{lem}
% We have $H(f) = \brho^\zeta = (\zeta l(n-l))_{l=0}^l$
% \end{lem}

% Let $f$ be a $g$-template. We wish to show that Alice can attain $H(f)-\delta$ for every $\delta>0$. 
% To compute 
% $H(f)$ we will use the following observation.
% \begin{lem}
% We have $H(f) = \brho^\zeta = (\zeta l(n-l))_{l=0}^l$
% \end{lem}

% Denote by $\brho^\zeta = (\zeta l(n-l))_{l=0}^l$ be a shift sequence. 
% We need to show that for every possible play (TODO - how to write mahalach hamishak??) we have 
% \[\frac1T\liminf_{m\to \infty} \sum_{m'=0}^m\#A_{m'} \ge \lim_{\zeta\to \infty}\int_{0}^\infty \delta\left(f_{E, *}^{{\brho}^\zeta}(t)\right)dt\]
% % subsection computation_of_alice_s_payoff (end)

% section computation_of_the_dimension (end)

\section{The dimension of the set of Divergent Trajectories} % (fold)
\label{sec:combinatorial_computations}
In this section we will prove Corollary \ref{cor: general scewed dimention of divergent}. In view of Theorem \ref{thm: general scewed dimention formula} we will be interested in computing $\sup_{f\in \cF}\Delta(f)$ for the set $\cF$ of $g$-templates that correspond to divergent trajectories.
Recall that  
\[D := \dim_\funH (H; d_\varphi) = \delta(\emptyset\subseteq \Eall) = \sum_{\eta, \eta'\in \Eall}(\eta-\eta')^+.\]
Denote \[\Xi = \sum_{\eta\in \Eall}\eta^+.\]
The following two lemmas together with Theorem \ref{thm: general scewed dimention formula} imply Corollary \ref{cor: general scewed dimention of divergent}. The first implies the upper bound and the second implies the lower bound.
Let $\cF_{\rm sing}:=\{g\text{-template }f:f_{H,1}\to \infty\}$. It follows that ${\rm Sing}(H, \Lambda; g_t) = Y_{\Lambda, \cF_{\rm sing}}$.
\begin{lem}\label{lem:upper bound diver}
$\Delta_0(f) \le D-\Xi$ for every $f\in \cF_{\rm sing}$.
\end{lem}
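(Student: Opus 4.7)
The plan is to establish a pointwise inequality of the form
\[
\delta(f_{E,\bullet}(t)) - (D-\Xi) \;\le\; \sum_{l=1}^{n-1} c_l\,\tfrac{d}{dt}f_{H,l}(t)
\]
for nonnegative coefficients $c_l$ depending only on $\Eall$, valid outside a set of bounded measure, and then to integrate using that the height sequence $f_{H,\bullet}(t)$ is bounded above by $0$.

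The first step is combinatorial. Expanding $\delta$ by the blocks $\Delta_i := E_{l_i}\setminus E_{l_{i-1}}$ of a direction filtration and splitting pairs according to which block each element belongs to, one derives the identity
\[
D - \delta(E_\bullet) \;=\; \sum_{i<j}\sum_{\eta\in\Delta_j,\,\eta'\in\Delta_i}(\eta - \eta')^+.
\]
Keeping only the $i=1$ term and applying Observation \ref{obs: final implications} (coarsening only increases $\delta$), one obtains, for $E_{l_1} = \{\sigma\}$, the bound $D - \delta(E_\bullet) \ge \sum_{\eta \in \Eall \setminus\{\sigma\}}(\eta-\sigma)^+ \ge \Xi - (n-1)\sigma$; the latter inequality follows from a short case analysis on the sign of $\sigma$. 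More generally, I would establish the existence of nonnegative constants $(c_l)_{l=1}^{n-1}$ satisfying the displayed pointwise bound for all direction filtrations $E_\bullet\ne(\emptyset\subset\Eall)$. This is a finite linear feasibility problem in the $c_l$; I would verify feasibility either by LP duality (the trace constraint $\sum_{\eta\in\Eall}\eta = 0$ corresponds to the degenerate dual vector) or by explicit case-by-case construction. Sanity checks confirm the feasibility: for $n=3$ and $\Eall = \{-2,1,1\}$, the choice $c_1 = 2$ and $c_2 = 0$ works; for $n=4$ and $\Eall = \{-3,-1,1,3\}$, the choice $c_2 = 3$ works.

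Given such $(c_l)$, define the potential $\psi(t) := \sum_l c_l\, f_{H,l}(t)$. The convexity of $f_{H,\bullet}$ in $l$ together with the boundary conditions $f_{H,0}=f_{H,n}=0$ forces $f_{H,l}(t)\le 0$ for all $l$ and $t$, so $\psi(t) \le 0$. The pointwise bound reads $\delta(f_{E,\bullet}(t)) - (D-\Xi) \le \psi'(t)$ at all regular $t$ with $L_f(t)\ne\{0,n\}$ (using the linear interpolation formula $\tfrac{d}{dt}f_{H,l}(t) = \alpha\,\eta_{E_{l_{i-1}}}+(1-\alpha)\eta_{E_{l_i}}$ to evaluate $\psi'$ when some $l\notin L_f(t)$). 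The only case not covered is the regime $L_f(t) = \{0,n\}$, where $f_{H,\bullet}(t)\equiv 0$ and $\delta$ can be as large as $D$; since the hypothesis $f \in \cF_{\mathrm{sing}}$ says $|f_{H,1}(t)|\to\infty$, this regime has bounded total measure and contributes only $O(1)$ to the integral.

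Integrating over $[0,T]$ yields
\[
\int_0^T \delta(f_{E,\bullet}(t))\,dt \;\le\; T(D-\Xi) + \bigl(\psi(T) - \psi(0)\bigr) + O(1) \;\le\; T(D-\Xi) + O(1),
\]
since $\psi(T)\le 0$ and $\psi(0)$ is a finite constant. Dividing by $T$ and taking $\liminf_{T\to\infty}$ gives $\Delta_0(f) \le D-\Xi$, as required. The main obstacle is the feasibility argument in the first step: producing nonnegative coefficients $(c_l)$ uniformly in $E_\bullet$. I expect this to reduce, via LP duality, to the pointwise bound $\delta \le D-\Xi$ holding whenever $\eta_{E_l}\le 0$ for every $l$, which in turn should follow from the combinatorial identity above together with the fact that $\eta_E \le 0$ forces $E$ to be heavy in negative weights, so $\sum_{\eta\in\Eall\setminus E,\eta'\in E}(\eta-\eta')^+$ dominates $\Xi$.
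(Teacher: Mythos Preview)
Your overall architecture (a pointwise inequality bounding $\delta(f_{E,\bullet}(t))-(D-\Xi)$ by the derivative of a nonpositive potential, then integrate) is exactly the paper's. The combinatorial identity $D-\delta(E_\bullet)=\sum_{i<j}\sum_{\eta\in\Delta_j,\eta'\in\Delta_i}(\eta-\eta')^+$ is correct, and your treatment of the regime $L_f(t)=\{0,n\}$ and of the integration step is fine.

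The substantive divergence is in the choice of potential. You propose a \emph{fixed} linear combination $\psi(t)=\sum_l c_l f_{H,l}(t)$ with $c_l\ge 0$ working uniformly over all direction filtrations. The paper instead takes the \emph{minimum}
\[
\alpha_f(t)=\min_{1\le l\le n-1}\beta_l f_{H,l}(t),\qquad \beta_l=\begin{cases}n_+/l,& l\le n_+,\\ n_-/(n-l),& l>n_+,\end{cases}
\]
and uses that at each $t$ the argmin $l_f(t)$ lies in $L_f(t)$ (because $\beta_l$ is convex in $l$). Then $\alpha_f'(t)=\beta_{l_f(t)}\eta_{f_{E,l_f(t)}(t)}$, and the pointwise bound reduces to the \emph{single-level} inequality $\delta(\emptyset\subset E\subset\Eall)\le D-\Xi+\beta_l\eta_E$ for each $E\in\cI_l$ (Lemma~\ref{lem:pre H inequality}), together with the monotonicity $\delta(E_\bullet)\le\delta(\emptyset\subset E_{l_f(t)}\subset\Eall)$. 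The point is that the paper is allowed to pick a different level $l$ at each time, so it never has to solve a global feasibility problem in $(c_l)$.

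Your route leaves this feasibility problem open, and the LP-duality reduction you sketch is not right. Duality for the system $\{\delta(E_\bullet)-(D-\Xi)\le\sum_l c_l\tilde\eta_l(E_\bullet)\}$ over $c\ge 0$ says: infeasible iff there is a \emph{convex combination} $(\lambda_{E_\bullet})$ with $\sum_{E_\bullet}\lambda_{E_\bullet}\tilde\eta_l(E_\bullet)\le 0$ for every $l$ and $\sum_{E_\bullet}\lambda_{E_\bullet}\delta(E_\bullet)>D-\Xi$. That is an \emph{averaged} constraint, not the pointwise condition ``$\eta_{E_l}\le 0$ for every $l$'' that you state; a witness could mix filtrations each having some $\eta_{E_l}>0$ while the averages are all nonpositive. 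So your reduction does not close the gap, and nothing in the paper's result rules out such averaged witnesses (they need not assemble into a $g$-template in $\cF_{\rm sing}$). Your two sanity checks are consistent with feasibility but do not establish it in general; the paper's minimum-potential device avoids the question entirely at the cost of proving Lemma~\ref{lem:pre H inequality}, which is where the real combinatorial work happens.
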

\begin{lem}\label{lem: there is a good template}
There is a $g$-template $f\in \cF_{\rm sing}$ with $\Delta(f) = D-\Xi$, provided that $n\ge 3$. 
\end{lem}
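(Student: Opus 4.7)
I would prove the lemma by explicitly constructing a template $f$ realizing the bound. The idea is to design $f$ with a cyclic structure, where each cycle consists of a ``descending'' phase (driving some height down) and an ``ascending'' phase (restoring it), and whose cycle-averaged $\delta$ equals $D-\Xi$. The obstacle is balancing three competing demands simultaneously: the cycle average $\delta=D-\Xi$, the divergence of $f_{H,1}$, and the validity of the category-flow transitions.

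First I would select a rank $1\le l_0\le n-1$ and two rank-$l_0$ multisets $E^-, E^+\in\cI_{l_0}$ playing the role of the phase directions, which should satisfy:
\begin{enumerate}
\item $E^-\eteq E^+$, so that the non-null vertex $E^-\to E^+$ at level $l_0$ is admissible (Definition \ref{de:order definition} and Theorem \ref{thm: decomposition to elementary});
\item $\eta_{E^-}<0<\eta_{E^+}$, so that $f_{H,l_0}$ decreases in phase A and increases in phase C;
\item the cycle-weighted average
\[
\frac{\eta_{E^+}\,\delta(\emptyset\subset E^-\subset \Eall)\;+\;|\eta_{E^-}|\,\delta(\emptyset\subset E^+\subset \Eall)}{\eta_{E^+}+|\eta_{E^-}|}\;=\;D-\Xi.
\]
\end{enumerate}
A natural guess for the extremizer is $E^+:=\{\eta_{n-l_0+1},\ldots,\eta_n\}$ (the top $l_0$ weights) and $E^-$ obtained from $E^+$ by exchanging its smallest element for $\eta_1$. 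The average equation is then a direct computation using $\delta(\emptyset\subset E\subset \Eall)=D-\sum_{\eta\in\Eall\setminus E}\sum_{\eta'\in E,\,\eta'<\eta}(\eta-\eta')$ together with $\sum_{\eta\in\Eall}\eta=0$; verifying this identity (possibly varying $l_0$ depending on $\Eall$) is the first concrete step. I would check it works in the toy cases $\Eall=\{-2,1,1\}$ (Cheung) with $l_0=2$, $E^-=\{-2,1\}$, $E^+=\{1,1\}$, giving avg $=4$; and $\Eall=\{-1,-1,2\}$ with $l_0=1$ giving avg $=4$.

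With such $E^\pm$ in hand, I would build $f$ by stacking cycles of growing length $T^{(k)}\nearrow\infty$: cycle $k$ is a phase A (direction $E^-_\bullet$) of length $T^{(k)}$ followed by the non-null vertex $E^-\to E^+$ at level $l_0$ and a phase C (direction $E^+_\bullet$) of length $T^{(k)}|\eta_{E^-}|/\eta_{E^+}$, ending with $f_{H,l_0}$ back to $0$ and a short null-vertex return to the trivial filtration. The heights at all other levels are reconstructed by convexity, as in Example \ref{ex: a g template}, and $\Delta_0(f)$ is then the cycle average $D-\Xi$ by construction. Because $T^{(k)}\to\infty$, in cycle $k$ the value $f_{H,l_0}$ reaches $-|\eta_{E^-}|T^{(k)}\to-\infty$, which by the convexity bound $f_{H,1}\le f_{H,l_0}/l_0$ forces $f_{H,1}\to-\infty$ (at least in the sense of the $\liminf$, which is how I read Definition \ref{de: local entropy}'s $\cF_{\rm sing}$).

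The hard part will be checking the validity of transitions and the strict divergence demanded of $f$. The non-null vertex $E^-\to E^+$ is immediate from $E^-\eteq E^+$; the null vertex closing each cycle requires $f_{H,l_0}=0$, which my construction enforces. The more subtle point is that the cycle, as written, returns $f_{H,1}$ to $0$, so only the $\liminf$ (not the $\lim$) goes to $-\infty$; upgrading this to actual escape to infinity in $X_n$ (i.e.\ to membership in $\cF_{\rm sing}$ in the strict sense) uses $n\ge 3$, since for $n\ge 3$ there are enough ranks to interleave the basic cycle with a slow monotone descent at another level (by Lemma \ref{lem:single advance} applied iteratively), without disturbing the cycle-averaged $\delta$ by more than $o(1)$. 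This slow-descent modification is the step I expect to be most delicate; the case $n=2$ is excluded precisely because this extra room is unavailable, as the Remark after the corollary hints.
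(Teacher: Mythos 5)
Your overall plan --- a cyclic V-shaped template at some level, with cycles of growing length, plus a gluing mechanism to turn the $\liminf$-divergence into genuine divergence --- is the same shape as the paper's argument, and your formula for the cycle-weighted average of $\delta$ does agree with the identity the paper verifies. But two of your concrete choices fail.

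\textbf{Your candidate $E^\pm$ is wrong in general.} The paper works at $l_0=1$ with $E^-=\{\eta_-\}$ (the \emph{maximal negative} weight) and $E^+=\{\eta_+\}$ (the \emph{minimal positive} weight). Your ``natural guess'' at $l_0=1$ is $E^+=\{\eta_n\}$ (the largest weight) and $E^-=\{\eta_1\}$ (the smallest), which is a different pair whenever $\Eall$ has more than one weight of each sign. For $\Eall=\{-3,-1,1,3\}$ one computes $D=20$, $\Xi=4$, $D-\Xi=16$; the paper's pair $\{-1\},\{1\}$ gives $\delta$-values $14$ and $18$ with weights $1:1$, averaging to $16$, but your pair $\{-3\},\{3\}$ gives $\delta$-values $8$ and $20$ with weights $3:3$, averaging to $14\neq 16$. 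Your first toy $\Eall=\{-2,1,1\}$ only worked because it is the \emph{standard} case of Definition \ref{de: standard E all}, which the paper excludes from this construction (it is handled separately, by citing \cite{DFSU}), and there the maximal-negative/minimal-positive pair happens to coincide with your extreme-weights pair. This is exactly the content of the remark after Lemma \ref{lem:pre H inequality}: equality in that inequality holds at the \emph{middle} weights, not the extremal ones.

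\textbf{The ``slow monotone descent at another level'' fix does not obviously close the gap.} Two problems. First, a $g$-template's slope at level $l$ must equal $\eta_E$ for some $E\in\cI_l$; these form a finite set, so you cannot make the descent ``as slow as you like'' --- unless $0\in\Eall$, the available nonzero slopes are bounded away from $0$, and a persistent nonzero slope over a fraction of the time necessarily perturbs $\Delta_0(f)$ by a fixed amount rather than $o(1)$. Second, and more fundamentally, by Observation \ref{obs: final implications} inserting an extra nontrivial index into a direction filtration \emph{decreases} $\delta$ strictly unless the inserted multiset is final; a generic extra monotone level would lower the $\delta$-integral and destroy the equality $\Delta_0(f)=D-\Xi$. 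The paper's actual device is the \emph{connecting template} $f^\eta$ of Lemma \ref{lem:the connecting template}: it inserts a nontrivial level $2$ only on a \emph{bounded} window of length $\Theta(1)$ around each junction, so its $\delta$-cost is absorbed into the $o(1)$ error as $d_m\to\infty$, while simultaneously allowing $f_{H,1}$ to change direction from slope $\eta_+$ back to slope $\eta_-$ at a strictly negative value. It is this bounded-window gluing, not a persistent auxiliary descent, that makes the local maxima of $f_{H,1}$ between cycles go to $-\infty$ (they sit near $-\kappa d_m$), giving true membership in $\cF_{\rm sing}$. The constraint $n\ge 3$ enters precisely because the connecting template lives at level $2$, and its existence in the non-standard case needs $(n-1)\eta_n+\eta_->0$, which is where the paper uses non-standardness.

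So the strategy is right but both the choice of $E^\pm$ and the divergence-upgrade mechanism need to be replaced by the paper's versions for the proof to go through.
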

% \begin{remark}\label{rem:true even when standart}
% Lemma \ref{lem: there is a good template} is true even if 
% \end{remark}
%Upper bound of divergent trajectories - Lemma \ref{lem:upper bound diver}
\subsection{Single Dimension Analysis} % (fold)
\label{sub:single_dimension_analysis}
We will study the behavior of the $g$-template $f$ only on a single $l$. 
Fix $n_+, n_-\ge 1$ such that there are at most $n_+$ positive $\eta_i$-s, at most $n_-$ negative $\eta_i$-s, and $n_+ + n_- = n$. (We use this ambiguous notation to preserve symmetry).
Fix $1\le l\le n-1$.
Denote \[\beta_l := \begin{cases}
\frac{n_+}{l},&\text{if }l\le n_+,\\
\frac{n_-}{n-l},&\text{if }l> n_+.
\end{cases}\]
For $E\in \cI_l$ denote $\delta(E):=\delta(\emptyset\subseteq E\subset \Eall)$.
\begin{lem} \label{lem:pre H inequality}
For every $E\in \cI_l$ we have
\[\delta(E) - \beta_l\eta_{E} \le D - \Xi\]
\end{lem}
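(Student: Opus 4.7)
The plan is to reformulate the asserted inequality as a pointwise integral inequality on $\RR$ and verify it by a short case analysis.

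First, comparing the definition of $\delta$ on the filtrations $\emptyset \subseteq \Eall$ and $\emptyset \subseteq E \subseteq \Eall$, one obtains the identity
\[D - \delta(E) \;=\; \sum_{\eta \in \Eall \setminus E,\ \eta' \in E}(\eta - \eta')^+,\]
which reduces the lemma to
\[\Xi - \beta_l \eta_E \;\le\; \sum_{\eta \in F,\ \eta' \in E}(\eta - \eta')^+, \qquad (\star)\]
where $F := \Eall \setminus E$. The symmetry $\eta \mapsto -\eta$, $E \mapsto -F$, exchanges $n_+ \leftrightarrow n_-$ and sends $l$ to $n - l$; a short verification shows that each of $D$, $\Xi$, $\eta_E$, and $\beta_l$ is preserved, and that the map converts Case 2 ($l > n_+$) into Case 1 applied to $-\Eall$. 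It therefore suffices to prove $(\star)$ in Case 1, where $l \le n_+$ and $\beta_l = n_+/l \ge 1$.

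Next, I would use the layer-cake identity $a^+ = \int_\RR \mathbbm{1}_{0 \le t < a}\,dt$, and introduce counting functions $N_A^>(t) := \#\{\eta \in A : \eta > t\}$ and $N_A^\le(t) := \#\{\eta \in A : \eta \le t\}$ for submultisets $A \subseteq \Eall$. A direct computation then yields
\[\Xi = \int_0^\infty N_\Eall^>(t)\,dt, \qquad \eta_E = \int_0^\infty N_E^>(t)\,dt - \int_{-\infty}^0 N_E^\le(t)\,dt,\]
together with $\sum_{\eta \in F,\ \eta' \in E}(\eta - \eta')^+ = \int_\RR N_E^\le(t)\, N_F^>(t)\,dt$. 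Splitting the resulting form of $(\star)$ over $t > 0$ and $t < 0$ and using $N_E^>(t) + N_E^\le(t) = l$, it suffices to establish the two pointwise bounds
\begin{align*}
(1 - \beta_l)\, N_E^>(t) + N_F^>(t) &\le N_E^\le(t)\, N_F^>(t) \qquad (t > 0), \\
\beta_l\, N_E^\le(t) &\le N_E^\le(t)\, N_F^>(t) \qquad (t < 0).
\end{align*}

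Finally, for $t > 0$ set $e := N_E^>(t)$ and $f := N_F^>(t)$; every element counted in $e$ or $f$ is strictly positive, so $e + f = N_\Eall^>(t) \le n_+$. Since $1 - \beta_l \le 0$, when $e < l$ one has $(1-\beta_l)\,e + f \le f \le (l-e)\,f$, and when $e = l$ the bound $f \le n_+ - l$ forces both sides to be non-positive. For $t < 0$, the statement is trivial if $N_E^\le(t) = 0$; otherwise the inequality $N_\Eall^\le(t) \le n_-$ (as every element $\le t$ is strictly negative) gives $N_F^\le(t) = N_\Eall^\le(t) - N_E^\le(t) \le n_- - 1$, whence $N_F^>(t) \ge (n-l) - (n_- - 1) = n_+ - l + 1 \ge n_+/l = \beta_l$, the last step being equivalent to $n_+ \ge l$, which is the hypothesis of Case 1. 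The main obstacle I foresee is not the case analysis itself but producing the right pointwise reformulation; once the layer-cake form is in place, the remaining inequalities follow from elementary counting and the size constraints $|E| = l$, $|F| = n-l$.
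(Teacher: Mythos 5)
Your proof is correct, and after the shared opening moves (the identity $D - \delta(E) = \sum_{\eta\in \Eall\setminus E,\,\eta'\in E}(\eta-\eta')^+$ and the $\eta\mapsto -\eta$ symmetry that reduces to $l\le n_+$), it diverges from the paper's argument in a genuinely useful way. The paper exploits the fact that the inequality is linear and homogeneous in the weight vector $(\eta_i)$ and verifies it on the extreme rays $P_{a,b,c}$ of the cone cut out by $\eta_1\le\cdots\le\eta_n$ and $\sum\eta_i=0$; this yields a seven-variable polynomial inequality in integers that is handled by a fairly long case analysis in the appendix. You instead use the layer-cake identity $a^+ = \int_{\RR}\mathbbm{1}_{0\le t<a}\,dt$ to rewrite each of $\Xi$, $\eta_E$, and the double sum as integrals of counting functions, so that after splitting the integral over $t>0$ and $t<0$ the whole lemma reduces to two pointwise inequalities in the integers $N_E^>(t)$ and $N_F^>(t)$, each disposed of in a couple of lines. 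The key observations driving the pointwise checks — that for $t>0$ one has $N_E^>(t)+N_F^>(t)\le n_+$ and $\beta_l\ge 1$, and for $t<0$ that $N_E^\le(t)\ge 1$ forces $N_F^>(t)\ge n_+-l+1\ge \beta_l$ — are exactly the right elementary facts. The only imprecision I'd flag is the phrase "the last step being equivalent to $n_+\ge l$": the inequality $n_+-l+1\ge n_+/l$ rearranges to $(l-1)(n_+-l)\ge 0$, which is \emph{implied} by $n_+\ge l$ (and is trivially true when $l=1$ regardless), so the direction you need holds, but "equivalent" overclaims slightly. Overall your argument is shorter and more transparent than the paper's cone-ray reduction, and the underlying reason it works is the same convexity structure — the extreme-ray decomposition and the layer-cake decomposition are two ways to express the same positivity — but your route avoids the algebraic bookkeeping entirely.
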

Lemma \ref{lem:pre H inequality} is analogous to \cite[Lemma 8.1]{DFSU} and contains the combinatorial essence of \cite[Proposition 3.1]{KKLM}.
\begin{remark}\label{rem:Why inequality good}
The inequality is important because we need to integrate numbers of the form $\delta(f_{E,l}(t))$ in order to compute $\Delta_0(f)$ for a $g$-template $f$. Lemma \ref{lem:pre H inequality} will enable us to transform the problem into bounding an integral of $\frac{d}{dt}f_{H,l}(t)$, which is more accessible.
\end{remark}
\begin{remark}
In general equality occurs in Lemma \ref{lem:pre H inequality} only if $l=1, n-1$. In the case $l=1$, if $\Eall\cap \{0\} = \emptyset$, then equality occurs at $E = \{\eta\}$, whenever $\eta$ is the maximal negative or the minimal positive element in $\Eall$. 
If $\Eall\cap \{0\} \neq \emptyset$, then equality occurs for $E = \{0\}$, and perhaps at the maximal negative or the minimal positive elements in $\Eall$, depending on the choice of $n_+, n_-$. 
\end{remark}
\begin{proof}
We reduce the proof to a polynomial inequality on integers, which will be proved in the appendix.

Note that \[D-\delta(E) = \sum_{\substack{\eta\in \Eall-E\\\eta'\in E}}(\eta-\eta')^+.\]
Consequently, we need to show that 
\begin{align}\label{eq: reduction of ineq 1}
\sum_{\substack{\eta\in \Eall-E\\\eta'\in E}}(\eta-\eta')^+ + \beta_l\eta_E \ge \Xi.
\end{align}
Let $E'\subseteq \{1,\dots,n\}$ be a set for which $E = \{\eta_j:j\in E'\}$. 
Inequality \eqref{eq: reduction of ineq 1} becomes
\begin{align}\label{eq: reduction of ineq 2}
V(\eta_1,\dots,\eta_n):=\sum_{\substack{j\in \{1,\dots,n\}\setminus E'\\j'\in E}}(\eta_{j}-\eta_{j'})^+ + \beta_l\sum_{j\in E'}\eta_j - \sum_{j=n_- + 1}^n\eta_j \ge 0.
\end{align}
In this form the inequality is symmetric with respect to the mapping
\[\eta_j \mapsto -\eta_{n-j+1},\quad E \mapsto \{1\le j\le n:n-j+1 \notin E\},\quad n_+\leftrightarrow n_-,\]
and hence we may assume that $l\le n_+$.

Inequality \eqref{eq: reduction of ineq 2} is a linear homogeneous inequality in $(\eta_i)_{i=1}^n$, which satisfy the linear inequalities
\begin{align}\label{eq: eta cond 1}
&\eta_1\le \dots\le\eta_{n_-}\le 0\le \eta_{n_-+1}\le \dots\le \eta_n,\\\label{eq: eta cond 2}
&\sum_{i=1}^n\eta_i = 0.
\end{align}
The set of $\eta_i$ satisfying \eqref{eq: eta cond 1} and \eqref{eq: eta cond 2} is a cone defined by the rays
\[P_{a,b,c}:=(\underbrace{-c,\dots,-c}_{a\text{ times}},\underbrace{0,\dots,0}_{b\text{ times}},\underbrace{a,\dots,a}_{c\text{ times}});~~1\le a\le n_-, 1\le c\le n_+, a+b+c=n.\]
It is left to prove that $V(P_{a,b,c}) \ge 0$ for every such $a,b,c$.
Denote 
\[a_0 = \#E'\cap \{1,\dots,a\}, b_0 = \#E'\cap \{a+1,\dots,a+b\}, c_0 = \#E'\cap \{a+b+1,\dots,n\}.\]

Since $\beta_l = \tfrac{n_+}{l} = \tfrac{n_+}{a_0+b_0+c_0}$, it remains to show that
\begin{align}
V(P_{a,b,c}) = &(c-c_0)a_0(a+c) + (b-b_0)a_0c + (c-c_0)b_0a \\&\nonumber
+ \frac{n_+}{a_0+b_0+c_0}(-a_0c + c_0 a) - ac \ge 0.
\end{align}
This inequality is proved in Appendix \ref{sub:dimension_inequality}.
\end{proof}
% subsection single_dimension_analysis (end)

\subsection{Upper Bound on Divergent Trajectories --- Lemma \ref{lem:upper bound diver}} % (fold)
\label{sub:nowhere_trivial_g}
Let $f$ be a $g$-template on $I$. 
For every $t\in \bigcup_{l=1}^{n-1}U_l(f)$ denote by \[\alpha_f(t) := \min_{l=1,\dots,n-1} \beta_lf_{H,l}(t).\]
This is a nonpositive continuous piecewise linear function, being the minimum of a family of such functions.
Note that $\alpha_f(t)<0$ for every $t\in \bigcup_{l=1}^{n-1}U_l(f)$. 
Since $\beta_l$ is convex in $l$, the minimum must be attained at a point $l_f(t)\in L_f(t)\setminus \{0,n\}$. 

Consequently, for every differentiable point $t\in \bigcup_{l=1}^{n-1}U_l(f)$ of $\alpha_f$ we have 

\[\frac{d}{dt}\alpha_f(t) = \frac{d}{dt}\beta_{l_f(t)}f_{H,l_f(t)}(t) = \beta_{l_f(t)}\eta_{f_{E,l_f(t)}(t)}.\]

The following lemma follows, and in turn implies Lemma \ref{lem:upper bound diver}.
\begin{lem}\label{lem: H inequality}
For every interval $(t_0,t_1)\subseteq \bigcup_{l=1}^{n-1}U_l(f)$,
\[
\int_{t_0}^{t_1}\delta(f_{E,*}(t))dt \le (t_0-t_1)(D-\Xi) + \alpha_f(t)|_{t=t_0}^{t_1}.
\]
\end{lem}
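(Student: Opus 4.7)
The plan is to apply Lemma \ref{lem:pre H inequality} pointwise at the minimizing index $l_f(t)$ and then integrate. Fix $t \in (t_0,t_1)$. By Observation \ref{obs: final implications}, refining the trivial filtration $\emptyset \subseteq f_{E,l_f(t)}(t) \subseteq \Eall$ to the full direction filtration $f_{E,\bullet}(t)$ only decreases $\delta$. Combining this with Lemma \ref{lem:pre H inequality} applied to $E = f_{E,l_f(t)}(t) \in \cI_{l_f(t)}$ gives the pointwise bound
\[
\delta(f_{E,\bullet}(t)) \;\le\; \delta\bigl(\emptyset \subseteq f_{E,l_f(t)}(t) \subseteq \Eall\bigr) \;\le\; (D-\Xi) + \beta_{l_f(t)}\,\eta_{f_{E,l_f(t)}(t)}.
\]

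Next, I identify the second summand with $\frac{d}{dt}\alpha_f(t)$. At every $t \in (t_0,t_1)$ where $\alpha_f$ is differentiable, the minimum defining $\alpha_f(t) = \min_l \beta_l f_{H,l}(t)$ is attained in a neighborhood by a single $l = l_f(t)$, and by the envelope argument together with the $g$-template derivative condition \eqref{eq:diff eq},
\[
\frac{d}{dt}\alpha_f(t) \;=\; \beta_{l_f(t)}\,\frac{d}{dt}f_{H,l_f(t)}(t) \;=\; \beta_{l_f(t)}\,\eta_{f_{E,l_f(t)}(t)}.
\]
The function $\alpha_f$ is continuous and piecewise linear on $(t_0,t_1)$ (as a minimum of finitely many such functions), hence absolutely continuous, so integrating and combining with the pointwise estimate above yields
\[
\int_{t_0}^{t_1}\delta(f_{E,\bullet}(t))\,dt \;\le\; (t_1-t_0)(D-\Xi) + \alpha_f(t)\bigl|_{t=t_0}^{t_1}.
\]
This is exactly the claimed inequality after interpreting the coefficient $(t_0-t_1)(D-\Xi)$ in the statement as $(t_1-t_0)(D-\Xi)$; the displayed sign produces an inequality in the wrong direction for divergent $f$ (the LHS is nonnegative and $(t_1-t_0)(D-\Xi)$ is the only positive contribution), so I read it as a typographical sign error.

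The only point requiring mild care is the handling of exceptional times where $l_f(t)$ switches among competing minimizers, or where $f_{H,\bullet}$ has a vertex. Both produce only a finite set of times in any compact subinterval (the first because there are finitely many candidate indices, and the second because category flows are locally finite by Example \ref{ex:category flow tachles}). Such a set has zero Lebesgue measure and is irrelevant both for the pointwise estimate and for the application of the fundamental theorem of calculus to $\alpha_f$. I do not expect any real obstacle beyond carefully stating this almost-everywhere argument.
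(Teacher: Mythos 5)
Your proof is correct and follows essentially the same route as the paper's: bound $\delta(f_{E,\bullet}(t))$ from above by $\delta(\emptyset\subseteq f_{E,l_f(t)}(t)\subseteq \Eall)$ via Observation \ref{obs: final implications}, apply Lemma \ref{lem:pre H inequality} at the minimizing index, use the envelope identity $\frac{d}{dt}\alpha_f(t)=\beta_{l_f(t)}\eta_{f_{E,l_f(t)}(t)}$ (which the paper records just before the lemma), and integrate. You also correctly identify that $(t_0-t_1)$ in the displayed statement should read $(t_1-t_0)$ --- this is indeed a sign typo in the source.
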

\begin{proof}
By Lemma \ref{lem:pre H inequality},
\begin{align}\label{eq: diffrential claim}
\delta(f_{E, *}(t)) \le \delta(f_{E,l_f(t)}(t))\le D-\Xi + \beta_{l_f(t)}\eta_{f_{E,l_f(t)}} = D-\Xi + \frac{d}{dt}\alpha_f(t).
\end{align}
Integrating Eq. \eqref{eq: diffrential claim} from $t_0$ to $t_1$ yields the result.
\end{proof}

% The following are direct corollaries.
% \begin{cor}
% Let $f$ be a $g$-template for which $[t_0,\infty)\subseteq \bigcup_{l=1}^\infty U_l(f)$.
% Then $\Delta_0(f) \le D-\Xi$.
% \end{cor}
% \begin{proof}
% This follows directly from Lemma \ref{lem: H inequality} and the fact that $\alpha_t(f)\le 0$. 
% \end{proof}
% \begin{cor}
% Let $\Lambda$ be a lattice for which $g_t\Lambda$ diverges. 
% Then $\Delta(f(\Lambda)) \le D-\Xi$.
% \end{cor}
% \begin{proof}
% Since $g_t$ diverges it follows that $\HN(g_t)_{H, 1}\xrightarrow{t\to \infty}-\infty$ and hence for every $f\sim f(\Lambda)$ we also have $f_{H,1}(t)\xrightarrow{t\to \infty}-\infty$. 
% It follows that for every $t$ large enough $t\in \bigcup_{l=1}^\infty U_l(f)$.

% As a result, $\Delta_0(f)\le D-\Xi$ and hence $\Delta(f(\Lambda)) \le D-\Xi$.
% \end{proof}

\subsection{Construction of a Special \texorpdfstring{$g$}{g}-Template --- Proof of Lemma \ref{lem: there is a good template}} % (fold)
\label{sub:construction_of_a_special_g_template_proof_of_lemma_lem there is a good template}
For the rest of the section assume $n\ge 3$.
\begin{de}\label{de: standard E all}
\index{Standard weights}\hypertarget{bej}{}
We say that $\Eall$ is \emph{standard} if its elements are proportional to $-(n-1), 1,\dots,1$. This is the flow corresponding to the standard vector approximation. 
\end{de}
The standard case was dealt with in \cite[Section 9]{DFSU}. The arguments that we present below deals with the non-standard case, and can be adapted to handle the standard case as well. For the rest of the section assume $\Eall$ is not standard.
\begin{lem}[The connecting template]\label{lem:the connecting template}
% Assume $\Eall$ is not standard.
Let $\eta_-$ be the maximal negative weight, and $\eta_+$ be the minimal positive weight. 
There exist a $g$-template $f^\eta$ and $C>0$ such that 
\begin{itemize}
	\item $f^\eta_{H, 1}(t) = \begin{cases}
	\eta_+ t - 1, & \text{if }t\le 0,\\
	\eta_- t - 1, & \text{if }t\ge 0.
	\end{cases}$
	\item For every $t\in \RR\setminus [-C, C]$,
	\[f^\eta_{E, l}(t) = \begin{cases}
	\{\eta_+\}, & \text{if }l = 1, t<-C,\\
	\{\eta_-\}, & \text{if }l = 1, t>C,\\
	*, & \text{otherwise.}
	\end{cases}\]
\end{itemize}
\end{lem}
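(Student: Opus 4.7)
Plan: I will construct $f^\eta$ by patching together a simple template outside $[-C, C]$ and a carefully designed interior on $[-C, C]$.

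Outside $[-C, C]$, the values of $f^\eta_{E, \bullet}$ are prescribed, and I extend $f^\eta_{H, \bullet}$ by the unique convex continuation with nontrivial places exactly $\{0, 1, n\}$: namely $f^\eta_{H, l}(t) = f^\eta_{H, 1}(t)(n-l)/(n-1)$ for $1 \le l \le n$. A direct computation of second differences in $l$ gives $\partial^2 f^\eta_{H, 1}(t) = -f^\eta_{H, 1}(t)\cdot n/(n-1) > 0$ (since $f^\eta_{H, 1}(t) < 0$ everywhere) while $\partial^2 f^\eta_{H, l}(t) = 0$ for $2 \le l \le n-1$. The slope identity $\eta_{\{\eta_\pm\}} = \eta_\pm = \frac{d}{dt} f^\eta_{H, 1}(t)$ is immediate, and this assembles into a valid $g$-template segment on $\RR \setminus [-C, C]$.

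The main obstacle is the transition between the direction multisets $\{\eta_+\}$ and $\{\eta_-\}$ at $l = 1$. Because $\eta_- < \eta_+$, one has $\{\eta_-\} \eteq \{\eta_+\}$ but not the reverse, so there is no transition arrow $\{\eta_+\} \to \{\eta_-\}$ in $\cI_1^*$, and the change must occur through a null vertex at some $t_0 \in (-C, C)$. By the remark following Definition \ref{de: U_l}, such a null vertex means $l = 1$ is not a nontrivial place of $f^\eta_{H, \bullet}(t_0)$, i.e.\ $\partial^2 f^\eta_{H, 1}(t_0) = 0$; combined with $f^\eta_{H, 1}(t_0) < 0$, this forces some other level $l^* \in \{2, \ldots, n-1\}$ to be nontrivial at $t_0$ in order to sustain the tent value of $f^\eta_{H, 1}(t_0)$ via convex interpolation.

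For the interior I therefore activate an auxiliary level $l^*$: I take $f^\eta_{H, l^*}$ to be piecewise linear on $[-C, C]$ with value $f^\eta_{H, l^*}(\pm C) = f^\eta_{H, 1}(\pm C)(n - l^*)/(n - 1)$ (continuity with the exterior interpolation) and with $f^\eta_{H, l^*}(0) = -l^*$ (so that the convex interpolation through $(0, 0), (l^*, f^\eta_{H, l^*}(0)), (n, 0)$ yields $f^\eta_{H, 1}(0) = -1$). The slopes of the two linear pieces must equal $\eta_{E^L}$, $\eta_{E^R}$ for some $E^L, E^R \in \cI_{l^*}$, and eliminating $C$ from the continuity equations forces the constraint
\[
\eta_{E^L} + \eta_{E^R} \;=\; (\eta_+ + \eta_-)\,\frac{n - l^*}{n - 1},
\]
with strict inequalities $\eta_{E^L} < \eta_+(n - l^*)/(n - 1)$ and $\eta_{E^R} > \eta_-(n - l^*)/(n - 1)$ ensuring $C > 0$ and convexity at each $l$. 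Around $t = 0$ I close $l = 1$ using null vertices $\{\eta_+\} \to *$ and $* \to \{\eta_-\}$ in a narrow window, so that on this window $l = 1$ is trivial and its height is carried entirely by $l^*$; I also introduce null vertices at $t = \pm C$ for $l^*$ transitioning $* \leftrightarrow E^{L/R}$, matching the exterior where $l^*$ is trivial.

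The hardest step is the combinatorial existence of $l^*, E^L, E^R$ satisfying the displayed slope equation. This is where the hypothesis that $\Eall$ is non-standard (together with $n \ge 3$) enters: the set $\{\eta_E : E \in \cI_{l^*}\}$ has enough distinct values, across varying $l^*$, to realize the required sum. If a single-kink $f^\eta_{H, l^*}$ fails to meet the constraint for a given $\Eall$, I would instead use a multi-kink piecewise-linear $f^\eta_{H, l^*}$ (with intermediate transition arrows $E_1 \et E_2 \et \cdots$ respecting the partial order $\eteq$), and/or activate more than one auxiliary level simultaneously --- both permitted by the definition of a $g$-template and substantially enlarging the space of feasible slope configurations. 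Once $l^*, E^L, E^R, C$ are chosen, the verification that the resulting $f^\eta$ satisfies properties \eqref{prop:compatibility} and \eqref{prop:Derivative} of Definition \ref{de: template} is a routine computation of convexity and of the slope identity on each linear piece.
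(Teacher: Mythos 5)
Your overall architecture matches the paper's: introduce an auxiliary level $l^*$ that is nontrivial at $t=0$ to support the kink of $f^\eta_{H,1}$, change $\{\eta_+\}\to\{\eta_-\}$ through a null arrow, and carry a non-null transition $E^L\xrightarrow\eteq E^R$ at $l^*$. But the way you close the interior overconstrains the problem, and the resulting combinatorial step — the one you flag as "the hardest" — is not actually completed.

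Concretely, you insist that the nontriviality window for $l^*$ be exactly $[-C,C]$, with the symmetric continuity requirement $f^\eta_{H,l^*}(\pm C)=f^\eta_{H,1}(\pm C)\tfrac{n-l^*}{n-1}$. Eliminating $C$ then forces $\eta_{E^L}+\eta_{E^R}=(\eta_++\eta_-)\tfrac{n-l^*}{n-1}$, and for generic non-standard $\Eall$ this single linear constraint has no solution among the finitely many values $\{\eta_E:E\in\cI_{l^*}\}$ — one can already see this for $n=3$, $l^*=2$, where the constraint becomes $\eta_{E^L}+\eta_{E^R}=-\eta_1/2$ and each of the six possible sums matches only on a codimension-one subvariety of weights. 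The lemma only prescribes the template outside $[-C,C]$, so there is no reason for the $l^*$-window to reach $\pm C$, and no reason for it to be symmetric. The paper exploits exactly this freedom: it picks $l^*=2$, $E^L=\{\eta_+,\eta_1\}$, $E^R=\{\eta_-,\eta_n\}$, and lets the window be $(t_-,t_+)$ with $t_+=n/((n-1)\eta_n+\eta_-)$ and $t_-=n/((n-1)\eta_1+\eta_+)$, asymmetric in general, then takes $C$ large enough to contain it. This decouples the two one-sided continuity equations, so there is no sum constraint to satisfy, and the non-standard hypothesis is used in a precise way you never invoke: it guarantees $(n-1)\eta_n+\eta_->0$ (hence $t_+>0$) and the mirror inequality (hence $t_-<0$). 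Your remark that "non-standard gives enough distinct values" is a gesture, not an argument, and your fallback ("use a multi-kink") is unsubstantiated.

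Two smaller issues. First, the change at $l=1$ must be a \emph{single} null vertex at $t=0$; an actual open window on which $l=1$ is trivial would force $f^\eta_{H,1}$ there to be a convex combination of $f^\eta_{H,0}\equiv 0$ and $f^\eta_{H,l^*}$, whose slopes would then be $\eta_{E^{L/R}}/l^*$ rather than the required $\eta_\pm$. Second, you need to check $E^L\eteq E^R$ so the interior vertex of $l^*$ is a genuine transition arrow; for the paper's choice $\{\eta_+,\eta_1\}\eteq\{\eta_-,\eta_n\}$ this holds, but it is an additional constraint that your candidate pairs would also have to satisfy.
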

\begin{proof}
Define
\[f^\eta_{E, 1}(t) := \begin{cases}
\{\eta_+\}, & \text{if }l = 1, t<-C,\\
\{\eta_-\}, & \text{if }l = 1, t>C,\\
*, & \text{otherwise.}
\end{cases}\]
By definition, $t=0$ is a null point of $f^\eta$ at $l=1$, hence must have $f^\eta_{E,l}(0) \neq *$ for some $1<l<n$. We will use $l=2$. 
Define $\tilde f^\eta_+(t) := (\eta_{-} + \eta_n)t - 2$ and let $t_+ := \tfrac{n}{(n-1)\eta_n + \eta_-} > 0$. Then $t_+$ is a solution of
\[\tilde f^\eta_{+,2}(t_+) = \frac{n-2}{n-1}f^\eta_{E, 1}(t_+) = \frac{n-2}{n-1}(\eta_-t_+ - 1).\] 
Since $\Eall$ is not standard, $(n-1)\eta_n + \eta_- >0$, since instead of considering the sum of all $\eta_i$, we take $n-1$ times the largest weight together with another weight. 
Similarly, set $\tilde f^\eta_-(t) := (\eta_{+} + \eta_1)t - 2$ and let $t_- := \tfrac{n}{(n-1)\eta_1 + \eta_+} < 0$. Then $t_-$ is the solution of
\[\tilde f^\eta_{+,2}(t_-) = \frac{n-2}{n-1}f^\eta_{E, 1}(t_-) = \frac{n-2}{n-1}(\eta_+t_- - 1).\] 
Define 
\[f^\eta_{E,2} := \begin{cases}
*, & \text{if }t < t_-\text{ or } t_+ <t,\\
\{\eta_+, \eta_1\}, & \text{if } t_- < t < 0,\\
\{\eta_-, \eta_n\}, & \text{if } 0 < t < t_+,\\
\end{cases}\]
with the nontrivial connecting morphism $\{\eta_+, \eta_1\}\to \{\eta_-, \eta_n\}$ at $0$. 
% The definition of $f^\eta_{H,2}(t)$ is derived from $f^\eta_{H,1}(t)$ for $t\in (-\infty, t_-]\cup [t_+,\infty)$.
Define
\[f^\eta_{H,2}(t) = \begin{cases}
\tilde f^\eta_{-,2},& \text{if } t_- \le  t \le 0,\\
\tilde f^\eta_{+,2},& \text{if } 0 \le  t \le t_+,\\
\frac{n-2}{n-1}f^\eta_{H,1}(t), & \text{if } t\in (-\infty, t_-]\cup [t_+,\infty).
\end{cases}\]
Extrapolate $(f^\eta_{H,l})_{l=1,2}$ to $f^\eta_{H,\bullet}$ by convexity and the fact that no other $f^\eta_{E, l}$ is nontrivial.
\end{proof}
% \begin{remark}\label{rem:standard case}
% If $E_\infty$ was standard, say $\eta_1 = -(n-1), \eta_{2} = \eta_3 = \dots = \eta_n=1$ we would have $t_+=\infty$. 
% \end{remark}
\begin{figure}[ht]
% \begin{wrapfigure}{l}{0.45\textwidth}
\label{fig:standard_template}
\begin{tikzpicture}[line cap=round,line join=round,>=triangle 45,x=1.0cm,y=1.0cm]
\clip(-2.7,-6.3) rectangle (2.7,.6);

[y = 0x + 0; x in [-7.583333333333334, 7.583333333333334]]
[y = -1.2x + -7.9; x in [-6.583333333333334, -5], y = 0.3x + -0.40000000000000036; x in [-5, 0], y = -1.2x + -0.40000000000000036; x in [0, 7.583333333333334]]
[y = -0.8999999999999999x + -0.8000000000000004; x in [-0.5714285714285716, 0], y = -0.29999999999999993x + -0.8000000000000004; x in [0, 2.0000000000000004]]
[y = 0x + 0; x in [-7.583333333333334, 7.583333333333334]]

\draw [line width=1.pt, -to] (-7,0) -- (2.6,0) node[above] {$t$};

% \draw [line width=2.pt, ] (-7.583333333333334,0.0) -- (7.583333333333334,0.0);
% \draw [line width=2.pt, ] (-7.583333333333334,0.0) -- (7.583333333333334,0.0);
\draw [line width=2.pt, ] (-6.583333333333334,0.0) -- (-5,-1.9000000000000004);
\draw [line width=2.pt, ] (-5,-1.9000000000000004) -- (0,-0.40000000000000036);
\draw [line width=2.pt, ] (0,-0.40000000000000036) -- (7.583333333333334,-9.5);
\draw [line width=2.pt, color=redcolor,] (-0.5714285714285716,-0.2857142857142859) -- (0,-0.8000000000000004);
\draw [line width=2.pt, color=redcolor,] (0,-0.8000000000000004) -- (2.0000000000000004,-1.4000000000000004);
\draw [line width=2.pt, dash pattern=on 5pt off 5pt,color=redcolor,] (-6.583333333333334,0.0) -- (-5.0,-0.9500000000000002);
\draw [line width=2.pt, dash pattern=on 5pt off 5pt,color=redcolor,] (-5.0,-0.9500000000000002) -- (-0.5714285714285716,-0.2857142857142859);
\draw [line width=2.pt, dash pattern=on 5pt off 5pt,color=redcolor,] (2.0000000000000004,-1.4000000000000004) -- (7.583333333333334,-4.749999999999999);
\begin{scriptsize}

\draw [fill=bluecolor] (-6.583333333333334,0.0) circle (2.5pt); %POINT1
\draw [fill=greencolor] (-5,-1.9000000000000004) circle (2.5pt); %POINT1
\draw [fill=bluecolor] (-0.5714285714285716,-0.2857142857142859) circle (2.5pt); %POINT1
\draw [fill=bluecolor] (0,-0.40000000000000036) circle (2.5pt); %POINT1
\draw [fill=greencolor] (0,-0.8000000000000004) circle (2.5pt); %POINT1
\draw [fill=bluecolor] (2.0000000000000004,-1.4000000000000004) circle (2.5pt); %POINT1
\draw [fill=bluecolor] (7.583333333333334,-9.5) circle (2.5pt); %POINT1
\end{scriptsize}

\draw [line width=2.pt, dash pattern=on 5pt off 5pt,] (-7.583333333333334,-4.1) -- (-6.583333333333334,-4.1);
\draw [color=black] (-7.083333333333334, -3.8) node {$*$};
\draw [line width=2.pt, ] (-6.583333333333334,-4.1) -- (-5,-4.1);
\draw [color=black] (-5.791666666666667, -3.8) node {$\{-1.2\}$};
\draw [line width=2.pt, ] (-5,-4.1) -- (0,-4.1);
\draw [color=black] (-1.5, -3.8) node {$\{0.3\}$};
\draw [line width=2.pt, ] (0,-4.1) -- (7.583333333333334,-4.1);
\draw [color=black] (1, -3.8) node {$\{-1.2\}$};
\draw [fill=bluecolor] (-7.583333333333334,-4.1) circle (2.5pt); %POINT1
\draw [fill=bluecolor] (-6.583333333333334,-4.1) circle (2.5pt); %POINT1
\draw [fill=greencolor] (-5,-4.1) circle (2.5pt); %POINT1
\draw [fill=bluecolor] (0,-4.1) circle (2.5pt); %POINT1
\draw [fill=greencolor] (7.583333333333334,-4.1) circle (2.5pt); %POINT1
\draw [line width=2.pt, dash pattern=on 5pt off 5pt,color=redcolor,] (-7.583333333333334,-5.1) -- (-0.5714285714285716,-5.1);
\draw [color=redcolor] (-1.5, -4.8) node {$*$};
\draw [line width=2.pt, color=redcolor,] (-0.5714285714285716,-5.1) -- (0,-5.1);
\draw [color=redcolor] (-0.2857142857142858, -5.4) node {$\{-1.2, 0.3\}$};
\draw [line width=2.pt, color=redcolor,] (0,-5.1) -- (2.0000000000000004,-5.1);
\draw [color=redcolor] (1.0000000000000002, -4.8) node {$\{-1.2, 0.9\}$};
\draw [line width=2.pt, dash pattern=on 5pt off 5pt,color=redcolor,] (2.0000000000000004,-5.1) -- (7.583333333333334,-5.1);
\draw [color=redcolor] (2.5, -4.8) node {$*$};
\draw [fill=bluecolor] (-7.583333333333334,-5.1) circle (2.5pt); %POINT1
\draw [fill=bluecolor] (-0.5714285714285716,-5.1) circle (2.5pt); %POINT1
\draw [fill=greencolor] (0,-5.1) circle (2.5pt); %POINT1
\draw [fill=bluecolor] (2.0000000000000004,-5.1) circle (2.5pt); %POINT1
\draw [fill=bluecolor] (7.583333333333334,-5.1) circle (2.5pt); %POINT1
%[Finished in 0.1s]

% \draw [color=black] (-4.8,-2.11) node {$f$};
\end{tikzpicture}
\caption{
The connecting $g$-template with $\Eall = \{-1.2, 0.3, 0.9\}$. 
% The green vertices are the non-null ones and the blue ones are the null vertices.
}
\end{figure}
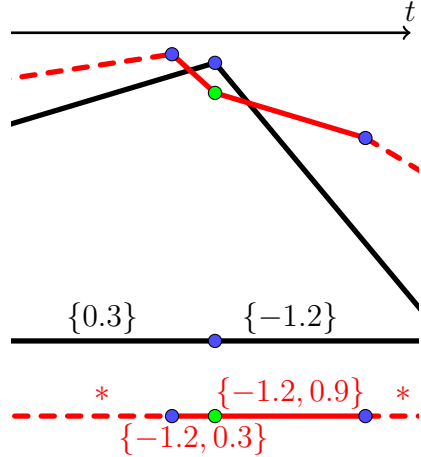

We will use Lemma \ref{lem:the connecting template} to glue simple $g$-templates and thereby generate more complex ones.
% \begin{lem}
% If $l=1$ there exists a $g$-template $f$ such that $\Delta(f) = D-\Xi$ and $\lim_{t \to \infty}f_{l}(t) = -\infty$ for every $1\le l\le n-1$.
% \end{lem}
\begin{proof}[Proof of Lemma \emph{\ref{lem: there is a good template}}]
We will distinguish between two cases: 
\begin{enumerate}
\item\label{case: eta nonzero}
If $0 \nin \Eall$.
\item\label{case: eta zero}
If $0 \in \Eall$.
\end{enumerate}
\textbf{Case \eqref{case: eta nonzero}: }
Let $\eta_-$ be the largest negative weight and $\eta_+$ be the smallest positive weight. 
Let \[E_{-,1} := \{\eta_-\},\quad E_{+,1} := \{\eta_+\},\quad E_{\pm, 2} := \{\eta_+, \eta_-\}.\]
For every $t_0<t_1$ denote by $f^{t_0,t_1}$ the $g$-template on $\RR$ defined by:
\[f^{t_0, t_1}_{E, l}(t) := \begin{cases}
E_{-,1},& \text{if } t_0 < t < t_{1/2}, l=1,\\
E_{+,1},& \text{if } t_{1/2} < t < t_1, l=1,\\
*,& \text{otherwise},
\end{cases}\]
with the non-null connecting morphism $E_{-,1}\to E_{+,1}$ and with $t_{1/2} = \frac{\eta_+ t_1-\eta_-t_0}{\eta_+ - \eta_-}$ the only value for which the differential equation will define a $f^{t_0, t_1}_{H,1}(t)$ that vanishes at $t=t_1$.
\[f^{t_0, t_1}_{H, 1}(t) := \begin{cases}
\eta_-(t-t_0),& \text{if } t_0 \le t \le t_{1/2},\\
\eta_+(t-t_1),& \text{if } t_{1/2} \le t \le t_1,\\
0, & \text{otherwise},
\end{cases}\]
and extrapolate it by convexity to get $f^{t_0, t_1}_{H\bullet}(t)$ for every $t\in \RR$.

\begin{figure}[ht]
% \begin{wrapfigure}{l}{0.45\textwidth}
\label{fig:connecting_template}
\begin{tikzpicture}[line cap=round,line join=round,>=triangle 45,x=1.0cm,y=1.0cm]

\draw [line width=1.pt, -to] (-3,0) -- (3.2,0) node[above] {$t$};
\clip(-3,-3) rectangle (3,1);
\draw [color=black] (-2, 0.4) node {$t_0$};
\draw [color=black] (-1.2, 0.4) node {$t_{1/2}$};
\draw [color=black] (2, 0.4) node {$t_1$};
% \draw [line width=2.pt, color=black] (-1,-0.1) -- (-1,0.1); %POINT2
\draw [line width=2.pt, color=black] (-1.2,-0.1) -- (-1.2,0.1); %POINT2
% \draw [line width=2.pt, color=black] (1.0,-0.1) -- (1.0,0.1); %POINT2

\draw [line width=2.pt, dash pattern=on 5pt off 5pt,] (-7.1,0.03) -- (-2,0.03);
\draw [line width=2.pt, dash pattern=on 5pt off 5pt,color=redcolor] (-7,-.03) -- (-2,-.03);

\draw [line width=2.pt, dash pattern=on 5pt off 5pt,] (7,0.03) -- (2.0,0.03);
\draw [line width=2.pt, dash pattern=on 5pt off 5pt,color=redcolor] (7,-.03) -- (2,-.03);

\draw [line width=2.pt, ] (-2,0.0) -- (-1.2,-0.96);
\draw [line width=2.pt, ] (-1.2,-0.96) -- (2.0,0.0);
\draw [line width=2.pt, dash pattern=on 5pt off 5pt,color=redcolor,] (-2.0,0.0) -- (-1.2,-0.48);
\draw [line width=2.pt, dash pattern=on 5pt off 5pt,color=redcolor,] (-1.2,-0.48) -- (2.0,0.0);
\begin{scriptsize}

\end{scriptsize}

\draw [line width=2.pt, dash pattern=on 5pt off 5pt,] (-3,-2.0) -- (-2,-2.0);
\draw[color=black] (-2.5, -1.7) node {$*$};
\draw [line width=2.pt, ] (-2,-2.0) -- (-1.2,-2.0);
\draw[color=black] (-1.6, -1.7) node {$\{-1.2\}$};
\draw [line width=2.pt, ] (-1.2,-2.0) -- (2.0,-2.0);
\draw[color=black] (0.4, -1.7) node {$\{0.3\}$};
\draw [line width=2.pt, dash pattern=on 5pt off 5pt,] (2.0,-2.0) -- (3,-2.0);
\draw[color=black] (2.5, -1.7) node {$*$};
\draw [line width=2.pt, color=black] (-2,-2.1) -- (-2,-1.9); %POINT2
\draw [line width=2.pt, color=black] (-1.2,-2.1) -- (-1.2,-1.9); %POINT2
\draw [line width=2.pt, color=black] (2.0,-2.1) -- (2.0,-1.9); %POINT2
\draw [line width=2.pt, dash pattern=on 5pt off 5pt,color=redcolor,] (-3,-3.0) -- (3,-3.0);
\draw[color=redcolor] (0.0, -2.7) node {$*$};

%[Finished in 0.2s]

\end{tikzpicture}
\caption{
The $g$-template $f^{t_0, t_1}$ with $\Eall = \{-1.2, 0.3, 0.9\}$. 
% The green vertices are the non-null ones and the blue ones are the null vertices.
}
\end{figure}
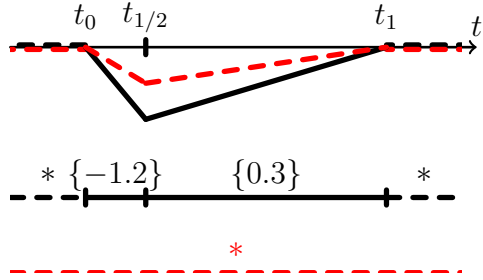

The definition was selected because it satisfies the following equality:
\begin{align*}
\int_{t_0}^{t_1}&\delta\left(f_{E, *}^{t_0, t_1}\right)dt = 
(t_{1/2}-t_0)\delta(\{\eta_-\}) + (t_1-t_{1/2})\delta(\{\eta_+\})\\
&=
(t_1-t_0)D
 - (t_{1/2} - t_0)\sum_{\eta\neq \eta_-}(\eta-\eta_+)^+
  - (t_1 - t_{1/2})\sum_{\eta\neq \eta_+}(\eta-\eta_+)^+ \\
&=
(t_1-t_0)D - (t_{1/2} - t_0)(\Xi - n_+\eta_-) - (t_1 - t_{1/2})(\Xi - n_+\eta_+)\\&=
(t_1-t_0)(D-\Xi).
\end{align*}

For every pair of intervals $t_0 < t_1, t_0' < t_1'$ such that $t_{1/2} < t_0' < t_1 < t_{1/2}'$,
we would like to define the \emph{merger} $f' = f^{t_0, t_1} \vee f^{t_0', t_1'}$ of $f^{t_0, t_1}$ and $f^{t_0', t_1'}$. Unfortunately, the connecting template protrudes beyonds $[t_0', t_1]$, and hence we will need additional constraints on $t_0, t_1, t_0', t_1'$ to define the merger.

\begin{figure}[ht]
% \begin{wrapfigure}{l}{0.45\textwidth}
\label{fig:a merged template}
\begin{tikzpicture}[line cap=round,line join=round,>=triangle 45,x=1.0cm,y=1.0cm]
\clip(-5.3,-9.3) rectangle (6.3,1);

\draw [line width=2.pt, dash pattern=on 5pt off 5pt,] (-7.1,0.03) -- (-5,0.03);
\draw [line width=2.pt, dash pattern=on 5pt off 5pt,color=redcolor] (-7.2,-.03) -- (-5,-.03);
\draw [line width=1.pt, -to] (-7,0) -- (5.5,0) node[above] {$t$};

\draw [color=black] (-5, 0.4) node {$t_0$};
\draw [color=black] (-3.9, 0.4) node {$t_{1/2}$};
\draw [line width=2.pt, color=black] (-3.9,-0.1) -- (-3.9,0.1); %POINT2
\draw [color=black] (.5, 0.4) node {$t_1$};
\draw [line width=2.pt, color=black] (.5,-0.1) -- (.5,0.1); %POINT2
\draw [color=black] (-.6, 0.4) node {$t_0'$};
\draw [line width=2.pt, color=black] (-.5,-0.1) -- (-.5,0.1); %POINT2

\draw [color=black] (-0.2, 0.4) node {$t_0''$};
\draw [line width=2.pt, color=black] (-.3,-0.1) -- (-.3,0.1); %POINT2

\draw [color=black] (2.1, 0.4) node {$t_{1/2}'$};
\draw [line width=2.pt, color=black] (2,-0.1) -- (2,0.1); %POINT2

\draw [color=black] (1.4, 0.4) node {$t_{1}''$};
\draw [line width=2.pt, color=black] (1.5,-0.1) -- (1.5,0.1); %POINT2

\draw [color=black] (-1.5, 0.4) node {$t_{-1}''$};
\draw [line width=2.pt, color=black] (-1.5,-0.1) -- (-1.5,0.1); %POINT2

\draw [line width=1.pt, color=black, dotted] (-0.3,-0.24) -- (.5, 0); %POINT2
\draw [line width=1.pt, color=black, dotted] (-0.3,-0.24) -- (-.5, 0); %POINT2

% \draw [line width=2.pt, color=black] (-1,-0.1) -- (-1,0.1); %POINT2
% \draw [line width=2.pt, color=black] (1.0,-0.1) -- (1.0,0.1); %POINT2

% -5,0.0)
% -3.9,-1.3200000000000003
% -0.642857142857143,-0.1714285714285716
% -0.3,-0.24000000000000035
% -0.3,-0.4800000000000003
% -0.3,-0.24000000000000032
% 0.9000000000000002,-0.8400000000000003
% 1.9999999999999998,-3.0
% 12.0,0.0

% [(-5, 1, Counter({-1.2: 1})), (-3.9, 1, Counter({0.3: 1})), (-0.642857142857143, 2, Counter({0.3: 1, -1.2: 1})), (-0.3, 1, Counter({-1.2: 1})), (-0.3, 2, Counter({-1.2: 1, 0.9: 1})), (1.9999999999999998, 1, Counter({0.3: 1}))]
% \draw [line width=2.pt, ] (-6,0) -- (13,0);
\draw [line width=2.pt, ] (-5,0.0) -- (-3.9,-1.3200000000000003);
\draw [line width=2.pt, ] (-3.9,-1.3200000000000003) -- (-0.3,-0.24000000000000035);
\draw [line width=2.pt, ] (-0.3,-0.24000000000000032) -- (1.9999999999999998,-3.0);
\draw [line width=2.pt, ] (1.9999999999999998,-3.0) -- (12.0,0.0);
\draw [line width=2.pt, color=redcolor,] (-0.642857142857143,-0.1714285714285716) -- (-0.3,-0.48000000000000026);
\draw [line width=2.pt, color=redcolor,] (-0.3,-0.4800000000000003) -- (0.9000000000000002,-0.8400000000000003);
% \draw [line width=2.pt, ] (-6,0) -- (13,0);
%wut? (-0.3, -0.24000000000000035) (-0.3, -0.24000000000000016)
%wut? (-0.3, -0.24000000000000016) (-0.3, -0.24000000000000032)
\draw [line width=2.pt, dash pattern=on 5pt off 5pt,color=redcolor,] (-5.0,0.0) -- (-3.9,-0.6600000000000001);
\draw [line width=2.pt, dash pattern=on 5pt off 5pt,color=redcolor,] (-3.9,-0.6600000000000001) -- (-0.642857142857143,-0.17142857142857162);
\draw [line width=2.pt, dash pattern=on 5pt off 5pt,color=redcolor,] (0.9000000000000002,-0.8400000000000003) -- (1.9999999999999998,-1.4999999999999998);
\draw [line width=2.pt, dash pattern=on 5pt off 5pt,color=redcolor,] (1.9999999999999998,-1.5) -- (12.0,0.0);
\begin{scriptsize}

\draw [fill=bluecolor] (-5,0.0) circle (2.5pt); %POINT1
\draw [fill=greencolor] (-3.9,-1.3200000000000003) circle (2.5pt); %POINT1
\draw [fill=bluecolor] (-0.642857142857143,-0.1714285714285716) circle (2.5pt); %POINT1
\draw [fill=bluecolor] (-0.3,-0.24000000000000035) circle (2.5pt); %POINT1
\draw [fill=greencolor] (-0.3,-0.4800000000000003) circle (2.5pt); %POINT1
\draw [fill=bluecolor] (-0.3,-0.24000000000000032) circle (2.5pt); %POINT1
\draw [fill=bluecolor] (0.9000000000000002,-0.8400000000000003) circle (2.5pt); %POINT1
\draw [fill=greencolor] (1.9999999999999998,-3.0) circle (2.5pt); %POINT1
\draw [fill=bluecolor] (12.0,0.0) circle (2.5pt); %POINT1
\end{scriptsize}

\draw [line width=2.pt, dash pattern=on 5pt off 5pt,] (-6.1,-4.0) -- (-5,-4.0);
\draw [color=black] (-5.5, -3.7) node {$*$};
\draw [line width=2.pt, ] (-5,-4.0) -- (-3.9,-4.0);
\draw [color=black] (-4.45, -3.7) node {$\{-1.2\}$};
\draw [line width=2.pt, ] (-3.9,-4.0) -- (-0.3,-4.0);
\draw [color=black] (-2.1, -3.7) node {$\{0.3\}$};
\draw [line width=2.pt, ] (-0.3,-4.0) -- (1.9999999999999998,-4.0);
\draw [color=black] (0.8499999999999999, -3.7) node {$\{-1.2\}$};
\draw [line width=2.pt, ] (1.9999999999999998,-4.0) -- (12.0,-4.0);
\draw [color=black] (4.0, -3.7) node {$\{0.3\}$};
\draw [line width=2.pt, dash pattern=on 5pt off 5pt,] (12.0,-4.0) -- (13,-4.0);
\draw [color=black] (12.5, -3.7) node {$*$};
\draw [fill=bluecolor] (-6,-4) circle (2.5pt); %POINT1
\draw [fill=bluecolor] (-5,-4) circle (2.5pt); %POINT1
\draw [fill=greencolor] (-3.9,-4) circle (2.5pt); %POINT1
\draw [fill=bluecolor] (-0.3,-4) circle (2.5pt); %POINT1
\draw [fill=greencolor] (1.9999999999999998,-4) circle (2.5pt); %POINT1
\draw [fill=bluecolor] (12.0,-4) circle (2.5pt); %POINT1
\draw [fill=bluecolor] (13,-4) circle (2.5pt); %POINT1
\draw [line width=2.pt, dash pattern=on 5pt off 5pt,color=redcolor,] (-6,-5.0) -- (-0.642857142857143,-5.0);
\draw [color=redcolor] (-3.3214285714285716, -4.7) node {$*$};
\draw [line width=2.pt, color=redcolor,] (-0.642857142857143,-5.0) -- (-0.3,-5.0);
\draw [color=redcolor] (-0.47142857142857153, -5.3) node {$\{-1.2, 0.3\}$};
\draw [line width=2.pt, color=redcolor,] (-0.3,-5.0) -- (0.9000000000000002,-5.0);
\draw [color=redcolor] (0.30000000000000016, -4.7) node {$\{-1.2, 0.9\}$};
\draw [line width=2.pt, dash pattern=on 5pt off 5pt,color=redcolor,] (0.9000000000000002,-5.0) -- (13,-5.0);
\draw [color=redcolor] (4, -4.7) node {$*$};
\draw [fill=bluecolor] (-6,-5) circle (2.5pt); %POINT1
\draw [fill=bluecolor] (-0.642857142857143,-5) circle (2.5pt); %POINT1
\draw [fill=greencolor] (-0.3,-5) circle (2.5pt); %POINT1
\draw [fill=bluecolor] (0.9000000000000002,-5) circle (2.5pt); %POINT1
\draw [fill=bluecolor] (13,-5) circle (2.5pt); %POINT1
%[Finished in 0.2s]

% \draw [color=black] (-4.8,-2.11) node {$f$};
\end{tikzpicture}
\caption{
The merger of $f^{t_0, t_1}$ and $f^{t_0', t_1'}$, where $t_0 = -5, t_1 = 0.5, t_0' = -0.5, t_1' = 12$, and $\Eall = \{-1.2, 0.3, 0.9\}$.
% The green vertices are the non-null ones and the blue ones are the null vertices.
}
\end{figure}
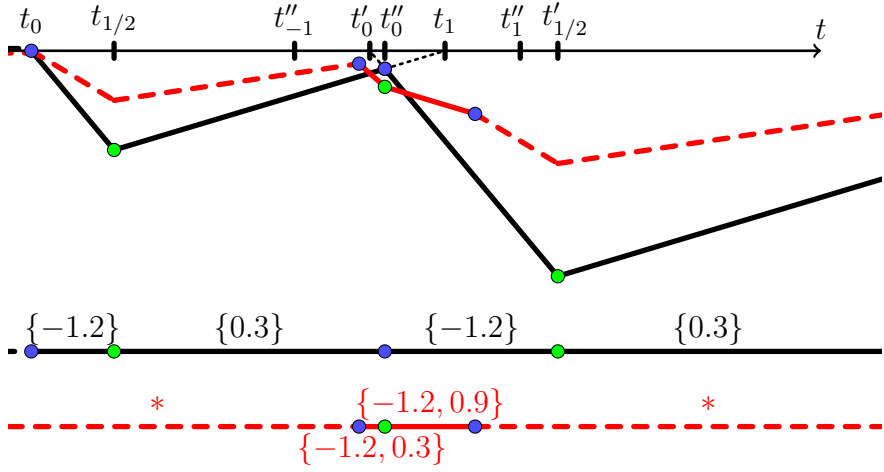

Our goal is to ensure that the merger will satisfy $f'_{H, 1}(t) = \max(f^{t_0,t_1}_{H,1}(t), f^{t_0',t_1'}_{H,1}(t))$.
% \[f'(t)= \begin{cases}
% f^{t_0, t_1}& \text{if } t < t_0' - O(t_1-t_0')\\
% f^{t_0, t_1}& \text{if } t > t_1 + O(t_1-t_0')
% \end{cases}\]
Let $t''_0 := \frac{\eta_+ t_1-\eta_-t_0'}{\eta_+ - \eta_-}$ denote the unique point $t''_0\in [t_{1/2}, t'_{1/2}]$ for which \[f_{H,1}^{t_0, t_1}(t''_0) = f_{H,1}^{t_0', t_1'}(t''_0) = \frac{t_1-t_0'}{\eta_+^{-1}-\eta_-^{-1}}.\]
Denote \[\kappa := \tfrac{1}{\eta_+^{-1}-\eta_-^{-1}}\quad \text{and}\quad C' := C\kappa,\] 
where $C$ is the constant from Lemma \ref{lem:the connecting template}.
We define the merger $f^{t_0,t_1}\vee f^{t_0', t_1'}$ only if the following inequality holds: 
\[t_{-1}'':=t_0' - (t_1-t_0')C' > t_{1/2}\text{ and }t_{1}'':=t_1 + (t_1-t_0')C < t_{1/2}'.\] 

For $t\in \RR$, define $\tilde f^\eta$ by
\[\tilde f^\eta_{E, l}(t) := f^\eta_{E, l}\left(\frac{t-t''_0}{(t_1-t_0')\kappa}\right),\quad \tilde f^\eta_{H,l}(t):= (t_1-t_0')\kappa f^\eta_{H, l}\left(\frac{t-t''_0}{(t_1-t_0')\kappa}\right).\]
In other words, we translate $f^\eta$ and apply a homothety so that the increasing and decreasing branches of $\tilde f^\eta_{H,1}$ will coincide with the corresponding branches of $f^{t_0,t_1}_{H,1}, f^{t_0',t_1'}_{H,1}$ at $t_{-1}'', t_{1}''$, respectively.
We can then glue the three $g$-templates
$f^{t_0, t_1}|_{(-\infty, t_{-1}'']},\, \tilde f^\eta|_{[t_{-1}'', t_{1}'']},\, f^{t_0', t_1'}|_{[t_{1}'', \infty)}$.

Using this method we can merge arbitrarily many $g$-templates $f^{t_0,t_1}$. 
Let $t_0^{(m)}<t_1^{(m)}, m\ge 0$, be an infinite sequence of pairs with $d^m := t_1^{(m)} - t_0^{(m+1)} > 0 $ and 
\[t_{1/2}^{(m)} \le t_0^{(m+1)} - C'd_m < t_1^{(m)} + C'd_m < t_{1/2}^{m+1}.\]

We can merge the $g$-templates $(f^{t_0^{(m)},t_1^{(m)}})_{m\ge 0}$ into a single $g$-template $f = \bigvee_{m=0}^\infty f^{t_0^{(m)},t_1^{(m)}}$. 
Note that \[f|_{[t_1^{(m)} + C'd_m, t_0^{(m+2)} - C'd_{m+1}]} = f^{t_0^{(m+1)},t_1^{(m+1)}}.\]
Assume that 
\begin{align}
d_m\xrightarrow{m \to \infty} \infty,\label{eq: d_m to infty}
\\
\frac{d_m}{t^{(m)}_1 - t^{(m)}_0} \xrightarrow{m \to \infty} 0,\label{eq: d_m over length to 0}
\\
\frac{t^{(m)}_1 - t^{(m)}_0}{\sum_{m'=0}^m t^{(m')}_1 - t^{(m')}_0}\xrightarrow{m\to \infty} 0.
\label{eq: single over sum to 0}
\end{align}
For instance, the sequence $t_0^{(m)} = m^3 - m, t_1^{(m)} = (m+1)^3 + m$ works for $m$ large enough as a function of $C'$. 
We now wish to compute 
\begin{align*}
\Delta_0(f) = \lim_{T\to \infty}\frac{1}{T}\int_0^T\delta(f_{E, *}(t))dt.
\end{align*}
By Eq. \eqref{eq: single over sum to 0}, it is enough to compute the limit for $T = t_1^{(m)}$.
By Eq. \eqref{eq: d_m over length to 0}, the regions in which $f$ is not one of the $(f^{t_0^{(m)},t_1^{(m)}})_{m\ge 0}$ are negligible. We have
\begin{align*}
\Delta_0(f) &= \lim_{m\to \infty}\frac{1}{t_{1}^{(m)}}\int_0^{t_{1}^{(m)}}\delta(f_{E, *}(t))dt\\
&= \lim_{m\to \infty}\frac{1}{t_{1}^{(m)}}\sum_{m'=0}^m\int_{t_{0}^{(m')}}^{t_{1}^{(m')}}\delta(f_{E, *}^{t_{0}^{(m')},t_{1}^{(m')}}(t))dt\\
&= \lim_{m\to \infty}\frac{1}{t_{1}^{(m)}}\sum_{m'=0}^m(t_{1}^{(m')}-t_{0}^{(m')})(D-\Xi)\\
&= \lim_{m\to \infty}\frac{1}{t_{1}^{(m)}}\sum_{m'=1}^m(t_{1}^{(m')}-t_{1}^{(m'-1)})(D-\Xi)\\
&= D-\Xi.
\end{align*}
Similarly, the local minima of $f_{E,1}(t)$ are $(\kappa d_m)_{m\ge 0}$, which tend to infinity, and hence 
$f_{E,1}(t)\xrightarrow{t\to \infty}-\infty$, as desired.

\textbf{Case \eqref{case: eta zero}:}
The construction is similar. We define $f^{t_0, t_1, \varepsilon}$ by 
\begin{align*}
f^{t_0, t_1, \varepsilon}_{H,1}(t)&:=
\begin{cases}
\max\left((t-t_0)\eta_-, t-t_1\eta_+, -\varepsilon (t_1-t_0)\right),& \text{if }t \in [t_0, t_1],\\
0,&\text{if }t \nin [t_0, t_1],
\end{cases}\\
f^{t_0, t_1, \varepsilon}_{E,l}(t) &:= 
\begin{cases}
\left\{\frac{d}{dt}f^{t_0, t_1, \varepsilon}_{H,1}(t)\right\}, &\text{if }t \in (t_0,t_1) \text{ and } l=1,\\
*, &\text{otherwise,}
\end{cases}
\end{align*}
and extrapolate $f^{t_0, t_1, \varepsilon}_{H,l}$ to $f^{t_0, t_1, \varepsilon}_{H,\bullet}$ by convexity.
\begin{figure}[ht]
% \begin{wrapfigure}{l}{0.45\textwidth}
\label{fig:simple_template_2}
\begin{tikzpicture}[line cap=round,line join=round,>=triangle 45,x=1.0cm,y=1.0cm]

\draw [line width=1.pt, -to] (-3,0) -- (4.2,0) node[above] {$t$};
\clip(-3,-4.6) rectangle (4,1);

\draw [line width=2.pt, dash pattern=on 5pt off 5pt,] (-7.1,0.06) -- (-2,0.06);
\draw [line width=2.pt, dash pattern=on 5pt off 5pt,color=redcolor] (-7,0) -- (-2,0);
\draw [line width=2.pt, dash pattern=on 5pt off 5pt,color=pinkcolor] (-6.9,-.06) -- (-2,-.06);

\draw [line width=2.pt, dash pattern=on 5pt off 5pt,] (7.2,0.06) -- (3.4,0.06);
\draw [line width=2.pt, dash pattern=on 5pt off 5pt,color=redcolor] (7.1,0) -- (3.4,0);
\draw [line width=2.pt, dash pattern=on 5pt off 5pt,color=pinkcolor] (7,-0.06) -- (3.4,-0.06);

\draw [color=black] (-2.5, -0.3) node {$f_{H, \bullet}:$};

\draw [color=black] (-2.5, -2.2) node {$f_{E, 1}$};
\draw [color=redcolor] (-2.5, -3.2) node {$f_{E, 2}$};
\draw [color=pinkcolor] (-2.5, -4.2) node {$f_{E, 3}$};

\draw [color=black] (-2, 0.4) node {$t_0$};
\draw [color=black] (-1, 0.4) node {$t_{1/3}$};
\draw [color=black] (1, 0.4) node {$t_{2/3}$};
\draw [color=black] (3.4, 0.4) node {$t_1$};
% \draw [line width=2.pt, color=black] (-1,-0.1) -- (-1,0.1); %POINT2
\draw [line width=2.pt, color=black] (-1,-0.1) -- (-1,0.1); %POINT2
\draw [line width=2.pt, color=black] (1,-0.1) -- (1,0.1); %POINT2
% \draw [line width=2.pt, color=black] (1.0,-0.1) -- (1.0,0.1); %POINT2

\draw [line width=2.pt, ] (-2,0.0) -- (-1,-1.2);
\draw [line width=2.pt, ] (-1,-1.2) -- (1,-1.2);
\draw [line width=2.pt, ] (1,-1.2) -- (3.4,0.0);
\draw [line width=2.pt, dash pattern=on 5pt off 5pt,color=redcolor,] (-2.0,0.0) -- (-1.0,-0.7999999999999999);
\draw [line width=2.pt, dash pattern=on 5pt off 5pt,color=redcolor,] (-1.0,-0.7999999999999999) -- (1.0,-0.7999999999999999);
\draw [line width=2.pt, dash pattern=on 5pt off 5pt,color=redcolor,] (1.0,-0.8) -- (3.4,0.0);
\draw [line width=2.pt, dash pattern=on 5pt off 5pt,color=pinkcolor,] (-2.0,0.0) -- (-1.0,-0.39999999999999997);
\draw [line width=2.pt, dash pattern=on 5pt off 5pt,color=pinkcolor,] (-1.0,-0.39999999999999997) -- (1.0,-0.39999999999999997);
\draw [line width=2.pt, dash pattern=on 5pt off 5pt,color=pinkcolor,] (1.0,-0.4) -- (3.4,0.0);
\begin{scriptsize}

\draw [fill=bluecolor] (-2,0.0) circle (2.5pt); %POINT1
\draw [fill=greencolor] (-1,-1.2) circle (2.5pt); %POINT1
\draw [fill=greencolor] (1,-1.2) circle (2.5pt); %POINT1
\draw [fill=bluecolor] (3.4,0.0) circle (2.5pt); %POINT1
\end{scriptsize}

\draw [line width=2.pt, dash pattern=on 5pt off 5pt,] (-7,-2.5) -- (-2,-2.5);
\draw[color=black] (-4.5, -2.2) node {$*$};
\draw [line width=2.pt, ] (-2,-2.5) -- (-1,-2.5);
\draw[color=black] (-1.5, -2.2) node {$\{-1.2\}$};
\draw [line width=2.pt, ] (-1,-2.5) -- (1,-2.5);
\draw[color=black] (0.0, -2.2) node {$\{0\}$};
\draw [line width=2.pt, ] (1,-2.5) -- (3.4,-2.5);
\draw[color=black] (2.2, -2.2) node {$\{0.5\}$};
\draw [line width=2.pt, dash pattern=on 5pt off 5pt,] (3.4,-2.5) -- (7,-2.5);
\draw[color=black] (5.2, -2.2) node {$*$};
\draw [fill=bluecolor] (-7,-2.5) circle (2.5pt); %POINT1
\draw [fill=bluecolor] (-2,-2.5) circle (2.5pt); %POINT1
\draw [fill=greencolor] (-1,-2.5) circle (2.5pt); %POINT1
\draw [fill=greencolor] (1,-2.5) circle (2.5pt); %POINT1
\draw [fill=bluecolor] (3.4,-2.5) circle (2.5pt); %POINT1
\draw [fill=bluecolor] (7,-2.5) circle (2.5pt); %POINT1
\draw [line width=2.pt, dash pattern=on 5pt off 5pt,color=redcolor,] (-7,-3.5) -- (7,-3.5);
\draw[color=redcolor] (0.0, -3.2) node {$*$};
\draw [fill=bluecolor] (-7,-3.5) circle (2.5pt); %POINT1
\draw [fill=bluecolor] (7,-3.5) circle (2.5pt); %POINT1
\draw [line width=2.pt, dash pattern=on 5pt off 5pt,color=pinkcolor,] (-7,-4.5) -- (7,-4.5);
\draw[color=pinkcolor] (0.0, -4.2) node {$*$};
\draw [fill=bluecolor] (-7,-4.5) circle (2.5pt); %POINT1
\draw [fill=bluecolor] (7,-4.5) circle (2.5pt); %POINT1
%[Finished in 0.1s]
% \draw [color=black] (-4.8,-2.11) node {$f$};
\end{tikzpicture}
\caption{
The $g$-template $f^{t_0, t_1, \varepsilon}_{H,l}$ with $\Eall = \{-1.2, 0, 0.3, 0.9\}$. 
% The green vertices are the non-null ones and the blue ones are the null vertices.
}
\end{figure}
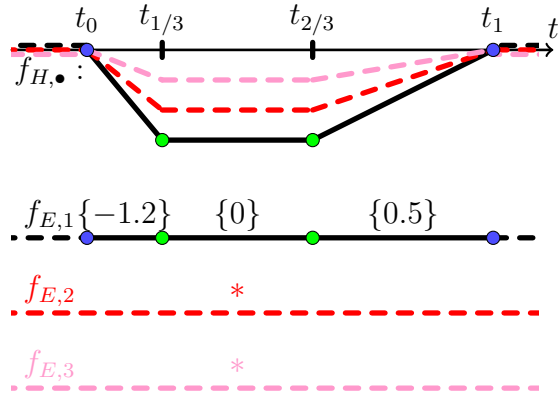
Let $t_{1/3} < t_{2/3}$ denote the two non-null vertices of $f^{t_0, t_1, \varepsilon}$ at $1$.
Then \[\int_{t_{1/3}}^{t_{2/3}} \delta\left(f^{t_0, t_1, \varepsilon}_{E,*}(t)\right)dt = (t_{2/3}-t_{1/3})(D-\Xi).\]

Hence the merger of a sequence of such functions with $\varepsilon\to 0$ provides the desired function as in Case \eqref{case: eta nonzero}.
\end{proof}
% subsection construction_of_a_special_g_template_proof_of_lemma_lem: there is a good template (end)
% section combinatorial_computations (end)
\section{Appendix} % (fold)
\label{sec:appendix}
\subsection{Dimension Inequality} % (fold)
Here we will prove the following inequality that was used in the proof of Lemma \ref{lem:pre H inequality}.
\label{sub:dimension_inequality}
\begin{lem}
Let $a\ge a_0\ge 0, b\ge b_0\ge 0, c\ge c_0\ge 0$ be integers with $a_0+b_0+c_0 \ge 1$, and let $n_+ \in [c,c+b]$. Then
\begin{align*}
V(a,b,c,a_0,b_0,c_0, n_+) := &(c-c_0)a_0(a+c) + (b-b_0)a_0c + (c-c_0)b_0a \\&\nonumber
+ \frac{n_+}{a_0+b_0+c_0}(-a_0c + c_0 a) - ac \ge 0
\end{align*}
provided that $a_0+b_0+c_0\le n_+$. 
\end{lem}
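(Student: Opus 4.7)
My plan is to exploit that $V$ is affine in $n_+$. Rewriting
\[
V = x\,a\,(a_0+b_0-1) \;+\; a_0\,c\,(x+y-1) \;+\; \tfrac{n_+-\ell}{\ell}(c_0 a - a_0 c),
\]
with $x := c-c_0 \ge 0$, $y := b-b_0 \ge 0$, and $\ell := a_0+b_0+c_0 \ge 1$, I see that the coefficient of $n_+$ has the sign of $c_0 a - a_0 c$. Hence the minimum of $V$ over the feasible range $\max(c,\ell) \le n_+ \le c+b$ is attained at an endpoint, and I will split on the sign of $c_0 a - a_0 c$. Throughout, note the identity $x+y-a_0 = c+b-\ell \ge 0$.

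\textbf{Case I} ($c_0 a \ge a_0 c$). The third term is nonnegative, so it suffices to bound the first two. Generically $a_0+b_0\ge 1$ and $x+y\ge 1$, and both terms are nonnegative. The two degenerate possibilities I will handle by direct inspection. If $a_0=b_0=0$ then $\ell=c_0\ge 1$, and the constraint $n_+\ge c$ together with $c\ge c_0$ collapses $V$ to $a(n_+-c)\ge 0$. If $x=y=0$ then $\ell=a_0+b+c\le c+b$ forces $a_0=0$, after which $V\ge 0$ is immediate.

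\textbf{Case II} ($c_0 a < a_0 c$). This forces $a_0\ge 1$ and $c\ge 1$. The minimum is at $n_+ = c+b$, giving $(n_+-\ell)/\ell = (x+y-a_0)/\ell$. Multiplying by $\ell$ and expanding, I expect the rearrangement
\[
\ell V \;=\; x\,A_1 \;+\; y\,A_2 \;-\; a_0\bigl[c\,b_0 + c_0(a+c)\bigr],
\]
where $A_1 := \ell\,a\,(a_0+b_0-1) + a_0\,c\,(\ell-1) + c_0 a$ and $A_2 := a_0\,c\,(\ell-1) + c_0 a$. Since $a_0\ge 1$, both $A_1\ge A_2\ge 0$, and the feasibility $\ell\le c+b$ yields $x+y\ge a_0$. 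Chaining,
\[
\ell V \;\ge\; (x+y)\,A_2 \;-\; a_0\bigl[c b_0 + c_0(a+c)\bigr] \;\ge\; a_0\,\bigl(A_2 - c b_0 - c_0(a+c)\bigr).
\]
It then remains to verify the closing identity
\[
A_2 - \bigl[c\,b_0 + c_0(a+c)\bigr] \;=\; c\,\ell\,(a_0-1),
\]
which is a direct computation using $a_0+b_0+c_0=\ell$: the left-hand side is $a_0c(\ell-1) - cb_0 - c_0c = c\bigl(a_0\ell - (a_0+b_0+c_0)\bigr) = c\ell(a_0-1)$. Since $a_0\ge 1$ in Case II, this is nonnegative and the result follows.

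The hard part will be Case II: producing the clean form of $\ell V$ requires careful bookkeeping, and the closing identity $A_2 - c b_0 - c_0(a+c) = c\ell(a_0-1)$ is the punchline that makes the two successive estimates close with exactly the slack $a_0\ge 1$ that characterises Case II.
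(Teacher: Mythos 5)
Your proof is correct. I checked the rewriting $V = x\,a\,(a_0+b_0-1) + a_0\,c\,(x+y-1) + \tfrac{n_+-\ell}{\ell}(c_0 a - a_0 c)$: the apparent discrepancy $-xa - c_0 a$ against the original $-ac$ resolves via $a(x+c_0)=ac$. Case I, including both degenerate subcases $a_0=b_0=0$ (where $V$ collapses to $a(n_+-c)$) and $x=y=0$ (where feasibility forces $a_0=0$), checks out. In Case II the expansion $\ell V = xA_1 + yA_2 - a_0[cb_0 + c_0(a+c)]$ is right (the constant term equals $-a_0[c(\ell-a_0) + c_0 a]$ and $\ell - a_0 = b_0+c_0$), the inequalities $A_1 \ge A_2 \ge 0$ and $x+y \ge a_0$ hold under the Case II hypothesis $a_0 \ge 1$, and the closing identity $A_2 - cb_0 - c_0(a+c) = c\ell(a_0-1)$ is a clean computation using $a_0+b_0+c_0=\ell$.

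Your route is genuinely different from the paper's. Both proofs start from the observation that $V$ is affine in $n_+$, but the paper then performs a preliminary reduction that lowers $b$ until either $b=b_0$ or $n_+=c+b=\ell$, and afterwards verifies four separate cases by exhibiting explicit sum-of-nonnegatives decompositions of $(a_0+b_0+c_0)V$ in each. You instead split on the sign of the $n_+$-coefficient $c_0 a - a_0 c$ rather than on which endpoint is binding, and you avoid the $b$-reduction entirely. In the favourable-sign case the rewritten form of $V$ makes everything visibly nonnegative up to two degenerate subcases, each dispatched by a one-line evaluation; in the unfavourable-sign case you pin $n_+$ to the upper endpoint and close with a chain of estimates culminating in the identity $A_2 - cb_0 - c_0(a+c) = c\ell(a_0-1)$, whose nonnegativity is exactly what the Case II hypothesis $a_0 \ge 1$ provides. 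Your version is shorter and more structured: the paper's decompositions require knowing the right grouping in advance and rechecking each of four cases, whereas your two estimates $A_1 \ge A_2$ and $x+y \ge a_0$ do the work uniformly, at the cost of requiring the slightly-less-obvious reformulation of $V$ up front.
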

\begin{proof}
Since $V$ is linear in $n_+$ we may assume $n_+$ is one of its boundary values:
\begin{enumerate}
\item $n_+=c+b$,
\item $n_+=c$, or
\item $n_+=a_0+b_0+c_0$. 
\end{enumerate}
First we will show that we may decrease $b$ until $b = b_0$ or $b>b_0$ and $b+c = n_+ = a_0+b_0+c_0$. 
We will show that
\[dV:=V(a,b,c,a_0,b_0,c_0, n_+) - V(a,b-1,c,a_0,b_0,c_0, n_+') \ge 0,\]
% \stackrel{?}{\ge}{}
where $n_+' = n_+$ in cases $2,3$ or $n_+' = n_+-1$ in case $1$.
In cases $2,3$ we see that $dV = a_0c\ge 0$.
In case $1$ we have 
$dV = \frac{-a_0c + c_0a}{a_0+b_0+c_0} + a_0c \ge 0$ as $a_0+b_0+c_0 \ge 1$. 

The above three cases become four:

\begin{enumerate}
 \item \label{case: appendix 1}$n_+ = c+b$, $b = b_0$.
\item \label{case: appendix 2}$n_+ = c$, $b = b_0$.
\item \label{case: appendix 3}$n_+ = a_0 + b_0 + c_0$, $b = b_0$. 
\item \label{case: appendix 4}$n_+ = a_0 + b_0 + c_0 = c + b$. 
\end{enumerate}
Denote $a_1 = a - a_0$, $b_1 = b-b_0, c_1 = c-c_0$. 

\textbf{Case \eqref{case: appendix 1}:}
Note that since $c+b = n_+ \ge a_0+b_0+c_0 =a_0 + b+c_0$ we get that $c_1\ge a_0$. 
Since $a_0 ^ 2 -a_0, b_0^2 - b_0\ge 0$.
By simple algebraic manipulations,
\begin{align*}
V(a,b,&c,a_0,b_0,c_0, n_+)\cdot (a_0+ b_0 + c_0)\\
=&(a_0^2-a_0)  (a_0c_1 + a_1c_1 + 2c_0c_1 + a_1  c_0 + c_0^2 + c_1^2 + a_0c_0 + b_0  c_0 + 2  b_0  c_1) \\&+ 
(c_1 - a_0)  (a_0  a_1  c_0 + a_0  c_0^2 + a_0c_0c_1 + a_0^2c_0 + a_0b_0c_0) \\&+
(b_0^2 - b_0)  a_1c_1 \\& +
 2  a_0  a_1  b_0  c_1 + a_0  b_0^2  c_1 + a_1b_0c_0c_1 + a_0  b_0  c_1^2 + a_0  b_0  c_0  c_1
\\\ge& 0.
\end{align*}

\textbf{Case \eqref{case: appendix 2}:}
Note that if $b_0=0$ then we were in the previous case, hence assume $b_0 \ge 1$. 
Since $c=n_+ \ge a_0+b_0+c_0$ we get that $c_1\ge a_0+b_0$. In particular, $c_1\ge b_0\ge 1$.
By simple algebraic manipulations,
\begin{align*}
V(a,b,&c,a_0,b_0,c_0, n_+)\cdot (a_0+ b_0 + c_0)\\ =& 
(a_0^2-a_0)(a_0c_1 + b_0c_1 + c_1^2) \\&+
(b_0 - 1)(a_0a_1c_1 + a_1b_0c_1+ 2a_0c_0c_1) \\&+
(c_1 - 1)  (a_0^2c_0 + a_0a_1c_0 + a_1b_0c_0 + a_0c_0^2 + a_0c_0c_1) \\&+ 
(c_1-b_0) a_0c_0 \\&+ 
a_0^2a_1c_1 + a_0^2b_0c_1 + a_0a_1b_0c_1 + a_0b_0^2c_1 + a_0^2c_0c_1 + a_0b_0c_1^2
\\\ge &0.
\end{align*}

\textbf{Case \eqref{case: appendix 3}:}
Note that  $a_0+b_0+c_0= n \in [c,c+b_0]$. If $n$ was one of the endpoints, we would be in the previous cases. Hence, $c<a_0+b_0+c_0 < c+b_0$. Equivalently $a_0<c_1<a_0+b_0$. In particular, $b_0 \ge 2, c_1\ge 1$.

By simple algebraic manipulations,
\begin{align*}
V(a,b,&c,a_0,b_0,c_0, n_+) \\
=&(b_0-2)(a_0  c_1 + a_1  c_1)\\
&+ (c_1-1)a_0c_0\\
&+ a_1c_1 + a_0^2c_1 + a_0a_1c_1 + a_0c_1^2 \\
\ge & 0.
\end{align*}

\textbf{Case \eqref{case: appendix 4}:}
The equation defining \eqref{case: appendix 4} implies $a_0 = c_1 + b_1$. If we had $b_1=0$ then we were in Case \eqref{case: appendix 1}. 
Consequently, $a_0\ge b_1\ge 1$.
By simple algebraic manipulations,
\begin{align*}
V(a,b,&c,a_0,b_0,c_0, n_+) \\
=&(b_1 - 1)(a_0c_0 + a_0c_1)\\
&+(a_0 - 1) (a_0c_1 + a_1c_1) \\&+ 
a_0b_0c_1 + a_1b_0c_1+ a_0c_0c_1 + a_0c_1^2\\\ge & 0
\end{align*}
\end{proof}
% subsection dimension_inequality (end)
% section appendix (end)

\begin{theindex}

  \item $*$, \hyperlink{ee}{9}
  \item $\#$, \hyperlink{dj}{7}
  \item $\eteq$, \hyperlink{ea}{7}
    \subitem $E_\bullet \eteq E'_\bullet$, \hyperlink{bcb}{57}
  \item $\normi\cdot\normi$, \hyperlink{dd}{7}
  \item $\ggg$, \hyperlink{hd}{16}

  \indexspace

  \item $a_\bullet$, \hyperlink{cj}{6}
  \item Aligned noise lattice, \hyperlink{bdi}{73}
  \item $\alpha_\varphi$, \hyperlink{ic}{21}
  \item $A_m$, \hyperlink{ig}{25}
  \item $\widetilde A_{m+1}$, \hyperlink{bdf}{73}
  \item Anchor, \hyperlink{bdj}{74}

  \indexspace

  \item Basic intervals, \hyperlink{baj}{41}
  \item $B_d(x;r)$, \hyperlink{he}{16}
  \item Blades, \hyperlink{de}{7}
  \item $\BL$, \hyperlink{df}{7}
  \item $\bl\Gamma$, \hyperlink{dh}{7}
  \item boundedly expanding action, \hyperlink{hf}{18}
  \item $B_T$, \hyperlink{bdg}{73}

  \indexspace

  \item Category flow, \hyperlink{ej}{10}
  \item Complementary interval, \hyperlink{bba}{41}
  \item Constructible graph, \hyperlink{bai}{33}
  \item $\cov \Gamma$, \hyperlink{bh}{5}
  \item $\cov(\Gamma_2/\Gamma_1/\Gamma_0)$, \hyperlink{bcg}{66}

  \indexspace

  \item $D$, \hyperlink{gh}{15}
  \item $\delta(E_\bullet)$, \hyperlink{gc}{14}
  \item $\Delta_0(f)$, \hyperlink{ge}{14}
  \item $\Delta(f)$, \hyperlink{gf}{14}
  \item $d_{\gr}$, \hyperlink{cb}{6}
  \item $d_{H}$, \hyperlink{bf}{5}
  \item $\dim_\funH$, \hyperlink{hi}{19}
  \item Direction filtration, \hyperlink{cg}{6}
  \item Doubling metric, \hyperlink{ij}{25}
  \item $d_\varphi$, \hyperlink{ib}{21}
  \item $d_{\SL_n}$, \hyperlink{f}{3}
  \item $d_{X_n}$, \hyperlink{e}{3}

  \indexspace

  \item $e_{a,b}$, \hyperlink{bbd}{49}
  \item $\Eall$, \hyperlink{i}{4}
  \item $E_\bullet$, \hyperlink{ch}{6}
  \item $e_{i}$, \hyperlink{bbe}{49}
  \item Elementary flip, \hyperlink{bcd}{59}
  \item Enticement, \hyperlink{bac}{32}
    \subitem compatible with a graph, \hyperlink{bah}{33}
    \subitem finitary collection, \hyperlink{bag}{32}
    \subitem satisfaction, \hyperlink{bae}{32}
    \subitem solution, \hyperlink{baf}{32}
    \subitem support, \hyperlink{bad}{32}
  \item Equivalent trajectories, \hyperlink{j}{4}
  \item $\eta_E$, \hyperlink{ec}{8}
  \item $\eta_i$, \hyperlink{g}{3}
  \item expansion semi-metric, \hyperlink{hg}{18}

  \indexspace

  \item $f$, \hyperlink{ff}{11}
    \subitem $f_{E,\bullet}$, \hyperlink{fg}{11}
    \subitem $f_{H,\bullet}$, \hyperlink{fh}{11}
  \item $\overline F$, \hyperlink{id}{21}
  \item $\HN$, \hyperlink{bj}{6}
  \item Final multiset, \hyperlink{bch}{70}
  \item Final noise lattice, \hyperlink{bdh}{73}
  \item Flag, \hyperlink{ce}{6}
  \item $f^\Lambda$, \hyperlink{gb}{12}
  \item $\underline F$, \hyperlink{ie}{21}

  \indexspace

  \item Game - $(T, g)$, \hyperlink{if}{24}
  \item $\Gamma_\bullet^m$, \hyperlink{bdb}{72}
  \item $\widetilde \Gamma^m_\bullet$, \hyperlink{bdd}{72}
  \item $\gamma_*$, \hyperlink{bcj}{72}
  \item $\cG_f$, \hyperlink{ga}{11}
  \item $\gr$, \hyperlink{ca}{6}
    \subitem $\gr_E^g$, \hyperlink{cd}{6}
    \subitem $\gr_{\to E}$, \hyperlink{bbg}{51}
    \subitem $\gr_{\to E}^{\varepsilon, g}$, \hyperlink{bca}{57}
  \item $g_t$, \hyperlink{h}{4}
  \item $g$-template, \hyperlink{fd}{11}
    \subitem separated, \hyperlink{je}{27}
    \subitem significant, \hyperlink{jg}{28}

  \indexspace

  \item $H$, \hyperlink{ba}{4}
    \subitem $H_V$, \hyperlink{bbb}{48}
    \subitem $H_{V\to E}$, \hyperlink{bbh}{51}
  \item $H^-$, \hyperlink{bb}{4}
  \item $H^0$, \hyperlink{bc}{4}
  \item $H^{-0}$, \hyperlink{bd}{4}
  \item $\check \funh$, \hyperlink{bbi}{55}
  \item Harder-Narasimhan filtration, \hyperlink{bg}{5}
  \item Hausdorff dimension, \hyperlink{hh}{19}
    \subitem with respect to a semi-metric, \hyperlink{hj}{20}
  \item Height sequence, \hyperlink{da}{6}
  \item $h_\infty$, \hyperlink{ii}{25}
  \item $h_m$, \hyperlink{ih}{25}
  \item $\widetilde h^{m_0\to m_1}$, \hyperlink{bea}{74}

  \indexspace

  \item $\cI_l^*$, \hyperlink{ef}{9}
  \item $\cI_l$, \hyperlink{cc}{6}
  \item Independent shift sequence, \hyperlink{bab}{30}
  \item irregular point, \hyperlink{fc}{10}
  \item $\cI_\bullet^*$, \hyperlink{ei}{10}

  \indexspace

  \item $L(-)$, \hyperlink{ci}{6}
  \item $\widetilde \Lambda^m$, \hyperlink{bdc}{72}
  \item $L_f(t)$, \hyperlink{ja}{26}
  \item Lower convex hull, \hyperlink{jh}{28}

  \indexspace

  \item $\mLU$, \hyperlink{be}{4}
  \item $\mult(x;E)$, \hyperlink{bcc}{59}

  \indexspace

  \item $n$, \hyperlink{d}{3}
  \item $\NN_0$, \hyperlink{bce}{60}
  \item Noise lattice, \hyperlink{bde}{73}
  \item Nontrivial place, \hyperlink{db}{6}
  \item Nontrivial point, \hyperlink{fb}{10}
  \item Nontriviality interval, \hyperlink{fj}{11}
  \item Null arrow, \hyperlink{eh}{9}
  \item Null point, \hyperlink{fa}{10}

  \indexspace

  \item $O(-)$, \hyperlink{ha}{16}
  \item $O_g$, \hyperlink{bbc}{49}
  \item $\Omega(-)$, \hyperlink{hc}{16}

  \indexspace

  \item $p_\Gamma$, \hyperlink{bi}{5}
  \item $\partial^2a_\bullet$, \hyperlink{dc}{6}
  \item $\varphi_t$, \hyperlink{ia}{20}
  \item $\widetilde \bpi^m$, \hyperlink{bda}{72}
  \item $(-)^+$, \hyperlink{gd}{14}
  \item $P_m$, \hyperlink{bef}{82}

  \indexspace

  \item $\check \funq$, \hyperlink{bbj}{55}

  \indexspace

  \item Rectangle, \hyperlink{bee}{81}
  \item Relative covolume, \hyperlink{bcf}{66}
  \item $\RR^\times$, \hyperlink{di}{7}

  \indexspace

  \item Scalar pair, \hyperlink{bci}{70}
  \item Shift
    \subitem of $g$-template $f^\brho$, \hyperlink{baa}{30}
    \subitem of height sequence $a^\brho_\bullet$, \hyperlink{ji}{29}
    \subitem sequence, \hyperlink{jj}{29}
  \item $\SL_n(\RR)$, \hyperlink{b}{3}
  \item $\SL_n(\ZZ)$, \hyperlink{c}{3}
  \item Standard weights, \hyperlink{bej}{98}
  \item Steps of Alice
    \subitem adding, \hyperlink{beb}{77}
    \subitem changing, \hyperlink{bec}{79}
    \subitem standard, \hyperlink{bed}{79}
  \item Steps of Bob
    \subitem idle, \hyperlink{beh}{90}
    \subitem interruption, \hyperlink{bei}{90}

  \indexspace

  \item Template, \hyperlink{fe}{11}
  \item $\Theta(-)$, \hyperlink{hb}{16}
  \item Transition arrow, \hyperlink{eg}{9}

  \indexspace

  \item $U_\varepsilon$, \hyperlink{eb}{7}
  \item $U_l(f)$, \hyperlink{fi}{11}

  \indexspace

  \item V:$V_\eta$, \hyperlink{ed}{8}
  \item Vanishing number, \hyperlink{beg}{90}
  \item $V_\bullet$, \hyperlink{cf}{6}
  \item Vertex, \hyperlink{jb}{26}
    \subitem impact, \hyperlink{jf}{28}
    \subitem non-null, \hyperlink{jc}{26}
    \subitem null, \hyperlink{jd}{26}
  \item $V^\to$, \hyperlink{bbf}{51}

  \indexspace

  \item $\bigwedge^l \RR^n$, \hyperlink{dg}{7}

  \indexspace

  \item $\Xi$, \hyperlink{gi}{15}
  \item $X_n$, \hyperlink{a}{3}

  \indexspace

  \item $Y_{\Lambda, \cF}$, \hyperlink{gg}{15}
  \item $Y_{\Lambda, f}$, \hyperlink{gj}{16}

\end{theindex}

\end{document}